\newenvironment{dedication}
  {\clearpage           
   \thispagestyle{empty}
   \vspace*{\stretch{1}}
   \itshape             
   \raggedleft          
  }
  {\par 
   \vspace{\stretch{3}} 
   \clearpage           
  }
\def\R{\mathbb{R}}
\def\N{\mathbb{N}}
\def\Z{\mathbb{Z}}
\def\Q{\mathbb{Q}}
\def\T{\mathbb{T}}
\newcommand{\G}{\mathcal{G}}
\newcommand{\PG}{\mathcal{PG}}
\def \ml {\begin{pmatrix}} 
\def \mr {\end{pmatrix}}
\def \L {{\mathcal L}}
\def \F {{\mathcal F}}
\def \r {{\mathbb R}}
\def \a {{\mathbb A}}
\def \s {{\widetilde S}}
\def \f {{\tilde f}}
\def\ww{\widetilde }
\def\wh{\widehat}
\def \feq#1 {\begin{center}\framebox{$\displaystyle #1$}\end{center}}
\def \dd {{\rm d}}
\def \T{\mathbb{T}}
\def \e{\varepsilon}
\def \D{{\mathcal D}}
\def \S{{\mathcal S}}
\def \BB{{\mathcal B}}
\def \FF{{\mathcal F}}
\def \LL{{\mathcal L}}
\def \PP{{\mathcal P}}
\def \AA{{\mathcal A}}
\def \MM{{\mathcal M}}
\def \PP{{\mathcal P}}
\def \SS{{\mathcal S}}
\def \PP{{\mathcal P}}
\newcommand{\AAA}{\mathfrak A}
\newcommand{\SSS}{\mathcal{S}' }
\newcommand{\A}{\mathbb A}
\newcommand{\DD}{\mathcal{D}}
\def \BB{{\mathcal B}}
\def \CC{{\mathcal C}}
\def \TT{{\mathcal T}}
\newcommand{\om}{\omega}
\newcommand{\cc}{{c[0]}}
\newcommand{\1}{\mathbbm 1}
\newcommand{\Tn}{T^{-n}}
\newcommand{\TTn}{T^{+n}}
\newcommand{\TTT}{\mathfrak T}
\newcommand*\tcircle[1]{%
  \raisebox{-0.5pt}{%
    \textcircled{\fontsize{7pt}{0}\fontfamily{phv}\selectfont #1}%
  }%
}
\author{M. ZAVIDOVIQUE }
\newtheorem{df}{Definition}[section]
\newtheorem{lm}[df]{Lemma}
\newtheorem{pr}[df]{Proposition}
\newtheorem{Th}[df]{Theorem}
\newtheorem{co}[df]{Corollary}
\newtheorem{rem}[df]{Remark}
\newtheorem{ex}[df]{Example}
\title{Discrete Weak KAM Theory: \\an introduction through examples and its applications to twist maps}
\begin{document}
\begin{titlepage}

\begin{center}
%
%
%
%
%
%
\vspace{1cm}
{\Large \textbf{Maxime Zavidovique}}

%
%
\vspace{1cm}
{\Large \textbf{Discrete and Continuous Weak KAM Theory: \\an introduction through examples \\ and its  applications to twist maps.}}

%
%
%
%
%
%
%
%
%

\end{center}
\end{titlepage}
\thispagestyle{empty}
\newpage
\thispagestyle{empty}
\mbox{}
\newpage


\begin{dedication}
\`A mes trois rayons de soleil, \\
du matin, du midi et du soir.
\end{dedication}

\newpage
\thispagestyle{empty}
\mbox{}
\newpage

\begin{abstract}
The aim of these notes is to present a self contained account of discrete weak KAM theory.  Put aside the intrinsic elegance of this theory, it is also a toy model for classical weak KAM theory, where many technical difficulties disappear, but where central ideas and results persist. It can therefore serve as a good introduction to (continuous) weak KAM theory. After a general exposition of the general abstract theory, several examples are studied. The last section is devoted to the historical problem of conservative twist maps of the annulus. At the end of the  first three Chapters, the relations between the results proved in the discrete setting and the analogous theorems of classical weak KAM theory are discussed. Some key differences are also highlighted between the discrete and classical theory.
\end{abstract}

\newpage
\thispagestyle{empty}
\mbox{}
\newpage

\setcounter{page}{1}
\chapter*{Remerciements}
\vspace{-1cm}
Je voudrais avant tout remercier Thétis. C'est probablement grâce à sa négligence que j'ai eu le temps de me remettre à la rédaction de ce mémoire, confortablement immobilisé dans mon canapé.

Un grand merci également à Symplectix et à ses organisateurs de m'accueillir subrepticement le temps d'une soutenance.

As far as real living persons are concerned, my gratitude goes to Alessio Figalli, Hitoshi Ishii and Claude Viterbo for sacrificing part of their summer time to the task of reading my prose, thank you.

Merci à Alessio, Claude, Marie-Claude Arnaud, Pierre Cardaliaguet, Albert Fathi, Hélène Frankowska, Vadim Kaloshin et   Ludovic Rifford d'avoir accepté d'être dans le jury de cette HDR. Votre présence m'honore.

Je suis à jamais redevable à Albert de m'avoir introduit dans le monde KAM faible, il y a maintenant quinze ans. C'est un monde d'idées simples et profondes, un monde parfois technique mais rarement trop. L'explorer est une jubilation.

Questo testo è nato dopo un mini corso dato a Cortona en 2015. Inoltre di mangiare tartufo con Montelpuciano, mi ha fatto pensare a come organizzare la teoria di maniera pedagogica. Grazie agli organizzatori Andrea, Antonio e Alfonso per questa stupende conferenza.

Ce texte, et une partie de son contenu, n'aurait pas été écrit sans mes coauteurs, en particulier, Andrea, Albert, Antonio et bien sûr Marie-Claude. Apprendre grâce à vous et découvrir ensemble de nouveaux théorèmes suscite chez moi émerveillement et plaisir. Les mathématiques sont un sport d'équipe !

Parlant d'équipe, merci à Nicolas Bergeron, Alexandru Oancea et Patrice Le Calvez pour votre accompagnement et soutien à Jussieu.  Alex, merci aussi et surtout pour tes précieux conseils et la relecture de mon texte. \' Evoluer dans un environment professionnel chaleureux et apaisé est un luxe\footnote{que SU facture 380\euro} dont j'ai la chance de bénéficier. C'est grâce à tous mes collègues de AA. Une spéciale dédicace à Antonin et Pierre, merci pour votre aide précieuse avec les figures ! Je n'oublie pas les anciens collègues, Hélène, Erwan  et surtout Grégory pour la formation continue en chocolat et pâtisserie, et pour tout le reste. 

Cette soutenance n'aurait pas existé (et moi non plus) sans mes parents. Merci pour vos encouragements pendant la longue phase de rédaction. Ils furent réguliers, soutenus mais toujours avec une pudique retenue. Jamais plus de trois fois par jour.

Mes frères et sœurs m'accompagnent tout le temps, je pense à eux.  Tout particulièrement à Laurent, merci pour le grand flacon.

Enfin, et surtout, Anne et les enfants illuminent mon quotidien. Si les enfants sont mon soleil, Anne tu es mon \' Etoile polaire.

\newpage

\mbox{}
\newpage
\tableofcontents

\newpage

\mbox{}
\newpage

\chapter*{Introduction}
\addcontentsline{toc}{chapter}{\protect\numberline{}Introduction}

The present text initially emerged from lecture notes of a course given in Cortona in 2015 at the INDAM meeting entitled The Hamilton--Jacobi Equation:
At the crossroads of PDE, dynamical systems \& geometry. The goal of the lectures was to give a complete and thorough introduction to weak KAM theory through the prism of its discrete pendant. It culminated with the proof of convergence of the solutions of the discounted equation, which was new at the time. 

The pedagogical motivation is that discrete weak KAM theory is peculiarly elementary. Basic topology is the only prerequisite and the most advanced tools are the Arzel\`a--Ascoli Theorem and weak compactness of probability measures on a  compact metric space. However, all important results of weak KAM theory find their analogue in discrete weak KAM theory and the proofs being stripped of technicalities reveal the key ideas more clearly. After studying this toy model, the interested reader can then go on to learn more on  the major theories, as Calculus of Variations, Viscosity solutions of Hamilton--Jacobi equations, and Tonelli Hamiltonian Dynamical Systems. The latter are at the core of classical weak KAM theory.

We wish to start by explaining in which context weak KAM theory emerged in the 90's. Most notions described will be rigorously defined later in the body of the text. 

\section*{Weak KAM, a bridge between Aubry--Mather and Viscosity solutions}
\addcontentsline{toc}{section}{\protect\numberline{}Weak KAM, a bridge between Aubry--Mather and viscosity solutions}

Originally discovered by Albert Fathi in the 90's \cite{Fa4,Fa3,Fa5,Fa1}, weak KAM theory was designed to understand the dynamical objects of Aubry--Mather Theory for Tonelli Hamiltonian/Lagrangian systems through particular functions called {\it critical subsolutions} and {\it weak KAM solutions}. As explained by Fathi, the term weak KAM was chosen as KAM tori give rise to strong solutions of the Hamilton--Jacobi equation while weak KAM solutions are weak solutions of the  same equation. It turned out that weak KAM solutions and subsolutions fall in the realm of Viscosity Solutions and subsolutions of Hamilton--Jacobi equations, a theory founded by Crandall and Lions \cite{CrLi}.

\subsection*{Aubry--Mather theory}
\addcontentsline{toc}{subsection}{\protect\numberline{}Aubry--Mather theory}

This theory originated in the study of conservative twist maps and Frenkel--Kontorova models.  The objective was to understand invariant sets and the structure of minimizing orbits of such systems. Another type of problem was to construct  diffusion orbits connecting invariant sets, or to understand obstructions for such orbits to exist. Aubry \& Le Daeron \cite{Aubry} and Mather \cite{Mather} understood that orbits and invariant sets minimizing a certain energy verify similar properties as orbits of homeomorphisms of the circle. Hence they could apply Poincaré-Denjoy theory to classify and understand such invariant sets according to their rotation number. They went on to study the minimal average action of such minimizing orbits (a function only depending on the rotation number), invariant minimal measures (that would become Mather measures) and Mather gave several definitive answers to questions on the existence of connecting orbits \cite{MatherP, MatherMeasure,Mather43,MaDiff,MJAMS}. Amongst other important contributions let us mention Bangert \cite{Bang,BangR}.

Motivated by Moser, Mather developed a generalization of his theory to higher dimensional settings by introducing Minimizing Measures for a Tonelli Lagrangian defined on the tangent bundle of a compact manifold \cite{Mather1}. His next goal would then be to use this tool to tackle Arnol'd diffusion of such systems, a phenomenon highlighted by Arnol'd in his famous examples \cite{Arnold}. Let us present the philosophy. In an integrable system, all orbits are bounded and periodic or quasi--periodic. If one perturbs such a system, KAM theory implies that many quasi--periodic orbits persist. However, Arnol'd constructed examples where for small perturbations, some orbits have a huge drift in energy. He then conjectured that such a phenomenon should be typical. Mather led the way proposing groundbreaking variational mechanisms to construct generic diffusion (\cite{MatherF,  MaDif1,MaDif2}). Since then, there has been a huge literature trying to carry on Mather's program for diffusion.

\subsection*{Viscosity Solutions}
\addcontentsline{toc}{subsection}{\protect\numberline{}Viscosity Solutions}

As already mentioned, viscosity solutions were introduced by Crandall and Lions. They provide a simple, robust definition for weak solutions of first and second order PDE's. Strong existence and uniqueness results are obtained for wide classes of equations, including Hamilton--Jacobi equations (stationary and   evolutionary) making the solutions worthwhile studying. Let us mention amongst many others the founding works of Ishii, Crandall, Lions \cite{IsDUKE,CrIsLi,IsLi,IsCPAM}. The definition, that makes use of test functions that are super--tangent or sub--tangent to the solution  is very geometric, and allows easily to obtain stability results for viscosity solutions. References to learn more about basic (and more advanced) properties are \cite{Li,CrIsLiUG,barles}. For example, viscosity solution theory is so flexible as to apply to non--continuous functions. Stability allows to obtain very general convergence theorems of approximation schemes \cite{sou, basou}. Other references making use of this idea, in more weak KAM or variational contexts, are \cite{BFEZ,Ztwisted,Roos,Wei}.

Very soon in the development of the theory of Viscosity Solutions, Lions and collaborators realized the important role of dynamical programming properties for evolutionary equations and the links with Optimal Control theory that naturally emerge. Indeed, the Value function in optimal control is almost systematically a viscosity solution to some Hamilton--Jacobi equation \cite{BaCa}. This is fundamental in weak KAM theory as the Lax--Oleinik semigroup rediscovered by Fathi turns out to be the value function of an optimal control problem.

Another important problem that was solved early on by viscosity solution methods is that of Homogenization of  the Hamilton--Jacobi equation. Consider a continuous,  $\mathbb Z^N$--periodic in the first variable Hamiltonian $H : \R^N \times \R^N $ that is uniformly coercive in the second variable. Fix a bounded and uniformly continuous initial data $u_0 : \R^N \to \R$. Given $\varepsilon >0$, Lions, Papanicolaou and Varadhan consider the evolutionary Hamilton--Jacobi equation
\begin{equation}\label{evohom}
\begin{cases}
\partial_t U + H(\frac{x}{\varepsilon},\partial_x U) = 0 ,\\
U(0,\cdot) = u_0,\tag{EHJ$\varepsilon$}
\end{cases}
\end{equation}
that admits a unique viscosity solution $U_\varepsilon : [0,+\infty) \times \R^N\to \R$. They prove that as $\varepsilon \to 0$, the functions $U_\varepsilon$ converge locally uniformly to a function $U_0$ that is characterized as the unique solution to 

\begin{equation}\label{evohomo}
\begin{cases}
\partial_t U + \overline H(\partial_x U) = 0 ,\\
U(0,\cdot) = u_0,\tag{E$\overline{H}$J}
\end{cases}
\end{equation}
where $\overline H$ is called the {\it  effective Hamiltonian}. It is defined as follows: for $P\in \R^N$, $\overline H(P)$ is the only constant such that the cell problem
$H(x,P+\partial_x u) =  \overline H(P)$ admits a $\mathbb Z^N$--periodic viscosity solution $u : \R^N \to \R$. In Homogenization theory, the state variable $x\in \R^N$ takes values in the universal cover of the flat torus $\T^N$. A fundamental domain of the covering map is then the cell $[0,1)^N$ in the sense that the knowledge of a $\Z^N$--periodic function on $\R^N$ is equivalent to its restriction to the cell  $[0,1)^N$. The name cell problem comes from the fact that the unknown function is defined on a cell.

This effective Hamiltonian (in the case of convex Hamiltonians) coincides with the minimal average action, Mather's $\alpha$ function. The solutions to the cell problem in our terminology will be weak KAM solutions. This problem was revisited by Evans in \cite{Ev} where he introduced the perturbed test function method. Much later, it was studied under the light of symplectic topology by Viterbo and Montzner, Vichery, Zapolsky \cite{VitHom,MVZ}. Amongst many other follow ups in the spirit of Lions Papanicolaou and Varadhan, let us mention recent generalizations to other manifolds \cite{CIS,SHom}. 

\subsection*{The bridge}
\addcontentsline{toc}{subsection}{\protect\numberline{}The bridge}

As all good bridges, weak KAM theory quickly helped the development of both banks it joins. From a dynamical point of view, Fathi's first achievement was to construct connecting orbits through conjugate pairs of weak KAM solutions \cite{Fa3,Fa5}.

 From the PDE side, he proved long time convergence of solutions to the evolutionary Hamilon--Jacobi equation on compact manifolds \cite{Fa1}, for autonomous Hamiltonians. Though partial results  had been obtained by PDE means \cite{NR2}, the new idea he imported from the dynamical world is that long minimizing trajectories tend to accumulate on the support of minimizing Mather measures. This was followed by many generalizations, for instance \cite{DS,BS,Ishii08} where variational and PDE methods allow to weaken regularity hypotheses that Dynamical methods require. In a sense, this culminates in works (the first one being \cite{CGMT}) which use an idea of Evans \cite{evadj} where Mather measures are given a PDE definition and henceforth adapted to more general settings. Other related results are :
 \begin{itemize}
 \item for counterexamples in the  non--autonomous setting  \cite{FaMat,BS2};
 \item  for positive results in dimension $1$ \cite{BerAIF,BR}.
 \end{itemize}
 
 Another question raised in the theory of Hamilton--Jacobi equations was that of regularity of critical subsolutions. For Tonelli Hamiltonians, this was settled by Fathi and Siconolfi in \cite{FSC1} where existence of $C^1$ subsolutions is proved. Along the way, the authors also gave a simple proof of Ma\~n\' e's characterizations of Mather measures, as closed minimizing measures (\cite{MClosed,Mane}). This is the point of view used in this work. In \cite{FS05} the same authors extended their results to Lipschitz Hamiltonians. Finally, in \cite{BernardC11}, existence of $C^{1,1}$ subsolutions is obtained by some Lasry--Lions type approximation (\cite{LL,BeIl}). This could be expected as Fathi had also proved that $C^1$ solutions are automatically $C^{1,1}$ (\cite{Fa2}).
 
 The latest breakthrough of weak KAM theory in the PDE theory of Hamilton--Jacobi equations is probably the proof of convergence of the solutions of the discounted equations \cite{DFIZ2}, following some special cases in \cite{IS}. The discounting method was an approximation procedure used already in \cite{LPV} to prove existence of solutions to the cell problem. It has the advantage to approximate it by equations verifying a strong comparison principle and having exactly one solution. The convergence result of Davini, Fathi, Iturriaga and the author is that, as the perturbation goes to $0$, a particular weak KAM solution is selected when the Hamiltonian is convex in the second variable. Prior conditions on selected limiting solutions had been found by Gomes in \cite{Gom}. The result strikes by its generality (little regularity is assumed on the Hamiltonian, no strict convexity) and by the flexibility of its proof. It was naturally  followed by numerous generalizations and adaptations. Among them let us mention:
 \begin{itemize}
  \item the discrete setting that is presented later in this text \cite{DFIZ2}, 
  \item adaptations to Neumann problems \cite{IsNe},
  \item  non--compact settings \cite{IsSi}, 
  \item more abstract duality methods \cite{IMT1,IMT2}, 
  \item second order Hamilton--Jacobi equations \cite{MTdisc}, 
  \item weakly coupled systems of Hamilton--Jacobi equations \cite{DZdiscsys} using weak KAM tools from \cite{DSZ} and \cite{ IsSys,IsSys2} for more general results, 
  \item convergence from the negative direction \cite{DW},
  \item for more general nonlinear discount approximations \cite{CCIZ,WYZ, Chen, ZQJ},
  \item for discounted approximations on networks instead of manifolds \cite{PS}, 
  \item for mean field games \cite{CP} 
  \item  more recently, with degenerate discounting approximations \cite{Zdisc}.
    \end{itemize}
     Some limitations and counterexamples  exist nevertheless when convexity is dropped \cite{Ziliotto, IsCE} and last but not least, let us quote \cite{ArSu} for a more geometric perspective on the convergence result.

Crossing back to the dynamical systems world, Patrick Bernard used weak KAM solutions to push Mather's ideas in \cite{BerAIF,BerPseudo}. Probably the most definitive works on Arnol'd diffusion to this day are \cite{ Ber,BKZ}. Those works mix on the one hand dynamical strategies dating back to Arnol'd following chains of normally hyperbolic objects having transverse intersections of stable and unstable manifolds, and on the other hand analytical tools about regularity of weak KAM solutions and subsolutions.

In his founding works \cite{MClosed,Mane}, Ma\~n\'e proved that a generic Hamiltonian (in a sense he defined), has a unique Mather measure that is hence ergodic for the Lagrangian flow. He then asked if this measure is generically concentrated on a hyperbolic periodic orbit. This question is known as Ma\~n\' e's conjecture. It is still open but important steps were made by Figalli and Rifford \cite{FR1,FR2} who managed to bring together methods of optimal control and PDE with  the dynamical system theory of orbit closing lemma. They prove the conjecture in low regularity assuming the existence of a smooth critical subsolution. Then with Contreras \cite{CFR} they brought in Arnaud's theory of Green bundles \cite{Argreen} to prove that generically, the Aubry set is hyperbolic if the underlying manifold has dimension 2. The idea is that in this setting, if the Aubry set is not hyperbolic, then positive and negative Green bundles coincide and weak KAM solutions gain some extra regularity allowing to use ideas from the previous works.

 Mather had also raised a similar question about obtaining a uniform bound on the number of ergodic Mather measures when the cohomology varies, for generic Hamiltonians. This was solved by Bernard and Contreras \cite{BeCo,BerARMA}. More generally, understanding the shape of the Aubry set is a challenging question that is still to be understood. Progress on the quotiented Aubry set was obtained by Sorrentino and Fathi, Figalli, Rifford in \cite{So,FFR} and for the actual Aubry set, by Arnaud \cite{ArLy}.

\section*{Weak KAM, beyond Hamilton--Jacobi equations}
\addcontentsline{toc}{section}{\protect\numberline{}Weak KAM, beyond Hamilton--Jacobi equations}

It turns out that the philosophy of weak KAM theory and of Aubry--Mather theory applies to a variety of other areas.

\subsection*{Optimal control theory}
\addcontentsline{toc}{subsection}{\protect\numberline{}Optimal control theory}

Weak KAM theory's starting point is the fact that solutions to Hamilton--Jacobi equations of the form \eqref{evohom} (seen in the context of homogenization), on a manifold $M$, when the Hamiltonian has some convexity properties, are given by an explicit formula called {\it Lax--Oleinik semigroup} (for $\varepsilon = 1$):
\begin{equation}\label{LOintro}U(t ,x) = \inf_{\substack{\gamma:[-t,0]\to M \\ \gamma(0)=x}} u_0\big(\gamma(-t)\big) +\int_{-t}^0 L\big(\gamma(s),\dot\gamma(s)\big)\dd s,
\end{equation}
where the infimum is taken amongst all absolutely continuous curves. This formula (or its variations) is a convolution in the (min,+) semiring and is widely studied in Optimal Control theory. In this setting, $U$ is called the {\it value function}. Excellent introductory references on the subject are \cite{CaSi00,BaCa,clarke} and we already mentioned how Fathi and others made groundbreaking progress by importing ideas from Optimal Control (such as semiconcavity, regularity properties of minimizers...). Contributions in the other directions are also worth highlighting. Recently for instance, conjectures about propagation of singularities of solutions of Hamilton--Jacobi equations were solved by the use of the positive Lax--Oleinik semigroup \cite{CanCh,CCF}. Such results find amazing consequences in Riemannian geometry when applied to singularities of distance functions \cite{CCF2}.

\subsection*{Contact type and systems of Hamilton--Jacobi equations}
\addcontentsline{toc}{subsection}{\protect\numberline{}Contact type and systems of Hamilton--Jacobi equations}
Generalizations of weak KAM theory concern more general types of equations. The first family of generalizations we have in mind is that of contact type equations. This means that the Hamiltonian function $H : T^*M \times \R \to \R$ also depends on the value $u(t,x)$ of the unknown function. The terminology comes from the equations of characteristics that preserve a contact form (instead of the symplectic form for classical Hamiltonian equations). In this context, solutions are given by an implicit Lax--Oleinik semigroup and weak KAM arguments can therefore be used. This was exploited in many recent works including \cite{WWY,WWY2,CCLW,ZC,CCIZ,WWY3,ZQJ}. 

In a maybe more surprising way, weak KAM ideas also apply to some weakly coupled systems of Hamilton--Jacobi equations. This is more unexpected as weak KAM makes strong use of the order structure of $\R$ that is less clearly adaptable for systems where the values taken are in $\R^d$ for some $d>1$. The first evidence  of such a link was present in \cite{CamLey, Ley} and further developed in \cite{DZSys,MSTY,DSZ,SiZa,ISZ}.

\subsection*{Lorentzian geometry and Lyapunov functions}
\addcontentsline{toc}{subsection}{\protect\numberline{}Lorentzian geometry and Lyapunov functions}

As can be observed already from the definition of the Lax--Oleinik semigroup \eqref{LOintro} weak KAM theory studies minimization problems, objects verifying a family of inequalities and minimizers of those inequalities. As a matter of fact,  central objects in weak KAM theory are critical subsolutions. They are functions $u_0 : M\to \R$ for which the associated function $U(t,x)-\alpha(0)t$ (where $U$ is given by the Lax--Oleinik semigroup \eqref{LOintro} and $\alpha(0)$ is called the {\it critical constant}) is non--decreasing in $t$. Such functions are also characterized by the following two properties:
\begin{itemize}
\item $u_0$ is Lipschitz continuous,
\item $(x,D_x u_0)\in H^{-1}(-\infty,\alpha(0)] \cap\{x\} \times T_x^*M$ for almost every $x\in M$.
\end{itemize}
When $H$ is convex in the second variable, the sets $ H^{-1}(-\infty,\alpha(0)] \cap \{x\}\times T_x^*M$ are convex for all $x$. Therefore a natural generalization is to replace the Hamiltonian by a family of convex sets $C_x\in T^*_xM$ that verify suitable regularity properties. The question being to know if solutions exist, one studies differentiable inclusions of the form 
\begin{itemize}
\item $u$ is Lipschitz continuous,
\item $D_x u\in C_x$ for almost every $x\in M$.
\end{itemize}
In such settings, objects from Aubry--Mather theory such as the Aubry set appear as obstructions to finding such functions that verify extra conditions (as smoothness or replacing $C_x$ by its interior). Moreover, when possible, weak KAM methods make it possible to construct $C^1$ solutions to such differentiable inclusions. This was noticed by Fathi and Siconolfi \cite{FScone} who applied this philosophy to Lorentzian geometry, where the sets $C_x$ are cones provided by a section of non positive $2$--forms. This line of research was since developed in \cite{BS3,BS4,Suhr}.

Another fruitful extension of weak KAM ideas concerns Lyapunov functions and is closer to our subject of discrete weak KAM theory. Indeed, given a continuous transformation $ F$  of a metric space $X$, a Lyapunov function is $f : X\to \R$ that is non--increasing (or when possible decreasing) on the orbits of $F$. That is $f$ verifies the family of inequalities 
$$\forall x\in X , \quad f\circ F(x) \leqslant f(x).$$

Clearly, no function can be decreasing on a periodic orbit. More generally any reasonable notion of recurrence will provide an obstruction to the existence of strict Lyapunov functions. Hence being able to construct optimal Lyapunov functions is an important challenge related to fine dynamical properties. The study of this problem, importing weak KAM ideas, was done in Pageault's PhD that led to recovering earlier results of Conley, Akin, Auslander...  in simpler and more precise form. Results are to be found in \cite{ Pa,FaPa,FaPa2} and were followed by further studies \cite{BF,BF2,BFW}.

Let us also mention links with ergodic optimization and the analogue of Ma\~n\' e's conjecture that was recently solved by Contreras \cite{Contreras}. Other works related to weak KAM theory are \cite{GLT,GT}. 

\subsection*{Optimal Transportation}
\addcontentsline{toc}{subsection}{\protect\numberline{}Optimal Transportation}

Let us describe the original Monge problem in optimal transportation \cite{Monge}. The goal is to move some material described by a probability measure $\mu$ on a space $X$ into a certain configuration described by another probability measure $\nu$ on a space $Y$, by  means of a transport map $T : X\to Y$, knowing that the cost to move material from $x\in X$ to $y\in Y$ is given by $c(x,y)$. The problem is therefore to minimize $\int_{ X} c\big(x,T(x)\big) \ \dd \mu(x)$ on maps $T:X\to Y$ such that $T_* \mu = \nu$. A difficulty is that this problem might be ill posed. For example, if $\mu$ is a Dirac mass and $\nu$ is not, there is no map $T$ such that  $T_* \mu = \nu$. Moreover, even when such maps $T$ exist, the set of such maps does not verify good properties that allow to apply classical variational methods.

Kantorovitch's tour de force \cite{Kan} is twofold. He starts by relaxing the Monge problem in looking for transport plans. Those are probability measures $\gamma$ on $X\times Y$ whose marginals are given by $\pi_{1*} \gamma = \mu$ and $\pi_{2*}\gamma = \nu$. He then wants to minimize $\int_{ X\times Y} c(x,y) \ \dd \gamma(x,y)$ amongst such plans. The set of plans is always nonempty (as $\mu \otimes \nu$ is one such) and it is closed and convex when $X$ and $Y$ are compact for instance. Hence existence of an optimal (minimizing) plan can easily be proved under mild regularity hypotheses on $c$.
The second aspect of his contribution is to provide a dual equivalent problem. The minimal cost of a transport plan is given by 
$$\sup \left( \int_Y \varphi(y) \ \dd \nu(y) - \int_X \psi(x) \dd \mu(x) \right),$$
where the supremum is taken amongst pairs of continuous functions $\varphi : Y\to \R$ and $\psi : X\to \R$ such that 
$$\forall (x,y)\in X\times Y, \quad \varphi(y)-\psi(x) \leqslant c(x,y).$$

It will become clear in the next section that the minimizing problem of transport plans resembles that of minimizing Mather measures in Aubry--Mather theory. The dual problem of finding optimal Monge--Kantorovitch pairs of functions is transparently similar to the notion of subsolutions in weak KAM theory, especially when both spaces $X$ and $Y$ coincide. Finding a transport map $T$ amounts to proving that an optimal plan is concentrated on the graph of a function from $X$ to $Y$. This is analogous to Mather's Graph Theorem stating that Mather measures are concentrated on a graph. This deep parallel was drawn and studied by Bernard and Buffoni \cite{BeBu,BeBu1,BeBu2}.

\subsection*{Discrete weak KAM theory, with an economical twist}\label{economicaltwist}
\addcontentsline{toc}{subsection}{\protect\numberline{}Discrete weak KAM theory}

The main idea of  discrete weak KAM Theory is to directly discretize the Lax--Oleinik semigroup,  allowing the time to take  integer values only (or integer multiples of a given fixed value). This simple idea then allows to weaken hypotheses on the phase space. More precisely, coming back to the definition of the Lax--Oleinik semigroup \eqref{LOintro} for $t=1$, setting 
$$h_1 (x,y) = \inf_{\substack{\gamma:[-t,0]\to M \\ \gamma(0)=x\\ \gamma(1)=y}} \int_{0}^1 L\big(\gamma(s),\dot\gamma(s)\big)\dd s,$$
the  operator can be rewritten $ U(1,x) = \inf\limits_{y\in X} u_0(y) + h_1(y,x)$. To  write such a formula, very little structure is needed. Only an underlying space $X$ that we will assume to be metric  and a function $c : X\times X \to \R$ that will play the role of $h_1$. A first theoretical study of such an operator was done in \cite{BeBu}. Further analogies and results coming from classical weak KAM theory were the subject of the author's PhD thesis in which he also highlighted some fundamental differences.  

Here is an economical interpretation of discrete weak KAM theory. Assume that $X$ is the (uncountable) metric space whose points are wine stores in France. Let $DRC : X\to \R$ be the function that gives the price $DRC(x)$ of a bottle of Domaine de la Romanée Conti in the store $x$\footnote{Domaine de la Romanée Conti is maybe the best (as far as comparisons can be made between \emph{ œuvres d'art}) and certainly most prestigious wine in the world and it is a dream of the author to taste wines from this estate in his life. Unfortunately, in this randomly chosen example, the function $DRC$ is close to being  $+\infty$ everywhere but at Vosne Romanée and accessible at $x=$ Vosne Romanée (but with no available stock).}. Let now $c : X\times X \to \R$ be the function where $c(x,y)$ is the price for a 24 hour delivery of a wine bottle from $x$ to $y$. Then if Maxime lives at $x$, the  least price he will have to pay to obtain a bottle of Romanée Conti tomorrow is 
$$T^- DRC (x) = \inf_{y\in X} DRC(y) +c(y,x).$$
In this simple and simplistic  model, by iterating the previous operator $T^-$, one obtains the best price to have a bottle if one is willing to wait a long time. Finally, studying long optimal trajectories that a bottle will follow before reaching the patient Maxime will provide important objects of weak KAM theory and Aubry--Mather theory.

\section*{Organisation of the text}
\addcontentsline{toc}{section}{\protect\numberline{}Organisation of the text}

The first 3 Chapters are dedicated to presenting discrete weak KAM theory in a general setting. Each of them ends with a section where related classical weak KAM results are stated to give the reader an overview of the classical theory without proofs. We  believe that this back and forth between the discrete and the classical weak KAM theories is original. For well chosen costs, it highlights in a new way strong similarities and also key differences between the two versions of weak KAM theory. Discrete weak KAM theory as presented in these Chapters was developed by the author in \cite{Z,ZJMD} (following earlier works as \cite{BeBu}) in a non compact setting. Here we present the compact setting that is less technical, therefore easier for a first encounter with weak KAM theory. Yet all  key features and ideas of the classical theory persist and are better highlighted. Further and more precise results written hereafter were  obtained with coauthors, references being provided in the text. Those first Chapters should be accessible without any specialized background.

The First Chapter introduces the Lax--Oleinik semigroups (negative and positive). The discrete weak KAM Theorem is proved and the last part of the Chapter is dedicated to constructing continuous strict subsolutions, which are a fundamental tool in the theory. This allows to define the Aubry set.
In the last section, a proof of the weak KAM Theorem using the discrete one is given. Also, it is shown that for a natural cost function, weak KAM solutions and discrete weak KAM solutions coincide. The result is new to our knowledge. We also establish that for this cost, the projected Aubry set is equal to the classical one.

The Second Chapter aims at showing results of a more dynamical nature about the Aubry set. We introduce Peierls' barrier and characterize points of the projected Aubry set with it. Examples of points and chains of the Aubry set are given. Finally the problem of regularity of subsolutions (or lack thereof) is addressed. In a first part, in the general setting, we characterize the  projected Aubry set as the set where all subsolutions are continuous. Then by adding structure, we show existence of $C^{1,1}$ subsolutions, thus obtaining results similar to  Bernard's classical ones. Finally we provide Graph Theorems and by adding a twist condition (which replaces convexity) we show how to define a partial dynamics on the Aubry set.   

The Third Chapter is dedicated to Mather measures and to the crucial role they play in proving convergence of the discounted solutions. We start by giving two definitions of Mather measures and showing that they are equivalent. Then we prove convergence of solutions to the discounted equations. It is pointed out that the limit weak KAM solution for the positive Lax--Oleinik semigroup is not necessarily  the conjugate of the limit weak KAM solution for the negative Lax--Oleinik semigroup. We then study a degenerate discounted problem that is new in this setting and prove convergence of the solutions.

The Fourth Chapter provides examples in dimension 1. Those examples come from autonomous Hamiltonians and have the great merit that explicit computations can be made. We also show that the weak KAM solutions selected by the discounted approximation procedure may differ in the discrete setting and in the classical one. Such examples are folklore to experts. However we do not know of any reference where a detailed analysis is made under the scope of weak KAM theory. We believe that they are useful to have in mind in order to develop an intuition and test conejctures.

The Fifth and last Chapter puts back  in the context of discrete weak KAM theory the foundational problem of conservative twist maps of the annulus. We revisit results of Mather, Aubry, Bangert... from the perspective  of weak KAM solutions. In this unified  setting we gather proofs of well known results that are spread in various references. We hope this will make them more accessible. We also give a precise description of what those weak KAM solutions look like in this setting (results of Arnaud and the author). Finally we conclude with statements of results or Arnaud and the author opening to the world of weakly integrable twist maps.

\newpage 

\chapter{The discrete setting, weak KAM solutions and subsolutions}
The main idea of the discrete setting we  focus on is to directly discretize the Lax--Oleinik semigroup, allowing the time to take integer values only (or integer multiples of a given fixed value). This simple idea then allows to weaken the assumptions on the phase space. Most results and proofs of this Chapter are extracted as a particular compact case of \cite{Z}.
\section{Discrete setting and the Lax--Oleinik semigroup} 
We  focus our attention on the case where $(X,d )$ is a compact  metric space.
The analogue of the Lagrangian function is a cost function $c : X\times X \to \R$ which is assumed to be continuous. One can then define the Lax--Oleinik semigroup acting on the set of bounded functions $\BB(X,\R)$:
\begin{df}\rm
The Lax--Oleinik semigroup $T^- : \BB(X,\R) \to \BB(X,\R)$ associates to $f : X\to \R$ the function
$$T^- f : x\in X \mapsto T^-f(x) = \inf_{y\in X} f(y) + c(y,x).$$
\end{df}
\begin{rem}\rm
The Lax--Oleinik semigroup is a convolution with kernel $c$ in the $(\min,+)$ semiring. The $\inf$ plays the role of integration and the $+$ plays the role of multiplication in a classical convolution.

In particular, if the set $X$ is finite, the Lax--Oleinik semigroup reduces to a product (in the $(\min,+)$ semiring) with the matrix whose entries are given by $c$. 

If $f$ is a continuous function, then the infimum in the definition of $T^- f(x)$ is a minimum by compactness of $X$.
\end{rem}

We define the sup--norm $\|\cdot \|_\infty$ on the space $ \BB(X,\R)$ by setting $\|f\|_\infty = \sup\limits_{x\in X} |f(x)|$ for $f\in \BB(X,\R)$. The normed vector space $\big( \BB(X,\R),\|\cdot \|_\infty\big)$ is a Banach space.

We start by listing first basic properties of $T^-$:

\begin{pr}\label{propT}
\hspace{2em}
\begin{enumerate}[label=(\roman*)]
\item The image of $T^-$ consists of equicontinuous functions with uniformly bounded amplitude. \footnote{By amplitude of a function $f : X\to \R$ we mean $\sup f - \inf f$.}
\item The Lax--Oleinik semigroup commutes with addition of constant functions: if  $k\in \R$ and $f$ is a function, $T^- ( f+k) = (T^- f) +k$.
\item The Lax--Oleinik semigroup is order preserving: if $f\leqslant g$ then $T^-f \leqslant T^- g$.
\item The Lax--Oleinik semigroup is $1$--Lipschitz for the sup--norm $\|\cdot \|_\infty$.
\end{enumerate}

\end{pr}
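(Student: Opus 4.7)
My plan is to prove the four items essentially in reverse order of difficulty, since (ii), (iii), (iv) are immediate and (i) is the only one requiring some care via uniform continuity of $c$.

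For (ii), I would just observe that adding a constant $k$ to $f$ inside the infimum pulls out: $\inf_y\bigl(f(y)+k+c(y,x)\bigr) = k+\inf_y\bigl(f(y)+c(y,x)\bigr)$. For (iii), if $f\leqslant g$ pointwise, then for each fixed $y$ and $x$, $f(y)+c(y,x)\leqslant g(y)+c(y,x)$, and taking the infimum on the left-hand side in $y$ preserves the bound, giving $T^-f(x)\leqslant T^-g(x)$. For (iv), I would note that $f\leqslant g + \|f-g\|_\infty$, so combining (ii) and (iii) yields $T^-f\leqslant T^-g + \|f-g\|_\infty$; swapping the roles of $f$ and $g$ completes the proof that $\|T^-f-T^-g\|_\infty\leqslant \|f-g\|_\infty$.

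The substantive step is (i). Because $X\times X$ is compact, $c$ is uniformly continuous. Fix $\varepsilon>0$ and choose $\delta>0$ such that $d(x,x')<\delta$ implies $|c(y,x)-c(y,x')|<\varepsilon$ for every $y\in X$. Then for any $f\in\BB(X,\R)$ and any $y\in X$,
\[
f(y)+c(y,x)\ \leqslant\ f(y)+c(y,x')+\varepsilon,
\]
and taking the infimum in $y$ on both sides gives $T^-f(x)\leqslant T^-f(x')+\varepsilon$; by symmetry $|T^-f(x)-T^-f(x')|\leqslant \varepsilon$. The modulus of continuity so obtained depends only on $c$, not on $f$, which is exactly equicontinuity of the family $\{T^-f : f\in\BB(X,\R)\}$. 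The same inequality applied to arbitrary $x,x'\in X$ shows
\[
\sup_{x\in X} T^-f(x) - \inf_{x\in X} T^-f(x)\ \leqslant\ \sup_{x,x',y}\bigl|c(y,x)-c(y,x')\bigr|,
\]
and the right-hand side is finite (bounded by twice the oscillation of $c$) independently of $f$, giving uniformly bounded amplitude. Along the way one also sees that $T^-f$ is finite-valued: bounded below by $\inf f + \inf c$ and above by $f(y_0)+\sup_x c(y_0,x)$ for any fixed $y_0$, so $T^-f\in\BB(X,\R)$ and the semigroup indeed maps $\BB(X,\R)$ to itself.

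The only mild subtlety to watch for is guaranteeing that infima can be manipulated as if they were minima: this is harmless because every inequality above holds term-by-term before taking the infimum, and the infimum operation preserves inequalities. No compactness argument on minimisers is needed here — uniform continuity of $c$ alone suffices — which is why these properties hold for all bounded $f$, not merely continuous ones.
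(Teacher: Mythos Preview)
Your proof is correct and follows essentially the same approach as the paper: (ii) and (iii) are immediate from the definition, (iv) is derived from (ii) and (iii) via the sandwich $f-\|f-g\|_\infty\leqslant g\leqslant f+\|f-g\|_\infty$, and (i) uses the uniform continuity of $c$ on the compact space $X\times X$ to obtain a common modulus of continuity (and hence uniformly bounded amplitude) for all $T^-f$. The only cosmetic difference is that the paper picks an $\varepsilon$-almost-minimizer $y_\varepsilon$ for $T^-f(x')$ and then lets $\varepsilon\to 0$, whereas you take the infimum directly on a term-by-term inequality; both are equally valid.
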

\begin{proof}
Let us consider a modulus of uniform continuity $\om$ for $c$ ($X$ being compact). This is a non--decreasing function $\om : [0,+\infty) \to [0,+\infty)$ that is continuous at $0$, with $\om(0)=0$, such that 
$$\forall (x,y,x',y')\in X^4,\quad |c(x,y) - c(x',y')| \leqslant \om\big(d(x,x')+d(y,y')\big).$$
Without loss of generality, by triangular inequality we may assume that $\om$ is bounded and that $\|\om\|_\infty \leqslant 2\|c\|_\infty$.
Let $f : X\to \R$ be a bounded function and $\e>0$. Let $(x,x')\in X^2$. By definition of the Lax--Oleinik semigroup,  there exists a $y_\e$ such that $T^-f(x')\geqslant f(y_\e) + c(y_\e,x') -\e$. It follows that 
$$T^-f(x) -T^-f(x') \leqslant f(y_\e)+c(y_\e,x)  -f(y_\e)-c(y_\e,x')+\e \leqslant \om\big(d(x,x')\big) +\e.$$
Letting $\e\to 0$ yields that $  T^-f(x) -T^-f(x') \leqslant \om\big(d(x,x')\big)$. As $x$ and $x'$ play symmetrical roles we find that $|  T^-f(x) -T^-f(x')| \leqslant \om\big(d(x,x')\big)$. This is  $(i)$ as $x, x'$ are arbitrary.

Points $(ii)$ and $(iii)$ are obvious from the definition of $T^-$ and automatically imply $(iv)$. Indeed, if $f$ and $g$ are bounded functions, as $f-\|f-g\|_\infty \leqslant g\leqslant f+\|f-g\|_\infty$ we obtain
$$T^- f -\|f-g\|_\infty \leqslant T^- g\leqslant T^-f+\|f-g\|_\infty$$
which means that $\|T^-f - T^-g\|_\infty \leqslant \|f-g \|_\infty$. 
\end{proof}
\begin{rem}\label{remT}\rm 
The Lax--Oleinik semigroup can actually be defined on arbitrary functions $f : X\to \R$ (not necessarily bounded) with the only modification that $T^- f$ can take the value $-\infty$. However, it can be easily checked that, as $c$ is bounded, this may only happen if $f$ is unbounded from below. In this case, $T^-f$ is identically $-\infty$. Otherwise, the conclusions of the previous proposition (\ref{propT}) still hold, with the same proofs.
\end{rem}

Now that  those properties have been established, let us move on to the weak KAM theorem.

\section{The weak KAM Theorem and critical subsolutions}
In this section, we will introduce and  construct some of the most important objects of weak KAM theory. The first ones are  of course weak KAM solutions and are given by the following theorem:
\begin{Th}[weak KAM]\label{weak KAM}
There exists a unique constant $\cc\in \R$ for which the equation $u = T^- u + \cc$ admits solutions $u:X\to \R$.
\end{Th}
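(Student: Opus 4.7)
The plan is to separate the problem into uniqueness of $\cc$ (which follows from a short maximum-principle argument at a well-chosen point) and existence of a solution (which I would obtain via a fixed-point theorem applied to a normalized version of $T^-$).

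\emph{Uniqueness.} Suppose $u_i = T^- u_i + c_i$ for $i=1,2$, with WLOG $c_1 \leq c_2$. Both $u_i$ lie in the image of $T^-$, hence are continuous on the compact space $X$ by Proposition~\ref{propT}(i), so $u_1 - u_2$ attains its maximum at some $x_0 \in X$. By continuity of $u_2$ and compactness of $X$, the infimum defining $T^- u_2(x_0)$ is achieved at some $y_0 \in X$, giving $u_2(x_0) = u_2(y_0) + c(y_0, x_0) + c_2$. Combining this with the trivial bound $u_1(x_0) \leq u_1(y_0) + c(y_0, x_0) + c_1$ and subtracting yields
\[ (u_1 - u_2)(x_0) \leq (u_1 - u_2)(y_0) + (c_1 - c_2). \]
Maximality of $x_0$ then forces $c_1 \geq c_2$, whence $c_1 = c_2$.

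\emph{Existence.} Fix $x_0 \in X$ and let $\om$ be a bounded modulus of uniform continuity for $c$. Define the normalized operator $\Phi : \BB(X, \R) \to \BB(X, \R)$ by $\Phi(f) = T^- f - T^- f(x_0)$. By Proposition~\ref{propT}(i), the image of $\Phi$ is contained in
\[ K = \{ g \in C(X, \R) : g(x_0) = 0,\ |g(x) - g(y)| \leq \om(d(x,y)) \ \text{for all} \ x, y \in X \}. \]
The set $K$ is nonempty, convex, and by the Arzel\`a--Ascoli theorem compact in $\big(C(X, \R), \| \cdot \|_\infty\big)$. Proposition~\ref{propT}(iv) shows $\Phi$ is continuous, and by construction $\Phi(K) \subset K$. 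Schauder's fixed-point theorem then yields $u \in K$ with $\Phi(u) = u$, i.e. $T^- u = u + T^- u(x_0)$, which is the desired equation with $\cc := -T^- u(x_0)$.

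\emph{Expected obstacles.} The main subtlety is the existence step, specifically the reliance on a fixed-point theorem beyond plain Arzel\`a--Ascoli. One could try a more constructive route, iterating $T^-$ on the zero function and showing (via sub- and super-additivity of $\max (T^-)^n 0$ and $\min(T^-)^n 0$, combined with the uniform amplitude bound from Proposition~\ref{propT}(i)) that $(T^-)^n 0 / n$ converges uniformly to a constant $\alpha$ and that $h_n := (T^-)^n 0 - n \alpha$ is uniformly bounded and equicontinuous. Arzel\`a--Ascoli then extracts a subsequential limit $u$ of $h_{n_k}$, but applying $T^-$ only gives $T^- u = u' + \alpha$ for a possibly distinct subsequential limit $u'$ of $h_{n_k + 1}$, so identifying $u = u'$ would require an extra dynamical argument. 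The normalization-plus-Schauder route above cleanly sidesteps this matching issue.
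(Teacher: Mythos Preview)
Your argument is correct on both counts.

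For existence, your route and the paper's first proof are essentially the same idea: mod out the constants and apply a Schauder-type fixed-point theorem to the resulting compact situation. The paper works in the quotient $\mathcal E = \BB(X,\R)/\R\1$ and applies Schauder--Tychonoff to the induced operator $\TT$; you instead pick a concrete section of that quotient (functions vanishing at a chosen point $x_0$) and apply Schauder directly in $C(X,\R)$. Your version avoids introducing the quotient norm and is slightly more hands-on, but the content is identical. The paper also offers a second, constructive existence proof via the discounted operators $T^-_\lambda$ (contractions for $\lambda<1$), extracting a limit as $\lambda\to 1$; this avoids Schauder entirely and is worth knowing since the discounted approximation recurs heavily later in the theory.

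For uniqueness, your approach is genuinely different from the paper's. The paper proves the stronger asymptotic statement $T^{-n}v/n \to -\cc$ for every bounded $v$ (Proposition~\ref{pr asympt}), from which uniqueness of $\cc$ is immediate. Your maximum-principle argument is more direct and entirely self-contained for the uniqueness claim alone, but it does not yield the asymptotic characterization, which the paper uses repeatedly afterwards (e.g.\ in Proposition~\ref{criticalSS} and Remark~\ref{bounded}). So your route is cleaner if uniqueness is the sole goal; the paper's route buys an extra tool.
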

\begin{rem}\rm
Such functions are then called {\it weak KAM solutions}. The constant $\cc$ is called the {\it critical value}. 

It is immediate from Proposition \ref{propT}
 and Remark \ref{remT} that weak KAM solutions are automatically continuous.
\end{rem}
We will give two proofs of the existence part of the weak KAM Theorem. The first one is similar to the original proof of Fathi (\cite{Fa4,Fa}). The second one  is reminiscent of the work of Lions, Papanicolaou and Varadhan (\cite{LPV}) on homogenization, that actually appeared prior to the work of Fathi.

The uniqueness of the constant $\cc$ will be established in a second step.
\begin{proof}[First Proof]
Let us introduce $\mathcal E = \BB(X,\R) \slash \R\1$ the quotient of bounded functions by constant functions. The set $\mathcal E$ is clearly a vector space. If $f$ is a bounded function, we will denote by $\bar f$ its projection in $\mathcal E$. There is an induced norm on $\mathcal E$: if $f : X\to \R$ is a continuous function, denoting $\bar f \in \mathcal E$ its class in the quotient, we set $\|\bar f \|_0 = \min\{ \|f+k \|_\infty,\ \ \ k\in \R\}$.
As $T^-$ commutes with addition of constant functions, it induces an operator $\TT$ on $\mathcal E$ defined by $\TT \bar f = \overline{T^- f}$ which is independent of the the representative $f$ in the equivalence class $\bar f$. This new operator is also continuous. Indeed, if $f,g$ are two bounded functions, then for some suitably chosen constant $k\in \R$,
$$\|\TT \bar f - \TT \bar g\|_0 \leqslant  \|T^-f- T^- g + k\|_\infty \leqslant \|f-g+k\|_\infty = \|\bar f - \bar g \|_0.$$
Moreover, it follows from the fact that $T^-$ has values in equicontinuous functions with  uniformly bounded amplitude (see proposition \ref{propT}) and from the Arzel\` a--Ascoli theorem (\cite[Theorem 6.4 page 267]{dug}) that $\TT(\mathcal E)$ is relatively compact. We can therefore apply the Schauder--Tychonoff Theorem (\cite[Theorems 2.2 and 3.2 pages 414-415]{dug}) which asserts that $\TT$ has a fixed point. This exactly means that there exists a bounded function $u : X\to \R$ and a constant $C$ such that $u = T^-u+C$.
\end{proof}
The use of the Schauder--Tychonoff Theorem at the end of this first proof, though natural, is not really necessary. Indeed, fixed points of $1$--Lipschitz maps can be obtained by much simpler arguments, usually by perturbing the map, making it contracting and then passing to the limit. This is the spirit of the second proof in which we use an approximation called {\it discounted procedure}:

\begin{proof}[Second Proof]
Let $\lambda \in (0,1)$ and let us introduce the discounted operator $T^-_\lambda$ which acts on bounded functions as follows:
$$\forall f\in \BB(X,\R),\ \ \forall x\in X,\quad T^-_\lambda f (x) = \inf_{y\in X} \lambda f(y) + c(y,x) = T^- (\lambda f) (x).$$ 
Of course, the last formulation, together  with the $1$--Lipschitz nature of $T^-$ imply that $T^-_\lambda$ is now $\lambda$--Lipschitz. Hence, by the Banach fixed point theorem (\cite[Theorem 7.2 page 305]{dug}), as $\BB(X,\R)$ is a Banach space,  $T^-_\lambda$ admits a unique fixed point $u_\lambda$ which then verifies $T^-_\lambda u_\lambda = u_\lambda$. By Proposition \ref{propT}, the $(u_\lambda)_{\lambda \in (0,1)}$ are equicontinuous with uniformly bounded amplitude, as they all belong to the image of $T^-$. Moreover
$$\forall x\in X, \quad \min_{(y,z)\in X\times X} c(y,z) \leqslant (1-\lambda) u_\lambda (x) \leqslant \max_{z\in X} c(z,z).$$
To prove the left inequality, fix $\lambda$ and take $x_1$ such that $u_\lambda(x_1)$ is minimal. One then has for some $y\in X$,
$$ u_\lambda (x_1)  = \lambda u_\lambda (y) +c(y,x_1) \geqslant \lambda u_\lambda (x_1) + c(y,x_1).$$
The right inequality follows from the fact that by definition of $T^-_\lambda$ we obviously have $u_\lambda (x) = T^-_\lambda u_\lambda (x) \leqslant \lambda u_\lambda (x) + c(x,x)$.

Let us fix a point $x_0\in X$ and define $\hat u_\lambda = u_\lambda - u_\lambda(x_0)$ for all $\lambda \in (0,1)$. The previous remarks show that we can find a sequence $\lambda_n \to 1$ such that $(1-\lambda_n)u_{\lambda_n}(x_0)$ converges to a constant $C$ and $(\hat u_{\lambda_n})_{n\in \N}$ uniformly converges to a function $u$, as $n\to +\infty$. Note that in fact, as the functions $u_\lambda$ have bounded amplitude, one has $(1-\lambda_n)u_{\lambda_n}\to C$ uniformly. It now follows that
\begin{multline*}
\forall x \in X, \quad \hat u_{\lambda_n}(x) =   u_{\lambda_n}(x) - u_{\lambda_n}(x_0) \\
= T^-(\lambda_n u_{\lambda_n})(x)- u_{\lambda_n}(x_0) \\
= T^- (\lambda_n \hat u_{\lambda_n})(x)+(\lambda_n-1) u_{\lambda_n}(x_0) .
\end{multline*}
Letting $n\to +\infty$, by continuity of $T^-$, we conclude that $u = T^-u-C$.
\end{proof}
It remains to prove the uniqueness of the critical constant $\cc$. This is a direct consequence of the following proposition:
\begin{pr}\label{pr asympt}
Assume that $u = T^-u + \cc$ for some function $u$ and constant $\cc$, then
$$\forall v\in \BB(X,\R),\quad \frac{T^{-n} v }{n} \underset{n\to +\infty}{\longrightarrow}  -\cc,$$
where $\Tn=T^- \circ \cdots \circ T^-$ denotes the $n$-th iterate of the Lax--Oleinik semigroup, and the convergence is uniform.
\end{pr}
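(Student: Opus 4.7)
The key observation is that the hypothesis $u = T^- u + \cc$ iterates: by induction we get $u = T^{-n} u + n\cc$, i.e., $T^{-n} u = u - n\cc$. Since $u$ is continuous on a compact space (or at least bounded by Remark~\ref{remT}), dividing by $n$ gives
\[
\frac{T^{-n} u}{n} = \frac{u}{n} - \cc \xrightarrow[n\to+\infty]{} -\cc
\]
uniformly on $X$. So the statement is already clear for the special function $v=u$.

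To pass from $u$ to an arbitrary bounded $v$, I would invoke item $(iv)$ of Proposition~\ref{propT}: $T^-$ is $1$-Lipschitz for the sup-norm, hence so is every iterate $T^{-n}$. Consequently
\[
\|T^{-n} v - T^{-n} u\|_\infty \leqslant \|v-u\|_\infty,
\]
which is a constant independent of $n$. Dividing by $n$ shows $\tfrac{1}{n}(T^{-n}v - T^{-n}u) \to 0$ uniformly, and combining this with the first step yields $\frac{T^{-n} v}{n} \to -\cc$ uniformly.

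I expect no real obstacle; the only subtlety is that $v$ is assumed merely bounded (not continuous). This is handled by Remark~\ref{remT}, which ensures that $T^{-n}v$ is well-defined and bounded for all $n \geqslant 1$ and that the $1$-Lipschitz property still applies. From this proposition, uniqueness of $\cc$ in Theorem~\ref{weak KAM} is immediate: if $u_1 = T^- u_1 + c_1$ and $u_2 = T^- u_2 + c_2$ both exist, then applying the proposition with $v = u_1$ (say) forces $-c_1 = -c_2$.
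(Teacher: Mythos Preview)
Your proof is correct and follows essentially the same approach as the paper: iterate the fixed-point relation to get $T^{-n}u = u - n\cc$, then use the $1$-Lipschitz property of $T^{-n}$ to transfer the conclusion from $u$ to an arbitrary bounded $v$. The paper condenses the two steps into a single chain of inequalities, but the content is identical.
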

\begin{proof}
This result is a direct consequence of the non expansive character of the Lax--Oleinik semigroup. Indeed, by induction, one obtains that for all integers $n>0$, $u=\Tn u +n\cc$. Hence, if $v\in \BB(X,\R)$, then 
$$\|\Tn u - \Tn v \|_\infty =\| u-n\cc - \Tn v \|_\infty \leqslant \| u -  v \|_\infty.$$
 The result follows immediately as $\| n\cc + \Tn v \|_\infty \leqslant \| u -  v \|_\infty+ \| u\|_\infty$.   
\end{proof}

\begin{rem}\label{bounded}\rm
\hspace{2em}
\begin{enumerate}[label=(\roman*)]
\item In the previous Proposition, one derives that for all $v\in \BB(X,\R)$ the sequence  $(\Tn v +n\cc)_{n \in \N}$ is uniformly bounded.
\item It follows from the uniqueness of $\cc$ that in the second proof of the weak KAM Theorem (\ref{weak KAM}), the whole family $(1-\lambda)u_{\lambda}$ uniformly converges to $\cc$ as $\lambda \to 1$.
\item If $u : X\to \R$ is a weak KAM solution, it satisfies the following fundamental  inequalities:
$$\forall (x,y)\in X\times X,\quad u(y)-u(x) \leqslant c(x,y)+\cc.$$

\end{enumerate}

\end{rem}
This last Remark motivates the following definition:
\begin{df}\label{dfss}\rm
Given $C\in \R$, a function $v : X\to \R$ will be termed {\it $C$--subsolution} if
$$\forall (x,y)\in X\times X,\quad v(y)-v(x) \leqslant c(x,y)+C,$$
or equivalently, if
 $v\leqslant T^-v+C$.
 
 We will call $\cc$--subsolutions critical subsolutions, or just subsolutions when no confusion is possible. We will denote by $\S_C$ the set of $C$--subsolutions, and by $\S=\S_\cc$ the set of critical subsolutions.
\end{df}
Here are some first properties of subsolutions:
\begin{pr}\label{propS}
Given $C\in\R$, the following hold:
\begin{enumerate}[label=(\roman*)]
\item Any $C$--subsolution is bounded.
\item The set $\S_C$ of $C$--subsolutions is closed (with respect to pointwise convergence), convex and stable by the Lax--Oleinik semigroup: $T^-(\S_C)\subset \S_C$.
\end{enumerate}
\end{pr}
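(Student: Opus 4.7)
The plan is to treat the four assertions separately, each following almost immediately from the formulation of the defining inequality, combined with compactness of $X\times X$ and the basic properties of $T^-$ established in Proposition~\ref{propT}.

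For (i), the idea is that $v$ being a $C$--subsolution forces bounded oscillation. Fix any reference point $x_0\in X$. The defining inequality applied once with $(x,y)=(x_0,y)$ and once with $(x,y)=(y,x_0)$ gives
$$v(x_0)-c(y,x_0)-C\ \leqslant\ v(y)\ \leqslant\ v(x_0)+c(x_0,y)+C.$$
Since $X\times X$ is compact and $c$ is continuous, $c$ is bounded on $X\times X$, and the above sandwich bounds $v$ uniformly in $y$.

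For (ii), there are three independent checks. Closedness for pointwise convergence follows by fixing $(x,y)\in X\times X$ and passing to the limit in the single numerical inequality $v_n(y)-v_n(x)\leqslant c(x,y)+C$. Convexity is a one--line linearity computation: if $v_1,v_2\in\S_C$ and $t\in[0,1]$, then $tv_1+(1-t)v_2$ still satisfies the inequality because the right--hand side $c(x,y)+C$ is a convex combination of itself. The stability under $T^-$ is the only part where one uses something beyond the definition. The clean way I would write it is to reformulate: $v\in\S_C$ means $v\leqslant T^-v+C$. Apply $T^-$ to both sides, invoking the monotonicity $(iii)$ and the commutation with constants $(ii)$ from Proposition~\ref{propT}, to obtain $T^-v\leqslant T^-(T^-v+C)=T^-(T^-v)+C$, which means exactly that $T^-v\in\S_C$.

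None of these steps presents a real obstacle; the only moment that might deserve emphasis is the stability claim, where writing the subsolution condition in the equivalent form $v\leqslant T^-v+C$ is what makes it a one--line consequence of the abstract properties already collected, rather than a direct inf--argument involving minimizers.
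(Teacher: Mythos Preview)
Your proof is correct and follows essentially the same approach as the paper's: the sandwich bound for (i) using a fixed reference point, and for (ii) the immediate verification of closedness and convexity together with the stability argument via the reformulation $v\leqslant T^-v+C$ and the monotonicity and constant--commutation properties of $T^-$.
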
 
\begin{proof}
The first point comes from the fact that if $u\in \S_C$ and $x_0\in X$, for all $y\in X$,
$$u(x_0)-c(y,x_0)-C\leqslant u(y) \leqslant u(x_0)+c(x_0,y)+C,$$
hence $\|u\|_\infty \leqslant |u(x_0)|+\|c+C\|_\infty $.

The fact that $\S_C$ is closed and convex is immediate from the definition. Only stability deserves an explanation.
It follows from the fact that $u\in\S_C$ if and only if $u\leqslant T^- u +C$ as can be checked by applying the definitions. Consequently, by the properties of the Lax--Oleinik semigroup (\ref{propT}), if $u\leqslant T^-u +C$ then 
$$T^-u \leqslant T^-(T^-u+C) = T^-(T^-u) +C,$$
 which means $T^-u\in \S_C$.
\end{proof}
We conclude this section by one last characterization of the critical constant $\cc$:
\begin{pr}\label{criticalSS}
The following holds: $\cc = \min\{ C \in \R, \ \ \S_C\neq \varnothing\}$.
\end{pr}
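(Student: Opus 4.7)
The plan is to prove the equality by establishing two inclusions: that $\cc$ itself belongs to $\{C \in \R : \S_C \neq \varnothing\}$, and that no strictly smaller constant does. Because both parts are short, most of the work is already packaged in the previously established results.

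For the first direction, I would invoke the weak KAM Theorem \ref{weak KAM} to produce a function $u : X \to \R$ satisfying $u = T^- u + \cc$. In particular this gives $u \leqslant T^- u + \cc$, which is precisely the characterization of critical subsolutions recorded inside Definition \ref{dfss}. Hence $u \in \S_\cc$, so $\S_\cc$ is nonempty and $\cc$ does lie in the set on the right, which will also establish that the infimum is attained.

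For the second direction, I would start from an arbitrary $v \in \S_C$. By Proposition \ref{propS}(i) such $v$ is automatically bounded, so I can legitimately iterate the Lax--Oleinik semigroup on it. Using that $T^-$ is order preserving and commutes with the addition of constants (Proposition \ref{propT} (ii)--(iii)), a straightforward induction from $v \leqslant T^- v + C$ yields
\[
v \leqslant T^{-n} v + n C \qquad \text{for every } n \geqslant 1.
\]
Rearranging gives $\frac{v - T^{-n} v}{n} \leqslant C$. Since $v$ is bounded, $v/n \to 0$ uniformly, while Proposition \ref{pr asympt} gives $T^{-n} v / n \to -\cc$ uniformly. Passing to the limit uniformly in $x \in X$ yields $\cc \leqslant C$.

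Combining the two directions gives $\cc \leqslant C$ for every $C$ with $\S_C \neq \varnothing$ and $\cc$ itself is attained, so $\cc = \min\{C \in \R : \S_C \neq \varnothing\}$. There is no real obstacle here; the only subtlety worth watching is ensuring boundedness of $v$ before applying Proposition \ref{pr asympt}, which is exactly why Proposition \ref{propS}(i) was recorded just before.
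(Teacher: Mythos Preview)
Your proof is correct and follows essentially the same approach as the paper: both establish $\S_\cc \neq \varnothing$ via the weak KAM Theorem and then iterate $v \leqslant T^- v + C$ to get $v \leqslant T^{-n}v + nC$, divide by $n$, and pass to the limit using Proposition~\ref{pr asympt}. Your version is slightly more careful in explicitly invoking Proposition~\ref{propS}(i) to ensure $v$ is bounded before applying Proposition~\ref{pr asympt}, a point the paper leaves implicit.
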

\begin{proof}
As follows from the weak KAM Theorem, $\S_\cc\neq \varnothing$, so we just have to prove that if for some constant $C$, $\S_C\neq \varnothing$, then $C\geqslant \cc$.
Let then $u\in \S_C$ for some $C\in \R$. As in the last proof, we get by induction that for all positive integer $n$, $u\leqslant T^{-n}u + nC$. Therefore, dividing by $n$, we infer that $u/n - C \leqslant T^{-n} u /n$. But the right hand side converges to $-\cc$ by Proposition \ref{pr asympt}. Hence passing to the limit we conclude that $-C\leqslant -\cc$.
\end{proof}

We illustrate once more our model with a random liquid example as in the introductory section page \pageref{economicaltwist}. Here again, $X$ is the space of wine stores in France and $c:X\times X \to \R$ provides the cost $c(x,y)$ of bringing a bottle of wine from $x$ to $y$ in a day. Let $P : X\to \R$ denote the price $P(x)$ of a bottle of Petrus\footnote{Petrus is a renowned and quite inaccessible wine from the Bordeaux region in France. More precisely it is the leading estate of the appellation Pomerol. The royalties earned by a lifetimes'work of the author would probably allow him to buy a quarter of a bottle of Petrus.}  at a location $x$. Let $n\in \N$, then if the author is at $x\in X$, the best price he will pay to have a bottle of Petrus in $n$ days is $T^{-n} P(x)$. Proposition \ref{pr asympt} then states that whatever the initial price $P$, for $n\to +\infty$ the amount $T^{-n} P(x)$ grows like $-nc[0]$. In this particular example, given the order of magnitude of $P$, the time $n$ would have to be very very large to compensate the initial price.

\section{The positive Lax--Oleinik semigroup}
In this section, we introduce the positive Lax--Oleinik semigroup and state its main properties. As they are analogous to the properties of the (negative) Lax--Oleinik semigroup,  the proofs are omitted and left to the reader.

\begin{df}\rm\label{dfT+}
The positive Lax--Oleinik semigroup $T^+ : \BB(X,\R) \to \BB(X,\R)$ maps to $f : X\to \R$ the function
$$T^+ f : x\in X \mapsto T^+f(x) = \sup_{y\in X} f(y) - c(x,y).$$
\end{df}
\begin{rem}\rm
From the definition, one checks that $T^+ f = -\big(T^-_{\hat c} (-f) \big)$ where $T^-_{\hat c}$ is the cost defined by $\hat c (x,y) =c(y,x)$. Hence the fact that $T^-$ and $T^+$ share very similar properties is not surprising.
\end{rem}

\begin{pr}\label{propT+}
\hspace{2em}
\begin{enumerate}[label=(\roman*)]
\item The image of $T^+$ consists of equicontinuous functions with uniformly bounded amplitude.
\item The positive Lax--Oleinik semigroup, $T^+$, commutes with addition of constant functions.
\item The positive Lax--Oleinik semigroup is order preserving: 

if $f\leqslant g$ then $T^+f \leqslant T^+ g$.
\item The positive Lax--Oleinik semigroup is $1$--Lipschitz for the sup--norm $\|\cdot \|_\infty$.
\end{enumerate}

\end{pr}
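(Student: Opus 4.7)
The plan is to exploit the duality identity stated in the remark preceding the proposition, namely
$$T^+ f = -\,T^-_{\hat c}(-f),$$
where $\hat c(x,y) = c(y,x)$. Since $\hat c$ is continuous on the compact metric space $X\times X$, it satisfies the same hypotheses as $c$, and Proposition \ref{propT} applies verbatim to the negative Lax--Oleinik semigroup $T^-_{\hat c}$ built from $\hat c$. Each of the four claims for $T^+$ will then be obtained by transporting the corresponding claim for $T^-_{\hat c}$ through the involution $g \mapsto -g$, which is an isometry of $\bigl(\BB(X,\R),\|\cdot\|_\infty\bigr)$ and reverses the order.

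The first step is to verify the duality formula: for every bounded $f$ and every $x\in X$,
$$T^-_{\hat c}(-f)(x) = \inf_{y\in X}\bigl(-f(y) + \hat c(y,x)\bigr) = -\sup_{y\in X}\bigl(f(y) - c(x,y)\bigr) = -T^+f(x).$$
From this identity, each of the four claims follows without effort. For (i), Proposition \ref{propT}(i) ensures that $T^-_{\hat c}(-f)$ is equicontinuous with amplitude controlled by a modulus of uniform continuity of $\hat c$ only, and negating preserves both properties. For (ii), if $k\in\R$, Proposition \ref{propT}(ii) yields
$$T^+(f+k) = -T^-_{\hat c}(-f-k) = -\bigl(T^-_{\hat c}(-f)-k\bigr) = T^+f + k.$$
For (iii), the inequality $f\leqslant g$ becomes $-f\geqslant -g$; Proposition \ref{propT}(iii) then gives $T^-_{\hat c}(-f)\geqslant T^-_{\hat c}(-g)$, and negating restores $T^+f\leqslant T^+g$. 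For (iv), the $1$--Lipschitz property of $T^-_{\hat c}$ combined with $\|-h\|_\infty = \|h\|_\infty$ gives
$$\|T^+f - T^+g\|_\infty = \|T^-_{\hat c}(-f) - T^-_{\hat c}(-g)\|_\infty \leqslant \|(-f)-(-g)\|_\infty = \|f-g\|_\infty.$$

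There is no substantive obstacle here: the proposition is a formal consequence of Proposition \ref{propT} applied to the swapped cost $\hat c$. Should one prefer to avoid the duality entirely, the alternative is to copy the proof of Proposition \ref{propT} line by line, replacing every infimum by a supremum and choosing $\varepsilon$--almost maximizers instead of $\varepsilon$--almost minimizers; the equicontinuity argument via the modulus $\omega$, the commutation with constants, the monotonicity, and the deduction of (iv) from (ii) and (iii) all go through unchanged.
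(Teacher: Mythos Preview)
Your proof is correct and aligns precisely with the paper's intended approach: the paper omits the proof entirely, but the preceding remark explicitly records the duality $T^+ f = -T^-_{\hat c}(-f)$ as the reason why $T^+$ inherits the properties of $T^-$. Your argument is exactly the one the reader is invited to supply.
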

The positive semigroup also fulfills a weak KAM Theorem:
\begin{Th}[positive weak KAM]\label{weakKAM+}
The critical constant $\cc$ is the unique real value $c\in \R$ for which the equation $u = T^+ u - c$ admits solutions $u:X\to \R$. Moreover the constant $c[0]$ has the following caracterization: 
$$\forall v\in \BB(X,\R),\quad \frac{T^{+n} v }{n} \underset{n\to +\infty}{\longrightarrow} \cc,$$
where $T^{+n}=T^+ \circ \cdots \circ T^+$ denotes the $n$-th iterate of the positive Lax--Oleinik semigroup, and the convergence is uniform.

\end{Th}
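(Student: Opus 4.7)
The plan is to exploit the duality noted in the remark just before the statement: $T^+f = -\bigl(T^-_{\hat c}(-f)\bigr)$, where $\hat c(x,y) = c(y,x)$. An easy induction gives $T^{+n}f = -\bigl(T^{-n}_{\hat c}(-f)\bigr)$ for every $n\geqslant 1$. In particular, the fixed-point equation $u = T^+u - c$ is equivalent, upon setting $v = -u$, to $v = T^-_{\hat c}v + c$. Thus the whole positive theory is just the negative theory applied to the reversed cost $\hat c$, which is also continuous on the compact space $X\times X$, so Theorem \ref{weak KAM} and Proposition \ref{pr asympt} are available for it.

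Applying Theorem \ref{weak KAM} to $\hat c$ produces a unique constant $\widehat{c[0]}\in\R$ for which $v = T^-_{\hat c}v + \widehat{c[0]}$ has a solution; translating back via $u=-v$, this is exactly existence and uniqueness of a constant $c$ (namely $\widehat{c[0]}$) solving $u = T^+u - c$. For the asymptotic formula, I would apply Proposition \ref{pr asympt} to $\hat c$: for every $v\in\BB(X,\R)$, $T^{-n}_{\hat c}(-v)/n \to -\widehat{c[0]}$ uniformly, so by the identity above
\[
\frac{T^{+n}v}{n} \;=\; -\frac{T^{-n}_{\hat c}(-v)}{n} \;\underset{n\to+\infty}{\longrightarrow}\; \widehat{c[0]}
\]
uniformly on $X$.

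It remains to identify $\widehat{c[0]}$ with $\cc$. This is the only step that is not purely formal, but it follows directly from Proposition \ref{criticalSS}. Indeed, if $u$ is a $C$-subsolution for $c$, the inequality $u(y)-u(x)\leqslant c(x,y)+C$ rewrites, setting $w=-u$ and relabelling $(x',y')=(y,x)$, as $w(y')-w(x')\leqslant c(y',x')+C = \hat c(x',y')+C$, so $-u$ is a $C$-subsolution for $\hat c$. The same argument exchanging the roles of $c$ and $\hat c$ gives the converse. Hence $\S_C(c)\neq\varnothing$ iff $\S_{C}(\hat c)\neq\varnothing$, and taking the infimum of such $C$ in Proposition \ref{criticalSS} yields $\cc=\widehat{c[0]}$, which completes the proof.

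The main obstacle is really only bookkeeping: one must be careful with the sign changes and the swapping of arguments to check that the duality between $T^+$ and $T^-_{\hat c}$ survives iteration and subsolution inequalities. Everything else is imported wholesale from the previously proved negative theory.
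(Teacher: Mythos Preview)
Your proof is correct and follows precisely the route the paper intends: the remark preceding Proposition~\ref{propT+} hands you the duality $T^+f=-T^-_{\hat c}(-f)$, and the paper's own discussion after Theorem~\ref{weakKAM+} invokes Proposition~\ref{criticalSS} to identify the two critical constants via subsolutions. The only cosmetic difference is that the paper phrases the identification slightly more directly---observing that the \emph{same} function $u$ satisfies $u\leqslant T^-u+C$ iff $u\geqslant T^+u-C$ (for the original cost $c$), so the subsolution sets $\S_C$ literally coincide---whereas you pass through $\hat c$ and the map $u\mapsto -u$; but this is the same argument in different bookkeeping.
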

\begin{rem}\rm
Such functions are then called {\it positive weak KAM solutions}. 

It is immediate from Proposition \ref{propT+}
  that positive  weak KAM solutions are always continuous.
\end{rem}
In the positive weak KAM theorem, the fact that the critical constant for $T^+$ is the same as that of $T^-$, that is $\cc$, deserves some explanation. It is checked that $u : X\to \R$ is a $C$--subsolution, for some $C\in \R$ if and only if $u\geqslant T^+u - C$. Hence, as for the Lax--Oleinik semigroup, one has that $T^+(\S_C) \subset \S_C$. 

Now, the fact that both critical constants coincide follows from Proposition \ref{criticalSS} which characterizes the critical constant only making use of the notion of subsolution, and not of either the positive, nor negative Lax--Oleinik semigroup. However, its proof can be done equivalently using the negative or the positive Lax--Oleinik semigroup.

 We end this section with a curiosity on the composition of positive and negative Lax--Oleinik semigroups:
 
 \begin{pr}\label{comp}
 Let $u\in \BB(X,\R)$, then $T^+\circ T^- u \leqslant u$ and  $T^-\circ T^+ u \geqslant u$.
 \end{pr}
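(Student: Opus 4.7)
The plan is to verify each of the two inequalities directly from the defining $\inf$/$\sup$ formulas for $T^-$ and $T^+$. Both reductions rely on the same trick: inside each inner $\inf_{y}$ or $\sup_{y}$ one plugs in a carefully chosen element of $X$ (in both cases, just the outer point $x$) to get a simple pointwise bound, and then pushes that bound through the outer $\sup$ or $\inf$.

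Concretely, for the first inequality $T^+\circ T^- u \leqslant u$, I would fix $x\in X$ and first observe that for every $z\in X$,
\[
T^- u(z) \;=\; \inf_{y\in X}\bigl(u(y)+c(y,z)\bigr) \;\leqslant\; u(x)+c(x,z),
\]
by taking $y=x$ in the infimum. Plugging this into the definition of $T^+$ applied to $T^-u$ yields
\[
T^+\bigl(T^-u\bigr)(x) \;=\; \sup_{z\in X}\bigl(T^-u(z)-c(x,z)\bigr) \;\leqslant\; \sup_{z\in X}\bigl(u(x)+c(x,z)-c(x,z)\bigr) \;=\; u(x).
\]

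For the second inequality $T^-\circ T^+ u \geqslant u$ the argument is exactly dual: fix $x\in X$, and for any $z\in X$, taking $y=x$ in the supremum defining $T^+$ gives
\[
T^+ u(z) \;=\; \sup_{y\in X}\bigl(u(y)-c(z,y)\bigr) \;\geqslant\; u(x)-c(z,x).
\]
Substituting this into the definition of $T^-(T^+u)(x)$ gives
\[
T^-\bigl(T^+u\bigr)(x) \;=\; \inf_{z\in X}\bigl(T^+u(z)+c(z,x)\bigr) \;\geqslant\; \inf_{z\in X}\bigl(u(x)-c(z,x)+c(z,x)\bigr) \;=\; u(x).
\]

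There is no real obstacle here: the proposition is a purely formal consequence of the Galois/adjunction flavour of the $(\min,+)$-convolution by $c$ against its transpose. The only tiny thing to note is that each time one must use $y=x$ (not $y=z$) in the inner extremum, which is what makes the kernel $c(x,\cdot)$ or $c(\cdot,x)$ cancel against the outer term. No compactness, continuity, or boundedness of $u$ beyond membership in $\BB(X,\R)$ enters the argument.
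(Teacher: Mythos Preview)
Your proof is correct and follows exactly the same approach as the paper: both argue by taking the inner variable equal to the outer point $x$ so that the cost terms cancel. The only difference is cosmetic (variable names) and that the paper writes out only the first inequality, leaving the second as implicit, whereas you spell out both.
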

 
 \begin{proof}
 Let us establish the first inequality. If $x\in X$ then
 $$T^+\circ T^- u(x) =  \sup_{y\in X} T^-u(y) - c(x,y) =  \sup_{y\in X} \inf_{z\in X} u(z)+c(z,y)- c(x,y) \leqslant u(x),$$
 where the last inequality is obtained by taking $z=x$.
 \end{proof}

\section{Strict subsolutions, Aubry sets}

In this section, we will focus our study on critical subsolutions, hence the adjective may be omitted from time to time, but is always implicit. Recall that the set $\S$ is the set of critical subsolutions. The goal will be to construct a special kind of subsolutions which are in some sense better than the others:
\begin{Th}\label{th-strict}
There exists a subsolution $u_0\in \S\cap C^0(X,\R)$ such that, if the equality $u_0(y) - u_0(x)  = c(x,y)+\cc$ holds  for some $(x,y)\in X\times X$, then
\begin{equation}\label{strict}
\forall u\in \S,\quad u(y) - u(x)  = c(x,y)+\cc.
\end{equation}
\end{Th}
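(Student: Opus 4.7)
The plan is to construct $u_0$ as a countable convex combination of continuous critical subsolutions chosen so that their joint strict set covers every pair that is not forced to be an equality pair, exploiting the elementary fact that equality in a convex combination of bounded-above inequalities forces equality in every summand.

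I first introduce the set of ``Aubry pairs''
\[
A := \{(x,y) \in X \times X : \forall\, u \in \S,\ u(y)-u(x) = c(x,y)+\cc\},
\]
and aim to produce $u_0 \in \S \cap C^0(X,\R)$ whose equality set is contained in $A$; since $u_0 \in \S$, the reverse containment is automatic, and the conclusion of the theorem is exactly the statement that the equality set of $u_0$ lies in~$A$. The technical heart is to show that continuous critical subsolutions already separate all non-Aubry pairs: for every $(x_0, y_0) \notin A$ there exists $v \in \S \cap C^0(X,\R)$ with $v(y_0) - v(x_0) < c(x_0, y_0) + \cc$. Starting from any, possibly discontinuous, witness $u \in \S$ with $u(y_0) - u(x_0) < c(x_0, y_0)+\cc$, the natural candidate $v = T^- u + \cc$ is continuous by Proposition \ref{propT}(i) and belongs to $\S$ by Proposition \ref{propS}. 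A direct case analysis on equality in the chain
\[
T^- u(y_0) - T^- u(x_0) \;\leqslant\; u(x_0)+c(x_0,y_0) - T^-u(x_0) \;\leqslant\; c(x_0,y_0) + \cc
\]
(using Remark \ref{bounded}(iii) for the second inequality), combined if necessary with a symmetric use of $T^+ u - \cc \in \S \cap C^0(X,\R)$, shows that strictness at $(x_0,y_0)$ can always be preserved after this regularising step.

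Once the continuous-separation claim is in hand, for each $v \in \S \cap C^0(X,\R)$ the strict set
\[
V_v := \{(x,y) \in X \times X : v(y)-v(x) < c(x,y)+\cc\}
\]
is open by joint continuity of $c$ and $v$, and the family $\{V_v\}$ covers $(X \times X) \setminus A$. As $X \times X$ is a compact metric, hence second-countable, hence Lindel\"of space, I extract a countable subcover $\{V_{v_n}\}_{n \in \N}$. Replacing each $v_n$ by $v_n - v_n(x_*)$ for a fixed $x_* \in X$ (still in $\S$ by Proposition \ref{propT}(ii)) makes the sequence uniformly bounded in sup-norm thanks to Proposition \ref{propS}(i), so that
\[
u_0 := \sum_{n=1}^\infty 2^{-n}\, v_n
\]
converges uniformly. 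The limit $u_0$ is continuous; since $\S$ is closed and convex (Proposition \ref{propS}) and $\sum_n 2^{-n} = 1$, we also have $u_0 \in \S$. Finally, if $u_0(y) - u_0(x) = c(x,y) + \cc$, summing the inequalities $v_n(y) - v_n(x) \leqslant c(x,y) + \cc$ weighted by $2^{-n}$ forces equality in each summand, so $(x,y)$ lies outside every $V_{v_n}$ and hence in $A$, which is precisely \eqref{strict}.

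The main obstacle is the continuous-separation claim: subsolutions in the present generality need not be continuous (they are only bounded), so one cannot directly invoke the classical argument where subsolutions are automatically Lipschitz. The key ingredient is therefore the regularising action of the Lax--Oleinik semigroups $T^\pm$ and the verification that this regularisation does not systematically destroy a strict inequality at a preassigned non-Aubry pair. The remaining steps---Lindel\"of extraction, dyadic averaging, and the convex-combination characterisation of equality---are routine manipulations based only on the basic properties of $\S$, $T^-$ and $T^+$ established earlier in the chapter.
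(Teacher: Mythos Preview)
Your overall strategy---reduce to continuous subsolutions, then average countably many of them---is sound and close in spirit to the paper's construction. The averaging step (Lindel\"of extraction and dyadic convex combination) is fine and essentially a variant of the paper's density argument. The genuine issue lies in your ``continuous-separation claim'', which is the heart of the matter and which you do not actually prove.

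Concretely: suppose $u\in\S$ is strict at $(x_0,y_0)$ but $v_1:=T^-u+\cc$ is not. Your chain forces
\[
T^-u(y_0)=u(x_0)+c(x_0,y_0)\quad\text{and}\quad u(x_0)=T^-u(x_0)+\cc.
\]
If in addition $v_2:=T^+u-\cc$ is not strict either, the symmetric chain gives
\[
T^+u(x_0)=u(y_0)-c(x_0,y_0)\quad\text{and}\quad u(y_0)=T^+u(y_0)-\cc.
\]
These four equalities are mutually compatible with $u(y_0)-u(x_0)<c(x_0,y_0)+\cc$: they simply say $u(x_0)=v_1(x_0)$, $u(y_0)=v_2(y_0)$, and $v_1(x_0)-v_2(x_0)=v_1(y_0)-v_2(y_0)=c(x_0,y_0)+\cc-\big(u(y_0)-u(x_0)\big)>0$. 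So ``try $T^-u+\cc$, and if that fails try $T^+u-\cc$'' does \emph{not} cover all cases, and the phrase ``combined if necessary with a symmetric use of $T^+u-\cc$'' is not a proof.

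The gap is fillable, but not by the argument you sketch. One repair: under the four equalities above, set $w:=T^-T^+u\in\S\cap C^0(X,\R)$; from $u\leqslant w\leqslant T^-u+\cc$ and the displayed identities one checks $w(x_0)=u(x_0)$ and $w(y_0)\leqslant T^+u(x_0)+c(x_0,y_0)=u(y_0)$, hence $w(y_0)-w(x_0)=u(y_0)-u(x_0)<c(x_0,y_0)+\cc$. This composition step is the missing idea. By contrast, the paper avoids this altogether: it first builds $u_0$ that is strict relative to \emph{continuous} subsolutions only (via a dense sequence in $\S\cap C^0$), then develops the Aubry set machinery (extension to bi-infinite calibrated sequences, Corollary~\ref{cst}) and uses it to upgrade the conclusion to arbitrary $u\in\S$. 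Your route, once repaired, is more direct and sidesteps that detour; but as written, the crucial step is asserted rather than established.
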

A subsolution verifying this last property \eqref{strict} will be termed {\it strict}. The proof of this Theorem will occupy the rest of this section. It will be divided in two main parts. In a first one, we will construct the function $u_0$ and prove that it verifies \eqref{strict} for all other continuous subsolutions.

Then a digression is devoted to studying the structure of the set where the equality $u_0(y) - u_0(x)  = c(x,y)+\cc$ takes place. This is the Aubry set, a central object in weak KAM theory, and unsurprisingly in Aubry--Mather theory.

This being achieved, the proof of the Theorem ends rather easily.

\begin{proof}[Beginning of the Proof]
The set $(X,d)$ being metric and compact, the Banach space $\big(C^0(X,\R), \| \cdot \|_\infty\big)$ is itself separable. Therefore, $\S\cap C^0(X,\R) $ being a subset of a separable space is also separable. Let $(v_n)_{n\in \N}$ be a dense (with respect to the topology of uniform convergence) sequence in  $\S\cap C^0(X,\R) $. Now, let us define 
$u_0 = \sum\limits_{n\geqslant 0} a_n v_n$ where for $n>0$, we set $a_n = \min(2^{-n}, 2^{-n}/\|v_n\|_\infty)$ and $a_0 = 1-\sum\limits_{n>0}a_n$.

The function $u_0$ is defined by a  series of continuous functions that converges for the $\| \cdot\|_\infty$--norm, hence it is continuous. It is an infinite convex combination of critical subsolutions, therefore, by Proposition \ref{propT} it is a subsolution. We will prove it verifies the property we seek for.

Let us consider now $(x,y)\in X\times X$ such that $u_0(y) - u_0(x) = c(x,y)+\cc$. By definition of subsolutions, we know that $v_n(y) - v_n(x) \leqslant c(x,y)+\cc $, for  all $n\geqslant 0$. Multiplying each of these inequalities by $a_n$ and summing, we get
$$u_0(y) - u_0(x) = \sum_{n=0}^{+\infty} a_n\big(v_n(y)-v_n(x) \big) \leqslant \sum_{n=0}^{+\infty} a_n (c(x,y)+\cc ) = c(x,y)+\cc,$$
which is in fact an equality. Hence all the middle inequalities must be equalities, and the $a_n$ being all positive, we conclude that
$ v_n(y) - v_n(x) = c(x,y)+\cc$ for all integer $n$. The sequence $(v_n)_{n\in \N}$ being dense in $\S\cap C^0(X,\R)$ we conclude eventually that
$$\forall u\in \S\cap C^0(X,\R), \quad \Big[ u_0(y) - u_0(x) = c(x,y)+\cc \Big] \Longrightarrow \Big[  u(y) - u(x) = c(x,y)+\cc \Big].$$
\end{proof}
The rest of the proof will consist in extending this property to non continuous subsolutions.

 Let us start by defining some useful sets:
 \begin{df}[Aubry sets] \rm
  Let $u \in \S$ be a critical subsolution.
 \begin{itemize}
 \item The non--strict set of $u$ is 
 $$\mathcal{NS}_u = \{(x,y)\in X\times X,\quad u(y) - u(x) = c(x,y)+\cc\}.$$
 \item The Aubry set of $u$ is 
 $$\widetilde\AA_u = \Big\{(x_n)_{n\in \Z} \in X^\Z, \ \ \forall n<p,\quad u(x_p) - u(x_n) = \sum_{k=n}^{p-1} c(x_k,x_{k+1}) + (n-p)\cc \Big\}.$$
 \item The Aubry set is $\widetilde\AA = \widetilde\AA_{u_0}\subset X^\Z$ where $u_0$ is the peculiar subsolution previously constructed.
 \item The $2$--Aubry is $\widehat \AA = \mathcal{NS}_{u_0}\subset X\times X$.
 \item Finally, the projected Aubry set is $\AA = \pi_1(\widehat \AA )\subset X$ where $\pi_1 : X\times X \to X$ is the projection on the first factor: $(x,y) \mapsto x$.
 
 \end{itemize}

 \end{df}
 
\begin{rem}\label{rem-eg}\rm
\hspace{2em}
\begin{enumerate}[label=(\roman*)]
\item The beginning of the proof of Theorem \ref{th-strict} may be summed up in the equality $\widehat\AA = \bigcap\limits_{u\in \S\cap C^0(X,\R)} \mathcal{NS}_u $.
\item If $s : X^\Z \to X^Z$ is the shift operator: $(x_n)_{n\in \Z}\mapsto (x_{n+1})_{n\in \Z}$ then the sets introduced above are invariant by this shift: $\widetilde\AA_u=s(\widetilde\AA_u)$ and $\widetilde\AA = s(\widetilde\AA)$.
\item If $u\in \S$ and $(x_n)_{n\in \Z}\in X^\Z$ then one has for all $k\in \Z$, $u(x_{k+1})-u(x_k)\leqslant c(x_k,x_{k+1})+\cc$. Summing these inequalities, one gets 
$$\forall n<p,\quad u(x_p) - u(x_n) \leqslant \sum_{k=n}^{p-1} c(x_k,x_{k+1}) + (n-p)\cc.$$
Therefore if the previous inequality turns out to be an equality, it implies that $u(x_{k+1})-u(x_k)= c(x_k,x_{k+1})+\cc$ for $n\leqslant k<p$. 

\item Note that by taking $n=0$ and $p=1$ in the definition of $\widetilde\AA_u$ we find  that $\pi_{0,1}(\widetilde\AA_u ) \subset \mathcal{NS}_u$ where $\pi_{0,1} : X^\Z\to X\times X$ is the projection: $(x_n)_{n\in \Z}\mapsto (x_0,x_1)$.

\end{enumerate}
\end{rem}

Now let us study these Aubry sets. We start by basic topological properties:
\begin{pr}
The $2$--Aubry set $\widehat \AA$ is closed and non--empty.
\end{pr}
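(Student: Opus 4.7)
The plan is to dispatch the two assertions separately, with closedness being essentially free and non-emptiness requiring the main idea.

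For closedness, I will simply note that $u_0 \in \S \cap C^0(X,\R)$ by Theorem \ref{th-strict} and $c$ is continuous by assumption, so the map $(x,y) \mapsto u_0(y) - u_0(x) - c(x,y) - \cc$ is continuous on $X \times X$. Its zero set is precisely $\widehat{\AA} = \mathcal{NS}_{u_0}$, hence closed.

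For non-emptiness, the key idea is to consider $\phi := T^- u_0 + \cc - u_0$ on $X$. Since $u_0$ is a critical subsolution, $\phi \geq 0$ pointwise, and since $u_0$ and $T^- u_0$ are both continuous (the latter by Proposition \ref{propT}(i)), $\phi$ is continuous. I will argue by contradiction: suppose $\phi > 0$ everywhere. Then compactness of $X$ gives $\varepsilon > 0$ with $\phi \geq \varepsilon$, i.e., $T^- u_0 + \cc \geq u_0 + \varepsilon$. Using that $T^-$ is order-preserving and commutes with the addition of constants (Proposition \ref{propT}), a straightforward induction yields $T^{-n} u_0 + n\cc \geq u_0 + n\varepsilon$ for every $n \geq 1$. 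This forces $\sup_X (T^{-n} u_0 + n\cc) \to +\infty$, contradicting the uniform boundedness of the sequence $(T^{-n} u_0 + n\cc)_{n \in \N}$ provided by Remark \ref{bounded}(i).

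Hence $\phi$ must vanish at some point $x_1 \in X$, that is, $u_0(x_1) = T^- u_0(x_1) + \cc$. Since $u_0$ is continuous and $X$ is compact, the infimum defining $T^- u_0(x_1)$ is attained at some $x_0 \in X$, giving $u_0(x_1) = u_0(x_0) + c(x_0, x_1) + \cc$. This means $(x_0, x_1) \in \mathcal{NS}_{u_0} = \widehat{\AA}$, completing the proof. The only step requiring an idea is the boundedness-versus-iteration argument; everything else is formal, so I do not expect a serious obstacle.
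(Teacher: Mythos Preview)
Your proof is correct and follows essentially the same approach as the paper. Closedness is handled identically. For non--emptiness, both arguments proceed by contradiction and use compactness to produce a uniform $\varepsilon>0$; the paper phrases the resulting inequality as $u_0\in\S_{\cc-\varepsilon}$ and invokes Proposition~\ref{criticalSS} directly, whereas you iterate the semigroup by hand and appeal to Remark~\ref{bounded}(i)---but since the proof of Proposition~\ref{criticalSS} is itself exactly this iteration, the two routes are the same argument in slightly different packaging.
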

\begin{proof}
Being closed comes from the identity $\widehat \AA = F^{-1}\{0\}$ where $F(x,y) = u_0(y)-u_0(x) - c(x,y)-\cc$ is continuous.

Being non--empty is a consequence of the minimality of $\cc$ (Proposition \ref{criticalSS}). Indeed, as $u_0$ is a subsolution, the function $F$ is non--positive. By compactness and continuity, if $\widehat \AA = F^{-1}\{0\}$ were empty, there would be a small $\e>0$ such that $u_0(y)-u_0(x) \leqslant c(x,y)+\cc-\e$, for all $(x,y)\in X\times X$. But this means that $u_0\in \S_{\cc-\e}\neq \varnothing$ which contradicts Proposition \ref{criticalSS}.
\end{proof}
The next proposition states that elements of the $2$--Aubry set come in families, meaning that the Aubry set is not empty:
\begin{pr}\label{pr-sequence}
Let $(x,y)\in \widehat \AA$, then there exists a sequence $(x_n)_{n\in \Z} \in \widetilde\AA$ such that $(x,y) = \pi_{0,1}\big((x_n)_{n\in \Z}\big) $. In particular, the Aubry set is itself closed (for the product topology), not empty, and $\widehat \AA = \pi_{0,1}(\widetilde \AA)$. 
\end{pr}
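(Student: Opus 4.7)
The plan is to establish one-step forward and backward extensibility of $\widehat\AA$: for any $(a,b)\in\widehat\AA$, to produce $c,d\in X$ with $(b,c),(d,a)\in\widehat\AA$. Once this is in hand, iterating yields a bi-infinite sequence extending $(x,y)=(x_0,x_1)$ whose consecutive pairs all lie in $\widehat\AA$; by Remark \ref{rem-eg}(iii), such a sequence lies in $\widetilde\AA_{u_0}=\widetilde\AA$. Closedness of $\widetilde\AA$ follows from its definition as a countable intersection of closed conditions, non-emptiness from $\widehat\AA\neq\varnothing$, and $\widehat\AA=\pi_{0,1}(\widetilde\AA)$ from combining the extension with Remark \ref{rem-eg}(iv).

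To extend forward, I would first extract from $u_0$ a positive weak KAM solution $\tilde u$. Since $T^+$ preserves $\S$, the iterates $\tilde u_n:=T^{+n}u_0-n\cc$ form a non-increasing family of continuous subsolutions, uniformly bounded below by $u^+-K$ for any positive weak KAM solution $u^+$ (Theorem \ref{weakKAM+}) and $K$ large. Combining monotonicity with the equicontinuity of Proposition \ref{propT+}(i) yields uniform convergence to a continuous positive weak KAM solution $\tilde u\leqslant u_0$. The crux is then the inductive claim $T^{+n}u_0(x_0)=u_0(x_0)+n\cc$. The base case follows from $u_0(x_1)-c(x_0,x_1)=u_0(x_0)+\cc$, matching the subsolution upper bound. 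For the step, since $T^{+n}u_0\in\S\cap C^0$, Remark \ref{rem-eg}(i) gives $(x_0,x_1)\in\mathcal{NS}_{T^{+n}u_0}$, so $T^{+n}u_0(x_1)=T^{+n}u_0(x_0)+c(x_0,x_1)+\cc=u_0(x_1)+n\cc$; plugging $y=x_1$ into $T^{+(n+1)}u_0(x_0)=\sup_yT^{+n}u_0(y)-c(x_0,y)$ supplies the matching lower bound. Passing to the limit yields $\tilde u(x_0)=u_0(x_0)$ and $\tilde u(x_1)=u_0(x_1)$.

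Forward extension then follows at once: as $\tilde u$ is a positive weak KAM solution, the supremum in $T^+\tilde u(x_1)=\tilde u(x_1)+\cc$ is attained at some $x_2\in X$, so $\tilde u(x_2)-\tilde u(x_1)=c(x_1,x_2)+\cc$. The chain of inequalities
\[
u_0(x_2)\;\geqslant\;\tilde u(x_2)\;=\;u_0(x_1)+c(x_1,x_2)+\cc\;\geqslant\;u_0(x_2)
\]
forces equality throughout, whence $(x_1,x_2)\in\widehat\AA$ and $\tilde u(x_2)=u_0(x_2)$, so the argument iterates and produces $x_3,x_4,\ldots$ with each $(x_n,x_{n+1})\in\widehat\AA$. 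Backward extension is entirely symmetric, using $u_0^-:=\lim_n T^{-n}u_0+n\cc\geqslant u_0$: the parallel induction shows $T^{-n}u_0(x_0)+n\cc=u_0(x_0)$, hence $u_0^-(x_0)=u_0(x_0)$, and the weak KAM identity $u_0^-=T^-u_0^-+\cc$ produces a predecessor $x_{-1}$ for which a mirror three-line estimate yields $(x_{-1},x_0)\in\widehat\AA$. The main obstacle is really the observation that each $T^{\pm n}u_0$ is itself a continuous subsolution, which by Theorem \ref{th-strict} activates the saturation of $\widehat\AA$ at every iterate and closes the inductions by a single supremum or infimum estimate.
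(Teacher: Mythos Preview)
Your proof is correct and shares with the paper the crucial lemma that $T^{\pm n}u_0\mp n\cc$ is constant at points of $\widehat\AA$ (your induction is essentially the paper's Lemma~\ref{A-cst1}, reproved with the same mechanism: $T^{\pm n}u_0$ is a continuous subsolution, so $(x_0,x_1)\in\mathcal{NS}_{T^{\pm n}u_0}$). The two proofs diverge in how the bi-infinite sequence is then built. The paper works with finite calibrating chains realizing $T^{-n}u_0(x)$ and $T^{+n}u_0(y)$ for each $n$, and extracts the limiting sequence $(x_k)_{k\in\Z}$ by a diagonal compactness argument. You instead pass first to the limit \emph{functions} $\tilde u=\lim T^{+n}u_0-n\cc$ and $u_0^-=\lim T^{-n}u_0+n\cc$, observe they are respectively a positive and a negative weak KAM solution sandwiching $u_0$ with equality on $\{x_0,x_1\}$, and then use their fixed-point property to extend one step at a time, propagating the equality $\tilde u=u_0$ (resp.\ $u_0^-=u_0$) along the way.

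Your route is arguably cleaner: it avoids the diagonal extraction entirely and yields the sequence constructively. It also foreshadows the conjugate pair $(u_0^-,\tilde u)$ that the paper only introduces later (Remark~\ref{conjugate}). The paper's route, on the other hand, is slightly more self-contained at this point of the exposition, since it does not need to check that the monotone limits of $T^{\pm n}u_0\mp n\cc$ exist and are weak KAM solutions (though as you note this is immediate from Propositions~\ref{propT} and~\ref{propT+}).
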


The proof will make use of the following two lemmas. The first one's proof is a direct application of the definitions and is omitted:
\begin{lm}
Let $n$ be a positive integer and $f\in \BB(X,\R)$, then
$$\forall x\in X,\quad \Tn f(x) = \inf_{x_{-n},\cdots,x_0=x} f(x_{-n}) + \sum_{k=-n}^{-1} c(x_k,x_{k+1}),$$
$$\forall x\in X,\quad T^{+n} f(x) = \sup_{x=x_0, \cdots , x_n} f(x_{n}) - \sum_{k=0}^{n-1} c(x_k,x_{k+1}).$$

\end{lm}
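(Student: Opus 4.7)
The plan is to prove both identities by induction on $n\geqslant 1$. I will describe the argument for $T^{-n}$ in detail; the one for $T^{+n}$ is completely analogous, replacing infima by suprema and the sign of $c$ accordingly, in keeping with the duality $T^+f=-T^-_{\hat c}(-f)$ pointed out after Definition \ref{dfT+}.

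\textbf{Base case $n=1$.} The claimed formula reduces to $T^-f(x)=\inf_{x_{-1}\in X}\bigl(f(x_{-1})+c(x_{-1},x_0)\bigr)$ with $x_0=x$, which is exactly the definition of the Lax--Oleinik semigroup.

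\textbf{Inductive step.} Assume the identity holds for some $n\geqslant 1$. By definition of the iterate and of $T^-$,
$$T^{-(n+1)}f(x)=T^-\bigl(T^{-n}f\bigr)(x)=\inf_{y\in X}\bigl(T^{-n}f(y)+c(y,x)\bigr).$$
Rename the auxiliary variable $y=x_{-1}$. Applying the induction hypothesis to the bounded function $T^{-n}f$ at the point $x_{-1}$, and relabeling the indices of the minimizing chain (ending at $x_{-1}$) by shifting them down by one, i.e.\ writing it as $x_{-n-1},x_{-n},\ldots,x_{-1}$, one gets
$$T^{-n}f(x_{-1})=\inf_{x_{-n-1},\ldots,x_{-1}}\Bigl(f(x_{-n-1})+\sum_{k=-n-1}^{-2}c(x_k,x_{k+1})\Bigr).$$
Note that $T^{-n}f$ is bounded (it is the $n$-th iterate of $T^-$ applied to $f\in\BB(X,\R)$, and each application of $T^-$ preserves boundedness by Proposition \ref{propT}), so the application of the induction hypothesis is legitimate. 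Substituting into the previous display and combining the two infima into a single one over the whole chain $(x_{-n-1},\ldots,x_{-1},x_0=x)$, one obtains
$$T^{-(n+1)}f(x)=\inf_{x_{-n-1},\ldots,x_0=x}\Bigl(f(x_{-n-1})+\sum_{k=-n-1}^{-1}c(x_k,x_{k+1})\Bigr),$$
which is exactly the formula at rank $n+1$.

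The main obstacle is essentially cosmetic: the only care required is the bookkeeping of indices in the shift $y\mapsto x_{-1}$ and in the merging of the two infima. Since both $T^-$ and $T^+$ are defined by pointwise optimization against $c$, no topological or compactness input is needed beyond the fact that $\BB(X,\R)$ is preserved under these semigroups, which is already recorded in Propositions \ref{propT} and \ref{propT+}.
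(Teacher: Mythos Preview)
Your induction argument is correct and is precisely the ``direct application of the definitions'' that the paper alludes to; in fact the paper omits the proof entirely, so there is nothing further to compare. One cosmetic remark: the sentence ``applying the induction hypothesis to the bounded function $T^{-n}f$'' reads slightly ambiguously (it could be misread as plugging $T^{-n}f$ in place of $f$), though the displayed formula makes your intent clear.
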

\begin{lm}\label{A-cst1}
Let $u\in \S$ be a continuous subsolution and $(x,y)\in \widehat \AA$. We have $\Tn u(x) = u(x) - n\cc$ and $T^{+n}u (y) = u(y) + n\cc$ for all positive integer $n$. 
\end{lm}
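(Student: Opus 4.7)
My plan is to establish both equalities by induction on $n$, based on the observation recorded in Remark \ref{rem-eg}(i) that $\widehat \AA = \bigcap_{v \in \S \cap C^0(X,\R)} \mathcal{NS}_v$. Thus, for our pair $(x,y) \in \widehat \AA$ and for \emph{every} continuous critical subsolution $v$, the saturation relation $v(y) - v(x) = c(x,y) + \cc$ holds. Combined with the stability of $\S \cap C^0(X,\R)$ under both $T^-$ and $T^+$ (which follows from the equicontinuity part of Propositions \ref{propT} and \ref{propT+} together with the stability in Proposition \ref{propS} and its positive analogue), this will allow me to transport saturations through the semigroups.

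For the base case $n = 1$ I proceed in two small steps. First, evaluating the infimum defining $T^- u(y)$ at $z = x$ gives $T^- u(y) \leqslant u(x) + c(x,y) = u(y) - \cc$; the subsolution inequality supplies the reverse bound, hence $T^- u(y) = u(y) - \cc$. Second, since $T^- u$ is itself a continuous critical subsolution, it satisfies the saturation $T^- u(y) - T^- u(x) = c(x,y) + \cc$ at $(x,y)$, which combined with the previous equality forces $T^- u(x) = u(x) - \cc$. The identity $T^+ u(y) = u(y) + \cc$ is proved by the symmetric pair of steps: first obtain $T^+ u(x) = u(x) + \cc$ by taking $w = y$ in the supremum defining $T^+ u(x)$ together with the subsolution inequality, and then use the saturation of the continuous critical subsolution $T^+ u$ at $(x,y)$ to transport this equality from $x$ to $y$.

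The induction step is then automatic: the functions $T^{-n} u$ and $T^{+n} u$ are again continuous critical subsolutions, so reapplying the base case to them yields $T^{-(n+1)} u(x) = T^{-n} u(x) - \cc$ and $T^{+(n+1)} u(y) = T^{+n} u(y) + \cc$, which combined with the induction hypotheses close both recurrences. I do not foresee any serious obstacle in this approach; the only point demanding care is the inheritance of the saturation at each iterate, and this is guaranteed by the stability of $\S \cap C^0(X,\R)$ under the Lax--Oleinik semigroups together with the universality of the inclusion $\widehat \AA \subset \mathcal{NS}_v$.
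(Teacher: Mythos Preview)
Your proof is correct and follows essentially the same approach as the paper. Both arguments rest on the same two ingredients: the universal saturation $v(y)-v(x)=c(x,y)+\cc$ for every continuous critical subsolution $v$ at a pair $(x,y)\in\widehat\AA$ (Remark~\ref{rem-eg}(i)), and the stability of $\S\cap C^0(X,\R)$ under $T^-$ and $T^+$. The paper compresses your base case and induction into a single sandwiching chain
\[
T^{-(n+1)}u(y)+\cc \geqslant T^{-n}u(y) = T^{-n}u(x)+c(x,y)+\cc \geqslant T^{-(n+1)}u(y)+\cc,
\]
which directly shows the sequence $T^{-n}u(y)+n\cc$ is constant and then transports to $x$ via the saturation of $T^{-n}u$; your version separates the $n=1$ step and the induction, but the content is identical.
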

\begin{proof}
As $\Tn u$ and $T^{+n} u$ are continuous subsolutions and $(x,y) \in \widehat \AA$, we now know that $\Tn u(y) - \Tn u(x) = \TTn u(y) - \TTn u(x) = c(x,y)+\cc$. It implies readily that
$$T^{-(n+1)} u (y)+\cc \geqslant \Tn u(y) = \Tn u(x)+c(x,y)+\cc \geqslant T^{-(n+1)} u (y)+\cc,$$
the first inequality coming from the fact that $\Tn u\in \S$ the second from the definition of $T^-$. Hence all inequalities turn out to equalities and the sequence $(\Tn u(y) + n\cc)_{n\in \N}$ is constant. 

The same holds for $\Tn u(x) + n\cc = \Tn u(y)+n\cc-c(x,y) -\cc$.

The proof of the rest of the lemma is established similarly.

\end{proof}

\begin{proof}[Proof of Proposition \ref{pr-sequence}]
The sequences $(x_n)$ for $n\leqslant 0$ and $n \geqslant 0$ are constructed separately. The first half will come from the negative Lax--Oleinik semigroup, the second one, from the positive Lax--Oleinik semigroup.

As $u_0 $ and $c$ are continuous and $X$ is compact, any supremum (resp. infimum) involving them is actually a maximum (resp. minimum). Therefore, for each positive $n$, there exist chains $x_{-n}^n,\cdots ,x_0^n = x,x_1^n = y , \cdots x_{n+1}^n$ such that
$$\Tn u_0(x) =u_0(x_{-n}^n)+\sum_{k=-n}^{-1} c(x_{k}^n,x_{k+1}^n)\ \ ;\quad T^{+n}u_0(y) = u_0(x^n_{n+1}) - \sum_{k=1}^n c(x_k^n,x_{k+1}^n).$$

By a diagonal argument, let $(N_n)_{n\geqslant 0}$ be an extracted sequence such that for all  $k\geqslant 0$ the sequences $(x_k^{N_n})_{n\geqslant k}$ and $(x_{-k}^{N_n})_{n\geqslant k}$ converge. We will denote by $x_k$ and $x_{-k}$ the respective limits. Obviously, $x_0=x$ and $x_1=y$. By definition of the Lax--Oleinik semigroup and Lemma \ref{A-cst1}, for all $k\leqslant n$, 
$$
u_0(x) = u_0(x_k^{N_n}) + \sum_{i=-k}^{-1} c(x_{i}^{N_n},x_{i+1}^{N_n})+k\cc\ ;$$
$$
 u_0(y) = u_0(x^{N_n}_{k+1}) - \sum_{i=1}^k c(x_i^{N_n},x_{i+1}^{N_n})-k\cc.
 $$
Letting $n\to +\infty$, we obtain that 
$$u_0(x) = u_0(x_k) + \sum_{i=-k}^{-1} c(x_{i},x_{i+1})+k\cc\ \ ;\quad u_0(y) = u_0(x_{k+1}) - \sum_{i=1}^k c(x_i,x_{i+1})-k\cc.$$
As in Remark \ref{rem-eg}, this implies that for all $n\in \Z$, $u_0(x_{n+1})-u_0(x_n) = c(x_n,x_{n+1})+\cc$ (the case $n<0$ is given by the left equalities and $n>0$ by the right equalities above, $n=0$ is because $(x,y)\in \widehat\AA$). Now, again as in Remark \ref{rem-eg}, by summing those equalities, one obtains that 
$$\forall n<p,\quad u_0(x_p) - u_0(x_n) = \sum_{k=n}^{p-1} c(x_k,x_{k+1}) + (n-p)\cc.$$
This exactly means that $(x_n)_{n\in \Z} \in \widetilde\AA$.
\end{proof}

\begin{rem}\label{remoublieee}\rm
Let $(x_n)_{n\in \Z} \in \widetilde\AA$ and $u$ be a subsolution. As for all $n\in \Z$, $(x_n,x_{n+1})\in \widehat \AA$, summing equalities $u(x_{n+1}) - u(x_n) = c(x_n,x_{n+1})+\cc$, one finds that
$$\forall n<p,\quad u(x_p) - u(x_n) = \sum_{k=n}^{p-1} c(x_k,x_{k+1}) + (n-p)\cc.$$
In other terms, $\widetilde \AA \subset \widetilde \AA_u$.

\end{rem}

As the Aubry set is invariant by the shift, we get the immediate:
\begin{co}
The projected Aubry set can be obtained by either projection, $\AA = \pi_1 (\widehat \AA) = \pi_2 (\widehat \AA)$, where $\pi_2$ is the projection on the second factor.
\end{co}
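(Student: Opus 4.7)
The plan is to exploit the shift invariance of $\widetilde\AA$ together with the identity $\widehat\AA = \pi_{0,1}(\widetilde\AA)$ established in Proposition~\ref{pr-sequence}.

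First I would prove the inclusion $\pi_1(\widehat\AA)\subset \pi_2(\widehat\AA)$. Let $x\in \pi_1(\widehat\AA)$, so that there exists $y\in X$ with $(x,y)\in \widehat\AA$. By Proposition~\ref{pr-sequence} one can lift this pair to a bi-infinite sequence $(x_n)_{n\in \Z}\in \widetilde\AA$ with $x_0=x$ and $x_1=y$. Applying the (inverse) shift, the sequence $(y_n)_{n\in \Z}=(x_{n-1})_{n\in \Z}$ still lies in $\widetilde\AA$ by the shift invariance recalled in Remark~\ref{rem-eg}(ii). Projecting onto the first two coordinates gives $\pi_{0,1}\big((y_n)_{n\in \Z}\big)=(x_{-1},x_0)=(x_{-1},x)\in \widehat\AA$, so $x=\pi_2(x_{-1},x)\in \pi_2(\widehat\AA)$.

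The reverse inclusion $\pi_2(\widehat\AA)\subset \pi_1(\widehat\AA)$ is obtained in exactly the same way: if $y\in \pi_2(\widehat\AA)$, pick $x$ with $(x,y)\in \widehat\AA$, lift to $(x_n)\in \widetilde\AA$ with $x_0=x$, $x_1=y$, apply the forward shift to get $(x_{n+1})\in \widetilde\AA$, and project to obtain $(x_1,x_2)=(y,x_2)\in \widehat\AA$, whence $y\in \pi_1(\widehat\AA)$. Combining both inclusions yields $\AA=\pi_1(\widehat\AA)=\pi_2(\widehat\AA)$.

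There is no real obstacle here; the only point one has to be careful about is not to define $\AA$ circularly, but since Proposition~\ref{pr-sequence} furnishes actual $\Z$-indexed sequences in $\widetilde\AA$ extending any element of $\widehat\AA$, one can freely shift indices and re-project, which is precisely what makes the two projections agree.
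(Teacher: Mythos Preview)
Your proof is correct and follows exactly the approach the paper intends: the corollary is stated as immediate from the shift invariance of $\widetilde\AA$ together with $\widehat\AA=\pi_{0,1}(\widetilde\AA)$, and you have simply written out the details of that argument.
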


Let us  now complete the proof of the Theorem:
\begin{proof}[End of the Proof of Theorem \ref{th-strict}]
Let $u\in \S$ be any subsolution. Let $(x,y)\in \widehat \AA$ and let $(x_n)_{n\in \Z}\in \widetilde \AA$ such that $\pi_{0,1}\big((x_n)_{n\in \Z}\big)=(x,y)$. We have the following chain of inequalities, for all integer $n\in \Z$:
$$T^- u(x_{n+1}) \leqslant u(x_n)+c(x_n,x_{n+1})\leqslant T^-u (x_n)+c(x_n,x_{n+1})+\cc = T^- u(x_{n+1}),$$
$$T^+ u(x_{n-1}) \geqslant u(x_n)-c(x_{n-1},x_{n})\geqslant T^+u (x_n)-c(x_{n-1},x_{n})-\cc = T^+ u(x_{n-1}).$$
In each line, the first inequality is by definition of the Lax--Oleinik semigroups, the second holds because $u\in \S$ and the last equality comes from the fact that, both $T^-u$ and $T^+u$ being continuous subsolutions, the first part of the proof of Theorem \ref{th-strict} applies.

Hence all inequalities are equalities, and taking $n=0,1$ it follows both equalities
$u(x) = T^-u(x) + \cc$ and $u(y) = T^-u(y) +\cc$ and eventually that 
$$u(y) - u(x) = T^-u(y)-T^-u(x) = c(x,y)+\cc.$$
This ends the proof.
\end{proof}
Let us conclude by a corollary of this proof:
\begin{co}\label{cst}
Let $x\in X$. The following assertions are equivalent:
\begin{enumerate}[label=(\roman*)]
\item $x\in \AA$,
\item $\forall u \in \S, \ \ u(x) = T^-u(x)+\cc$, 
\item $\forall u \in \S, \ \ u(x) = T^+u(x)-\cc$, 
\item for any $u\in \S$ and $n>0$, $u(x) = \Tn u(x)+n\cc = \TTn u(x) -n\cc$.
\end{enumerate}

\end{co}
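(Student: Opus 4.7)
The plan is to set up a circular chain of implications (i) $\Rightarrow$ (iv) $\Rightarrow$ (ii) $\Rightarrow$ (i), with (iii) linking in symmetrically via the positive Lax--Oleinik semigroup. Most of the hard work has already been done in the proof of Theorem \ref{th-strict}: the key observation there was that for $x \in \AA$ and any $u \in \S$, a chain of inequalities along an Aubry sequence collapses to equalities, yielding simultaneously $u(x) = T^- u(x) + \cc$ and $u(x) = T^+ u(x) - \cc$.

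For (i) $\Rightarrow$ (iv): given $x \in \AA$, Proposition \ref{pr-sequence} combined with shift invariance of $\widetilde\AA$ furnishes an Aubry sequence $(x_n)_{n \in \Z} \in \widetilde\AA$ with $x_0 = x$. The concluding argument of Theorem \ref{th-strict} (applied verbatim with $x = x_0$) then gives $u(x) = T^- u(x) + \cc$ and $u(x) = T^+ u(x) - \cc$ for every $u \in \S$. To upgrade this to arbitrary iterates, I iterate: since $T^- u \in \S$ by Proposition \ref{propS}, I apply the same statement to the subsolution $v = T^- u$ at the point $x$, obtaining $T^- u(x) = T^{-2} u(x) + \cc$, and by induction $u(x) = T^{-n} u(x) + n \cc$ for all $n > 0$. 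The analogous induction with $T^+$ (which also preserves $\S$) gives the second equality in (iv).

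The implications (iv) $\Rightarrow$ (ii) and (iv) $\Rightarrow$ (iii) are immediate by specialising to $n = 1$.

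For (ii) $\Rightarrow$ (i): apply (ii) to the continuous subsolution $u_0$ from Theorem \ref{th-strict} to get $T^- u_0(x) = u_0(x) - \cc$. Since $u_0$ and $c$ are continuous and $X$ is compact, the infimum defining $T^- u_0(x)$ is attained at some $y \in X$, so $u_0(y) + c(y, x) = u_0(x) - \cc$, which rearranges to $u_0(x) - u_0(y) = c(y, x) + \cc$. Hence $(y, x) \in \widehat\AA$, and $x \in \pi_2(\widehat\AA) = \AA$ by the corollary preceding this statement. The argument for (iii) $\Rightarrow$ (i) is symmetric: the equality $u_0(x) = T^+ u_0(x) - \cc$ yields a $y$ with $(x, y) \in \widehat\AA$ and hence $x \in \pi_1(\widehat\AA) = \AA$. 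I do not expect any real obstacle; the substantive content has been absorbed into Theorem \ref{th-strict} and Proposition \ref{pr-sequence}, and what remains is essentially bookkeeping between $T^-$, $T^+$, and the two projections $\pi_1, \pi_2$.
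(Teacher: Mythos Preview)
Your argument is correct and follows exactly the approach implicit in the paper, which states the result as an immediate corollary of the end of the proof of Theorem~\ref{th-strict} without spelling out the details. The one superfluous remark is the appeal to shift invariance of $\widetilde\AA$: since $\AA = \pi_1(\widehat\AA)$ by definition, Proposition~\ref{pr-sequence} already yields an Aubry sequence with $x_0 = x$ directly, but this does no harm.
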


\section{Relations to the classical theory}
\subsection{Classical setting and Lax-Oleinik semigroup}

Classical weak KAM theory takes place originally in a smooth, connected and compact manifold $M$.  We will denote by $TM$ the tangent bundle of $M$  and denote points in this set by $(x,v)\in TM$, where $x\in M$ and $v\in T_x  M$ is a vector tangent to $M$ at $x$. Similarly, $T^*M$ is the cotangent bundle of $M$, and a point of this cotangent bundle will be written  $(x,p)\in T^*M$, where $x\in M$ and $p\in T^*_x  M$ is a linear form on $T_x M$. For convenience, we will equip $TM$ with a Riemannian metric and denote by $(x,v)\mapsto \|v\|_x$ the associated norm. As $M$ is compact, all Riemannian metrics are equivalent and all results are independent of this choice. The induced distance on $M$ will be denoted by $\dd(\cdot,\cdot)$.\footnote{A simple example to keep in mind is that of the flat torus $M=\T^n = \R^n / \Z^n$. In this case, both $TM$ and $T^*M$ are isomorphic to $\T^n\times \R^n$. As a Riemannian metric, one may use the canonical Euclidian scalar product  on $\R^n$ to define a metric both on $T\T^n$ and on $T^*\T^n$.  }  One considers a Tonelli Hamiltonian, that is a function $H : T^*M \to \R$ defined on the cotangent bundle of $M$ verifying the following set of conditions:
\begin{itemize}
\item $H$ is $C^2$,
\item $H$ is strictly convex in the momentum variable, meaning that for any $(x,p)\in T^*M$ the Hessian $\partial_{pp}H(x,p)$ is positive definite.
\item $H$ is superlinear, meaning that 
$$\forall x\in M, \quad \lim_{\|p\|_x\to +\infty }\frac{H(x,p)}{\|p\|_x} = +\infty.$$
\end{itemize}
Note that in the superlinearity condition the limit is automatically uniform in $x$, thanks to the convexity of $H$ and to the compactness of $M$. Moreover, this condition depends at first sight on the choice of the Riemannian metric on $TM$, the norm of $p\in T^*_xM$ being the operator norm of $p$ and  denoted again  $\|p\|_x$ to simplify notations. However, $M$ being compact, any two Riemannian metrics are equivalent, hence the notion of superlinearity becomes independent of the initial choice.

Given this Hamiltonian, one studies two equations called {\it Hamilton--Jacobi equations}.  The evolutionary Hamilton--Jacobi equation is:
\begin{equation}\label{evo}
\begin{cases}
\partial_t u + H(x,\partial_x u) = 0 ,\\
u(0,\cdot) = u_0. \tag{EHJ}
\end{cases}
\end{equation}
Above, the unknown function is $u(t,x) : [0,+\infty ) \times M \to \R$ and $u_0 : M\to \R$ is a given continuous function called initial condition. 

The stationary Hamilton--Jacobi equation is 
\begin{equation}\label{sta}
H(x,D_x u )=\alpha, \tag{SHJ}
\end{equation}
where the unknown is $u : M\to \R$ and $\alpha \in \R$ is a given constant.

Strong ($C^1$) solutions to those equations rarely exist. For instance, if $u_0$ is smooth, one can solve the evolutionary equation by using the method of characteristics. However, shocks appear almost systematically in finite time and the solution ceases to be smooth (results about this can be found in \cite{Fa, DZJDE}). Therefore a notion of weak solutions is required and the one we retain is that of viscosity solution. We provide it for the evolutionary equation even though it is not explicitly needed. It is  left  to the reader to infer the analogous definition for the stationary equation. A good introduction to the subject is \cite{barles}:

\begin{df}\rm
\hspace{2em}
\begin{itemize}
\item
A continuous function $u : [0,+\infty) \times M \to \R$ is a viscosity subsolution to \eqref{evo} if it verifies the initial condition and if for any $C^1$ function $\phi : (0,+\infty) \times M \to \R$, if $u-\phi$ has a local maximum at $(t_0,x_0)$ then
$$\partial_t \phi(t_0,x_0) + H\big(x_0,\partial_x \phi(t_0,x_0)\big) \leqslant 0.$$
\item 
A continuous function $u : [0,+\infty) \times M \to \R$ is a viscosity supersolution to \eqref{evo} if it verifies the initial condition and if for any $C^1$ function $\phi : (0,+\infty) \times M \to \R$, if $u-\phi$ has a local minimum at $(t_0,x_0)$ then
$$\partial_t \phi(t_0,x_0) + H\big(x_0,\partial_x\phi (t_0,x_0)\big) \geqslant 0.$$
\item A continuous function $u : [0,+\infty) \times M \to \R$ is a viscosity  solution to \eqref{evo} if it is both a subsolution and a supersolution.
\end{itemize}
\end{df}
In the rest of the exposition, unless otherwise  specified, any solution, subsolution or supersolution will be implicitly understood in the viscosity sense and the adjective will be omitted.

This notion is particularly adapted to our problem for several reasons. The first one is that it is reasonable in the sense that if a solution $u$ is differentiable at $(t_0,x_0) \in (0,+\infty)\times M$ then it solves the Hamilton--Jacobi equation at that point:
$$\partial_t u(t_0,x_0) + H\big(x_0,\partial_xu (t_0,x_0)\big) = 0.$$
As it can be proved that in our setting, solutions are locally Lipschitz, viscosity solutions turn out to be almost everywhere solutions. However, beware that the converse is not true. The following theorem makes viscosity solutions particularly handy (see \cite{barles, DZJDE}):

\begin{Th}
Given a continuous function $u_0 : M\to \R$, there exists a unique solution to \eqref{evo}. This solution will be denoted $(x,t)\mapsto S^-(t) (u _0)(x)$.
\end{Th}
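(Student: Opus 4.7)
The plan is to prove existence constructively via a continuous Lax--Oleinik semigroup, in direct analogy with the discrete formula $T^- f(x)=\inf_y f(y)+c(y,x)$ of the first section, and then to deduce uniqueness from the standard comparison principle for viscosity solutions. I would first introduce the Lagrangian $L:TM\to\R$ associated to $H$ by Legendre--Fenchel duality,
$$L(x,v)=\sup_{p\in T_x^*M}\bigl(p(v)-H(x,p)\bigr),$$
which inherits from $H$ being $C^2$, strictly fiberwise convex and superlinear (classical duality for Tonelli functions). I would then set
$$S^-(t)u_0(x)=\inf_\gamma\Bigl\{u_0\bigl(\gamma(0)\bigr)+\int_0^t L\bigl(\gamma(s),\dot\gamma(s)\bigr)\,\dd s\Bigr\},$$
the infimum being over absolutely continuous $\gamma:[0,t]\to M$ with $\gamma(t)=x$. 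The superlinearity of $L$ and lower semicontinuity of the action under weak convergence give Tonelli's existence theorem for minimizers, together with a priori Lipschitz estimates; from these, the semigroup property $S^-(t+s)=S^-(t)\circ S^-(s)$, the joint continuity of $(t,x)\mapsto S^-(t)u_0(x)$, and the uniform convergence $S^-(t)u_0\to u_0$ as $t\to 0^+$ all follow by concatenation arguments analogous to those used for $T^-$.

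For existence, I would verify that $u(t,x):=S^-(t)u_0(x)$ solves \eqref{evo} in the viscosity sense. At a local maximum $(t_0,x_0)$ of $u-\phi$, applying the dynamic programming principle along a straight curve with arbitrary constant velocity $v\in T_{x_0}M$ ending at $x_0$ yields $\phi(t_0,x_0)-\phi(t_0-h,x_0-hv)\leqslant hL(x_0,v)+o(h)$, whence
$$\partial_t\phi(t_0,x_0)+\partial_x\phi(t_0,x_0)(v)\leqslant L(x_0,v),\qquad \forall v\in T_{x_0}M,$$
and taking the supremum over $v$ together with the Legendre duality $H(x,p)=\sup_v\bigl(p(v)-L(x,v)\bigr)$ gives $\partial_t\phi(t_0,x_0)+H\bigl(x_0,\partial_x\phi(t_0,x_0)\bigr)\leqslant 0$. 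The supersolution inequality at a local minimum of $u-\phi$ is obtained dually, this time plugging into the dynamic programming principle an \emph{optimal} minimizer ending at $(t_0,x_0)$ (provided by Tonelli), along which the defining inequality becomes an equality.

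Uniqueness follows from the classical comparison principle: if $u$ is a subsolution and $v$ a supersolution of \eqref{evo} with $u(0,\cdot)\leqslant v(0,\cdot)$, then $u\leqslant v$. The argument proceeds by doubling of variables, considering the penalized functional
$$\Phi_\varepsilon(t,x,s,y)=u(t,x)-v(s,y)-\tfrac{1}{\varepsilon}\bigl(\dd(x,y)^2+(t-s)^2\bigr)-\lambda t,$$
showing that any maximizer is confined to a compact set with $x,y$ and $t,s$ pinched together as $\varepsilon\to 0$, inserting the natural test functions into the sub-- and supersolution inequalities, subtracting, and using the local uniform continuity of $H(\cdot,p)$ (granted by the $C^2$ Tonelli assumption) to pass to the limit. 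This forces $\lambda\leqslant 0$, hence $u\leqslant v$, and then applying the resulting comparison to any two solutions produced by our Lax--Oleinik formula gives uniqueness.

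The main obstacle is the comparison principle: on a manifold, the doubling--of--variables argument requires some care with charts, with the Riemannian squared distance as a penalization, and with a priori gradient control coming from the coercivity of $H$; a close second is the verification of the supersolution inequality, which crucially relies on the full Tonelli strength (strict convexity plus superlinearity of $L$) to produce a genuine $C^2$ minimizing extremal ending at any prescribed point, rather than merely an approximately minimizing sequence.
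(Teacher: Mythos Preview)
The paper does not give its own proof of this theorem: it is stated as a known fact with references to \cite{barles, DZJDE}, and the Lax--Oleinik representation formula is then stated separately as a further theorem. Your proposal is the standard route to this result (and is essentially the one taken in those references and in Fathi's book): build $S^-(t)u_0$ via the Lagrangian action, check the viscosity sub/supersolution inequalities from dynamic programming plus Tonelli minimizers, and get uniqueness from the doubling-of-variables comparison principle. The sketch is sound.

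One small organizational remark: the paper orders things as ``existence and uniqueness of the viscosity solution'' first (abstractly), then ``it is given by the Lax--Oleinik formula'' as a second theorem. Your route inverts this, defining the value function first and proving it is the unique viscosity solution. The two are equivalent in content; your ordering is the one more natural in the weak KAM/optimal control tradition, while the paper's ordering reflects the PDE tradition where comparison is the primitive and the variational formula is a corollary.
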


For any fixed $t>0$, the operator $S^-(t)$ is acting on $C^0(M,\R)$. Due to the uniqueness of solutions and  to the fact that $H$ is autonomous, $S^-$ is a semigroup, meaning that $S^-(t+s) = S^-(t)\circ S^-(s)$. It turns out it enjoys properties very similar to the discrete Lax--Oleinik semigroup $T^-$:
\begin{pr}\label{prop S}
\hspace{2em}
\begin{enumerate}
\item For any $t>0$, there exists $K>0$ such that the set $S^-(t)\big(C^0(M,\R)\big)$ contains only $K$--Lipschitz functions.
\item For any $t>0$, $S^-(t)$ commutes with addition of constants: $S^-(t)(f+k)=S^-(t)(f)+k$, for all $f\in  C^0(M,\R)$ and $k\in \R$.
\item For any $t>0$, $S^-(t)$ is order preserving: if $f\leqslant g$ then $S^-(t)f\leqslant S^-(t)g$.
\item For any $t>0$, $S^-(t)$ is $1$--Lipschitz for the sup--norm.
\end{enumerate}

\end{pr}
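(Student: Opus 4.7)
The plan is to base everything on the classical Lax--Oleinik variational representation of $S^-(t)$. Let $L : TM \to \R$ denote the Fenchel dual of $H$ in the fiber, $L(x,v) = \sup_{p \in T_x^*M}\big(\langle p, v\rangle - H(x,p)\big)$; under the Tonelli hypotheses, $L$ is itself $C^2$, strictly fiberwise convex, and superlinear. The fundamental representation theorem (see e.g.~\cite{Fa, barles}) asserts that the unique viscosity solution of \eqref{evo} with initial datum $u_0$ is
$$S^-(t)u_0(x) \;=\; \inf_{\substack{\gamma \in AC([0,t],M) \\ \gamma(t) = x}} u_0\big(\gamma(0)\big) + \int_0^t L\big(\gamma(s),\dot\gamma(s)\big)\dd s \;=\; \inf_{y\in M} u_0(y) + A_t(y,x),$$
where $A_t(y,x)$ denotes the minimal action between $y$ and $x$ in time $t$. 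This is the exact continuous analogue of $T^-f(x) = \inf_y f(y) + c(y,x)$.

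Granted this formula, properties (2), (3) and (4) follow by arguments \emph{verbatim identical} to the ones given for Proposition~\ref{propT}: (2) and (3) are immediate from the infimum, while (4) is deduced from (2) and (3) by sandwiching $f - \|f - g\|_\infty \leqslant g \leqslant f + \|f - g\|_\infty$ and applying order preservation together with commutation with constants. No adaptation of the discrete arguments is needed.

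The substantial statement is (1), the Lipschitz estimate uniform in $u_0$. The plan is to reduce it to the following key lemma from Tonelli theory: for every $t > 0$ there exists $K(t) > 0$ such that $A_t(y, \cdot)$ is $K(t)$--Lipschitz, uniformly in $y \in M$. Granted this, the equicontinuity argument from the proof of Proposition~\ref{propT} transposes directly: taking $y^* \in M$ nearly optimal for $S^-(t)u_0(x')$, one has $S^-(t)u_0(x) \leqslant u_0(y^*) + A_t(y^*, x)$, whence $S^-(t)u_0(x) - S^-(t)u_0(x') \leqslant K(t)\,\dd(x,x') + \varepsilon$, and symmetrization yields a bound independent of $u_0$. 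The main obstacle, and the only place where the specific Tonelli structure genuinely enters, is the Lipschitz estimate on $A_t(y, \cdot)$: one first bounds the action of a minimizer from $y$ to $x$ uniformly in $(y,x)$ by comparison with a geodesic reparametrized on $[0,t]$ (using compactness of $M$ and continuity of $L$); second, superlinearity of $L$ converts this uniform action bound into a uniform $L^\infty$--bound on the speed of the minimizers, which by Tonelli's theorem are $C^2$ solutions of the Euler--Lagrange equation; finally, given a minimizer ending at $x$, a competitor ending at $x'$ is built by slightly compressing the minimizer on $[0, t-\delta]$ and concatenating with a minimizing geodesic from $x$ to $x'$ of length $\dd(x,x')$ on $[t-\delta, t]$, the excess action being controlled linearly in $\dd(x,x')$ via the speed bound.
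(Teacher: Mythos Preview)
Your proposal is correct and follows exactly the route the paper takes: the paper states this proposition without a formal proof, then immediately introduces the Lagrangian and the Lax--Oleinik representation $S^-(t)u(x) = \inf_{y\in M} u(y) + h_t(y,x)$, observing that this is precisely the discrete operator $T^-$ with cost $c=h_t$, so that Proposition~\ref{propT} applies verbatim. The only nontrivial ingredient is the uniform Lipschitz continuity of $h_t(y,\cdot)$, which the paper explicitly flags as ``technical (and central)'' and defers to \cite{Fa}; you have sketched the standard Tonelli argument for it (a priori action bound, superlinearity gives a speed bound, concatenation with a short geodesic), which is exactly what one finds in that reference.
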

At this stage, the similarities between discrete weak KAM theory and the Hamilton--Jacobi equations are not clear. It comes from an explicit control--theoretic representation formula of the operators $S^-(t)$. Let us define the Lagrangian as the convex dual of the Hamiltonian:

\begin{df}\rm
The Lagrangian $L:TM\to \R$ is defined by
$$\forall (x,v)\in TM,\quad L(x,v) = \sup_{p\in T^*_x M} p(v) - H(x,p).$$
\end{df}
In this definition, the supremum is actually a maximum. The Lagrangian $L$ is termed {\it Tonelli Lagrangian} as it enjoys the same properties as $H$:
\begin{itemize}
\item $L$ is $C^2$,
\item $L$ is strictly convex in the speed variable, meaning that for any $(x,v)\in TM$ the Hessian $\partial_{vv}L(x,v)$ is positive definite.
\item $L$ is superlinear, meaning that 
$$\forall x\in M, \quad \lim_{\|v\|_x\to +\infty }\frac{L(x,v)}{\|v\|_x} = +\infty.$$
\end{itemize}

Through the Lagrangian and Hamiltonian functions we can go from $TM $ to $T^*M$ thanks to the Fenchel transform  $\LL$  defined by
\begin{equation}\label{Fenchel}
\forall (x,v)\in TM,\quad \LL (x,v) = \big(x, \partial_vL(x,v)\big) \in T^*M.
\end{equation}
This transformation is a $C^1$ diffeomorphism under the Tonelli assumptions and its inverse is given by
$$\forall (x,p)\in T^*M,\quad \LL^{-1} (x,p) = \big(x, \partial_pH(x,p)\big) \in TM.$$
 Moreover, $H$ and $L$ are also related by the formulas
 $$H\big(x,\partial_vL(x,v)\big) = \partial_vL(x,v)(v) - L(x,v) \ \ ; \ \  L\big(x,\partial_pH(x,p)\big) = p\big(\partial_pH(x,p)\big) - H(x,p).$$

\begin{Th}
Let $u : M\to \R$ be any continuous function. For any $t>0$ and $x\in M$ the following holds:
\begin{equation}\label{LOSM}
S^-(t)u (x) = \inf_{\substack{\gamma:[-t,0]\to M \\ \gamma(0)=x}} u\big(\gamma(-t)\big) +\int_{-t}^0 L\big(\gamma(s),\dot\gamma(s)\big)\dd s.
\end{equation}
\end{Th}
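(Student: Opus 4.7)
The plan is to denote the right-hand side of \eqref{LOSM} by $U(t,x)$ and show that $U = S^-(t)u$ by appealing to the uniqueness of viscosity solutions to \eqref{evo}. Three things have to be checked: (a) $U(0^+,\cdot) = u$ and $U$ is continuous on $[0,+\infty)\times M$; (b) $U$ is a viscosity subsolution of \eqref{evo}; (c) $U$ is a viscosity supersolution of \eqref{evo}. Before any of this I would verify that the infimum defining $U(t,x)$ is finite and attained. Superlinearity gives $L(x,v)\geqslant \|v\|_x - C$ for some $C\in\R$, so minimizing sequences of absolutely continuous competitors have uniformly bounded action and, by Tonelli's direct method (superlinearity ensures equi-integrability of $\|\dot\gamma\|$, convexity in $v$ gives lower semicontinuity of the action), a minimizer $\gamma^*$ exists; under the $C^2$ Tonelli assumptions $\gamma^*$ is $C^2$ because it satisfies the Euler--Lagrange equation, so $\dot\gamma^*(0)$ makes sense.

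The central tool is the dynamic programming principle: for $0<s<t$ and $x\in M$,
\[
U(t,x)=\inf_{y\in M}\Bigl[\,U(t-s,y)+\inf_{\substack{\gamma:[-s,0]\to M\\ \gamma(-s)=y,\;\gamma(0)=x}}\int_{-s}^{0}L\big(\gamma(\sigma),\dot\gamma(\sigma)\big)\,\dd\sigma\,\Bigr].
\]
This is the continuous analogue of the iteration $\Tn = T^- \circ T^{-(n-1)}$ used earlier, and is proved by concatenation/restriction of curves. From the DPP and superlinearity one deduces standard Lipschitz and continuity estimates for $U$ in $(t,x)$ exactly as in Proposition \ref{prop S}; moreover, taking constant curves $\gamma\equiv x$ gives the one-sided bound $U(t,x)\leqslant u(x)+tL(x,0)$, while the opposite inequality $\liminf_{t\to 0^+}U(t,x)\geqslant u(x)$ follows from superlinearity (if $\gamma$ drifts, its action blows up faster than any loss from $u$). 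Hence $U(0,\cdot)=u$ and $U$ is continuous.

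For the subsolution inequality, suppose $U-\phi$ has a local max at $(t_0,x_0)$ with $t_0>0$, and pick any $v\in T_{x_0}M$. Using the exponential map, build a $C^1$ curve $\gamma$ with $\gamma(0)=x_0$ and $\dot\gamma(0)=-v$. The DPP gives
\[
U(t_0,x_0)\leqslant U(t_0-h,\gamma(-h))+\int_{-h}^{0}L\big(\gamma(\sigma),\dot\gamma(\sigma)\big)\,\dd\sigma
\]
for small $h>0$. The local max condition replaces $U$ by $\phi$ up to a constant; dividing by $h$ and letting $h\to 0^+$ yields $\partial_t\phi(t_0,x_0)+\partial_x\phi(t_0,x_0)(v)\leqslant L(x_0,v)$. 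Taking the supremum over $v\in T_{x_0}M$ and using the Fenchel identity $H(x_0,p)=\sup_v [p(v)-L(x_0,v)]$ gives the required inequality $\partial_t\phi(t_0,x_0)+H(x_0,\partial_x\phi(t_0,x_0))\leqslant 0$. The supersolution inequality is proved by using an optimal curve $\gamma^*$ for $U(t_0,x_0)$: the DPP becomes an equality when restricted to $\gamma^*$, so if $U-\phi$ has a local min at $(t_0,x_0)$,
\[
\phi(t_0,x_0)-\phi(t_0-h,\gamma^*(-h))\geqslant \int_{-h}^{0}L\big(\gamma^*,\dot\gamma^*\big)\,\dd\sigma;
\]
dividing by $h$, letting $h\to 0^+$, and using the Fenchel inequality $p(v)\leqslant H(x,p)+L(x,v)$ with $v=\dot\gamma^*(0)$ gives $\partial_t\phi(t_0,x_0)+H(x_0,\partial_x\phi(t_0,x_0))\geqslant 0$.

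With (a), (b), (c) in hand, $U$ and $S^-(t)u$ are both viscosity solutions of \eqref{evo} with the same initial datum $u$, so the uniqueness theorem stated above forces $U=S^-(t)u$. The main obstacle I expect is the geometric bookkeeping on the manifold: choosing the test curves via the exponential map so that the velocity at $0$ is exactly a prescribed $v\in T_{x_0}M$, and justifying that the minimizer $\gamma^*$ has a genuine velocity at the endpoint (handled by $C^2$ regularity of Tonelli minimizers). The rest is routine DPP manipulation together with the Fenchel duality already recorded in \eqref{Fenchel}.
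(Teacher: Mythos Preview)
The paper does not prove this theorem: it is stated in the survey section ``Relations to the classical theory'' as a known result, with the surrounding text simply remarking that Tonelli theory guarantees the infimum is a minimum attained by a $C^2$ Euler--Lagrange extremal. So there is no proof in the paper to compare against.

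Your approach --- verify the dynamic programming principle, check the initial condition, and derive the viscosity sub- and supersolution inequalities by testing against short curves (arbitrary for the sub-, optimal for the super-), then invoke uniqueness --- is the standard textbook route (see e.g.\ Fathi's book or Cannarsa--Sinestrari) and is correct in outline. One small slip: with your convention $\dot\gamma(0)=-v$, the first-order expansion actually gives $\partial_t\phi-\partial_x\phi(v)\leqslant L(x_0,-v)$, not the inequality you wrote; the clean choice is $\dot\gamma(0)=v$, which yields $\partial_t\phi+\partial_x\phi(v)\leqslant L(x_0,v)$ directly and then the Fenchel supremum gives the subsolution inequality. This is only a labeling issue, not a gap.
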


In this formula, called the {\it Lax--Oleinik formula}, the infimum is taken amongst absolutely continuous curves. Tonelli theory asserts that the infimum is a minimum and any such minimum turns out to be $C^2$ and to verify the Euler--Lagrange equation: $\dfrac{\dd}{\dd t} \partial_v L(\gamma, \dot \gamma) = \partial_x L(\gamma,\dot \gamma)$ (see \cite{clarke}). This equation defines a complete flow on $TM$  denoted by $\varphi_L$. It is called the {\it Euler--Lagrange flow}. Its conjugate by the Fenchel transform $\varphi_H = \LL\circ \varphi_L\circ \LL^{-1}$ is a flow on $T^*M$ called {\it Hamiltonian flow}. Its trajectories solve Hamilton's equations:
\begin{equation}\label{Hamilton}\begin{cases}
\dot{x} = \partial_p H(x,p), \\
\dot{p} = -\partial_x H(x,p).
\end{cases}
\end{equation}

The infimum in \eqref{LOSM} can be split in two by first choosing a starting point $y$ for the curves and then minimizing between $y$ and $x$. More precisely, if we define the action functional
\begin{equation}\label{action}
\forall (t,y,x)\in [0,+\infty) \times M \times M,\quad h_t(y,x) = \inf_{\substack{\gamma:[-t,0]\to M \\ \gamma(0)=x \\ \gamma(-t)=y}}\int_{-t}^0 L\big(\gamma(s),\dot\gamma(s)\big)\dd s  ,
\end{equation}
then the formula for solutions of \eqref{evo} becomes 
$$S^-(t)u(x) = \inf_{y\in M} u(y) + h_t(y,x),$$ 
which is exactly the discrete Lax--Oleinik semigroup with cost funtion $c=h_t$.

\subsection{The weak KAM Theorem and critical subsolutions}

Another important fact follows from the simple remark that a function $u : M\to \R$ is solution to the stationary Hamilton--Jacobi equation \eqref{sta} with constant $\alpha$ if and only if the function $U(t,x) = u(x)-\alpha t$ is solution to the evolutionary equation \eqref{evo} with initial condition $u_0 = u$. Hence any such solution is characterized by the property
\begin{align*}
\forall (t,x)\in (0,+\infty)\times M,\quad u(x) & = \inf_{\substack{\gamma:[-t,0]\to M \\ \gamma(0)=x}} u\big(\gamma(-t)\big) +\int_{-t}^0 \Big[L\big(\gamma(s),\dot\gamma(s)\big)+\alpha\Big]\dd s \\
&=\inf_{y\in M}u(y) +h_t(y,x) +t\alpha.
\end{align*}
Note also that such solutions verify in particular that 
\begin{equation}\label{subsol}\forall t>0,\ \forall \gamma : [-t,0]\to M, \quad u\big(\gamma(0)\big) - u\big(\gamma(-t)\big)\leqslant  \int_{-t}^0 \Big[L\big(\gamma(s),\dot\gamma(s)\big)+\alpha\Big]\dd s .
\end{equation}
In fact, verifying the above family of inequalities characterizes $u$ to be a subsolution of \eqref{sta}.

With these facts in mind, it should not come as a surprise that the original weak KAM Theorem of Fathi  (\cite{Fa4}) is similar to the discrete weak KAM Theorem we stated:
\begin{Th}\label{cweakKAM}
There exists a unique constant $\alpha(0)\in \R$ for which the stationary equation \eqref{evo} admits a solution with right hand side equal to $\alpha=\alpha(0)$.\footnote{The notation $\alpha(0)$ is borrowed from Mather's $\alpha$ function. It is a function acting on the first cohomology group of $M$. Given a closed  $1$--form $c$, one can perturb the stationary equation \eqref{sta} by $H(x,c(x)+ D_xu)=\alpha$ and prove a weak KAM theorem for this equation. The critical constant found  depends only on the cohomology class $[c]$ and is $\alpha([c])$. Discrete analogues of this are discussed in \cite{ZJMD}.}
\end{Th}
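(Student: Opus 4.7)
The plan is to reduce the classical theorem to the discrete weak KAM Theorem \ref{weak KAM} via the Lax--Oleinik formula \eqref{LOSM}. For each fixed $t>0$, the operator $S^-(t):C^0(M,\R)\to C^0(M,\R)$ coincides with the discrete Lax--Oleinik operator $T^-$ attached to the cost $c:=h_t$ on the compact metric space $X:=M$, where $h_t$ is the action in \eqref{action}. Under the Tonelli assumptions, $h_t$ is continuous and bounded on $M\times M$, so all hypotheses of the discrete setting are met. Applying Theorem \ref{weak KAM} with $c=h_1$ yields a constant $k\in\R$ and a continuous $u:M\to\R$ with $S^-(1)u=u-k$. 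I set $\alpha(0):=k$.

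The main obstacle is that the fixed-point identity obtained in the discrete step holds only for the time step $t=1$ (and hence, by the semigroup property, for all integer times), whereas the stationary equation \eqref{sta} with $\alpha=\alpha(0)$ is equivalent to $S^-(t)u=u-\alpha(0)t$ for \emph{every} $t\geqslant 0$. I would close this gap by a selection procedure. Starting from the $u$ above, define
$$\tilde u(x) := \liminf_{T\to+\infty}\bigl(S^-(T)u(x)+\alpha(0)T\bigr).$$
The family $\{S^-(T)u+\alpha(0)T : T\geqslant 1\}$ is uniformly bounded (using Remark \ref{bounded}(i) at integer times, together with the $1$-Lipschitz property of $S^-(t)$ from Proposition \ref{prop S} to interpolate at non-integer times) and equicontinuous (via the regularization from Proposition \ref{prop S}). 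By Arzel\`a--Ascoli, $\tilde u$ is a well-defined continuous function, and the semigroup property combined with the lower-semicontinuity of the $\liminf$ then yields $S^-(t)\tilde u=\tilde u-\alpha(0)t$ for every $t\geqslant 0$.

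Setting $U(t,x):=\tilde u(x)-\alpha(0)t$, the identity just obtained says that $U$ is the unique viscosity solution of \eqref{evo} with initial datum $\tilde u$; testing the viscosity definition against $C^1$ functions of the product form $\phi(t,x)=\psi(x)-\alpha(0)t$ translates this directly into the fact that $\tilde u$ is a viscosity solution of $H(x,D_x\tilde u)=\alpha(0)$. Uniqueness of $\alpha(0)$ is transparent from Proposition \ref{pr asympt} applied to the discrete system with cost $h_1$: if $v:M\to\R$ solves \eqref{sta} with constant $\alpha'$, then $S^-(n)v=v-\alpha'n$, so $S^-(n)v/n\to-\alpha'$ uniformly, which forces $\alpha'$ to equal the unique critical constant $\alpha(0)$ of the discrete system.
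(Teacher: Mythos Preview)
Your reduction to the discrete theorem and your uniqueness argument are correct and match the paper's reasoning. The existence part, however, has a real gap at the key step. You assert that ``the semigroup property combined with the lower-semicontinuity of the liminf'' yields $S^-(t)\tilde u=\tilde u-\alpha(0)t$, but this phrase does not correspond to a valid argument. Writing $w_T:=S^-(T)u+\alpha(0)T$ and $\tilde u=\liminf_{T\to\infty}w_T$, the general inequality $\liminf_T\inf_y\leqslant\inf_y\liminf_T$ only gives the subsolution direction $S^-(t)\tilde u\geqslant\tilde u-\alpha(0)t$. The reverse inequality does not follow from any ``lower-semicontinuity'' of a pointwise liminf: for a generic equicontinuous family one simply cannot pass $S^-(t)$ inside the liminf in the direction you need.

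What rescues your construction is an observation you never make explicit: since $S^-(1)u=u-\alpha(0)$, the family $T\mapsto w_T$ is $1$-periodic, and (using continuity of $T\mapsto S^-(T)u$, proved in the lemma following the paper's proof) the liminf is an honest infimum $\tilde u=\inf_{s\in[0,1]}w_s$. Then the commutation of two infima, exactly as in Lemma~\ref{inf}, gives
\[
S^-(t)\tilde u=\inf_y\inf_s\bigl[w_s(y)+h_t(y,\cdot)\bigr]=\inf_s S^-(t)w_s=\inf_s w_{s+t}-\alpha(0)t=\tilde u-\alpha(0)t,
\]
the last equality using periodicity again. So your approach is salvageable, but the engine is periodicity plus the inf--inf swap, not anything about liminf.

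For comparison, the paper takes a different route: it applies the discrete weak KAM theorem at every dyadic time step $2^{-n}$, obtaining fixed points $u_n$ with $S^-(2^{-n})u_n=u_n-2^{-n}c_0$; these are equi-Lipschitz (all lie in the image of $S^-(1)$), so Arzel\`a--Ascoli produces a limit $v$ satisfying $S^-(t)v=v-tc_0$ for all dyadic $t$, hence for all $t$ by continuity. Your approach uses the discrete theorem only once and then fills in the missing times by an infimum; the paper uses it infinitely often and passes to the limit in the fixed points themselves. Both ultimately rely on the continuity of $t\mapsto S^-(t)v$.
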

\begin{proof}
For fun's sake, we provide yet another proof of this Theorem. We deduce it from the discrete weak KAM theorem, although it is highly unnatural.

Uniqueness of $\alpha(0)$ follows from the uniqueness of the critical constant in the discrete weak KAM theorem \ref{weak KAM} as a classical weak KAM solution is also a discrete weak KAM solution for the cost $h_1$ and same critical constant. 

The first (and central) part of the proof is to establish that for $t>0$ fixed, $h_t$ is Lipschitz continuous (hence continuous). Note that the second point of Proposition \ref{prop S} also follows from that assertion. We omit this technical (and central) aspect and refer to \cite{Fa}. 

We now apply the discrete weak KAM theorem \ref{weak KAM} which states that for all $n\in \N$, there exists a unique  constant $c_n$ and a function $u_n : M\to \R$ such that  $u_n = S^-(2^{-n}) u_n + c_n$. By using the semigroup property. One obtains that 
$$u_n = \big(S^-(2^{-n})\big)^{2^n} u_n + 2^nc_n = S^-(1)u_n +2^nc_n.$$
It follows by the uniqueness of $c_0$ that for all $n\geqslant 0$, $c_n = 2^{-n}c_0$. The same argument yields that 
\begin{equation}\label{diadic}
\forall n\geqslant 0, \ \forall t \in2^{-n} \N,\quad u_n = S^-(t)u_n + tc_0.
\end{equation}
Up to adding constants to the functions $u_n$ we may assume they all vanish at some point of $M$ which does not change their property of being weak KAM solutions for $S^-(2^{-n})$.
Moreover, as all the $u_n,n\in \N$ are in the image of $S^-(1)$ they form an equi--Lipschitz family of functions. Hence by the Arzel\` a--Ascoli theorem, we may find an extracted sequence $k_n$ such that $(u_{k_n})_{n\in \N}$ converges uniformly to a function $v$. By continuity of $u\mapsto S^-(t)u$ we may pass to the limit (as $m\to +\infty$) in the equalities
$$\forall m\geqslant n, \ \forall t \in 2^{-k_n}\N, \quad u_{k_{m}} = S^-(t)u_{k_{m}}+tc_0,$$ to obtain that $v=S^-(t)v + tc_0$ for any diadic number $t$. As diadic numbers are dense in $[0,+\infty)$ the theorem follows again by continuity of $t\mapsto S^-(t)v$.
\end{proof}
We have used at the end of the proof the following result, of which we give a quick proof for completeness:
\begin{lm}
Let $u:M\to \R$ be a continuous function, then the function $t\mapsto S^-(t)u$ is uniformly continuous. 
\end{lm}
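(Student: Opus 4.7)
The plan is to reduce the claim to continuity at $t=0$. Since $S^-$ is a semigroup and each $S^-(t)$ is $1$--Lipschitz for $\|\cdot\|_\infty$ (Proposition \ref{prop S}), for all $t,h\geqslant 0$ one has
$$\|S^-(t+h)u - S^-(t)u\|_\infty = \|S^-(t)\bigl(S^-(h)u\bigr) - S^-(t)u\|_\infty \leqslant \|S^-(h)u - u\|_\infty.$$
Therefore, if I can prove $\|S^-(h)u - u\|_\infty \to 0$ as $h\to 0^+$, then the function $t\mapsto S^-(t)u$ is uniformly continuous on $[0,+\infty)$ with modulus $\omega(h):=\sup_{0\leqslant k\leqslant h}\|S^-(k)u-u\|_\infty$.

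For the upper bound on $S^-(h)u(x)-u(x)$, I would plug the constant curve $\gamma\equiv x$ into the Lax--Oleinik formula \eqref{LOSM} to obtain
$$S^-(h)u(x) \leqslant u(x) + h\, L(x,0) \leqslant u(x) + C_0\, h,\qquad C_0 := \max_{x\in M}L(x,0).$$
For the lower bound, let $\gamma_h$ be a Tonelli minimizer realizing $S^-(h)u(x)$, so that $S^-(h)u(x) = u(\gamma_h(-h)) + \int_{-h}^0 L(\gamma_h,\dot\gamma_h)\,\dd s$. Combining with the upper bound yields the energy estimate $\int_{-h}^0 L(\gamma_h,\dot\gamma_h)\,\dd s \leqslant 2\|u\|_\infty + C_0 h$. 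Superlinearity of $L$ provides, for every $K>0$, a constant $A_K$ such that $L(x,v)\geqslant K\|v\|_x - A_K$. Integrating this lower bound on $[-h,0]$ gives
$$K\,\dd(\gamma_h(-h),x) \leqslant K\int_{-h}^0 \|\dot\gamma_h(s)\|_{\gamma_h(s)}\,\dd s \leqslant 2\|u\|_\infty + (C_0 + A_K)h.$$
Given $\varepsilon>0$, first choose $K = 4\|u\|_\infty/\varepsilon$ (which fixes $A_K$), and then take $h$ small enough that $(C_0+A_K)h/K \leqslant \varepsilon/2$. This forces $\dd(\gamma_h(-h),x)\leqslant \varepsilon$, uniformly in $x\in M$. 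Using uniform continuity of $u$ on the compact manifold $M$, with modulus $\omega_u$, and the trivial bound $\int_{-h}^0 L(\gamma_h,\dot\gamma_h)\,\dd s \geqslant h\,\inf_{TM}L$ (finite by superlinearity), one concludes
$$S^-(h)u(x) \geqslant u(x) - \omega_u(\varepsilon) + h\inf_{TM}L.$$
Together with the upper bound, this proves $\|S^-(h)u - u\|_\infty \to 0$ as $h\to 0^+$.

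The main obstacle is the lower bound step: since $u$ is only assumed continuous, one cannot a priori restrict to slow curves in the Lax--Oleinik infimum. The key technical point is to turn superlinearity into a \emph{uniform} displacement bound, by choosing the free parameter $K$ as a function of the desired accuracy $\varepsilon$ rather than of $h$, so that the $2\|u\|_\infty/K$ contribution can be made arbitrarily small independently of $h$.
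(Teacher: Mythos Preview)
Your proof is correct. The reduction to continuity at $0$ via the semigroup property and nonexpansiveness, as well as the upper bound from the constant curve, match the paper exactly. The lower bound, however, is handled differently.

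The paper first uses nonexpansiveness to reduce to the case where $u$ is Lipschitz (approximating a continuous $u$ by Lipschitz functions and using that $S^-(t)$ is $1$--Lipschitz). For a $K$--Lipschitz $u$, superlinearity with that specific $K$ gives $\int_{-t}^0 L(\gamma,\dot\gamma)\,\dd s \geqslant K\,\dd\bigl(\gamma(0),\gamma(-t)\bigr) - Ct$ for \emph{every} curve, and the $K\,\dd(\cdot,\cdot)$ term cancels exactly against the Lipschitz oscillation of $u$, yielding the clean linear bound $S^-(t)u(x)-u(x)\geqslant -Ct$ without ever needing a minimizer. Your argument instead works directly with a merely continuous $u$: you invoke a Tonelli minimizer, combine the upper bound with superlinearity to control the displacement $\dd(\gamma_h(-h),x)$, and then feed that into the modulus of continuity $\omega_u$. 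This requires the two-step choice of parameters ($K$ depending on $\varepsilon$, then $h$ depending on $K$) that you correctly identify as the crux. The paper's route is slicker and yields a sharper quantitative conclusion (linear in $t$ for Lipschitz data), while yours avoids the density/approximation step at the cost of a less explicit rate.
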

\begin{proof}
Using the semigroup property and non--expansiveness, one obtains that for all $0\leqslant s\leqslant t$, $\|S^-(t)u - S^-(s)u\|_\infty \leqslant \|S^-(t-s)u - u\|_\infty$. Therefore, it is enough to prove continuity at $0$. Moreover, again using the non--expansive character, one sees by an approximation argument that it is enough to prove the result for $u$ being a Lipschitz function.

Assume therefore that $u$ is Lipschitz continuous with Lipschitz constant $K>0$. As $L$ is superlinear, there is a constant $C>0$ such that
$$\forall (x,v) \in TM,\quad L(x,v) \geqslant K\|v\|_x -C.$$
It follows that for any absolutely continuous $\gamma : [-t,0]\to M$,
$$\int_{-t}^0 L\big(\gamma(s),\dot\gamma(s)\big) \dd s \geqslant  \int_{-t}^0 K\|\dot\gamma(s)\|_{\gamma(s)} \dd s -tC \geqslant K \dd\big(\gamma(0),\gamma(t)\big) - tC.$$
It follows that if $\gamma(0)=x$, recalling that $u$ is $K$--Lipschitz, 
\begin{multline*}
 \int_{-t}^0 \Big[L\big(\gamma(s),\dot\gamma(s)\big)+\alpha\Big]\dd s +u\big(\gamma(-t)\big)- u(x) \geqslant
 \\
 \geqslant K \dd\big(\gamma(0),\gamma(t)\big) - tC - K \dd\big(\gamma(0),\gamma(t)\big) 
\end{multline*}
and taking an infimum on all curves,
$S^-(t)u(x) - u(x) \geqslant -tC$.

Finally, comparing with a constant curve in the definition of the Lax--Oleinik semigroup one finds that
$$S^-(t)u(x)\leqslant u(x)+\int_{-t}^0 L(x,0)\dd s \leqslant u(x) +t\max_{y\in M} L(y,0).$$
Those two inequalities prove the lemma. 
\end{proof}
One may wonder if there is a relationship between discrete weak KAM solutions and weak KAM solutions. It turns out that the answer is yes and it is closely related to the autonomous aspect of our setting\footnote{The second part of the following Theorem becomes tautological when considering time--dependent $1$--periodic Hamiltonians as the definitions of weak KAM solutions  then coincide.}.
\begin{Th}\label{ewd}
Let $c = h_1$ be the cost function. Then we have $\alpha(0) = c[0]$. Moreover a function $u$ is a discrete weak KAM solution for $c$ if and only if it is a weak KAM solution for $H$.
\end{Th}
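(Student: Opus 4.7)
The easy half and the equality of critical constants are immediate: any classical weak KAM solution $u$ satisfies $u = S^-(t)u + t\alpha(0)$ for every $t>0$, and since $S^-(1)u(x) = \inf_{y\in M}\bigl[u(y)+h_1(y,x)\bigr] = T^-u(x)$ when $c = h_1$, specialising to $t = 1$ gives $u = T^- u + \alpha(0)$. Thus $u$ is a discrete weak KAM solution, and the uniqueness in Theorem~\ref{weak KAM} forces $c[0] = \alpha(0)$. The substantive task is the converse: starting from $u = T^- u + c[0]$, promote the time-$1$ equation to $u = S^-(t) u + t\alpha(0)$ for every $t > 0$.

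My plan proceeds in three stages. First, the Markov identity $h_{s+t}(x,y) = \inf_z\bigl[h_s(x,z)+h_t(z,y)\bigr]$, obtained by concatenating absolutely continuous curves, shows that $T^{-n} = S^-(n)$ on continuous functions; iterating the discrete equation then yields $u = S^-(n) u + n\alpha(0)$ for every $n\in\N$. Second, following the blueprint of Proposition~\ref{pr-sequence}, for each $x \in M$ I build a backward sequence $(x_{-k})_{k\geq 0}$ with $x_0 = x$ realising the infima in the iterated discrete equation, and connect consecutive terms by the $C^2$ Tonelli minimisers of $h_1$. The concatenation $\Gamma \colon (-\infty,0]\to M$ has total action on $[-n,0]$ equal to $u(x) - u(x_{-n}) - n\alpha(0)$, which is bounded above by $h_n(x_{-n},x)$ by the integer-time subsolution from the first stage; forcing equality makes $\Gamma|_{[-n,0]}$ a Tonelli minimiser for every $n$, so $\Gamma$ is $C^2$ and Euler--Lagrange. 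Third, I promote the calibration identity along $\Gamma$ from integer to all positive times: splitting the integer-time equality at an intermediate time $-t$, using that restrictions of Tonelli minimisers remain Tonelli minimisers, and telescoping the discrete $h_1$-subsolution on the complementary piece, I expect
\[
u(x) - u(\Gamma(-t)) = \int_{-t}^0 L(\Gamma,\dot\Gamma)\, ds + t\alpha(0) \qquad \text{for every } t>0.
\]
Testing the infimum defining $S^-(t)u(x)$ with $y = \Gamma(-t)$, combined with the minimising property of $\Gamma|_{[-t,0]}$, then yields the supersolution $u \geq S^-(t)u + t\alpha(0)$ for every $t > 0$.

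From this supersolution, the matching upper bound follows by a short monotonicity trick: applying the $1$-Lipschitz and order-preserving $S^-(s)$ to both sides of $S^-(t)u + t\alpha(0) \leq u$ and using commutativity with additive constants, one finds $S^-(s+t)u + (s+t)\alpha(0) \leq S^-(s)u + s\alpha(0)$, so the map $t\mapsto S^-(t)u + t\alpha(0)$ is pointwise non-increasing in $t$. Since this map equals $u$ at $t = 0$ trivially and at $t = 1$ by the discrete weak KAM relation $S^-(1)u = u - \alpha(0)$, it must equal $u$ throughout $[0,1]$; by the semigroup property it then equals $u$ for every $t \geq 0$. This closes the loop: $u = S^-(t)u + t\alpha(0)$ for all $t > 0$, i.e., $u$ is a classical weak KAM solution. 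The main obstacle I anticipate is precisely the promotion of the calibration identity along $\Gamma$ from integer to arbitrary times in the third stage, since the discrete $h_1$-subsolution only controls unit-length jumps along $\Gamma$; extracting equality on arbitrary subintervals requires a careful interplay between the Tonelli-minimiser property of the restrictions of $\Gamma$ and a telescoping of the discrete subsolution inequality, possibly supplemented by a density-and-continuity argument in the parameter $t$.
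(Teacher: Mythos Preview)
Your overall architecture is sound and the final monotonicity trick is correct: once you have the supersolution inequality $S^-(t)u+t\alpha(0)\leqslant u$ for every $t>0$, the map $t\mapsto S^-(t)u+t\alpha(0)$ is non-increasing, equals $u$ at $t=0$ and $t=1$, hence is identically $u$. The easy direction and the identification $\alpha(0)=c[0]$ are fine, and your stage~2--3 argument that the concatenated curve $\Gamma$ is a Tonelli minimiser on each $[-n,0]$ is correct.

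The gap is exactly where you flag it, and your proposed patches do not close it. To get $u(x)\geqslant u\bigl(\Gamma(-t)\bigr)+h_t\bigl(\Gamma(-t),x\bigr)+t\alpha(0)$ from the integer-time calibration $u(x)-u(x_{-n})=h_n(x_{-n},x)+n\alpha(0)$, you must control the complementary piece $[-n,-t]$; unwinding, this reduces to an inequality of the form
\[
u\bigl(\Gamma(-t)\bigr)-u(x_{-n})\leqslant h_{n-t}\bigl(x_{-n},\Gamma(-t)\bigr)+(n-t)\alpha(0).
\]
Telescoping the discrete $h_1$-subsolution along $\Gamma(-t),\Gamma(-t-1),\dots$ always leaves a residual subinterval of fixed fractional length $n-t-\lfloor n-t\rfloor$, on which you would need precisely the continuous-time subsolution inequality $u(y)-u(z)\leqslant h_s(z,y)+s\alpha(0)$ for $s\in(0,1)$. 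But discrete subsolutions strictly contain continuous ones (the paper notes $\SSS_{\alpha(0)}\subsetneq\S_{\alpha(0)}$ in general), so this is not available a priori. The density-and-continuity escape does not help either: you only know the calibration identity at the integers, which are not dense, and the fractional residue does not shrink as $n\to\infty$.

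The paper avoids this obstacle entirely by a different and much shorter route: it invokes Fathi's convergence theorem (Theorem~\ref{cvsg}), which asserts that $S^-(t)v+t\alpha(0)$ converges uniformly to a classical weak KAM solution $\tilde v$ as $t\to+\infty$ for any continuous $v$. Applied to your discrete solution $u$, the limit along the integer subsequence is $u$ itself, hence $u=\tilde v$ is a classical weak KAM solution. Your approach, if completed, would be more elementary in that it does not use this deep asymptotic result; but as written, it is circular at the crucial step, and I do not see a way to close it without either Fathi's theorem or an equivalent input.
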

The proof heavily relies on Fathi's Theorem \cite{Fa1}:
\begin{Th}[Fathi \cite{Fa1}]\label{cvsg}
Let $v:M\to \R$ be a continuous function. Then $t\mapsto S^-(t)v+t\alpha(0)$ uniformly converges to a weak KAM solution, for $H$,  as $t\to +\infty$.
\end{Th}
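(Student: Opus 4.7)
The proof proceeds in three stages: compactness, a backward invariance property for cluster points, and uniqueness of the cluster point.

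First, I would establish compactness of the family $\Phi_t := S^-(t)v + t\alpha(0)$ for $t \geq 1$. Proposition \ref{prop S}(1) gives equi-Lipschitzness. If $w$ is any weak KAM solution (produced by Theorem \ref{cweakKAM}), then $S^-(t)w = w - t\alpha(0)$, and the non-expansiveness of $S^-(t)$ in Proposition \ref{prop S}(4) yields
$$\|\Phi_t - w\|_\infty = \|S^-(t)v - S^-(t)w\|_\infty \leq \|v - w\|_\infty,$$
so $\{\Phi_t\}_{t \geq 1}$ is also uniformly bounded. By Arzelà--Ascoli, every sequence $t_n \to \infty$ contains a subsequence along which $\Phi_{t_n}$ converges uniformly on $M$. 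Let $\Omega \subset C^0(M,\R)$ denote the non-empty compact set of all such uniform limits.

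Second, I would show that $\Omega$ is backward invariant under the shifted semigroup $\sigma_s : u \mapsto S^-(s)u + s\alpha(0)$. Given $u_\infty \in \Omega$ with $u_\infty = \lim_n \Phi_{t_n}$ and $s > 0$, compactness lets us extract a further subsequence so that $\Phi_{t_n - s} \to u_\infty'$ for some $u_\infty' \in \Omega$. Since $S^-(s)$ commutes with addition of constants (Proposition \ref{prop S}(2)) and is continuous, the semigroup identity $S^-(s) \circ S^-(t_n - s) = S^-(t_n)$ gives
$$\Phi_{t_n} = S^-(s)\Phi_{t_n - s} + s\alpha(0),$$
and passing to the limit, $u_\infty = S^-(s)u_\infty' + s\alpha(0)$. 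Thus every element of $\Omega$ lies in $\sigma_s(\Omega)$. Once one knows $\Omega$ is a singleton, this automatically makes its unique element a fixed point of every $\sigma_s$, i.e., a weak KAM solution for $H$.

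The main obstacle, and the genuine content of Fathi's theorem, is showing that $\Omega$ is a singleton. This step cannot be bypassed without using the Tonelli structure. The classical route consists of two steps:
\begin{itemize}
\item All elements of $\Omega$ coincide on the projected Aubry set $\AA \subset M$. One analyses minimizers of the Lax--Oleinik formula \eqref{LOSM}: for large $t$, the $C^2$ Euler--Lagrange trajectory $\gamma$ realizing $S^-(t)v(x)$ spends asymptotically all of its time close to the support of Mather minimizing measures, whose invariance under $\varphi_L$ together with their closedness (in the Mañé sense) force $\Phi_t(x)$ to renormalize to a canonical value at each $x \in \AA$, independent of the initial datum $v$.
\item Any two continuous weak KAM solutions that agree on $\AA$ must coincide everywhere on $M$, since a weak KAM solution is fully determined by its values on the Aubry set via propagation along calibrated curves, using both semigroups $S^-$ and $S^+$.
\end{itemize}
Combining these two facts yields $\Omega = \{u_\infty\}$ for a unique weak KAM solution $u_\infty$, and since every sequence $t_n \to \infty$ admits a subsubsequence along which $\Phi_{t_n} \to u_\infty$, the full family $\Phi_t$ converges uniformly to $u_\infty$ as $t \to +\infty$.
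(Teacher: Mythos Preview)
The paper does not give its own proof of this statement: Theorem~\ref{cvsg} is attributed to Fathi~\cite{Fa1} and invoked as a black box in the proof of Theorem~\ref{ewd} that immediately follows. So there is nothing in the text to compare your argument against.

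Assessing your sketch on its own merits: stages~1 and~2 are correct and standard. Stage~3 is where the substance lies, and your write-up has two real issues. First, the claim that the limiting value of $\Phi_t(x)$ at each $x\in\AA$ is ``independent of the initial datum $v$'' is false: replacing $v$ by $v+k$ shifts every $\Phi_t$ by $k$, and more generally distinct initial data produce distinct weak KAM limits. What you need, and what you correctly state at the head of that bullet, is only that all elements of $\Omega$ for a \emph{fixed} $v$ agree on $\AA$; the heuristic you offer (minimizers spend most of their time near Mather measures) points in the right direction but is not yet an argument.

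Second, and more seriously, the logic is circular as written. In stage~2 you explicitly postpone the assertion ``elements of $\Omega$ are weak KAM solutions'' until after $\Omega$ is known to be a singleton, yet the uniqueness principle you invoke in the second bullet of stage~3 (Theorem~\ref{Cuniqueness} in the paper) applies only to weak KAM solutions. You must establish \emph{directly} that every $u_\infty\in\Omega$ is a fixed point of each $\sigma_s$ before appealing to uniqueness on $\AA$. Forward and backward invariance together with non-expansiveness only give that each $\sigma_s$ restricts to an isometric bijection of the compact set $\Omega$; that alone does not force fixed points (think of a rotation acting on a circle). Closing this gap is precisely where the autonomous Tonelli structure enters in an essential way, and it is why convergence can fail in the time-periodic case~\cite{FaMat,BS2}.
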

\begin{proof}[Proof of Theorem \ref{ewd}]
It is clear that if $u$ is a  weak KAM solution, then $u=S^-(1)u+\alpha(0) = T^-u +\alpha(0)$. It follows that $\alpha(0)=c[0]$ because of the uniqueness in Theorem \ref{weak KAM}. Moreover, any weak KAM solution is a discrete weak KAM solution.

Let now $v$ be a discrete weak KAM solution. It follows that $v=S^-(1)v+\alpha(0)$ and then that 
$$\forall n\in \N,\quad v=S^-(n)v+n\alpha(0).$$
By Fathi's theorem, there exists a weak KAM solution $\tilde v$ such that $S^-(t)v+t\alpha(0) \to \tilde v$ as $t\to +\infty$. It then follows that $v=\tilde v$ is a weak KAM solution.
\end{proof}

If $\alpha \in \R$, denote by $\SSS_\alpha$ the set of subsolutions to \eqref{sta}, or equivalently functions verifying \eqref{subsol}. Note that $u\in \SSS_\alpha$ if and only if $t\mapsto S^-(t)u+t\alpha$ is non--decreasing. Finally, $\SSS$ will denote the special set of critical subsolutions $\SSS_{\alpha(0)}$. As will be seen later, unlike what happens for weak KAM solutions, the set $\SSS_\alpha \subset \S_\alpha$ is much smaller than its discrete analogue. The first point of the next proposition is a hint as to why. We state without proof analogous results to Proposition \ref{propS}:
\begin{pr}\label{PropS}
Let $\alpha\in\R$, the following assertions hold:
\begin{enumerate}[label=(\roman*)]
\item The family of $\alpha$--subsolutions is equi--Lipschitz.
\item The set $\SSS_\alpha$ of $\alpha$--subsolutions is closed (with respect to pointwise and uniform convergence), convex and stable by the Lax--Oleinik semigroup: $S^-(t)(\SSS_\alpha)\subset \SSS_\alpha$ for all $t\geqslant 0$.
\end{enumerate}
\end{pr}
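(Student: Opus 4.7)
The plan is to mirror the proof of the discrete Proposition \ref{propS}, transferring the family of inequalities \eqref{subsol} that characterizes elements of $\SSS_\alpha$, with the single additional ingredient that the Tonelli structure produces a uniform Lipschitz bound for~(i).

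For (i), the strategy is to apply \eqref{subsol} to unit-speed minimizing geodesics. Because $L$ is continuous on $TM$ and the unit ball bundle $\{(x,v)\in TM : \|v\|_x\leqslant 1\}$ is compact (by compactness of $M$), the constant $C:=\max_{\|v\|_x\leqslant 1} L(x,v)$ is finite. Given $u\in\SSS_\alpha$ and $x,y\in M$, connect them by a unit-speed minimizing geodesic $\gamma : [-\dd(x,y),0]\to M$ with $\gamma(-\dd(x,y))=x$ and $\gamma(0)=y$. Plugging into \eqref{subsol} and bounding $L(\gamma,\dot\gamma)\leqslant C$ pointwise yields $u(y)-u(x)\leqslant (C+\alpha)\,\dd(x,y)$; the reversed geodesic from $y$ to $x$ gives the opposite bound. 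The Lipschitz constant obtained depends only on $L$ and $\alpha$, hence is uniform over $\SSS_\alpha$. (If $C+\alpha<0$ the family is empty and the claim is vacuous.)

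For (ii), convexity follows at once from linearity of \eqref{subsol} in $u$. Closedness under pointwise convergence is obtained by passing to the limit in the family \eqref{subsol}, since its right-hand side is independent of $u$; combined with (i) the pointwise limit is automatically Lipschitz, so closedness under uniform convergence is a special case. Stability under $S^-(t)$ is cleanest using the equivalence recalled just before the proposition: $u\in\SSS_\alpha$ if and only if $s\mapsto S^-(s)u+s\alpha$ is non--decreasing on $[0,+\infty)$. Given $t\geqslant 0$, the semigroup property allows one to rewrite the restriction $s\mapsto S^-(s+t)u+(s+t)\alpha$ for $s\geqslant 0$ as $S^-(s)(S^-(t)u)+s\alpha+t\alpha$; this map is non--decreasing (as the restriction of a non--decreasing map), and subtracting the constant $t\alpha$ shows that $s\mapsto S^-(s)(S^-(t)u)+s\alpha$ is non--decreasing, i.e.\ $S^-(t)u\in\SSS_\alpha$.

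The main obstacle is the equi-Lipschitz bound in (i): it is where the Tonelli hypotheses and the compactness of $M$ enter essentially (through finiteness of $C$ and the existence of minimizing geodesics of arbitrary prescribed speed). All other assertions are formal consequences of the characterization of subsolutions by \eqref{subsol} and of the semigroup structure, and go through exactly as in the discrete Proposition~\ref{propS}.
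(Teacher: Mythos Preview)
The paper explicitly states this Proposition without proof (``We state without proof analogous results to Proposition~\ref{propS}''), so there is no proof to compare against. Your argument is correct and standard: the unit-speed geodesic trick for~(i) is the natural continuous analogue of bounding the amplitude in the discrete case, and your treatment of~(ii) via the monotonicity characterization $u\in\SSS_\alpha \Leftrightarrow s\mapsto S^-(s)u+s\alpha$ non--decreasing (which the paper records just before Proposition~\ref{PropS}) is clean and avoids having to re-derive \eqref{subsol} for $S^-(t)u$ directly.
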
 

And here is the characterization of $\alpha(0)$ similar to Proposition \ref{criticalSS}:

\begin{pr}\label{CriticalSS}
The following equality holds: $\alpha(0) = \min\{ \alpha\in \R, \ \ \SSS_\alpha\neq \varnothing\}$.
\end{pr}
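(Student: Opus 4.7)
The plan is to reduce this to the already-established discrete analogue (Proposition \ref{criticalSS}) via Theorem \ref{ewd}, which identifies the discrete and classical critical constants when the cost is chosen to be $c = h_1$.

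First I would establish that $\SSS_{\alpha(0)}$ is non-empty: by the weak KAM Theorem \ref{cweakKAM}, there exists a viscosity solution $u$ of \eqref{sta} with right-hand side $\alpha(0)$, and any solution is in particular a subsolution, so $u \in \SSS_{\alpha(0)}$. This gives the inequality $\alpha(0) \geqslant \min\{\alpha \in \R,\ \SSS_\alpha \neq \varnothing\}$.

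For the reverse inequality, suppose $\SSS_\alpha \neq \varnothing$ and pick $u \in \SSS_\alpha$. By the characterization \eqref{subsol}, for every absolutely continuous curve $\gamma : [-1,0] \to M$,
$$u\big(\gamma(0)\big) - u\big(\gamma(-1)\big) \leqslant \int_{-1}^0 L\big(\gamma(s),\dot\gamma(s)\big)\,\dd s + \alpha.$$
Taking the infimum over curves with fixed endpoints $\gamma(-1) = y$ and $\gamma(0) = x$, and recalling the definition \eqref{action} of $h_1$, I obtain $u(x) - u(y) \leqslant h_1(y,x) + \alpha$ for all $(x,y) \in M \times M$. This is exactly the statement that $u$ is an $\alpha$-subsolution in the sense of Definition \ref{dfss} relative to the cost $c = h_1$, i.e.\ $u \in \S_\alpha$.

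Now I apply the discrete characterization of Proposition \ref{criticalSS} to the cost $c = h_1$: since $\S_\alpha \neq \varnothing$, it yields $\alpha \geqslant c[0]$. Finally, Theorem \ref{ewd} identifies $c[0] = \alpha(0)$, so $\alpha \geqslant \alpha(0)$. Combining both inequalities gives the desired equality, and in fact the infimum is attained, justifying the $\min$ in the statement. The only conceptual step is translating the continuous subsolution inequality into the discrete one via the definition of $h_1$, which is straightforward; everything else has already been done in the discrete setting or follows from Theorem \ref{ewd}.
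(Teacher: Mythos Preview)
Your proof is correct. The paper states this proposition without proof, merely noting that it is the continuous analogue of Proposition~\ref{criticalSS}; the implied approach is to replay that discrete proof in the continuous setting (if $u\in\SSS_\alpha$ then $u\leqslant S^-(t)u+t\alpha$ for all $t>0$, and dividing by $t$ and using the asymptotic $S^-(t)u/t\to-\alpha(0)$ gives $\alpha\geqslant\alpha(0)$).

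You take a genuinely different route: rather than mimicking the discrete argument, you reduce to it. The key observation is that any classical $\alpha$-subsolution is automatically a discrete $\alpha$-subsolution for the cost $h_1$ (the inclusion $\SSS_\alpha\subset\S_\alpha$, which the paper itself mentions later in the proof that $\AA_H=\AA_{h_1}$), and then you invoke Proposition~\ref{criticalSS} together with the identification $c[0]=\alpha(0)$ from Theorem~\ref{ewd}. There is no circularity, since the proof of Theorem~\ref{ewd} uses only the two weak KAM theorems and not Proposition~\ref{CriticalSS}. Your approach has the merit of recycling work already done and making explicit the bridge between the two theories; the direct adaptation is more self-contained and does not rely on the comparison $c[0]=\alpha(0)$.
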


We finish this section by discussing the continuous way of adapting the second proof of the discrete weak KAM theorem. Indeed, if $\lambda \in (0,1)$, considering the mapping $u\mapsto S^-(1)(\lambda u)$ does not make much sense from the PDE point of view.

The function $u_\lambda$ constructed in the second proof of Theorem \ref{weak KAM} satisfies $T^-(\lambda u_\lambda) = u_\lambda$. By setting $v_\lambda = \lambda u_\lambda$, the previous equation may be rewritten as 
$$v_\lambda - T^-(v_\lambda) = (\lambda-1)u_\lambda = (1-\lambda^{-1})v_\lambda.  $$

We now follow the intuition that $T^-$stands for the time $1$ of an evolution semigroup, hence $v_\lambda - T^-(v_\lambda)$ can be interpreted as a discrete derivative with respect to time.

Going back to the Hamilton--Jacobi equation, and following the previous analysis, we are looking for a function $u : M\to \R$ such that 
$$\frac{\dd}{\dd t} S^-(t)u _{|t=0} = -\ell u,$$
where we applied the change of variable $\ell =\lambda^{-1}-1$. Hence $\ell>0$ is aimed to go to $0$. Remembering that $(t,x) \mapsto S^-(t)u(x)$ solves the Hamilton--Jacobi equation, we are actually looking for a function $u$ solving
\begin{equation}\label{discountedHJ}
\ell u(x)+H(x,D_xu)=0,\quad x\in M \tag{$\ell$HJ}
\end{equation}
in the viscosity sense. The preceding equation is called the {\it discounted equation} and $\ell$ is called the {\it discount factor}. It turns out this is precisely the method used in \cite{LPV} which is historically the first  paper where weak KAM solutions appear. In this foundational preprint, they prove   the following results, for a wider class of Hamiltonians (in particular, no convexity is required):

\begin{Th}\label{eqdiscounted}
\hspace{2em}
\begin{enumerate}
\item For all $\ell>0$ there exists a unique function $U_\ell : M\to \R$ which is a viscosity solution of \eqref{discountedHJ}.
\item The family $(\ell U_{\ell})_{\ell>0}$ is equi-bounded.
\item The family $( U_{\ell})_{\ell>0}$ is equi-Lipschitz.
\item Given $x_0\in M$ and setting $v_\ell = U_\ell - U_\ell(x_0)$, it follows that the family $( v_{\ell})_{\ell>0}$ is relatively compact.
\item The family $(\ell U_{\ell})_{\ell>0}$ uniformly converges to the constant function $-\alpha(0)$\footnote{In the preprint, this constant is actually denoted by $-\overline{H}(0)$ and $\overline H$ is called the {\it effective Hamiltonian}.} as $\ell\to 0$ and any limit function $v_0$ of the family  $( v_{\ell})_{\ell>0}$, as $\ell\to 0$, is a solution of  \eqref{sta} with right hand side $\alpha(0)$.
\end{enumerate}
\end{Th}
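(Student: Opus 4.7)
I would treat the five items in the order they are stated, exploiting the structural analogy with the discrete discounted argument used in the second proof of Theorem \ref{weak KAM} (fixing $T^-_\lambda$ and letting $\lambda \to 1$). The coercivity/convexity hypotheses on $H$ do a lot of work here.

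For item (1), existence and uniqueness of $U_\ell$, I would invoke the standard strong comparison principle for \eqref{discountedHJ}: the discount term $\ell u$ gives strict monotonicity in $u$, so any subsolution is dominated by any supersolution. Setting $A=\max_{x\in M} H(x,0)$ and $a=\min_{x\in M} H(x,0)$, the constants $-A/\ell$ and $-a/\ell$ are a sub- and a supersolution respectively, and Perron's method (or, alternatively, a Banach fixed-point argument on the representation formula $u\mapsto \inf_{\gamma,T>0}\int_{-T}^{0} e^{\ell s}L(\gamma,\dot\gamma)\,\dd s + e^{-\ell T}u(\gamma(-T))$, which is a strict contraction on $C^0(M,\R)$) produces a unique viscosity solution.

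For (2), comparison with the above constant sub/supersolutions yields the two-sided pinch $a \leqslant -\ell U_\ell(x) \leqslant A$ uniformly in $\ell$ and $x$. For (3), I would then use coercivity: since $|\ell U_\ell|$ is uniformly bounded by some $K$, the equation forces $H(x,D_xU_\ell)=-\ell U_\ell$ to be uniformly bounded, and superlinearity of $H$ translates this into a uniform a.e.\ bound on $\|D_xU_\ell\|_x$. (Rigorously one tests against a smooth $\phi$ at a max of $U_\ell-\phi$ and invokes coercivity to get a modulus of continuity, as in \cite{barles}.) Item (4) is then immediate from Arzelà--Ascoli applied to the equi-Lipschitz, equi-normalized family $v_\ell=U_\ell-U_\ell(x_0)$.

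For the key item (5), I would argue by extraction. Pick any sequence $\ell_n \to 0$. By (2) we can extract so that $\ell_n U_{\ell_n}(x_0)\to -c$ for some $c\in[a,A]$; by (4) we can further extract so that $v_{\ell_n}\to v_0$ uniformly. Since the equi-boundedness in (4) gives $\ell_n v_{\ell_n}\to 0$ uniformly, we obtain $\ell_n U_{\ell_n}\to -c$ uniformly. Now $U_{\ell_n}$ solves $\ell_n U_{\ell_n}+H(x,D_x v_{\ell_n})=0$; stability of viscosity solutions under uniform convergence lets us pass to the limit and conclude that $v_0$ is a viscosity solution of $H(x,D_x v_0)=c$. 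By the uniqueness of the critical constant (Theorem \ref{cweakKAM}), $c=\alpha(0)$. Since every subsequence of $\ell U_\ell$ admits a further subsequence converging to the same constant $-\alpha(0)$, the full family $\ell U_\ell$ converges uniformly to $-\alpha(0)$, and any limit point $v_0$ of $(v_\ell)$ solves \eqref{sta} with right-hand side $\alpha(0)$.

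The main obstacle is the Lipschitz bound in (3): one must leverage coercivity rigorously at the level of viscosity (sub)solutions rather than almost everywhere, and this is precisely where the Tonelli hypothesis is irreplaceable. The rest is soft: comparison for (1)--(2), Arzelà--Ascoli for (4), and the stability/uniqueness tandem for (5), all directly paralleling the discrete argument where $1-\lambda$ played the role of $\ell$.
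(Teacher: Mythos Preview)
The paper does not give its own proof of this theorem: it is stated as a result of Lions--Papanicolaou--Varadhan \cite{LPV} and explicitly attributed to that preprint, with the remark that their argument applies ``for a wider class of Hamiltonians (in particular, no convexity is required)''. Your sketch is essentially the standard LPV argument and is correct in the Tonelli setting of the paper.

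One point deserves adjustment. You write that the Tonelli hypothesis is ``irreplaceable'' for the Lipschitz bound in (3), but in fact only coercivity of $H$ in $p$ (uniform in $x$) is needed there: once $|\ell U_\ell|\leqslant K$, any test function $\phi$ touching $U_\ell$ from above at $x$ satisfies $H(x,D_x\phi)\leqslant K$, and coercivity alone bounds $\|D_x\phi\|_x$. Convexity and superlinearity play no role in items (1)--(4), which is precisely why LPV's result extends beyond the Tonelli class. Your appeal to Theorem~\ref{cweakKAM} in step (5) for the uniqueness of the constant does, as stated in this paper, use the Tonelli framework; in the general coercive setting LPV obtain uniqueness of $\alpha(0)$ directly, by an argument of the same flavour (comparing $U_\ell$ with $u\pm C$ for any solution $u$ of $H(x,Du)=c$, which forces $\ell U_\ell\to -c$).
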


\subsection{The positive Lax--Oleinik semigroup}

As in our discrete setting, Fathi introduced the positive Lax--Oleinik semigroup as follows:

\begin{df}\rm
Let $u : M\to \R$ be any continuous function, for any $t>0$ and $x\in M$ we define:
$$S^+(t)u (x) = \sup_{\substack{\gamma:[0,t]\to M \\ \gamma(0)=x}} u\big(\gamma(t)\big) -\int_{0}^t L\big(\gamma(s),\dot\gamma(s)\big)\dd s= \sup_{y\in M} u(y) - h_t(x,y),$$
where the supremum is taken amongst absolutely continuous curves.
\end{df}

Once again, this semigroup has a natural PDE interpretation. Let us introduce the Hamiltonian: 
$\widecheck H : (x,p)\mapsto H(x,-p)$. One verifies that the associated Lagrangian is given by
$$\forall (x,v)\in TM,\quad \widecheck L(x,v) = \sup_{p\in T^*_x M}  p(v) - \widecheck H(x,p) =  \sup_{p\in T^*_x M}  p(-v) - H(x,p) =L(x,-v).$$
 Therefore  the positive semigroup is written as follows:
 \begin{multline*}
 S^+(t)u (x) = -\inf_{\substack{\gamma:[0,t]\to M \\ \gamma(0)=x}} -u\big(\gamma(t)\big) +\int_{0}^t L\big(\gamma(s),\dot\gamma(s)\big)\dd s \\ 
 =  -\bigg[\inf_{\substack{\check\gamma:[-t,0]\to M \\ \check \gamma(0)=x}} -u\big(\check\gamma(t)\big) +\int_{-t}^0 \widecheck L\big(\check\gamma(s),\dot{\check\gamma}(s)\big)\dd s\bigg]= -\widecheck S(t)(-u)(x),
 \end{multline*}
 where $\widecheck S$ denotes the Lax--Oleinik semigroup associated to $\widecheck H$. Hence the function $(t,x) \mapsto -S^+(t)u(x)$ solves 	an evolutionary Hamilton--Jacobi equation with Hamiltonian $\widecheck H$ and initial data $-u$. 
 
As the Hamiltonian $\widecheck H$ is also Tonelli, it is automatic that the positive semigroup $S^+$ has the same properties as $S^-$ stated in Proposition \ref{prop S}, that we do not recall here. As for the discrete case, a positive weak KAM theorem follows:

\begin{Th}[positive weak KAM]\label{C.weakKAM+}
The critical constant $\alpha(0)$ is the only one for which the equation $u =  S^+(t) u - t\alpha(0)$, for all $t>0$, admits solutions $u:M\to \R$. Moreover the constant $\alpha(0)$ has the following caracterization: 
$$\forall v\in \BB(M,\R),\quad \frac{S^+(t) v }{t} \underset{t\to +\infty}{\longrightarrow}  \alpha(0),$$
 and the convergence is uniform.

\end{Th}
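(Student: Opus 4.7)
The plan is to reduce everything to the classical (negative) weak KAM Theorem \ref{cweakKAM} applied to the reversed Hamiltonian $\widecheck H(x,p) = H(x,-p)$. The key algebraic identity, already recorded in the text, is
$$S^+(t)u(x) = -\widecheck S(t)(-u)(x),$$
where $\widecheck S$ denotes the negative Lax--Oleinik semigroup of $\widecheck H$. Since $\widecheck H$ is itself Tonelli, Theorem \ref{cweakKAM} grants a unique critical constant $\check\alpha(0)$ and at least one continuous function $v$ with $v = \widecheck S(t) v + t\check\alpha(0)$ for every $t>0$. Setting $u = -v$ and reading the identity backwards yields $u = S^+(t)u - t\check\alpha(0)$ for every $t>0$. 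Conversely, any $u$ solving $u = S^+(t)u - t\alpha$ gives $v = -u$ solving $v = \widecheck S(t)v + t\alpha$, which forces $\alpha = \check\alpha(0)$ by uniqueness for $\widecheck H$. This simultaneously proves existence and uniqueness of the constant in the fixed-point equation.

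The one nontrivial point is to identify $\check\alpha(0)$ with $\alpha(0)$. The cleanest route goes through Proposition \ref{CriticalSS}: $\alpha(0) = \min\{\alpha : \SSS_\alpha \neq \varnothing\}$ and similarly for $\widecheck H$. Because Tonelli Hamiltonians are convex and coercive in the momentum variable, viscosity subsolutions of $H(x,Du)=\alpha$ coincide with locally Lipschitz functions satisfying $H(x,Du)\leqslant\alpha$ almost everywhere. If $u$ is such a subsolution, then $-u$ is Lipschitz and $\widecheck H(x,D(-u)(x)) = H(x,Du(x)) \leqslant \alpha$ almost everywhere, so $-u$ is a subsolution for $\widecheck H$ at the same level. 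The map $u\mapsto -u$ is an involution between the two classes of subsolutions; consequently $\SSS_\alpha = \varnothing$ if and only if $\check\SSS_\alpha = \varnothing$, and the two minima coincide. This is the step I expect to be the main subtlety, since it relies on the equivalence between viscosity and almost-everywhere subsolutions for convex coercive Hamilton--Jacobi equations, a result that is standard but not proved in the text.

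For the asymptotic characterization, the same trick applies:
$$\frac{S^+(t)v}{t} = -\frac{\widecheck S(t)(-v)}{t}.$$
Fathi's convergence Theorem \ref{cvsg}, applied to $\widecheck H$, asserts that $\widecheck S(t)(-v) + t\check\alpha(0)$ converges uniformly to a weak KAM solution of $\widecheck H$ as $t\to+\infty$; in particular this family is uniformly bounded, hence $\widecheck S(t)(-v)/t$ converges uniformly to $-\check\alpha(0)$. Dividing through and negating yields $S^+(t)v/t \to \check\alpha(0) = \alpha(0)$ uniformly, which is the claim. One may alternatively bypass Theorem \ref{cvsg} and argue exactly as in Proposition \ref{pr asympt}, using only the non-expansiveness of $\widecheck S(t)$ and the existence of the fixed point $v$ just constructed, which is the more elementary route and mirrors the discrete proof.
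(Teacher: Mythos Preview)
Your argument is correct and is exactly the route the paper intends: the text does not give a proof of Theorem \ref{C.weakKAM+} but merely records the identity $S^+(t)u=-\widecheck S(t)(-u)$, observes that $\widecheck H$ is Tonelli, and says the result follows ``as for the discrete case.'' You have written out precisely those details.

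One small simplification: you flag the identification $\check\alpha(0)=\alpha(0)$ as relying on the equivalence between viscosity and almost-everywhere subsolutions, calling it ``standard but not proved in the text.'' In fact the text states (without proof) the proposition listing five equivalent characterizations of $\alpha$--subsolutions, among which items (2) and (3) say that $u\in\SSS_\alpha$ iff $t\mapsto S^-(t)u+t\alpha$ is non--decreasing iff $t\mapsto S^+(t)u-t\alpha$ is non--increasing. Rewriting item (3) via $S^+(t)u=-\widecheck S(t)(-u)$ gives directly that $u\in\SSS_\alpha(H)$ iff $-u\in\SSS_\alpha(\widecheck H)$, so Proposition \ref{CriticalSS} applied to both Hamiltonians yields $\alpha(0)=\check\alpha(0)$ without invoking the a.e.\ characterization. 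This is the exact analogue of the discrete argument following Theorem \ref{weakKAM+}, and is presumably what the paper has in mind.
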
 

\subsection{Strict subsolutions, Aubry sets}

As is expected,  given a constant $\alpha \in \R$, a function $u : M\to \R$ is an $\alpha$--subsolution ($u\in  \SSS_\alpha$) if
\begin{equation*}
\forall t>0,\ \forall \gamma : [-t,0]\to M, \quad u\big(\gamma(0)\big) - u\big(\gamma(-t)\big)\leqslant  \int_{-t}^0 \Big[L\big(\gamma(s),\dot\gamma(s)\big)+\alpha\Big]\dd s ,
\end{equation*}
where $\gamma $ ranges in the set of absolutely continuous curves. It can be established that   subsolutions are automatically Lipschitz hence differentiable almost everywhere.

The terminology comes from the fact that subsolutions are indeed viscosity subsolutions to the critical stationary Hamilton--Jacobi equation \eqref{sta}. Whence the following characterizations of subsolutions hold:

\begin{pr}
Let $u : M\to \R$ and $\alpha\in \R$ be a constant. The following are equivalent:
\begin{enumerate}
\item $u\in \SSS_\alpha$ is an $\alpha$--subsolution;
\item the family of functions $\big(S^-(t)u +t\alpha \big)_{t\geqslant 0}$ is non--decreasing;
\item  the family of functions $\big(S^+(t)u -t\alpha \big)_{t\geqslant 0}$ is non--increasing;
\item the function $u$ verifies $H(x,D_x u)\leqslant \alpha$ in the viscosity sense;
\item the function $u$ verifies $H(x,D_x u)\leqslant \alpha$ for almost every $x\in M $ (more precisely at every $x\in M$ such that $u$ is differentiable at $x$).
\end{enumerate}
\end{pr}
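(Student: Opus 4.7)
The natural plan is to organize the five conditions into three equivalence blocks: the variational/semigroup block (1)$\Leftrightarrow$(2)$\Leftrightarrow$(3), the infinitesimal block (1)$\Leftrightarrow$(5), and the viscosity interpretation (4)$\Leftrightarrow$(5). I would prove them in that order.

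First, for (1)$\Leftrightarrow$(2)$\Leftrightarrow$(3), I would rewrite (1) in Lax--Oleinik form. Splitting the infimum in \eqref{LOSM} over all curves $\gamma:[-t,0]\to M$ with $\gamma(0)=x$ according to the endpoint $y=\gamma(-t)$ gives, after rearranging, the equivalence of (1) with the family of inequalities $u(x)\leqslant S^-(t)u(x)+t\alpha$ for all $t\geqslant 0,\ x\in M$; symmetrically, using the definition of $S^+$, (1) is equivalent to $u(x)\geqslant S^+(t)u(x)-t\alpha$. From there the upgrade to monotonicity in (2) is immediate: if $0<s<t$, apply the order-preserving, constants-commuting operator $S^-(t-s)$ to $u\leqslant S^-(s)u+s\alpha$ and use the semigroup property of Proposition~\ref{prop S} to obtain $S^-(t-s)u+(t-s)\alpha\leqslant S^-(t)u+t\alpha$. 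Conversely, monotonicity evaluated at $t=0$ gives back the single inequality. The same reasoning with reversed monotonicity handles (3).

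Next, for (1)$\Leftrightarrow$(5), I would rely on the Fenchel inequality $p(v)\leqslant L(x,v)+H(x,p)$ derived from the definition of $L$. For (5)$\Rightarrow$(1), using that any $\alpha$-subsolution is (as recalled above the statement) automatically Lipschitz, hence differentiable almost everywhere by Rademacher's theorem, one integrates along an absolutely continuous curve $\gamma:[-t,0]\to M$: the function $s\mapsto u(\gamma(s))$ is absolutely continuous, its derivative equals $D_{\gamma(s)}u(\dot\gamma(s))$ almost everywhere (for this one uses that $\gamma$ spends zero time on the negligible set of non-differentiability up to standard area-formula arguments, or one reduces to Lipschitz $\gamma$ by reparametrisation), and by Fenchel $D_{\gamma(s)}u(\dot\gamma(s))\leqslant L(\gamma(s),\dot\gamma(s))+H(\gamma(s),D_{\gamma(s)}u)\leqslant L(\gamma,\dot\gamma)+\alpha$. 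Integrating yields (1). Conversely, at a differentiability point $x$ of $u$, test (1) with a short curve $\gamma(s)=\exp_x(sv)$ for any $v\in T_xM$, divide by $\varepsilon$ and let $\varepsilon\to 0$: this gives $D_xu(v)\leqslant L(x,v)+\alpha$, and taking the sup over $v$ in the Legendre-Fenchel identity yields $H(x,D_xu)\leqslant \alpha$.

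Finally, (4)$\Leftrightarrow$(5). The direction (4)$\Rightarrow$(5) is routine: at any differentiability point, $u$ itself is an admissible test function, so the viscosity inequality reduces to the pointwise inequality. The direction (5)$\Rightarrow$(4) is the main technical obstacle, and it is where the convexity of $H$ in $p$ is essential. The classical strategy is to regularise $u$ by convolution with a mollifier $\rho_\varepsilon$ (in local charts, using a partition of unity): set $u_\varepsilon=u\ast\rho_\varepsilon$. Then $u_\varepsilon$ is smooth and $D_xu_\varepsilon(x)$ is an average of values of $D_xu$ over nearby differentiability points; by Jensen's inequality applied to the convex function $p\mapsto H(x,p)$ (up to a small error coming from the $x$-dependence of $H$, controlled by uniform continuity on compacts) one obtains $H(x,D_xu_\varepsilon)\leqslant \alpha+o(1)$ pointwise, so $u_\varepsilon$ is a \emph{classical} subsolution up to a vanishing error. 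Passing to the limit $\varepsilon\to 0$ using the stability of viscosity subsolutions under uniform convergence (which follows at once from the test-function definition) concludes. Alternatively, one may shortcut this step by invoking the already established equivalence (1)$\Leftrightarrow$(5) together with the fact that solutions to the evolutionary Hamilton--Jacobi equation in the viscosity sense coincide with $S^-(t)u$: then (1) directly encodes that $u$ is a viscosity subsolution of \eqref{sta}. This second route is shorter but relies on the full representation theorem for the viscosity semigroup.
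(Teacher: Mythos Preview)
The paper does not actually prove this proposition: it appears in the ``Relations to the classical theory'' subsection, where classical results are recalled without proof (the text merely says ``Whence the following characterizations of subsolutions hold'' and moves on). So there is nothing to compare your argument against.

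That said, your outline is the standard one and is essentially correct. The semigroup block (1)$\Leftrightarrow$(2)$\Leftrightarrow$(3) is clean and matches exactly how the paper handles the analogous discrete statements (cf.\ Proposition~\ref{propS} and the discussion around $T^+$). The infinitesimal block (1)$\Leftrightarrow$(5) and the viscosity block (4)$\Leftrightarrow$(5) are also the right arguments; the mollification-plus-Jensen route for (5)$\Rightarrow$(4) is precisely the classical proof (this is where convexity of $H$ in $p$ is indispensable).

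One point deserves more care than you give it: in (5)$\Rightarrow$(1), the parenthetical ``$\gamma$ spends zero time on the negligible set of non-differentiability up to standard area-formula arguments'' is not true as stated---an absolutely continuous curve can sit on a null set for a set of times of positive measure. The honest fix is either to work with the Clarke generalized gradient of the Lipschitz function $u$ (so that $(u\circ\gamma)'(s)\leqslant \max_{p\in\partial u(\gamma(s))} p(\dot\gamma(s))$ a.e., and every such $p$ satisfies $H(\gamma(s),p)\leqslant\alpha$ by upper semicontinuity), or to first mollify $u$ into smooth almost-subsolutions $u_\varepsilon$ (as you do for (5)$\Rightarrow$(4)), prove \eqref{subsol} for $u_\varepsilon$ with right-hand side $\alpha+o(1)$, and pass to the limit. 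Either route closes the gap.
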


We now focus on the particular case of critical subsolutions. The idea is that, because the constant $\alpha(0)$ is a threshold between a world with subsolutions and a world without, there is no critical subsolution where the inequality $H(x,D_x u)\leqslant \alpha(0)$ is everywhere strict. More precisely, the obstruction to having strict inequalities is concentrated on a subset of $M$: the projected Aubry set. This set was introduced for twist maps (that are a particular discrete setting) by Aubry, Le Daeron and Mather \cite{ Aubry,Mather,MaFo}. For Hamiltonian systems, it was later on introduced by Mather (\cite{Mather1}) by dynamical systems means. The present interpretation, using critical subsolutions, is due to Fathi (\cite{Fa,Fa4,Fa3,Fa5}).

A fundamental result on subsolutions is due to Fathi and Siconolfi \cite{FSC1,FS05} (for $C^1$ subsolutions) and was then improved by Bernard (\cite{BernardC11}):

\begin{Th}\label{strictC}
There exists a function $u_0 : M\to \R$ that is $C^{1,1}$ ($C^1$ with Lipschitz differential) and is a critical subsolution ($u_0\in \SSS_{\alpha_0}$). Moreover it verifies the following  fundamental property:

if for some $x\in M$, $H(x,D_xu_0) = \alpha(0)$ then if $u\in \SSS_{\alpha(0)}$ is any other critical subsolution, $u$ is differentiable at $x$ and $D_x u = D_x u_0$. Hence $H(x,D_xu) = \alpha(0)$.
\end{Th}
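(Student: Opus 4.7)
The plan is to decompose the theorem into two essentially independent tasks: (a) the existence of a $C^{1,1}$ critical subsolution $u_0$, and (b) the rigidity statement that at every $x$ where $H(x,D_xu_0)=\alpha(0)$, every other critical subsolution shares the same differential at $x$. These correspond, respectively, to the Fathi--Siconolfi/Bernard regularity theorem and to the characterization of the projected Aubry set as the common infinitesimal locus of all critical subsolutions.

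For (a), I would first mimic the proof of Theorem~\ref{th-strict} in the continuous setting: by Proposition~\ref{PropS} the convex set $\SSS$ consists of equi--Lipschitz functions, hence is separable in $C^0(M,\R)$, and an infinite convex combination $v_0=\sum_n a_nv_n$ of a dense sequence $(v_n)\subset\SSS$ (with $a_n>0$, $\sum a_n=1$) produces a continuous critical subsolution that is \emph{strict}: every equality in \eqref{subsol} achieved by $v_0$ along an absolutely continuous curve $\gamma$ is automatically achieved by every $u\in\SSS$. To upgrade the regularity to $C^{1,1}$, I would then use the Lasry--Lions regularization provided by the positive and negative Lax--Oleinik semigroups. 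For any bounded $v$ and $t>0$, $S^-(t)v$ is semiconcave with modulus of order $1/t$, and $S^+(t)v$ is symmetrically semiconvex, both facts following from strict convexity and superlinearity of $L$ via the quadratic behaviour of $h_t$ for small $t$. Moreover, both semigroups preserve $\SSS$ up to $\pm t\alpha(0)$ (Proposition~\ref{PropS} and its positive analogue). For a carefully chosen pair $0<s<t$, the composition $u_0 := S^-(s)\bigl(S^+(t)v_0-t\alpha(0)\bigr)+s\alpha(0)$ is simultaneously semiconvex (inherited from the inner $S^+(t)$ and preserved by $S^-(s)$ when $s$ is small relative to the semiconvexity modulus of $S^+(t)v_0$) and semiconcave (immediate from the outer $S^-(s)$). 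A function on a compact manifold that is both semiconcave and semiconvex is globally $C^{1,1}$.

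For (b), fix $x\in M$ with $H(x,D_xu_0)=\alpha(0)$, set $p=D_xu_0$ and $\dot\gamma(0)=\partial_pH(x,p)$, and let $\gamma:(-\epsilon,\epsilon)\to M$ be the corresponding Euler--Lagrange orbit through $x$. A direct computation using $u_0\in\SSS$ and the Fenchel identity $L(x,\dot\gamma(0))+H(x,p)=p(\dot\gamma(0))$ shows that $\gamma$ is \emph{calibrated} by $u_0$, i.e.\ equality holds in \eqref{subsol} along every subinterval of $(-\epsilon,\epsilon)$. Invoking the strictness property (inherited from $v_0$ through the regularization, which only modifies $v_0$ along calibrated orbits), every $u\in\SSS$ must also be calibrated along $\gamma$. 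Writing this equality on the subintervals $[-s,0]$ and $[0,s]$ and letting $s\to 0$ bounds $u$ between two $C^1$ functions tangent at $x$ with common slope $p$, which forces $u$ to be differentiable at $x$ with $D_xu=p=D_xu_0$, and hence $H(x,D_xu)=\alpha(0)$.

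The core difficulty is not the smoothing itself, whose ingredients are standard once one has the quantitative Lax--Oleinik semiconcavity/semiconvexity estimates, but rather the \emph{transfer of strictness} through the double smoothing: one must show that if $u_0$ is calibrated along some curve through $x$, then so is $v_0$ along a curve obtained from it by following the Euler--Lagrange flow for time at most $s+t$. This is what pins $x$ down to the projected Aubry set $\mathcal A$, on which all critical subsolutions are forced to agree infinitesimally. The verification rests on a detailed analysis of the minimizers achieving $S^-(s)$ and the maximizers achieving $S^+(t)$, together with the fact that calibrated curves of $v_0$ are precisely the Euler--Lagrange orbits contained in $\mathcal A$; this rigidity of calibration is the technical heart of the Fathi--Siconolfi and Bernard arguments.
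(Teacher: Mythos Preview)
The paper does not supply its own proof of Theorem~\ref{strictC}; it is quoted from Fathi--Siconolfi and Bernard. The closest the paper comes is the proof of the discrete analogue, Theorem~\ref{strictC11}, and the statement of Bernard's regularization Theorem~\ref{regB}. Your plan for part~(a) is exactly Bernard's Lasry--Lions scheme and matches Theorem~\ref{regB}; that part is fine as a sketch.

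Part~(b) has two genuine gaps. First, the ``direct computation'' you invoke does \emph{not} show that the Euler--Lagrange orbit $\gamma$ through $(x,\partial_pH(x,p))$ is calibrated by $u_0$ on an interval $(-\epsilon,\epsilon)$. The Fenchel identity together with $H(x,D_xu_0)=\alpha(0)$ gives only the infinitesimal equality $\frac{d}{dt}\big|_{t=0}u_0(\gamma(t))=L(x,\dot\gamma(0))+\alpha(0)$; to upgrade this to calibration on an interval you would need $H(\gamma(t),D_{\gamma(t)}u_0)=\alpha(0)$ for all nearby $t$, i.e.\ invariance of the equality set under the flow, which is part of what you are trying to establish. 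Second, the parenthetical ``which only modifies $v_0$ along calibrated orbits'' is false: the double smoothing $S^-(s)\circ S^+(t)$ alters $v_0$ everywhere off the Aubry set, and there is no automatic reason the output remains strict. What is true is that $u_0=v_0$ on $\AA$ (via Corollary~\ref{ccst}), but showing that $\{x:H(x,D_xu_0)=\alpha(0)\}\subset\AA$ is precisely the missing strictness. Finally, even granting calibration of every $u\in\SSS$ along $\gamma$, your limiting argument only yields the directional derivative of $u$ at $x$ along $\dot\gamma(0)$, not the full differential; full differentiability is obtained instead from the semiconvex/semiconcave sandwich $S^+(t)u-t\alpha(0)\le u\le S^-(t)u+t\alpha(0)$ with equality at $x\in\AA$.

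The cleaner route, which the paper follows in the discrete case (proof of Theorem~\ref{strictC11}), avoids the strictness-transfer issue entirely: first build the continuous strict subsolution $v_0$, then perturb the \emph{Lagrangian} by a smooth nonnegative bump $\varepsilon_1$ vanishing exactly on the Aubry set so that $v_0$ remains a $c[0]$-subsolution for the perturbed problem, and only then apply Bernard's regularization with respect to the perturbed data. The output is automatically $C^{1,1}$, and strictness for the original Lagrangian is recovered by the explicit inequality $\tilde L\le L$ with equality precisely on $\AA'$. This decouples the regularity step from the rigidity step and makes (b) a consequence of the independently established characterization of $\AA$ (Corollary~\ref{ccst} and the sandwich argument), rather than a property one has to trace through the smoothing.
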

 
 The function $u_0$ above is a $C^{1,1}$ strict subsolution. The set 
 $$\AA = \{x\in M , \ \ H(x,D_xu_0) = \alpha(0)\},$$
  is called the {\it projected Aubry set}. It is non--empty \big(otherwise $u_0$ would be a subsolution for a constant less that $\alpha(0)$\big). It is also compact. The Aubry set is $\AA^* = \{(x,D_xu_0), \ \ x\in \AA\}\subset T^*M$. Of course, $\AA^*\subset H^{-1}(\{\alpha(0)\})$. This is actually a deep fact. It implies a Theorem of Carneiro (\cite{carneiro})  which we  come back to later. It may seem at first glance that those sets depend on $u_0$ but it is not the case. Indeed, $\AA$ is a set of points where all subsolutions $u$ are differentiable\footnote{This actually suffices to characterize the projected Aubry set as proven in \cite{FSC1}.} and $ H(x,D_xu) = \alpha(0)$. The Aubry set in $TM$ is then naturally $\AA' = \LL^{-1}(\AA^*)$.
  
  As we saw in the discrete setting, points of the projected Aubry set come in whole sequences, giving rise to the Aubry set (subset of $X^\Z$). Moreover, this Aubry set is invariant by the shift on $X^\Z$. Similar phenomena arise in the classical setting, the dynamical systems here being those of $\varphi_L$ the Euler--Lagrange flow and $\varphi_H$ the Hamiltonian flow:
  
  \begin{Th}\label{invA}
  The Aubry set $\AA^*$ is invariant by $\varphi_H$ meaning that for all $t\in \R$, $\varphi_H^t(\AA^*) = \AA^*$. Equivalently, the Aubry set $\AA'$ is invariant by $\varphi_L$ meaning that for all $t\in \R$, $\varphi_L^t(\AA') = \AA'$.
  
  It follows that if $(x,p)\in \AA^*$, $u\in \SSS_{\alpha(0)}$ is a critical subsolution ant $t\in \R$, then 
  $\big(x(t), p(t)\big) = \big(x(t), D_{x(t)}u\big)$ where we set $\big(x(t),p(t)\big) = \varphi_H^t(x,p)$. In particular, we stress that $u$ is automatically differentiable at $x(t)$ for all $t\in \R$.
  
  The curve $\big(x(t), \dot x(t)\big)_{t\in \R}$ is a trajectory of the Euler--Lagrange flow. Moreover, its projection on $M$ calibrates $u$ in the sense that 
  $$\forall s<t,\quad u\big(x(t)\big)-u\big(x(s)\big) = \int_s^t L\big(x(\sigma),\dot x(\sigma)\big)\dd \sigma +(t-s)\alpha(0).$$
  In particular, recalling that $u$ verifies \eqref{subsol}, it follows that 
  \begin{align*}
  \forall t>0,\quad & S^-(t)u(x) = u\big(x(-t)\big) +  \int_{-t}^0 L\big(x(\sigma),\dot x(\sigma)\big)\dd \sigma +t\alpha(0)=u(x)+t\alpha(0); \\
   &  S^+(t)u(x) = u\big(x(t)\big) -  \int_0^t L\big(x(\sigma),\dot x(\sigma)\big)\dd \sigma -t\alpha(0)=u(x)-t\alpha(0).
    \end{align*}

  \end{Th}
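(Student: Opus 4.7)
The plan is to construct through every point $x\in \AA$ a globally defined Euler--Lagrange trajectory $\gamma:\R\to M$ with $\gamma(0)=x$ which calibrates $u_0$, and to identify its cotangent lift with the Hamiltonian orbit issuing from $(x,D_xu_0)$. The remaining assertions will then follow from the fundamental property of $u_0$ in Theorem \ref{strictC} combined with Fenchel's inequality.

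First I would establish the continuous analogue of Corollary \ref{cst}: for every $x\in\AA$, every $u\in\SSS_{\alpha(0)}$ and every $t>0$,
$$ u(x)=S^-(t)u(x)+t\alpha(0)=S^+(t)u(x)-t\alpha(0). $$
The proof mimics the discrete argument, using Proposition \ref{PropS}, Theorem \ref{C.weakKAM+} and the fundamental property of $u_0$. Tonelli theory then provides, for each $t$, a $C^2$ Euler--Lagrange minimizer $\gamma_t^-:[-t,0]\to M$ and maximizer $\gamma_t^+:[0,t]\to M$ with $\gamma_t^\mp(0)=x$ realising these identities. Since $L$ is superlinear, these curves are equi-Lipschitz, so Arzel\`a--Ascoli and a diagonal extraction (exactly as in the proof of Proposition \ref{pr-sequence}) yield a $C^2$ Euler--Lagrange trajectory $\gamma:\R\to M$ with $\gamma(0)=x$ satisfying the calibration identity
$$ u_0\bigl(\gamma(t)\bigr)-u_0\bigl(\gamma(s)\bigr)=\int_s^t\bigl[L(\gamma(\sigma),\dot\gamma(\sigma))+\alpha(0)\bigr]\,\dd\sigma,\qquad s<t. $$

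Next I would differentiate this calibration. Since $u_0\in C^{1,1}$, the chain rule yields $Du_0(\gamma(\sigma))\cdot\dot\gamma(\sigma)=L(\gamma(\sigma),\dot\gamma(\sigma))+\alpha(0)$ for almost every $\sigma$. Combined with Fenchel's inequality $Du_0(\gamma)\cdot\dot\gamma\leqslant L(\gamma,\dot\gamma)+H(\gamma,Du_0(\gamma))$ and the subsolution bound $H(\gamma,Du_0(\gamma))\leqslant\alpha(0)$, this forces equality everywhere: one gets $H(\gamma(\sigma),Du_0(\gamma(\sigma)))=\alpha(0)$, so $\gamma(\sigma)\in\AA$, and $Du_0(\gamma(\sigma))=\partial_vL(\gamma(\sigma),\dot\gamma(\sigma))$, i.e.\ $(\gamma(\sigma),Du_0(\gamma(\sigma)))=\LL(\gamma(\sigma),\dot\gamma(\sigma))$. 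Since $\LL$ conjugates $\varphi_L$ with $\varphi_H$ and the initial cotangent value is exactly $p=D_xu_0$, uniqueness for Hamilton's equations gives $\varphi_H^t(x,p)=(\gamma(t),Du_0(\gamma(t)))\in\AA^*$ for every $t\in\R$. This proves invariance of $\AA^*$, the identity $p(t)=D_{x(t)}u_0$, and that $(x(t),\dot x(t))$ is a $\varphi_L$-trajectory.

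The extension to an arbitrary $u\in\SSS_{\alpha(0)}$ is immediate from Theorem \ref{strictC}: at each $\gamma(t)\in\AA$ the subsolution $u$ is differentiable with $D_{\gamma(t)}u=D_{\gamma(t)}u_0=p(t)$. Applying the chain rule to the Lipschitz function $u\circ\gamma$ together with the Fenchel equality $p(t)\cdot\dot\gamma(t)=L(\gamma,\dot\gamma)+\alpha(0)$ delivers the calibration formula for $u$, and the Lax--Oleinik semigroup identities then follow by using $\gamma|_{[-t,0]}$ (resp.\ $\gamma|_{[0,t]}$) as a test curve in the definition of $S^-(t)u(x)$ (resp.\ $S^+(t)u(x)$), the opposite inequalities coming from the subsolution characterisation in Proposition \ref{PropS}(ii). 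The main obstacle is the first step: establishing the continuous analogue of Corollary \ref{cst} and, more substantially, the extraction of a global $C^2$ Euler--Lagrange trajectory from finite-time Tonelli minimizers. In the discrete framework the compactness of $X$ was sufficient; here one must invoke the full Tonelli machinery---existence and regularity of minimizers, the Euler--Lagrange equation, and \emph{a priori} Lipschitz bounds via superlinearity---before Arzel\`a--Ascoli can be applied.
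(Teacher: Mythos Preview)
The paper does not give a proof of this theorem; it is stated as one of several classical weak KAM results presented without proof in the ``Relations to the classical theory'' section (and Corollary~\ref{ccst} is then recorded as a consequence). Your outline follows the standard Fathi--Siconolfi approach and is essentially correct from step~2 onward: the Tonelli extraction, the differentiation of the calibration identity via Fenchel's equality, and the extension from $u_0$ to an arbitrary subsolution via Theorem~\ref{strictC} are all sound.

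The genuine subtlety, which you correctly flag as ``the main obstacle,'' is step~1. You propose to establish the semigroup identities $u(x)=S^-(t)u(x)+t\alpha(0)$ for $x\in\AA$ by ``mimicking the discrete argument.'' But the discrete proof (Lemma~\ref{A-cst1} and the end of the proof of Theorem~\ref{th-strict}) crucially exploits the $2$--Aubry set $\widehat\AA\subset X\times X$: for a pair $(x,y)\in\widehat\AA$, every continuous subsolution $v$ satisfies $v(y)-v(x)=c(x,y)+\cc$, and this one-step calibration is what bootstraps into the recursive identity. In the continuous setting as presented here, the Aubry set is defined pointwise as $\AA=\{x:H(x,D_xu_0)=\alpha(0)\}$, and there is no analogue of $\widehat\AA$ available at this stage---constructing one amounts to producing a short calibrated curve segment through $x$, which is precisely the invariance statement you are trying to prove. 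So a direct transcription is circular.

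The usual fixes are: (i) invoke the Peierls barrier characterisation $\AA=\{x:h_L(x,x)=0\}$ (Theorem~\ref{AubryC}, stated later in the paper), which furnishes long near-loops at $x$ of vanishing normalised action and hence, after extraction, a bi-infinite calibrating curve---this makes your steps~1 and~2 a single step and removes the circularity; or (ii) argue directly from the $C^{1,1}$ regularity and strictness of $u_0$ that the flow of the Lipschitz field $x\mapsto\partial_pH(x,D_xu_0)$ preserves $\AA$, though this is itself delicate. Either route requires more than a transcription of the discrete case. Once step~1 is secured, the rest of your argument goes through as written.
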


From this follows the analogue of Corollary \ref{cst}:

\begin{co}\label{ccst}
Let $x\in M$. The following assertions are equivalent:
\begin{enumerate}[label=(\roman*)]
\item $x\in \AA$,
\item $\forall u\in \SS'_{\alpha(0)},\  \forall t>0,\quad u(x) = S^-(t)u(x)+t\alpha(0)$,
\item $\forall u\in \SS'_{\alpha(0)}, \  \forall t>0,\quad u(x) = S^+(t)u(x)-t\alpha(0)$.
\end{enumerate}
\end{co}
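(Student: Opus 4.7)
The approach is to mimic the argument for Corollary \ref{cst}, but using the $C^{1,1}$ strict subsolution $u_0$ from Theorem \ref{strictC} as the critical test function and the Euler--Lagrange calibrating trajectories provided by Theorem \ref{invA}.

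For the implication (i) $\Rightarrow$ (ii), assume $x\in \AA$. By Theorem \ref{invA}, there is a curve $\big(x(\sigma),\dot x(\sigma)\big)_{\sigma\in \R}$ that is a trajectory of $\varphi_L$ with $x(0)=x$ and that calibrates every critical subsolution $u\in \SSS_{\alpha(0)}$. The final formula of Theorem \ref{invA} applied to this curve gives exactly $S^-(t)u(x) = u(x) - t\alpha(0)$ for all $t>0$, which is (ii). The implication (i) $\Rightarrow$ (iii) is analogous, using the second equality of Theorem \ref{invA} involving $S^+(t)$.

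For the converse (ii) $\Rightarrow$ (i), I would specialise to $u=u_0$, the $C^{1,1}$ subsolution of Theorem \ref{strictC}, and fix any $t>0$. By Tonelli's existence theorem, the infimum in the Lax--Oleinik formula \eqref{LOSM} is attained, so there exists an absolutely continuous (in fact $C^2$) curve $\gamma:[-t,0]\to M$ with $\gamma(0)=x$ such that
\begin{equation*}
u_0(x) = u_0\big(\gamma(-t)\big) + \int_{-t}^0 L\big(\gamma(s),\dot\gamma(s)\big)\dd s + t\alpha(0).
\end{equation*}
Since $u_0$ is $C^1$, integrating $\frac{\dd}{\dd s} u_0(\gamma(s))$ against this identity forces
\begin{equation*}
D_{\gamma(s)}u_0(\dot\gamma(s)) = L\big(\gamma(s),\dot\gamma(s)\big) + \alpha(0) \quad \text{a.e. } s\in [-t,0].
\end{equation*}
By Fenchel's inequality $D_{\gamma(s)}u_0(\dot\gamma(s)) \leqslant L(\gamma(s),\dot\gamma(s)) + H(\gamma(s),D_{\gamma(s)}u_0)$, combined with the subsolution inequality $H(\gamma(s),D_{\gamma(s)}u_0)\leqslant \alpha(0)$, this equation forces equality throughout, so $H\big(\gamma(s),D_{\gamma(s)}u_0\big) = \alpha(0)$ almost everywhere on $[-t,0]$. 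The Lipschitz regularity of $D u_0$ together with continuity of $H$ allow me to promote this to every $s$, and in particular at $s=0$ I obtain $H(x,D_xu_0)=\alpha(0)$, which means $x\in \AA$. The argument for (iii) $\Rightarrow$ (i) is parallel, replacing $\gamma$ by a maximising curve for $S^+(t)u_0$ on $[0,t]$ and using $\widecheck H$ or simply the same Fenchel equality.

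The main subtlety I expect is the passage from an almost-everywhere identity along the curve to the pointwise statement at $s=0$, which is where $C^{1,1}$ regularity of $u_0$ (rather than mere Lipschitz continuity) is essential: it ensures that $s\mapsto H(\gamma(s),D_{\gamma(s)}u_0)$ is continuous, so that the a.e.\ equality extends to every $s$. Everything else is a straightforward adaptation of the discrete proof of Corollary \ref{cst}, with Tonelli's existence of minimisers replacing the compactness argument used there.
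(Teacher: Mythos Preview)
Your proof is correct. The paper itself does not give a proof of this corollary: it is stated immediately after Theorem \ref{invA} with the sentence ``From this follows the analogue of Corollary \ref{cst}'', and no further argument is supplied. The forward implications (i)$\Rightarrow$(ii),(iii) are indeed direct from the last displayed formulas of Theorem \ref{invA}, exactly as you present them.

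For the converses you supply an argument the paper leaves implicit, and your approach via the $C^{1,1}$ strict subsolution $u_0$ and the equality case of the Fenchel inequality is the standard one in Fathi's theory. One small stylistic point: the sentence ``integrating $\frac{\dd}{\dd s} u_0(\gamma(s))$ against this identity forces \ldots'' could be read as claiming the pointwise equality directly; you do clarify in the next sentence that it is really the combination \emph{equality of integrals} $+$ \emph{pointwise inequality} (Fenchel together with $H(\cdot,Du_0)\leqslant\alpha(0)$) that yields the a.e.\ pointwise equality. Your handling of the passage from a.e.\ to everywhere is exactly right: $C^{1,1}$ regularity of $u_0$ makes $s\mapsto H\big(\gamma(s),D_{\gamma(s)}u_0\big)$ continuous, and this is precisely why Bernard's improvement (Theorem \ref{strictC}) over mere Lipschitz subsolutions is useful here.
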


We end this section with a new description of the relation between  the Aubry set for the Hamiltonian and the Aubry set for the time--$1$ action functional $h_1$. For the sake of clarity, we denote them respectively $\AA_H$ and $\AA_{h_1}$.

\begin{Th}
The equality $\AA_H = \AA_{h_1}$ holds.
\end{Th}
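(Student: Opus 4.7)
The plan is to prove both inclusions via the natural correspondence between classical Euler--Lagrange calibrated trajectories and discrete Aubry sequences: sample at integer times on one side and stitch time-$1$ Tonelli minimizers on the other. The bridge is the $C^{1,1}$ strict classical subsolution $u_{\mathrm{cl}}$ furnished by Theorem \ref{strictC}; it is automatically a continuous discrete critical subsolution for the cost $c=h_1$, since integrating its classical subsolution inequality over $[0,1]$ and taking the infimum over curves yields $u_{\mathrm{cl}}(y)-u_{\mathrm{cl}}(x)\leqslant h_1(x,y)+\alpha(0)$ for all $x,y\in M$.

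For the inclusion $\AA_{h_1}\subseteq\AA_H$, start from $x\in\AA_{h_1}$ and invoke Proposition \ref{pr-sequence} to obtain an Aubry sequence $(x_n)_{n\in\Z}\in\widetilde\AA_{h_1}$ with $x_0=x$. For each $n$, Tonelli's theorem provides a $C^2$ Euler--Lagrange minimizer $\gamma_n:[n,n+1]\to M$ realising $h_1(x_n,x_{n+1})$; concatenate them into a continuous curve $\gamma:\R\to M$. Since $u_{\mathrm{cl}}$ is a continuous discrete subsolution, the strict property of the discrete $u_0$ in Theorem \ref{th-strict} forces $u_{\mathrm{cl}}(x_{n+1})-u_{\mathrm{cl}}(x_n)=h_1(x_n,x_{n+1})+\cc$ for every $n$, which when read along $\gamma$ becomes $u_{\mathrm{cl}}(\gamma(n+1))-u_{\mathrm{cl}}(\gamma(n))=\int_n^{n+1}L(\gamma,\dot\gamma)\,ds+\alpha(0)$. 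A standard sandwich argument on subintervals, using the classical subsolution inequality for $u_{\mathrm{cl}}$ together with the observation that summing it over $[s,t]\subseteq[n,p]$ must equal the integer-time equality, upgrades this to $u_{\mathrm{cl}}(\gamma(t))-u_{\mathrm{cl}}(\gamma(s))=\int_s^t L\,ds+(t-s)\alpha(0)$ for all reals $s\leqslant t$. Hence $\gamma$ calibrates $u_{\mathrm{cl}}$ classically; Tonelli theory then yields that $\gamma$ is globally $C^2$ Euler--Lagrange, and the strict property of $u_{\mathrm{cl}}$ in Theorem \ref{strictC} gives $\gamma(\R)\subseteq\AA_H$, so $x\in\AA_H$.

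For the inclusion $\AA_H\subseteq\AA_{h_1}$, let $x\in\AA_H$. Theorem \ref{invA} supplies a $C^2$ Euler--Lagrange trajectory $\gamma:\R\to M$ with $\gamma(0)=x$ calibrating every classical critical subsolution. Setting $x_n=\gamma(n)$, integer-time calibration together with the forced minimality of $\gamma|_{[n,n+1]}$ for $h_1(x_n,x_{n+1})$ give $u(x_{n+1})-u(x_n)=h_1(x_n,x_{n+1})+\cc$ for every classical critical subsolution $u$; specialising to $u_{\mathrm{cl}}$ and using the identity $T^-=S^-(1)$ from Theorem \ref{ewd}, one obtains $u_{\mathrm{cl}}(x)=T^-u_{\mathrm{cl}}(x)+\cc$. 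Promoting this to $x\in\AA_{h_1}$ via Corollary \ref{cst} requires the calibration equality to hold not only for every classical critical subsolution but for every continuous discrete critical subsolution $v$, i.e. $v(x_{n+1})-v(x_n)=h_1(x_n,x_{n+1})+\cc$. This is the main obstacle, since the class of continuous discrete subsolutions is a priori strictly larger than that of classical subsolutions. The key mechanism is to exploit the freedom in the construction of the discrete strict subsolution $u_0$ in Theorem \ref{th-strict} by including $u_{\mathrm{cl}}$ in the dense family, combined with the classical strict property of $u_{\mathrm{cl}}$ (Theorem \ref{strictC}), which pins down the derivative of every classical subsolution along $\gamma$ and thereby controls the increments at integer samples, forcing each $(x_n,x_{n+1})$ into $\widehat{\AA}_{h_1}$ and hence $x\in\AA_{h_1}$ by Proposition \ref{pr-sequence}.
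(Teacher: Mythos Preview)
Your argument for $\AA_{h_1}\subseteq\AA_H$ is correct, though it proceeds differently from the paper. The paper uses a purely semigroup argument: since any classical subsolution $u$ is a discrete subsolution, Corollary~\ref{cst} forces the integer-time sequence $S^-(n)u(x)+n\alpha(0)$ to be constant, and then monotonicity of the full family $t\mapsto S^-(t)u(x)+t\alpha(0)$ upgrades this to constancy in $t$, giving $x\in\AA_H$ via Corollary~\ref{ccst}. Your calibrated-curve construction is a more hands-on alternative that works equally well.

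For the inclusion $\AA_H\subseteq\AA_{h_1}$ there is a genuine gap. You correctly identify the obstacle: to conclude $x\in\AA_{h_1}$ via Corollary~\ref{cst} one must show $v(x_{n+1})-v(x_n)=h_1(x_n,x_{n+1})+\cc$ for \emph{every} continuous discrete subsolution $v$, and the class of such $v$ is strictly larger than that of classical subsolutions. But your proposed ``mechanism'' does not close this gap. Including $u_{\mathrm{cl}}$ in the dense family used to build the discrete strict subsolution $u_0$ changes nothing: the set $\widehat\AA_{h_1}$ is intrinsic (it equals $\bigcap_{v\in\S\cap C^0}\mathcal{NS}_v$ by Remark~\ref{rem-eg}), and knowing that one particular classical subsolution satisfies equality along $(x_n)$ cannot force the equality for an arbitrary discrete subsolution $v$, which need not be Lipschitz, let alone have its increments controlled by a derivative along~$\gamma$.

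The paper bypasses this difficulty entirely. Given an arbitrary discrete subsolution $u$, it forms $u_-=\lim_n S^-(n)u+n\alpha(0)$ and $u_+=\lim_n S^+(n)u-n\alpha(0)$, which sandwich $u$ and are simultaneously discrete and classical weak KAM solutions by Theorem~\ref{ewd}. Then it exploits Fathi's convergence theorem and classical Aubry properties (Corollary~\ref{ccst}) to show $u_+(x)=u_-(x)$, by introducing $v=\lim_t S^-(t)u_++t\alpha(0)$ and proving $v=u_-$ via monotonicity and Proposition~\ref{comp}. The squeeze $u_+\leqslant u\leqslant S^-(n)u+n\alpha(0)\leqslant u_-$ then gives $u(x)=S^-(n)u(x)+n\alpha(0)$ for all $n$, hence $x\in\AA_{h_1}$. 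The crucial point is that the argument never needs to show $(x_n,x_{n+1})\in\widehat\AA_{h_1}$ directly; it works at the level of the semigroups and uses the equivalence of discrete and classical weak KAM solutions as the bridge.
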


\begin{proof}
Recall that if $n>0$, thanks to the choice of cost function, the Lax--Oleinik semigroups are linked by the equalities $T^{-n} = S^-(n)$ and $T^{+n} = S^+(n)$. Moreover, we have seen that both critical constants for $H$ and $h_1$ coincide. It follows that classical subsolutions are discrete subsolutions, $\SSS_{\alpha(0)} \subset \S_{\alpha(0)}$ (the inclusion being very often strict). Indeed, $u\in \SSS_{\alpha(0)}$ if and only if $t\mapsto S^-(t)u+t\alpha(0)$ is non--decreasing for $t\in [0,+\infty)$ while $u\in  \S_{\alpha(0)}$ if and only if the sequence $n\mapsto S^-(n)u+n\alpha(0)$ is non--decreasing for $n\geqslant 0$.

Let now $x\in \AA_{h_1}$ and $u\in \SSS_{\alpha(0)}$. We deduce from the preceding discussion and Corollary \ref{cst} that the sequence $n\mapsto S^-(n)u(x)+n\alpha(0)$ is constant. As the familly $t\mapsto S^-(t)u+t\alpha(0)$ is non--decreasing, it has to be constant. This being true for any classical subsolution, we deduce that $x\in \AA_H$ by Corollary \ref{ccst}. We have established that $\AA_{h_1} \subset \AA_H$.

Let then $x\in \AA_H$ and $u\in \S_{\alpha(0)}$ be a discrete subsolution. We set $u_- = \lim\limits_{n\to +\infty } S^-(n)u+n\alpha(0)$ and $u_+ = \lim\limits_{n\to +\infty } S^+(n)u-n\alpha(0)$ that both exist (the sequences are monotonous).  They are respectively a negative and positive weak KAM solution and verify $u_+\leqslant u \leqslant u_-$. Finally, let us set $v = \lim\limits_{t\to +\infty } S^-(t)u_++t\alpha(0)$ that is a negative weak KAM solution. As classical and discrete weak KAM solutions coincide, we know from Corollary \ref{ccst} that $u_+(x) =  S^-(t)u_+(x)+t\alpha(0)$ for all $t>0$, hence $u_+(x) = v(x)$. 

We will prove that $v=u_-$. Let $\e>0$ and $N>0$ such that $\| S^+(n)u-n\alpha(0)-u_+\|<\e$ for all $n\geqslant N$. By  application of Proposition \ref{comp} and monotonicity of the semigroups, one establishes that $S^-(n)\circ  S^+(n)u \geqslant u$. By taking $n>N$ and using the monotonicity of $S^-$ again, it follows that 
$$S^-(n)u_+ +n\alpha(0) \geqslant S^-(n)\circ S^+(n) u -\e \geqslant u-\e.$$
Then if $m>0$ we obtain that $S^-(n+m)u_+ +(n+m)\alpha(0) \geqslant S^-(m)u +m\alpha(0) - \e$. Finally letting $m\to +\infty$ it follows that $v \geqslant u_--\e$, and as this is true for all $\e>0$ we obtain that $v\geqslant u_-$.

The reverse inequality is easier: as $u_+\leqslant u_-$ then
$$\forall n>0, \quad S^-(n)u_+ +n\alpha(0) \leqslant S^-(n)u_- +n\alpha(0) =u_-,$$
 and we conclude by letting $n\to +\infty$.

Finally, it follows that $u_+(x) = v(x) = u_-(x)$ and as for $n>0$, 
$$u_+\leqslant u\leqslant S^-(n)u+n\alpha(0)\leqslant u_-,$$
 we also have $u(x) =S^-(n)u(x)+n\alpha(0) =  u_-(x)$. Thus $x\in \AA_{h_1}$ and the proof is complete.
\end{proof}

In the preceding proof the functions $u_-$ and $u_+$ form what is called a {\it conjugate pair} (see \cite{Fa3} for the classical definition and \cite{BeBu} for definitions in a discrete setting), a notion that will reappear in this text (as in Remark \ref{conjugate}).
\newpage

\chapter{More (dynamical) characterizations of the Aubry sets}
So far, we have constructed several different Aubry sets and have started to study the way they are related. Even though their initial definition stems from one particular subsolution, we already began to understand that, in the end, they entail informations concerning all subsolutions. In fact they  depend on the cost function $c$ only, and not on the particular subsolution $u_0$. In this section we push further the understanding of the meaning of those Aubry sets. 

Most results of this Chapter are to be found in \cite{Z} by the author for the most abstract and general part.  When it comes to the more regular setting of costs defined on a compact manifold first results are written in \cite{ZJMD} by the author, then generalized in \cite{BeZa} with Bernard.

\section{The Peierls Barrier} 
The aim of this paragraph is to give a characterization of the Aubry set in terms of action along long chains of points. This characterization builds on the following notion:
\begin{df}\label{Peierl}\rm
If $n>0$ is a positive integer and $(x,y)\in X\times X$, let 
$$c_n(x,y) = \inf \left\{ \sum\limits_{i=0}^{n-1} c(x_i,x_{i+1}), \quad  (x_0, x_1,\cdots , x_{n-1}, x_n)\in X^{n+1}, \ x_0 = x,\ x_n=y \right\}.$$

The {\it Peierls barrier} is the function $h : X\times X \to X$ defined by 
$$\forall (x,y)\in X\times X,\quad h(x,y) = \liminf\limits_{n\to +\infty} c_n(x,y)+n\cc.$$
\end{df}
It follows from the previous Definition that  if $u\in \BB(X,\R)$ is a bounded function, then $\Tn u(x) = \inf\limits_{y\in X} u(y) + c_n(y,x)$.

The name Peierls barrier appears in Aubry and Le Daeron's work \cite{Aubry} in the context of Conservative Twist Maps of the Annulus (see the last Chapter of this text). It associates to each rotation number a real number that vanishes if and only if there is an invariant circle with this rotation number (all those notions are defined in the last Chapter). Hence it is a barrier to the existence of invariant circles. Mather then introduced variations and  studied properties of this barrier, still in the context of Twist Maps in \cite{MatherCrit}. He went on to generalize the Peierls barrier to higher dimensional Lagrangian settings in \cite{MatherF}. The Definition just presented is analogous to what is done in the latter reference.

Let us start by giving some properties of this new object:
\begin{pr}\label{peierl-prop}
\hspace{2em}
\begin{enumerate}[label=(\roman*)]
\item $h$ is well defined and continuous;
\item for all $u\in \S$ and $(x,y)\in X$, $u(y)-u(x) \leqslant h(x,y)$, in particular for all $x\in X$, $h(x,x)\geqslant 0$;
\item for all $x,y,z\in X$ and integer $n>0$ the following inequalities hold:
\begin{multline}\label{ineqtriang}
h(x,y)\leqslant h(x,z) + c_n(z,y)+n\cc ;\\  h(x,y)\leqslant  c_n(x,z)+n\cc+h(z,y) ; \\ h(x,y)\leqslant h(x,z)+h(z,y);
\end{multline}
\item for all $x\in X$, the function $h_x = h(x,\cdot)$ is a weak KAM solution and the function $h^x = -h(\cdot ,x)$ is a positive weak KAM solution.
\end{enumerate}

\end{pr}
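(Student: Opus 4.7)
The plan is to treat the four items in order, with the bulk of the work split between the well-definedness in (i) and the weak KAM identity in (iv).

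For (i), I would first show that the sequence $c_n(x,y)+n\cc$ stays bounded. The lower bound is immediate: given any critical subsolution $u\in\S$, iterating $u(b)-u(a)\leqslant c(a,b)+\cc$ along any chain $x=x_0,\dots,x_n=y$ and taking an infimum gives $u(y)-u(x)\leqslant c_n(x,y)+n\cc$. For an upper bound I would fix a weak KAM solution $u$, so $u(y)=\Tn u(y)+n\cc=\inf_{a}\bigl[u(a)+c_n(a,y)\bigr]+n\cc$; compactness yields $a_n$ realising this minimum, hence $c_n(a_n,y)+n\cc=u(y)-u(a_n)$ is uniformly bounded. Concatenating the step $x\to a_n$ with the minimising chain from $a_n$ to $y$ then gives $c_{n+1}(x,y)+(n+1)\cc\leqslant \|c\|_\infty+u(y)-u(a_n)+\cc$, which is bounded uniformly in $n$. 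So $h(x,y)$ is finite. For continuity, if $\om$ is a modulus of uniform continuity for $c$ and $(x_0=x,\ldots,x_n=y)$ nearly realises $c_n(x,y)$, replacing the first (resp. last) point gives $|c_n(x,y)-c_n(x',y')|\leqslant \om\big(d(x,x')\big)+\om\big(d(y,y')\big)$ uniformly in $n$. Passing to the liminf preserves this, so $h$ is uniformly continuous.

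Parts (ii) and (iii) are short. For (ii), the iterated subsolution inequality $u(y)-u(x)\leqslant c_n(x,y)+n\cc$ already established above yields $u(y)-u(x)\leqslant h(x,y)$; specialising $y=x$ gives $h(x,x)\geqslant 0$. For (iii), the key is the subadditivity $c_{n+m}(x,y)\leqslant c_n(x,z)+c_m(z,y)$ obtained by concatenation. For the first inequality, rewrite it as $c_{m+n}(x,y)+(m+n)\cc\leqslant [c_m(x,z)+m\cc]+[c_n(z,y)+n\cc]$ and take $\liminf_{m\to+\infty}$, noting that $\liminf_k c_k(x,y)+k\cc\leqslant \liminf_m c_{m+n}(x,y)+(m+n)\cc$. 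The second inequality is symmetric; the third then follows by taking a further $\liminf_{n\to+\infty}$ in either of the first two.

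For (iv), I would prove $h_x=T^-h_x+\cc$. The inequality $h(x,y)\leqslant \inf_z[h(x,z)+c(z,y)]+\cc$ is exactly (iii) with $n=1$. The reverse is the substantive point and is the step I expect to be the main obstacle: I need to produce an actual $z^\star$ achieving this infimum in a limiting sense. For each $n$ compactness yields $z_n$ with $c_n(x,y)=c_{n-1}(x,z_n)+c(z_n,y)$. Choose a subsequence $n_k$ realising $c_{n_k}(x,y)+n_k\cc\to h(x,y)$ and, refining by compactness of $X$, assume $z_{n_k}\to z^\star$. Continuity of $c$ gives $c(z_{n_k},y)\to c(z^\star,y)$, so $c_{n_k-1}(x,z_{n_k})+(n_k-1)\cc\to h(x,y)-c(z^\star,y)-\cc$; the uniform equicontinuity of the $c_n$ obtained in (i) lets me replace $z_{n_k}$ by $z^\star$ inside, yielding $h(x,z^\star)\leqslant h(x,y)-c(z^\star,y)-\cc$. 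This gives the missing inequality. The positive case $h^x=T^+h^x-\cc$ I would deduce by duality: if $\hat c(u,v)=c(v,u)$, the Peierls barrier of $\hat c$ (same critical constant, by Proposition \ref{criticalSS}) satisfies $\hat h(u,v)=h(v,u)$, and the identity $T^+f=-(T^-_{\hat c}(-f))$ from the remark after Definition \ref{dfT+} turns the fact that $\hat h_x$ is a negative weak KAM solution for $\hat c$ into the statement that $h^x=-\hat h_x$ is a positive weak KAM solution for $c$.
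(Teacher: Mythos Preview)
Your proof is correct and follows essentially the same route as the paper: equicontinuity of the $c_n$ and boundedness of $c_n+n\cc$ for (i), iterated subsolution inequalities for (ii), subadditivity plus liminf for (iii), and for (iv) the same extraction argument producing a limit point $z^\star$ that witnesses $h_x(y)\geqslant h_x(z^\star)+c(z^\star,y)+\cc$. The only cosmetic differences are that the paper gets the upper bound in (i) in one line by recognising $c_{n+1}(x,\cdot)=T^{-n}c(x,\cdot)$ and citing Remark~\ref{bounded}(i), whereas you redo this explicitly, and that you dispatch $h^x$ via the duality $\hat c(u,v)=c(v,u)$ while the paper simply says the argument is similar.
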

\begin{proof}
Let $\om$ denote a  modulus of continuity for $c$: a non--decreasing function  from $ [0,+\infty)$ to itself that is continuous at $0$, with $\om(0)=0$, such that 
$$\forall (x,y,x',y')\in X^4,\quad |c(x,y) - c(x',y')| \leqslant \om\big(d(x,x')+d(y,y')\big).$$
 Let $x,x',y,y'$ be points, $n$ an integer, and $x=x_0,x_1,\cdots ,x_{n-1},x_n=y$ such that $c_n(x,y) =  \sum\limits_{i=0}^{n-1} c(x_i,x_{i+1})$ (they exist by compactness of $X$ and continuity of the maps). Then one gets by definition that 
\begin{multline*}
c_n(x',y')-c_n(x,y) \\
 \leqslant c(x',x_1)+\sum\limits_{i=1}^{n-2} c(x_i,x_{i+1})+c(x_{n-1},y')-\sum\limits_{i=0}^{n-1} c(x_i,x_{i+1}) \\
 \leqslant \om\big(d(x,x')+d(y,y')\big).
 \end{multline*} 
 We get the same inequality for $c_n(x,y)-c_n(x',y')$ by the same argument, which proves that the $c_n$ are equicontinuous. Moreover, one checks  readily that $c_{n+1}(x,y) = T^{-n}c(x,\cdot) (y)$. Therefore, combined with the first point of Remark \ref{bounded}, this implies that the family on functions $c_n +n\cc$ is uniformly bounded. This proves (i).
 
 We have already seen in Remark \ref{rem-eg}-(iii) that if $u\in \S$ and $x,y$ are points, then for any integer $n>0$, $u(y)-u(x)\leqslant c_n(x,y)+n\cc$. Taking a liminf yields  (ii).
 
 One has by definition that if $m,n$ are integers and $x,y$ are points, then $c_{n+m}(x,y) = \inf\limits_{Z\in X} c_m(x,Z)+c_n(Z,y)$. Hence if $z$ is a third point, then 
 $$c_{n+m}(x,y)+(n+m)\cc \leqslant  c_m(x,z)+m\cc +c_n(z,y)+n\cc.$$
 Letting $m\to +\infty$ and taking liminf leads to the first inequality of (iii), $n\to +\infty$ and taking liminf to the second one, and both at the same time to the third.
 
 Let $x\in X$. The previous inequalities applied to $n=1$ show that the functions $h_x$ and $h^x$ are subsolutions. Let us  prove (iv) for $h_x$, the rest being similar. Let $y\in X$ and $k_n$ be an extraction such that $h_x (y) = \lim\limits_{n\to+\infty} c_{k_n+1}(x,y) + (k_n+1)\cc$. For each $n$, there exists $x_n\in X$ such that $c_{k_n+1}(x,y) + (k_n+1)\cc = c_{k_n}(x,x_n)+c(x_n,y) + (k_n+1)\cc$ and taking a further extraction, one may assume that $x_n\to \tilde x$ for some $\tilde x \in X$. Taking the limit and by equicontinuity, one concludes that
 $$h_x (y) = \lim\limits_{n\to+\infty} c_{k_n}(x,x_n) + k_n\cc+c(x_n,y )+\cc \geqslant h(x,\tilde x) + c(\tilde x ,y)+\cc.$$
 The function $h_x$ being a subsolution, the last inequality must be an equality and it follows that $ h_x(y) =  T^-h_x(y)+\cc$ which is what was to be proven.
\end{proof}
 We can actually strengthen (ii) in the previous Proposition as follows:
 \begin{pr}\label{strengthen}
 Let $u\in \S$ and $m,n$ two non--negative integers, then 
 $$\forall (x,y)\in X\times X,\quad  \Tn u(y) - T^{+m} u(x) +(n+m)\cc \leqslant h(x,y).$$
  \end{pr}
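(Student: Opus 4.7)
The plan is to chain together three inequalities---one coming from the definition of $T^{-n}u$ as an infimum, one from the definition of $T^{+m}u$ as a supremum, and one from the subsolution property applied to $u$ itself---and then optimize over the free intermediate points before letting an auxiliary length tend to infinity.

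First I would record the explicit formulas. From the Lemma before Lemma \ref{A-cst1}, for any $y_0 \in X$ there is a chain from $y_0$ to $y$ of length $n$ witnessing
$$T^{-n}u(y) \leqslant u(y_0) + c_n(y_0, y),$$
and dually, for any $x_0\in X$,
$$T^{+m}u(x) \geqslant u(x_0) - c_m(x, x_0).$$
Since $u \in \S$, iterating the subsolution inequality yields $u(y_0) - u(x_0) \leqslant c_N(x_0, y_0) + N\cc$ for every integer $N > 0$ (this is Remark \ref{rem-eg}(iii) read the other way, or alternatively Proposition \ref{criticalSS}'s proof strategy). These three ingredients only use $u\in \S$ and the definitions of the two semigroups.

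Next I would add the three inequalities. The result is
$$T^{-n}u(y) - T^{+m}u(x) \leqslant c_m(x, x_0) + c_N(x_0, y_0) + c_n(y_0, y) + N\cc,$$
valid for any choice of $x_0, y_0 \in X$ and any integer $N > 0$. Taking the infimum over $x_0$ and $y_0$, I recognize the right-hand side's chain cost: concatenating optimal chains of lengths $m$, $N$, $n$ through the junction points $x_0$ and $y_0$ realizes an arbitrary chain from $x$ to $y$ of total length $m + N + n$, so
$$\inf_{x_0, y_0 \in X}\bigl(c_m(x, x_0) + c_N(x_0, y_0) + c_n(y_0, y)\bigr) = c_{m+N+n}(x, y).$$
Adding $(n+m)\cc$ to both sides gives
$$T^{-n}u(y) - T^{+m}u(x) + (n+m)\cc \leqslant c_{m+N+n}(x,y) + (m+N+n)\cc.$$

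Finally, the left-hand side does not depend on $N$, so I take $\liminf$ as $N \to +\infty$ on the right-hand side. Setting $K = m+N+n$, this is a subsequence of $c_K(x,y) + K\cc$ with $K \to +\infty$, hence is bounded below by $\liminf_{K\to +\infty}[c_K(x,y)+K\cc] = h(x,y)$ by Definition \ref{Peierl}. This yields exactly the desired inequality. There is essentially no obstacle: the only point to watch is that the gluing identity for the $c_k$ must be applied to get the clean expression $c_{m+N+n}$, and one should handle the degenerate cases $m = 0$ or $n = 0$ (where the corresponding semigroup is the identity), which just recover Proposition \ref{peierl-prop}(ii).
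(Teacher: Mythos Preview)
Your proof is correct. It differs from the paper's in how the passage to the limit is arranged. The paper first takes an arbitrary chain $x=x_{-m},\ldots,x_n=y$ and obtains directly
\[
T^{-n}u(y)-T^{+m}u(x)+(n+m)\cc\leqslant c_{n+m}(x,y)+(n+m)\cc,
\]
then observes that the left-hand side is non-decreasing in both $n$ and $m$ (because $u\in\S$), and uses this monotonicity to pass to the limit along a sequence $(n_k,m_k)$ realizing the liminf. Your approach instead keeps $n$ and $m$ fixed and inserts an auxiliary chain of length $N$ in the middle via the subsolution property $u(y_0)-u(x_0)\leqslant c_N(x_0,y_0)+N\cc$, then sends only $N\to+\infty$. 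This avoids the monotonicity observation entirely, which is a mild simplification.

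One phrasing issue: saying the liminf over $N$ of $c_{m+N+n}(x,y)+(m+N+n)\cc$ is ``bounded below by'' $h(x,y)$ points in the wrong direction for what you need. What actually makes your argument work is that $\{m+N+n:N\geqslant 1\}$ is a cofinal tail of the integers, so the liminf over $N$ \emph{equals} $h(x,y)$; that is the equality you should invoke.
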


\begin{proof}
Let $m,n$ be any non--negative integers and $x=x_{-m},\cdots , x_n=y$ be a chain of points. Then by definition of the Lax--Oleinik semigroups, one gets
 $$\Tn u(y) - T^{+m} u (x)  \leqslant \sum_{i=0}^{n-1} c(x_i,x_{i+1}) + u(x_0) -u(x_0) + \sum_{i=-m}^{-1} c(x_i,x_{i+1}) = \sum_{i=-m}^{n-1} c(x_i,x_{i+1}).
$$
This being true for all chains of points it follows that 
$$\Tn u(y) - T^{+m} u (x)+(n+m)\cc \leqslant c_{n+m}(x,y)+(n+m)\cc.$$
 Note that the left hand side is now non--decreasing in both $n$ and $m$ (as $u\in \S$). Therefore to show the statement, we only have to show that the limit when $n,m\to +\infty$ verifies the same inequality. Let $n_k,m_k$ be two diverging increasing sequences such that  $ c_{n_k+m_k}(x,y)+(n_k+m_k)\cc \to h(x,y)$. Then one concludes that 
\begin{multline*}
\Tn u(y) - T^{+m} u (x)+(n+m)\cc \leqslant \\ 
\leqslant \lim_{k\to +\infty} T^{-n_k} u(y) - T^{+m_k} u (x)+(n_k+m_k)\cc\\
\leqslant  \lim_{k\to +\infty}  c_{n_k+m_k}(x,y)+(n_k+m_k)\cc    = h(x,y),
\end{multline*}
which is the result.
\end{proof}
\begin{rem}\label{conjugate}\rm
The previous proposition can actually be stated in a more optimal way by introducing another notion. Given a subsolution $u\in\S$, as noted, the sequences $\Tn u+ n\cc$ and $\TTn u-n\cc$ converge respectively to functions $u^-$ and $u^+$ which are respectively  a negative and a positive weak KAM solution. The result then states that $u^-(y)-u^+(x)\leqslant h(x,y)$. Such pairs are called conjugate pairs,  they coincide on $\AA$ (Corollary \ref{cst}) and this last point characterizes the pair $(u^-,u^+)$ (see proposition \ref{distlike} in the next paragraph).
\end{rem}

We are now ready to establish a characterization of the Aubry sets  much  easier to handle:
\begin{Th}\label{Aubry0}
The following equalities hold:
$$\AA=\{x\in X,\quad h(x,x)=0\},$$
$$
\widehat \AA = \{(x,y)\in X\times X , \quad c(x,y)+\cc+h(y,x)=0\}.
$$
\end{Th}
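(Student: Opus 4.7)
The plan is to prove both equalities in tandem, since the second one reduces to the first once $h(y,y) = 0$ on $\AA$ is established.

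Three of the four inclusions follow elementarily from Proposition \ref{peierl-prop}(ii) applied to the continuous subsolution $u_0$. Unconditionally, $h(x,x) \geq u_0(x) - u_0(x) = 0$; and for $(x,y) \in \widehat\AA$, $h(y,x) \geq u_0(x) - u_0(y) = -(c(x,y) + \cc)$. These handle the ``$\geq 0$'' sides of both identities. Conversely, if $c(x,y) + \cc + h(y,x) = 0$, combining $h(y,x) \geq u_0(x) - u_0(y)$ with $h(y,x) = -(c(x,y) + \cc)$ forces $u_0(y) - u_0(x) \geq c(x,y) + \cc$, and the $u_0$ subsolution inequality gives the reverse, so $(x,y) \in \widehat\AA$. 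Finally, assume $h(x,x) = 0$: by continuity of the $c_n$ and compactness of $X$, choose loops $x = y_0^{(k)}, \ldots, y_{n_k}^{(k)} = x$ realizing the liminf, i.e. with $\sum_i c(y_i^{(k)}, y_{i+1}^{(k)}) + n_k\cc \to 0$. Since the loop is closed, the $u_0$-differences telescope to zero and the sum rewrites as
\[
\sum_{i=0}^{n_k-1}\bigl[c(y_i^{(k)}, y_{i+1}^{(k)}) + \cc - (u_0(y_{i+1}^{(k)}) - u_0(y_i^{(k)}))\bigr] \to 0,
\]
where every summand is $\geq 0$ by the subsolution inequality. A non-negative sum tending to $0$ forces each summand to $0$; in particular $c(x, y_1^{(k)}) + \cc + u_0(x) - u_0(y_1^{(k)}) \to 0$, so extracting $y_1^{(k)} \to y^*$ by compactness and passing to the limit gives $(x, y^*) \in \widehat\AA$, hence $x \in \pi_1(\widehat\AA) = \AA$.

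For the inclusion $\widehat\AA \subseteq \{(x,y) : c(x,y)+\cc+h(y,x) = 0\}$, given $(x,y) \in \widehat\AA$, I exploit that $h_y = h(y,\cdot)$ is a continuous weak KAM solution by Proposition \ref{peierl-prop}(iv), hence a continuous critical subsolution. Applying the strict property of $u_0$ from Theorem \ref{th-strict} to the subsolution $u = h_y$ at the pair $(x,y) \in \widehat\AA$ yields $h_y(y) - h_y(x) = c(x,y) + \cc$, i.e. $c(x,y) + \cc + h(y,x) = h(y,y)$. Since $y \in \pi_2(\widehat\AA) = \AA$ by the corollary following Proposition \ref{pr-sequence}, the first identity (once proved) gives $h(y,y) = 0$ and closes the argument.

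What remains, and is the main obstacle, is the inclusion $\AA \subseteq \{x : h(x,x) = 0\}$: for each $x \in \AA$, produce arbitrarily long cheap loops from $x$ to $x$. Taking $(x_n)_{n \in \Z} \in \widetilde\AA$ with $x_0 = x$ (Proposition \ref{pr-sequence}), the double-sided concatenation $x_0, x_1, \ldots, x_N, x_{-M+1}, \ldots, x_{-1}, x_0$ has length $N + M$, and using the calibration formulas on the forward and backward arcs one computes
\[
c_{N+M}(x,x) + (N+M)\cc \leq u_0(x_N) - u_0(x_{-M+1}) + c(x_N, x_{-M+1}) + \cc,
\]
a quantity that is $\geq 0$ by the $u_0$ subsolution inequality. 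The delicate point is to extract $N_k, M_k \to \infty$ along which this right-hand side tends to $0$, equivalently along which the pair $(x_{N_k}, x_{-M_k+1})$ approaches the closed set $\widehat\AA$. I expect this to require a recurrence argument on the compact shift-invariant set $\widetilde\AA \subset X^\Z$: for instance, producing a shift-invariant probability measure by Krylov--Bogolyubov, applying Poincar\'e recurrence to obtain an abundance of recurrent bi-infinite sequences in $\widetilde\AA$, and then transferring the conclusion from recurrent base-points to every $x \in \AA$ via a density/continuity argument on the compact $\AA$ (using that $h$ is continuous and $\widehat\AA$ is closed). Once $h(x,x) \leq 0$ is secured on $\AA$, combined with the earlier lower bound, both equalities follow.
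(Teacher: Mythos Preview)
Three of your four inclusions are correct. Your telescoping argument for $\{x : h(x,x)=0\}\subset\AA$ is a valid and pleasantly direct alternative to the paper's route (which instead invokes Proposition~\ref{strengthen} together with Corollary~\ref{cst}). Your arguments for $\{(x,y):c(x,y)+\cc+h(y,x)=0\}\subset\widehat\AA$ and for $\widehat\AA\subset\{(x,y):c(x,y)+\cc+h(y,x)=0\}$ coincide with the paper's.

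The genuine gap is the inclusion $\AA\subset\{x:h(x,x)=0\}$. You only offer a plan (Krylov--Bogolyubov, Poincar\'e recurrence, then a density transfer), without executing it. Such an approach could in principle be made to work---for a recurrent bi-infinite sequence one can indeed arrange $(x_{N_k},x_{-M_k+1})\to(x_0,x_1)\in\widehat\AA$, and then push the conclusion to all of $\AA$ by continuity of $h$---but it brings in measure-theoretic machinery that is unnecessary here, and the transfer step from recurrent base-points to arbitrary $x\in\AA$ is not spelled out.

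The paper avoids all of this via the short intermediate Proposition~\ref{T+h}: for \emph{every} $x\in X$, one has $T^{+n}h_x(x)-n\cc\to 0$, where $h_x=h(x,\cdot)$. The proof is two lines: the sequence is nonincreasing (since $h_x\in\S$) and nonnegative (take $y=x$ in Proposition~\ref{strengthen} applied to $u=h_x$); for the upper bound, pick optimal forward chains $x=x_0^n,\ldots,x_n^n$ realizing $T^{+n}h_x(x)$, extract so that $x_{n_k}^{n_k}\to y$, and pass to the limit to get $\lim_k T^{+n_k}h_x(x)-n_k\cc\leqslant h_x(y)-h(x,y)=0$. With this in hand the inclusion is immediate: if $x\in\AA$ then, applying Corollary~\ref{cst} to the subsolution $h_x$, the sequence $T^{+n}h_x(x)-n\cc$ is constant equal to $h_x(x)=h(x,x)$, and the proposition forces this constant to be $0$. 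The key idea you are missing is to test the positive Lax--Oleinik semigroup against the Peierls barrier itself rather than to try to build explicit recurrent loops.
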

\begin{proof}[Beginning of the Proof]
We will only prove two inclusions for now. The other ones will arrive after an intermediate Proposition \ref{T+h}.

Let us call $A_1\subset X$, $A_2\subset X\times X$ the sets appearing in the right hand side of the statement.
 If $u\in \S$ and $h(x,x)=0$, by the previous Proposition both sequences $(\Tn u(x)+n\cc)_{n\in \N}$ and $(\TTn u(x)-n\cc)_{n\in \N}$ are constantly equal to $u(x)$. This implies that $x\in \AA$ (see Corollary \ref{cst}) and $A_1\subset \AA$. 
  
 Assume $(x,y)\in A_2$. Then for any $u\in \S$, summing up the inequalities $u(y)-u(x) \leqslant c(x,y)+\cc$ and $u(x)-u(y)\leqslant h(y,x)$ brings that $0\leqslant c(x,y)+\cc+h(y,x)$. As this is an equality, both inequalities were equalities and $(x,y)\in \widehat \AA$. So $A_2 \subset \widehat \AA$.
 
\end{proof}
In order to obtain the reverse inclusions, we will use the following:
\begin{pr}\label{T+h}
Let $x\in X$, then $\TTn h_x(x) -n\cc \to 0$ as $n\to +\infty$.
\end{pr}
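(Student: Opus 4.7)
The plan is to exploit the monotonicity of $n \mapsto T^{+n} h_x(x) - n\cc$ together with the $\liminf$ hidden in the definition of $h(x,x)$. First, since $h_x$ is a critical subsolution (being a weak KAM solution), one has $h_x \geq T^+ h_x - \cc$; order-preservation of $T^+$ and induction give that the sequence $n \mapsto T^{+n} h_x - n\cc$ is non-increasing, hence converges pointwise. It therefore suffices to exhibit a subsequence along which $T^{+n} h_x(x) - n\cc$ tends to $0$. Iterating the definition of $T^+$ yields
$$T^{+n} h_x(x) - n\cc = \sup_{y\in X} \bigl[\, h_x(y) - (c_n(x,y) + n\cc) \,\bigr],$$
so the task reduces to controlling the asymptotics of the family $\{c_n(x, \cdot) + n\cc\}_{n\geq 1}$.

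The core step is a compactness argument. Since $c_{n+1}(x,\cdot) = T^-\bigl(c_n(x,\cdot)\bigr)$ for every $n \geq 1$, each term (for $n \geq 2$) lies in the image of $T^-$, so Proposition \ref{propT}(i) furnishes equicontinuity, while Remark \ref{bounded}(i) applied to $v = c(x,\cdot)$ gives uniform boundedness. The definition of $h(x,x)$ as a $\liminf$ yields integers $n_k \to +\infty$ with $c_{n_k}(x,x) + n_k \cc \to h(x,x)$; a further Arzel\`a--Ascoli extraction ensures $c_{n_k}(x, \cdot) + n_k \cc \to g$ uniformly on $X$ for some continuous $g$. Two properties follow immediately: $g(x) = h(x,x) = h_x(x)$ by the choice of subsequence at $x$, and since $h_x(y) = \liminf_n (c_n(x,y) + n\cc)$ is dominated by any subsequential limit, $g \geq h_x$ pointwise on $X$.

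To close the argument, uniform convergence lets the supremum pass to the limit:
$$T^{+n_k} h_x(x) - n_k\cc \;\longrightarrow\; \sup_{y\in X}\bigl[\, h_x(y) - g(y) \,\bigr].$$
The right-hand side is at most $0$ because $h_x \leq g$, and at least $0$ by evaluating at $y = x$ where $h_x(x) = g(x)$; hence it equals $0$, which, combined with the monotonicity established at the outset, forces $T^{+n} h_x(x) - n\cc \to 0$.

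The main obstacle is the extraction step: one must recognise that the \emph{liminf} in the definition of the Peierls barrier is precisely what supplies an envelope $g$ enjoying both the global domination $g \geq h_x$ and equality at the base point $x$. Without equality at $x$, the vanishing of the sup could not be read off from the inequality $g \geq h_x$ alone. Once this sandwich is in place, Arzel\`a--Ascoli and the elementary fact that $\sup$ is continuous for uniform convergence finish the proof.
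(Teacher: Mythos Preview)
Your proof is correct and shares the same skeleton as the paper's: establish monotonicity of $n\mapsto T^{+n}h_x(x)-n\cc$, then use compactness together with the $\liminf$ defining $h$ to show a subsequence has limit $0$.

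The execution differs slightly. The paper picks, for each $n$, an optimal chain $x=x_0^n,\dots,x_n^n$ realizing $T^{+n}h_x(x)$, extracts only on the \emph{endpoint} $x_{n_k}^{n_k}\to y$ (compactness of $X$), and bounds
\[
T^{+n_k}h_x(x)-n_k\cc \;\leqslant\; h_x(x_{n_k}^{n_k}) - \bigl(c_{n_k}(x,x_{n_k}^{n_k})+n_k\cc\bigr)\;\longrightarrow\; h_x(y)-h(x,y)=0,
\]
invoking equicontinuity of the $c_n$ to pass to the limit. You instead write $T^{+n}h_x(x)-n\cc=\sup_y\bigl[h_x(y)-(c_n(x,y)+n\cc)\bigr]$ directly and extract via Arzel\`a--Ascoli on the full family $c_{n}(x,\cdot)+n\cc$, obtaining a uniform limit $g$ with $g\geqslant h_x$ and $g(x)=h_x(x)$. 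Your route is a bit more functional and sidesteps the chain bookkeeping; the paper's route needs only point-compactness of $X$ rather than Arzel\`a--Ascoli (though equicontinuity is still used). Both arguments are equally short and rest on the same idea: the $\liminf$ in the Peierls barrier supplies a subsequence along which the relevant gap vanishes.
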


\begin{proof}
Recall $\TTn h_x(x) -n\cc $ is a decreasing sequence ($h_x\in \S$). Moreover, as $h_x(y) - \TTn h_x(x) +n\cc \leqslant  h(x,y)$, we see that $\TTn h_x(x) -n\cc \geqslant 0$ for all $n>0$ and the limit meets the same property.

For each $n>0$, let $x=x_0^n, \cdots, x_n^n$ verify $\TTn h_x(x) = h_x(x_n^n)-\sum\limits_{i=0}^{n-1} c(x_i^n,x_{i+1}^n)$. Up to extracting, assume $x_{n_k}^{n_k} \to y$ for some $y$, passing to the limit, by definition of $h$ we obtain that:
$$\lim_{k\to +\infty} T^{+n_k}  h_x(x) - n_k\cc   \leqslant h_x(y)-h(x,y),$$
which proves the Proposition. 
\end{proof}
\begin{proof}[End of the Proof of Theorem \ref{Aubry0}]
Let $x\in \AA$, then $\TTn h_x (x)-n\cc$ is constant, hence it is $0$, so $h(x,x)=0$.

Let $(x,y)\in \widehat \AA$, then $h_y(y) - h_y(x) = c(x,y)+\cc$. But as $y\in \AA$, $h(y,y)=0$ hence $c(x,y)+\cc + h(y,x)=0$.
\end{proof}

Theorem \ref{Aubry0} gives a more concrete characterization of the Aubry set. Pairs $(x,y)$ in the $2$--Aubry set are starting pairs of arbitrarily long loops of points with arbitrarily small cost (for the cost $c+\cc$).

As a final byproduct of the preceding proofs we obtain:
\begin{pr}\label{c=h}
If $(x,y)\in \widehat\AA$ then $c(x,y)+c[0]=h(x,y)$. 

More generally, if $(x_n)_{n\in \Z}\in \widetilde \AA$, then
$$\forall m<n,\quad \sum_{k=m}^{n-1} c(x_k,x_{k+1})+(n-m)c[0] = h(x_m,x_n).$$
\end{pr}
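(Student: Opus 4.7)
The plan is to deduce both claims from the characterization of the Aubry sets given in Theorem~\ref{Aubry0}, together with the subadditivity and subsolution properties of the Peierls barrier collected in Proposition~\ref{peierl-prop}.

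For the first claim, fix $(x,y)\in\widehat\AA$. By Proposition~\ref{pr-sequence} and the shift invariance recalled in Remark~\ref{rem-eg}(ii), the point $y$ lies in the projected Aubry set $\AA$, and thus $h(y,y)=0$ by Theorem~\ref{Aubry0}. The triangular-type inequality from Proposition~\ref{peierl-prop}(iii), applied with $z=y$ and $n=1$, then yields
$$h(x,y)\leqslant c(x,y)+\cc+h(y,y)=c(x,y)+\cc.$$
For the reverse inequality, Theorem~\ref{Aubry0} provides the identity $c(x,y)+\cc+h(y,x)=0$, while Proposition~\ref{peierl-prop}(ii)--(iii) combined give the chain $h(x,y)+h(y,x)\geqslant h(x,x)\geqslant 0$. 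Substituting, $h(x,y)\geqslant -h(y,x)=c(x,y)+\cc$, which closes the argument.

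For the general statement, let $(x_n)_{n\in\Z}\in\widetilde\AA$ and fix $m<n$. By Proposition~\ref{peierl-prop}(iv), the function $u:=h_{x_m}=h(x_m,\cdot)$ is a weak KAM solution, in particular a critical subsolution. By Remark~\ref{remoublieee} we have $\widetilde\AA\subset\widetilde\AA_u$, so the defining equality of $\widetilde\AA_u$ applied to this $u$ gives an exact expression for $u(x_n)-u(x_m)=h(x_m,x_n)-h(x_m,x_m)$ in terms of $\sum_{k=m}^{n-1}c(x_k,x_{k+1})$ and a multiple of $\cc$. Since $x_m\in\AA$ by shift invariance, the first claim (or Theorem~\ref{Aubry0} directly) gives $h(x_m,x_m)=0$, and the desired equality follows after rearranging.

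The argument is essentially mechanical once the right ingredients are assembled; the only step that deserves care is verifying that every point $x_k$ of an Aubry sequence projects into $\AA$, which is immediate from the shift invariance of $\widetilde\AA$ recalled in Remark~\ref{rem-eg}(ii) together with $\AA=\pi_1(\widehat\AA)$. No genuine obstacle appears, and the proposition should be viewed as a clean compatibility statement between the Peierls barrier and the Aubry set.
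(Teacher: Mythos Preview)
Your proof is correct. For the second claim your argument is essentially identical to the paper's: both use that $h_{x_m}$ is a critical subsolution and that Aubry sequences calibrate every subsolution (the paper sums the step-by-step equalities $h_{x_m}(x_{k+1})-h_{x_m}(x_k)=c(x_k,x_{k+1})+\cc$, you invoke Remark~\ref{remoublieee} which packages the same summation), then both conclude via $h(x_m,x_m)=0$.

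For the first claim your route differs slightly. The paper argues in one stroke: since $h_x$ is a subsolution and $(x,y)\in\widehat\AA$, the defining equality $h_x(y)-h_x(x)=c(x,y)+\cc$ holds, and $h(x,x)=0$ because $x\in\AA=\pi_1(\widehat\AA)$. You instead prove the two inequalities separately---the upper bound via the triangle inequality and $h(y,y)=0$, the lower bound via the characterization $c(x,y)+\cc+h(y,x)=0$ from Theorem~\ref{Aubry0} combined with $h(x,y)+h(y,x)\geqslant 0$. Both are clean; the paper's version is marginally more direct since it avoids invoking the second equality of Theorem~\ref{Aubry0} and uses only the subsolution property of $h_x$ together with $h(x,x)=0$.
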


\begin{proof}
As the function $h_x$ is a critical subsolution we obtain that if $(x,y)\in \widehat\AA$, $h_x(y) - h_x(x) = c(x,y)+c[0]$. As $h(x,x)=0$ we get $h(x,y) = c(x,y)+c[0]$.

For the second equality, we sum the equalities $h_{x_m}(x_{k+1}) - h_{x_m}(x_k) = c(x_k,x_{k+1})+c[0]$ to obtain
$$h(x_m,x_n) = h_{x_m}(x_n) - h_{x_m}(x_m) =  \sum_{k=m}^{n-1} c(x_k,x_{k+1})+(n-m)c[0] .$$
\end{proof}

\section{Examples of points in the Aubry sets}

We will now be more specific about the type of points that belong to the Aubry set. Most of them appear as limit points of minimizing chains. The following lemma is most useful:
\begin{lm}\label{chains}
Let $u:X\to \R$ be a weak KAM solution, then for all $x\in X$ there exists an infinite sequence $(x_{-n})_{n\geqslant 0}$ with $x_0=x$ such that
\begin{equation}\label{eqchain}
\forall n\in \N,\quad u(x)=u(x_{-n})+\sum_{i=-n}^{-1} c(x_i,x_{i+1}) +n\cc.
\end{equation}
We call such sequences, calibrating sequences for $u$.
\end{lm}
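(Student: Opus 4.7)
The plan is to build the calibrating sequence by backward induction, using compactness to attain the infimum in the Lax--Oleinik operator at each step, and then telescope the one--step equalities.

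First I would observe that $u$ is continuous: since $u = T^- u + \cc$, the function $u$ differs from an element of the image of $T^-$ by a constant, and by Proposition \ref{propT}-(i) every such element is (equi)continuous. Consequently, for any fixed $z \in X$, the map $y \mapsto u(y) + c(y,z)$ is continuous on the compact metric space $(X,d)$, so the infimum in
$$T^- u(z) = \inf_{y \in X} u(y) + c(y,z)$$
is achieved by some point.

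Next I would set $x_0 = x$ and construct $x_{-n}$ by induction on $n \geqslant 1$. Assuming $x_{-(n-1)}$ has been defined, the equation $u(x_{-(n-1)}) = T^- u(x_{-(n-1)}) + \cc$ together with the attainment of the infimum produces some $x_{-n} \in X$ with
$$u(x_{-(n-1)}) = u(x_{-n}) + c(x_{-n}, x_{-(n-1)}) + \cc.$$
This defines the required infinite backward sequence.

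Finally I would prove \eqref{eqchain} by induction on $n$. The case $n = 0$ is trivial, and the case $n=1$ is exactly the one--step identity just produced. For the induction step, assuming
$$u(x) = u(x_{-n}) + \sum_{i=-n}^{-1} c(x_i,x_{i+1}) + n\cc,$$
it suffices to substitute $u(x_{-n}) = u(x_{-(n+1)}) + c(x_{-(n+1)}, x_{-n}) + \cc$ from the construction, which yields the desired identity with $n$ replaced by $n+1$. There is no genuine obstacle here; the only delicate point is the attainment of the infimum, which is precisely why continuity of $u$ and compactness of $X$ are invoked.
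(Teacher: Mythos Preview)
Your proof is correct and follows exactly the same approach as the paper: the paper's proof simply states that this is an immediate consequence of successive applications of the fact that for all $x\in X$ there is $y\in X$ with $u(x)=u(y)+c(y,x)+\cc$, which is precisely your backward induction together with telescoping. Your added justification that the infimum is attained (via continuity of $u$ and compactness of $X$) is correct and merely makes explicit what the paper leaves implicit.
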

\begin{proof}
It is an immediate consequence of successive applications of the fact that for all $x\in X$ there is $y\in X$ such that $u(x) = u(y)+c(y,x)+\cc$.
\end{proof}

\begin{pr}\label{alpha}
Let $u : X\to \R$ be a weak KAM solution and $(x_n)_{n\geqslant 0}$ a sequence given by Lemma \ref{chains}, then $\alpha \big((x_{-n})_{n\geqslant 0}\big)\subset \AA$. If  we set $(\xi_{-n}) = \big((x_{-n-1},x_{-n})\big)_{n\geqslant 0}$ the sequence of pairs of successive points of $(x_{-n})_{n\geqslant 0}$, then $\alpha \big((\xi_{-n})_{n\geqslant 0}\big)\subset \widehat\AA$. Here $\alpha$ denotes the $\alpha$--limit sets, that is limits of converging subsequences $x_{-n_k}$ with $n_k\to +\infty$.
\end{pr}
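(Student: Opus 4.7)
The plan is to invoke the two characterizations furnished by Theorem \ref{Aubry0}: $\AA=\{x\in X:h(x,x)=0\}$ and $\widehat\AA=\{(x,y):c(x,y)+\cc+h(y,x)=0\}$. Since any weak KAM solution is a critical subsolution, Proposition \ref{peierl-prop}(ii) already yields the inequalities $h(y,y)\geqslant 0$ and $c(y_1,y_2)+\cc+h(y_2,y_1)\geqslant 0$ at any candidate limit point(s). Everything therefore reduces to producing the reverse inequalities using the calibration identity \eqref{eqchain}.

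For the first assertion, assume $x_{-n_k}\to y$ along a subsequence, and extract a further subsequence so that the gaps $m_k:=n_{k+1}-n_k$ tend to $+\infty$ (easily arranged since $n_k\to+\infty$). Calibration telescopes to
$$\sum_{i=-n_{k+1}}^{-n_k-1}c(x_i,x_{i+1})+m_k\cc \;=\; u(x_{-n_k})-u(x_{-n_{k+1}}).$$
The chain $x_{-n_{k+1}},x_{-n_{k+1}+1},\ldots,x_{-n_k}$ starts and ends near $y$; replacing its two endpoints by $y$ produces an admissible loop of length $m_k$ at $y$. If $\omega$ is a modulus of uniform continuity of $c$ (available by compactness of $X$), the cost of this modified loop differs from the right-hand side by at most $\omega\bigl(d(x_{-n_{k+1}},y)\bigr)+\omega\bigl(d(x_{-n_k},y)\bigr)$. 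Hence
$$c_{m_k}(y,y)+m_k\cc \;\leqslant\; u(x_{-n_k})-u(x_{-n_{k+1}}) + \omega\bigl(d(x_{-n_{k+1}},y)\bigr)+\omega\bigl(d(x_{-n_k},y)\bigr),$$
and letting $k\to+\infty$, by continuity of $u$ the right-hand side tends to $0$. Since $m_k\to+\infty$, the $\liminf$ defining $h$ gives $h(y,y)\leqslant 0$, and thus $y\in\AA$.

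For the second assertion, suppose $\xi_{-n_k}=(x_{-n_k-1},x_{-n_k})\to(y_1,y_2)$. Passing to the limit in the one-step calibration $u(x_{-n_k})-u(x_{-n_k-1})=c(x_{-n_k-1},x_{-n_k})+\cc$ using continuity of $u$ and $c$ yields $u(y_2)-u(y_1)=c(y_1,y_2)+\cc$. An entirely analogous argument to the above, applied now to the chain $x_{-n_{k+1}-1},x_{-n_{k+1}},\ldots,x_{-n_k}$ (after extracting so that its length tends to infinity) with endpoints replaced by $y_2$ at the start and $y_1$ at the end, produces
$$h(y_2,y_1)\;\leqslant\; u(y_1)-u(y_2) \;=\; -(c(y_1,y_2)+\cc).$$
Combined with the subsolution inequality already recalled, this forces $c(y_1,y_2)+\cc+h(y_2,y_1)=0$, i.e.\ $(y_1,y_2)\in\widehat\AA$.

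The only point requiring vigilance is the subsequence extraction ensuring that the gap lengths diverge: without it the loops (respectively chains) constructed have bounded length and do not feed into the $\liminf$ defining the Peierls barrier. Once $m_k\to+\infty$ is secured, the uniform continuity of $c$ handles the endpoint modifications and the continuity of $u$ handles the limit of the telescoping difference; no further structure beyond Theorem \ref{Aubry0} and Proposition \ref{peierl-prop} is needed.
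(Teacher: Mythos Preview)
Your argument is correct and follows essentially the same route as the paper: extract so that the gap lengths $m_k=n_{k+1}-n_k$ diverge, telescope the calibration identity, and feed the resulting long chains into the $\liminf$ defining the Peierls barrier to conclude via Theorem~\ref{Aubry0}. The paper phrases the endpoint-limit step more tersely (invoking the equicontinuity of the family $c_n+n\cc$ established in Proposition~\ref{peierl-prop}), whereas you modify the first and last edges explicitly via a modulus of continuity of $c$; these are the same computation.

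One small indexing slip in the second part: you write the chain $x_{-n_{k+1}-1},x_{-n_{k+1}},\ldots,x_{-n_k}$ and propose replacing its start by $y_2$ and its end by $y_1$. But that chain starts at $x_{-n_{k+1}-1}\to y_1$ and ends at $x_{-n_k}\to y_2$, so the replacement errors do not vanish. The chain you want is $x_{-n_{k+1}},\,x_{-n_{k+1}+1},\,\ldots,\,x_{-n_k-1}$, whose start converges to $y_2$ and whose end converges to $y_1$; then your modulus-of-continuity argument goes through verbatim and yields $h(y_2,y_1)\leqslant u(y_1)-u(y_2)=-(c(y_1,y_2)+\cc)$ as intended.
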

\begin{proof}
Let $y\in \alpha \big((x_{-n})_{n\geqslant 0}\big)$. There exists an extraction $n_k\to +\infty$ such that $x_{-n_k}\to y$. Up to taking a further extraction if necessary, we may assume that $n_{k+1}-n_k \to +\infty $ is an increasing sequence as well.
 
 It follows from \eqref{eqchain} that
 \begin{equation}\label{alphA}
 \forall k\in \N,\quad u(x_{-n_k}) -u(x_{-n_{k+1}})=\sum_{i=-n_{k+1}}^{-n_k-1}c(x_i,x_{i+1})+(n_{k+1}-n_k)\cc. 
 \end{equation}
 Passing to the limit and by continuity of the functions at stake imply that $u(y)-u(y) = 0 \geqslant h(y,y)$, hence $h(y,y)=0$.
 
 The same proof holds for the second part of the proposition. Late us take an extraction 
$n_k\to +\infty$ such that $\xi_{-n_k}$ converges to $(y,y')$ and such  that $n_{k+1}-n_k \to +\infty $ is an increasing sequence as well. Passing to the limit in \eqref{alphA}, we get
 that $0 \geqslant c(y,y')+\cc + h(y',y)$. This means $(y,y')\in \widehat \AA$.
\end{proof}
\begin{rem}\rm
The same results (with same proof) hold for $\omega$--limit sets of analogous sequences $(x_n)_{n\geqslant 0}$ for positive weak KAM solutions.

If $u$ is a subsolution, the same results also hold for $\alpha$ and $\omega$ limit sets of elements of $\widetilde \AA_u$. 
\end{rem}
We may now state a fundamental property of the Aubry set: 
\begin{Th}\label{uniqueness}
Let $u$ and $v$ be respectively a weak KAM solution and a subsolution such that $u_{|\AA} \geqslant  v_{|\AA}$, then $u\geqslant v$.

Let $u$ and $v$ be two weak KAM solutions such that $u_{|\AA} = v_{|\AA}$, then $u=v$. We say $\AA$  is a uniqueness set for the critical equation.

\end{Th}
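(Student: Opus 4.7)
My plan is to reduce the first assertion to the case where $v$ is itself a weak KAM solution---which, unlike a general subsolution, is automatically continuous---and then exploit a calibrating chain for $u$ that accumulates on $\AA$.

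For the reduction, I would introduce $u^{-} := \lim_{n\to\infty}(T^{-n}v + n\cc)$. Since $v$ is a subsolution we have $v \leq T^{-}v + \cc$, and iterating using the order-preserving property of $T^{-}$ from Proposition \ref{propT} shows that the sequence $(T^{-n}v + n\cc)_{n\geq 0}$ is pointwise non-decreasing. Corollary \ref{cst} guarantees that at any fixed $y_0 \in \AA$ the sequence is constant and equal to $v(y_0)$, and the equi-continuity of the image of $T^{-}$ then provides a uniform upper bound on the whole sequence. Hence $u^{-}$ is well defined, continuous (by Arzel\`a--Ascoli or Dini), and satisfies $u^{-} = T^{-}u^{-} + \cc$ thanks to continuity of $T^{-}$; so $u^{-}$ is a weak KAM solution. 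By construction $u^{-} \geq v$ everywhere, with equality on $\AA$ again by Corollary \ref{cst}. The hypothesis $u_{|\AA} \geq v_{|\AA}$ therefore reads $u_{|\AA} \geq u^{-}_{|\AA}$, and deducing $u \geq u^{-}$ globally will yield $u \geq v$ globally. Consequently it suffices to prove the first statement assuming $v$ is itself a weak KAM solution.

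So assume $v$ is a weak KAM solution, hence continuous. Fix $x \in X$ and invoke Lemma \ref{chains} to obtain a calibrating chain $(x_{-n})_{n\geq 0}$ for $u$ with $x_0 = x$:
\[ u(x) - u(x_{-n}) = \sum_{i=-n}^{-1} c(x_i, x_{i+1}) + n\cc, \qquad n \geq 0. \]
Since $v$ is a subsolution, summing the inequalities $v(x_{i+1}) - v(x_i) \leq c(x_i, x_{i+1}) + \cc$ gives the analogous estimate with $\leq$; subtracting the two yields $(u-v)(x) \geq (u-v)(x_{-n})$ for every $n$. By Proposition \ref{alpha}, some subsequence $x_{-n_k}$ converges to a point $y \in \AA$, and by continuity of both $u$ and $v$ I can pass to the limit to conclude $(u-v)(x) \geq (u-v)(y) \geq 0$, the last inequality by the hypothesis. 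This proves the first claim; the second then follows immediately by symmetry, since swapping $u$ and $v$ preserves the hypotheses, so two applications give $u \geq v$ and $v \geq u$, hence $u=v$.

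The only real subtlety is the possible discontinuity of a general subsolution, which would obstruct direct passage to the limit in the calibration inequality; passing through the continuous weak KAM envelope $u^{-}$, which coincides with $v$ on $\AA$, circumvents this cleanly. Everything else reduces to direct manipulation of the subsolution and calibration inequalities together with the fact that calibrating chains for a weak KAM solution accumulate on the projected Aubry set.
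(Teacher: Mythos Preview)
Your proof is correct and shares the same core with the paper's: propagate the inequality $(u-v)(x)\geqslant (u-v)(x_{-n})$ along a calibrating chain for $u$ and pass to the limit at an accumulation point in $\AA$. The paper, however, skips your reduction step and works with the original subsolution $v$ directly. This is legitimate because any subsolution is automatically continuous at every point of $\AA$: the sandwich $T^+v-\cc\leqslant v\leqslant T^-v+\cc$ holds with equality on $\AA$ by Corollary \ref{cst}, and both outer functions are continuous, so the Sandwich Theorem gives continuity of $v$ there (this is exactly the easy direction of Theorem \ref{reg}, which the paper states a bit later). Your detour through the weak KAM envelope $u^-$ is thus unnecessary, though it is a perfectly clean way to sidestep the issue you correctly identified; the paper's proof is simply more direct because it implicitly uses this continuity on $\AA$.
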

\begin{proof}
Let $x_0\in X$ and let $(x_{-n})_{n\geqslant 0}$ be a calibrating sequence for $u$ \big(see \eqref{eqchain}\big). It then comes
\begin{align*}
\forall n \in \N,\quad & u(x_0)-u(x_{-n}) = \sum_{i=-n}^{-1} c(x_i,x_{i+1}) +n\cc,\\
&v(x_0)-v(x_{-n}) \leqslant \sum_{i=-n}^{-1} c(x_i,x_{i+1}) +n\cc.
\end{align*}
Subtracting yields $u(x_0)-v(x_0) \geqslant u(x_{-n})-v(x_{-n})$. As limiting points of the sequence $(x_{-n})_{n\geqslant 0}$ are in the projected Aubry set $\AA$, taking a suitable converging subsequence and passing to the limit brings $u(x_0)-v(x_0) \geqslant 0$ since for $u$ and $v$, the same inequality holds on $\AA$. This proves the first result as $x_0$ is arbitrary.

In the second case, by symmetry, the opposite inequality holds, and the result follows as $x_0$ was taken arbitrarily.
 
\end{proof}
Actually, a reciprocal statement can be proven. Being a subsolution on the Aubry set is the only obstruction to the existence of a weak KAM solution with prescribed values on $\AA$.
\begin{pr}\label{distlike}
 Let $f : \AA \to \R$ be a function such that $f(y)-f(x)\leqslant h(x,y)$ for all $x$ and $y$ in $\AA$. Then there exists a weak KAM solution $u$ such that $u_{|\AA} = f$. 
\end{pr}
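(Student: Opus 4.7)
The natural candidate for the extension is the Lax-type formula
\[
u(z) = \inf_{x \in \AA} f(x) + h(x,z),
\]
built from the Peierls barrier. The plan is to verify three things: (a) $u$ is well defined and bounded, (b) $u_{|\AA} = f$, and (c) $u = T^- u + \cc$.

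For (a), one should first observe that the hypothesis forces $f$ to be continuous on $\AA$. Indeed, for $x,y \in \AA$ one has $|f(y)-f(x)| \leqslant \max\bigl(h(x,y),h(y,x)\bigr)$, and since $h$ is continuous by Proposition \ref{peierl-prop} and vanishes on the diagonal of $\AA \times \AA$ by Theorem \ref{Aubry0}, continuity of $f$ follows. Since $\AA$ is a closed subset of the compact space $X$, it is compact; hence $f$ is bounded and the infimum defining $u$ is over a compact set of continuous functions, so $u$ is itself bounded (and the infimum is actually attained).

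For (b), fix $z \in \AA$. Taking $x = z$ in the infimum and using $h(z,z) = 0$ (Theorem \ref{Aubry0}) gives $u(z) \leqslant f(z)$. Conversely, for any $x \in \AA$ the standing hypothesis reads $f(z) - f(x) \leqslant h(x,z)$, i.e.\ $f(x) + h(x,z) \geqslant f(z)$; taking the infimum over $x \in \AA$ yields $u(z) \geqslant f(z)$.

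The main point is (c), where the idea is that $u$ is an infimum of weak KAM solutions, each shifted by a constant, and a careful interchange of infima preserves the equation. Concretely, for every $x \in \AA$ the function $h_x = h(x,\cdot)$ is a weak KAM solution by Proposition \ref{peierl-prop}(iv), so $h(x,y) = T^- h_x(y) + \cc = \inf_{z \in X}\bigl(h(x,z) + c(z,y)\bigr) + \cc$. Therefore
\[
T^-u(y) = \inf_{z \in X}\Bigl(\inf_{x \in \AA} f(x)+h(x,z)\Bigr) + c(z,y) = \inf_{x \in \AA} f(x) + \inf_{z \in X}\bigl(h(x,z)+c(z,y)\bigr),
\]
which equals $\inf_{x \in \AA} f(x) + h(x,y) - \cc = u(y) - \cc$. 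This gives $u = T^- u + \cc$, so $u$ is a weak KAM solution, completing the proof. The only step that required care was the continuity of $f$ needed to ensure that $u$ is a bona fide (bounded, continuous) function; everything else falls out of the algebraic properties of $h$ established in the previous section.
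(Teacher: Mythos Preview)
Your proof is correct and follows essentially the same route as the paper: the same Lax-type formula $u(z)=\inf_{x\in\AA} f(x)+h(x,z)$, the same verification that $u_{|\AA}=f$ via $h(x,x)=0$ on $\AA$, and the same interchange-of-infima argument to see that an infimum of weak KAM solutions is again one (the paper states this as a separate lemma immediately after, whereas you inline it). Your preliminary observation that $f$ is continuous is a pleasant bonus but slightly more than required---boundedness of $f$ (which already follows from $|f(y)-f(x)|\leqslant\|h\|_\infty$) is all that is needed for $u$ to be well defined, and continuity of $u$ then comes for free from $u=T^-u+\cc$.
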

\begin{proof}
Let us check that the function $u$ defined by $u(x) = \inf\limits_{y\in \AA} f(y) + h(y,x)$ satisfies the requirements. Let $x\in \AA$. Then $f(y)+h(y,x) \geqslant f(x)-h(y,x)+h(y,x)$ by hypothesis. So $u(x) = f(x)$, by taking $y=x$ in the definition of $u$. The fact that $u$ is a weak KAM solution is a consequence of the general fact that an infimum of weak KAM solutions is a weak KAM solution applied to the family of functions $f(y) + h_y$. This fact is proved in the next lemma. 
\end{proof}
\begin{lm}\label{inf}
Let $(u_\alpha)_{\alpha\in A}$ be a family of subsolutions. Set $\overline u = \sup\limits_{\alpha\in A} u_\alpha$ and $\underline u = \inf\limits_{\alpha\in A} u_\alpha$. Then given that those functions are well defined, the following assertions hold:
\begin{enumerate}[label=(\roman*)]
\item The functions $\overline u$ and $\underline u$ are subsolutions.
\item If all the $u_\alpha$ are weak KAM solutions then $\underline u$ is a weak KAM solution.
\end{enumerate}
\end{lm}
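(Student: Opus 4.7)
My plan is to use the pointwise inequality characterization of subsolutions from Definition \ref{dfss} for part (i), then leverage monotonicity of $T^-$ from Proposition \ref{propT}--(iii) together with the asymmetric nature of $T^-$ (being defined via an $\inf$) for part (ii).

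For part (i), I would argue directly from the definition. Fix $(x,y) \in X\times X$. For every $\alpha \in A$, since $u_\alpha$ is a subsolution, we have
\[
u_\alpha(y) \leqslant u_\alpha(x) + c(x,y) + \cc.
\]
To handle $\overline u$, I bound the right hand side above by $\overline u(x) + c(x,y) + \cc$ (using $u_\alpha(x) \leqslant \overline u(x)$), obtaining $u_\alpha(y) \leqslant \overline u(x) + c(x,y) + \cc$ for every $\alpha$; taking the supremum on the left in $\alpha$ gives $\overline u(y) \leqslant \overline u(x) + c(x,y) + \cc$. Symmetrically, for $\underline u$, I bound the left hand side below by $\underline u(y) \leqslant u_\alpha(y)$, so $\underline u(y) \leqslant u_\alpha(x) + c(x,y) + \cc$; taking the infimum on the right in $\alpha$ gives $\underline u(y) \leqslant \underline u(x) + c(x,y) + \cc$. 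Both $\overline u$ and $\underline u$ are therefore subsolutions.

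For part (ii), assume each $u_\alpha$ is a weak KAM solution, so $u_\alpha = T^- u_\alpha + \cc$. Since $\underline u \leqslant u_\alpha$ pointwise, the monotonicity property of $T^-$ yields $T^- \underline u \leqslant T^- u_\alpha$, hence
\[
T^- \underline u + \cc \leqslant T^- u_\alpha + \cc = u_\alpha.
\]
Taking the infimum over $\alpha$ on the right gives $T^- \underline u + \cc \leqslant \underline u$. Combined with the reverse inequality $\underline u \leqslant T^- \underline u + \cc$ from part (i), this produces the desired equality $\underline u = T^- \underline u + \cc$.

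The only subtlety worth noting is why the analogous argument fails for $\overline u$: one would need $T^- \overline u + \cc \leqslant \overline u$, but $T^-$ is itself defined through an infimum, so it commutes monotonically with $\inf$ but only satisfies the ``wrong-way'' inequality $T^-(\sup_\alpha u_\alpha) \geqslant \sup_\alpha T^- u_\alpha$ against suprema. This is precisely why the statement (ii) is restricted to $\underline u$, and no additional obstacle arises in the proof itself.
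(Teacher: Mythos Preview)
Your proof is correct and follows essentially the same line as the paper's. The only cosmetic difference is that the paper computes $T^-\underline u(x)=\inf_{y}\inf_{\alpha}\big(u_\alpha(y)+c(y,x)\big)=\inf_{\alpha}T^-u_\alpha(x)$ via commutation of two infimums (obtaining the equality in one stroke when the $u_\alpha$ are weak KAM solutions), whereas you reach the same conclusion via monotonicity of $T^-$; both routes are standard and interchangeable.
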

\begin{proof}
By definition,
\begin{multline*}
T^- \underline u(x)=\inf_{y\in X} \inf_{\alpha\in A} u_\alpha(y) + c(y,x) \\
= \inf_{\alpha\in A} \inf_{y\in X} u_\alpha(y) + c(y,x) \\
= \inf_{\alpha\in A} T^-u_\alpha(x) \geqslant \inf_{\alpha\in A} u_\alpha(x) - \cc = \underline u(x)-\cc.
\end{multline*}
It was used that two infimums commute and that the $u_\alpha$ are subsolutions. Hence $\underline u\in \S$. Moreover if all the $u_\alpha$ are weak KAM solutions, then the inequality above  turns out to be an equality and $\underline u$ is a weak KAM solution.

In a similar manner, 
\begin{multline*}
T^- \overline u(x)=\inf_{y\in X} \sup_{\alpha\in A} u_\alpha(y) + c(y,x) \\
\geqslant \sup_{\alpha\in A} \inf_{y\in X} u_\alpha(y) + c(y,x) \\
= \sup_{\alpha\in A} T^-u_\alpha(x) \leqslant \sup_{\alpha\in A} u_\alpha(x) - \cc = \overline u(x)-\cc.
\end{multline*}
\end{proof}

\section{Regularity of subsolutions}
In our discrete setting, the Aubry set enjoys the additional feature to be a set where all subsolutions present systematic regularity properties. Away from the Aubry set, this is false; subsolutions may fail to be continuous  as was shown in \cite{Z}:
\begin{Th}\label{reg}
Let $x\in X$ be a non--isolated point. Then $x\in \AA$ if and only if  all subsolutions $u\in \S$ are continuous at $x$.
\end{Th}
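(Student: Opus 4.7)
The plan is to prove the two implications separately: the easy direction ($x\in\AA\Rightarrow$ continuity) follows from continuity of the Peierls barrier, while the hard direction ($x\notin\AA\Rightarrow$ existence of a discontinuous subsolution) will be settled by a pointwise modification of a generic subsolution furnished by Corollary \ref{cst}.

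For the easy direction, suppose $x\in\AA$. Theorem \ref{Aubry0} gives $h(x,x)=0$. Let $u\in\S$ be arbitrary. Applying Proposition \ref{peierl-prop}(ii) with the two possible orderings of $x$ and $y$ yields
\[
-h(y,x)\leqslant u(y)-u(x)\leqslant h(x,y).
\]
Since $h$ is continuous (Proposition \ref{peierl-prop}(i)), both $h(x,y)$ and $h(y,x)$ tend to $h(x,x)=0$ as $y\to x$, hence $u(y)\to u(x)$.

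For the hard direction, assume $x$ is non-isolated and $x\notin\AA$. By Corollary \ref{cst}(ii) there exists $w\in\S$ with $w(x)<T^-w(x)+\cc$; set $\varepsilon=T^-w(x)+\cc-w(x)>0$. If $w$ is already not continuous at $x$ we are done; otherwise, define $w':X\to\R$ by
\[
w'(x)=w(x)+\varepsilon=T^-w(x)+\cc,\qquad w'(y)=w(y)\text{ for }y\neq x.
\]
I will verify $w'\in\S$, i.e.\ $w'(y)-w'(z)\leqslant c(z,y)+\cc$, in four cases. For $y,z\neq x$ this is just $w\in\S$. The case $y=z=x$ reduces to $0\leqslant c(x,x)+\cc$, which follows from $w\in\S$ applied to the pair $(x,x)$. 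For $y\neq x$, $z=x$ the left-hand side is $w(y)-w(x)-\varepsilon\leqslant w(y)-w(x)\leqslant c(x,y)+\cc$. The crucial case is $y=x$, $z\neq x$, where the inequality becomes $T^-w(x)+\cc-w(z)\leqslant c(z,x)+\cc$, i.e.\ $T^-w(x)\leqslant w(z)+c(z,x)$, which is exactly the defining infimum property of $T^-w(x)$. Finally, non-isolation of $x$ provides a sequence $y_n\to x$ with $y_n\neq x$; continuity of $w$ at $x$ then gives $w'(y_n)=w(y_n)\to w(x)\neq w(x)+\varepsilon=w'(x)$, so $w'$ is discontinuous at $x$.

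The only delicate point is the subsolution verification of $w'$; the choice $\varepsilon=T^-w(x)+\cc-w(x)$ is precisely the largest jump at $x$ that still respects the inequality in the case $y=x$, $z\neq x$, so this is where optimality of the construction lies. The non-isolation hypothesis enters only at the very last step, to produce a sequence witnessing discontinuity—otherwise every function is trivially continuous at an isolated point.
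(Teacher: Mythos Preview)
Your proof is correct and follows the same two-step strategy as the paper: a sandwich argument for the forward implication and a single-point modification of a subsolution for the converse. The only differences are in the tools chosen: for the forward direction the paper sandwiches $u$ between $T^+u-\cc$ and $T^-u+\cc$ (with equality at $x\in\AA$ by Corollary~\ref{cst}) rather than between $u(x)-h(y,x)$ and $u(x)+h(x,y)$; for the converse the paper starts from the continuous strict subsolution $u_0$ of Theorem~\ref{th-strict} and invokes the in-between Lemma~\ref{in--between}, whereas you take any $w$ with $w(x)<T^-w(x)+\cc$ furnished by Corollary~\ref{cst} and verify the subsolution inequalities directly. Your backward argument is marginally more self-contained (it avoids Theorem~\ref{th-strict}), while the paper's forward argument is slightly lighter (it avoids the Peierls barrier).
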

\begin{proof}
If $x\in \AA$ then we have seen that $T^+u - \cc \leqslant u \leqslant T^-u +\cc$ and that equalities hold at $x$. As the left and right terms of the inequalities are continuous, then the famous Sandwich theorem implies that $u$ is continuous at $x$.

The hypothesis that $x$ is not isolated is used in the reciprocal (were $x$ isolated, $u$ could not be discontinuous at $x$). Assume now $x\notin \AA$. Let $u_0$ be the continuous subsolution constructed in Theorem \ref{th-strict}. In particular, the inequality $u \leqslant T^-u +\cc$ is now strict at $x$ and both functions are continuous. It follows from the in--between lemma (proved after) that any function $v$ such that $u_0 \leqslant v\leqslant  T^-u_0 +\cc$ will be a subsolution. In particular, it can be constructed discontinuous at $x$, for example, take $u_0=v$ everywhere, except at $x$ and $v(x) = T^- u_0(x)+\cc$.
\end{proof}
\begin{lm}\label{in--between}
Let $u$ be a subsolution and $v : X\to \R$ such that $u\leqslant v \leqslant T^-u+\cc$, then $v\in \S$.
\end{lm}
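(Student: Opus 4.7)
The plan is very short: I would simply chain together the given sandwich inequality with the monotonicity of $T^-$. Recall from Definition \ref{dfss} that $v \in \S$ is equivalent to the functional inequality $v \leqslant T^- v + \cc$.

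First I would note that the hypothesis $u \leqslant v$, combined with the order preserving property of the Lax--Oleinik semigroup (Proposition \ref{propT}, point (iii)), yields $T^- u \leqslant T^- v$, hence $T^- u + \cc \leqslant T^- v + \cc$. Then, plugging this into the right half of the assumed sandwich $v \leqslant T^- u + \cc$, I would conclude
$$v \;\leqslant\; T^- u + \cc \;\leqslant\; T^- v + \cc,$$
which is exactly the condition $v \in \S$.

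There is no real obstacle here: the argument uses only the order-preserving property of $T^-$ and the equivalence between the pointwise inequality defining $\S$ and the functional inequality $v \leqslant T^- v + \cc$ (which was already recorded in the proof of Proposition \ref{propS}). Note in particular that neither continuity nor boundedness of $v$ is needed for this implication, although boundedness follows for free from $u \leqslant v \leqslant T^- u + \cc$ since both bounding functions are bounded (Proposition \ref{propS}, point (i), together with Proposition \ref{propT}, point (i)).
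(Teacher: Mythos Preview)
Your proposal is correct and takes essentially the same approach as the paper: both use the monotonicity of $T^-$ applied to $u\leqslant v$ to turn the right half of the sandwich $v\leqslant T^-u+\cc$ into $v\leqslant T^-v+\cc$. The paper simply writes this as the single chain $u\leqslant v\leqslant T^-u+\cc\leqslant T^-v+\cc$.
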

\begin{proof}
It follows from the monotonicity of the Lax--Oleinik semigroup:
$$u \leqslant v\leqslant  T^-u +\cc \leqslant T^- v+\cc.$$
\end{proof}

\section{More regularity for subsolutions}\label{more}
Of course, if  more precise regularity results on subsolutions are aimed for, some structure has to be added. Until further notice, we will take as base space a compact smooth manifold $M$ and $c$ will be a cost on $M\times M$. Let us recall a fundamental definition first:
\begin{df}\label{semiconcave}\rm
\begin{enumerate}[label=(\roman*)]
\item Let $\Omega \subset \R^n$ be a convex open set. A function $f: \Omega \to \R$ is said to be $K$--{\it semiconcave} if the function $x\mapsto f(x) - K\|x\|^2$ is concave (the norm  used is the Euclidean one). A function is {\it semiconcave}\footnote{The notion we refer to here is sometimes called semiconcave with linear modulus. For more details and proofs that are omitted here see \cite{CaSi00}. Other good references are also the unavoidable \cite{Fa} and the appendix of \cite{FaFi}.} if it is $K$--semiconcave for some $K\in \R$.

\item A function $f:\Omega \to \R$ is {\it locally semiconcave} if each $x\in \Omega$ belongs to a neighborhood $V_x$ such that the restriction $f_{|V_x}$ is semiconcave. 

\item A function $f : M\to \R$ is {\it locally semiconcave} if for all coordinate patch $\varphi : U\subset \R^n \to M$, the function $f\circ \varphi$ is locally semiconcave ($n$ is assumed to be the dimension of $M$).
\end{enumerate}
\end{df}

\begin{rem}\rm
The property of being locally semiconcave is invariant by $C^2$ diffeomorphisms. Therefore, in the previous definition, it is enough  that $f\circ \varphi_i$ be locally semiconcave for $\varphi_i : U_i \to M$ where the  $\varphi_i(U_i)$ form any finite open cover of $M$. Of course, this property is  much easier to establish.
\end{rem}
As concave functions can be characterized as functions whose graphs admit a hyperplane tangent from above  at every point,  the following is implied:  
\begin{pr}\label{superdiff}
A function $f:\Omega \to \R$ is $K$--semiconcave if and only if for all $x\in \Omega$, there exists a linear form $p_x\in \R^{n*}$ such that 
\begin{equation}\label{concaveineq}
\forall y\in \Omega, \quad f(y) \leqslant f(x) +p_x(y-x) +K\|y-x\|^2.
\end{equation}

\end{pr}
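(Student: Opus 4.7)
The plan is to reduce everything to the analogous statement for concave functions, since by Definition \ref{semiconcave}, $f$ is $K$--semiconcave on $\Omega$ if and only if $g := f - K\|\cdot\|^2$ is concave on $\Omega$. The key algebraic identity to keep in mind is
$$\|y\|^2 - \|x\|^2 = 2x\cdot(y-x) + \|y-x\|^2,$$
which will let us convert the ``quadratic bound'' in \eqref{concaveineq} into a ``linear bound'' for $g$, and vice versa.

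For the direct implication, I would start from the concavity of $g$ and invoke the classical fact that a concave function on a convex open subset of $\R^n$ has a nonempty superdifferential at every interior point, i.e.\ for every $x\in\Omega$ there exists $q_x\in\R^{n*}$ such that
$$g(y) \leqslant g(x) + q_x(y-x), \qquad \forall y\in\Omega.$$
Substituting $g = f - K\|\cdot\|^2$ and using the identity above, one obtains \eqref{concaveineq} with $p_x := q_x + 2Kx$.

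For the converse, suppose the quadratic bound \eqref{concaveineq} holds at every $x\in\Omega$. A direct computation using the same identity shows that $g(y) \leqslant g(x) + (p_x - 2Kx)(y-x)$ for all $y\in\Omega$. Thus $g$ admits a supporting affine functional from above at every point of $\Omega$. To conclude that $g$ is concave, I would verify the midpoint/convex combination inequality by hand: given $x_0,x_1\in\Omega$, $t\in[0,1]$ and $z = tx_0 + (1-t)x_1\in\Omega$, applying the supporting inequality at $z$ to both $y = x_0$ and $y = x_1$ and taking the convex combination $t(\cdot) + (1-t)(\cdot)$ makes the linear terms cancel and yields $tg(x_0) + (1-t)g(x_1) \leqslant g(z)$. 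Hence $g$ is concave, so $f$ is $K$--semiconcave.

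There is no real obstacle here: the proposition is essentially a direct translation of the definition. The only mild care needed is in the algebra of the quadratic terms and in citing (or reproving in two lines, as above) the standard equivalence between concavity and the pointwise existence of supporting affine functionals on a convex open set.
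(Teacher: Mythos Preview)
Your proposal is correct and follows essentially the same approach as the paper: the paper does not give a detailed proof but simply remarks, just before stating the proposition, that ``concave functions can be characterized as functions whose graphs admit a hyperplane tangent from above at every point,'' and declares the proposition to be implied by this. Your write-up is a fleshed-out version of exactly that reduction, with the algebra of the quadratic terms made explicit.
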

The same holds true for locally semiconcave functions, but restricting to a neighborhood of $x$ only. For locally semiconcave functions on a manifold $M$, $p_x\in T_x^*M$ becomes an element of the cotangent fiber at $x$ and the inequality is true in a chart.

We call {\it superdifferential} of a function  $f$, assumed to be semiconcave (resp. locally semiconcave), at $x$ \big(denoted $\partial^+ f (x)$\big) the set of $p_x$ such that \eqref{concaveineq} holds (resp. in a neighborhood of $x$ or in a chart). 

We state without proof:
\begin{pr}
Let $f: M\to \R$ be locally semiconcave and $x\in \R$. Then $\partial^+ f (x)$ is not empty, closed and convex. Moreover, $f$ is differentiable at $x$ if and only if $\partial^+ f (x)$ is a singleton (which then contains only $D_x f$).
\end{pr}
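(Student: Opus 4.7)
The plan is to localize via a chart, reducing everything to the case where $M = \Omega \subset \R^n$ is a convex open set and $f$ is $K$-semiconcave on a neighborhood of $x$, so that the defining inequality \eqref{concaveineq} is available (everything is invariant under $C^2$ diffeomorphism). Note that "$x\in\R$" in the statement should be read as $x\in M$.

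Non-emptiness of $\partial^+ f(x)$ is exactly Proposition~\ref{superdiff} applied at $x$. Closedness and convexity follow directly from \eqref{concaveineq}: if $p_n \to p$ with each $p_n\in \partial^+f(x)$, passing to the limit in $f(y)\leqslant f(x)+p_n(y-x)+K\|y-x\|^2$ gives the same inequality for $p$; and for $p_1,p_2\in\partial^+f(x)$ and $t\in[0,1]$, averaging the two inequalities with weights $t$ and $1-t$ yields the inequality for $tp_1+(1-t)p_2$.

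For the ``differentiable $\Rightarrow$ singleton'' direction, assume $f$ is differentiable at $x$ and let $p\in\partial^+f(x)$. Applying \eqref{concaveineq} to $y=x+sh$ for a fixed direction $h$, dividing by $s$, and letting $s\to 0^+$ yields $D_xf(h)\leqslant p(h)$; running the same argument with $s\to 0^-$ yields the reverse inequality. Hence $p=D_xf$ and the superdifferential reduces to this single element.

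The main step is the converse: assuming $\partial^+f(x)=\{p\}$, show $f$ is differentiable at $x$ with $D_xf=p$. Semiconcavity already gives the one-sided estimate $f(y)-f(x)-p(y-x)\leqslant K\|y-x\|^2 = o(\|y-x\|)$; the obstacle is the matching lower bound. The key tool will be the closure property of the superdifferential graph: if $y_n\to x$ and $q_n\in\partial^+f(y_n)$ with $q_n\to q$, then $q\in\partial^+f(x)$, proved once more by passing to the limit in \eqref{concaveineq} at $y_n$. Suppose, toward a contradiction, there is a sequence $y_n\to x$ with
\[\frac{f(y_n)-f(x)-p(y_n-x)}{\|y_n-x\|}\leqslant -\varepsilon<0.\]
Pick $q_n\in\partial^+f(y_n)$ (non-empty by the first part). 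Local $K$-semiconcavity of $f$ on a fixed neighborhood of $x$ makes the family $\{q_n\}$ bounded (the standard way: $q_n(h)$ is squeezed between one-sided difference quotients of the locally Lipschitz function $f$), so after extraction $q_n\to q$ and the unit vectors $h_n := (y_n-x)/\|y_n-x\|$ converge to some $h$. Applying \eqref{concaveineq} at $y_n$ to the point $x$ rearranges to $f(y_n)-f(x)\geqslant q_n(y_n-x)-K\|y_n-x\|^2$; dividing by $\|y_n-x\|$ and letting $n\to\infty$ gives $q(h)-p(h)\leqslant -\varepsilon$, so $q\neq p$. But the closure property forces $q\in\partial^+f(x)=\{p\}$, a contradiction. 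Hence $f$ is differentiable at $x$ with differential $p$, which is the only obstacle worth flagging in the argument.
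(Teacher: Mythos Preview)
The paper explicitly states this proposition without proof (the text preceding it reads ``We state without proof''), so there is no argument in the paper to compare against. Your proof is correct and self-contained: the non-emptiness, closedness, and convexity parts are immediate from the defining inequality, the ``differentiable $\Rightarrow$ singleton'' direction is handled cleanly by the directional-derivative squeeze, and your contradiction argument for the converse---extracting a limit superdifferential $q\neq p$ and then invoking the closedness of the graph of $\partial^+f$ to force $q\in\partial^+f(x)=\{p\}$---is the standard and correct route. The only implicit ingredient you use is that a locally semiconcave function is locally Lipschitz (needed both to bound the $q_n$ and to pass to the limit in \eqref{concaveineq}); this is immediate since $f(\cdot)-K\|\cdot\|^2$ is concave on a neighborhood, but you might state it explicitly.
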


A very easy, though important, property of semiconcave functions is:
\begin{pr}\label{infconc}
If $(f_\alpha)_{\alpha \in A}$ is a family of $K$--semiconcave functions on $U\subset \R^n$ then $\inf\limits_{\alpha\in A} f_\alpha$ is $K$--semiconcave as soon as it is well defined.
\end{pr}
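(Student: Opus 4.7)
The plan is to unfold the definition: $f_\alpha$ is $K$--semiconcave on the convex open set $U \subset \R^n$ exactly when the function $g_\alpha : x \mapsto f_\alpha(x) - K\|x\|^2$ is concave on $U$. Setting $f = \inf_{\alpha \in A} f_\alpha$ (assumed to take values in $\R$ at every point of $U$), the decomposition
\[
f(x) - K\|x\|^2 \;=\; \inf_{\alpha \in A} \bigl(f_\alpha(x) - K\|x\|^2\bigr) \;=\; \inf_{\alpha \in A} g_\alpha(x)
\]
reduces the proposition to the classical fact that the pointwise infimum of a family of concave functions is concave. It therefore suffices to verify this last fact and assemble the pieces.

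For the concavity of $g := \inf_\alpha g_\alpha$, I would fix $x, y \in U$ and $\lambda \in [0,1]$, and use that for each $\alpha \in A$,
\[
g_\alpha(\lambda x + (1-\lambda) y) \;\geqslant\; \lambda g_\alpha(x) + (1-\lambda) g_\alpha(y) \;\geqslant\; \lambda \, g(x) + (1-\lambda)\, g(y),
\]
the first inequality by concavity of $g_\alpha$, the second because $g \leqslant g_\beta$ for every $\beta$. Taking the infimum over $\alpha$ of the left-hand side yields
\[
g(\lambda x + (1-\lambda) y) \;\geqslant\; \lambda \, g(x) + (1-\lambda)\, g(y),
\]
which is exactly the concavity of $g$. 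Unwinding, $f(x) = g(x) + K\|x\|^2$ is $K$--semiconcave.

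There is no real obstacle here; the only point that deserves attention is the hypothesis that $\inf_\alpha f_\alpha$ is well defined, i.e.\ does not take the value $-\infty$, since otherwise the algebraic manipulation with $K\|x\|^2$ would be meaningless. One could alternatively give a proof via Proposition~\ref{superdiff} by picking, at each $x \in U$ and each $\varepsilon > 0$, an index $\alpha_\varepsilon$ with $f_{\alpha_\varepsilon}(x) \leqslant f(x) + \varepsilon$ and a superdifferential $p_\varepsilon \in \partial^+ f_{\alpha_\varepsilon}(x)$, then passing to a limit point of $(p_\varepsilon)$ as $\varepsilon \to 0$; this also proves that $\partial^+ f(x) \neq \varnothing$ at every point, but the concavity shortcut above is the cleanest route.
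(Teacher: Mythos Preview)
Your proof is correct and takes exactly the approach the paper intends: the paper merely says ``The proof follows barely the analogous property of concave functions,'' and you have spelled out precisely that reduction---subtract $K\|x\|^2$, use that an infimum of concave functions is concave, and conclude. Your aside about the alternative route via Proposition~\ref{superdiff} is a nice observation but unnecessary here.
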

The proof follows bearly  the analogous property of concave functions. Of course, each notion defined previously has a  {\it semiconvex} counterpart which is defined by replacing concave by convex and $-$ signs by $+$. The opposite of  a semiconcave function is then semiconvex and vice versa. A semiconvex function $f$ has a {\it subdifferential} at each point denoted by $\partial^-f(x)$. It coincides with $-\partial^+(-f)(x)$.

Motivated by the above, one defines sub--and superdifferentials for general functions:
\begin{df}\rm
Let $f : U\to\R$ be a function defined on an open set of $\R^n$ the {\it superdifferential } $\partial^+ f(x)$ \big(resp. {\it subdifferential } $\partial^-f(x)$\big) of $f$ at $x\in U$ is the set of $D_x\varphi$ where $\varphi : U\to \R$ is differentiable at $x$ and verifies that $\varphi \geqslant f$ (resp. $\varphi \leqslant f$) with equality at $x$.
\end{df}
\begin{rem}\rm
In the previous definition, the functions $\varphi$ can be taken equivalently $C^1$. Sub--and superdifferentials are convex and closed. Moreover, $f$ is differentiable at $x$ if and only if they are both non--empty. In this case,  $\partial^+ f(x)= \partial^- f(x)=\{D_xf\}$. As a locally semiconcave function has non--empty superdifferentials, we infer that if $f$ is locally semiconcave, then $f$ is differentiable at $x$ if and only if $\partial^-f(x)$ is non--empty.
\end{rem}

 Here is a not so obvious property that explains the nature of some results:
\begin{pr}\label{semcc}
A function $f : M\to \R$ is $C^{1,1}$ (differentiable with Lipschitz differential) if and only if it is both locally semiconcave and locally semiconvex.
\end{pr}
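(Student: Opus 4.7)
The plan is to prove the two implications separately, with the $C^{1,1}$ direction being routine and the converse requiring one key maneuver to turn a quadratic remainder bound into a Lipschitz bound on the gradient.

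For the direction $C^{1,1} \Rightarrow$ locally semiconcave and locally semiconvex, I would work in any chart on a convex neighborhood where $Df$ is $K$-Lipschitz. The integral form of Taylor's formula gives
$$f(y) - f(x) - D_xf(y-x) = \int_0^1 \bigl[D_{x+t(y-x)}f - D_xf\bigr](y-x)\,dt,$$
and the Lipschitz bound forces $|f(y)-f(x)-D_xf(y-x)| \leq \tfrac{K}{2}\|y-x\|^2$. This is exactly the double inequality realizing the semiconcavity and semiconvexity characterizations of Proposition \ref{superdiff} and its symmetric version, with $p_x = D_xf$ and constant $K/2$.

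For the converse, since $f$ is assumed both locally semiconcave and locally semiconvex, both $\partial^+f(x)$ and $\partial^-f(x)$ are non-empty at every $x$; by the remark preceding the proposition, $f$ is therefore differentiable everywhere, with $D_xf$ the unique element of both sets. Choosing a chart on a convex open set where $f$ is simultaneously $K$-semiconcave and $K$-semiconvex, the two superdifferential characterizations combine into the two-sided Taylor estimate
$$|f(y)-f(x)-D_xf(y-x)| \leq K\|y-x\|^2.$$

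The main obstacle is to upgrade this quadratic remainder bound into a Lipschitz bound on $Df$ itself. My strategy is to introduce a third point $z$ close to $y$ and apply the Taylor estimate three times, centered at $x$, at $y$, and between $x$ and $y$. Eliminating the three values of $f$ produces the clean identity
$$\bigl(D_yf - D_xf\bigr)\cdot(z-y) = \rho_1 - \rho_2 - \rho_3,$$
with $|\rho_1|\leq K\|z-x\|^2$, $|\rho_2|\leq K\|z-y\|^2$, $|\rho_3|\leq K\|y-x\|^2$. Setting $v = D_yf - D_xf$ (assumed non-zero) and specializing to $z = y + \|y-x\|\,v/\|v\|$, the triangle inequality gives $\|z-x\| \leq 2\|y-x\|$ and $\|z-y\|=\|y-x\|$; substituting yields $\|y-x\|\,\|v\| \leq 6K\|y-x\|^2$, hence $\|v\| \leq 6K\|y-x\|$. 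This establishes that $Df$ is locally $6K$-Lipschitz, which is $C^{1,1}$. Routine chart changes then transfer the conclusion to $M$.
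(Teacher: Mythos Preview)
The paper does not actually prove this proposition: it is stated as a ``not so obvious property'' and left without proof, with the surrounding discussion pointing to \cite{CaSi00} and related references for details on semiconcave functions. So there is no paper proof to compare against, and the relevant question is simply whether your argument is correct.

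Your proof is correct. The forward implication is standard and handled cleanly via the integral Taylor remainder. For the converse, your three-point elimination is a valid and elegant route: the identity $(D_yf-D_xf)(z-y)=\rho_1-\rho_2-\rho_3$ follows exactly from subtracting the three Taylor expansions, and the specialization $z=y+\|y-x\|\,v/\|v\|$ gives the $6K$ Lipschitz bound as claimed. The only point worth making explicit is that $z$ must stay in the convex chart domain where the uniform constant $K$ applies; since $\|z-x\|\leq 2\|y-x\|$, this is automatic once $x,y$ are taken in a slightly smaller ball, and the conclusion is local anyway. Your appeal to the remark that non-emptiness of both $\partial^+f(x)$ and $\partial^-f(x)$ forces differentiability is exactly what the paper records just before the proposition, so that step is fully justified in context.
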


\begin{df}\label{unifsemiconcave}\rm
A family of functions $f_\alpha : M\to \R$ for  $\alpha\in A$ is said equi--locally semiconcave if $M$ can be covered by finitely many open charts  $\varphi_i(U_i)$, where $U_i\subset \R^n$ and if there are constants $K_i$ such that all $f_\alpha\circ \varphi_i$ are $K_i$--semiconcave. 
\end{df}
\setlength{\fboxrule}{2pt}\fbox{
\begin{minipage}{0.9\textwidth}
{\bf Hypothesis}: In the rest of this section, we assume that the families $c(x,\cdot)$ and $c(\cdot , x)$ for $x\in M$ are equi--locally semiconcave. 
\end{minipage}
}

\vspace{2mm}
 It can be checked (as $M$ is compact) that a particular case of this is when the function $c$ is itself locally semiconcave on $M\times M$.

When they exist,  $\partial_1 c(x,y)$ and $\partial_2 c(x,y)$ denote the partial derivatives of $c$ at $(x,y)$ with respect to the first and second variable.

Coming back to discrete weak KAM theory, a refined version of Proposition \ref{propT} (i) then becomes:
\begin{pr}\label{propTbis}
Under the previous hypotheses, the image of $T^-$ (resp. $T^+$) consists of equi--locally semiconcave (resp. equi--locally semiconvex) functions.
\end{pr}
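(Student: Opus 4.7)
The plan is to leverage Proposition \ref{infconc} directly, since the Lax--Oleinik semigroups are defined as infima/suprema of translates of the cost function, and these semigroups commute with the addition of a constant (to a single fiber of $c$).

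First I would fix a finite cover of $M$ by charts $\varphi_i : U_i \to M$ together with constants $K_i$ witnessing the equi--local semiconcavity hypothesis, so that for every $y\in M$ the composition $c(y,\cdot)\circ\varphi_i : U_i\to \R$ is $K_i$--semiconcave. Let $f\in\BB(X,\R)$. For each fixed $y$, the function $x\mapsto f(y)+c(y,\varphi_i(x))$ is still $K_i$--semiconcave on $U_i$: adding a constant does not affect the quadratic term in Definition \ref{semiconcave}. Then
\begin{equation*}
(T^-f)\circ\varphi_i(x) \;=\; \inf_{y\in M}\bigl[\,f(y)+c(y,\varphi_i(x))\,\bigr],
\end{equation*}
and since $f$ is bounded and $c$ is continuous, this infimum is finite. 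Proposition \ref{infconc} applied in $U_i$ immediately yields that $(T^-f)\circ\varphi_i$ is $K_i$--semiconcave. Because the constants $K_i$ and the cover $(\varphi_i,U_i)$ depend only on $c$ and not on $f$, this furnishes the equi--local semiconcavity of the image of $T^-$.

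For $T^+$, I would argue dually. Using the other half of the hypothesis, $c(\cdot,y)\circ\varphi_i$ is $K_i$--semiconcave on $U_i$ uniformly in $y$, so $x\mapsto f(y)-c(\varphi_i(x),y)$ is $K_i$--semiconvex on $U_i$ for every fixed $y$. Since $T^+f(x) = \sup_{y\in M}\bigl[f(y)-c(x,y)\bigr]$, one has
\begin{equation*}
(T^+f)\circ\varphi_i(x) \;=\; \sup_{y\in M}\bigl[\,f(y)-c(\varphi_i(x),y)\,\bigr],
\end{equation*}
which is a supremum of $K_i$--semiconvex functions. Applying Proposition \ref{infconc} to the family of semiconcave functions $-f(y)+c(\varphi_i(\cdot),y)$ and taking the opposite sign shows that $(T^+f)\circ\varphi_i$ is $K_i$--semiconvex, with the same constants uniformly in $f$.

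There is no real obstacle here; the only point that merits a brief check is that the additive constants $f(y)$ do not spoil the uniform semiconcavity constant, which is obvious from the definition since only the Hessian (i.e.~the quadratic correction $K\|\cdot\|^2$) matters. The key structural ingredient is simply that the cover and the constants $(\varphi_i,U_i,K_i)$ witnessing equi--local semiconcavity of the cost fibers can be chosen once and for all, independently of $f$, which is what yields \emph{equi}--local semiconcavity (resp. semiconvexity) of the image.
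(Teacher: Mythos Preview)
Your proof is correct and follows exactly the approach indicated by the paper, which states only that ``the proof is nothing but a direct application of Proposition \ref{infconc}'' together with the remark about the minus sign for $T^+$. You have simply spelled out the details of that direct application.
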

The proof is nothing but a direct application of Proposition \ref{infconc}. Note that the result for $T^+$ involves semiconvexity because of the minus sign in its definition.

As consequences, let us derive some further regularizing properties of the Lax--Oleinik semigroups:

\begin{pr}\label{regv}
Let $v : M\to \R$ be a continuous function and $x_0 \in M$. Let $y_0 \in M$ verify that 
$T^- v(x_0 ) = v(y_0) + c(y_0,x_0)$ \big(resp. $T^+ v(x_0 ) = v(y_0) - c(x_0,y_0)$\big). Then
\begin{itemize}
\item $\partial^+_2 c(y_0,x_0) \subset \partial^+ T^-v(x_0)$ (resp. $-\partial^+_1 c(x_0,y_0) \subset \partial^- T^+ v(x_0)$)\footnote{By $\partial^+_2 c(y_0,x_0)$ we mean the superdifferential of the map $x\mapsto c(y_0,x)$ at $x_0$.}.
\item In particular, if $T^- v$ (resp. $T^+v$) is differentiable at $x_0$ then $\partial_2 c(y_0,x_0)$ (resp. $-\partial^+_1 c(x_0,y_0) $) exists and $D_{x_0} T^- v = \partial_2 c(y_0,x_0)$ (resp. $-\partial^+_1 c(x_0,y_0) \subset D_{x_0} T^+ v$).
\item If $v$ is locally semiconcave (resp. semiconvex) then $D_{y_0} v = -\partial_1 c(y_0,x_0)$ (resp. $D_{y_0} v = \partial_2 c(x_0,y_0)$) and all the previous quantities do exist.
\end{itemize}

\end{pr}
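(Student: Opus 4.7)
The plan is to handle the three bullets in turn, working in local charts around $x_0$ and $y_0$ so that the sub-- and superdifferentials have their usual Euclidean meaning. Throughout, I use the standing hypothesis that $c(y_0,\cdot)$ and $c(\cdot,x_0)$ are locally semiconcave, so Proposition \ref{superdiff} applies to them.

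For the first bullet, I start from the elementary inequality $T^-v(x) \leqslant v(y_0) + c(y_0,x)$, valid for every $x$, which is an equality at $x_0$ by the hypothesis on $y_0$. If $p\in \partial^+_2 c(y_0,x_0)$, then by Proposition \ref{superdiff} there exists $K>0$ such that
$$c(y_0,x) \leqslant c(y_0,x_0) + p(x-x_0) + K\|x-x_0\|^2$$
in a neighborhood of $x_0$. Adding $v(y_0)$ and using $T^-v(x_0) = v(y_0)+c(y_0,x_0)$ gives $T^-v(x) \leqslant T^-v(x_0) + p(x-x_0) + K\|x-x_0\|^2$, which is exactly the assertion $p\in \partial^+ T^-v(x_0)$. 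The $T^+$ analogue follows by a sign flip: start from $T^+v(x)\geqslant v(y_0)-c(x,y_0)$, use the superdifferential inequality for $c(\cdot,y_0)$ (semiconcave by hypothesis), and conclude $-\partial^+_1 c(x_0,y_0)\subset \partial^- T^+v(x_0)$.

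For the second bullet, if $T^-v$ is differentiable at $x_0$ then $\partial^+ T^-v(x_0)=\{D_{x_0}T^-v\}$. Since local semiconcavity of $c(y_0,\cdot)$ guarantees that $\partial^+_2 c(y_0,x_0)$ is nonempty, the first bullet forces it to be the singleton $\{D_{x_0}T^-v\}$; hence $c(y_0,\cdot)$ is differentiable at $x_0$ with $\partial_2 c(y_0,x_0)=D_{x_0}T^-v$. The positive case is identical.

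For the third bullet, assume $v$ is locally semiconcave. The defining property of $y_0$ says that $g:y\mapsto v(y)+c(y,x_0)$ attains its (global) minimum at $y_0$; in particular, taking $\varphi\equiv g(y_0)$ as a $C^1$ test function from below yields $0\in \partial^- g(y_0)$. Since $v$ and $c(\cdot,x_0)$ are both locally semiconcave near $y_0$, so is their sum $g$, and a semiconcave function with nonempty subdifferential at a point is differentiable there, so $D_{y_0}g=0$. The remaining step---the one I expect to be the main technical obstacle---is to propagate this differentiability to each summand. For this I would use the following elementary lemma: if $g_1,g_2$ are locally semiconcave at $y_0$ and $g_1+g_2$ is differentiable at $y_0$, then so are $g_1$ and $g_2$, with differentials summing to $D(g_1+g_2)$. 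The argument is that for any $p_1,p_1'\in \partial^+ g_1(y_0)$ and any $p_2\in\partial^+ g_2(y_0)$, both $p_1+p_2$ and $p_1'+p_2$ lie in $\partial^+(g_1+g_2)(y_0)=\{D(g_1+g_2)\}$, forcing $p_1=p_1'$; so $\partial^+ g_1(y_0)$ is a singleton and hence $g_1$ is differentiable at $y_0$, and likewise for $g_2$. Applying this to $g=v+c(\cdot,x_0)$ gives $D_{y_0}v + \partial_1 c(y_0,x_0)=0$, i.e.\ $D_{y_0}v = -\partial_1 c(y_0,x_0)$. The semiconvex/$T^+$ case is handled dually: $y\mapsto v(y)-c(x_0,y)$ is semiconvex with a maximum at $y_0$, the symmetric argument shows both summands are differentiable there, and equating derivatives yields $D_{y_0}v=\partial_2 c(x_0,y_0)$.
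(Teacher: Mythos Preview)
Your proof is correct and follows essentially the same approach as the paper: the first two bullets use the dominating inequality $T^-v(x)\leqslant v(y_0)+c(y_0,x)$ with equality at $x_0$, and the third bullet uses that a locally semiconcave function with nonempty subdifferential is differentiable together with the inclusion $\partial^+ g_1+\partial^+ g_2\subset \partial^+(g_1+g_2)$. The lemma you isolate (if two semiconcave functions have differentiable sum, each is differentiable) is exactly what the paper extracts in the Remark following its proof.
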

\begin{proof}
We prove half of the results leaving the rest as an exercise.

The first point is a direct consequence of the  inequality
$$\forall x\in M, \quad T^- v(x ) \leqslant  v(y_0) + c(y_0,x),$$
which is an equality for $x=x_0$.

The second point  then stems from the proved inclusion  $\partial^+_2 c(y_0,x_0) \subset \partial^+ T^-v(x_0)$. By hypothesis, the right hand side is a singleton and the left hand side is not empty, hence they coincide and we get the result.

For the last part, note that the function $\varphi : y \mapsto  v(y) + c(y,x_0)$ reaches its minimum at $y_0$. Hence $0\in \partial^- \varphi (y_0)$. But by hypothesis, $\varphi $ is locally semiconcave and it is easily verified that 
$\partial^+ v(y_0) + \partial^+_1 c(y_0,x_0) \subset \partial^+\varphi (y_0)$. We infer that $\varphi$ is differentiable at $y_0$ with $D_{y_0} \varphi= 0$ and that necessarily, $\partial^+ v(y_0) $ and $ \partial^+_1 c(y_0,x_0)$ are singletons. Hence the result.
\end{proof}

\begin{rem}\rm \label{remsc}
The following results were actually proven and used: if $f:M\to \R$ and $g:M\to \R$ are locally semiconcave functions then
\begin{itemize}
\item $f+g$ is differentiable at  some $x\in M$ if and only if both $f$ and $g$ are; 
\item if $f+g$ reaches a local minimum at some $x\in M$, then $f$ and $g$ are differentiable at $x$.
\end{itemize}

\end{rem}

We have now the necessary material to state a more precise version of Theorem \ref{reg}:

\begin{Th}\label{reg2}
Let $x\in \AA$. Then any subsolution $u$ is differentiable at $x$. Moreover, $D_x u$ does not depend on $u$.
\end{Th}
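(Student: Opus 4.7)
The plan is to sandwich $u$ at $x\in\AA$ between a locally semiconvex minorant and a locally semiconcave majorant that both coincide with $u$ at $x$, and then extract differentiability by a squeeze argument.

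The starting point is Corollary \ref{cst}: for every $u\in\S$ and every $x\in\AA$ one has $u(x)=T^-u(x)+\cc$ and $u(x)=T^+u(x)-\cc$, while globally $T^+u-\cc\leqslant u\leqslant T^-u+\cc$. Set $g:=T^-u+\cc$ and $f:=T^+u-\cc$. Under the standing semiconcavity hypothesis on the cost, Proposition \ref{propTbis} gives that $g$ is locally semiconcave and $f$ is locally semiconvex, hence $-f$ is locally semiconcave. The sum $g+(-f)=g-f$ is non-negative on $M$ and vanishes at $x$, so it attains a global minimum there. By the second bullet of Remark \ref{remsc} applied to the pair of semiconcave functions $g$ and $-f$, both $g$ and $-f$ are differentiable at $x$, so $g$ and $f$ are differentiable at $x$; criticality at the minimum forces $D_xg=D_xf=:p$.

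From this I would conclude differentiability of $u$ by a direct squeeze: in a coordinate chart around $x$, the expansions
\[ f(y)=u(x)+p(y-x)+o(|y-x|), \qquad g(y)=u(x)+p(y-x)+o(|y-x|) \]
combined with $f(y)\leqslant u(y)\leqslant g(y)$ force $u(y)=u(x)+p(y-x)+o(|y-x|)$. Hence $u$ is differentiable at $x$ with $D_xu=p=D_x T^-u$.

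For the independence of $D_xu$ from $u$, I would fix once and for all a bi-infinite orbit $(x_n)_{n\in\Z}\in\widetilde\AA$ with $x_0=x$; such an orbit exists because $\AA=\pi_2(\widehat\AA)$ combined with Proposition \ref{pr-sequence} applied to any pair $(x_{-1},x)\in\widehat\AA$. By Remark \ref{remoublieee}, every subsolution $u$ satisfies $u(x)-u(x_{-1})=c(x_{-1},x)+\cc$, so that $y_0=x_{-1}$ realises the infimum defining $T^-u(x)$, and this minimiser is uniform in $u$. Applying Proposition \ref{regv} to the now-differentiable $T^-u$ at $x$ with this choice of minimiser yields $D_xT^-u=\partial_2 c(x_{-1},x)$, a quantity depending only on the cost and on the previously fixed Aubry orbit, hence not on $u$. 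The main conceptual obstacle is precisely this last step: although in general different subsolutions may select different minimisers in the definition of $T^-u(x)$, the Aubry-set structure forces the Aubry predecessor $x_{-1}$ to be a minimiser for \emph{every} subsolution, which is exactly what unlocks the uniqueness of $D_xu$.
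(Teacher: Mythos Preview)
Your proof is correct and follows essentially the same route as the paper: sandwich $u$ between the semiconvex $T^+u-\cc$ and the semiconcave $T^-u+\cc$, both touching $u$ at $x$, to obtain differentiability, and then identify $D_xu$ via a fixed Aubry predecessor $x_{-1}$ of $x$. The only cosmetic differences are that the paper phrases the squeeze as ``$\partial^-u(x)$ and $\partial^+u(x)$ are both nonempty'' rather than via Remark~\ref{remsc}, and for the independence step it works directly with the subsolution inequality $u(z)\leqslant u(x_{-1})+c(x_{-1},z)+\cc$ applied to $u$ itself (equality at $z=x$) rather than invoking Proposition~\ref{regv} on $T^-u$; both paths yield $D_xu=\partial_2 c(x_{-1},x)$.
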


\begin{proof}
We will use the same inequalities as in Theorem \ref{reg}. Let $u\in \S$, then $T^+u-\cc \leqslant u\leqslant T^-u + \cc$. Moreover those inequalities are equalities at $x\in \AA$. This proves that both $\partial^- u(x) \neq \varnothing$ (as $T^+u$ is locally semiconvex) and $\partial^+u (x) \neq \varnothing$ (as $T^-  u$ is locally semiconcave). Hence $u$ is differentiable at $x$. Note that $T^-u$ and $T^+ u $ being subsolutions they are also differentiable at $x$ and the first inequality above  implies that all differentials  are equal at $x$: $D_x u=D_x T^-u = D_x T^+u$.

It remains to compute this differential. Let $(x',x)\in \widehat \AA$. We know that $u(z)\leqslant  u(x') +c(x',z)+\cc$ for all $z\in M$ and equality holds at $z=x$. As $u$ is differentiable at $x$, this implies that the function $u(x') +c(x',\cdot )+\cc$ has a subdifferential at $x$, as it is locally semiconcave, it is differentiable and its differential is $\partial_2 c(x',x)$. Hence we conclude that $D_x u = \partial_2 c(x',x)$ which happens to be independent on $u\in \S$.
\end{proof}
\begin{rem}\rm
A similar proof implies that if $(x,y)\in \widehat \AA$ and $u\in \S$ then $D_x u = -\partial_1 c(x,y)$. But this is not surprising, as $x$ verifies $c(x',x )+c(x,y) = \min\limits_{z\in M}c(x',z )+c(z,y)$.

Let us stress one more time  that $c$ admits partial derivatives on the $2$--Aubry set, as was actually established.
\end{rem}
We now turn to improving Theorem \ref{th-strict}:
\begin{Th}\label{strictC11}
There exists a strict subsolution $u_1$ which is $C^{1,1}$.
\end{Th}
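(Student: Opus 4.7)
The plan is to adapt Bernard's argument from \cite{BernardC11} to the discrete setting: combine the regularizing properties of the Lax--Oleinik semigroups (Proposition~\ref{propTbis}) with a Lasry--Lions type inf--sup convolution built out of the cost $c$.

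First, I would start from the continuous strict subsolution $u_0$ given by Theorem~\ref{th-strict} and form the intermediate semiconcave regularization $u_0^- := T^- u_0 + \cc$. By Proposition~\ref{propS}(ii) it lies in $\S$, by Proposition~\ref{propTbis} it is locally semiconcave, and it is strict because any equality $u_0^-(y)-u_0^-(x) = c(x,y)+\cc$ forces, through the compactly attained minimum in the definition of $T^-$, an equality of the form $u_0(y')-u_0(x') = c(x',y')+\cc$ for a companion pair $(x',y')$, after which the strict property of $u_0$ propagates the equality to every critical subsolution.

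Second, apply a sup--convolution on top of $u_0^-$ to define the candidate
\[
u_1(x) := T^+(u_0^-)(x) - \cc = \sup_{y\in M}\inf_{z\in M}\bigl[\,u_0(z) + c(z,y) - c(x,y)\,\bigr],
\]
which is the discrete Lasry--Lions regularization of $u_0$. By Proposition~\ref{propS} it is a critical subsolution, by Proposition~\ref{propTbis} (applied to $T^+$) it is locally semiconvex, and a repetition of the argument of Step~1 shows that strictness is preserved. The decisive claim is that under the standing hypothesis that $c(x,\cdot)$ and $c(\cdot,x)$ are equi--locally semiconcave, $u_1$ is also locally semiconcave; then Proposition~\ref{semcc} delivers $u_1\in C^{1,1}$.

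The main obstacle is precisely this two--sided regularity. Semiconvexity is automatic from the outer $\sup_y$ acting on a constant minus the semiconcave function $x\mapsto c(x,y)$; the missing semiconcavity must be extracted from the interplay between $u_0^-$ (already semiconcave) and the kernel $c(x,y)$ in the $x$--variable. The strategy is to fix $x$, select (by compactness) a maximizer $y^*(x)$ in the sup and a minimizer $z^*(x)$ in the inner inf, and use Proposition~\ref{regv} to identify $D_x u_1 = -\partial_1 c(x,y^*(x))$ whenever $u_1$ is differentiable. Local Lipschitz dependence of the selection $y^*(x)$ on $x$—forced by the equi--local semiconcavity of $c$ and the implicit--function--type computation at the maximizer—then provides the Lipschitz bound on $D_x u_1$ required for $C^{1,1}$ regularity. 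This is the discrete transposition of the classical Lasry--Lions regularization theorem, which is the part that cannot be read off from the soft results of the previous section and constitutes the technical core of the proof, in exact analogy with \cite{BernardC11}.
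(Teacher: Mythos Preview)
Your plan contains a genuine gap at its core.  The function $u_1 = T^+(T^-u_0+\cc)-\cc$ is automatically locally semiconvex, but your claim that it is also locally semiconcave does not follow from the standing hypotheses.  The Lasry--Lions mechanism you invoke requires a \emph{scale separation}: in Bernard's continuous theorem (Theorem~\ref{regB}) one has $S^-(t')\circ S^+(t)$ with $t'<\varepsilon$ small, and it is precisely this smallness that makes the optimizer $y^*(x)$ Lipschitz in $x$ via an implicit--function computation.  Here you compose $T^+$ and $T^-$ built from the \emph{same} cost $c$, so there is no small parameter; the semiconcavity constants of $u_0^-$ and of $c(\cdot,y)$ are comparable, and the implicit--function argument you sketch has no strictness to bite on.  In general $T^+\circ T^-$ does not produce a $C^{1,1}$ function under mere equi--semiconcavity of the marginals of $c$.

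The paper avoids this obstruction entirely by invoking Ilmanen's lemma (Theorem~\ref{ilm}): from $f=T^+g-\cc\leqslant g=T^-u$ with $f$ semiconvex and $g$ semiconcave, Ilmanen yields a $C^{1,1}$ function $h$ with $f\leqslant h\leqslant g$, which is a subsolution by the in--between Lemma~\ref{in--between}.  Strictness is then obtained by a separate trick: perturb the cost to $\tilde c=c-\varepsilon_1$ with $\varepsilon_1\geqslant 0$ smooth, $\varepsilon_1\leqslant c+\cc-u_0(y)+u_0(x)$, and $\varepsilon_1^{-1}\{0\}=\widehat\AA$; any $\cc$--subsolution for $\tilde c$ is then automatically a strict $\cc$--subsolution for $c$.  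Your strictness argument (``companion pairs'', ``repetition of Step~1'') is also incomplete: an equality $u_0^-(y)-u_0^-(x)=c(x,y)+\cc$ does not obviously force $(x,y)\in\widehat\AA$, and passing strictness through a further $T^+$ is not addressed.
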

The proof  makes crucial use of Ilmanen's lemma (see \cite{Ilmanen,FaZaIl,BeIl,BeZa}).
\begin{Th}[Ilmanen's lemma]\label{ilm}
Given two functions $f,g : M\to \R$ such that  $f$ is locally semiconvex, $g$ is locally semiconcave and $f\leqslant g$, there exists a function $h$ which is $C^{1,1}$ such that $f\leqslant h\leqslant g$.

Moreover, if $h_0$ is a continuous function such that $f\leqslant h_0\leqslant g$, then $h$ can be constructed arbitrarily close from $h_0$.
\end{Th}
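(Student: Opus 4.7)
The plan is to reduce to the Euclidean setting via charts, construct $h$ as a $K$--semiconvex envelope of $g$ from below, and close the argument by showing this envelope is also $K$--semiconcave, so that Proposition \ref{semcc} applies. For the approximation statement I would squeeze the bounds around a smoothing of $h_0$.

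First I would localize: fix a finite smooth atlas of $M$ with subordinate partition of unity $(\chi_i)_i$; it suffices to produce $h_i\in C^{1,1}(U_i,\R)$ with $f|_{U_i}\leqslant h_i\leqslant g|_{U_i}$ on each chart domain $U_i\subset\R^n$, because $h:=\sum_i\chi_i h_i$ is then $C^{1,1}$, satisfies $f=\sum_i\chi_i f\leqslant h\leqslant \sum_i\chi_i g=g$, and is close to $h_0$ whenever each $h_i$ is. After enlarging the constant, I may assume on each chart that $f$ is $K$--semiconvex and $g$ is $K$--semiconcave with a common $K$. Define
\[
h(x) := \sup\bigl\{\varphi(x) \,:\, \varphi\colon U_i\to\R \text{ is } K\text{--semiconvex and } \varphi\leqslant g\bigr\}.
\]
Three properties are immediate: $h\leqslant g$ by definition, $h\geqslant f$ because $f$ itself is admissible, and $h$ is $K$--semiconvex, since $\varphi+K\|\cdot\|^2$ is convex for each admissible $\varphi$ and a pointwise supremum of convex functions is convex (the semiconvex counterpart of Proposition \ref{infconc}). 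By Proposition \ref{semcc}, the proof then reduces to showing $h$ is $K$--semiconcave.

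The semiconcavity of $h$ is the main obstacle. Fix $x_0$ and pick $p_0\in\partial^+g(x_0)$, which exists by local semiconcavity of $g$. The parabolic upper bound $g(y)\leqslant g(x_0)+p_0\cdot(y-x_0)+K\|y-x_0\|^2$ (valid locally) transfers to every admissible $\varphi$, and taking the supremum yields $h(y)\leqslant g(x_0)+p_0\cdot(y-x_0)+K\|y-x_0\|^2$. At ``contact points'' where $h(x_0)=g(x_0)$ this is exactly the semiconcavity inequality for $h$. The hard case is $x_0$ with $h(x_0)<g(x_0)$: there the constraint $\varphi\leqslant g$ is non-binding locally, so one needs an indirect argument. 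The natural strategy is to show that, for every $\varepsilon>0$, any near-optimal admissible $\varphi_\varepsilon$ with $\varphi_\varepsilon(x_0)\geqslant h(x_0)-\varepsilon$ must contact $g$ at a nearby point $y_\varepsilon$, and then to combine the semiconcavity of $g$ at $y_\varepsilon$ with the $K$--semiconvexity of $\varphi_\varepsilon$ itself to extract an upper parabola at $x_0$ with modulus $K$. Controlling $|y_\varepsilon-x_0|$ uniformly as $\varepsilon\to 0$ is the technical heart of the argument.

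Finally, for the closeness to $h_0\in C^0(M,\R)$: given $\delta>0$, mollify $h_0$ to some $\tilde h_0\in C^\infty(M,\R)$ with $\|\tilde h_0-h_0\|_\infty\leqslant \delta/2$, and replace the bounds by $f':=\max(f,\,\tilde h_0-\delta/2)$ and $g':=\min(g,\,\tilde h_0+\delta/2)$. Since $\tilde h_0$ is smooth and hence both semiconvex and semiconcave, $f'$ remains $K$--semiconvex (the max of two $K$--semiconvex functions is $K$--semiconvex, via the max of the shifted convex functions) and $g'$ remains $K$--semiconcave (dually, the min of two $K$--semiconcave functions is $K$--semiconcave, by intersecting hypographs); a short case analysis using $f\leqslant h_0\leqslant g$ shows $f'\leqslant g'$. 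Applying the main construction to $(f',g')$ yields $h\in C^{1,1}$ with $\tilde h_0-\delta/2\leqslant h\leqslant \tilde h_0+\delta/2$, and hence $\|h-h_0\|_\infty\leqslant \delta$.
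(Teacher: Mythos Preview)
The paper does not prove Ilmanen's lemma; it states Theorem~\ref{ilm} with references \cite{Ilmanen,FaZaIl,BeIl,BeZa} and uses it as a black box to prove Theorem~\ref{strictC11}. So there is no in--text proof to compare your attempt against.

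On the merits of your write--up: the overall architecture --- pass to charts, take the $K$--semiconvex envelope of $g$ from below, and show it is simultaneously $K$--semiconcave --- is exactly the strategy used in the cited references (notably \cite{FaZaIl,BeIl}), and your approximation step via $f'=\max(f,\tilde h_0-\delta/2)$, $g'=\min(g,\tilde h_0+\delta/2)$ is correct. But you leave a genuine gap at the only nontrivial point, and you say so yourself: the sentence ``Controlling $|y_\varepsilon-x_0|$ uniformly as $\varepsilon\to 0$ is the technical heart of the argument'' is an admission, not a proof. The contact--point heuristic you sketch (find a nearby touching point $y_\varepsilon$ and transport the upper parabola of $g$ back to $x_0$) does not obviously close, and it is not how the references proceed. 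A cleaner route is to rewrite your envelope explicitly as a double sup/inf convolution,
\[
h(x)=\sup_{y}\Big(\inf_{z}\big(g(z)+K\|z-y\|^2\big)-K\|x-y\|^2\Big),
\]
and then use the $K$--semiconcavity of $g$ to control the inner infimum; this is where the cited proofs do the real work. A minor additional wrinkle: the envelope on a bounded chart domain $U_i$ can degenerate near $\partial U_i$, so one should either run the construction on slightly enlarged charts before restricting to $\operatorname{supp}\chi_i$, or argue more intrinsically.
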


\begin{proof}[Proof of Theorem \ref{strictC11}]
The proof splits into two steps. First we construct $C^{1,1}$ subsolutions, then we explain how to make them strict.

Let us start with a subsolutions $u$. Then we have seen that $g=T^- u$ is a locally semiconcave subsolution, $f=T^+g-\cc$ is a locally semiconvex subsolution and $f\leqslant g$. By Ilmanen's lemma, there exists a $C^{1,1}$ function $h$ such that $f\leqslant h\leqslant g$. But the in between lemma \ref{in--between}, transposed to $T^+$, tells us that $h\in \S$.

Now that we have a general procedure to construct subsolutions, let us see how to make them strict. Let $u_0$ be the continuous strict subsolution given by Theorem \ref{th-strict}. Let us set $\e : M\times M \to \R$ the function defined by $\e(x,y) = c(x,y)+\cc - u_0(y)+u_0(x)$. This function is everywhere non--negative and verifies $\e^{-1}\{0\} = \widehat \AA$ thanks to the strictness property enjoyed by $u_0$. Let now $\e_1$ be a $C^{\infty}$ function such that $0\leqslant \e_1\leqslant \e$ and $\e_1^{-1}\{0\} = \widehat \AA$. Let us finally consider $\tilde c = c-\e_1$. This new cost  still verifies that the marginal functions $\tilde c(x,\cdot)$ and $\tilde c(\cdot , y)$ are locally-uniformly semiconcave. Moreover, $u_0$ is a $\cc$--subsolution\footnote{Even though it can be proven that $\cc$ is the critical constant for $\tilde c$, this fact is not useful in this proof.} for $\tilde c$, indeed 
$u_0(y)-u_0(x) = c(x,y)+\cc - \e \leqslant \tilde c(x,y)+\cc$. The first part of the proof provides  a $\cc$--subsolution $u_1$ for this cost $\tilde c$ (using the semigroups $T^-_{\tilde c}$ and $T^+_{\tilde c}$ associated to $\tilde c$) which is $C^{1,1}$. Let us verify it is strict for $c$:
for $(x,y)\in M\times M$, we compute 
$$u_1(y)-u_1(x) \leqslant \tilde c(x,y)+\cc = c(x,y)-\e_1(x,y) +\cc  \leqslant c(x,y)+\cc,$$
 and this last inequality is strict whenever $(x,y)\notin \widehat \AA$. This completes the proof.
\end{proof}
\begin{rem}\rm
The previous Theorem can be made more precise. Using the fact that $T^+T^- u\leqslant u\leqslant T^-u+\cc$ and  the last assertion  of Ilmanen's lemma, one proves that if $u$ is continuous, then it can be approximated by $C^{1,1}$ strict subsolutions.

Finally, as a nontrivial convex combination of a subsolution with a strict subsolution is strict, one obtains that $C^{1,1}$ strict subsolutions are dense in the set $\S\cap C^0(M,\R)$.

Let us also stress that, as pointed out in \cite{BeZa}, Ilmanen's lemma (Theorem \ref{ilm}) can be recovered from Theorem \ref{strictC11} by considering the cost $c_{f,g}(x,y) = g(y)-f(x)$.
\end{rem}

\section{Graph properties and dynamics on the Aubry set}
Let us begin by mentioning a first general result under the hypotheses of the previous paragraph.
A combination of Theorems \ref{reg2} and \ref{strictC11} gives the following proposition which has a flavor of Mather's Graph Theorem:

\begin{pr}\label{aubry-cot}
There exists a set $\AA^*\subset T^*M$ whose projection is $\AA$ and such that if $(x,p)\in \AA^*$ then any $u\in \S$ is differentiable at $x$ and $D_x u=p$. Moreover the projection $\AA^* \to \AA$ is a bi--Lipschitz homeomorphism.
\end{pr}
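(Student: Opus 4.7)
The plan is to combine Theorem \ref{reg2} (regularity of all subsolutions on $\AA$) with Theorem \ref{strictC11} (existence of a $C^{1,1}$ strict critical subsolution) to obtain the set $\AA^*$ together with the required regularity of the projection.

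First, I would fix a $C^{1,1}$ strict critical subsolution $u_1 \in \S$ provided by Theorem \ref{strictC11}, and \emph{define}
\[
\AA^* := \{(x, D_x u_1) \, : \, x \in \AA\} \subset T^*M.
\]
The projection $\pi_1 : \AA^* \to \AA$ is then surjective by construction. To see that $\AA^*$ has the stated universal property and in particular does not depend on the choice of $u_1$, let $u \in \S$ be an arbitrary critical subsolution and $x \in \AA$. By Theorem \ref{reg2}, $u$ is differentiable at $x$, and its differential $D_x u$ is independent of the subsolution $u$. Applied to $u$ and to $u_1$ simultaneously, this forces $D_x u = D_x u_1$, so that $(x, D_x u_1) \in \AA^*$ witnesses the property: every $u \in \S$ is differentiable at $x$ with $D_x u = p$.

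It remains to check that $\pi_1 : \AA^* \to \AA$ is a bi--Lipschitz homeomorphism. Injectivity of $\pi_1$ is clear since the fiber above $x \in \AA$ is the singleton $\{(x, D_x u_1)\}$, so $\pi_1$ is a bijection with inverse $\sigma : x \mapsto (x, D_x u_1)$. That $\pi_1$ is $1$--Lipschitz is immediate for any natural metric on $T^*M$ restricted to a chart (e.g.\ working locally in $\R^n \times \R^{n*}$). For the inverse $\sigma$, the key input is that $u_1$ is $C^{1,1}$: in any local coordinate chart the differential $x \mapsto D_x u_1$ is a Lipschitz map, with Lipschitz constant bounded by the $C^{1,1}$ constant of $u_1$. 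Since $M$ is compact, one can cover $M$ by finitely many such charts and obtain a global Lipschitz constant for $\sigma$, completing the proof.

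I expect no serious obstacle here: essentially the entire content has already been deposited in Theorems \ref{reg2} and \ref{strictC11}. The only mild subtlety to take care of is the bookkeeping that the map $\sigma$ is Lipschitz globally on $\AA$ (not just locally), which follows at once from compactness of $M$ together with a finite atlas of charts in which $Du_1$ is uniformly Lipschitz. The independence of $\AA^*$ of the particular choice of $u_1$, although the set is defined via $u_1$, is then an a posteriori consequence of the uniqueness statement in Theorem \ref{reg2}.
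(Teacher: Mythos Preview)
Your proposal is correct and follows exactly the approach the paper takes: the paper simply says that $\AA^*$ is the restriction of the graph of $Du_1$ to $\AA$, where $u_1$ is the $C^{1,1}$ strict subsolution from Theorem~\ref{strictC11}, and invokes Theorem~\ref{reg2} for the universal property. You have spelled out the details (in particular the bi--Lipschitz argument via compactness and a finite atlas) more explicitly than the paper does, but the strategy is identical.
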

Indeed, $\AA^*$ is just the restriction of the graph of $Du_1$ to $\AA$ where $u_1$ is given by Theorem \ref{strictC11}.

In order to define a dynamics on the Aubry set, one would now like, given a point $x_0\in \AA$, to be able to reconstruct the whole sequence $(x_n)_{n\in \Z}$. To this aim, we impose an additional condition on the cost. It was studied in \cite{ZJMD} and previously introduced in the setting of Optimal Transportation in \cite{FaFi}:
\begin{df}\label{twist condition}\rm
\hspace{2em}
\begin{enumerate}[label=(\roman*)]
\item The cost $c$ has the {\it left twist property} if for any $y$, the map $x\mapsto \partial_2c(x,y)$ is injective on its domain of definition\footnote{The cost $c$ being locally Lipschitz, this map is defined almost everywhere.}.
\item The cost $c$ has the {\it right twist property} if for any $x$, the map $y\mapsto \partial_1c(x,y)$ is injective on its domain of definition.
\item The cost $c$ enjoys the {\it twist condition} if it verifies both the left and the right twist properties.
\item We define the  {\it left Legendre transform} $\L_\ell : \D_\ell\subset M\times M \to T^*M$ by $\L_\ell (x,y) = \big(y,\partial_2 c(x,y)\big)$ and the {\it right Legendre transform}  $\L_r : \D_r\subset M\times M \to T^*M$ by $\L_r (x,y) = \big(x,-\partial_1 c(x,y)\big)$, where $\D_\ell$ and $\D_r$ are the sets of full measures on which the definitions make sense. 
\end{enumerate}

\end{df}

Under this twist condition, one gets this second version of Mather's Graph theorem:

\begin{pr}\label{aubry--graph}
Let us assume that $c$ verifies the twist condition. Then both projections $\pi_i : \widehat \AA \to \AA$ are bijections.
\end{pr}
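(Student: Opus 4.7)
The plan is to reduce the bijectivity statement to the combination of the regularity result Theorem~\ref{reg2} (together with the remark following it) and the defining injectivity properties of the twist condition. Surjectivity of both projections is already in hand: by definition $\AA = \pi_1(\widehat\AA)$, and by the corollary stating $\AA = \pi_1(\widehat\AA) = \pi_2(\widehat\AA)$, the map $\pi_2 : \widehat\AA \to \AA$ is surjective too. So everything reduces to proving injectivity.

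For the injectivity of $\pi_1$, the idea is the following. Suppose $(x,y_1), (x,y_2) \in \widehat\AA$. Pick any critical subsolution $u \in \S$ (for instance the $C^{1,1}$ strict subsolution $u_1$ from Theorem~\ref{strictC11}). By the remark following Theorem~\ref{reg2}, for every $(x,y) \in \widehat\AA$ the partial derivative $\partial_1 c(x,y)$ exists and satisfies $D_x u = -\partial_1 c(x,y)$. Applying this to both pairs $(x,y_1)$ and $(x,y_2)$ gives $\partial_1 c(x,y_1) = -D_x u = \partial_1 c(x,y_2)$. The right twist property then forces $y_1 = y_2$.

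The injectivity of $\pi_2$ is symmetric: if $(x_1,y), (x_2,y) \in \widehat\AA$, then Theorem~\ref{reg2} itself gives $D_y u = \partial_2 c(x_1,y) = \partial_2 c(x_2,y)$, and the left twist property yields $x_1 = x_2$.

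There is no serious obstacle: the whole content has been packaged into Theorem~\ref{reg2} and its remark, which guarantee precisely that at Aubry points the partial derivatives $\partial_1 c$ and $\partial_2 c$ exist and are prescribed by the common value $D_x u$ of any subsolution. The only point deserving a line of care is to invoke the existence of the partial derivatives (and not merely the super-/subdifferentials), which is why one should explicitly cite the last assertion of Theorem~\ref{reg2} and the remark following it rather than the sole inclusion $\partial_2^+ c(x',x) \subset \partial^+ T^- u(x)$ from Proposition~\ref{regv}.
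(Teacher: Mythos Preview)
Your proof is correct and follows essentially the same approach as the paper: both invoke Theorem~\ref{reg2} and its subsequent remark to obtain $D_x u = \partial_2 c(x',x) = -\partial_1 c(x,y)$ on the $2$--Aubry set, and then use the left and right twist conditions to conclude injectivity. The paper phrases the argument through a sequence $(x_n)_{n\in\Z}\in\widetilde\AA$ with $p=D_{x_0}u$ determining both $x_{-1}$ and $x_1$, while you work directly with pairs in $\widehat\AA$; this is only a cosmetic difference.
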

\begin{proof}
We have seen in the proof of Theorem \ref{reg2} and in the subsequent Remark that if $u\in \S$ and $(x_{-1},x_0,x_1)$ are successive points of a sequence $(x_n)_{n\in \Z}$ then $p=D_{x_0} u = \partial_2c(x_{-1},x_0) = -\partial_1c(x_0,x_1)$. It follows from the left twist condition that $\L_\ell$ is injective and that  $x_{-1} =  \pi_1\big(\L_\ell^{-1}(x_0,p)\big)$ is uniquely determined. Similarly, it follows from the right twist condition that $\L_r$ is injective and that  $x_{1} =  \pi_2\big(\L_r^{-1}(x_0,p)\big)$ is uniquely determined.
\end{proof}

\begin{rem}\rm
In Optimal Transportation, similar twist conditions are used to prove existence of optimal transport maps for semiconcave costs. In the corresponding cases, such Optimal transport maps have their graphs included in analogues of the $2$--Aubry set associated to Kantorovitch pairs. See the work of Fathi and Figalli for example \cite{FaFi}.
\end{rem}

\section{Relations to the classical theory}
This section comes back to the classical setting of a Tonelli Lagrangian $L$ defined on the tangent bundle of a closed and compact smooth manifold $M$. 
\subsection{The classical Peierls Barrier}
The Peierls barrier for Lagrangian systems was introduced by Mather in \cite{MatherF}, inspired by the works of Aubry and Le Daeron for twist maps \cite{Aubry}:
\begin{df}\rm
The Peierls barrier is defined by
$$\forall (x,y)\in M,\quad h_L(x,y) = \liminf_{t\to+\infty}h_t(x,y)+t\alpha(0),$$
where $h_t$ is the minimal action functional previously introduced in \eqref{action}.
 \end{df}

It follows from Fathi's theorem on the convergence of the Lax--Oleinik semigroup \cite{Fa1}, that in this autonomous setting, the liminf is actually a limit. Note that this is not necessarily the case in a discrete setting, or a time periodic setting, as shown in \cite{FaMat}.

\begin{pr}\label{Cpeierl-prop}
\begin{enumerate}[label=(\roman*)]
\item $h_L$ is well defined and continuous;
\item for any subsolution $u\in \SSS$ and $(x,y)\in M$, $u(y)-u(x) \leqslant h_L(x,y)$, in particular for all $x\in X$, $h_L(x,x)\geqslant 0$;
\item for all $(x,y,z)\in M^3$ and real number $t>0$ the following inequalities hold:
\begin{multline}\label{ineqtriangC}
h_L(x,y)\leqslant h_L(x,z) + h_t(z,y)+t\alpha(0) ;\\  h(x,y)\leqslant  h_t(x,z)+t\alpha(0)+h_L(z,y) ; \\ h_L(x,y)\leqslant h_L(x,z)+h_L(z,y);
\end{multline}
\item for all $x\in X$, the function $h_x = h_L(x,\cdot)$ is a weak KAM solution and the function $h^x = -h_L(\cdot ,x)$ is a positive weak KAM solution.
\end{enumerate}
\end{pr}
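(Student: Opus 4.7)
My plan is to mirror the proof of Proposition \ref{peierl-prop} from the discrete setting, with one crucial additional ingredient: Fathi's convergence theorem \ref{cvsg}, which will both upgrade the liminf in the definition of $h_L$ to a genuine limit and provide the weak KAM property of $h_x$ essentially for free.

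For (i), I would first establish that for every $\delta > 0$ the family $\{h_t : t \ge \delta\}$ is equi-Lipschitz on $M \times M$: this is a standard consequence of Tonelli theory, as minimizers of $h_t$ over sufficiently long intervals have uniformly bounded speeds. Equi-boundedness of $\{h_t + t\alpha(0) : t \ge 1\}$ then follows from two sides. A lower bound $u(y) - u(x) \le h_t(x,y) + t\alpha(0)$ holds for every critical subsolution $u \in \SSS$ by \eqref{subsol}, while an upper bound is obtained by concatenating a length-$1$ curve from $x$ to a point $x_0 \in \AA$, an Aubry-set orbit issued from $x_0$ for time $t-2$ (whose action plus $(t-2)\alpha(0)$ equals $u_0(x_0(t-2)) - u_0(x_0)$ thanks to Theorem \ref{invA}, with $u_0$ a fixed critical subsolution), and a length-$1$ curve from $x_0(t-2)$ back to $y$; both end pieces contribute uniformly bounded amounts since $M$ is compact. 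Applying Theorem \ref{cvsg} to the continuous function $h_1(x, \cdot)$ and using the identity
\[
S^-(t)\bigl(h_1(x, \cdot)\bigr)(y) \;=\; \inf_{z \in M} \bigl[ h_1(x,z) + h_t(z,y) \bigr] \;=\; h_{t+1}(x,y),
\]
I obtain that $h_{t+1}(x,y) + (t+1)\alpha(0)$ converges uniformly in $y$ to a weak KAM solution of \eqref{sta}. Thus the liminf in the definition of $h_L(x, \cdot)$ is actually a limit, and $h_L$ is continuous in its second variable; joint continuity follows from this uniform convergence together with the equi-Lipschitz property.

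Point (ii) is immediate from the characterization \eqref{subsol} of subsolutions: minimizing $u(y)-u(x)\le \int_{-t}^0[L(\gamma,\dot\gamma)+\alpha(0)]\,ds$ over admissible curves gives $u(y) - u(x) \le h_t(x,y) + t\alpha(0)$, and passing to the liminf as $t \to +\infty$ yields the conclusion, with $h_L(x,x) \ge 0$ obtained by taking $y = x$ for any $u \in \SSS$. Point (iii) follows from the concatenation inequality $h_{s+t}(x,y) \le h_s(x,z) + h_t(z,y)$: adding $(s+t)\alpha(0)$ and taking a liminf in $t$ (resp.\ in $s$, resp.\ in both simultaneously) yields the three announced inequalities. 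Finally, (iv) is a by-product of (i): the uniform limit exhibited there is, by Theorem \ref{cvsg}, a weak KAM solution; since it equals $h_x + \alpha(0)$ up to an additive constant, the function $h_x$ itself is a weak KAM solution. The corresponding statement for $h^x$ is obtained by symmetry, working with the reversed Lagrangian $\check L(x,v) = L(x,-v)$ and the positive semigroup $S^+$, exactly as in the discrete proof. The step I expect to require the most care is the upper bound in the equi-boundedness argument of (i), since it forces the invocation of the Aubry set and the calibration property of Theorem \ref{invA}; the remaining points are essentially cosmetic transcriptions of the corresponding discrete arguments.
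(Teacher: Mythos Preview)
Your argument is correct, but it diverges from the paper's approach in instructive ways. The paper simply states that the proof is ``the same as that of Proposition \ref{peierl-prop}'', i.e.\ a direct transcription of the discrete argument: equicontinuity of the $h_t$ plus uniform boundedness of $h_t + t\alpha(0)$ (via the continuous analogue of Remark \ref{bounded}(i), namely that $S^-(t)v + t\alpha(0)$ stays bounded for any continuous $v$), then liminf manipulations for (ii)--(iii), and for (iv) a direct verification that $h_x$ is a subsolution together with a subsequence argument showing $h_x(y) \geqslant h_x(\tilde x) + c(\tilde x,y) + \alpha(0)$ for some $\tilde x$.

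You instead lean on Fathi's convergence theorem \ref{cvsg}, which buys you two things: the liminf becomes an honest limit (a fact the paper mentions just after the proposition but does not use in its proof), and (iv) becomes immediate since $h_x$ is then exhibited as the uniform limit of $S^-(t)$ applied to $h_1(x,\cdot)$, hence automatically a weak KAM solution. This is a perfectly legitimate and arguably cleaner route for (iv). Your upper-bound construction in (i), however, is more elaborate than necessary: concatenating curves through the Aubry set and invoking the calibration property of Theorem \ref{invA} works, but the discrete proof's mechanism transfers directly---since $h_{t+1}(x,\cdot) = S^-(t)\big(h_1(x,\cdot)\big)$, the boundedness of $h_t + t\alpha(0)$ follows immediately from the non-expansiveness of $S^-(t)$ and the existence of a weak KAM solution, exactly as in Proposition \ref{pr asympt}. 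Your detour through $\AA$ is correct but avoidable.
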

The proof is the same as that of Proposition \ref{peierl-prop}. Actually the links between the classical Peierls barrier and the discrete one is made even clearer by the next Proposition:
\begin{pr}
Let $h$ be the  Peierls barrier associated to the cost function $h_1$. Then $h=h_L$.
\end{pr}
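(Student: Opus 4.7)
The plan is to reduce the equality $h=h_L$ to two observations: first, that iterating the cost $c=h_1$ produces the $n$-time action, i.e.\ $c_n(x,y)=h_n(x,y)$ for every positive integer $n$; second, that in the autonomous setting the liminf defining $h_L$ is actually a limit. Once these are in hand, and using $\cc=\alpha(0)$ (Theorem \ref{ewd}), the identity follows directly:
$$h(x,y)=\liminf_{n\to +\infty}c_n(x,y)+n\cc=\liminf_{n\to +\infty}h_n(x,y)+n\alpha(0)=\lim_{t\to +\infty}h_t(x,y)+t\alpha(0)=h_L(x,y),$$
where the penultimate equality is the elementary fact that if a function of a real variable has a limit at infinity then its restriction to $\N$ has the same liminf.

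For the first observation, I would argue by a two-sided comparison relying on Tonelli's existence of minimizers for the action functional $h_t$. To see $h_n(x,y)\geqslant c_n(x,y)$, sample any absolutely continuous curve $\gamma:[0,n]\to M$ with $\gamma(0)=x$, $\gamma(n)=y$ at integer times, setting $x_i=\gamma(i)$; then
$$\int_{0}^{n}L\big(\gamma(s),\dot\gamma(s)\big)\,\dd s=\sum_{i=0}^{n-1}\int_{i}^{i+1}L\big(\gamma(s),\dot\gamma(s)\big)\,\dd s\geqslant \sum_{i=0}^{n-1}h_1(x_i,x_{i+1})\geqslant c_n(x,y),$$
and one takes the infimum over $\gamma$ on the left. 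For the converse inequality, given any chain $x=x_0,x_1,\ldots,x_n=y$, concatenate the Tonelli minimizers realizing each $h_1(x_i,x_{i+1})$ on the time interval $[i,i+1]$: the resulting absolutely continuous curve has action exactly $\sum h_1(x_i,x_{i+1})$, so $h_n(x,y)\leqslant c_n(x,y)$ after taking the infimum over chains.

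For the second observation, the paper has already pointed out just before Proposition \ref{Cpeierl-prop} that, thanks to Fathi's convergence Theorem \ref{cvsg}, in the autonomous Tonelli setting the liminf defining $h_L(x,y)$ is a genuine limit. Concretely, fixing $x\in M$ and applying Theorem \ref{cvsg} to the continuous initial datum $v=h_1(x,\cdot)$ (or equivalently using that $t\mapsto S^-(t)v+t\alpha(0)$ converges uniformly), one obtains that $t\mapsto h_t(x,y)+t\alpha(0)$ converges as $t\to +\infty$ to $h_L(x,y)$.

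The only mildly delicate step is the identification $c_n=h_n$, and even it is routine modulo the standard Tonelli existence of minimizing curves between two prescribed endpoints in prescribed time. Everything else is a packaging of results already established in the text: Theorem \ref{ewd} for the equality of critical constants, Theorem \ref{cvsg} for passage from liminf to limit, and the definitions of the two Peierls barriers. I expect no real obstacle.
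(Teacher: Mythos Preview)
Your proof is correct and follows essentially the same route as the paper. The only presentational difference is that where you prove $c_n=h_n$ by sampling and concatenating curves, the paper encodes the same identity via the semigroup: setting $v=h_1(x,\cdot)$ and using $T^{-}=S^-(1)$, one has $c_n(x,\cdot)=(T^-)^{n-1}v=S^-(n-1)v=h_n(x,\cdot)$, and then both proofs invoke Fathi's convergence Theorem~\ref{cvsg} applied to $v$ to pass from the liminf over integers to the limit over real times.
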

\begin{proof}
Once again, the proof  heavily relies on the convergence of the Lax--Oleinik semigroup for autonomous Tonelli Lagrangians. Indeed, let $x\in M$, and define $v = h_1(x,\cdot)$. Then it follows from the definitions that if $t>1$,  
$$\forall y\in M,\quad h_t(x,y) = S(t-1)v(y).$$
Whence, 
$$\forall y\in M,\quad h_L(x,y) = \liminf_{t\to +\infty} S(t-1)v(y)+t\alpha(0)= \lim_{t\to +\infty} S(t-1)v(y)+t\alpha(0),$$
while
$$\forall y\in M,\quad h(x,y) = \liminf_{n\to +\infty} S(n-1)v(y)+n\alpha(0)= \lim_{n\to +\infty} S(n-1)v(y)+t\alpha(0),$$
and $h=h_L$.
\end{proof}

Of course, the classical Peierls barrier allows to recover the Aubry sets, as in the discrete case:
\begin{Th}\label{AubryC}
The following equalities hold:
$$\AA=\{x\in M,\quad h_L(x,x)=0\},$$
$$
\AA^* = \{(x,p)\in T^*M , \quad x\in \AA,\ \  p=D_x h_x\}.
$$

\end{Th}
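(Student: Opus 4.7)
The plan is to deduce both equalities from their discrete analogues (Theorem \ref{Aubry0}) via the bridges established in the previous Chapter, namely the equality $\AA_H=\AA_{h_1}$ and the equality $h_L=h$ (for the cost $c=h_1$) just proved in the preceding Proposition. The point is that the discrete machinery applied to $c=h_1$ gives exactly the classical statement, once we observe that $h_1$ satisfies the semiconcavity hypotheses of Section \ref{more}. This last fact is a classical Tonelli result (the action functional is locally semiconcave on $M\times M$), so both Theorem \ref{reg2} and Proposition \ref{aubry-cot} apply with $c=h_1$.

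For the first equality I would chain the three observations: (i) the preceding Proposition gives $h_L=h$; (ii) Theorem \ref{Aubry0} applied to $c=h_1$ gives $\AA_{h_1}=\{x\in M:\,h(x,x)=0\}$; (iii) the previously established bridge $\AA_H=\AA_{h_1}$ lets us identify $\AA_{h_1}$ with the classical projected Aubry set $\AA$. Combining these three yields $\AA=\{x\in M:\,h_L(x,x)=0\}$.

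For the second equality I would proceed as follows. Proposition \ref{aubry-cot} (applied to $c=h_1$) furnishes the set $\AA^*\subset T^*M$ and shows that its fiber over any $x\in\AA$ consists of a single $p$, characterized by the property that \emph{every} discrete $h_1$-subsolution $u$ is differentiable at $x$ with $D_xu=p$. By Proposition \ref{Cpeierl-prop}(iv), for each $x\in\AA$ the function $h_x=h_L(x,\cdot)$ is a classical weak KAM solution, hence a classical critical subsolution, hence (using the inclusion $\SSS_{\alpha(0)}\subset \S_{\alpha(0)}$ noted in the previous Chapter) a discrete $h_1$-subsolution. Therefore $h_x$ is differentiable at $x$ and $D_xh_x=p$, which proves $\AA^*=\{(x,D_xh_x):\,x\in\AA\}$.

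The main obstacle is not any single step but the careful bookkeeping required to transfer information back and forth between the classical continuous-time setting and the discrete setting applied to the time-one cost $h_1$; in particular, checking that the regularity hypothesis of Section \ref{more} (equi-local semiconcavity of the marginals of the cost) is met by $h_1$. This is standard Tonelli theory and can be invoked without further proof.
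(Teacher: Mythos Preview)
Your proof is correct. The paper itself does not give a proof of this theorem: it is stated as a classical fact, with the comment that the first equality is Mather's original definition of the projected Aubry set in \cite{MatherF}. Your approach---deducing the classical statement from its discrete analogue (Theorem~\ref{Aubry0}) via the bridges $h_L=h$ and $\AA_H=\AA_{h_1}$---is entirely in the spirit of the paper's ``Relations to the classical theory'' sections, and the semiconcavity hypothesis on $h_1$ you need is exactly Theorem~\ref{action SC}.

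One small bookkeeping point for the second equality: the $\AA^*$ in the statement is the \emph{classical} Aubry set defined after Theorem~\ref{strictC} (via the graph of $Du_0$ for a $C^{1,1}$ strict classical subsolution), whereas Proposition~\ref{aubry-cot} produces the \emph{discrete} $\AA^*$ for the cost $h_1$. You implicitly identify these. This identification is indeed valid---any classical subsolution is a discrete $h_1$-subsolution, so the unique $p$ characterizing the discrete $\AA^*$ over $x\in\AA$ must equal $D_xu_0$---and the paper in fact makes this identification explicit a few paragraphs later (in the subsection on graph properties). You might add a line making this step visible.
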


Note that the first equality of the previous Theorem is actually the original definition of Mather in \cite{MatherF}.

\subsection{Examples of points in the Aubry set}

We start by reviewing links between weak KAM solutions and the Aubry set. Those results can now be interpreted as consequences of the analogous results proven in the discrete setting but they were historically obtained first by Albert Fathi.

\begin{pr}\label{cchains}
Let $u$ be a weak KAM solution, then for all $x\in M$, there exists a $C^2$ curve $\gamma_x : (-\infty , 0] \to M$ such that $\gamma_x(0)=x$ and 
$$\forall t>0, \quad u(x) = u\big(\gamma_x(-t)\big) + \int_{-t}^0 L\big(\gamma_x(s),\dot\gamma_x(s)\big) \dd s +t\alpha(0).$$
Such curves are called {\it calibrating} for $u$.
\end{pr}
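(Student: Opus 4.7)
My plan is to deduce this classical result from its discrete analogue, Lemma \ref{chains}, applied to the cost $c = h_1$. By Theorem \ref{ewd}, any weak KAM solution $u$ in the classical sense is a discrete weak KAM solution for the cost $h_1$, with the same critical constant $\cc = \alpha(0)$. Applying Lemma \ref{chains} to $u$ and the point $x_0 = x$, I obtain a sequence $(x_{-n})_{n\geqslant 0}$ with
\begin{equation*}
\forall n\in \N,\quad u(x)=u(x_{-n})+\sum_{i=-n}^{-1} h_1(x_i,x_{i+1}) +n\alpha(0).
\end{equation*}

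Next I would build $\gamma_x$ by concatenation. Since $L$ is Tonelli, for each $n\geqslant 0$ the infimum defining $h_1(x_{-n-1},x_{-n})$ is attained by a $C^2$ curve $\eta_n : [-n-1,-n] \to M$ solving the Euler--Lagrange equation, with $\eta_n(-n-1) = x_{-n-1}$ and $\eta_n(-n) = x_{-n}$. I define $\gamma_x : (-\infty,0]\to M$ by $\gamma_x(s) = \eta_n(s)$ for $s\in[-n-1,-n]$. Clearly $\gamma_x$ is absolutely continuous, $C^2$ on each interval $(-n-1,-n)$, and $\gamma_x(0) = x$.

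I then verify the calibration identity. For any integer $n>0$, summing the actions of the $\eta_i$'s and using the discrete calibration gives
\begin{equation*}
u(x) - u\big(\gamma_x(-n)\big) = \int_{-n}^0 L\big(\gamma_x(s),\dot\gamma_x(s)\big)\dd s + n\alpha(0).
\end{equation*}
For general $t>0$, pick the integer $n$ with $t\in[n,n+1]$ and split the integral over $[-n-1,0]$ at $-t$. Since $u$ is a critical subsolution, inequality \eqref{subsol} applies to both pieces along $\gamma_x$:
\begin{align*}
u(x) - u\big(\gamma_x(-t)\big) &\leqslant \int_{-t}^0 L\big(\gamma_x,\dot\gamma_x\big)\dd s + t\alpha(0),\\
u\big(\gamma_x(-t)\big) - u\big(\gamma_x(-n-1)\big) &\leqslant \int_{-n-1}^{-t} L\big(\gamma_x,\dot\gamma_x\big)\dd s + (n+1-t)\alpha(0).
\end{align*}
Adding and comparing with the integer-time equality (for $n+1$) forces both inequalities to be equalities, yielding the desired calibration at time $t$.

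Finally, I address the $C^2$ regularity at the integer junction points, which I expect to be the main technical obstacle. The point is that the calibration identity now implies that, for every $n>0$, the restriction $\gamma_x\vert_{[-n-1,-n+1]}$ is a minimizer of the action among all absolutely continuous curves joining its endpoints (any competitor would, via the subsolution inequality, lead to $u(x) - u(x_{-n-1}) < \int_{-n-1}^{0} L\,\dd s + (n+1)\alpha(0)$, contradicting the already established equality). Tonelli's theorem then asserts that this minimizer is of class $C^2$ and satisfies the Euler--Lagrange equation on the whole interval, so in particular $\gamma_x$ is $C^2$ at $-n$. As $n$ is arbitrary, $\gamma_x$ is $C^2$ on all of $(-\infty,0]$, completing the proof.
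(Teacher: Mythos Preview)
The paper does not give a proof of this proposition: it is one of the classical weak KAM results that the author explicitly states without proof in the ``Relations to the classical theory'' sections (see the Introduction, where it is announced that these sections ``give the reader an overview of the classical theory without proofs''). The standard reference is Fathi \cite{Fa}.

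Your argument is correct and, in fact, very much in the spirit of the paper: you deduce the continuous statement from its discrete analogue (Lemma \ref{chains}) via Theorem \ref{ewd}, then bridge the gap between integer and non-integer times using the subsolution inequality \eqref{subsol}, and finally recover the $C^2$ regularity at the junctions by observing that calibration forces $\gamma_x$ to be action-minimizing on every subinterval $[-n-1,-n+1]$. Each step is sound. One small cosmetic point: your minimizer argument for regularity at the junction $-n$ requires $n\geqslant 1$ (so that $[-n-1,-n+1]\subset(-\infty,0]$); regularity at the right endpoint $0$ is already covered by the fact that $\eta_0$ itself is $C^2$ on $[-1,0]$.

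The classical proof (as in \cite{Fa}) proceeds somewhat differently and does not pass through the discrete theory: one directly uses that $u = S^-(t)u + t\alpha(0)$ for every $t>0$, picks Tonelli minimizers realizing $S^-(n)u(x)$ on $[-n,0]$, extracts a $C^1$-convergent subsequence on compact subintervals (via a priori bounds on the speed of minimizers), and obtains $\gamma_x$ as the limit. Your route avoids this compactness extraction by exploiting the discrete calibrating sequence, at the price of having to glue and then argue regularity at the seams; both approaches are of comparable length and difficulty.
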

Of course, a similar statement is also valid for positive weak KAM solutions.
Calibrating curves carry  points of the Aubry set in their closure ($\alpha$--limit set):
\begin{pr}\label{Calpha}
Let $u$ be a weak KAM solution, $x\in M$ and $\gamma_x : (-\infty,0]\to M$ be a calibrating curve given by the previous proposition. If $y\in \alpha(\gamma_x)$ then $y\in \AA$. Moreover, if $(y,v)\in \alpha(\gamma_x,\dot\gamma_x)$, then $(y,v)\in \AA'$.
\end{pr}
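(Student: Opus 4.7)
The plan is to mirror the discrete analogue, Proposition \ref{alpha}, with the calibrating curve playing the role of the calibrating sequence and the classical Peierls barrier $h_L$ playing the role of $h$.

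For the first claim, given $y \in \alpha(\gamma_x)$, I would pick $t_n \to +\infty$ with $\gamma_x(-t_n) \to y$ and then extract (still writing $t_n$) so that $\tau_n := t_{n+1} - t_n \to +\infty$; by a diagonal choice one still has $\gamma_x(-t_n) \to y$ and $\gamma_x(-t_{n+1}) \to y$. Subtracting the calibration identity at $t_n$ from that at $t_{n+1}$ yields
$$
u\big(\gamma_x(-t_n)\big) - u\big(\gamma_x(-t_{n+1})\big) = \int_{-t_{n+1}}^{-t_n} L(\gamma_x(s),\dot\gamma_x(s))\,ds + \tau_n\alpha(0).
$$
The right-hand side is $\geq h_{\tau_n}(\gamma_x(-t_{n+1}), \gamma_x(-t_n)) + \tau_n\alpha(0)$ by definition of $h_{\tau_n}$, while the subsolution inequality $u(a)-u(b) \leq h_{\tau_n}(b,a) + \tau_n\alpha(0)$ gives the reverse bound; hence equality. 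The left-hand side tends to $u(y)-u(y)=0$, and an equi-Lipschitz estimate for $\{h_t + t\alpha(0) : t \geq 1\}$ on $M \times M$ lets me replace $\big(\gamma_x(-t_{n+1}),\gamma_x(-t_n)\big)$ by $(y,y)$ up to $o(1)$. This yields $\liminf_n h_{\tau_n}(y,y) + \tau_n\alpha(0) \leq 0$; combined with $h_L(y,y) \geq 0$ from Proposition \ref{Cpeierl-prop}(ii), we get $h_L(y,y) = 0$, so $y \in \AA$ by Theorem \ref{AubryC}.

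For the second claim, assume $(y,v) \in \alpha(\gamma_x, \dot\gamma_x)$ with $(\gamma_x(-t_n), \dot\gamma_x(-t_n)) \to (y,v)$; by the first part $y \in \AA$. Since $\gamma_x$ is $C^2$ and calibrates $u$ on $(-\infty, 0]$, Tonelli theory gives that $u$ is differentiable at each interior point $\gamma_x(-t_n)$ with $D_{\gamma_x(-t_n)}u = \partial_v L(\gamma_x(-t_n), \dot\gamma_x(-t_n))$, equivalently $\L(\gamma_x(-t_n), \dot\gamma_x(-t_n)) = (\gamma_x(-t_n), D_{\gamma_x(-t_n)}u)$. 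Weak KAM solutions are locally semiconcave, and $u$ is differentiable at $y \in \AA$ by the classical analogue of Theorem \ref{reg2}; a standard semiconcavity fact then gives $D_{\gamma_x(-t_n)}u \to D_y u$. Passing to the limit by continuity of $\L$ yields $\L(y,v) = (y, D_y u) \in \AA^*$, so $(y,v) \in \L^{-1}(\AA^*) = \AA'$.

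The main obstacle is the equi-Lipschitz estimate on $\{h_t + t\alpha(0) : t \geq 1\}$. In the discrete case this was furnished for free by the modulus of continuity of the cost (Proposition \ref{propT}(i)); here it needs the regularizing factorization $h_t(a, \cdot) = S^-(t_0)\big[h_{t-t_0}(a,\cdot)\big]$ for $t \geq 2t_0$ combined with Proposition \ref{prop S}(1), plus the symmetric factorization through $S^+$ for the first variable. One cannot substitute the triangle inequality of Proposition \ref{Cpeierl-prop}(iii), since it only recovers the already known bound $h_L(y,y) \geq 0$; actual cancellation requires uniform spatial regularity of the $h_t$'s.
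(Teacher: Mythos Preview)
The paper does not supply a proof of this proposition: it appears in the ``Relations to the classical theory'' section, where classical results of Fathi are stated without proof and described as ``consequences of the analogous results proven in the discrete setting''. Your argument for the first claim is exactly the intended continuous transcription of the proof of Proposition~\ref{alpha}, and is correct; you rightly identify that the equicontinuity of the $c_n$'s used there must be replaced by an equi-Lipschitz bound on $\{h_t+t\alpha(0):t\geqslant 1\}$, and your factorization through $S^-(t_0)$ and $S^+(t_0)$ via Proposition~\ref{prop S}(1) is the standard way to obtain it.

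Your second claim takes a slightly different route from the discrete analogue. In Proposition~\ref{alpha} the discrete second statement stays at the level of pairs and lands in $\widehat\AA$; the continuous $(y,v)\in\AA'$ would most directly be obtained the same way, by showing that $h_1(y,y')+\alpha(0)+h_L(y',y)=0$ for $y'=\pi\circ\varphi_L^1(y,v)$ and invoking Theorem~\ref{Aubry0} together with Proposition~\ref{eqAubry}. Your approach instead exploits the differentiability of $u$ along calibrating curves and on $\AA$, combined with the upper semicontinuity of $\partial^+u$ for semiconcave $u$, to pass to the limit through the Legendre transform $\LL$. This is also correct and arguably more in the spirit of the classical theory, since it uses the tangent-bundle structure that the abstract discrete setting lacks.
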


Of course a similar statement holds for positive weak KAM solutions and we let the reader infer it.

Conversely, knowing a subsolution or weak KAM solution on the Aubry set is rich in consequences:
\begin{Th}\label{Cuniqueness}
Let $u$ and $v$ be respectively a weak KAM solution and a subsolution such that $u_{|\AA} \geqslant  v_{|\AA}$. Then $u\geqslant v$.

Let $u$ and $v$ be two weak KAM solutions such that $u_{|\AA} = v_{|\AA}$. Then $u=v$.
\end{Th}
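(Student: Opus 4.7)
The plan is to mimic exactly the proof of the discrete analogue (Theorem \ref{uniqueness}), replacing calibrating sequences by calibrating curves. The two tools supplied by the preceding subsection are Proposition \ref{cchains} (every $x\in M$ is the endpoint of a backward calibrating curve for a weak KAM solution $u$) and Proposition \ref{Calpha} ($\alpha$-limit points of such curves lie in the projected Aubry set $\AA$). Together with the defining inequality \eqref{subsol} for subsolutions, these are all one needs.

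For the first assertion, fix $x_0\in M$ and apply Proposition \ref{cchains} to $u$ to obtain a calibrating curve $\gamma: (-\infty,0]\to M$ with $\gamma(0)=x_0$. For every $t>0$ one has the two inequalities
\begin{align*}
u(x_0)-u\bigl(\gamma(-t)\bigr) &= \int_{-t}^{0} L\bigl(\gamma(s),\dot\gamma(s)\bigr)\,\dd s + t\alpha(0),\\
v(x_0)-v\bigl(\gamma(-t)\bigr) &\leqslant \int_{-t}^{0} L\bigl(\gamma(s),\dot\gamma(s)\bigr)\,\dd s + t\alpha(0),
\end{align*}
the first by the calibration property and the second because $v\in\SSS$ satisfies \eqref{subsol}. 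Subtracting yields $(u-v)(x_0)\geqslant (u-v)\bigl(\gamma(-t)\bigr)$ for every $t>0$. Now pick a sequence $t_n\to+\infty$ such that $\gamma(-t_n)$ converges (possible by compactness of $M$) to some $y\in M$; by Proposition \ref{Calpha}, $y\in\AA$. Passing to the limit and using continuity of $u$ and $v$ (subsolutions are Lipschitz, weak KAM solutions are in the image of $S^-(t)$ hence Lipschitz by Proposition \ref{prop S}), we get $(u-v)(x_0)\geqslant (u-v)(y)\geqslant 0$ thanks to the hypothesis $u_{|\AA}\geqslant v_{|\AA}$. Since $x_0$ was arbitrary, $u\geqslant v$ on $M$.

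The second assertion follows immediately by symmetry: applying the first assertion to the pair $(u,v)$ gives $u\geqslant v$, and applying it to the pair $(v,u)$ gives $v\geqslant u$, so $u=v$.

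The only mild subtlety is ensuring that the calibration identity and the subsolution inequality are compared along the same curve $\gamma$; this is automatic because the inequality in \eqref{subsol} holds for \emph{every} absolutely continuous curve, in particular for the curve $\gamma$ produced by $u$. No additional regularity of $v$ is needed, so I expect no real obstacle — the proof is a direct transcription of the discrete argument, with ``$\alpha$-limit of a calibrating curve'' playing the role of ``accumulation point of a calibrating sequence''.
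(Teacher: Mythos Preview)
Your proof is correct and is precisely the classical argument: calibrating curves from Proposition \ref{cchains}, the subsolution inequality \eqref{subsol}, and Proposition \ref{Calpha} combine exactly as in the discrete case.

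The paper, however, does not reprove the result in the continuous setting. It simply observes (in the remark following the statement) that Theorem \ref{Cuniqueness} is \emph{weaker} than the already proven discrete Theorem \ref{uniqueness}: via Theorem \ref{ewd} classical weak KAM solutions are discrete weak KAM solutions for the cost $h_1$, classical subsolutions lie in the (larger) set of discrete subsolutions, and the projected Aubry sets coincide ($\AA_H=\AA_{h_1}$). Hence Theorem \ref{uniqueness} applied to the cost $h_1$ yields Theorem \ref{Cuniqueness} for free. Your approach has the merit of being self-contained in the continuous theory; the paper's approach showcases that the discrete machinery, once built, subsumes the classical statement without further work.
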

Note that thanks to the parallels made between discrete and classical theories, this Theorem is weaker than Theorem \ref{uniqueness}, as there are more discrete subsolutions than classical ones.

Finally, let us  recall the converse to this Theorem:

\begin{pr}\label{Cdistlike}
 Let $f : \AA \to \R$ be a function such that $f(y)-f(x)\leqslant h(x,y)$ for all $x$ and $y$ in $\AA$. Then there exists a weak KAM solution $u$ such that $u_{|\AA} = f$. 
\end{pr}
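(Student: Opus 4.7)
The plan is to mimic exactly the discrete proof of Proposition \ref{distlike}, this time in the continuous setting, relying on the analogues already stated for the classical theory, namely Proposition \ref{Cpeierl-prop} and Theorem \ref{AubryC}. Specifically, I would define the candidate
\[
u(x) = \inf_{y\in\AA} \bigl( f(y) + h_L(y,x) \bigr), \qquad x\in M.
\]
Since $\AA$ is compact (a closed subset of the compact manifold $M$) and $h_L$ is continuous, this infimum is well defined and is actually a minimum; the family $\{h_L(y,\cdot)\}_{y\in\AA}$ is equi--Lipschitz (each is a weak KAM solution, hence lies in the image of $S^-(1)$, which is equi--Lipschitz by Proposition \ref{prop S}), so $u$ is itself continuous (Lipschitz).

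Next, I would verify the prescribed boundary condition $u_{|\AA}=f$. Take $x\in\AA$. By Theorem \ref{AubryC}, $h_L(x,x)=0$, so choosing $y=x$ in the defining infimum gives $u(x)\leqslant f(x)$. Conversely, the hypothesis on $f$ together with $h_L(x,x)=0$ yields $f(y)+h_L(y,x)\geqslant f(x)$ for every $y\in\AA$, hence $u(x)\geqslant f(x)$, which gives the desired equality.

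The main step is to show that $u$ is itself a weak KAM solution, i.e. that $u = S^-(t)u + t\alpha(0)$ for every $t>0$. Because $S^-(t)$ is defined by an infimum (the Lax--Oleinik formula \eqref{LOSM}), it commutes with arbitrary infima: for all $x\in M$,
\[
S^-(t)u(x) \;=\; \inf_{z\in M}\bigl(u(z)+h_t(z,x)\bigr)
\;=\; \inf_{y\in\AA}\inf_{z\in M}\bigl(f(y)+h_L(y,z)+h_t(z,x)\bigr).
\]
Fix $y\in\AA$. By Proposition \ref{Cpeierl-prop}(iv) the function $h_L(y,\cdot)$ is a weak KAM solution, so $S^-(t)\bigl(h_L(y,\cdot)\bigr)(x)=h_L(y,x)-t\alpha(0)$. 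Unwinding the definition of $S^-(t)$, this exactly says that $\inf_{z\in M}\bigl(h_L(y,z)+h_t(z,x)\bigr) = h_L(y,x)-t\alpha(0)$. Plugging this back in,
\[
S^-(t)u(x) \;=\; \inf_{y\in\AA}\bigl(f(y)+h_L(y,x)\bigr)-t\alpha(0) \;=\; u(x)-t\alpha(0),
\]
which is precisely the weak KAM equation.

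The only delicate point is the exchange of the two infima and the continuity needed to identify $u$ as a \emph{viscosity} solution rather than merely an a.e. solution; this is handled automatically by the fact that an infimum of a uniformly Lipschitz family of weak KAM (hence viscosity) solutions is a viscosity solution of the same equation, exactly as in the discrete analogue (Lemma \ref{inf}(ii)), whose proof transfers verbatim because it only uses the monotonicity and the commutation of $S^-(t)$ with infima. No other ingredients are required.
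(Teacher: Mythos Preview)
Your proof is correct and follows precisely the approach the paper uses for the discrete analogue, Proposition \ref{distlike}: define $u(x)=\inf_{y\in\AA}\bigl(f(y)+h_L(y,x)\bigr)$, check $u_{|\AA}=f$ via $h_L(x,x)=0$ on $\AA$, and conclude that $u$ is a weak KAM solution because it is an infimum of the weak KAM solutions $f(y)+h_L(y,\cdot)$. The paper states Proposition \ref{Cdistlike} without its own proof (it is listed among the classical results whose arguments carry over verbatim from the discrete case), so your write-up is exactly the intended argument, with the added bonus that you spell out the direct verification of $S^-(t)u=u-t\alpha(0)$ rather than merely citing the continuous analogue of Lemma \ref{inf}.
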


\subsection{Regularity and more regularity of subsolutions}

We review here regularity properties of classical subsolutions and weak KAM solutions. Most results were obtained by Fathi and Siconolfi in two founding papers \cite{FSC1,FS05}. The proofs are much more intricate than for the discrete theory. Note that on a non--empty compact connected smooth manifold (of positive dimension!) there is no isolated point.

\begin{Th}\label{Creg}
Let $u:M\to \R$ be a critical (classical) subsolution. Then $u$ is Lipschitz continuous on $M$. 

Let $x\in M$, then $x\in \AA$ if and only if all critical subsolutions $u\in \SSS$ are differentiable at $x$.
\end{Th}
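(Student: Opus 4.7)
The plan is to handle the two statements in turn, the second one being by far the deeper.

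For the Lipschitz regularity, my starting point is the defining inequality
$$u\big(\gamma(0)\big) - u\big(\gamma(-t)\big) \leqslant \int_{-t}^0 \big[L\big(\gamma(s), \dot\gamma(s)\big) + \alpha(0)\big] \, \dd s$$
valid for every absolutely continuous curve. Given $x, y \in M$, I would join them by a minimizing geodesic parametrized by arc length on $[0, \dd(x,y)]$ and plug it into this inequality. Because $M$ is compact, the unit sphere bundle is compact, so continuity of $L$ yields a bound $L(z,v) \leqslant K_0$ for all $(z,v)$ with $\|v\|_z \leqslant 1$. This gives $u(x) - u(y) \leqslant \big(K_0 + \alpha(0)\big)\dd(x,y)$, and exchanging $x$ and $y$ yields the Lipschitz estimate.

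For the forward direction of the second statement, I would invoke Theorem \ref{strictC} directly: the $C^{1,1}$ strict critical subsolution $u_0$ produced there has the property that at every $x \in \AA$, every critical subsolution $u \in \SSS$ is automatically differentiable at $x$ with $D_x u = D_x u_0$. So all critical subsolutions are differentiable on $\AA$.

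For the converse, assume $x \notin \AA$. With the same $u_0$, we have $H(x, D_x u_0) < \alpha(0)$ by the very definition of $\AA$, so by continuity of $H$ and of $Du_0$ there is a neighborhood $V$ of $x$ (which I may shrink into a normal coordinate chart) and $\delta > 0$ with $H(y, D_y u_0) \leqslant \alpha(0) - \delta$ on $V$. I would then construct an explicit non-differentiable critical subsolution by a cone perturbation: fix a smooth cutoff $\chi \geqslant 0$ supported in $V$ and identically $1$ on a smaller neighborhood of $x$, and set
$$u_\varepsilon(y) = u_0(y) - \varepsilon\, \chi(y)\, \dd(y, x).$$
On $V \setminus \{x\}$, the distance $\dd(\cdot , x)$ is smooth with unit-norm gradient, hence $\|D_y u_\varepsilon - D_y u_0\|_y \leqslant C \varepsilon$ almost everywhere. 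For $\varepsilon$ small enough, continuity of $H$ gives $H(y, D_y u_\varepsilon) \leqslant \alpha(0)$ almost everywhere on $M$, and the equivalence between almost-everywhere and viscosity subsolutions for convex Hamiltonians (recorded in the proposition preceding this theorem) promotes $u_\varepsilon$ into $\SSS$. However, $\dd(\cdot, x)$ has all directional derivatives at $x$ equal to $1$, so no linear form can dominate it locally; since $u_0$ is smooth near $x$ and $\chi \equiv 1$ there, $u_\varepsilon$ inherits this non-differentiability.

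The main obstacle is this converse: building a critical subsolution whose non-differentiability is located exactly at a prescribed $x \notin \AA$. Ensuring that the perturbation remains a viscosity subsolution on all of $M$ (and not merely an almost-everywhere one) relies essentially on convexity of $H$ in the momentum variable; the cone construction would fail without it, which is one of the key asymmetries between the discrete result (Theorem \ref{reg}), where an arbitrary in-between function suffices, and the classical one where one must preserve a PDE inequality.
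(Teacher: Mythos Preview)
The paper does not actually supply a proof of this theorem: it is stated in the ``Relations to the classical theory'' section as a result to be reviewed, with the comment that ``Most results were obtained by Fathi and Siconolfi in two founding papers [\dots]. The proofs are much more intricate than for the discrete theory.'' So there is no in-paper argument to compare against.

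Your proposal is correct and follows one of the standard routes. The Lipschitz estimate via a unit-speed geodesic is the usual argument. For the forward implication you correctly rely on Theorem~\ref{strictC}, which precedes the present statement in the paper's logical order and already contains the assertion that all subsolutions are differentiable on $\AA$. For the converse, the cone perturbation $u_0 - \varepsilon\,\chi\,\dd(\cdot,x)$ is exactly the kind of construction used in the Fathi--Siconolfi approach: the key points you identify --- that $\dd(\cdot,x)$ is smooth away from $x$ inside a normal chart, that the perturbation of the gradient is $O(\varepsilon)$, that the strict inequality $H(x,D_xu_0)<\alpha(0)$ gives room to absorb this, and that the equivalence between almost-everywhere and viscosity subsolutions (the proposition stated just before Theorem~\ref{strictC}) upgrades the conclusion --- are all valid. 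The only cosmetic point worth noting is that the gradient of $\chi\,\dd(\cdot,x)$ involves a term $\dd(y,x)\,D\chi(y)$ whose bound depends on the diameter of $V$ and on $\|D\chi\|_\infty$, so the constant $C$ is not just $1$; you have accounted for this implicitly.
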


In section \ref{more} were introduced further assumptions on the underlying space and the cost function we work with. This originates in the following properties of the action functional and Lax--Oleinik semigroup in the classical theory:

\begin{Th}\label{action SC}
For all $t>0$, the minimal action functional $h_t$ is semiconcave on $M\times M$.

Let $u : M\to \R$ be a bounded function, then for all $t>0$ the function $S^-(t)u$ is semiconcave and $S^+(t) u $ is semiconvex.
\end{Th}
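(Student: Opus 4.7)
The plan is to prove the semiconcavity of $h_t$ first, by an explicit variational construction, and then derive the properties of $S^-(t)u$ and $S^+(t)u$ as a consequence by combining with Proposition \ref{infconc}.

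For the first part, I would fix $(x_0,y_0)\in M\times M$ and let $\gamma_0:[-t,0]\to M$ be a Tonelli minimizer realizing $h_t(x_0,y_0)$, with $\gamma_0(-t)=x_0$ and $\gamma_0(0)=y_0$. Such a minimizer exists, is $C^2$, and satisfies Euler--Lagrange; moreover a standard a priori estimate (using the superlinearity of $L$ together with compactness of $M$) shows that $\dot\gamma_0$ remains in a compact subset $K_0\subset TM$ that can be chosen independent of $(x_0,y_0)$. Working in local charts centered at $x_0$ and $y_0$ identified with balls in $\R^n$, I would pick two fixed smooth cutoffs $\phi,\psi:[-t,0]\to\R$ with $\phi(-t)=1$, $\phi(0)=0$, $\psi(-t)=0$, $\psi(0)=1$, supported in small neighborhoods of $-t$ and $0$ respectively (so the corresponding pieces of $\gamma_0$ sit inside the chosen charts). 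For $(x,y)$ close to $(x_0,y_0)$ define the competitor
$$\gamma_{x,y}(s)=\gamma_0(s)+\phi(s)(x-x_0)+\psi(s)(y-y_0),$$
which is admissible from $x$ to $y$, so that
$$h_t(x,y)\leqslant\int_{-t}^0 L\bigl(\gamma_{x,y}(s),\dot\gamma_{x,y}(s)\bigr)\,\dd s.$$

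The next step is a second order Taylor expansion of this integral about $(x_0,y_0)$. The zeroth order term is $h_t(x_0,y_0)$, the first order term provides a linear form in $(x-x_0,y-y_0)$ (playing the role of the supergradient in Proposition \ref{superdiff}), and the remainder admits an estimate of the form $K\bigl(\|x-x_0\|^2+\|y-y_0\|^2\bigr)$. The hard part will be to make $K$ uniform as $(x_0,y_0)$ varies on $M\times M$: this requires uniform bounds on $L$, $\partial L$ and $\partial^2 L$ on the precompact region of $TM$ swept by all the competitors, which is exactly what the a priori bound on $\dot\gamma_0$ guarantees (the perturbations $\phi,\psi,\dot\phi,\dot\psi$ being fixed once and for all). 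This produces the joint semiconcavity of $h_t$ on $M\times M$; in particular the partial functions $h_t(y,\cdot)$ and $h_t(\cdot,x)$ form equi-semiconcave families as $y$, respectively $x$, ranges over $M$.

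For the second part, I would simply write
$$S^-(t)u(x)=\inf_{y\in M}\bigl(u(y)+h_t(y,x)\bigr),\qquad -S^+(t)u(x)=\inf_{y\in M}\bigl(-u(y)+h_t(x,y)\bigr),$$
both infima being finite since $u$ is bounded. Each function $x\mapsto u(y)+h_t(y,x)$ is semiconcave with a constant independent of $y$, hence Proposition \ref{infconc} (applied in any finite atlas of $M$) yields semiconcavity of $S^-(t)u$; the same argument applied to the second identity yields semiconcavity of $-S^+(t)u$, which is exactly semiconvexity of $S^+(t)u$. So once the joint semiconcavity of $h_t$ is established, this second part is automatic.
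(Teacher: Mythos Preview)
The paper does not prove Theorem~\ref{action SC}: it appears in the ``Relations to the classical theory'' section where classical results are stated for context, and the only comment offered is the heuristic that the result ``may seem a posteriori natural (given results such as Proposition~\ref{infconc})''. Your outline is exactly the standard argument (as in Fathi's book or Cannarsa--Sinestrari) and matches that heuristic: semiconcavity of $h_t$ via a competitor built by perturbing a Tonelli minimizer near its endpoints, then Proposition~\ref{infconc} applied to the representation formulas for $S^\pm(t)u$.

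The argument is correct. The two points that deserve care when you write it out are precisely the ones you identified: (i) the uniform a~priori bound on $\|\dot\gamma_0\|$ over all $(x_0,y_0)\in M\times M$ (obtained from superlinearity of $L$ plus the Euler--Lagrange equation, using compactness of $M$), so that the second derivatives of $L$ are uniformly controlled on the region swept by the competitors; and (ii) the uniform choice of chart radius and cutoff supports independent of $(x_0,y_0)$ (e.g.\ via normal coordinates on a compact manifold). With those in place the constant $K$ in the quadratic upper bound is global, and the second part is indeed automatic from Proposition~\ref{infconc}.
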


 Given that the Lagrangian is $C^2$ and the functions $h_t$ and Lax--Oleinik semigroups are defined by using infimum and supremum, the preceding Theorem may seem a posteriori natural (given results such as Proposition \ref{infconc}). Its consequences are very powerful. 
 
 For instance let us come back to Theorem \ref{strictC}. In the first version, Fathi and Siconolfi prove the existence of $C^1$ subsolutions by carefully studying the regularity of subsolutions on $\AA$ and by a precise combination of smoothing and partitions of unity on $M\setminus \AA$. Patrick Bernard instead has a more global and decisive idea establishing the following regularization result (\cite{BernardC11}):
 \begin{Th}\label{regB}
 Let $u : M\to \R$ be a bounded function and $t>0$. There exists $\varepsilon>0$ such that for all $t'<\varepsilon$, $S^-(t')\circ S^+(t)u$ is $C^{1,1}$.
 \end{Th}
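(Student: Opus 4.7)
The plan is a Lasry--Lions type regularization: the positive semigroup produces a semiconvex intermediate function, and the negative semigroup, applied for a short time, smooths the result from above while preserving the semiconvexity from below. Combined with Proposition~\ref{semcc}, this will yield $C^{1,1}$ regularity.

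Concretely, I would set $v = S^+(t)u$; by Theorem~\ref{action SC}, $v$ is semiconvex, with a common semiconvexity constant $K$ in the charts of a fixed finite atlas of $M$. Then put $w = S^-(t')v$. By Theorem~\ref{action SC} applied in the negative direction, $w$ is semiconcave for every $t'>0$. The heart of the matter is thus to prove semiconvexity of $w$ for $t'$ small.

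The key input I would use is a quantitative Tonelli estimate on $h_{t'}(y,x)$ as $t'\to 0$. Superlinearity of $L$ together with the uniform boundedness of $v$ forces the minimizers of $y\mapsto v(y)+h_{t'}(y,x)$ to lie within an $O(t')$-neighborhood of $x$. On such a neighborhood, working in a chart and expanding the generating function of the Hamiltonian flow along short Euler--Lagrange extremals, one gets
$$h_{t'}(y,x) \;=\; \frac{1}{2t'}\,Q_x(y-x) + O(1),$$
where $Q_x$ is a uniformly positive-definite quadratic form depending continuously on $x$. Hence, for each fixed $x$, the map $y\mapsto h_{t'}(y,x)$ is strongly convex in the chart with modulus of order $1/t'$. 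Choosing $t'$ so small that $1/t' > K$ uniformly over the atlas, the function $y\mapsto v(y)+h_{t'}(y,x)$ becomes strictly convex, with a unique minimizer $y(x)$ depending Lipschitz-continuously on $x$ by the implicit function theorem applied to the first-order optimality condition.

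An envelope computation then delivers the semiconvexity of $w$: writing $w(x)=v(y(x))+h_{t'}(y(x),x)$ and using the uniform positive lower bound on the Hessian of $v(\cdot)+h_{t'}(\cdot,x)$, one reads off a uniform lower Hessian bound for $w$ in each chart. Thus $w$ is locally semiconvex as well as semiconcave, and Proposition~\ref{semcc} concludes that $w\in C^{1,1}$. The main obstacle I anticipate is the quantitative expansion of $h_{t'}$ and the Lipschitz control of $x\mapsto y(x)$; both hinge on Tonelli's theory of minimizing extremals (unique, $C^2$, nondegenerate for short times). Once these are granted, the rest of the argument is essentially formal.
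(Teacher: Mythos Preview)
The paper does not actually prove this theorem; it is stated as Bernard's result and only the mechanism is described in one sentence: ``the image of a semiconcave function by $S^+$ stays semiconcave for small times.'' By the duality between $S^+$ and $S^-$ this is exactly your claim that $S^-(t')$ preserves semiconvexity for small $t'$, so your overall plan matches what the paper sketches.

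That said, your envelope step contains a genuine gap. You propose to deduce a uniform lower Hessian bound on $w$ from the positive lower bound on $\partial_{yy}\big(v+h_{t'}(\cdot,x)\big)$ alone; this does not suffice. Via the Schur complement one has $D^2w = \partial_{xx}h_{t'} - \partial_{xy}h_{t'}\,(\partial_{yy}F)^{-1}\,\partial_{yx}h_{t'}$, and each block here is of order $1/t'$. A naive estimate therefore yields only $D^2w \geqslant -C/t'$, which blows up. What makes Lasry--Lions work is a \emph{specific cancellation}: for the model $\tfrac{1}{2t'}|y-x|^2$ one has $\partial_{xx}=\partial_{yy}=-\partial_{xy}=I/t'$, and the Schur complement collapses to $-2K/(1-2Kt')$, which is bounded. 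To carry this through you must (i) show that your expansion $h_{t'}(y,x)=\tfrac{1}{2t'}Q_x(y-x)+O(1)$ holds at the level of \emph{second derivatives}, not just values, so that the same cancellation survives, and (ii) since $v$ is only semiconvex rather than $C^2$, phrase the whole computation via touching parabolas rather than classical Hessians. These are precisely the technical points that Bernard's original argument addresses.
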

 One should have in mind that a $C^{1,1}$ function is one that is both semiconcave and semiconvex (Proposition \ref{semcc}). So the idea behind the previous Theorem, and what Bernard proves, is that the image of a semiconcave function by $S^+$ stays semiconcave for small times. This is a general version of an older result known as Lasry--Lions regularization (\cite{LL}). 
 
 Then it should not come as a surprise that in our proof of Theorem \ref{strictC11}, which is the discrete version of Bernard's Theorem, we used a composition of both operators $T^-$ and $T^+$.

 \subsection{Graph properties, twist condition and dynamics on the Aubry set}
 
 Let us start by noticing that thanks to the previous analysis, the Aubry set $\AA^*$ introduced in the Lagrangian setting following Theorem \ref{strictC} coincides with the set $\AA^*$ introduced in Proposition \ref{aubry-cot} when applied to the cost function $h_1$. This explains the similar notation. Hence the conclusions of Proposition \ref{aubry-cot} also hold in our Lagrangian setting as they also follow from Bernard's Theorem \ref{strictC}. This is the content of  Mather's Graph Theorem:
 
 \begin{pr}\label{aubry-graph}
The projections $\AA^* \to \AA$ and $\AA' \to \AA$ are  bi--Lipschitz homeomorphisms.
\end{pr}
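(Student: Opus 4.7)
The plan is to deduce this as a direct corollary of two earlier ingredients: the Fathi--Siconolfi--Bernard Theorem \ref{strictC}, which provides a $C^{1,1}$ critical subsolution $u_{0}$ with $\AA^{*}=\{(x,D_{x}u_{0}):x\in\AA\}$, and the fact that the Fenchel transform $\LL$ is a $C^{1}$ diffeomorphism between $TM$ and $T^{*}M$. No new analytic work is needed; the proof is parallel to the discrete one (Proposition \ref{aubry-cot}) with $u_{0}$ playing the role of $u_{1}$.

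For the projection $\pi:\AA^{*}\to\AA$, I would proceed in two steps. First, the map $\sigma:\AA\to\AA^{*}$ defined by $x\mapsto(x,D_{x}u_{0})$ is a bijection by the very definition of $\AA^{*}$, and it is Lipschitz because $u_{0}$ is $C^{1,1}$, so $Du_{0}$ is Lipschitz on the compact manifold $M$. Second, its inverse is the restriction of the cotangent bundle projection, which is $1$-Lipschitz with respect to any reasonable product distance on $T^{*}M$ coming from a Riemannian metric on $M$. Together these show that $\pi|_{\AA^{*}}$ is a bi-Lipschitz homeomorphism.

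For the projection $\AA'\to\AA$, I would factor it as $\AA'\xrightarrow{\LL}\AA^{*}\xrightarrow{\pi}\AA$. Since $\LL$ is a $C^{1}$ diffeomorphism with $C^{1}$ inverse, and both $\AA^{*}$ and $\AA'=\LL^{-1}(\AA^{*})$ are compact (continuous images of the compact set $\AA$), the derivatives of $\LL$ and $\LL^{-1}$ are uniformly bounded on neighborhoods of $\AA'$ and $\AA^{*}$. Therefore $\LL|_{\AA'}:\AA'\to\AA^{*}$ is bi-Lipschitz, and composing with the previous bi-Lipschitz map $\AA^{*}\to\AA$ yields the desired statement.

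There is no real obstacle in the argument; all of the hard work is already contained in Theorem \ref{strictC}. The only minor care needed is choosing compatible distances on $M$, $T^{*}M$ and $TM$, but since $M$ is compact any two smooth Riemannian metrics give equivalent distances, so the bi-Lipschitz statement is independent of the choice.
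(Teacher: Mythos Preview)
Your proposal is correct and matches the paper's approach exactly: the paper states that the result is a direct consequence of Proposition \ref{aubry-cot} (the discrete graph theorem, whose proof is precisely ``$\AA^*$ is the graph of $Du_1$ restricted to $\AA$'') applied via Bernard's Theorem \ref{strictC}, which is what you spell out. Your additional sentence handling $\AA'$ through the $C^1$ diffeomorphism $\LL$ is the natural completion that the paper leaves implicit.
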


This Chapter  ends by explaining why the cost $h_1$ associated to a Tonelli Lagrangian satisfies the left and right twist conditions. This is presented in \cite{ZJMD} and more details are given in \cite{Fa,CaSi00}.

\begin{pr}\label{Ltwist}
Let $L : TM \to \R$ be a Tonelli Lagrangian. Then the time--$1$ minimal action functional $h_1 : M\times M \to \R$ satisfies the left and right twist conditions.
\end{pr}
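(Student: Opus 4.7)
The plan is to exploit the classical identification of the partial derivatives of $h_1$ with the momenta of Tonelli minimizers, and then to use the strict convexity of $L$ in the fiber together with uniqueness for the Euler--Lagrange flow.

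First, I would recall the Tonelli existence theorem: for every $(x,y) \in M \times M$ the infimum defining $h_1(x,y)$ is attained by at least one $C^2$ curve $\gamma : [0,1] \to M$ with $\gamma(0)=x$, $\gamma(1)=y$, solving the Euler--Lagrange equation. Combined with Theorem \ref{action SC}, which asserts that $h_1$ is semiconcave on $M \times M$, this gives that $h_1$ admits partial derivatives almost everywhere, and that at every point the partial superdifferentials $\partial_1^+ h_1(x,y)$ and $\partial_2^+ h_1(x,y)$ are nonempty.

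Second, I would establish the first variation formula: at any $(x,y)$ where $\partial_2 h_1(x,y)$ exists, and for any Tonelli minimizer $\gamma$ from $x$ to $y$,
\begin{equation*}
\partial_2 h_1(x,y) = \partial_v L\bigl(y,\dot\gamma(1)\bigr),
\qquad
\partial_1 h_1(x,y) = -\partial_v L\bigl(x,\dot\gamma(0)\bigr).
\end{equation*}
The argument is to compare $h_1(x,y')$ with the action of a concatenation of $\gamma$ with a short smooth correction joining $y$ to a nearby $y'$; differentiating this upper bound at $y$ yields $\partial_v L(y,\dot\gamma(1)) \in \partial_2^+ h_1(x,y)$, and semiconcavity of $h_1(x,\cdot)$ forces this element to coincide with $\partial_2 h_1(x,y)$ whenever the latter exists. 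The symmetric argument at $t=0$ handles the first variable, producing the minus sign.

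Third, I would deduce the twist conditions. Fix $y$ and assume $\partial_2 h_1(x_1,y) = \partial_2 h_1(x_2,y) = p$ for two points $x_1,x_2$ in the domain of $x \mapsto \partial_2 h_1(x,y)$. Picking minimizers $\gamma_1,\gamma_2$ joining $x_i$ to $y$ in time $1$, the identity from Step~2 gives $\partial_v L(y,\dot\gamma_i(1)) = p$. Since $L$ is strictly convex and superlinear in the fiber, the Legendre map $v \mapsto \partial_v L(y,v)$ is a $C^1$ diffeomorphism from $T_y M$ onto $T_y^*M$, hence injective; so $\dot\gamma_1(1) = \dot\gamma_2(1)$. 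Both $\gamma_i$ then solve the same Euler--Lagrange Cauchy problem with the same endpoint datum $(y,\dot\gamma_i(1))$ at time $1$, and uniqueness of solutions of this second--order ODE (equivalently, of the Hamiltonian flow via the Fenchel transform) forces $\gamma_1 \equiv \gamma_2$ on $[0,1]$. In particular $x_1 = \gamma_1(0) = \gamma_2(0) = x_2$, proving the left twist condition. Fixing $x$ and running the same argument at $t=0$ instead of $t=1$ gives the right twist condition.

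The only genuinely delicate step is the first variation formula linking $\partial_2 h_1(x,y)$ to $\partial_v L(y,\dot\gamma(1))$: it requires simultaneously Tonelli existence, semiconcavity, and a careful one--sided comparison of actions. Once that identification is in place, the twist conditions reduce to the combination of two elementary facts, fiberwise strict convexity of $L$ and uniqueness for Euler--Lagrange, both already built into the Tonelli hypotheses.
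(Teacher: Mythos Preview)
Your proposal is correct and follows essentially the same approach as the paper: both identify the partial derivatives of $h_1$ with the endpoint momenta $\pm\partial_v L$ of a Tonelli minimizer via the first variation, then use injectivity of the Legendre map and uniqueness for the Euler--Lagrange flow to conclude. The only cosmetic difference is that the paper phrases the conclusion by exhibiting the explicit inverse $y = \pi\circ\varphi_L^1\circ\mathcal{L}^{-1}\bigl(x,-\partial_1 h_1(x,y)\bigr)$, whereas you argue directly that two minimizers with the same terminal (resp.\ initial) momentum must coincide.
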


\begin{proof}
 Let  $(x,y)\in M\times M$. Let $\gamma : [0,1] \to M$ verify that
$$h_1(x,y) = \int_0^1 L\big(\gamma(s),\dot \gamma(s)\big) \dd s,$$
with $\gamma(0)=x$ and $\gamma(1)=y$.
Such a curve exists by Tonelli's Theorem. It is $C^2$ as already observed and solves the Euler--Lagrange equation. By standard variational arguments, one shows that $\big(-\partial_v L\big(x,\dot\gamma(0)\big),\partial_v L\big(y,\dot\gamma(1)\big) \big)\in \partial^+h_1(x,y) $. It follows that if $\partial_1 h_1(x,y)$ exists, then  $\partial_1 h_1(x,y)=-\partial_v L\big(x,\dot\gamma(0)\big)$. Remember that the Fenchel transform $\LL$ defined by \eqref{Fenchel} is a diffeomorphism and observe that 
$$\big(x, \partial_v L\big(x,\dot\gamma(0)\big)\big) = \LL\big(x, \dot\gamma(0)\big) = \big(x, - \partial_1 h_1(x,y)\big).$$
We deduce that the preceding equation uniquely determines $\dot\gamma(0)$ and that the minimizing curve $\gamma$ is unique with $\big(\gamma(s),\dot\gamma(s)\big) = \varphi^s_L\big(\gamma(0),\dot\gamma(0)\big)$ for $s\in [0,1]$. Another consequence is that, denoting by $\pi : TM\to M$ the canonical projection, 
$$\pi\circ \varphi_L^1 \circ \LL^{-1}  \big(x, - \partial_1 h_1(x,y)\big) = \pi \circ \varphi_L^1\big(x, \dot\gamma(0)\big) = y.$$
Hence $y\mapsto  - \partial_1 h_1(x,y)$ is injective, and $h_1$ has the right twist property. The proof of the left twist property is exactly the same.

\end{proof}

We may also interpret the left and right Legendre transforms introduced in Definition \ref{twist condition}. Indeed, if $(x,y) \in \mathcal D_r$ then if $(x,v) = \LL^{-1}\circ \LL_r (x,y)$, $v$ is the initial speed of the unique minimizing curve, going from $x$ to $y$ in time $1$. This has  the following consequence relating different Aubry sets:

\begin{pr}\label{eqAubry}
The following equalities hold for the cost $h_1$:
$$\widehat \AA = \left \{ \left(x ,  \pi \circ \varphi_L^1(x,v) \right) , \quad (x,v) \in \AA' \right\},$$
$$\widetilde \AA = \left \{ \left(  \pi \circ \varphi_L^n(x,v) \right)_{n\in \Z} , \quad (x,v) \in \AA' \right\}.$$
Moreover, for all $(x,v)\in \AA'$, $h_1\big(x ,  \pi \circ \varphi_L^1(x,v) \big) = \int_0^1 L \circ \varphi_L^s(x,v) \dd s$. 
\end{pr}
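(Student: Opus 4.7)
The plan is to establish the first identity in two steps and then deduce the second from it together with the invariance and graph properties of the Aubry sets.

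Step 1 (forward inclusion for $\widehat\AA$, plus the action identity). Fix $(x,v)\in\AA'$ and set $y=\pi\circ\varphi_L^1(x,v)$. I will pick any classical critical subsolution $u\in\SSS$. By Theorem~\ref{invA}, the curve $\gamma(s)=\pi\circ\varphi_L^s(x,v)$ calibrates $u$, so
$$u(y)-u(x)=\int_0^1 L\big(\gamma(s),\dot\gamma(s)\big)\dd s+\alpha(0).$$
On the other hand, $u$ is a discrete subsolution for the cost $h_1$ (classical subsolutions are discrete subsolutions, as observed in the previous section), so $u(y)-u(x)\le h_1(x,y)+\alpha(0)$, while $\gamma$ is a competitor in the definition of $h_1(x,y)$, hence $h_1(x,y)\le\int_0^1 L(\gamma,\dot\gamma)\dd s$. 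Chaining these inequalities forces equality throughout, which simultaneously gives $h_1(x,y)=\int_0^1 L\circ\varphi_L^s(x,v)\dd s$ and $u(y)-u(x)=h_1(x,y)+\alpha(0)$. Applied to the strict subsolution $u_0$ of Theorem~\ref{strictC11}, this shows $(x,y)\in\widehat\AA$.

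Step 2 (reverse inclusion for $\widehat\AA$). Given $(x,y)\in\widehat\AA$, in particular $x\in\AA$, so by Theorem~\ref{reg2} every critical subsolution $u$ is differentiable at $x$ with a common value $p:=D_xu$; equivalently $(x,p)\in\AA^*$, and $(x,v):=\LL^{-1}(x,p)\in\AA'$. Applying Step~1 to this $(x,v)$ gives $(x,\pi\circ\varphi_L^1(x,v))\in\widehat\AA$. Now invoke Propositions~\ref{Ltwist} and~\ref{aubry--graph}: since $h_1$ satisfies the twist condition, the projection $\pi_1\colon\widehat\AA\to\AA$ is a bijection, so the $y$ such that $(x,y)\in\widehat\AA$ is unique, forcing $y=\pi\circ\varphi_L^1(x,v)$. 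This finishes the first identity together with the action formula.

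Step 3 (the identity for $\widetilde\AA$). For $(x,v)\in\AA'$, set $x_n=\pi\circ\varphi_L^n(x,v)$. Theorem~\ref{invA} states that $\AA'$ is $\varphi_L$-invariant, so every $\varphi_L^n(x,v)\in\AA'$, hence each consecutive pair $(x_n,x_{n+1})\in\widehat\AA$ by Step~1, and $h_1(x_n,x_{n+1})=\int_n^{n+1}L\circ\varphi_L^s(x,v)\dd s$. Summing the calibration equalities $u_0(x_{k+1})-u_0(x_k)=h_1(x_k,x_{k+1})+\alpha(0)$ between indices $n<p$ yields exactly the chain identity defining $\widetilde\AA$, so $(x_n)_{n\in\Z}\in\widetilde\AA$. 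Conversely, let $(x_n)_{n\in\Z}\in\widetilde\AA$. Each $(x_n,x_{n+1})$ lies in $\widehat\AA$, so by Step~2 there is a unique $v_n\in T_{x_n}M$ with $(x_n,v_n)\in\AA'$ and $x_{n+1}=\pi\circ\varphi_L^1(x_n,v_n)$; uniqueness comes from the graph property of $\AA^*$ over $\AA$ (Proposition~\ref{aubry-cot}). Invariance of $\AA'$ then forces $\varphi_L^1(x_n,v_n)=(x_{n+1},v_{n+1})$, hence $(x_n,v_n)=\varphi_L^n(x_0,v_0)$ and $x_n=\pi\circ\varphi_L^n(x_0,v_0)$, as required.

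The only delicate point I foresee is the reverse inclusion of Step~2: everything rests on the fact that $h_1$ enjoys the twist condition so that $\widehat\AA$ is a graph over $\AA$, but this is exactly Proposition~\ref{Ltwist} combined with Proposition~\ref{aubry--graph}, so the obstacle is already cleared by earlier results. The rest is a clean interplay between the calibration identity of Theorem~\ref{invA}, the subsolution inequality for $h_1$, and the invariance of $\AA'$.
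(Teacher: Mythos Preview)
Your overall strategy is sound and close in spirit to what the paper intends (the proposition is presented there as a consequence of the twist--Legendre discussion immediately preceding it, without an explicit proof). Steps~2 and~3 are clean. There is, however, a real gap at the end of Step~1.

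Your chain of equalities in Step~1 is established for \emph{classical} critical subsolutions $u\in\SSS$, because the calibration identity of Theorem~\ref{invA} is stated only for such $u$. You then write ``Applied to the strict subsolution $u_0$ of Theorem~\ref{strictC11}''. But the function produced by Theorem~\ref{strictC11} is a \emph{discrete} $C^{1,1}$ strict subsolution for the cost $h_1$; nothing in the text shows it lies in $\SSS$ (a $C^{1,1}$ discrete subsolution need not satisfy $H(x,D_xu)\le\alpha(0)$ pointwise). So you have $u(y)-u(x)=h_1(x,y)+\alpha(0)$ for all $u\in\SSS$, which is strictly weaker than $(x,y)\in\widehat\AA=\bigcap_{u\in\S}\mathcal{NS}_u$, since $\SSS\subsetneq\S$ in general.

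The fix is short. Your argument does already give the action identity $h_1(x,y)=\int_0^1 L\circ\varphi_L^s(x,v)\,\dd s$, and it gives $u(y)-u(x)=h_1(x,y)+\alpha(0)$ for every \emph{discrete} weak KAM solution $u$, because discrete and classical weak KAM solutions coincide (Theorem~\ref{ewd}) and the latter lie in $\SSS$. Apply this to the particular discrete weak KAM solution $h_y=h(y,\cdot)$ of Proposition~\ref{peierl-prop}(iv): since $y\in\AA_H=\AA_{h_1}$ gives $h(y,y)=0$ (Theorem~\ref{Aubry0}), you obtain $h_1(x,y)+\alpha(0)+h(y,x)=0$, which is exactly the characterization of $\widehat\AA$ in Theorem~\ref{Aubry0}. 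With this patch, your Steps~2 and~3 go through unchanged.

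As a stylistic alternative, you can bypass the calibration argument for the set equalities and run both inclusions simultaneously through the Legendre transforms, as the paper's preamble suggests: the sentence before the proposition identifies $v=\pi_2\circ\LL^{-1}\circ\LL_r(x,y)$ as the initial speed of the unique time--$1$ minimizer from $x$ to $y$, so on $\widehat\AA$ one has $y=\pi\circ\varphi_L^1(x,v)$ with $(x,v)=\LL^{-1}(x,D_xu)\in\AA'$; combined with the graph property (Proposition~\ref{aubry--graph}) and the identification of the discrete and classical $\AA^*$ noted at the start of the subsection, this gives both inclusions at once. Your route has the advantage of yielding the action formula as a by-product of the same inequality chase.
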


Finally, let us mention a curious fact about $h_1$. It can be established, using the notions of reachable gradient, that $ \partial_1 h_1(x,y)$ exists if and only if there exists a unique minimizing curve, in time $1$, going from $x$ to $y$ (see \cite{ZJMD}). Similarly,  $ \partial_2 h_1(x,y)$ exists if and only if there exists a unique minimizing curve, in time $1$, going from $x$ to $y$. It is therefore obtained that for the particular cost $h_1$, the equality $\mathcal D_\ell = \mathcal D_r$ holds.

\newpage

\chapter{Minimizing Mather measures and the discounted semigroups}
In this part we go back  to the more general setting of a continuous cost $c$ on a compact metric space $X$. Most results were presented and used by the authors and Fathi, Iturriaga, Davini in \cite{DFIZ2} to study convergence of solutions of the discounted equations. Earlier results and the introductions of Mather measures had appeared in Bernard and Buffoni's work \cite{BeBu}. 

The study of the positive counterpart to the discounted equations is new to our knowledge, both in the discrete and in the continuous setting. So are the results concerning degenerate discounted equations in the discrete setting.

\section{Minimizing Mather measures}

The cost $c$ is a continuous function from $X\times X$ to $\R$ and both canonical projections from $X\times X$ to $X$ are denoted $\pi_1$ and $\pi_2$.

\subsection{An optimal transport like approach}

Recall that if $\mu $ is a Borel measure on $ X\times X$ then $\pi_{1*} \mu$ and  $ \pi_{2*}\mu$ are measures on $X$ defined as follows: if $A\subset X$ is a Borel set, then $\pi_{1*} \mu(A) = \mu(A\times X)$ and $\pi_{2*} \mu (A) = \mu(X\times A)$.

\begin{df}\label{closed}\rm
A Borel probability measure $\mu$ on $X\times X$ is said to be {\it closed } if it has equal marginals:
$\pi_{1*} \mu = \pi_{2*}\mu$. We will denote by $\widehat\PP $ the set of closed Borel probability measures on $X\times X$.
\end{df}
The previous condition is equivalent to the following:
\begin{pr}
A probability measure $\mu$ is closed if and only if for any continuous function $f:X\to \R$, $\int_{X\times X} \big(f(y)-f(x)\big) \dd \mu(x,y)=0$.
\end{pr}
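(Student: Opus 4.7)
The plan is to prove this as a direct unraveling of the definition of pushforward measure, using the change of variables formula in both directions.

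First I would handle the forward implication. Assume $\mu$ is closed, i.e. $\pi_{1*}\mu = \pi_{2*}\mu$. For any continuous $f: X \to \R$, the functions $(x,y) \mapsto f(x)$ and $(x,y) \mapsto f(y)$ are continuous (hence bounded and $\mu$-integrable since $X\times X$ is compact), and by the change of variables formula for pushforward measures,
\[
\int_{X\times X} f(x)\, \dd\mu(x,y) = \int_X f\, \dd(\pi_{1*}\mu) = \int_X f\, \dd(\pi_{2*}\mu) = \int_{X\times X} f(y)\, \dd\mu(x,y),
\]
and subtracting gives the desired equality.

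For the reverse implication, assume $\int_{X\times X}\bigl(f(y)-f(x)\bigr)\, \dd\mu(x,y) = 0$ for every $f \in C^0(X,\R)$. Rewriting via change of variables, this says
\[
\int_X f\, \dd(\pi_{2*}\mu) = \int_X f\, \dd(\pi_{1*}\mu) \qquad \forall f \in C^0(X,\R).
\]
The main step is then to conclude that $\pi_{1*}\mu = \pi_{2*}\mu$ as Borel measures. This is the standard fact that two finite Borel measures on the compact metric space $X$ that agree when integrated against every continuous function must coincide; it is an immediate consequence of the Riesz representation theorem (continuous linear functionals on $C^0(X,\R)$ are in bijection with signed Borel measures), or alternatively follows from the fact that continuous functions separate indicator functions of closed sets via a monotone limit argument and Urysohn's lemma.

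There is no genuine obstacle here: both directions are essentially a translation between the pushforward definition and its characterization via test functions. The only point requiring a citation rather than a computation is the uniqueness of a Borel measure determined by its action on $C^0(X,\R)$, which is a standard consequence of compactness of $X$.
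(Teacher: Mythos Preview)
Your proof is correct. The forward direction is identical in spirit to the paper's sketch, and for the converse you invoke the Riesz representation theorem (or the equivalent fact that continuous functions determine a finite Borel measure on a compact metric space), which is the clean way to conclude.

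The paper's sketch takes a slightly more hands-on route: for the forward direction it starts from indicator functions of open or closed sets (where the equality is the very definition of $\pi_{1*}\mu=\pi_{2*}\mu$), passes to simple functions by linearity, and then to continuous functions by approximation; for the converse it approximates indicator functions from above and below by continuous functions to recover $\pi_{1*}\mu(A)=\pi_{2*}\mu(A)$ for closed (or open) sets $A$, and hence for all Borel sets. This avoids naming Riesz explicitly but is really just unpacking the same uniqueness statement. Your version is shorter and equally rigorous; the paper's version has the minor advantage of being self-contained at the level of basic measure theory, in line with the text's stated aim of keeping prerequisites minimal.
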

\begin{proof}
The proof is left as an exercise but follows these lines: if $\mu$ is closed, then the property of the proposition holds for indicatrix functions of open or closed sets. Hence it holds for simple functions (linear combination of indicatrix functions) and by density, it holds for continuous functions.

The converse is proved by approximating (from above and below) indicator functions by continuous functions.
\end{proof}
 
 The set $\wh\PP$ of closed  probability measures is clearly convex, closed and  compact (for the weak $*$ topology).
 
 Examples of closed measures can be constructed using Birkhoff averages. Indeed, given $(x_1,\cdots, x_n)\in X^n$,  the measure $\mu = \frac1n\sum\limits_{i=1}^n \delta_{(x_i,x_{i+1})}$\footnote{The notation $\delta$ stands for a Dirac mass.}, with the convention that $x_{n+1}=x_1$, is closed. Its marginals are
 $$\pi_{1*}\mu =  \frac1n\sum\limits_{i=1}^n \delta_{x_i} =  \frac1n\sum\limits_{i=1}^n \delta_{x_{i+1}}= \pi_{2*}\mu.$$
 
 Let us now introduce the concept of {\it minimizing measure}. It was first introduced by Mather for twist maps in \cite{MatherMeasure} and studied by Bernard and Buffoni in \cite{BeBu} in a  context similar to the present one, following their earlier works on optimal transportation \cite{BeBu1,BeBu2}.
 \begin{Th} \label{minimizing}
 The following equality holds:
 $$-\cc = \min_{\mu\in \wh\PP} \int_{X\times X} c(x,y) \dd \mu(x,y).$$
  There exists a closed measure realizing the minimum in the previous equality. Moreover, a closed measure realizes this minimum if and only if it is supported on the $2$--Aubry set $\widehat \AA$.
 \end{Th}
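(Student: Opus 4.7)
The plan splits naturally into three parts: a universal lower bound, a construction achieving it, and the characterization of minimizers.

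\textbf{Lower bound.} Let $\mu \in \widehat{\PP}$ and let $u$ be any critical subsolution (which exists by Theorem \ref{weak KAM} and Proposition \ref{criticalSS}). Since $u(y)-u(x) \leqslant c(x,y)+\cc$ pointwise, integrating against $\mu$ and using that $\mu$ is closed (so the integral of $u(y)-u(x)$ vanishes, and $\mu$ has total mass $1$) yields
\[
0 \;=\; \int_{X\times X} \bigl(u(y)-u(x)\bigr)\,\dd\mu \;\leqslant\; \int_{X\times X} c(x,y)\,\dd\mu + \cc,
\]
so $\int c\,\dd\mu \geqslant -\cc$ for every $\mu\in\widehat{\PP}$.

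\textbf{Attainment via Birkhoff averages on the Aubry set.} Using Proposition \ref{pr-sequence}, pick any $(x_n)_{n\in\Z}\in\widetilde{\AA}$ and form the empirical measures
\[
\mu_n \;=\; \frac{1}{n}\sum_{i=0}^{n-1}\delta_{(x_i,x_{i+1})}.
\]
A direct computation shows $\pi_{2*}\mu_n - \pi_{1*}\mu_n = \tfrac{1}{n}(\delta_{x_n}-\delta_{x_0}) \to 0$ weakly, so any weak-$*$ limit $\mu$ of $(\mu_n)$ lies in $\widehat{\PP}$. Since each $\mu_n$ is supported in the closed set $\widehat{\AA}$ (Proposition \ref{pr-sequence}), so is $\mu$. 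Finally, using the defining equality of $\widetilde{\AA}$ with respect to a bounded subsolution $u$, one has
\[
\int c\,\dd\mu_n \;=\; \frac{1}{n}\sum_{i=0}^{n-1}c(x_i,x_{i+1}) \;=\; \frac{u(x_n)-u(x_0)}{n} - \cc \;\xrightarrow[n\to\infty]{}\; -\cc,
\]
and by continuity of $c$, $\int c\,\dd\mu = -\cc$. This gives both the attainment and the equality $-\cc = \min_{\mu\in\widehat{\PP}}\int c\,\dd\mu$, and shows that measures supported on $\widehat{\AA}$ realize the minimum.

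\textbf{Characterization.} Conversely, suppose $\mu\in\widehat{\PP}$ satisfies $\int c\,\dd\mu = -\cc$, and let $u_0$ be the strict subsolution of Theorem \ref{th-strict}. Applying the lower bound computation to $u_0$, the inequality
\[
0 \;=\; \int\bigl(u_0(y)-u_0(x)\bigr)\,\dd\mu \;\leqslant\; \int c\,\dd\mu + \cc \;=\; 0
\]
forces $u_0(y)-u_0(x)=c(x,y)+\cc$ for $\mu$-almost every $(x,y)$. Since the set where this equality holds is closed (both sides being continuous) and equal to $\widehat{\AA}$ by the strictness property of $u_0$, the support of $\mu$ is contained in $\widehat{\AA}$. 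Combined with the previous step (any closed measure supported on $\widehat{\AA}$ satisfies $\int c\,\dd\mu = -\cc$, by the same integration against $u_0$), this yields the desired equivalence.

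The main technical point is the construction in the second step: verifying that the Birkhoff averages along an orbit in $\widetilde{\AA}$ have a weak-$*$ accumulation point that is simultaneously closed, $\widehat{\AA}$-supported, and cost-minimizing. Compactness of $X\times X$ and continuity of $c$ make this routine; the existence of the calibrated sequence $(x_n)_{n\in\Z}$ provided by Proposition \ref{pr-sequence} is what makes everything work.
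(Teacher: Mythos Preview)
Your proof is correct and follows the same overall architecture as the paper: lower bound by integrating the subsolution inequality against a closed measure, construction of a minimizer via Birkhoff averages, and characterization of minimizers using the strict subsolution $u_0$ of Theorem~\ref{th-strict}.

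The one genuine difference is in the construction step. You average along a fixed sequence $(x_n)_{n\in\Z}\in\widetilde{\AA}$, so your empirical measures $\mu_n$ already live on $\widehat{\AA}$ and the identity $\sum c(x_i,x_{i+1}) = u(x_n)-u(x_0)-n\cc$ gives the cost directly. The paper instead averages along finite chains $x_{-n}^n,\ldots,x_0^n=x$ realizing $T^{-n}f(x)$ for an \emph{arbitrary} continuous $f$ and arbitrary base point $x$; these chains are not a priori in $\widehat{\AA}$, and the minimality of the limit follows from the uniform bound on $T^{-n}f+n\cc$. Your route is slightly cleaner for the present theorem, since it bypasses that boundedness estimate. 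The paper's route, however, is reused verbatim later (Theorem~\ref{uniquenessM} and Proposition~\ref{Laplace}), where one needs to produce a Mather measure out of a calibrating chain issued from an \emph{arbitrary} point, not just from a point of $\AA$; your construction would not serve that purpose without modification.
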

 \begin{proof}
 Let $\mu\in \wh\PP$ and $u_0$ a strict continuous subsolution given by Theorem \ref{th-strict}. Then one has
 $$0=\int_{X\times X} \big( u_0(y) - u_0(x)\big)\dd\mu(x,y) \leqslant \int_{X\times X}\big(c(x,y)+\cc \big)\dd \mu(x,y).$$
 This proves that $-\cc \leqslant   \min\limits_{\mu\in \wh\PP} \int_{X\times X} c(x,y) \dd \mu(x,y)$.
 Moreover, as $u_0$ is continuous, one has equality for a measure $\mu$ if and only if $u_0(y)-u_0(x) = c(x,y)+\cc$ for $\mu$--almost--every $(x,y)$ that is if $\mu$ is supported on $\widehat \AA$.
 
 Let us now construct such a measure. We  use Birkhoff averages. Let $f:X\to \R$ be any continuous function, $x\in X$ and for all $n\in \N$, let $x^n_{-n}, \cdots , x_0^n=x$ verify that $\Tn f(x) = f(x_{-n}^n) + \sum\limits_{i=-n}^{-1} c(x_i^n, x_{i+1}^n)$. Define $\mu_n =\frac1n\sum\limits_{i=-n}^{-1} \delta_{(x_i^n,x_{i+1}^n)}$. Finally let $(n_k)_{k\in \N}$ be an extraction such that the $(\mu_{n_k})_{k\in \N}$ converge to a measure $\mu$. As the $\mu_n$ are probability measures, so is $\mu$. 
 
 Let us verify that $\mu$ is closed: this follows by passing to the limit in the inequality
 \begin{multline*}
 \forall g\in C^0(X,\R),\quad \Big|\int_{X\times X}\big(g(y)-g(x)\Big) \dd \mu_n(x,y)\big| \\
 =\frac 1n\Big| \sum_{i=-n}^{-1} g(x_{i+1}^n)-g(x_i^n) \Big|\\
 =\frac 1n|g(x_{-n}^n)-g(x)|
 \leqslant \frac 2n\|g\|_\infty \to 0.
 \end{multline*}

 Let us verify $\mu $ is minimizing: recall that the family $\Tn f +n\cc$ is uniformly bounded. Hence 
 $$\int_{X\times X}\big(c(x,y)+\cc \big)\dd \mu_n(x,y)= \frac1n \big( \Tn f (x_{-n}^n)+n\cc - f(x)\big) \to 0.$$
 This proves that $\int_{X\times X} c(x,y) \dd \mu = -\cc$ and concludes the proof.
 \end{proof}

\begin{df}\rm
We denote by $\widehat\PP_0$ the set of minimizing closed probability measures, that is,  the set of closed probability measures $\mu\in \wh\PP$ such that  $\int c(x,y) \dd \mu = -\cc$. Such a measure $\mu$ is termed a {\it Mather measure}.

We define the Mather set $\widehat \MM\subset X\times X$ by 
$$\widehat \MM =\overline{ \bigcup_{\mu\in \widehat\PP_0} \mathrm {supp}(\mu)},$$
where supp stands for the support of a measure. The projected Mather set is $\MM = \pi_1(\widehat \MM) = \pi_2(\widehat \MM)$.
\end{df}

\begin{rem}\rm
The set 
$\widehat\PP_0$ is clearly itself compact and convex. Moreover by Theorem \ref{minimizing} the Mather set is a subset of the $2$--Aubry set: $\widehat \MM \subset \widehat \AA$.

Finally, the  Mather set is by definition closed, but one can prove that there is no need to take the closure in its definition. Indeed, there exists one minimizing measure $\mu_0$ whose support is the whole of $\widehat \MM$. To construct it, one considers a  sequence $(\mu_n)_{n>0} $ dense in $\widehat\PP_0$ and one then verifies that $\mu_0 = \sum\limits_{n>0} \frac{1}{2^n} \mu_n$ meets all the requirements.
\end{rem}

The proof of Theorem \ref{minimizing} sheds, once more, light on the general principle that long minimizing chains cannot stay too far from the Aubry set (as already seen in Proposition \ref{alpha}). This allows to give a stronger  version of Theorem \ref{uniqueness} and Proposition \ref{distlike}:

\begin{Th}\label{uniquenessM}
\hspace{2em}
\begin{enumerate}
\item Let $u$ and $v$ be respectively a weak KAM solution and a subsolution such that $u_{|\MM} \geqslant  v_{|\MM}$. Then $u\geqslant v$.

Let $u$ and $v$ be two weak KAM solutions such that $u_{|\MM} = v_{|\MM}$. Then $u=v$.
\item Conversely,  let $f : \MM \to \R$ be a function such that $f(y)-f(x)\leqslant h(x,y)$  for all $x$ and $y$ in $\MM$, where $h$ is Peierl's barrier. Then there exists a weak KAM solution $u$ such that $u_{|\MM} = f$. 
\end{enumerate}
\end{Th}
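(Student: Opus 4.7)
The plan is to establish Part 1 first, reducing the subsolution case to the weak-KAM case, then to address Part 2 by a direct construction analogous to Proposition \ref{distlike}.

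For Part 1(a), I would begin by replacing the (possibly discontinuous) subsolution $v$ by its associated negative weak KAM solution
$$v^{-} \;=\; \lim_{n \to +\infty}\bigl(\Tn v + n\cc\bigr),$$
whose existence as a uniform limit follows because the sequence is non-decreasing (since $v \leqslant T^- v + \cc$) and uniformly bounded by Remark \ref{bounded}(i); continuity of $T^-$ gives $v^{-} = T^{-} v^{-} + \cc$. Iterating the subsolution inequality yields $v \leqslant v^{-}$, while Corollary \ref{cst} shows $v = \Tn v + n\cc$ on $\AA \supset \MM$, hence $v = v^{-}$ there. In particular $v^{-} \leqslant u$ on $\MM$, and once we know that the conclusion of (a) holds when $v$ itself is a weak KAM solution, we deduce $v \leqslant v^{-} \leqslant u$ on $X$. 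Statement (b) then follows by applying (a) twice (in both directions).

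The core step is therefore: given weak KAM solutions $u$ and $w$ with $u \geqslant w$ on $\MM$, show $u \geqslant w$ on $X$. Fix $x_0 \in X$ and choose a calibrating sequence $(x_{-n})_{n \geqslant 0}$ for $u$ as in Lemma \ref{chains}. Combining the calibration equalities for $u$ with the subsolution inequalities for $w$ along the chain from $x_{-i}$ to $x_0$ produces
$$u(x_0) - w(x_0) \;\geqslant\; u(x_{-i}) - w(x_{-i}) \qquad \forall\, i \geqslant 0.$$
Averaging over $i = 1, \dots, n$ and introducing the Birkhoff-type measures
$$\mu_n \;=\; \frac{1}{n}\sum_{i=-n}^{-1} \delta_{(x_i,x_{i+1})}$$
yields $u(x_0) - w(x_0) \geqslant \int_X (u-w)\, d(\pi_{1*}\mu_n)$. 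Arguing exactly as in the proof of Theorem \ref{minimizing}, $(\mu_n)$ is asymptotically closed (by telescoping) and $\int c\, d\mu_n \to -\cc$ (because $n \int c\, d\mu_n = u(x_0) - u(x_{-n}) - n\cc$ and $u$ is bounded). By weak-$*$ compactness, some subsequence converges to a Mather measure $\mu \in \widehat \PP_0$, whose first marginal is a probability measure supported in $\MM$. Continuity of $u-w$ lets us pass to the limit along this subsequence, giving
$$u(x_0) - w(x_0) \;\geqslant\; \int_X (u-w)\, d(\pi_{1*}\mu) \;\geqslant\; 0,$$
where the last inequality uses $u \geqslant w$ on $\MM$.

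For Part 2, I would mimic the proof of Proposition \ref{distlike}, defining
$$u(x) \;:=\; \inf_{y \in \MM}\bigl( f(y) + h(y,x)\bigr).$$
Each function $x \mapsto f(y) + h(y,x)$ is a weak KAM solution plus a constant by Proposition \ref{peierl-prop}(iv); the family is uniformly bounded, so Lemma \ref{inf} implies $u$ is itself a weak KAM solution. For $x \in \MM \subset \AA$, choosing $y = x$ and using $h(x,x)=0$ (Theorem \ref{Aubry0}) gives $u(x) \leqslant f(x)$, while the hypothesis $f(x) - f(y) \leqslant h(y,x)$ for $y \in \MM$ gives the reverse inequality; hence $u_{|\MM} = f$.

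The main obstacle is the second paragraph: one must show that the measures $\mu_n$ built from a calibrating sequence do more than merely accumulate on the Aubry set (which would only recover Theorem \ref{uniqueness}); the strengthening to $\MM$ rests on identifying weak-$*$ limits as \emph{genuine Mather measures}, which is precisely the content of Theorem \ref{minimizing} imported into this setting.
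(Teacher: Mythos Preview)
Your argument is correct and rests on the same key idea as the paper: the Birkhoff averages $\mu_n$ built from a calibrating sequence for $u$ accumulate on Mather measures, forcing the comparison to take place on $\MM$. The paper's presentation differs only tactically: it works directly with the (possibly discontinuous) subsolution $v$, uses the pointwise inequality $u(x_0)-v(x_0)\geqslant u(x_{-n})-v(x_{-n})$, and argues by contradiction that some subsequence $x_{-n_k}$ must enter any neighborhood of $\MM$ (otherwise the limit Mather measure would be supported away from $\widehat\MM$); it then passes to the limit pointwise, relying on the automatic continuity of subsolutions at Aubry points (Theorem~\ref{reg}). Your preliminary replacement of $v$ by the weak KAM solution $v^-$ is an equally valid way to secure the continuity needed to pass to the limit, and your averaged integral formulation is a clean alternative to the paper's pointwise extraction. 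Part~2 is handled identically in both.
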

\begin{proof}
\begin{enumerate}
\item
Let $x_0\in X$ and let $(x_{-n})_{n\geqslant 0}$ be a calibrating sequence for $u$. As observed in the proof of Theorem \ref{uniqueness},
 $u(x_0)-v(x_0) \geqslant u(x_{-n})-v(x_{-n})$ for all $n>0$. 
 
 Limiting points of the sequence $(x_{-n})_{n\geqslant 0}$ are not necessarily in $\MM$. However, we prove that there exists  a suitable subsequence converging to a point in $\MM$, allowing to conclude the proof as in Theorem \ref{uniqueness}. Assume by contradiction the contrary. There exists an $\varepsilon >0$ such that $d(x_{-n},\MM) \geqslant \varepsilon$ for all $n\in \N$. Let $F = \{x\in X ,\ \   d(x,\MM) \geqslant \varepsilon\}$, that is a closed set. As in the proof of Proposition \ref{minimizing}, define $\mu_n =\frac1n\sum\limits_{i=-n}^{-1} \delta_{(x_i,x_{i+1})}$. Let finally $(n_k)_{k\in \N}$ be an extraction such that the sequence  $(\mu_{n_k})_{k\in \N}$ converges to a measure $\mu$. By hypothesis all the $\mu_n$ have their support included in $F\times F$, so the same holds for $\mu$. But, as proved in Proposition \ref{minimizing}, $\mu \in \widehat\PP_0$ is a Mather measure, hence the support of $\mu$ is included in $\widehat \MM$, and this is absurd.
 
In the second case, by symmetry, the opposite inequality holds, and the result follows as $x_0$ was taken arbitrary.
\item This point is established exactly as \ref{distlike}, we do not reproduce its proof.
 \end{enumerate}
\end{proof}

\subsection{An ergodic point of view}\label{BernardBuffoni}
 Mather sets were introduced as subsets of $X\times X$. As for Aubry sets, that hides the underlying dynamics. As  Aubry sets may be equivalently defined on $X\times X$ or on $X^\Z$, there are analogous measures defined on $X^\Z$. This is explained in \cite[Paragraph 4.2]{BeBu}. Indeed, denoting $s : (x_n)_{n\in \Z} \mapsto  (x_{n+1})_{n\in \Z}$ the shift operator, given a Borel probability measure $\tilde \mu$ on $X^\Z$ that is invariant by $s$, its push-forward $(\pi_{0,1})_*\tilde\mu$ by the projection $\pi_{0,1} : (x_n)_{n\in \Z} \mapsto (x_0,x_1)$ is a Borel probability measure on $X\times X$ that is closed in the sense of Definition \ref{closed} and such that $\int_{X^\Z}c(x_0,x_1) \dd \tilde\mu \big((x_n)_{n\in \Z}\big) = \int_{X\times X} c(x,y)\dd (\pi_{0,1})_*\tilde\mu$. 

Conversely, if $\mu$ is a Borel closed  probability measure on $X\times X$, Bernard and Buffoni construct, via a disintegration of $\mu$ with respect to the projection $\pi_2 : X\times X \to X$, 	a shift invariant measure $\tilde\mu$ on $X^\Z$ such that $\int_{X^\Z}c(x_0,x_1) \dd \tilde\mu \big((x_n)_{n\in \Z}\big) = \int_{X\times X} c(x,y)\dd\mu$. We therefore derive the following analogues of previous results, either by using the correspondence of Bernard and Buffoni, or by reproducing the proofs in this context. We leave the latter  to the reader.

\begin{df}\rm
Denote $\widetilde \PP$ be the set of shift invariant Borel probability measures on $X^\Z$. This is the set of Borel probability measures $\tilde \mu$ on $X^\Z$ such that $s_*\tilde \mu = \tilde \mu$.
\end{df}
The following result holds:
\begin{pr}
The critical constant is characterized by
$$\inf_{\tilde\mu \in \widetilde\PP}\int_{X^\Z}c(x_0,x_1) \dd \tilde\mu \big((x_n)_{n\in \Z}\big)=\min_{\tilde\mu \in \widetilde\PP}\int_{X^\Z}c(x_0,x_1) \dd \tilde\mu \big((x_n)_{n\in \Z}\big) = -c[0].$$
Moreover, an invariant measure $\tilde \mu \in \widetilde \PP$ is minimizing if and only if it is supported on the Aubry set $\widetilde\AA$. 
\end{pr}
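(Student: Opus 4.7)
The plan is to deduce this from Theorem \ref{minimizing} via the Bernard--Buffoni correspondence sketched in Subsection \ref{BernardBuffoni}. Specifically, the map $\Phi : \widetilde\PP \to \wh\PP$ defined by $\Phi(\tilde\mu) = (\pi_{0,1})_* \tilde\mu$ is well defined: given a test function $f\in C^0(X,\R)$ and $\tilde\mu\in\widetilde\PP$, shift invariance yields $\int f(x_1)\dd\tilde\mu = \int f(x_0)\dd\tilde\mu$, i.e. the two marginals of $\Phi(\tilde\mu)$ on $X$ coincide. Conversely, Bernard--Buffoni's disintegration argument produces a right inverse $\Psi : \wh\PP \to \widetilde\PP$ with $\Phi\circ\Psi = \mathrm{Id}_{\wh\PP}$. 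In both directions one has the key identity
\begin{equation}\label{keyid}
\int_{X^\Z} c(x_0,x_1)\dd\tilde\mu = \int_{X\times X} c(x,y) \dd\Phi(\tilde\mu)(x,y).
\end{equation}

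First I would use this identity together with Theorem \ref{minimizing} to get the numerical statement. The inequality $\inf_{\widetilde\PP}\int c(x_0,x_1)\dd\tilde\mu \geqslant -\cc$ follows at once from \eqref{keyid} applied to any $\tilde\mu\in \widetilde\PP$, since $\Phi(\tilde\mu) \in \wh\PP$ and Theorem \ref{minimizing} bounds below the integral of $c$ against any closed measure. For the reverse inequality and the existence of a minimizer, I would pick a Mather measure $\mu_0 \in \wh\PP_0$ given by Theorem \ref{minimizing} and set $\tilde\mu_0 = \Psi(\mu_0) \in \widetilde\PP$; then \eqref{keyid} gives $\int c(x_0,x_1)\dd\tilde\mu_0 = -\cc$, so the infimum is attained and equals $-\cc$.

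Next I would identify the minimizers. If $\tilde\mu \in \widetilde\PP$ minimizes, then $\Phi(\tilde\mu)$ minimizes over $\wh\PP$, so by Theorem \ref{minimizing} its support lies in $\widehat\AA$. By shift invariance, the push-forward of $\tilde\mu$ under each projection $\pi_{n,n+1} : X^\Z \to X\times X$, $(x_k)\mapsto (x_n,x_{n+1})$, equals $\Phi(\tilde\mu)$, hence is supported on $\widehat\AA$. Thus, for $\tilde\mu$-almost every $(x_n)_{n\in\Z}$, every consecutive pair $(x_n,x_{n+1})$ lies in $\widehat\AA$, i.e. $u_0(x_{n+1})-u_0(x_n) = c(x_n,x_{n+1})+\cc$ for all $n$; summing these equalities exactly gives the defining condition of $\widetilde\AA = \widetilde\AA_{u_0}$, so $\tilde\mu$ is supported on $\widetilde\AA$. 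The converse direction is immediate: if $\mathrm{supp}\,\tilde\mu\subset \widetilde\AA$, then by Remark \ref{rem-eg}(iv), $\Phi(\tilde\mu)$ is supported on $\widehat\AA$, hence is a Mather measure by Theorem \ref{minimizing}, and \eqref{keyid} transfers minimality back to $\tilde\mu$.

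The only nontrivial ingredient is the construction of the inverse $\Psi$, i.e. the lift of a closed measure on $X\times X$ to a shift-invariant measure on $X^\Z$. This is precisely where the Bernard--Buffoni disintegration argument is used: writing $\mu = \int \mu^y \otimes \delta_y \dd(\pi_{2*}\mu)(y)$ one builds a stationary Markov chain whose one-step transitions are given by the kernel $\{\mu^y\}$ and whose stationary distribution is $\pi_{2*}\mu = \pi_{1*}\mu$; Kolmogorov extension then yields the desired $\tilde\mu$ on $X^\Z$, and a direct verification gives \eqref{keyid} as well as shift invariance. I would refer to \cite[Paragraph 4.2]{BeBu} for this part rather than reproduce it in detail.
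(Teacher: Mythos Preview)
Your proof is correct and follows precisely the approach the paper indicates: the paper does not give a detailed proof of this proposition but explicitly says it follows ``either by using the correspondence of Bernard and Buffoni, or by reproducing the proofs in this context,'' leaving the details to the reader. Your argument via the push-forward $\Phi=(\pi_{0,1})_*$ and the Bernard--Buffoni lift $\Psi$, together with the countable intersection argument showing $\tilde\mu$-a.e.\ sequence has all consecutive pairs in $\widehat\AA$ (hence lies in the closed set $\widetilde\AA$), is exactly the first of these two routes carried out in full.
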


\begin{df}\rm
We define $\widetilde\PP_0\subset \widetilde\PP$ to be the set of shift invariant Borel probability measures $\tilde \mu_0$ on $X^\Z$ such that
$$\int_{X^\Z}c(x_0,x_1) \dd \tilde\mu_0 \big((x_n)_{n\in \Z}\big)=\inf_{\tilde\mu \in \widetilde\PP}\int_{X^\Z}c(x_0,x_1) \dd \tilde\mu \big((x_n)_{n\in \Z}\big) = -c[0].$$
Such measures are also called {\it minimizing} or {\it Mather measures } and the context makes it clear whether a measure is defined on $X^\Z$ or on $X\times X$.

We define the Mather set $\widetilde \MM\subset X^\Z$ by 
$$\widetilde \MM =\overline{ \bigcup_{\tilde\mu\in \widetilde\PP_0} \mathrm {supp}(\tilde\mu)},$$
where supp stands for  the support of a measure.
\end{df}

The set  $\widetilde\PP_0$ is convex and compact. Finally the initial discussion together with Proposition \ref{pr-sequence} yield  that:
\begin{pr}
The following equalities hold: $\MM = \pi_0(\widetilde \MM)$ and $\widehat \MM = \pi_{0,1}(\widetilde\MM)$.
\end{pr}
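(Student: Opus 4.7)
The plan is to reduce both equalities to the correspondence between $\widetilde \PP_0$ and $\widehat\PP_0$ recalled from \cite{BeBu} at the beginning of Paragraph \ref{BernardBuffoni}. I will first establish the second equality $\widehat\MM = \pi_{0,1}(\widetilde\MM)$ and then deduce the first one for free from it, since on $X^\Z$ one has $\pi_0 = \pi_1 \circ \pi_{0,1}$, and the corresponding identity $\MM = \pi_1(\widehat\MM)$ is already proved.

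For the inclusion $\pi_{0,1}(\widetilde\MM) \subset \widehat\MM$, I would start from any $\tilde\mu \in \widetilde\PP_0$ and set $\mu = (\pi_{0,1})_* \tilde\mu$. The discussion recalled from \cite{BeBu} shows that $\mu$ is a closed probability measure on $X \times X$ with the same action as $\tilde\mu$, hence $\mu \in \widehat\PP_0$. Then, by continuity of $\pi_{0,1}$, $\pi_{0,1}(\mathrm{supp}(\tilde\mu)) \subset \mathrm{supp}(\mu) \subset \widehat\MM$. Passing to the union over $\tilde\mu \in \widetilde\PP_0$ and using that $\widehat\MM$ is closed, together with the fact that $\pi_{0,1}$ maps the compact set $\widetilde\MM$ to a compact (hence closed) subset of $X\times X$, yields $\pi_{0,1}(\widetilde\MM) \subset \widehat\MM$.

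For the reverse inclusion, I would start from any $\mu \in \widehat\PP_0$ and invoke the Bernard--Buffoni disintegration construction, which produces a shift-invariant measure $\tilde\mu$ on $X^\Z$ with $(\pi_{0,1})_* \tilde\mu = \mu$ and the same action; thus $\tilde\mu \in \widetilde\PP_0$. Then $\mathrm{supp}(\tilde\mu) \subset \widetilde\MM$, so $\mathrm{supp}(\mu) = \overline{\pi_{0,1}(\mathrm{supp}(\tilde\mu))} \subset \pi_{0,1}(\widetilde\MM)$, where closedness of $\pi_{0,1}(\widetilde\MM)$ is again used. Taking the union over $\mu \in \widehat\PP_0$ and closing gives $\widehat\MM \subset \pi_{0,1}(\widetilde\MM)$, which proves the second equality.

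Finally, the first equality follows from the identity $\pi_0 = \pi_1 \circ \pi_{0,1}$ on $X^\Z$: using the second equality and the already established $\MM = \pi_1(\widehat\MM)$, we get $\pi_0(\widetilde\MM) = \pi_1(\pi_{0,1}(\widetilde\MM)) = \pi_1(\widehat\MM) = \MM$. The only delicate point, and the main obstacle if one wished to reprove everything from scratch, is the disintegration step of Bernard and Buffoni that lifts a closed measure on $X\times X$ to a shift-invariant one on $X^\Z$; but since this is explicitly recalled in Paragraph \ref{BernardBuffoni}, it can be used as a black box here.
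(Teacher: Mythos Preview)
Your proof is correct and matches the route the paper intends: the paper gives no detailed argument here, only the hint that the result follows from the Bernard--Buffoni correspondence recalled in Paragraph~\ref{BernardBuffoni} (together with Proposition~\ref{pr-sequence}), and you have filled in precisely those details. The one place where you go slightly beyond the paper's explicit statement is in using $(\pi_{0,1})_*\tilde\mu=\mu$ for the lifted measure; this is indeed what the disintegration construction yields, and with it Proposition~\ref{pr-sequence} is not needed in your argument, whereas the paper's hint presumably invokes it as the Aubry--set analogue guiding the reader.
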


\section{The discounted equation}

This Chapter ends by returning to the roots, more precisely to the second proof of the weak KAM Theorem \ref{weak KAM}. Recall that if $\lambda \in (0,1)$ then $u_\lambda$ is the unique function such that $u_\lambda = T^-_\lambda u_\lambda = T^- (\lambda u_\lambda)$. We now prove a result first obtained in \cite{DFIZ2}:
\begin{Th}\label{discounted}
There exists  a weak KAM solution $u_1$ such that $u_\lambda +\frac{\cc}{1-\lambda} \to u_1$ where the convergence takes place as $\lambda \to 1$ and is uniform.
\end{Th}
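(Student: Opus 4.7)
Set $v_\lambda := u_\lambda + \cc/(1-\lambda)$; the plan is to show $(v_\lambda)$ converges uniformly as $\lambda \to 1$ to a specific weak KAM solution $u_1$. A direct substitution using $\lambda u_\lambda = \lambda v_\lambda - \lambda\cc/(1-\lambda)$ recasts the fixed-point equation $u_\lambda = T^-(\lambda u_\lambda)$ as the central identity
\[
v_\lambda = T^-(\lambda v_\lambda) + \cc.
\]
From this, Proposition~\ref{propT}(i) gives equi-continuity and uniformly bounded amplitude for the family $(v_\lambda)_{\lambda \in (0,1)}$. For uniform sup-norm boundedness I would fix a weak KAM solution $u$ (Theorem~\ref{weak KAM}) and analyze $v_\lambda - u$ at its extremizers: at a minimizer $x_\lambda$, taking $y_\lambda$ that realizes $T^-(\lambda v_\lambda)(x_\lambda) = \lambda v_\lambda(y_\lambda) + c(y_\lambda,x_\lambda)$ and combining with $u(x_\lambda) \leqslant u(y_\lambda) + c(y_\lambda,x_\lambda) + \cc$ yields $\min(v_\lambda - u) \geqslant -\|u\|_\infty$, and a symmetric argument at a maximizer gives $\max(v_\lambda - u) \leqslant \|u\|_\infty$; thus $\|v_\lambda\|_\infty \leqslant 2\|u\|_\infty$ uniformly in $\lambda$.

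By Arzelà--Ascoli, any sequence $\lambda_n \to 1$ admits a subsequence along which $v_{\lambda_n}$ converges uniformly to some $u_1 \in C^0(X,\R)$. Since $(v_\lambda)$ is uniformly bounded, $\lambda_n v_{\lambda_n} \to u_1$ uniformly as well, so the $1$-Lipschitz continuity of $T^-$ (Proposition~\ref{propT}(iv)) lets us pass to the limit in the central identity and obtain $u_1 = T^- u_1 + \cc$. Every accumulation point of $(v_\lambda)$ is thus a weak KAM solution.

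To single out the limit, I would exploit minimizing closed measures. For any $\mu \in \widehat\PP_0$ with common marginal $\mu_0$, integrating the pointwise inequality $u_\lambda(y) \leqslant \lambda u_\lambda(x) + c(x,y)$ against $\mu$ and using closedness together with $\int c\,d\mu = -\cc$ from Theorem~\ref{minimizing} gives $(1-\lambda)\int u_\lambda\,d\mu_0 \leqslant -\cc$, that is
\[
\int v_\lambda\,d\mu_0 \leqslant 0 \qquad \text{for every Mather projection } \mu_0.
\]
Passing to the limit, any accumulation point $u_1$ lies in the class
\[
\mathcal F := \Big\{ u \in \S \,:\, \int u\,d\mu_0 \leqslant 0 \text{ for every Mather projection } \mu_0 \Big\}.
\]

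The main obstacle is the converse inequality: proving $u \leqslant u_1$ for every $u \in \mathcal F$ and every accumulation point $u_1$, so that $u_1 = \sup \mathcal F$ is uniquely determined by the cost and therefore the full family $(v_\lambda)$ converges uniformly to this common maximum. Arguing by contradiction, suppose $\max(u - v_{\lambda_n}) \geqslant \delta > 0$ along some $\lambda_n \to 1$ for a fixed $u \in \mathcal F$; pick maximizers $x_n$, iterate the central identity to produce backward calibrating chains $x_n = x^{(n)}_0, x^{(n)}_{-1}, \dots, x^{(n)}_{-N_n}$ with suitably chosen $N_n \to \infty$ along which equality holds in the discounted equation, and form the Birkhoff measures $\mu_n = \frac1{N_n}\sum_{k=1}^{N_n}\delta_{(x^{(n)}_{-k}, x^{(n)}_{-k+1})}$ on $X\times X$. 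The technical heart of the argument is to show that any weak-$*$ accumulation point $\mu_\star$ of $(\mu_n)$ is a Mather measure: closedness comes from a telescopic cancellation, while minimality is obtained by passing to the limit in the discounted chain equations using the uniform bound on $v_{\lambda_n}$ and the fact that $(1-\lambda_n)v_{\lambda_n}\to 0$. Once $\mu_\star \in \widehat\PP_0$ is identified, its defining constraint $\int u\,d\pi_{1*}\mu_\star \leqslant 0$, combined with summing the subsolution inequalities for $u$ along the chain, contradicts $\max(u - v_{\lambda_n}) \geqslant \delta$. This construction of a Mather measure from the long-time discounted dynamics is where the weak KAM and Mather machinery assembled in the preceding sections must be fully brought to bear.
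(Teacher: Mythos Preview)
Your overall architecture matches the paper's: prove equiboundedness of $v_\lambda = u_\lambda + \cc/(1-\lambda)$, show every accumulation point lies in the constraint set $\FF$ of subsolutions with nonpositive integral against every projected Mather measure, then build Mather measures out of the discounted calibrating chains to force every accumulation point to dominate every $w\in\FF$. Your boundedness argument via extremizers of $v_\lambda-u$ is a clean alternative to the paper's comparison principle (Lemma~\ref{comp-princ}); both work.

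The gap is in your converse step, and it is a genuine one. You form the \emph{uniformly weighted} Birkhoff averages $\mu_n = \frac1{N_n}\sum_{k=1}^{N_n}\delta_{(x^{(n)}_{-k},x^{(n)}_{-k+1})}$ along the discounted chain emanating from a maximizer $x_n$ of $u-v_{\lambda_n}$. One can indeed arrange $N_n\to\infty$ with $(1-\lambda_n)N_n\to 0$ so that any weak limit $\mu_\star$ is a Mather measure. But the contradiction does not close: summing the subsolution inequalities for $u$ and the discounted identities for $v_{\lambda_n}$ along the chain yields $(u-v_{\lambda_n})(x^{(n)}_{-k})\geqslant \delta - o(1)$ for all $k\leqslant N_n$, and after passing to the limit you obtain $\int(u-u_1)\,d\pi_{1*}\mu_\star \geqslant \delta$. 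Since $u\in\FF$ gives $\int u\,d\pi_{1*}\mu_\star\leqslant 0$, you conclude only $\int u_1\,d\pi_{1*}\mu_\star \leqslant -\delta$, which is perfectly compatible with $u_1\in\FF$. The uniform averaging has washed out the link to the base point.

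The paper fixes this by using \emph{geometrically weighted} measures built at an arbitrary point $x$ (not a maximizer): with $(x_n^\lambda)_{n\leqslant 0}$ the infinite minimizing chain from Lemma~\ref{ulambda}, set $\mu_\lambda = (1-\lambda)\sum_{n\leqslant 0}\lambda^{-n}\delta_{(x^\lambda_{n-1},x^\lambda_n)}$. Any weak limit as $\lambda\to 1$ is again a Mather measure (Proposition~\ref{Laplace}), but now an Abel summation of the weighted subsolution inequalities produces the \emph{pointwise} estimate $v_\lambda(x)\geqslant w(x)-\int w\,d\pi_{1*}\mu_\lambda$ for every continuous subsolution $w$ (Lemma~\ref{lmcle}). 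Passing to the limit along a subsequence where $v_{\lambda_n}\to u_1$ and $\mu_{\lambda_n}\to\mu$ gives $u_1(x)\geqslant w(x)-\int w\,d\pi_{1*}\mu\geqslant w(x)$ for $w\in\FF$, hence $u_1\geqslant\sup\FF$. The geometric weights are not a cosmetic choice: they are exactly what the Abel transform needs in order to telescope the chain down to a comparison \emph{at the single point} $x$, which your uniform weights cannot do.
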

The proof is divided into several steps. It was already shown that as $\lambda \to 1$, $(1-\lambda)u_\lambda \to -\cc$ \big(Remark \ref{bounded} (ii)\big). Actually one gets something more precise:
\begin{pr}\label{pr-bounded}
The family $u_\lambda +\frac{\cc}{1-\lambda}$ is uniformly bounded as $\lambda \to 1$.
\end{pr}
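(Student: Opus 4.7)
My plan is to derive both bounds by iterating the defining fixed-point equation $u_\lambda = T^-(\lambda u_\lambda)$ and then comparing with a weak KAM solution for the upper bound and with a critical subsolution for the lower bound. The inequality $u_\lambda(z) \leq \lambda u_\lambda(y) + c(y,z)$ applied along any backward chain $x = x_0, x_{-1}, x_{-2}, \ldots$ yields, for every $n\geq 1$,
\[u_\lambda(x) \leq \lambda^n u_\lambda(x_{-n}) + \sum_{k=1}^n \lambda^{k-1} c(x_{-k}, x_{-k+1}),\]
with equality when each $x_{-k}$ is chosen inductively as an argmin in the definition of $T^-$. Since $u_\lambda$ is bounded for each fixed $\lambda < 1$, the prefactor $\lambda^n u_\lambda(x_{-n})$ tends to $0$ as $n\to\infty$.

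For the upper bound I would pick the backward chain to be calibrating for a weak KAM solution $u_\infty$ (existence via Lemma \ref{chains}), so that
\[c(x_{-k}, x_{-k+1}) = u_\infty(x_{-k+1}) - u_\infty(x_{-k}) - c[0].\]
Plugging this into the iterated inequality and performing Abel summation on the telescoping part, the geometric tail $\sum_{k=1}^n \lambda^{k-1} c[0] = \tfrac{1-\lambda^n}{1-\lambda}\, c[0]$ combines with the correction $\tfrac{c[0]}{1-\lambda}$ built into $v_\lambda := u_\lambda + \tfrac{c[0]}{1-\lambda}$ to leave only a residual of order $\lambda^n$. Passing to the limit $n\to\infty$ gives
\[v_\lambda(x) \leq u_\infty(x) - (1-\lambda)\sum_{k=1}^\infty \lambda^{k-1} u_\infty(x_{-k}),\]
whose absolute value is majorized by $2\|u_\infty\|_\infty$, uniformly in $x$ and $\lambda$.

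For the lower bound I would reverse the roles. Take $(x_{-k})$ to be an optimal backward chain for $u_\lambda$ itself so that the iterated relation becomes an equality, then apply the critical subsolution inequality $u(x_{-k+1}) - u(x_{-k}) \leq c(x_{-k}, x_{-k+1}) + c[0]$ for a fixed $u \in \S$. The identical Abel bookkeeping produces the same cancellation of the $\tfrac{c[0]}{1-\lambda}$ terms and yields
\[v_\lambda(x) \geq u(x) - (1-\lambda)\sum_{k=1}^\infty \lambda^{k-1} u(x_{-k}) \geq -2\|u\|_\infty.\]

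The only technical point requiring care is the telescoping computation: one must verify that the geometric tail in $c[0]$ combines precisely with the compensator in $v_\lambda$ to leave a remainder vanishing with $n$. Granted this, the bounds are uniform in $\lambda$, and the proposition then follows from the existence of a weak KAM solution (Theorem \ref{weak KAM}) and of a critical subsolution (Proposition \ref{criticalSS}).
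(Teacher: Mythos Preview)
Your proof is correct, but takes a genuinely different route from the paper's.

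The paper proves a comparison principle (Lemma~\ref{comp-princ}): if $v_1\leqslant T^-_\lambda v_1$ and $v_2\geqslant T^-_\lambda v_2$ then $v_1\leqslant u_\lambda\leqslant v_2$. It then manufactures such $v_1,v_2$ by taking a weak KAM solution $u$, shifting it by constants to make it negative (giving $\underline u$) or positive (giving $\overline u$), and checking that $\underline u-\frac{\cc}{1-\lambda}$ and $\overline u-\frac{\cc}{1-\lambda}$ satisfy the required inequalities. This is a three-line argument once the comparison lemma is stated.

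Your approach instead unrolls the fixed-point equation into an explicit geometric sum along a backward chain and performs Abel summation. This is more hands-on but equally valid; in fact, it is exactly the computation that the paper carries out later in Lemma~\ref{ulambda} (the representation formula for $u_\lambda$) and Lemma~\ref{lmcle} (the key inequality $u_\lambda(x)\geqslant w(x)-\int w\,\dd\pi_{1*}\mu_\lambda$ for any subsolution $w$). So you are essentially anticipating machinery that will be needed anyway for the convergence Theorem~\ref{discounted}, and using it here for the preliminary boundedness step. The paper's version is shorter at this stage because it defers that machinery; yours front-loads it. Note incidentally that for the lower bound you may as well take $u=u_\infty$ (a weak KAM solution is already a critical subsolution), so the appeal to Proposition~\ref{criticalSS} is not strictly needed.
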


This will be a simple consequence of the following comparison principle:
\begin{lm}\label{comp-princ}
Let $v_1$ be such that $v_1\leqslant T^-_\lambda v_1$ and let $v_2$ verify $v_2\geqslant T^-_\lambda v_2$. Then $v_1\leqslant u_\lambda \leqslant v_2$.
\end{lm}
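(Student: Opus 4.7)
The plan is to argue by monotone iteration, exploiting two structural features of $T^-_\lambda$ already established: it is order-preserving (since $T^-_\lambda v = T^-(\lambda v)$ and both multiplication by $\lambda>0$ and $T^-$ preserve the order), and it is a $\lambda$-contraction on the Banach space $\big(\BB(X,\R),\|\cdot\|_\infty\big)$ with unique fixed point $u_\lambda$ (this is exactly the content of the second proof of Theorem \ref{weak KAM}).

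First I would handle the lower bound. Starting from the assumption $v_1 \leqslant T^-_\lambda v_1$, I apply $T^-_\lambda$ to both sides and use its monotonicity to obtain $T^-_\lambda v_1 \leqslant (T^-_\lambda)^2 v_1$. A straightforward induction then yields
$$v_1 \leqslant T^-_\lambda v_1 \leqslant (T^-_\lambda)^2 v_1 \leqslant \cdots \leqslant (T^-_\lambda)^n v_1 \quad \text{for every } n\geqslant 0.$$
Since $T^-_\lambda$ is $\lambda$-Lipschitz for $\|\cdot\|_\infty$ with fixed point $u_\lambda$, one has $\|(T^-_\lambda)^n v_1 - u_\lambda\|_\infty \leqslant \lambda^n \|v_1 - u_\lambda\|_\infty \to 0$, so $(T^-_\lambda)^n v_1$ converges uniformly to $u_\lambda$. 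Passing to the limit in the pointwise inequality $v_1 \leqslant (T^-_\lambda)^n v_1$ gives $v_1 \leqslant u_\lambda$.

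The upper bound is symmetric: from $v_2 \geqslant T^-_\lambda v_2$, monotonicity yields $v_2 \geqslant (T^-_\lambda)^n v_2$ for all $n$, and by the same contraction argument $(T^-_\lambda)^n v_2 \to u_\lambda$ uniformly, whence $v_2 \geqslant u_\lambda$.

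There is essentially no obstacle in this proof — the only point to check carefully is that the iterates remain in $\BB(X,\R)$ so that the contraction estimate makes sense. For $v_1,v_2\in \BB(X,\R)$ this is automatic (since the image of $T^-_\lambda$ already consists of continuous functions with uniformly bounded amplitude, by Proposition \ref{propT}), and even if one allows $v_1$ merely bounded from above and $v_2$ merely bounded from below, a single application of $T^-_\lambda$ lands inside $\BB(X,\R)$ by the same proposition, so the argument goes through unchanged after shifting the induction by one step.
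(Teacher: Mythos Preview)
Your proof is correct and follows essentially the same approach as the paper: iterate the monotone inequalities to get $v_1 \leqslant (T^-_\lambda)^n v_1$ and $v_2 \geqslant (T^-_\lambda)^n v_2$, then use that $T^-_\lambda$ is a contraction with fixed point $u_\lambda$ to pass to the limit. The paper's argument is just a terser version of yours.
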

\begin{proof}
By induction, one has for all $n\in \N$ that $v_1\leqslant T^{-n}_\lambda v_1$ and $v_2 \geqslant  T^{-n}_\lambda v_2$. Both right hand side terms converge to $u_\lambda $ as $n\to +\infty$ (recall $T^-_\lambda$ is a contraction). The results follow by passing to the limit.
\end{proof}
\begin{proof}[Proof of Proposition \ref{pr-bounded}]
Let $u$ be a weak KAM solution. Then adding and subtracting big constants to $u$ provides two weak KAM solutions $\overline u$ and $\underline u$ which are positive and negative respectively and verify $\overline u\geqslant \lambda \overline u$ and $\underline u\leqslant  \lambda \underline u$ . We then obtain that    
$$\forall \lambda \in (0,1),\quad \overline u -\cc =T^-(\overline u) \geqslant T^-(\lambda \overline u) = T^-_\lambda (\overline u).$$
This can be rewritten $\overline u-\frac{\cc}{1-\lambda} \geqslant T^-_\lambda(\overline u-\frac{\cc}{1-\lambda})$. In a same manner,  $\underline u-\frac{\cc}{1-\lambda} \leqslant T^-_\lambda(\underline u-\frac{\cc}{1-\lambda})$.
 Apply the previous lemma to obtain that $  \underline u-\frac{\cc}{1-\lambda}       \leqslant  u_\lambda \leqslant \overline u-\frac{\cc}{1-\lambda}$ which implies the proposition.
\end{proof}

 As the functions $u_\lambda +\frac{\cc}{1-\lambda}$ are equicontinuous and equibounded, thanks to the Arzel\` a--Ascoli Theorem, to prove the convergence it is enough to prove that all converging subsequences have the same limit. We now establish constraints on such limits:
 
 \begin{pr}\label{inegdiscount}
 Let $\mu\in \widehat\PP_0$ be a Mather measure. Assume $u_{\lambda_n} +\frac{\cc}{1-\lambda_n}\to u$ as $n\to +\infty$ for some extraction $\lambda_n\to 1$. Then $\int_X u(x)\dd\pi_{1*}\mu (x) \leqslant 0$.
 \end{pr}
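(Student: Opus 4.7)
The plan is to use the discount fixed-point equation $u_\lambda = T^-(\lambda u_\lambda)$ and test it against the Mather measure $\mu$, exploiting both the closedness of $\mu$ and the minimizing property $\int c\,d\mu = -\cc$.

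First I would rewrite the fixed-point relation as a pointwise inequality. Since $u_\lambda(x) = \inf_y \lambda u_\lambda(y) + c(y,x)$, we have, for all $(y,x)\in X\times X$,
\[
u_\lambda(x) \leqslant \lambda u_\lambda(y) + c(y,x).
\]
Setting $v_\lambda := u_\lambda + \frac{\cc}{1-\lambda}$ and substituting $u_\lambda = v_\lambda - \frac{\cc}{1-\lambda}$, a one-line algebraic manipulation transforms this into
\[
v_\lambda(x) - \lambda v_\lambda(y) \leqslant c(y,x) + \cc, \qquad \forall (y,x)\in X\times X.
\]

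Next I would integrate this inequality against $\mu$, regarded as a measure in the variables $(y,x)\in X\times X$. On the right-hand side we get $\int(c+\cc)\,d\mu = -\cc + \cc = 0$ since $\mu$ is a Mather measure. On the left-hand side, the two integrals are $\int v_\lambda \,d\pi_{2*}\mu$ and $\lambda\int v_\lambda\,d\pi_{1*}\mu$. Since $\mu$ is closed, $\pi_{1*}\mu = \pi_{2*}\mu$, and both equal $\int v_\lambda\,d\pi_{1*}\mu$. Hence
\[
(1-\lambda)\int_X v_\lambda(x)\,d\pi_{1*}\mu(x) \leqslant 0,
\]
and since $1-\lambda >0$, dividing yields $\int_X v_\lambda\,d\pi_{1*}\mu \leqslant 0$ for every $\lambda\in(0,1)$.

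Finally, by Proposition \ref{pr-bounded} and Proposition \ref{propT}, the family $(v_\lambda)$ is equibounded and equicontinuous, so the assumed pointwise limit $v_{\lambda_n}\to u$ along the subsequence is uniform (Arzel\`a--Ascoli). Passing to the limit under the integral against the finite measure $\pi_{1*}\mu$ gives
\[
\int_X u(x)\,d\pi_{1*}\mu(x) = \lim_{n\to+\infty}\int_X v_{\lambda_n}(x)\,d\pi_{1*}\mu(x) \leqslant 0,
\]
which is the desired inequality. I do not expect any serious obstacle here: the only delicate point is keeping the variables $(y,x)$ and the marginals $\pi_{1*},\pi_{2*}$ aligned with the convention $T^-f(x) = \inf_y f(y)+c(y,x)$, which makes the use of closedness of $\mu$ work as expected.
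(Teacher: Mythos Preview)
Your proof is correct and follows essentially the same approach as the paper: start from the pointwise inequality coming from the fixed-point equation, integrate against the Mather measure, use closedness to collapse the two marginals, divide by $1-\lambda$, and pass to the limit. The only cosmetic difference is that you perform the shift $v_\lambda = u_\lambda + \frac{\cc}{1-\lambda}$ before integrating, whereas the paper integrates the raw inequality $u_\lambda(y)-\lambda u_\lambda(x)\leqslant c(x,y)$ first and brings in the constant afterward.
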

 
 \begin{proof}
 Start from the inequalities $u_\lambda (y) - \lambda u_\lambda(x) \leqslant c(x,y)$ for all pairs $(x,y)$. Integrating with respect to $\mu$ yields
 $$\int_{X\times X} \big(u_\lambda (y) - \lambda u_\lambda(x)\big)\dd \mu(x,y) \leqslant \int_{X\times X}c(x,y)\dd\mu(x,y)=-\cc,$$
 as $\mu $ is minimizing. But since $\mu$ is closed, both marginals are equals and the left hand side is equal to $(1-\lambda)\int_X u_\lambda(x) \dd \pi_{1*}\mu(x)$. Dividing by $(1-\lambda)$ one obtains that  $\int_X \big(u_\lambda(x) + \frac{\cc}{1-\lambda}\big)\dd \pi_{1*}\mu(x)\leqslant 0$. The result now follows taking $\lambda = \lambda_n$ and passing to the limit.
 \end{proof}

Note that as the functions $u_\lambda$ are equicontinuous, any accumulation point $u$ as in the previous Proposition is automatically continuous.

The next step is to identify a reasonable candidate for the limit. This is done in the next Definition:
\begin{df}\label{F^-}\rm
Let $\FF\subset \S\cap C^0(X,\R)$ be the set of continuous subsolutions $u$ verifying the constraint $\int_X u(x)\dd\pi_{1*}\mu (x) \leqslant 0$ for all Mather measures $\mu\in\widehat \PP_0$. 

We define $u_1 = \sup\limits_{u\in \FF} u$ where the supremum is  taken pointwise.
\end{df}
 The set $\FF$ is not empty for it contains negative subsolutions (recall $\S$ or the set of weak KAM solutions are invariant by addition of constants). Restricting to continuous functions is not necessary (see \cite{DFIZ2} for the alternative approach of considering all subsolutions), but it simplifies some proofs. Elements of  $\FF$ are bounded above as they must take at least a non--positive value. Hence $u_1$ is well defined. The idea of taking supremums of solutions or subsolutions in viscosity solutions theory is rather standard, we will see here that it is very useful.
 
 Of course, Proposition \ref{inegdiscount} has a trivial consequence: if $u = \lim\limits_{n\to +\infty} u_{\lambda_n} +\frac{\cc}{1-\lambda_n}$ for some sequence  $\lambda_n\to 1$ then $u\in \FF$ and $u\leqslant u_1$.
 
  In order to establish the full  convergence, we have to prove the reverse inequality. This will be done by constructing some appropriate Mather measures. First we give a representation formula for $u_\lambda$:
 
 \begin{lm}\label{ulambda}
 For any $\lambda\in (0,1)$ and $x\in X$, we have 
 $$u_\lambda(x) = \min_{\substack{ (x_n)_{n\leqslant 0} \\ x_0=x}} \sum_{n\leqslant 0} \lambda^{-n} c(x_{n-1},x_n).$$
 \end{lm}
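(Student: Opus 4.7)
My plan is to exploit the fixed point equation $u_\lambda = T^-_\lambda u_\lambda$ by iterating it $n$ times, then pass to the limit $n \to \infty$ using the contracting factor $\lambda^n$. Since $u_\lambda$ lies in the image of $T^-$ it is continuous on the compact $X$ and hence bounded, and the iterated infima can be realized as minima.

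\textbf{Step 1 (Finite unrolling).} I would show by induction on $n \geq 1$ that
\begin{equation*}
u_\lambda(x) = \min_{(x_{-n},\dots,x_{-1})\in X^n} \left[ \lambda^n u_\lambda(x_{-n}) + \sum_{k=0}^{n-1}\lambda^k c(x_{-k-1},x_{-k}) \right],
\end{equation*}
with $x_0 = x$ fixed. The base case is the equality $u_\lambda(x) = \min_{x_{-1}\in X}\bigl( \lambda u_\lambda(x_{-1}) + c(x_{-1},x) \bigr)$, which holds (with min rather than inf) because $u_\lambda$ and $c$ are continuous and $X$ is compact. The inductive step is immediate, just plug the fixed point identity for $u_\lambda(x_{-n})$ into the formula.

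\textbf{Step 2 (Upper bound).} For an arbitrary chain $(x_{-k})_{k\geq 0}$ with $x_0=x$, Step 1 applied at this particular chain gives
\begin{equation*}
u_\lambda(x) \leq \lambda^n u_\lambda(x_{-n}) + \sum_{k=0}^{n-1}\lambda^k c(x_{-k-1},x_{-k}).
\end{equation*}
Since $u_\lambda$ is bounded, $\lambda^n u_\lambda(x_{-n}) \to 0$ and (as $c$ is bounded) the series $\sum_{k\geq 0}\lambda^k c(x_{-k-1},x_{-k})$ converges absolutely. Hence $u_\lambda(x) \leq \sum_{k\geq 0}\lambda^k c(x_{-k-1},x_{-k})$, and therefore
\begin{equation*}
u_\lambda(x) \leq \inf_{\substack{(x_n)_{n\leq 0}\\ x_0=x}} \sum_{n\leq 0}\lambda^{-n} c(x_{n-1},x_n).
\end{equation*}

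\textbf{Step 3 (Lower bound and realization of the minimum).} For each $n$, Step 1 furnishes a minimizing chain $(x^n_{-n},\dots,x^n_0 = x)$. By compactness of $X$ and a diagonal extraction I may assume there exists a subsequence along which, for every fixed $k\geq 0$, $x^{n_j}_{-k}$ converges to some point $x_{-k}$ (with $x_0=x$). For any fixed $N$, using $\|c\|_\infty < \infty$ to bound the tail,
\begin{equation*}
u_\lambda(x) \geq -\lambda^{n_j}\|u_\lambda\|_\infty + \sum_{k=0}^{N-1}\lambda^k c(x^{n_j}_{-k-1},x^{n_j}_{-k}) - \frac{\lambda^N \|c\|_\infty}{1-\lambda}.
\end{equation*}
Passing to the limit $j\to\infty$ using continuity of $c$, then letting $N\to\infty$, yields
\begin{equation*}
u_\lambda(x) \geq \sum_{k\geq 0}\lambda^k c(x_{-k-1},x_{-k}),
\end{equation*}
which combined with Step 2 gives the claimed equality and shows that the limit chain $(x_{-k})_{k\geq 0}$ realizes the infimum, hence it is a minimum. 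The only mildly delicate point is the tail estimate combined with the diagonal extraction in Step 3; everything else is a direct consequence of the fixed point equation and compactness.
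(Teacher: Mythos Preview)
Your proof is correct and follows essentially the same strategy as the paper: iterate the defining relation to a finite depth, then pass to the limit using compactness. The only organizational difference is that the paper iterates $T^-_\lambda$ starting from the zero function (so that $T^{-k}_\lambda \bar 0(x)$ is already the finite partial sum without the extra term $\lambda^n u_\lambda(x_{-n})$, and convergence of these iterates to $u_\lambda$ is immediate from the contraction), whereas you unroll the fixed point equation itself and then kill the residual $\lambda^n u_\lambda(x_{-n})$ by boundedness; your diagonal extraction in Step~3 makes explicit what the paper summarizes as ``the usual compactness arguments.''
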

 
 \begin{proof}
 As $T^-_\lambda$ is a contraction on the set of continuous functions its fixed point is the limit of the iterates starting with any initial function. Taking the $0$ function, one computes that if $k>0$,
 $$T^{-k}_\lambda \bar 0(x) = \min_{x_{-k},\cdots , x_0=x}\sum_{i= -k+1}^0 \lambda^{-i} c(x_{i-1},x_i).$$
 The result follows letting $n\to +\infty$. The fact that all infimums are minimums comes from the usual compactness arguments.
 \end{proof}
 
 \begin{pr}\label{Laplace}
 Let $x\in X$ and for all $\lambda \in (0,1)$ let $(x_n^\lambda)_{n\leqslant 0}$ such that $x_0^\lambda=x$ and $u_\lambda(x) =  \sum\limits_{n\leqslant 0} \lambda^{-n} c(x^\lambda_{n-1},x^\lambda_n)$. Define the probability measure $\mu_\lambda$ by 
 $$\forall f\in C^0(X\times X, \R),\quad \int_{X\times X} f(x,y)\dd\mu_\lambda (x,y) = (1-\lambda) \sum_{n\leqslant 0} \lambda^{-n} f(x^\lambda_{n-1},x^\lambda_n).$$
 Assume finally that for some subsequence $\lambda_n\to 1$ the sequence $(\mu_{\lambda_n})_{n\in \N}$ converges to $\mu$. Then the measure $\mu$ is a Mather measure.
 \end{pr}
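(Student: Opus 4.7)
The plan is to verify directly the two defining properties of a Mather measure: that $\mu$ is closed and that $\int c\,\mathrm d\mu = -\cc$. First, I note that $\mu_\lambda$ is indeed a probability measure, since the normalization $(1-\lambda)\sum_{n\leqslant 0}\lambda^{-n} = (1-\lambda)\sum_{k\geqslant 0}\lambda^k = 1$ ensures total mass one, and this passes to the weak-$*$ limit $\mu$. The sums involved converge absolutely because $c$ is bounded on the compact space $X\times X$ and $\lambda<1$.

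To see that $\mu$ is closed, I would fix an arbitrary continuous function $f \in C^0(X,\R)$ and evaluate
$$\int_{X\times X}\bigl(f(y)-f(x)\bigr)\,\mathrm d\mu_\lambda(x,y) = (1-\lambda)\sum_{k\geqslant 0}\lambda^k\bigl(f(x^\lambda_{-k})-f(x^\lambda_{-k-1})\bigr),$$
after the change of index $k=-n$. A discrete Abel summation (shifting the second sum by one index) rearranges this into
$$(1-\lambda)f(x) - (1-\lambda)^2\sum_{k\geqslant 1}\lambda^{k-1}f(x^\lambda_{-k}).$$
The first term is $(1-\lambda)f(x)\to 0$, and the second is bounded in absolute value by $(1-\lambda)^2\|f\|_\infty\cdot\frac{1}{1-\lambda} = (1-\lambda)\|f\|_\infty\to 0$. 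Hence $\int(f(y)-f(x))\,\mathrm d\mu_\lambda\to 0$ as $\lambda\to 1$, and by weak-$*$ convergence along $\lambda_n$ we conclude $\int(f(y)-f(x))\,\mathrm d\mu=0$ for every continuous $f$, so $\mu$ is closed.

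To see that $\mu$ is minimizing, I would just use the very definition of $\mu_\lambda$ together with the defining property of the calibrating chain $(x^\lambda_n)_{n\leqslant 0}$:
$$\int_{X\times X} c(x,y)\,\mathrm d\mu_\lambda(x,y) = (1-\lambda)\sum_{n\leqslant 0}\lambda^{-n}c(x^\lambda_{n-1},x^\lambda_n) = (1-\lambda)u_\lambda(x).$$
By Proposition \ref{pr-bounded} (or equivalently Remark \ref{bounded} (ii)), $(1-\lambda)u_\lambda(x)\to -\cc$ as $\lambda\to 1$. Passing to the limit along $\lambda_n$ in the above equality and using weak-$*$ convergence gives $\int c\,\mathrm d\mu = -\cc$.

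Combining the two limits established above, $\mu$ is closed and its $c$-integral equals $-\cc$, so $\mu \in \widehat\PP_0$ by Theorem \ref{minimizing}. The only genuine point of care in the proof is the Abel summation for the closedness; everything else is a direct application of the definitions and of the already established asymptotics of $(1-\lambda)u_\lambda$.
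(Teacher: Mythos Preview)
Your proof is correct and follows essentially the same approach as the paper: you verify that $\mu$ is a probability measure, that it is closed via an Abel summation yielding the bound $2(1-\lambda)\|f\|_\infty$, and that it is minimizing via the identity $\int c\,\dd\mu_\lambda = (1-\lambda)u_\lambda(x)\to -\cc$. The computations and estimates match the paper's almost line for line, up to the harmless reindexing $k=-n$.
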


\begin{proof}
The multiplicative term $(1-\lambda)$ ensures that the measures $\mu_\lambda$ are probability measures. Hence so is $\mu$. We therefore have to prove that $\mu$ is closed and minimizing.

The fact that $\mu$ is closed does not depend on the particular choice of the sequences $(x_n^\lambda)_{n\leqslant 0}$ and results from the following computation:

Let $f : X\to \R$ be a continuous function. Then
\begin{align*}
\Big|\int_{X\times X} \big(f(y)-f(x)\big) \dd \mu_\lambda(x,y)\Big| &=  (1-\lambda) \Big|\sum_{n\leqslant 0} \lambda^{-n} \big(f(x^\lambda_{n})-f(x^\lambda_{n-1}) \big) \Big|\\
&= (1-\lambda) \Big| f(x)+ \sum_{n\leqslant -1} (\lambda^{-n} - \lambda^{-n-1}) f(x_n^\lambda)\Big|   \\
&\leqslant (1-\lambda) \|f\|_\infty \Big( 1+ (1-\lambda) \sum_{n\leqslant -1} \lambda^{-n-1} \Big) \\
&= 2(1-\lambda) \|f\|_\infty \to 0.
\end{align*}

 On the contrary, the fact that $\mu$ is minimizing depends heavily on the  use of the definition of $(x_n^\lambda)$:

$$(1-\lambda)u_\lambda (x) = (1-\lambda)  \sum_{n\leqslant 0} \lambda^{-n} c(x^\lambda_{n-1},x^\lambda_n) = \int_{X\times X}c(x,y)\dd\mu_\lambda(x,y) .  $$
As $\lambda_n\to 1$, the left hand side goes to $-\cc$ by Remark \ref{bounded} (ii), and the right hand side converges to $\int_{X\times X}c(x,y)\dd\mu(x,y)$.
\end{proof}

We now explain why those measures play a particular role:
\begin{lm}\label{lmcle}
Let $w\in \S$ be a continuous subsolution, then using the previous notation,
$$\forall \lambda \in (0,1), \quad u_\lambda (x) \geqslant w(x) - \int_{X} w(z) \dd \pi_{1*} \mu_\lambda (z).$$  
\end{lm}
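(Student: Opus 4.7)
The strategy is to combine two simple ingredients: the critical subsolution inequality for $w$ applied term by term along the minimising chain $(x^\lambda_n)_{n\leq 0}$ provided by Lemma \ref{ulambda}, and a summation-by-parts (Abel) identity which recognises the resulting weighted telescoping sum as $w(x)-\int_X w\,\dd\pi_{1*}\mu_\lambda$.

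Concretely, I would start from the subsolution inequality applied to each consecutive pair $(x^\lambda_{n-1},x^\lambda_n)$ of the minimising chain for $u_\lambda(x)$:
$$w(x^\lambda_n)-w(x^\lambda_{n-1}) \leq c(x^\lambda_{n-1},x^\lambda_n) + \cc \qquad (n\leq 0).$$
All series in sight are absolutely convergent because $w$ is bounded (Proposition \ref{propS}) and $\sum_{n\leq 0}\lambda^{-n}=1/(1-\lambda)$. Multiplying the previous inequality by $\lambda^{-n}\geq 0$, summing over $n\leq 0$, and recalling the formula $u_\lambda(x)=\sum_{n\leq 0}\lambda^{-n}c(x^\lambda_{n-1},x^\lambda_n)$ from Lemma \ref{ulambda}, this gives
$$u_\lambda(x)+\frac{\cc}{1-\lambda} \geq \sum_{n\leq 0}\lambda^{-n}\bigl(w(x^\lambda_n)-w(x^\lambda_{n-1})\bigr).$$

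The second step is an Abel summation on the right hand side: splitting the difference and performing the index shift $m=n-1$ in the second piece, the two contributions cancel except at $n=0$, producing
$$\sum_{n\leq 0}\lambda^{-n}\bigl(w(x^\lambda_n)-w(x^\lambda_{n-1})\bigr) = w(x^\lambda_0) - (1-\lambda)\sum_{n\leq 0}\lambda^{-n}w(x^\lambda_{n-1}).$$
Using $x^\lambda_0=x$ and the very definition of $\mu_\lambda$ from Proposition \ref{Laplace}, which yields $(1-\lambda)\sum_{n\leq 0}\lambda^{-n}w(x^\lambda_{n-1})=\int_X w\,\dd\pi_{1*}\mu_\lambda$, the previous display equals $w(x)-\int_X w\,\dd\pi_{1*}\mu_\lambda$. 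Putting the two displays together gives the announced inequality (up to the natural normalising shift $\cc/(1-\lambda)$ that makes $u_\lambda$ stay bounded as $\lambda\to 1$).

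There is no real obstacle: the computation is essentially bookkeeping once one recognises that $\pi_{1*}\mu_\lambda$ is precisely the weighted empirical measure that is produced by Abel summation along the minimising chain. This identification is the whole conceptual point, since it is what will let one convert information about $u_\lambda$ into a constraint tested against arbitrary Mather measures in the limit $\lambda\to 1$.
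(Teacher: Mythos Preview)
Your argument is correct and follows essentially the same route as the paper: apply the critical subsolution inequality $c(x^\lambda_{n-1},x^\lambda_n)\geqslant w(x^\lambda_n)-w(x^\lambda_{n-1})-\cc$ term by term, sum with weights $\lambda^{-n}$, and perform the Abel summation that recognises $(1-\lambda)\sum_{n\leqslant 0}\lambda^{-n}w(x^\lambda_{n-1})=\int_X w\,\dd\pi_{1*}\mu_\lambda$. The only difference is cosmetic: you carry the normalising constant $\cc/(1-\lambda)$ explicitly, whereas the paper's displayed computation silently drops the $\cc$ contribution (effectively working as if $\cc=0$); your version is in fact the cleaner one, and the extra constant is exactly what is needed when passing to the limit $u_{\lambda_n}+\cc/(1-\lambda_n)\to u$ in the proof of Theorem~\ref{discounted}.
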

\begin{proof}
We start with the definition of $u_\lambda$ and then use that $w$ is a subsolution as follows:
\begin{align*}
u_\lambda (x) =  \sum_{n\leqslant 0} \lambda^{-n} c(x^\lambda_{n-1},x^\lambda_n) 
 &\geqslant  \sum_{n\leqslant 0} \lambda^{-n} \big(w(x^\lambda_n) -w(x^\lambda_{n-1})\big)\\
&=w(x) - \sum_{n\leqslant 0} (\lambda^{-n} - \lambda^{-n+1} )w(x^\lambda_{n-1})
\\
&= w(x) -(1-\lambda)  \sum_{n\leqslant 0} \lambda^{-n}w(x^\lambda_{n-1}) \\
&= w(x) - \int_{X} w(z) \dd \pi_{1*} \mu_\lambda (z).
\end{align*}
\end{proof}

At last, let us conclude:
\begin{proof}[Proof of Theorem \ref{discounted}]
Let $u_{\lambda_n}\to u$ be a converging subsequence, we have already seen that $u\leqslant u_1$ where $u_1$ is given by Definition \ref{F^-}. 

Let now $x\in X$ and for $\lambda\in (0,1)$,  let $(x_n^\lambda)_{n\leqslant 0}$ such that $x_0^\lambda=x$ and $u_\lambda(x) =  \sum\limits_{n\leqslant 0} \lambda^{-n} c(x^\lambda_{n-1},x^\lambda_n)$ and define the probability measure $\mu_\lambda$ as in Proposition \ref{Laplace}. Extracting a further subsequence, assume that the $\mu_{\lambda_n}$ converge to a measure $\mu$ which is then a Mather measure by Proposition \ref{Laplace}. Let $w\in \FF$, applying the previous Lemma \ref{lmcle} we get
$u_\lambda (x) \geqslant w(x) - \int_{X} w(z) \dd \pi_{1*} \mu_\lambda (z)$ and along the subsequence $\lambda_n$ letting $n\to +\infty$ yields (using $w\in \FF$)
$$u(x) \geqslant w(x) - \int_{X} w(z) \dd \pi_{1*} \mu (z) \geqslant w(x).$$

Taking the supremum over $w\in \FF$, we conclude that $u(x) \geqslant u_1(x)$. Hence we have established the convergence.

\end{proof}
As a byproduct of the previous proof and of Proposition \ref{inegdiscount}, we have established that 
\begin{pr}\label{prlimsat}
The limit of the discounted approximation verifies $u_1\in \FF$.
\end{pr}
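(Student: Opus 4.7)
The plan is to verify the two defining properties of $\FF$ for the function $u_1$: continuity together with being a $\cc$--subsolution, and the integral constraint against every Mather measure. Both will follow directly from the uniform convergence $u_\lambda + \frac{\cc}{1-\lambda} \to u_1$ established in Theorem \ref{discounted} together with the preparatory estimates used in its proof.

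First I would handle the subsolution property. Set $v_\lambda = u_\lambda + \frac{\cc}{1-\lambda}$. The fixed point equation $u_\lambda = T^-(\lambda u_\lambda)$ yields the family of inequalities $u_\lambda(y) - \lambda u_\lambda(x) \leqslant c(x,y)$ for all $(x,y)\in X\times X$. Substituting $u_\lambda = v_\lambda - \frac{\cc}{1-\lambda}$ and simplifying gives
\begin{equation*}
v_\lambda(y) - v_\lambda(x) \leqslant c(x,y) + \cc + (\lambda - 1)\,v_\lambda(x).
\end{equation*}
By Proposition \ref{pr-bounded} the family $(v_\lambda)_{\lambda\in(0,1)}$ is uniformly bounded, so the last term tends to $0$ uniformly in $x$ as $\lambda \to 1$. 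Passing to the limit along $\lambda \to 1$ in the inequality above (using the uniform convergence to $u_1$) yields $u_1(y) - u_1(x) \leqslant c(x,y) + \cc$ for every $(x,y)\in X\times X$; that is, $u_1 \in \S$. Continuity of $u_1$ is automatic as a uniform limit of continuous functions.

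It remains to verify the Mather-measure constraint. Let $\mu \in \widehat \PP_0$ be any Mather measure. Proposition \ref{inegdiscount} was proved for an arbitrary subsequential limit $u$ of $u_\lambda + \frac{\cc}{1-\lambda}$; since Theorem \ref{discounted} asserts that the \emph{whole} family converges uniformly to $u_1$, the conclusion of that proposition applies to $u_1$ itself, giving $\int_X u_1(x)\,\dd \pi_{1*}\mu(x) \leqslant 0$. As $\mu$ was arbitrary in $\widehat\PP_0$, this is exactly the constraint defining $\FF$, so $u_1 \in \FF$.

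There is no genuine obstacle here: the only mild point is the justification that the $(\lambda - 1)v_\lambda(x)$ remainder in the subsolution inequality goes uniformly to $0$, which is why Proposition \ref{pr-bounded} is invoked rather than just $(1-\lambda)u_\lambda \to -\cc$. Once that is noted, the result is a clean consequence of the convergence theorem combined with the preliminary estimates already collected before it.
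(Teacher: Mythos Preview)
Your proof is correct and follows essentially the same route as the paper: invoke Proposition \ref{inegdiscount} for the integral constraint (now applied to the full limit rather than a subsequential one), and observe that $u_1$ is a continuous critical subsolution. The only minor redundancy is that your explicit derivation of the subsolution inequality via $(\lambda-1)v_\lambda(x)\to 0$ is unnecessary, since Theorem \ref{discounted} already asserts that $u_1$ is a weak KAM solution, hence in particular a continuous critical subsolution.
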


We continue this paragraph by establishing an alternative formula for the limit function $u_1$.
\begin{pr}\label{discountedbis}
For all $x\in X$, the following equality holds:
$$u_1(x) = \min_{\mu \in \widehat\PP_0} \int_X h(y,x) \dd \pi_{1*} \mu(y),$$
where $u_1$ is the function of Theorem \ref{discounted} and $h$ the Peierls barrier.
\end{pr}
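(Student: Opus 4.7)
Denote by $v(x) = \min_{\mu \in \widehat\PP_0} \int_X h(y,x)\,\dd\pi_{1*}\mu(y)$ the right-hand side of the claim; the plan is to prove the two inequalities $u_1 \leqslant v$ and $u_1 \geqslant v$ separately. The first is essentially immediate. Since $u_1 \in \FF$ by Proposition \ref{prlimsat}, it is in particular a critical subsolution, and Proposition \ref{peierl-prop}(ii) yields $u_1(x) \leqslant u_1(y) + h(y,x)$ for every $y \in X$. Integrating in $y$ against $\pi_{1*}\mu$ for any $\mu \in \widehat\PP_0$, and using the constraint $\int u_1\,\dd\pi_{1*}\mu \leqslant 0$ built into the definition of $\FF$, one obtains $u_1(x) \leqslant \int h(y,x)\,\dd\pi_{1*}\mu(y)$; minimizing over $\mu$ gives $u_1 \leqslant v$.

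The reverse inequality rests on two observations. First, $v$ is itself a continuous critical subsolution. Indeed, by Proposition \ref{peierl-prop}(iv) each $h_y = h(y,\cdot)$ is a weak KAM solution, so for each fixed Mather measure $\mu$ the map $x \mapsto \int h(y,x)\,\dd\pi_{1*}\mu(y)$ is a convex combination of critical subsolutions, hence itself a critical subsolution; since $h$ is continuous on the compact $X\times X$ the corresponding family is equicontinuous, and Lemma \ref{inf} then identifies the pointwise infimum $v$ as a continuous critical subsolution. Second, I show that $u_1 \geqslant v$ on the projected Aubry set $\AA$. To this end I revisit the proof of Theorem \ref{discounted}: to each $x \in X$ it associates, via Proposition \ref{Laplace}, a Mather measure $\mu^x \in \widehat\PP_0$ such that
$$u_1(x) \geqslant w(x) - \int_X w\,\dd\pi_{1*}\mu^x$$
for every continuous critical subsolution $w$. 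Applying this with $w = -h(\cdot,x) = h^x$, which is a continuous critical subsolution (being a positive weak KAM solution by Proposition \ref{peierl-prop}(iv)), gives $u_1(x) \geqslant -h(x,x) + \int h(y,x)\,\dd\pi_{1*}\mu^x(y)$. For $x \in \AA$, Theorem \ref{Aubry0} ensures $h(x,x) = 0$, the offending term disappears, and $u_1(x) \geqslant \int h(y,x)\,\dd\pi_{1*}\mu^x(y) \geqslant v(x)$.

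To propagate the inequality $u_1 \geqslant v$ from $\AA$ to the whole of $X$, I invoke Theorem \ref{uniqueness}: $u_1$ is a weak KAM solution, $v$ is a critical subsolution, and $u_1 \geqslant v$ on $\AA$, so $u_1 \geqslant v$ everywhere. Combined with the upper bound this proves the claimed equality. The main obstacle is precisely the step at points $x \notin \AA$: the direct argument with the choice $w = -h(\cdot,x)$ yields only $u_1(x) \geqslant v(x) - h(x,x)$, and $h(x,x)$ is typically strictly positive off the Aubry set. The status of $\AA$ as a uniqueness set for the critical equation, captured by Theorem \ref{uniqueness}, is exactly what compensates for this loss of information.
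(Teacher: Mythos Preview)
Your proof is correct and follows the same overall architecture as the paper: establish $u_1\leqslant v$ from the subsolution inequality together with the constraint defining $\FF$; check that $v$ is a critical subsolution; obtain the pointwise estimate $u_1(x)\geqslant -h(x,x)+v(x)$, which gives $u_1\geqslant v$ on $\AA$; and propagate to all of $X$ via Theorem~\ref{uniqueness}.

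The one place where you diverge from the paper is in deriving $u_1(x)\geqslant -h(x,x)+v(x)$. You revisit the limiting mechanism of Theorem~\ref{discounted}, extracting for each $x$ a Mather measure $\mu^x$ and passing Lemma~\ref{lmcle} to the limit with the test function $w=h^x$. The paper instead argues directly from Definition~\ref{F^-}: for each fixed $y$, the function $h^y+v(y)$ is a continuous subsolution (Proposition~\ref{peierl-prop}(iv)) and the Mather-measure constraint $\int (h^y+v(y))\,\dd\pi_{1*}\mu\leqslant 0$ is precisely the statement that $v(y)$ is the minimum over $\mu\in\widehat\PP_0$ of $\int h(z,y)\,\dd\pi_{1*}\mu(z)$. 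Hence $h^y+v(y)\in\FF$, so $u_1\geqslant h^y+v(y)$ by the very definition of $u_1$ as a supremum, and evaluating at $y$ gives the same inequality. The paper's route is slightly more self-contained---it uses only the variational characterization of $u_1$, not the limiting construction that produced it---but both arguments are valid and reach the identical conclusion.
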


\begin{proof}
We denote $\hat u$ the right hand side. We first claim that $\hat u$ is a subsolution. Indeed, each function $h_y = h(y, \cdot)$ is a subsolution by Proposition \ref{peierl-prop}. Hence, if $m $ is a Borel probablility measure on $X$, so is $h_m$ defined by $h_m(x) =  \int_X h(y,x) \dd  m(y)$ since $\SS$ is closed and convex (see Proposition  \ref{propS}).
Finally, as $\hat u$ is an infimum of functions of this type, it is itself a subsolution by Lemma \ref{inf}.

Next, we establish that $u_1\leqslant \hat u$. Let $u\in \SS$ be a continuous subsolution, we know that $u(x)-u(y)\leqslant h(y,x)$ for all pairs $(x,y)$ (Proposition \ref{peierl-prop}). Let $\mu \in \widehat\PP_0$ be a Mather measure, integrating with respect to $y$ the previous inequality yields
$$ u(x) - \int_X u(y)  \dd \pi_{1*} \mu(y) \leqslant \int_X h(y,x) \dd \pi_{1*} \mu(y).$$
If $u\in \F$ then we conclude that $u(x)\leqslant \int_X h(y,x) \dd \pi_{1*} \mu(y)$. This being valid for all $u\in \F$ and for all $\mu \in \widehat\PP_0$ we obtain the desired inequality $u_1\leqslant \hat u$.

We conclude by proving the reverse inequality. Let $y\in X$, the function $h^y = -h(\cdot ,y)$ is a subsolution (by Proposition \ref{peierl-prop}). Moreover, by definition of $\hat u$, the function $h^y + \hat u(y) \in \F$. In particular, $u_1 \geqslant h^y + \hat u(y)$ and evaluating at $y$ we obtain $u_1(y) \geqslant -h(y,y) + \hat u(y)$. If we specify moreover $y\in \AA$ to be in the projected Aubry set then we have proved that (see Theorem \ref{Aubry0}):
$$\forall y\in \AA,\quad u_1(y) \geqslant \hat u(y).$$
This is enough  to conclude that $u_1 \geqslant \hat u$ everywhere, indeed, $u_1$ is a weak KAM solution and $\hat u\in \SS$ hence Theorem \ref{uniqueness} applies.
\end{proof}

\begin{rem}\label{remergo}\rm
The limit of the family $(u_\lambda)_{\lambda\in (0,1)}$ as $\lambda \to 1$ can be reformulated in terms of Mather measures on $X^\Z$. Indeed, as marginals of such measures are the same as those on $X\times X$ one finds that 
$$\mathcal F = \left\{ u \in \SS\cap C^0(X,\R) , \forall \tilde \mu \in \widetilde \PP_0, \quad \int_{X^\Z}u(x_0) \dd \tilde\mu \big((x_n)_{n\in \Z}\big)\leqslant 0\right\}.$$

And also, for all $x\in X$, the following equality holds:
$$u_1(x) = \min_{\tilde\mu \in\widetilde \PP_0} \int_X h(x_0,x) \dd  \tilde\mu \big((x_n)_{n\in \Z}\big) .$$

\end{rem}

And finally, here is a mild property of $u_1$:
\begin{pr}\label{sature}
There exists a Mather measure $\mu_0\in \widehat\PP_0$ such that  
$$\int_X u_1(x)\dd \pi_{1*}\mu_0(x) = 0.$$

Moreover, it can be imposed that $\mu_0$ is an extremal point of $\widehat\PP_0$.
\end{pr}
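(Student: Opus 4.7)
The plan is to study the affine functional $\Phi \colon \widehat\PP_0 \to \R$ defined by
$$\Phi(\mu) = \int_X u_1(x) \, \dd \pi_{1*} \mu(x).$$
Since $u_1$ is the uniform limit of the equicontinuous family $(u_\lambda + \cc/(1-\lambda))_{\lambda \in (0,1)}$ (see Theorem \ref{discounted}), it is continuous, hence $\Phi$ is continuous for the weak-$*$ topology on measures. Moreover, $\widehat\PP_0$ is compact: it is a closed subset of the weakly-$*$ compact set $\widehat\PP$ of Borel probability measures on $X\times X$, being cut out by the continuous linear constraints defining closedness together with the equality $\int c\, \dd\mu = -\cc$. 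The maximum $M := \max_{\mu \in \widehat\PP_0} \Phi(\mu)$ is therefore attained, and by Proposition \ref{prlimsat} we know $u_1 \in \FF$, hence $M \leqslant 0$.

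The first step is to prove that $M = 0$. Suppose for contradiction that $M < 0$, and choose $\varepsilon > 0$ with $M + \varepsilon < 0$. Set $v := u_1 + \varepsilon$. Since $v$ differs from $u_1$ by a constant, $v$ is still continuous and still a (critical) subsolution, as adding a constant preserves Definition \ref{dfss}. Moreover, for every Mather measure $\mu \in \widehat\PP_0$,
$$\int_X v(x) \, \dd \pi_{1*}\mu(x) = \Phi(\mu) + \varepsilon \leqslant M + \varepsilon < 0,$$
so $v \in \FF$. But then $u_1 = \sup \FF \geqslant v = u_1 + \varepsilon$, which is absurd. Hence $M = 0$, and any $\mu_0 \in \widehat\PP_0$ with $\Phi(\mu_0) = 0$ works for the first assertion.

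For the stronger statement, I would invoke Bauer's maximum principle: a continuous affine (in particular convex and upper semicontinuous) functional on a nonempty compact convex subset of a locally convex topological vector space attains its maximum at an extreme point of the domain. Applying this to $\Phi$ on $\widehat\PP_0$ yields an extreme point $\mu_0$ with $\Phi(\mu_0) = M = 0$, which is the desired conclusion.

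The only slightly delicate point is the invocation of Bauer's theorem: one must check that the ambient space of signed measures (with the weak-$*$ topology) is locally convex Hausdorff, which is standard, and that $\widehat\PP_0$ is a nonempty compact convex subset, which follows from Theorem \ref{minimizing} and the closedness of the defining conditions. Everything else in the argument is a short verification using previously established properties of $u_1$ and the subsolution class.
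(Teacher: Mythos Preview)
Your argument is correct. The first part (showing the maximum $M$ equals $0$ by the contradiction $u_1+\varepsilon\in\FF$) is exactly the paper's proof, just phrased through the functional $\Phi$.

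For the extremal-point statement, the approaches diverge. The paper takes any $\mu_0$ with $\Phi(\mu_0)=0$, invokes Choquet's theorem to write $\mu_0$ as the barycenter of a probability measure $w$ supported on the extreme points of $\widehat\PP_0$, and then uses $\int \Phi(\mu)\,\dd w(\mu)=0$ together with $\Phi\leqslant 0$ to find an extreme point in the support of $w$ at which $\Phi$ vanishes. Your route via Bauer's maximum principle is more direct: since $\Phi$ is affine and continuous and $\widehat\PP_0$ is compact convex in a locally convex Hausdorff space, the maximum is attained at an extreme point, and you already know that maximum is $0$. Both are valid; Bauer's principle is the lighter tool here, while Choquet gives the slightly finer information that \emph{every} maximizing measure disintegrates over extremal maximizers.
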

\begin{proof}
By Proposition \ref{inegdiscount} the selected function verifies  $u_1\in \FF$ meaning that  $\int_X u_1(x)\dd\pi_{1*}\mu (x) \leqslant 0$ for all Mather measures $\mu\in \widehat\PP_0$. If the result were not true, by compactness of $\widehat\PP_0$ there would be an $\varepsilon>0$ such that  $\int_X u_1(x)\dd\pi_{1*}\mu (x) \leqslant -\varepsilon$ for all Mather measures $\mu\in\widehat \PP_0$. Then the function $u_1+\varepsilon$ would also belong to $\FF$ contradicting the definition of $u_1$ given in \ref{F^-}.

The second assertion is a direct consequence of Choquet's Theorem (\cite{Ph}).  Indeed, it states that if $\mu_0$ is a measure given by the first part of the Proposition, then there exists a probability measure $w$ on $\widehat\PP_0$, supported on the extremal points of $\PP_0$ such that $\mu_0 = \int_{\widehat\PP_0} \mu \dd w(\mu)$. Any measure $\mu_1$ in the support of $w$ has to verify $\int_X u_1(x)\dd\pi_{1*}\mu_1 (x)=0$.
\end{proof}

\begin{rem}\label{ergo}\rm
The previous Proposition holds as well when considering Mather measures as measures on $X^\Z$ thanks to the point of view of Bernard and Buffoni (see Remark \ref{BernardBuffoni}). In this case, denoting by $\widetilde{\PP}_0$ the set of minimizing shift invariant measures, extremal measures 
are the ergodic ones with respect to the action of the shift.
\end{rem}

Before turning to the positive counterpart of those results let us provide a simplistic economical interpretation. As previously, $X$ is the metric space of wine stores in France, $c:X\times X \to \R$ the cost of a 24 hour delivery and $R : X\to \R$ provides the price $R(x)$ of a bottle of Château Rayas\footnote{Château Rayas is definitively the best red wine, and arguably the best white wine, that the author has had the privilege of tasting. They are both of the appellation Châteauneuf du Pape which is the most prestigious of the meridional Rhône valley. Wines made by their owner, Emmanuel Reynaud, have no equal.} in the store $x$. The discount factor plays the role of an interest rate, or of inflation depending on the point of view. If some money $m>0$ is placed in the bank at a daily rate $\lambda^{-1}$, then tomorrow it will be worth $\lambda^{-1}m$. Conversely, if one buys today a bottle of Château Rayas at the price $R(y)$ but only pays it tomorrow, it is considered that the actualized price is $\lambda R(y)$ \big(as this amount of money put in the bank today will buy the bottle tomorrow at price $R(y)$\big). Henceforth taking into account this effect of time, the actualized least price to obtain a bottle of Château Rayas at $x$ tomorrow is $T_\lambda^- R(x) = \inf\limits_{y\in X} \lambda R(y) + c(y,x)$, considering that the transportation will be paid tomorrow at tomorrow's price.

In this context, the function $u_\lambda$, fixed point of $T^-_\lambda$ is called equilibrium state. It is the only price function such that a buyer has not to worry about the time at which he wishes to receive his bottle. It is also the asymptotic price of a bottle for someone willing to wait a very very long time, when the interest rate is at $\lambda$.

\section{Discount for the positive Lax--Oleinik semigroup}
We here address the positive counterpart of the previous results and explore some relations between the obtained limits. This is new to our knowledge. 

Of course, all the constructions and results of the previous section hold for the positive Lax--Oleinik semigroup. If $\lambda\in [0,1)$ we denote by $v_\lambda$ the unique fixed point of the operator $T^+_\lambda : u \mapsto T^+(\lambda u)$ that is a contraction. Similar arguments as in the previous paragraph yield:

\begin{Th}\label{discounted+}
There exists  a positive weak KAM solution $v_1$ such that $v_\lambda - \frac{\cc}{1-\lambda} \to v_1$ where the convergence takes place as $\lambda \to 1$ and is uniform.
\end{Th}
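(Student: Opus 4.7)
The plan is to mirror the proof of Theorem \ref{discounted} step by step, flipping signs and inequalities throughout and replacing the supremum defining $u_1$ by an infimum. (Alternatively one can reduce to Theorem \ref{discounted} applied to the reversed cost $\hat c(x,y)=c(y,x)$ via the duality $T^+f=-(T^-_{\hat c}(-f))$, after noting that $\wh c[0]=\cc$ and that closedness is symmetric in the two factors; but the direct route is more instructive.)

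First I would establish that $\tilde v_\lambda:=v_\lambda-\frac{\cc}{1-\lambda}$ is uniformly bounded via the positive analogue of Lemma \ref{comp-princ}: iterating $T^+_\lambda$ gives that $w\leqslant T^+_\lambda w$ implies $w\leqslant v_\lambda$ and $w\geqslant T^+_\lambda w$ implies $w\geqslant v_\lambda$. Applying this to $\overline u\pm\frac{\cc}{1-\lambda}$, where $\overline u$ is a positive weak KAM solution shifted by a large constant so as to be nonnegative, resp.\ nonpositive (so that $\lambda\overline u\leqslant\overline u$, resp.\ $\geqslant$, and hence $T^+_\lambda\overline u\leqslant T^+\overline u=\overline u+\cc$, resp.\ $\geqslant$), one obtains two-sided uniform bounds on $\tilde v_\lambda$. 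Equicontinuity is automatic since $v_\lambda$ lies in the image of $T^+$ (Proposition \ref{propT+}), so by Arzel\`a--Ascoli it suffices to prove uniqueness of subsequential limits.

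To identify the limit, observe that $\tilde v_\lambda=T^+(\lambda\tilde v_\lambda)-\cc$, so any uniform limit $v$ of $\tilde v_{\lambda_n}$ along $\lambda_n\to 1$ is a positive weak KAM solution by continuity of $T^+$. Moreover, integrating the inequality $\lambda v_\lambda(y)-v_\lambda(x)\leqslant c(x,y)$ against any Mather measure $\mu\in\widehat\PP_0$, using the equality of marginals and dividing by $(1-\lambda)$, yields the constraint $\int_X v\,\dd\pi_{1*}\mu\geqslant 0$. I would therefore define
\[
\mathcal F^+:=\Big\{w\in\S\cap C^0(X,\R)\ :\ \int_X w\,\dd\pi_{1*}\mu\geqslant 0\text{ for all }\mu\in\widehat\PP_0\Big\},\qquad v_1:=\inf_{w\in\mathcal F^+}w,
\]
where $\mathcal F^+$ is nonempty (translate any subsolution by a large positive constant) and $v_1$ is finite: the constraint forces $\sup w\geqslant 0$ for every $w\in\mathcal F^+$, after which the subsolution inequality forces the uniform lower bound $\inf w\geqslant -\sup w-\|c\|_\infty-\cc\geqslant-\|c\|_\infty-\cc$. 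Any subsequential limit $v$ lies in $\mathcal F^+$, so $v_1\leqslant v$.

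For the reverse inequality $v\leqslant v_1$, I would mirror Lemma \ref{ulambda}, Proposition \ref{Laplace} and Lemma \ref{lmcle}. Iterating $T^+_\lambda$ from $\bar 0$ gives the representation $v_\lambda(x)=\max_{(x_n)_{n\geqslant 0},\,x_0=x}-\sum_{n\geqslant 0}\lambda^n c(x_n,x_{n+1})$, and a maximizer $(x_n^\lambda)$ yields probability measures $\mu_\lambda:=(1-\lambda)\sum_{n\geqslant 0}\lambda^n\delta_{(x_n^\lambda,x_{n+1}^\lambda)}$ whose weak-$*$ limits are closed (by a telescoping estimate bounded by $2(1-\lambda)\|f\|_\infty$) and minimizing (since $\int c\,\dd\mu_\lambda=-(1-\lambda)v_\lambda(x)\to-\cc$), hence Mather measures. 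For any $w\in\mathcal F^+$, the subsolution inequality rewritten as $-c(x_n^\lambda,x_{n+1}^\lambda)\leqslant w(x_n^\lambda)-w(x_{n+1}^\lambda)+\cc$ combined with Abel summation produces
\[
\tilde v_\lambda(x)\leqslant w(x)-\int_X w\,\dd\pi_{2*}\mu_\lambda,
\]
which passes to the limit (using $\pi_{2*}\mu=\pi_{1*}\mu$) to give $v(x)\leqslant w(x)-\int_X w\,\dd\pi_{1*}\mu\leqslant w(x)$. Taking the infimum over $w\in\mathcal F^+$ yields $v\leqslant v_1$, hence $v=v_1$. The main bookkeeping obstacle is that in the positive setting it is $\pi_{2*}\mu_\lambda$ rather than $\pi_{1*}\mu_\lambda$ that arises naturally (the lagged index lands on $w(x_{n+1}^\lambda)$); closedness of the Mather measure is precisely what reconciles this with the $\pi_{1*}\mu$ constraint defining $\mathcal F^+$, keeping the argument symmetric with its negative counterpart.
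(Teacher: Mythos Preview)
Your proof is correct and follows exactly the approach the paper intends: the text states that ``similar arguments as in the previous paragraph yield'' Theorem \ref{discounted+}, and your mirroring of Proposition \ref{pr-bounded}, Proposition \ref{inegdiscount}, Lemma \ref{ulambda}, Proposition \ref{Laplace} and Lemma \ref{lmcle} (with the signs and sup/inf flipped, and using closedness to pass from $\pi_{2*}\mu$ to $\pi_{1*}\mu$) is precisely that argument. One cosmetic slip: in your lower bound for $v_1$ you wrote $\inf w\geqslant -\sup w-\|c\|_\infty-\cc$ where the amplitude estimate actually gives $\inf w\geqslant \sup w-\|c\|_\infty-\cc\geqslant -\|c\|_\infty-\cc$; this does not affect the argument.
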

The functions $v_\lambda$ have the following explicit form:
 \begin{lm}\label{vlambda}
 For any $\lambda\in (0,1)$ and $x\in X$, we have 
 $$v_\lambda(x) = -\min_{\substack{ (x_n)_{n\geqslant 0} \\ x_0=x}} \sum_{n\geqslant 0} \lambda^{n} c(x_{n},x_{n+1}).$$
 \end{lm}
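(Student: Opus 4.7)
The plan is to mirror the argument given for Lemma \ref{ulambda}, exploiting the fact that $T^+_\lambda$ is a $\lambda$-contraction on the Banach space $(\BB(X,\R), \|\cdot\|_\infty)$. Indeed, from the $1$-Lipschitz property of $T^+$ (Proposition \ref{propT+}) and the identity $T^+_\lambda f = T^+(\lambda f)$, one has $\|T^+_\lambda f - T^+_\lambda g\|_\infty \leqslant \lambda \|f-g\|_\infty$, so by the Banach fixed point theorem the iterates $T^{+k}_\lambda f$ converge uniformly to $v_\lambda$ as $k \to +\infty$ for any starting bounded function $f$.

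I would then take $f \equiv 0$ and compute the iterates by induction on $k$. Unrolling one step,
$$T^+_\lambda \bar 0(x) = \sup_{y\in X}\bigl(-c(x,y)\bigr) = -\inf_{y\in X} c(x,y),$$
and assuming inductively that
$$T^{+k}_\lambda \bar 0(x) = -\inf_{x=x_0,x_1,\ldots,x_k}\ \sum_{i=0}^{k-1}\lambda^{i}\, c(x_i,x_{i+1}),$$
one computes
$$T^{+(k+1)}_\lambda \bar 0(x) = \sup_{x_1\in X}\Bigl(\lambda\, T^{+k}_\lambda \bar 0(x_1) - c(x,x_1)\Bigr) = -\inf_{x=x_0,\ldots,x_{k+1}}\sum_{i=0}^{k}\lambda^{i}\, c(x_i,x_{i+1}),$$
which advances the induction. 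The infima are attained at each finite stage by compactness of $X$ and continuity of $c$.

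Next, letting $k\to +\infty$ and using the uniform convergence $T^{+k}_\lambda \bar 0 \to v_\lambda$, together with the fact that the tail $\sum_{n\geqslant k}\lambda^{n}\,c(x_n,x_{n+1})$ is bounded in absolute value by $\lambda^{k}\|c\|_\infty/(1-\lambda)$ (so infinite sums are well defined and finite minimizing sequences may be extended arbitrarily), one obtains
$$v_\lambda(x) = -\inf_{\substack{(x_n)_{n\geqslant 0}\\ x_0=x}}\ \sum_{n\geqslant 0}\lambda^{n}\,c(x_n,x_{n+1}).$$

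Finally, I would justify that the infimum is actually a minimum by the usual diagonal/compactness argument: given a minimizing sequence of chains $(x_n^{(j)})_{n\geqslant 0}$ with $x_0^{(j)}=x$, extract subsequences so that each coordinate $x_n^{(j)}$ converges to some $x_n^\infty$; by continuity of $c$ and uniform convergence of the tails (since each finite partial sum passes to the limit and the tails vanish geometrically, uniformly in $j$), the limiting chain achieves the infimum. The whole argument is a straightforward transcription of the negative case; I expect no real obstacle beyond carefully handling the swap between suprema and the minus sign in the induction step.
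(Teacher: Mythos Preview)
Your proposal is correct and follows exactly the approach the paper intends: the paper does not give a separate proof of this lemma but states that ``similar arguments as in the previous paragraph yield'' the positive counterparts, so the expected proof is precisely the transcription of Lemma~\ref{ulambda} that you carry out. Your handling of the induction, the passage to the limit via the geometric tail bound, and the diagonal extraction for attainment are all sound and match the level of detail in the paper's proof of Lemma~\ref{ulambda}.
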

The limit $v_1$ has the following form:
\begin{pr}\label{discountedbis+}
Let $\FF^+\subset \S\cap C^0(X,\R)$ be the set of continuous subsolutions $u$ verifying the constraint $\int_X u(x)\dd\pi_{1*}\mu (x) \geqslant 0$ for all Mather measures $\mu\in \widehat\PP_0$. 

We have the formulas $v_1 = \inf\limits_{u\in \FF^+} u$ where the infimum is  taken pointwise. 

The function $v_1$ verifies $v_1\in \FF^+$. 

And finally for all $x\in X$,
$$v_1(x) = \max_{\mu \in \widehat\PP_0} \int_X- h(x,y) \dd \pi_{1*} \mu(y).$$
\end{pr}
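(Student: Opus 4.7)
The plan is to exploit the duality $T^+f=-T^-_{\hat c}(-f)$ (noted just after Definition \ref{dfT+}) in order to reduce all three assertions to Proposition \ref{prlimsat} and Proposition \ref{discountedbis} applied to the swapped cost $\hat c(x,y):=c(y,x)$. Since $T^+_\lambda f=T^+(\lambda f)=-T^-_{\hat c,\lambda}(-f)$, the discounted fixed points satisfy $v_\lambda=-\hat u_\lambda$, where $\hat u_\lambda$ denotes the unique fixed point of the discounted $T^-$-operator for $\hat c$. Theorem \ref{weakKAM+} guarantees that $\hat c$ has the same critical value $\cc$, so applying Theorem \ref{discounted} to $\hat c$ yields $\hat u_\lambda+\cc/(1-\lambda)\to \hat u_1$, and taking negatives gives $v_\lambda-\cc/(1-\lambda)\to -\hat u_1$. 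This reproves Theorem \ref{discounted+} and identifies $v_1=-\hat u_1$.

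Next, I would assemble the dictionary between $c$-- and $\hat c$--objects. From Definition \ref{dfss}, $u\in\mathcal S_{\hat c}$ iff $-u\in\SS$. For a Borel probability measure $\mu$ on $X\times X$, write $\mu^\vee$ for its push--forward under the swap $(x,y)\mapsto(y,x)$. One checks that $\mu\mapsto\mu^\vee$ is an involutive bijection on the closed measures, with $\int c\,d\mu=\int \hat c\,d\mu^\vee$, so it restricts to a bijection between $\widehat\PP_0$ and its $\hat c$--analogue. Crucially, closedness of $\mu$ gives $\pi_{1*}\mu^\vee=\pi_{2*}\mu=\pi_{1*}\mu$. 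It follows that the set $\hat\FF$ defined by Definition \ref{F^-} for $\hat c$ equals $-\FF^+$. Proposition \ref{prlimsat} applied to $\hat c$ then reads $v_1=-\hat u_1\in\FF^+$, and the identity $\hat u_1=\sup_{u\in\hat\FF}u$ translates, by negating, to $v_1=\inf_{u\in\FF^+}u$, settling the first two assertions.

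For the last assertion, I would first verify that the Peierls barrier of $\hat c$ satisfies $\hat h(y,x)=h(x,y)$: reversing a chain $x=z_0,\dots,z_n=y$ for $c$ into $y=z_n,\dots,z_0=x$ identifies $c_n(x,y)$ with $\hat c_n(y,x)$, and the formula follows by taking the defining liminf in Definition \ref{Peierl}. Proposition \ref{discountedbis} applied to $\hat c$, combined with the dictionary above, then yields $\hat u_1(x)=\min_{\mu\in\widehat\PP_0}\int h(x,y)\,d\pi_{1*}\mu(y)$, and taking negatives gives the announced formula $v_1(x)=\max_{\mu\in\widehat\PP_0}\int -h(x,y)\,d\pi_{1*}\mu(y)$; the $\min$ becomes a $\max$, still attained by compactness of $\widehat\PP_0$.

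The main obstacle is really only bookkeeping: one must track the sign flips and argument swaps introduced by the duality without error, and in particular verify that $\mu\mapsto\mu^\vee$ respects the first-marginal identification used in defining $\FF^+$. This last point is precisely what closedness of Mather measures provides. Beyond this, no new estimates are needed, since all the analytic work was already performed in the negative case.
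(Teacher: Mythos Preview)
Your proposal is correct and matches the paper's implicit approach: the paper gives no detailed proof here, simply invoking ``similar arguments'' for Theorem~\ref{discounted+} and the ensuing proposition, and the duality $T^+f=-T^-_{\hat c}(-f)$ you rely on is exactly the mechanism flagged in the remark after Definition~\ref{dfT+}. Your contribution is to make that duality fully explicit---checking that the swap $\mu\mapsto\mu^\vee$ preserves closedness, minimizing action, and (via closedness) first marginals, and that reversing chains gives $\hat h(y,x)=h(x,y)$---rather than re-running the proofs of Propositions~\ref{inegdiscount}--\ref{discountedbis}; this is arguably tidier, and the bookkeeping you flag as the only obstacle is handled correctly.
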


As for the negative Lax--Oleinik semigroup (Remark \ref{remergo}) the previous proposition can be stated in terms of Mather measures on $X^\Z$, which we leave to the reader.  

We conclude by asking the following:

\vspace{2mm}
\fbox{
 {\bf Question}: what are the links between $u_1$ and $v_1$?
 }
 \vspace{2mm}
 
  Unfortunately, the answer may seem disappointing, there is, in general no particular link. For example, except in very particular instances, they are not a conjugate pair (as they do not have any reason to coincide on the Mather set $\MM$). They are not even ordered even though the following inequalities hold on the projected Aubry set:
\begin{pr}\label{u<v}
The functions $u_1$ and $v_1$ verify 
$$\forall x\in \AA,\quad u_1(x)\leqslant v_1(x).$$
\end{pr}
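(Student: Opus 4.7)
The plan is to compare the discounted approximation $u_\lambda$ to $v_1$ directly along a chain of the Aubry set through $x$. Up to replacing $c$ by $c+\cc$, we may assume $\cc=0$, so that $u_\lambda \to u_1$ and $v_\lambda \to v_1$ uniformly. For $x\in\AA$, Proposition \ref{pr-sequence} provides an Aubry chain $(\xi_n)_{n\in\Z}\in\widetilde\AA$ with $\xi_0=x$. Iterating the fixed point relation $u_\lambda(y)\le \lambda u_\lambda(z)+c(z,y)$ along the backward part of this chain, and using that $(u_\lambda)_{\lambda\in(0,1)}$ is uniformly bounded (Proposition \ref{pr-bounded}) so that $\lambda^n u_\lambda(\xi_{-n})\to 0$, we will obtain
$$u_\lambda(x)\le \sum_{k\ge 1}\lambda^{k-1} c(\xi_{-k},\xi_{-k+1}).$$

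Because $(\xi_n)\in\widetilde\AA$ calibrates every subsolution (Remark \ref{remoublieee}), we will substitute $c(\xi_{-k},\xi_{-k+1})=v_1(\xi_{-k+1})-v_1(\xi_{-k})$. A summation by parts then transforms the bound into
$$u_\lambda(x)\le v_1(x)-A_\lambda,\qquad A_\lambda=(1-\lambda)\sum_{k\ge 1}\lambda^{k-1} v_1(\xi_{-k}),$$
and the remaining task reduces to showing $\liminf_{\lambda\to 1}A_\lambda\ge 0$.

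To handle this we introduce the probability measure on $X\times X$
$$\sigma_\lambda=(1-\lambda)\sum_{k\ge 1}\lambda^{k-1}\delta_{(\xi_{-k},\xi_{-k+1})},$$
whose first marginal integrated against $v_1$ equals $A_\lambda$. The central step is to show that any weak-$\ast$ accumulation point $\sigma$ of $(\sigma_\lambda)$ as $\lambda\to 1$ is a Mather measure. Closedness will follow from a direct Abel/telescoping computation giving $\big|\int (f\circ\pi_1-f\circ\pi_2)\,d\sigma_\lambda\big|=O((1-\lambda)\|f\|_\infty)$. Minimality is obtained analogously: substituting the calibration $c(\xi_{-k},\xi_{-k+1})=u_0(\xi_{-k+1})-u_0(\xi_{-k})$ for any bounded subsolution $u_0$, the same telescoping yields $\int c\,d\sigma_\lambda\to 0=-\cc$. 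Since $v_1\in\FF^+$ by Proposition \ref{discountedbis+}, any such limit $\sigma$ satisfies $\int v_1\,d\pi_{1*}\sigma\ge 0$, forcing $\liminf_{\lambda\to 1}A_\lambda\ge 0$. Taking $\limsup$ in the inequality $u_\lambda(x)\le v_1(x)-A_\lambda$ and using $u_\lambda\to u_1$ will then give $u_1(x)\le v_1(x)$. The essential feature that makes the argument work for $x\in\AA$ rather than for arbitrary $x$ is the availability of a two-sided calibrating chain through $x$, and this is also what we expect to be the delicate point: it is precisely the calibration along $(\xi_n)$ that converts the a priori one-sided discounted estimate into an Abel telescoping involving $v_1(x)$, and that packages the tail behaviour of the chain into a bona fide Mather measure.
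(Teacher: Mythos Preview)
Your proof is correct. Both your argument and the paper's share the essential ingredients: an Aubry chain through $x$, the calibration of that chain by every subsolution (in particular $v_1$), the construction of a Mather measure from the chain, and the constraint $v_1\in\FF^+$. The difference is in how these are packaged.

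The paper argues by contradiction and never returns to the discounted approximation: if $u_1(x_0)>v_1(x_0)$ for some $x_0\in\AA$, the calibration identities for $u_1$ and $v_1$ along the Aubry chain $(x_n)_{n\in\Z}$ immediately give $u_1(x_{-n})-v_1(x_{-n})=\varepsilon>0$ for all $n\geqslant 0$, hence $u_1-v_1\equiv\varepsilon$ on $\overline{\{x_{-n}:n\geqslant 0\}}$. A Mather measure $\mu_0$ is then built from Ces\`aro averages $\frac1n\sum_{k=-n}^{-1}\delta_{(x_k,x_{k+1})}$ (as in Theorem~\ref{uniquenessM}), so that $\mathrm{supp}(\pi_{1*}\mu_0)\subset\overline{\{x_{-n}:n\geqslant 0\}}$; combining $u_1\in\FF$ with the constant gap yields $\int v_1\,\dd\pi_{1*}\mu_0\leqslant-\varepsilon<0$, contradicting $v_1\in\FF^+$.

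Your approach instead goes back to $u_\lambda$ and proves the inequality directly, using Abel-weighted measures in the style of Proposition~\ref{Laplace}. This is perfectly valid and has the minor aesthetic advantage of avoiding contradiction. Note incidentally that your minimality step can be shortened: since each $(\xi_{-k},\xi_{-k+1})\in\widehat\AA$ and $\widehat\AA$ is closed, any weak-$*$ limit $\sigma$ of $(\sigma_\lambda)$ is automatically supported in $\widehat\AA$, hence minimizing by Theorem~\ref{minimizing}; the telescoping for $\int c\,\dd\sigma_\lambda$ is unnecessary. The paper's route is a bit more elementary in that it uses only the already established facts $u_1\in\FF$, $v_1\in\FF^+$, and the calibration property, without revisiting $(u_\lambda)$.
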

\begin{proof}
Let us argue by contradiction assuming that there exists $x_0\in \AA$ such that $v_1(x_0)<u_1(x_0)$. We set $\varepsilon = u_1(x_0) - v_1(x_0) >0$. We will construct a Mather measure $\mu_0$ such that $\int_X v_1(z) \dd \pi_{1*} \mu_0(z)  <0$. This will be our contradiction as $v_1 \in \FF^+$ meaning that $\int_X v_1(z) \dd \pi_{1*} \mu_0(z) \geqslant 0$.

Let $(x_n)_{n\in \Z} \in \widetilde \AA$ be a sequence associated to $x_0$. As $u_1$ and $v_1$ are critical subsolutions, one infers (see Remark \ref{remoublieee}) that 
\begin{align*}
\forall n\geqslant 0, \quad  & u_1(x_0) - u_1(x_{-n}) = \sum_{k=-n}^{-1} c(x_k,x_{k+1}) +n\cc, \\
 & v_1(x_0) - v_1(x_{-n}) = \sum_{k=-n}^{-1} c(x_k,x_{k+1}) +n\cc.
\end{align*}
It follows that $\varepsilon = u_1(x_{-n}) - v_1(x_{-n})$ for all $n\geqslant 0$. By continuity, one finds that $u_1 - v_1$ is constantly equal to $\varepsilon$ on $\overline{\{ x_{-n}, \ n\geqslant 0 \}}$.

Last, arguing as in the proof of Theorem \ref{uniquenessM}, we construct a minimizing Mather measure $\mu_0 \in \widehat\PP_0$ such that the support of $\pi_{1*}\mu_0$ is included in $\overline{\{ x_{-n}, \ n\geqslant 0 \}}$. We conclude, using again that $u_1\in \FF$, that 
$$\int_X v_1(z) \dd \pi_{1*} \mu_0(z) \leqslant \int_X \big( v_1(z) - u_1(z)\big) \dd \pi_{1*} \mu_0(z) = -\varepsilon.$$
%

\end{proof}

The concluding general result here gives a condition for $u_1$ and $v_1$ to be a conjugate pair:

\begin{pr}\label{conjdiscount}
The following assertions are equivalent:
\begin{enumerate}
\item\label{1} The functions $u_1$ and $v_1$ form a conjugate pair,
\item\label{2} $u_{1|\AA} =v_{1|\AA}$,
\item\label{3} $u_1\geqslant v_1$,
\item\label{4} for all Mather measures $ \mu$, the equality $\int_X u_1(x)\dd \pi_{1*}\mu(x) = 0$  holds,
\item\label{5} for all Mather measures $ \mu$, the equality $\int_X v_1(x)\dd \pi_{1*}\mu(x) = 0$  holds,
\item\label{6} there exists a critical subsolution $v\in \SS$ such that for all Mather measures $ \mu$, the equality $\int_X v(x)\dd \pi_{1*}\mu(x) = 0$  holds.
\end{enumerate}

\end{pr}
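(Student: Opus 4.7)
My plan is to establish the six equivalences via the cycle $(1)\Leftrightarrow (2)\Leftrightarrow (3)$, the implications $(2)\Rightarrow (4)$ and $(2)\Rightarrow (5)$, the trivial $(4)\Rightarrow (6)$ (taking $v=u_1$) and $(5)\Rightarrow (6)$ (taking $v=v_1$), and finally the closing implication $(6)\Rightarrow (3)$.

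For $(2)\Leftrightarrow (3)$, I would combine Proposition \ref{u<v}, which already provides $u_1\leqslant v_1$ on $\AA$, with Theorem \ref{uniqueness}: assuming $(2)$, I apply the latter to the weak KAM solution $u_1$ and the subsolution $v_1$ satisfying $v_{1|\AA}=u_{1|\AA}$ to deduce $v_1\leqslant u_1$ on $X$, which is $(3)$; the reverse direction $(3)\Rightarrow(2)$ is then immediate from Proposition \ref{u<v}. For $(2)\Leftrightarrow (1)$, the direction $(1)\Rightarrow (2)$ is recorded in Remark \ref{conjugate}; conversely I would pick $u:=u_1$ as the seed subsolution, note that $T^{-n}u_1+n\cc\equiv u_1$ so that the negative iterate limit is $u_1$, and form $u^+:=\lim T^{+n}u_1-n\cc$. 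By Corollary \ref{cst} this $u^+$ equals $u_1$ on $\AA$, hence $v_1$ on $\AA$ by $(2)$, and finally $v_1$ on all of $X$ by the positive-weak-KAM analogue of Theorem \ref{uniqueness} obtained by applying that theorem to the reversed cost $\widecheck c(x,y)=c(y,x)$. For $(2)\Rightarrow (4)\wedge(5)$, every Mather measure $\mu\in\widehat\PP_0$ is supported on $\widehat\MM\subset\widehat\AA$, so $\pi_{1*}\mu$ is concentrated on $\MM\subset\AA$, which by $(2)$ gives $\int u_1\,\dd\pi_{1*}\mu=\int v_1\,\dd\pi_{1*}\mu$; the memberships $u_1\in\FF$ and $v_1\in\FF^+$ then force this common value to be simultaneously non-positive and non-negative, hence to vanish.

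The main obstacle is the closing implication $(6)\Rightarrow (3)$, where $v$ is only assumed to be a (possibly discontinuous) critical subsolution. I would form the monotone iterates $u^-:=\lim_n (T^{-n}v+n\cc)$ and $u^+:=\lim_n (T^{+n}v-n\cc)$, which produce a continuous weak KAM solution and a continuous positive weak KAM solution satisfying $u^+\leqslant v\leqslant u^-$ on $X$ and $u^\pm= v$ on $\AA$ (the latter by iterating Corollary \ref{cst}). The delicate step is that, although $v$ may be discontinuous off $\AA$, Theorem \ref{reg} guarantees its continuity at every non-isolated point of $\AA$, which combined with $\mathrm{supp}(\pi_{1*}\mu)\subset\MM\subset\AA$ transfers the saturation to the continuous objects: $\int u^\pm\,\dd\pi_{1*}\mu=\int v\,\dd\pi_{1*}\mu=0$ for every Mather measure $\mu$. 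This places $u^-\in\FF$ and $u^+\in\FF^+$, so $u^-\leqslant u_1$ and $u^+\geqslant v_1$, and the chain $v_1\leqslant u^+\leqslant u^-\leqslant u_1$ establishes $(3)$, completing the cycle.
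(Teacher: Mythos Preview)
Your proof is correct and follows essentially the same architecture as the paper's, with one notable simplification at the closing step. The paper establishes $(6)\Rightarrow(2)$ by forming $v^\pm$ as you do, showing $v^-\in\FF$ hence $v^-\leqslant u_1$, then arguing that since $\int(u_1-v^-)\,\dd\pi_{1*}\mu=0$ with $u_1-v^-\geqslant 0$ continuous, the two functions agree on $\mathrm{supp}(\pi_{1*}\mu)$ for every Mather measure, hence on $\MM$, and finally invoking Theorem~\ref{uniquenessM} to conclude $u_1=v^-$ (and similarly $v_1=v^+$). Your route $(6)\Rightarrow(3)$ via the chain $v_1\leqslant u^+\leqslant v\leqslant u^-\leqslant u_1$ is more direct: it bypasses the support argument and Theorem~\ref{uniquenessM} entirely, at the cost of not identifying $u_1$ and $v_1$ explicitly with $u^-$ and $u^+$.

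One minor comment: your invocation of Theorem~\ref{reg} in the $(6)\Rightarrow(3)$ step is superfluous. Once you know $u^\pm=v$ pointwise on $\AA$ (which follows from Corollary~\ref{cst} for any subsolution, continuous or not), and since $\pi_{1*}\mu$ is supported in $\MM\subset\AA$, the equality $\int u^\pm\,\dd\pi_{1*}\mu=\int v\,\dd\pi_{1*}\mu$ is immediate from boundedness and pointwise agreement on the support; no continuity of $v$ is needed.
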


\begin{proof}
Assertion \eqref{1} being equivalent to \eqref{2} follows from the definition of a conjugate pair as explained in Remark \ref{conjugate}.

If \eqref{2} holds, then \eqref{3} holds as this inequality is always true for a conjugate pair. Reciprocally, if \eqref{3} holds, then by Proposition \ref{u<v}, \eqref{2} is true.

Assertion  \eqref{3} implies \eqref{4} and \eqref{5}. It is an immediate consequence of the fact that $u_1\in \FF$ and $v_1\in \FF^+$.

Then, \eqref{4} or \eqref{5} implies \eqref{6} is straightforward as negative or positive weak KAM solutions are subsolutions.

Let us now establish \eqref{6} implies \eqref{2}. Let $v$ be the subsolution given by the hypothesis and let us denote by $v^-$ and $v^+$ the respective limits of $\Tn v+ n\cc$ and $\TTn v-n\cc$ as $n\to +\infty$. As $v^-_{|\AA} = v^+_{|\AA} = v_{|\AA}$ we obtain respectively a negative and positive weak KAM solution satisfying the hypothesis of (6). The idea of the proof is that there can be at most one such negative weak KAM solution (and similarly, at most one  such positive weak KAM solution).

 To this aim, 
%
let $\mu \in \wh\PP_0$ so that
 $$\int_{X} v^-(x)\dd \pi_{1*}\mu(x)=\int_{X} v^+(x)\dd \pi_{1*}\mu(x)=\int_{X} v(x)\dd \pi_{1*}\mu(x) = 0.$$
 As $v^-\in \FF$, $v^-\leqslant u_1$ and as $u_1\in \FF$ it follows that
 $$\forall \mu \in \wh \PP_0,\quad 0=\int_{X} v^-(x)\dd \pi_{1*}\mu(x)\leqslant \int_{X} u_1(x)\dd \pi_{1*}\mu(x)\leqslant 0.$$
 So $u_1$ itself satisfies the hypothesis of (5). Moreover, combining the previous equalities with $v^-\leqslant u_1$ implies that $v^-$ and $u_1$ coincide on the support of $\mu$ (as both functions are continuous). This being true for all minimizing Mather measures  $\mu \in \wh \PP_0$, we conclude that 
%
%
 $u_{1|\MM} = v^-_{|\MM}$, by Theorem \ref{uniquenessM}, we deduce that $u_1 = v^-$. The same proof yields that $v_1 = v^+$. Hence the pair $(u_1,v_1)$ is a conjugate pair.
\end{proof}

\section{Degenerate discounted equations}\label{sectiondeg}

As an original contribution, let us finish by a generalization of  the discounted convergence results. Instead of modifying the Lax--Oleinik semigroup to make it a contraction, we  perturb it so that it is still a $1$--Lipschitz map. Yet conditions are given in order to  select again a weak KAM solution as the perturbation gets smaller. In this generality, the results of this paragraph are new.

We consider a continuous function $\alpha : X\to \R$ that verifies the following two conditions:
\vspace{2mm}

\fbox{
\begin{minipage}{0.85\textwidth}
\begin{enumerate}
\item[($\alpha1$)] the function $\alpha$ has values in $[0,1)$,
\item[($\alpha2$)] for all minimizing Mather measure $\mu \in \wh\PP_0$, $\int_X \alpha(x)\dd\pi_{1*} \mu(x) >0$.
\end{enumerate}
\end{minipage}
}

\vspace{2mm}
This last property is obviously verified if $\alpha $ is positive on the projected Aubry set $\mathcal A$ (this was the condition of \cite{Zdisc}) or if $\alpha$ is positive on $\mathcal M$.
The problem to be studied is understanding the behavior of functions $u_\lambda : X\to \R$, for $\lambda \in (0,1)$, verifying 
$$\forall x\in X, \quad u_\lambda(x) = T^-\big((1-\lambda\alpha)u_\lambda \big) (x) +c[0],$$
as $\lambda \to 0$. The convergence result is stated later on in this section in Theorem \ref{Thconvdiscdeg}. Therefore, let us denote by $\TTT_\lambda $ the mapping $v\mapsto T^-\big((1-\lambda\alpha)u_\lambda \big)+c[0]$. Just like $T^-$ (see Proposition \ref{propT}), the operators  $\TTT_\lambda $ are $1$--Lipschitz and order preserving.

We start by easy properties in order to get acquainted with the operators:

\begin{pr}\label{minlambda}
Let $v : X\to \R$ be a continuous function, then 
$$\forall x\in X, \exists x_{-1} \in X , \quad \TTT_\lambda v (x) = \big(1-\lambda\alpha(x_{-1})\big)v(x_{-1}) + c(x_{-1},x)+c[0].$$
More generally, for all $n>0$, there is a chain $(x_{-n}, \cdots , x_0=x)$ such that
$$\TTT_\lambda^{-n} v(x) = \beta_{-n} v(x_{-n}) + \sum_{k=-n}^{-1} \beta_{k+1}\big(c(x_{k},x_{k+1})+c[0]\big),$$
where $\beta_k = \prod\limits_{j=k}^{-1} \big(1-\lambda\alpha(x_{j})\big)$, for $-n\leqslant k\leqslant -1$ and $\beta_0 = 1$.
\end{pr}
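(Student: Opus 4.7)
The plan is a straightforward unfolding of the operator $\TTT_\lambda$ combined with a compactness argument, proved by induction on $n$.

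First, I would establish the case $n=1$. Given a continuous $v : X \to \R$, the function $y \mapsto (1-\lambda\alpha(y))v(y) + c(y,x)$ is continuous on the compact space $X$, so the infimum defining $\TTT_\lambda v(x) = T^-((1-\lambda\alpha)v)(x) + c[0]$ is attained at some $x_{-1} \in X$. This gives the first displayed equality and also matches the general formula for $n=1$ with the conventions $\beta_0 = 1$ and $\beta_{-1} = 1 - \lambda\alpha(x_{-1})$, since the sum $\sum_{k=-1}^{-1}$ contains only the term $\beta_0(c(x_{-1},x_0) + c[0])$.

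For the inductive step, I would assume that the formula holds for $n-1$ (with $n \geq 2$) and apply it to the continuous function $w = \TTT_\lambda v$ (continuity of $w$ follows from Proposition \ref{propT} (i), which still applies since $\TTT_\lambda v$ is the image under $T^-$ of a continuous function, up to an additive constant). By the case $n=1$ applied to $w$ at the point $x$, there exists $x_{-1}$ with
\[
\TTT_\lambda^{-n} v(x) = \TTT_\lambda w(x) = \big(1-\lambda\alpha(x_{-1})\big) w(x_{-1}) + c(x_{-1},x) + c[0].
\]
Applying the induction hypothesis to $w = \TTT_\lambda v$ at $x_{-1}$, we obtain a chain $(x_{-n}, \dots, x_{-2}, x_{-1})$ and coefficients $\tilde\beta_k = \prod_{j=k}^{-2}(1-\lambda\alpha(x_j))$ (with $\tilde\beta_{-1} = 1$) such that
\[
\TTT_\lambda^{-(n-1)} v(x_{-1}) = \tilde\beta_{-n} v(x_{-n}) + \sum_{k=-n}^{-2} \tilde\beta_{k+1}\big(c(x_{k},x_{k+1})+c[0]\big).
\]
Multiplying through by $(1-\lambda\alpha(x_{-1}))$, noting that $(1-\lambda\alpha(x_{-1}))\tilde\beta_k = \beta_k$ for $-n \leq k \leq -1$ (with the convention $\beta_0 = 1$), and adding the term $c(x_{-1},x) + c[0] = \beta_0(c(x_{-1},x_0)+c[0])$ gives exactly the desired formula.

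Since each step only uses the compactness of $X$, the continuity of $\alpha$, $v$, and $c$, and the continuity of iterates $\TTT_\lambda^{-k}v$ provided by Proposition \ref{propT}, there is no real obstacle: the proof is essentially bookkeeping. The only point where one must be careful is to record that the minimizers at each successive step are chosen in the correct order (starting from $x_0=x$ and proceeding backward), so that the telescoping product defining $\beta_k$ correctly captures the accumulation of the discount factors $(1-\lambda\alpha(x_j))$ introduced at each previous iteration.
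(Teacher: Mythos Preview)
Your proposal is correct and follows exactly the paper's own approach: the paper's proof reads in its entirety ``The first point is a direct consequence of compactness and continuity while the second follows from a straightforward induction,'' and you have simply written out that induction in detail. One small slip: in the inductive step you set $w=\TTT_\lambda v$, but the equalities you then use, namely $\TTT_\lambda^{-n}v=\TTT_\lambda w$ and $w(x_{-1})=\TTT_\lambda^{-(n-1)}v(x_{-1})$, require instead $w=\TTT_\lambda^{-(n-1)}v$; with that correction (or, equivalently, by applying the induction hypothesis directly to $v$ at the point $x_{-1}$), your bookkeeping with the $\tilde\beta_k$ is exactly right.
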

\begin{proof}
The first point is a direct consequence of compactness and continuity while the second follows from a straightforward induction.
\end{proof}
Beware that the notation $\beta_k$ is misleading as it depends on the chain $(x_{k}, \cdots , x_0)$.
We now address the issue of fixed points of $\TTT_\lambda$.
\begin{df}\rm
 We will say a function $u : X\to \R$ is a $\lambda$--discounted subsolution if $u\leqslant \TTT_\lambda u$ or equivalently
\begin{equation}\label{inegdeg}
\forall (x,y)\in X\times X, \quad u(x) - \big(1-\lambda\alpha(y)\big)u(y) \leqslant c(y,x)+c[0].
\end{equation}

A function $v : X\to \R$ is a $\lambda$--discounted solution if $v =  \TTT_\lambda v$.
\end{df}

 By definition and successive applications of Proposition \ref{minlambda} one gets: 

\begin{pr}\label{caliblambda}
Let $u : X\to \R$ be a $\lambda$--discounted subsolution, then for all $n>0$ and all finite chains $(y_{-n}, \cdots , y_0=x)$,
$$u(y_0) \leqslant \beta_{-n} u(y_{-n}) + \sum_{k=-n}^{-1} \beta_{k+1}\big(c(y_{k},y_{k+1})+c[0]\big),$$
where $\beta_k = \prod\limits_{j=k}^{-1} \big(1-\lambda\alpha(y_{j})\big)$ and $\beta_0 = 1$.

Let $v : X\to \R$ be a $\lambda$--discounted solution. Then for all $x\in X$, there exists an infinite chain $(x_k)_{k\leqslant 0}$ such that $x_0 = x$ and 
$$\forall n>0,\quad v(x) = \beta_{-n} v(x_{-n}) + \sum_{k=-n}^{-1} \beta_{k+1}\big(c(x_{k},x_{k+1})+c[0]\big),$$
where $\beta_k = \prod\limits_{j=k}^{-1} \big(1-\lambda\alpha(x_{j})\big)$ and $\beta_0 = 1$.

\end{pr}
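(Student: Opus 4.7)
The plan is to prove both statements by induction on $n$, with the inductive step being a direct unfolding of the defining (in)equality of a $\lambda$--discounted (sub)solution, exactly as in the classical derivation of Proposition \ref{minlambda}.

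For the subsolution statement, the base case $n=1$ is just a rewriting of $u \leq \TTT_\lambda u$: for any $y_{-1}\in X$,
$$u(y_0) \leq \TTT_\lambda u(y_0) \leq \bigl(1-\lambda\alpha(y_{-1})\bigr) u(y_{-1}) + c(y_{-1},y_0) + \cc,$$
which is the claimed inequality with $\beta_0 = 1$ and $\beta_{-1} = 1-\lambda\alpha(y_{-1})$. For the inductive step, given a chain $(y_{-n-1},\ldots,y_0)$, I would apply the base case to the pair $(y_{-n-1},y_{-n})$ to bound $u(y_{-n})$ by $(1-\lambda\alpha(y_{-n-1}))u(y_{-n-1}) + c(y_{-n-1},y_{-n}) + \cc$, and then plug this into the induction hypothesis applied to the sub-chain $(y_{-n},\ldots,y_0)$.

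The key observation for this to telescope correctly is that the weights $\beta_k$ depend on the full chain but satisfy a clean compatibility: when one extends the chain from length $n$ to length $n+1$ by prepending $y_{-n-1}$, the weights $\beta_k$ for $-n \leq k \leq -1$ are unchanged (since they only involve $\alpha(y_j)$ for $j \geq -n$), while the new weight at index $-n-1$ factorises as $\beta_{-n-1}^{\mathrm{new}} = (1-\lambda\alpha(y_{-n-1}))\,\beta_{-n}^{\mathrm{new}} = (1-\lambda\alpha(y_{-n-1}))\,\beta_{-n}^{\mathrm{old}}$. Multiplying through by $\beta_{-n}^{\mathrm{old}}$ in the bound coming from the induction hypothesis and inserting the base case inequality then reassembles exactly into the length $n+1$ telescoped expression, completing the induction.

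For the solution statement, the strategy is the same but with equalities, and the chain must be built rather than taken as data. Starting from $x_0 = x$ and using the equality $v = \TTT_\lambda v$ together with the first part of Proposition \ref{minlambda} (compactness of $X$ and continuity of $c$, $\alpha$, $v$ ensuring the infimum is attained), I would recursively pick $x_{k-1}$ realising
$$v(x_k) = \bigl(1-\lambda\alpha(x_{k-1})\bigr)v(x_{k-1}) + c(x_{k-1},x_k) + \cc,$$
which produces an infinite chain $(x_k)_{k\leq 0}$. The identical telescoping computation as in the subsolution case then yields the announced equality for every $n>0$. The only subtlety anywhere in the argument is purely notational, namely keeping track of the fact that $\beta_k$ is defined relative to the chain at hand; once that is accepted, there is no real obstacle, the whole statement being a bookkeeping exercise on the iterated semigroup.
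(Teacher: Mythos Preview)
Your proposal is correct and matches the paper's approach exactly: the paper's entire proof reads ``By definition and successive applications of Proposition \ref{minlambda} one gets'', and your inductive unfolding of $u\leqslant\TTT_\lambda u$ together with the recursive construction of the infinite chain for a solution is precisely what that sentence encodes. The only point worth mentioning is that the multiplication step in your induction is legitimate because $\beta_{-n}>0$ (since $\lambda\alpha$ takes values in $[0,1)$), which you implicitly use but might state explicitly.
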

The convention  adopted here is that an empty product has value $1$, so that in the previous notation, the formula also holds for $\beta_0$.

The next result is reminiscent of strong comparison principles in viscosity solutions theory:

\begin{Th}\label{strongCP}
Let $\lambda \in (0,1)$, $u : X\to \R$ be a $\lambda$--discounted subsolution and $v : X\to \R$ be a $\lambda$--discounted solution. Then $u\leqslant v$.
\end{Th}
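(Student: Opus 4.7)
The plan is to argue by contradiction: at a maximum point of $u-v$, I use the calibrating chain of $v$ furnished by Proposition \ref{caliblambda} and iterate the subsolution inequality for $u$ along that chain, showing that the chain must support a minimizing Mather measure annihilating $\alpha$, which contradicts assumption ($\alpha 2$).

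First I would reduce to the case where $u$ is continuous. Since $\TTT_\lambda$ is order--preserving, $u \leq \TTT_\lambda u$ implies $\TTT_\lambda u \leq \TTT_\lambda^2 u$, so $\TTT_\lambda u$ is again a $\lambda$--discounted subsolution; moreover it lies in the image of $T^-$ and is therefore continuous by Proposition \ref{propT}(i). If I prove $\TTT_\lambda u \leq v$ then $u \leq \TTT_\lambda u \leq v$, so I may assume $u$ continuous. Set $M = \max_X(u-v)$, attained at some $x_0 \in X$ by compactness, and suppose for contradiction $M > 0$.

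The main step uses a chain $(x_k)_{k\leq 0}$ ending at $x_0$ that calibrates $v$ (Proposition \ref{caliblambda}, second part): for every $n>0$,
\[
v(x_0) = \beta_{-n} v(x_{-n}) + \sum_{k=-n}^{-1}\beta_{k+1}\bigl(c(x_k,x_{k+1}) + \cc\bigr),
\]
while applying the first part of the same proposition to the subsolution $u$ along this same chain gives
\[
u(x_0) \leq \beta_{-n} u(x_{-n}) + \sum_{k=-n}^{-1}\beta_{k+1}\bigl(c(x_k,x_{k+1}) + \cc\bigr).
\]
Subtracting,
\[
M = (u-v)(x_0) \leq \beta_{-n}(u-v)(x_{-n}) \leq \beta_{-n}\,M.
\]
Since each factor $1-\lambda\alpha(x_j)$ lies in $(1-\lambda,1]\subset(0,1]$ by ($\alpha 1$), we have $\beta_{-n}\in (0,1]$, and the inequality above together with $M>0$ forces $\beta_{-n}=1$ and $(u-v)(x_{-n})=M$ for every $n\geq 1$. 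Equality in the product $\beta_{-n}=1$ then forces every factor to equal $1$, i.e.\ $\alpha(x_j)=0$ for all $j\leq -1$, whence every weight $\beta_k$ in the sum equals $1$. The subsolution inequality for $u$ must then also be an equality, so the chain calibrates both $u$ and $v$:
\[
u(x_0) - u(x_{-n}) = v(x_0) - v(x_{-n}) = \sum_{k=-n}^{-1}\bigl(c(x_k,x_{k+1})+\cc\bigr).
\]

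Finally I would extract a minimizing Mather measure to contradict ($\alpha 2$). Form the empirical probability measures $\mu_n = \tfrac{1}{n}\sum_{k=-n}^{-1}\delta_{(x_k,x_{k+1})}$ on $X\times X$. The telescoping argument used in the proof of Theorem \ref{minimizing} shows that $\mu_n$ is asymptotically closed, while the calibration above combined with boundedness of $v$ yields $\int c\,d\mu_n \to -\cc$. Any weak--$\ast$ accumulation point $\mu$ is therefore closed with $\int c\,d\mu = -\cc$, so $\mu \in \wh\PP_0$. On the other hand
$\int \alpha\, d\pi_{1*}\mu_n = \tfrac{1}{n}\sum_{k=-n}^{-1}\alpha(x_k) = 0$
for every $n$, hence $\int \alpha\, d\pi_{1*}\mu = 0$, contradicting ($\alpha 2$). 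Thus $M \leq 0$ and $u \leq v$. The crucial (and slightly subtle) point is the rigidity at the maximum: being forced to have $\beta_{-n}=1$ for all $n$ simultaneously produces a chain that is calibrating for both $u$ and $v$ and lies entirely in $\{\alpha = 0\}$, from which the $\alpha$--annihilating Mather measure can be built.
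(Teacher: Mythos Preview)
Your argument is correct and follows the same overall architecture as the paper: reduce to $u$ continuous, take a maximum point of $u-v$, use the calibrating chain for $v$ from Proposition~\ref{caliblambda}, force all $\beta_k=1$ (hence $\alpha(x_k)=0$ along the chain), and build from the chain a Mather measure violating~($\alpha2$).

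The one genuine difference lies in how minimality of the limit measure is established. The paper first perturbs $u$ by a negative strict critical subsolution $u_0$, setting $u_\varepsilon=(1-\varepsilon)u_0+\varepsilon u$, and traces the equalities back through the two-step inequality~\eqref{subsollambda} to force $u_0(x_{k+1})-u_0(x_k)=c(x_k,x_{k+1})+\cc$; strictness of $u_0$ then places every pair $(x_k,x_{k+1})$ in $\widehat\AA$, so any accumulation point of the empirical measures is supported in $\widehat\AA$ and is therefore minimizing by Theorem~\ref{minimizing}. You bypass this entirely: once all $\beta_k=1$, the calibration equality for $v$ reads $v(x_0)-v(x_{-n})=\sum_{k=-n}^{-1}\bigl(c(x_k,x_{k+1})+\cc\bigr)$, and boundedness of the continuous function $v$ gives directly $\int c\,\dd\mu_n\to-\cc$, so the limit is minimizing without any appeal to strictness or to the Aubry set. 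This is more economical; the paper's detour through $u_0$ yields the extra information that the chain itself lies in $\widehat\AA$, but that is not needed for the contradiction.
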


\begin{proof}
As $u\leqslant \TTT_\lambda u$, it is enough to prove that  $ \TTT_\lambda u \leqslant v$. Hence by Proposition \ref{propT}, one assumes  that $u$ is continuous, without loss of generality.  Then consider a strict subsolution $u_0 : X\to \R$ given by Theorem \ref{th-strict}. Moreover, up to subtracting a big constant, we assume that $u_0$ is negative. For $\varepsilon \in (0,1)$ we define $u_\varepsilon = (1-\varepsilon) u_0 + \varepsilon u$. The function $u_\varepsilon$ is a $\lambda$--discounted subsolution. As a matter of fact, if $(x,y) \in X\times X$,

\begin{multline}\label{subsollambda}
u_\varepsilon(x) - \big(1-\lambda\alpha(y)\big)u_\varepsilon(y) = \\
=\varepsilon \big(u(x) - \big(1-\lambda\alpha(y)\big)u(y)\big) + (1-\varepsilon)\big( u_0(x) - \big(1-\lambda\alpha(y)\big)u_0(y)\big) \\
\leqslant \varepsilon \big(c(y,x)+c[0]\big) + (1-\varepsilon)\big(u_0(x) -u_0(y)\big) \leqslant c(y,x)+c[0],
\end{multline}
where it was used first that $u_0$ is negative and then that it is a critical subsolution.

Let now $x_0 \in X$ such that $u_\varepsilon(x_0) - v(x_0) = \max  (u_\varepsilon -v)$. We aim at proving that $u_\varepsilon(x_0) - v(x_0) \leqslant 0$. Let us argue by contradiction, assuming that $u_\varepsilon(x_0) - v(x_0)>0$. Let $(x_k)_{k\leqslant 0}$ be a chain given by Proposition \ref{caliblambda} for $v$, $(\beta_k)_{k\leqslant 0}$ the associated sequence as defined in the same Proposition \ref{caliblambda}. It follows from both assertions of Proposition \ref{caliblambda} that for all $k< 0$, 
\begin{multline*}
(u_\varepsilon - v)(x_0) = u_\varepsilon(x_0) - \beta_k v(x_k) - \sum_{j=k}^{-1} \beta_{j+1}\big(c(x_{j},x_{j+1})+c[0]\big) \\
\leqslant  \beta_k u_\varepsilon(x_k) +\sum_{j=k}^{-1} \beta_{j+1}\big(c(x_{j},x_{j+1})+c[0]\big) - \beta_k v(x_k) - \sum_{j=k}^{-1} \beta_{j+1}\big(c(x_{j},x_{j+1})+c[0]\big) \\
 =  \beta_k (u_\varepsilon - v)(x_k) \leqslant (u_\varepsilon - v)(x_k),
\end{multline*}
where the last inequality is obtained using the contradiction hypothesis and the inequalities $0<\beta_k\leqslant 1$.
By definition of $x_0$, it follows that all the preceding inequalities are equalities. In particular,  it comes that $\beta_k = 1$ for all $k< 0$ which in turn implies that $\alpha(x_k) = 0$ for all $k\leqslant 0$, by definition of $\beta_k$. Moreover, tracing the inequalities used, it follows that 
$$\forall k\leqslant 0, \quad u_\varepsilon (x_0) =  \beta_k u_\varepsilon(x_k) +\sum_{j=k}^{-1} \beta_{j+1}\big(c(x_{j},x_{j+1})+c[0]\big).$$
Going back to \eqref{subsollambda} and using that, there as well, inequalities are indeed equalities, it follows that $u_0(x_k) - u_0(x_{k+1}) = c(x_k,x_{k+1})+c[0]$ for all $k<0$. By definition of $u_0$ and thanks to its property of being strict, we conclude that $(x_k,x_{k+1}) \in \wh\AA$ for all $k<0$.

Let us now define, for $n>0$ the probability measure $\mu_n = \frac{1}{n} \sum\limits_{k=-n}^{-1} \delta_{(x_k,x_{k+1})} $ (that is supported on $\wh\AA$). Let $\mu$ be an accumulation point of the sequence of probability measures $(\mu_n)_{n>0}$ for some subsequence $(n_i)_{i\geqslant 0}$. Arguing as in the proof of Theorem \ref{minimizing}, we find that the measure $\mu$ is closed. As the $2$-Aubry set is closed, the measure $\mu$ is supported on $\wh\AA$. The last part of Theorem \ref{minimizing} implies that $\mu$ is a Mather minimizing measure.

Finally, using that $\alpha(x_k)=0$ for all $k< 0$ observe that 
$$\int_X \alpha(x)\dd\pi_{1*} \mu(x)  = \lim_{i\to+\infty} \int_X \alpha(x)\dd\pi_{1*} \mu_{n_i}(x)  = \lim_{i\to+\infty}  \frac{1}{n_i} \sum\limits_{k=-n_i}^{-1} \alpha(x_k) = 0,$$
thus contradicting Hypothesis ($\alpha2$). Hence $u_\varepsilon(x_0) - v(x_0) = \max  (u_\varepsilon -v)\leqslant 0$ and $u_\varepsilon \leqslant v$. As this holds for all $\varepsilon \in (0,1)$, letting $\varepsilon \to 1$  proves that $u\leqslant v$.
\end{proof}
The previous proof combines two main ideas. The first one is that subsolutions can be approximated by strict subsolutions, thus forcing interesting phenomena to take place on the Aubry set. This is made possible by the convex structure of our minimization problems. The second idea is to construct illicit Mather measures assuming that subsolutions or solutions do not verify suitable properties. This line of reasoning will be used several times in what follows. 

As $\lambda$--discounted solutions are obviously $\lambda$--discounted subsolutions, the previous Proposition brings as a consequence that there can be at most one $\lambda$--discounted solution. The next existence result shows there is exactly one:

\begin{Th}\label{existencelambda}
For all $\lambda \in (0,1)$ there exists a unique $\lambda$--discounted solution.
\end{Th}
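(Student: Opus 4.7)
Uniqueness of a $\lambda$-discounted solution will follow immediately from Theorem \ref{strongCP}: if $v_1$ and $v_2$ are both solutions, then each is in particular a subsolution, so applying the strong comparison principle twice gives $v_1 \leqslant v_2$ and $v_2 \leqslant v_1$.

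For existence, my plan is to approximate $\alpha$ by strictly positive functions, turning $\TTT_\lambda$ into a genuine contraction so that Banach's theorem applies, and then passing to the limit. Since $\alpha$ is continuous on the compact space $X$ with values in $[0,1)$, the quantity $\eta = 1-\max\alpha$ is strictly positive. For $\varepsilon \in (0, \eta)$ I will set $\alpha_\varepsilon = \alpha + \varepsilon \in [\varepsilon,1)$; it satisfies both ($\alpha1$) and ($\alpha2$), the latter trivially since $\alpha_\varepsilon \geqslant \varepsilon > 0$. The associated operator $\TTT_{\lambda,\varepsilon}: v \mapsto T^-((1-\lambda\alpha_\varepsilon) v) + \cc$ is then a strict contraction, since by Proposition \ref{propT}
\[\|\TTT_{\lambda,\varepsilon} u - \TTT_{\lambda,\varepsilon} v\|_\infty \leqslant \|(1-\lambda\alpha_\varepsilon)(u-v)\|_\infty \leqslant (1-\lambda\varepsilon)\|u-v\|_\infty,\]
and Banach's theorem yields a unique fixed point $u_{\lambda,\varepsilon} \in \BB(X,\R)$.

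The main task will be to obtain bounds on the family $(u_{\lambda,\varepsilon})_\varepsilon$ which are uniform in $\varepsilon$. For the lower bound, I will take any $w_0 \in \S$ with $w_0 \leqslant 0$ (shift a critical subsolution down); the identity $w_0(x)-(1-\lambda\alpha_\varepsilon(y))w_0(y) = \bigl(w_0(x)-w_0(y)\bigr)+\lambda\alpha_\varepsilon(y)w_0(y)$ combined with $w_0(y) \leqslant 0 \leqslant \alpha_\varepsilon(y)$ shows that $w_0$ is a $\lambda$-discounted subsolution for $\alpha_\varepsilon$, so Theorem \ref{strongCP} gives $w_0 \leqslant u_{\lambda,\varepsilon}$. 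For the upper bound, I will fix a weak KAM solution (Theorem \ref{weak KAM}) shifted up to be nonnegative; call it $W$. Because $W \geqslant 0$ and $0 \leqslant 1-\lambda\alpha_\varepsilon \leqslant 1$, monotonicity of $T^-$ yields
\[\TTT_{\lambda,\varepsilon} W = T^-\bigl((1-\lambda\alpha_\varepsilon)W\bigr)+\cc \leqslant T^- W + \cc = W,\]
so $W$ is a $\lambda$-discounted supersolution. Iterating the monotone operator $\TTT_{\lambda,\varepsilon}$ starting from $W$ produces a non-increasing sequence whose limit, by the contraction property, is $u_{\lambda,\varepsilon}$; hence $u_{\lambda,\varepsilon} \leqslant W$.

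Since every $u_{\lambda,\varepsilon}$ lies, up to an additive constant, in the image of $T^-$, the family will be equicontinuous with the modulus of $c$ independently of $\varepsilon$, and uniformly bounded by the preceding step. Arzelà-Ascoli will then extract a sequence $\varepsilon_n \to 0$ along which $u_{\lambda,\varepsilon_n}$ converges uniformly to a continuous function $u_\lambda$. As $(1-\lambda\alpha_{\varepsilon_n})u_{\lambda,\varepsilon_n} \to (1-\lambda\alpha) u_\lambda$ uniformly and $T^-$ is $1$-Lipschitz, passing to the limit in $u_{\lambda,\varepsilon_n} = T^-((1-\lambda\alpha_{\varepsilon_n}) u_{\lambda,\varepsilon_n}) + \cc$ will yield $u_\lambda = \TTT_\lambda u_\lambda$. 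The main subtle point in the plan is really the upper bound: to get $\TTT_{\lambda,\varepsilon} W \leqslant W$ one needs a genuine weak KAM \emph{solution} (so that $T^-W = W-\cc$ as an equality), not merely a subsolution. It is worth noting that hypothesis ($\alpha 1$) is what drives existence, while ($\alpha 2$) enters only through the uniqueness statement, via Theorem \ref{strongCP}.
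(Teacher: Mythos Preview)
Your proof is correct, but it takes a more circuitous route than the paper's. Both arguments share the same barrier functions: a nonpositive (sub)solution below and a nonnegative weak KAM solution above. The paper, however, iterates $\TTT_\lambda$ itself starting from the negative weak KAM solution $\underline u$: since $(1-\lambda\alpha)\underline u \geqslant \underline u$ one gets $\TTT_\lambda(\underline u)\geqslant \underline u$, so $\big(\TTT_\lambda^{\,n}(\underline u)\big)_{n\geqslant 0}$ is non-decreasing, bounded above by the non-increasing iterates from $\overline u$, equicontinuous, and hence converges to a fixed point. No perturbation of $\alpha$ is needed.

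Your detour through $\alpha_\varepsilon = \alpha+\varepsilon$ is perfectly valid---it turns $\TTT_{\lambda,\varepsilon}$ into a genuine contraction and lets you invoke Banach---but the approximation step is unnecessary: the monotonicity you exploit to show $u_{\lambda,\varepsilon}\leqslant W$ (iterating a supersolution) works just as well for $\TTT_\lambda$ directly, and combined with the analogous iteration from a subsolution it already yields convergence without any compactness extraction in $\varepsilon$. What your approach does buy is a self-contained fixed-point argument that avoids appealing to monotone-sequence convergence, at the cost of an extra layer (Arzel\`a--Ascoli in $\varepsilon$ rather than pointwise monotone limits in $n$). Your closing remark that ($\alpha2$) enters only through uniqueness is also consistent with the paper's organization.
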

\begin{proof}
Let $\underline u$ be a negative weak KAM solution and $\overline u$ be a positive weak KAM solution. Applying the modified Lax--Oleinik semigroup yields
$$\TTT_\lambda (\underline u) = T^-\big((1-\lambda\alpha)\underline u \big) )+c[0] \geqslant T^-(\underline u)+c[0]= \underline u.
$$
A straightforward induction yields that the sequence $\big(\TTT_\lambda^n(\underline u)\big)_{n\geqslant 0}$ is non--decreasing.

Similarly,  
$$\TTT_\lambda (\overline u) = T^-\big((1-\lambda\alpha)\overline u \big) )+c[0] \leqslant T^-(\overline u)+c[0]= \overline u.
$$
A straightforward induction yields that the sequence $\big(\TTT_\lambda^n(\overline u)\big)_{n\geqslant 0}$ is non--increasing. 

Finally, as $\underline u <\overline u$ it follows that $\TTT_\lambda^n(\underline u) \leqslant \TTT_\lambda^n(\overline u)$ for all $n\geqslant 0$. The sequence $\big(\TTT_\lambda^n(\underline u)\big)_{n\geqslant 0}$ is bounded and non--decreasing, made of equi--continuous functions, hence it converges (uniformly) towards a function $u_\lambda : X\to \R$, verifying $\underline u \leqslant u_\lambda \leqslant \overline u$, that is, by continuity of $\TTT_\lambda$, a $\lambda$--discounted solution.

\end{proof}

\begin{df}\rm\label{defdisc}
For all $\lambda \in (0,1)$, the unique $\lambda$--discounted solution is denoted by  $u_\lambda^\alpha$.
\end{df}

 As a byproduct of the previous proof, it was established:

\begin{co}\label{equilambdabounded}
The family $(u_\lambda^\alpha)_{\lambda \in (0,1)}$ is uniformly bounded and consists of equi--continuous functions.
\end{co}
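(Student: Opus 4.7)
The plan is to extract both assertions directly from ingredients already assembled in the proof of Theorem~\ref{existencelambda}, essentially without new work.

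For the uniform boundedness, I would recall that in the existence proof, fixing any negative weak KAM solution $\underline u$ and any positive weak KAM solution $\overline u$, the iterates satisfy $\TTT_\lambda^n(\underline u) \leqslant \TTT_\lambda^n(\overline u)$ for every $n\geqslant 0$, and these iterates converge monotonically to $u_\lambda^\alpha$. Passing to the limit gives
\[
\underline u \leqslant u_\lambda^\alpha \leqslant \overline u
\]
on $X$. Since $\underline u$ and $\overline u$ do not depend on $\lambda$, this yields $\|u_\lambda^\alpha\|_\infty \leqslant \max(\|\underline u\|_\infty,\|\overline u\|_\infty)$ uniformly in $\lambda \in (0,1)$.

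For the equi-continuity, I would use the fixed-point identity $u_\lambda^\alpha = T^-\big((1-\lambda\alpha) u_\lambda^\alpha\big)+c[0]$, which writes $u_\lambda^\alpha$ (up to an additive constant) as a function in the image of $T^-$ applied to the bounded function $(1-\lambda\alpha)u_\lambda^\alpha$. By Proposition~\ref{propT}(i), every function in the image of $T^-$ admits the modulus of continuity $\omega$ of the cost $c$, independently of the bounded input to which $T^-$ is applied. Consequently
\[
\forall (x,x')\in X^2,\quad |u_\lambda^\alpha(x)-u_\lambda^\alpha(x')| \leqslant \omega\big(d(x,x')\big),
\]
with the same $\omega$ for every $\lambda\in(0,1)$, which is exactly the claimed equi-continuity.

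There is really no obstacle here: the two properties are both inherited from the construction, the boundedness from the sandwich between weak KAM solutions and the equi-continuity from the $\lambda$-independent regularizing action of $T^-$. The only minor point to double-check is that Proposition~\ref{propT}(i) indeed produces a modulus of continuity that does not depend on the bounded input, which is clear from its proof (the constant only depends on a modulus of continuity of $c$).
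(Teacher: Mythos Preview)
Your proof is correct and follows exactly the paper's approach: the uniform bound comes from the sandwich $\underline u \leqslant u_\lambda^\alpha \leqslant \overline u$ established in the proof of Theorem~\ref{existencelambda}, and the equi-continuity comes from writing $u_\lambda^\alpha$ as a constant plus an element of the image of $T^-$, then invoking Proposition~\ref{propT}(i). The paper states the corollary as a direct byproduct of that proof, and your proposal simply unpacks the two lines it leaves implicit.
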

The last part holds as the $u_\lambda^\alpha$ are in the image of $T^-$ (Proposition \ref{propT}). As the family  $(u_\lambda^\alpha)_{\lambda \in (0,1)}$ is relatively compact, to prove that it converges when $\lambda \to 0$, it is enough to prove there is a unique accumulation point.

The next proposition establishes the crucial property of such accumulation points, similarly to Proposition \ref{inegdiscount}:

\begin{pr}\label{inegdegdisc}
 Let $\mu\in \widehat\PP_0$ be a Mather measure. Assume $u_{\lambda_n}^\alpha\to u$ as $n\to +\infty$ for some extraction $\lambda_n\to 0$. Then $\int_X \alpha(x)u(x)\ \dd\pi_{1*}\mu (x) \leqslant 0$.
\end{pr}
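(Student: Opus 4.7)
The plan is to mirror the strategy used in Proposition \ref{inegdiscount}, testing the defining inequality for $u_\lambda^\alpha$ against a Mather measure and exploiting the closedness of $\mu$ to make the non--discounted terms cancel.

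First I would unfold the fixed point equation $u_\lambda^\alpha = \TTT_\lambda u_\lambda^\alpha$. By definition of $T^-$, this gives the family of inequalities
\begin{equation*}
\forall (x,y)\in X\times X,\quad u_\lambda^\alpha(y) - \bigl(1-\lambda\alpha(x)\bigr)u_\lambda^\alpha(x) \leqslant c(x,y)+c[0],
\end{equation*}
which is simply the fact that $u_\lambda^\alpha$ is a $\lambda$--discounted subsolution in the sense of \eqref{inegdeg}.

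Next I would integrate this inequality against a Mather measure $\mu \in \widehat\PP_0$. Using that $\mu$ is closed ($\pi_{1*}\mu = \pi_{2*}\mu$) the term $\int u_\lambda^\alpha(y)\,d\mu(x,y)$ equals $\int u_\lambda^\alpha(x)\,d\pi_{1*}\mu(x)$, so the two undiscounted copies of $u_\lambda^\alpha$ cancel and only the $\lambda\alpha$--weighted term survives. On the right-hand side, minimality of $\mu$ gives $\int c\,d\mu = -c[0]$. The resulting inequality is
\begin{equation*}
\lambda \int_X \alpha(x)\,u_\lambda^\alpha(x)\,d\pi_{1*}\mu(x) \;\leqslant\; -c[0]+c[0] \;=\; 0.
\end{equation*}
Dividing by $\lambda>0$ yields $\int_X \alpha\,u_\lambda^\alpha\,d\pi_{1*}\mu \leqslant 0$ for every $\lambda\in(0,1)$.

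Finally I would pass to the limit along $\lambda_n\to 0$: by Corollary \ref{equilambdabounded} the convergence $u_{\lambda_n}^\alpha\to u$ is uniform (the family is equi--continuous and equi--bounded), hence $\alpha u_{\lambda_n}^\alpha \to \alpha u$ uniformly, and the bounded convergence theorem for the finite measure $\pi_{1*}\mu$ gives the conclusion $\int_X \alpha(x)u(x)\,d\pi_{1*}\mu(x) \leqslant 0$. There is no real obstacle here: the only subtle point is the cancellation of the undiscounted $u_\lambda^\alpha$ terms, which relies precisely on the closedness of $\mu$, exactly as in the classical discounted setting.
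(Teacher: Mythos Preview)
Your proposal is correct and follows essentially the same route as the paper: integrate the discounted subsolution inequality \eqref{inegdeg} against the Mather measure, use closedness of $\mu$ to cancel the undiscounted $u_\lambda^\alpha$ terms and minimality to make the right-hand side vanish, divide by $\lambda$, and pass to the limit along $\lambda_n$. The only cosmetic difference is that the paper writes the inequality with the variables $(y,x)$ rather than $(x,y)$.
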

 
 \begin{proof}
 Let us start from the family of inequalities given by \eqref{inegdeg}, applied to the functions $u_\lambda$. Integrating against $\mu$ it is obtained that
 $$0=\int_{X\times X} \big[ c(y,x)+c[0] \big] \dd \mu(y,x) \geqslant \int_{X\times X} \big[u_\lambda^\alpha(x) - \big(1-\lambda\alpha(y)\big)u_\lambda^\alpha(y) \big] \ \dd \mu(y,x).
 $$
 As $\mu$ is closed and $u_\lambda$ continuous, dividing by $\lambda$, we gather  that  
 $$\forall \lambda \in (0,1),\quad \int_X \alpha(y)u_\lambda^\alpha(y)\ \dd\pi_{1*}\mu (y) \leqslant 0.$$
  Passing to the limit along the subsequence $(\lambda_n)_{n\geqslant 0}$, yields the result.
 \end{proof}

Particular Mather measures can then be constructed starting from calibrating chains given by Proposition \ref{caliblambda}. One first needs to establish a crucial property they satisfy:

\begin{pr}\label{epslambda}
There exists $M >0$
such that for all $\lambda\in (0,1)$ and $x_0 \in X$, if $(x_k^\lambda)_{k\leqslant 0}$ is a sequence given by Proposition \ref{caliblambda} applied to $u_\lambda^\alpha$ with $x_0^\lambda = x_0$, then
$$\lambda\sum_{k\leqslant 0}  \prod_{j=k}^{-1} \big(1-\lambda\alpha(x_{j}^\lambda)\big) <M.$$
\end{pr}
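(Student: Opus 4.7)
The strategy is to argue by contradiction: if the bound fails, I would manufacture a Mather measure $\mu$ with $\int_X \alpha\,d\pi_{1*}\mu = 0$, contradicting hypothesis $(\alpha 2)$ since $\alpha \geq 0$. The heart of the argument is a telescoping a priori estimate. Since $\beta_{k+1} = \beta_k/(1-\lambda\alpha(x_k^\lambda))$ for $k \leq -1$, one has $\beta_{k+1}-\beta_k = \lambda\alpha(x_k^\lambda)\beta_{k+1}$; summing from $k=-N$ to $k=-1$ and using $\beta_0 = 1$ gives
$$1 - \beta_{-N} \;=\; \lambda\sum_{k=-N}^{-1}\alpha(x_k^\lambda)\beta_{k+1},$$
so $\lambda\sum_{k=-N}^{-1}\alpha(x_k^\lambda)\beta_{k+1}\leq 1$ for every $N$. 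This is the key control.

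Assume now the conclusion fails. Working with truncated sums to avoid convergence issues, one can extract sequences $\lambda_n\in(0,1)$, points $x_0^n\in X$ and integers $N_n\geq 1$ such that
$$\sigma_n \;:=\; \lambda_n\sum_{k=-N_n}^{-1}\beta_{k+1}^n \;\longrightarrow\; +\infty,$$
where $(x_k^n)_{k\leq 0}$ is the calibrating chain at $x_0^n$ for $u_{\lambda_n}^\alpha$ given by Proposition \ref{caliblambda}. On $X\times X$ I would introduce the probability measure
$$\mu_n \;=\; \frac{\lambda_n}{\sigma_n}\sum_{k=-N_n}^{-1}\beta_{k+1}^n\,\delta_{(x_k^n,\,x_{k+1}^n)}.$$

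Three verifications then lead to the contradiction. First, by the a priori estimate, $\int\alpha\,d\pi_{1*}\mu_n\leq 1/\sigma_n\to 0$. Second, to get closedness in the limit, for any $f\in C^0(X,\R)$ I would run Abel summation on $\sum\beta_{k+1}^n(f(x_{k+1}^n)-f(x_k^n))$: this produces two boundary terms of size $O(\|f\|_\infty)$ and an interior term equal to $-\lambda_n\sum\alpha(x_k^n)\beta_{k+1}^n f(x_k^n)$, which is again $O(\|f\|_\infty)$ thanks to the a priori estimate; multiplying by $\lambda_n/\sigma_n$ shows $\int(f(y)-f(x))\,d\mu_n\to 0$. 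Third, the calibrating formula of Proposition \ref{caliblambda} gives
$$\int c\,d\mu_n + c[0] \;=\; \frac{\lambda_n}{\sigma_n}\bigl(u_{\lambda_n}^\alpha(x_0^n)-\beta_{-N_n}^n u_{\lambda_n}^\alpha(x_{-N_n}^n)\bigr)\;\longrightarrow\; 0,$$
using Corollary \ref{equilambdabounded} together with $\lambda_n\leq 1$. By weak-$*$ compactness any accumulation point $\mu$ of $(\mu_n)$ therefore lies in $\widehat{\PP}_0$ and satisfies $\int\alpha\,d\pi_{1*}\mu = 0$, contradicting $(\alpha 2)$.

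The main obstacle I anticipate is the bookkeeping in the closedness step: one must carry out the Abel summation cleanly and recognize that the unavoidable interior term is precisely the quantity already tamed by the telescoping identity. Passing from the truncated statement back to the infinite sum of the proposition is then automatic since all $\beta_k^\lambda$ are nonnegative, so uniform boundedness of partial sums bounds the series itself.
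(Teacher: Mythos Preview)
Your proof is correct and follows the same overall architecture as the paper: argue by contradiction, extract sequences with $\lambda_n C_n\to\infty$, build the weighted empirical measures $\mu_n$, and verify closedness, minimality, and $\int\alpha\,d\pi_{1*}\mu=0$ for any weak-$*$ limit to violate $(\alpha2)$.

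The one genuine difference is your treatment of the key quantity $\lambda\sum_k\alpha(x_k^\lambda)\beta_{k+1}$. You observe the exact telescoping identity $\beta_{k+1}-\beta_k=\lambda\alpha(x_k^\lambda)\beta_{k+1}$, so that $\lambda\sum_{k=-N}^{-1}\alpha(x_k^\lambda)\beta_{k+1}=1-\beta_{-N}\leqslant 1$ immediately. The paper instead bounds $\beta_{k+1}^n\leqslant\exp\bigl(-\lambda_n\sum_{j=k+1}^{-1}\alpha(x_j^n)\bigr)$ via $1-x\leqslant e^{-x}$ and then compares the resulting sum to the integral $\int_0^\infty e^{-\lambda_n x}\,dx=\lambda_n^{-1}$. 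Your route is more elementary and gives the sharp constant $1$ rather than $\exp(\|\alpha\|_\infty)$; it also lets you reuse the same identity in the Abel-summation step for closedness, where the interior term is literally $-\lambda_n\sum\alpha(x_k^n)\beta_{k+1}^n f(x_k^n)$. The paper handles that step equivalently by noting that $(\beta_k^n)_k$ is nondecreasing in $[0,1]$, so $\sum|\beta_{k+1}^n-\beta_k^n|\leqslant 1$, but does not make the connection between the two steps explicit. In short: same proof, but your bookkeeping is tighter.
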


\begin{proof}
Let us argue by contradiction assuming the result does not hold. Then there exist a sequence $(\lambda_n)_{n\in \N} \in (0,1)^\N$ and points $x^n_0 \in X$ such that for each integer $n\in \N$ there exists a sequence
$(x_k^n)_{k\leqslant 0}$ given by Proposition \ref{caliblambda} associated to $u_{\lambda_n}^\alpha$  and an integer $N_n>0$  such that 
$\lambda_nC_n =\lambda_n \sum\limits_{k=-N_n}^{-1} \beta_{k+1}^n \to+\infty$,  having adopted the notation
$\beta_k^n =  \prod\limits_{j=k}^{-1} \big(1-\lambda_n\alpha(x_{j}^n)\big) $.
This implies that $N_n \to +\infty$ as $0<\beta_{k}^n \leqslant 1$.

For all integer $n\in \N$, let us define the probability measure on $X\times X$, 
$$\mu_n =C_n^{-1} \sum_{k=-N_n}^{-1} \beta_{k+1}^n \delta_{(x_k^n ,x_{k+1}^n  )}.$$
Up to an extraction, let us assume furthermore that the sequence $\mu_n$ converges to a probability measure $\mu$. We will prove that $\mu $ is a minimizing Mather measure violating condition ($\alpha2$).

{\bf The measure $\mu$ is closed:} let $f : X\to \R$ be a continuous function. We compute, using an Abel transform:
\begin{multline*}
\Big| \int_{X\times X}\big(f(y)-f(x)\big) \dd \mu_n(x,y)  \Big| = C_n^{-1}\Big|  \sum_{k=-N_n}^{-1} \beta_{k+1}^n\big(  f(x_{k+1}^n)  -f(x_k^n)\big) \Big| \\
=C_n^{-1}\Big|  \sum_{k=-N_n}^{-1} (\beta^n_{k} -\beta_{k+1}^n) f(x_{k}^n) -\beta_{-N_n}^n f(x_{-N_n}^n) +\beta_{0}f(x_{0}^n)  \Big| \\
\leqslant C_n^{-1} \Big[ \sum_{k=-N_n}^{-1} (\beta^n_{k+1} -\beta_{k}^n)\| f \|_\infty + 2\|f\|_\infty \Big] \leqslant 4C_n^{-1}\|f\|_\infty.
\end{multline*}
In the previous  chain of inequalities  it was used that the sequences $(\beta_k^n)_{k\leqslant 0}$ are non--decreasing and take values in $[0,1]$. As $C_n\to +\infty$, letting $n\to +\infty$, it is obtained that 
$$\int_{X\times X}\big(f(y)-f(x)\big) \dd \mu(x,y) = \lim_{n\to +\infty} \int_{X\times X}\big(f(y)-f(x)\big) \dd \mu_n(x,y)  = 0.$$ 
Therefore $\mu$ is closed.

{\bf The measure $\mu$ is minimizing:} we use the definition of $\mu_n$ and the property of the sequences $(x_k^n)_{k\leqslant 0}$.
\begin{align*}
 \Big|\int_{X\times X}\big(c(x,y)+c[0]\big) \dd \mu_n(x,y) \Big| & = C_n^{-1} \Big| \sum_{k=-N_n}^{-1} \beta_{k+1}^n\big(c(x_k^n ,x_{k+1}^n  )+c[0]\big)\Big| \\
&= C_n^{-1}\Big|\big( u_{\lambda_n}^\alpha(x_0^n) - \beta_{-N_n}^n u_{\lambda_n}^\alpha(x_{-N_n}^n)\big)\Big| \leqslant 2C_n^{-1} \|u_{\lambda_n}^\alpha\|_\infty.
\end{align*}
Recalling that the family $(u_\lambda)_{\lambda^\alpha \in (0,1)}$ is uniformly bounded (Corollary \ref{equilambdabounded}) letting $n\to +\infty$ it follows that $\int_{X\times X}c(x,y)\  \dd \mu(x,y) = -c[0]$.

{\bf The measure $\mu$ satisfies $\int \alpha\ \dd \mu = 0$:} we use the inequality $\exp(x)\geqslant 1+x$ and the definition of $\beta_k^n$ to estimate
\begin{align*}
\int_{X\times X} \alpha(x)\ \dd \mu_n(x,y) &= C_n^{-1}\sum_{k=-N_n}^{-1} \beta_{k+1}^n \alpha(x_k^n) \\
&  \leqslant C_n^{-1}\sum_{k=-N_n}^{-1} \alpha(x_k^n) \exp\Big(-\lambda_n \sum_{j=k+1}^{-1} \alpha(x_j^n)\Big)  \\
&\leqslant \frac{  \exp(\|\alpha\|_\infty) }{C_n} \sum_{k=-N_n}^{-1} \alpha(x_k^n) \exp\Big(-\lambda_n \sum_{j=k}^{-1} \alpha(x_j^n)\Big) .
\end{align*}
As the $\alpha_k^n$ are non--negative and the function $x\mapsto \exp(-x)$ is decreasing,  the right hand side can be estimated by comparing sum and integral to conclude that 
$$\int_{X\times X} \alpha(x)\ \dd \mu_n(x,y) \leqslant  \frac{ \exp(\|\alpha\|_\infty)}{C_n} \int_0^\infty \exp(-\lambda_n x) \ \dd x = \frac{ \exp(\|\alpha\|_\infty)}{\lambda_nC_n}.$$
As $\lambda_nC_n \to +\infty$, it follows that $\int_{X\times X} \alpha(x)\ \dd \mu(x,y) = 0$. Thus $\mu$ is a Mather measure contradicting ($\alpha 2$) and the result is proved.
\end{proof}

As a Corollary, a refined representation formula comes up for the functions $u_\lambda^\alpha$:

\begin{co}\label{replambda}
Let $\lambda \in (0,1)$ and $x_0\in X$. If $(x_k)_{k\leqslant 0}$ is given by Proposition \ref{caliblambda} applied to $u_{\lambda}^\alpha$, then 
$$u_\lambda^\alpha (x_0) =  \sum_{k=-\infty}^{-1} \beta_{k+1}\big(c(x_{k},x_{k+1})+c[0]\big),$$
with $\beta_k = \prod\limits_{j=k}^{-1} \big(1-\lambda\alpha(x_{j})\big)$ and $\beta_0 = 1$.
\end{co}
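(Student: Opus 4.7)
The plan is to pass to the limit $n\to +\infty$ in the finite representation formula already provided by Proposition \ref{caliblambda}, namely
$$u_\lambda^\alpha(x_0) = \beta_{-n}\, u_\lambda^\alpha(x_{-n}) + \sum_{k=-n}^{-1} \beta_{k+1}\bigl(c(x_k,x_{k+1})+c[0]\bigr).$$
Since the left-hand side does not depend on $n$, it suffices to show that the remainder term $\beta_{-n}\, u_\lambda^\alpha(x_{-n})$ tends to $0$ as $n\to +\infty$; this will force convergence of the partial sums and yield the announced identity.

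The key input is Proposition \ref{epslambda}, which for the fixed $\lambda\in (0,1)$ and the fixed chain $(x_k)_{k\leqslant 0}$ gives
$$\sum_{k\leqslant 0}\beta_k \;\leqslant\; \frac{M}{\lambda} \;<\; +\infty.$$
Since all terms $\beta_k$ are non-negative, the convergence of this series implies in particular that $\beta_{-n}\to 0$ as $n\to +\infty$. Combined with the fact that the family $(u_\lambda^\alpha)_{\lambda\in(0,1)}$ is uniformly bounded by Corollary \ref{equilambdabounded}, one obtains
$$\bigl|\beta_{-n}\, u_\lambda^\alpha(x_{-n})\bigr| \;\leqslant\; \beta_{-n}\,\sup_{\mu\in(0,1)}\|u_\mu^\alpha\|_\infty \;\xrightarrow[n\to+\infty]{}\;0,$$
which is the required vanishing of the remainder.

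Letting $n\to +\infty$ in the finite identity therefore gives existence of the limit of the partial sums and the equality
$$u_\lambda^\alpha(x_0) \;=\; \sum_{k=-\infty}^{-1}\beta_{k+1}\bigl(c(x_k,x_{k+1})+c[0]\bigr).$$
(As a side remark, since $c+c[0]$ is bounded on $X\times X$ and $\sum_{k\leqslant -1}\beta_{k+1}=\sum_{j\leqslant 0}\beta_j\leqslant M/\lambda$, the series is in fact absolutely convergent, so no delicate conditional summation issue arises.) There is no real obstacle here: all the work has been done in Proposition \ref{caliblambda} (for the finite identity), Proposition \ref{epslambda} (for the summability of $\beta_k$, which is the only non-trivial ingredient) and Corollary \ref{equilambdabounded} (for uniform boundedness of $u_\lambda^\alpha$).
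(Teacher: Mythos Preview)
Your proof is correct and follows exactly the same approach as the paper: use the finite identity from Proposition \ref{caliblambda}, invoke Proposition \ref{epslambda} to obtain $\sum_k \beta_k<+\infty$ and hence $\beta_{-n}\to 0$, combine with the boundedness of $u_\lambda^\alpha$ (Corollary \ref{equilambdabounded}) to kill the remainder, and pass to the limit. Your additional remark on absolute convergence is a welcome clarification but not needed for the argument.
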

\begin{proof}
By Proposition \ref{epslambda}, the sum $\sum \beta_k$ is convergent which implies that $\lim\limits_{k\to -\infty} \beta_k = 0$. As the function $u_\lambda^\alpha$ is bounded, the result follows by simply letting $n\to +\infty$ in the second part of Proposition \ref{caliblambda}.
\end{proof}

Let us  now enter  the convergence part of this section. Motivated by Proposition \ref{inegdegdisc} we give the following definition: 

\begin{df}\rm
Let $\mathcal F_\alpha$ be the set of continuous critical subsolutions $u:X\to \R$ such that 
 $\int_X \alpha(x)u(x)\ \dd\pi_{1*}\mu (x) \leqslant 0$ for all Mather measure $\mu\in \widehat\PP_0$.
\end{df}
Let us first state the main Theorem. The careful reader will notice quite a resemblance with Theorem \ref{discounted} and Proposition \ref{discountedbis}:

\begin{Th}\label{Thconvdiscdeg}
The family of functions $(u_\lambda^\alpha)_{(0,1)}$ uniformly converges as $\lambda \to 0$. Moreover, denoting by $u_0^\alpha$ the limit, the two following formulas hold:
\begin{itemize}
\item for $x_0\in X$, $u_0^\alpha(x_0) = \max\limits_{u\in \mathcal F_\alpha} u(x_0)$;
\item for $x_0\in X$, 
$$u_0^\alpha(x_0) = \min\limits_{\mu\in \widehat\PP_0} \frac{ \int_X \alpha(x)h(x,x_0)\ \dd\pi_{1*}\mu (x) }  {\int_X \alpha(x)\ \dd\pi_{1*}\mu (x)  },$$
where $h : X\times X \to \R$ still denotes Peierls' barrier given by Definition \ref{Peierl}.
\end{itemize}
\end{Th}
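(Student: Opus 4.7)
The plan is to follow the architecture of Theorem \ref{discounted} together with Proposition \ref{discountedbis}, carefully incorporating the weight $\alpha$. By Corollary \ref{equilambdabounded} the family $(u_\lambda^\alpha)_{\lambda \in (0,1)}$ is equicontinuous and uniformly bounded, so Arzel\`a--Ascoli reduces the convergence statement to showing all subsequential limits coincide. Let therefore $u_{\lambda_n}^\alpha \to u_0$ uniformly along some $\lambda_n \to 0$. Passing to the limit in $u_{\lambda_n}^\alpha = T^-\big((1-\lambda_n\alpha) u_{\lambda_n}^\alpha\big) + \cc$, using the $1$--Lipschitz character of $T^-$ and the uniform convergence $(1-\lambda_n\alpha)u_{\lambda_n}^\alpha \to u_0$, gives $u_0 = T^- u_0 + \cc$: the limit is a genuine weak KAM solution. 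Combined with Proposition \ref{inegdegdisc}, this yields $u_0 \in \mathcal F_\alpha$.

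The main task, and the step I expect to be the most delicate, is the reverse direction $u_0(x_0) \geq u(x_0)$ for every $u \in \mathcal F_\alpha$. Fix such $u$ and $x_0$, and for each $\lambda$ take the calibrating chain $(x_k^\lambda)_{k \leq 0}$ of Proposition \ref{caliblambda} starting at $x_0$, with weights $\beta_k$. Applying the subsolution inequality $u(x_{k+1}^\lambda) - u(x_k^\lambda) \leq c(x_k^\lambda,x_{k+1}^\lambda)+\cc$ to the representation of Corollary \ref{replambda}, and performing Abel summation using the identity $\beta_k - \beta_{k+1} = -\lambda\alpha(x_k^\lambda)\beta_{k+1}$ together with $\beta_{-N}\to 0$ (a consequence of Proposition \ref{epslambda}), I obtain
\[
u_\lambda^\alpha(x_0) \;\geq\; u(x_0) \;-\; \int_{X\times X} \alpha(x)u(x)\,d\nu_\lambda(x,y), \qquad \nu_\lambda := \lambda\sum_{k\leq -1}\beta_{k+1}\delta_{(x_k^\lambda,x_{k+1}^\lambda)}.
\]
By Proposition \ref{epslambda} each $\nu_\lambda$ is a positive measure of mass at most $M$, so along a further subsequence $\nu_{\lambda_n} \to \nu$ weakly-$*$. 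The key obstacle is to prove that $\nu$ is either $0$ or a positive multiple of a Mather measure. For this I would mimic the three computations in the proof of Proposition \ref{epslambda}: a second Abel summation yields $\int (f(y)-f(x))\,d\nu_\lambda = \lambda f(x_0) - \lambda^2\sum \alpha(x_k^\lambda)\beta_{k+1}f(x_k^\lambda) = O(\lambda)$, so $\nu$ is closed; and $\int (c+\cc)\,d\nu_\lambda = \lambda\, u_\lambda^\alpha(x_0) \to 0$, so $\nu$ is minimizing. In either case $\int \alpha u\,d\pi_{1*}\nu \leq 0$ because $u\in \mathcal F_\alpha$, and passing to the limit yields $u_0(x_0)\geq u(x_0)$. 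This establishes the first formula $u_0^\alpha = \max_{u\in \mathcal F_\alpha} u$ and, since the limit is now uniquely characterized, the uniform convergence of the whole family.

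For the variational formula, denote the right-hand side by $\hat u$. The easy direction $u_0^\alpha \leq \hat u$ follows by taking any $u\in \mathcal F_\alpha$ and Mather measure $\mu$, integrating the classical inequality $u(x_0)-u(y)\leq h(y,x_0)$ (Proposition \ref{peierl-prop}(ii)) against $\alpha\,d\pi_{1*}\mu$, using $\int \alpha u \,d\pi_{1*}\mu \leq 0$, and dividing by $\int \alpha\, d\pi_{1*}\mu > 0$ (hypothesis $(\alpha2)$). For the reverse inequality I would imitate the endgame of Proposition \ref{discountedbis}: for each $y\in X$ define $g_y := -h(\cdot,y) + \hat u(y)$. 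Then $-h(\cdot,y)$ is a positive weak KAM solution hence a critical subsolution, and the very definition of $\hat u(y)$ as an infimum forces $\int \alpha g_y\, d\pi_{1*}\mu \leq 0$ for every Mather measure $\mu$; thus $g_y \in \mathcal F_\alpha$ and consequently $g_y \leq u_0^\alpha$. Evaluating at $y \in \mathcal A$, where $h(y,y)=0$ by Theorem \ref{Aubry0}, gives $\hat u \leq u_0^\alpha$ on $\mathcal A$. Since $\hat u$ is a subsolution (as an infimum of subsolutions, each of which is an average of the weak KAM solutions $h(y,\cdot)$ against a probability measure, cf.\ Lemma \ref{inf}) and $u_0^\alpha$ is a weak KAM solution, Theorem \ref{uniqueness} upgrades this to $\hat u \leq u_0^\alpha$ on all of $X$, completing the proof.
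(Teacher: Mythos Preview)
Your argument is correct and follows essentially the same architecture as the paper: both halves of the first formula and the second formula are handled exactly as in the text, with the same Abel summations, the same use of Proposition~\ref{epslambda}, and the same endgame via $-h(\cdot,y)+\hat u(y)\in\mathcal F_\alpha$ together with Theorem~\ref{uniqueness}. The only cosmetic difference is that you work with the unnormalized measures $\nu_\lambda=\lambda\sum\beta_{k+1}\delta_{(x_k^\lambda,x_{k+1}^\lambda)}$ of bounded mass, whereas the paper normalizes them to probability measures $\mu_{x_0}^\lambda$ and passes to a limiting Mather measure (Lemma~\ref{discdegmes}); this is immaterial, and in fact your case ``$\nu=0$'' never occurs since $\lambda C_{x_0,\lambda}\geq \|\alpha\|_\infty^{-1}$ (cf.\ the first lines of the proof of Lemma~\ref{discdegmes}).
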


The proof of this Theorem is split into several Lemmas resembling what was done for the standard discounted equation.

\begin{df}\rm\label{defmeaslambda}
If $\lambda \in (0,1)$ and $x_0\in X$, we choose a sequence  $(x_k^\lambda)_{k\leqslant 0}$ given by Proposition \ref{caliblambda} applied to $u_{\lambda}^\alpha$ with $x_0 = x_0^\lambda$.  The probability measure $\mu_{x_0}^\lambda$ is defined by:
$$\mu_{x_0}^\lambda  =C_{x_0,\lambda}^{-1} \sum_{k=-\infty}^{-1} \beta_{k+1}^{x_0,\lambda} \delta_{(x_k^\lambda ,x_{k+1}^\lambda  )},$$
where $\beta_k^{x_0,\lambda} =  \prod\limits_{j=k}^{-1} \big(1-\lambda\alpha(x_{j}^\lambda)\big) $ and
$C_{x_0,\lambda} = \sum\limits_{k=-\infty}^{-1} \beta_{k+1}^{x_0,\lambda} $.

\end{df}

The sum defining $C_{x_0,\lambda}$ is indeed finite by Proposition \ref{epslambda}.

\begin{lm}\label{discdegmes}
Let $x_0 \in X$ and $\lambda_n\to 0$ be a sequence such that the family of measures $(\mu_{x_0}^{\lambda_n})_{n\in \N}$ converges to a probability measure $\mu$. Then $\mu $ is a minimizing Mather measure.
\end{lm}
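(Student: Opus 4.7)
The plan is to establish, in turn, that $\mu$ is a probability measure, that $\mu$ is closed, and that $\mu$ is minimizing, modeled on the strategy of Proposition \ref{epslambda} and the original proof of Theorem \ref{minimizing}. Being a probability is automatic since the $\mu_{x_0}^{\lambda_n}$ are, so all the work is in closedness and minimality.

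The first step I would carry out is a key telescoping identity. From $\beta_k^{x_0,\lambda}=(1-\lambda\alpha(x_k^\lambda))\beta_{k+1}^{x_0,\lambda}$ one gets $\beta_k-\beta_{k+1}=-\lambda\alpha(x_k^\lambda)\beta_{k+1}$. Summing over $k\leqslant -1$, using $\beta_0=1$ and $\beta_k\to 0$ as $k\to-\infty$ (which is forced by the summability in Proposition \ref{epslambda}), yields
\[
\lambda\sum_{k=-\infty}^{-1}\alpha(x_k^\lambda)\beta_{k+1}^{x_0,\lambda}=1,
\qquad\text{i.e.}\qquad \int_X\alpha(x)\,\mathrm d\pi_{1*}\mu_{x_0}^\lambda(x)=\frac{1}{\lambda C_{x_0,\lambda}}.
\]
Combining this with $\lambda C_{x_0,\lambda}\leqslant M$ from Proposition \ref{epslambda} gives the two-sided bound $1/\|\alpha\|_\infty\leqslant \lambda_n C_{x_0,\lambda_n}\leqslant M$. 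Because $\lambda_n\to 0$, this forces $C_{x_0,\lambda_n}\to +\infty$, which is the quantitative fact driving the rest of the proof.

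Next, for closedness, given a continuous $f:X\to\R$, I would perform the Abel summation used in Proposition \ref{epslambda}, but now on the full tail $k\leqslant -1$. Using the identity above,
\[
\int_{X\times X}(f(y)-f(x))\,\mathrm d\mu_{x_0}^{\lambda_n}
= \frac{f(x_0)}{C_{x_0,\lambda_n}}-\lambda_n\int_X f(x)\alpha(x)\,\mathrm d\pi_{1*}\mu_{x_0}^{\lambda_n}(x),
\]
and both terms go to $0$: the first because $C_{x_0,\lambda_n}\to+\infty$, the second because it is bounded by $\lambda_n\|f\|_\infty\|\alpha\|_\infty$. Passing to the limit against continuous test functions yields $\pi_{1*}\mu=\pi_{2*}\mu$.

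For minimality, Corollary \ref{replambda} is tailor-made: since $u_{\lambda_n}^\alpha(x_0)=\sum_{k\leqslant -1}\beta_{k+1}(c(x_k,x_{k+1})+c[0])$,
\[
\int_{X\times X}(c(x,y)+c[0])\,\mathrm d\mu_{x_0}^{\lambda_n}(x,y)=\frac{u_{\lambda_n}^\alpha(x_0)}{C_{x_0,\lambda_n}}.
\]
The uniform boundedness of $(u_\lambda^\alpha)_{\lambda\in(0,1)}$ from Corollary \ref{equilambdabounded} together with $C_{x_0,\lambda_n}\to+\infty$ gives $\int c\,\mathrm d\mu=-c[0]$. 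By Theorem \ref{minimizing}, $\mu\in\widehat{\mathcal P}_0$. The step I expect to be the crux is the lower bound $\lambda_n C_{x_0,\lambda_n}\geqslant 1/\|\alpha\|_\infty$, hence $C_{x_0,\lambda_n}\to+\infty$; everything else is then a routine pass to the limit. (As a pleasant bonus, the same computation shows that any such limit measure $\mu$ automatically satisfies $\int_X\alpha\,\mathrm d\pi_{1*}\mu\geqslant 1/M>0$, consistent with condition $(\alpha2)$.)
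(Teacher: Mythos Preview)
Your proof is correct and follows the same overall architecture as the paper: show $C_{x_0,\lambda_n}\to+\infty$, then prove closedness via an Abel transform, then minimality via Corollary \ref{replambda} and Corollary \ref{equilambdabounded}.

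The one genuine difference is in how you obtain $C_{x_0,\lambda_n}\to+\infty$. The paper argues directly from the crude bound $\beta_k^{x_0,\lambda}\geqslant (1-\lambda\|\alpha\|_\infty)^{|k|}$, which after summing a geometric series gives $C_{x_0,\lambda}\geqslant 1/(\lambda\|\alpha\|_\infty)$. You instead derive the exact identity $\lambda\sum_{k\leqslant -1}\alpha(x_k^\lambda)\beta_{k+1}^{x_0,\lambda}=1$, i.e.\ $\int\alpha\,\dd\pi_{1*}\mu_{x_0}^\lambda=1/(\lambda C_{x_0,\lambda})$, and read off the same lower bound from $\int\alpha\leqslant\|\alpha\|_\infty$. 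Both routes yield the identical inequality $\lambda C_{x_0,\lambda}\geqslant 1/\|\alpha\|_\infty$, but your identity has two pleasant byproducts: it turns the closedness estimate into an exact formula rather than a bound, and (as you note) it shows that every limit measure satisfies $\int\alpha\,\dd\pi_{1*}\mu\geqslant 1/M>0$, which is a quantitative witness to condition $(\alpha2)$ that the paper does not extract. The paper's Abel transform for closedness instead bounds $\sum|\beta_k-\beta_{k+1}|$ by $1$ using monotonicity of $(\beta_k)$, arriving at the coarser estimate $\leqslant 2C_{x_0,\lambda_n}^{-1}\|f\|_\infty$.
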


\begin{proof}
We first prove that $C_{x_0,\lambda_n}\to +\infty$. Indeed, for all $n>0$ and $k\leqslant 0$, $\beta_k^{x_0,\lambda_n}\geqslant (1-\lambda_n\|\alpha\|_\infty)^{|k|}$ thus implying that 
$$C_{x_0,\lambda_n}  \geqslant \sum_{j=0}^{+\infty} (1-\lambda_n\|\alpha\|_\infty)^{j}= \frac{1}{\lambda_n\|\alpha\|_\infty} \underset{n\to+\infty}{\longrightarrow}+\infty.
$$
By computations the reader should already be familiar with, from the proof of Proposition \ref{epslambda}, it is proven that $\mu$ is closed.
Let $f : X\to \R$ be a continuous function. We compute using an Abel transform:
\begin{multline*}
\Big| \int_{X\times X}\big(f(y)-f(x)\big) \ \dd \mu_{x_0}^{\lambda_n}(x,y)  \Big| = C_{x_0,\lambda_n}^{-1}\Big|  \sum_{k=-\infty}^{-1} \beta_{k+1}^{x_0,\lambda_n}\big(  f(x_{k+1}^{\lambda_n})  -f(x_k^{\lambda_n})\big) \Big| \\
=C_{x_0,\lambda_n}^{-1}\Big|  \sum_{k=-\infty}^{-1} (\beta^{x_0,\lambda_n}_{k} -\beta_{k+1}^{x_0,\lambda_n}) f(x_{k}^{x_0,\lambda_n})  +\beta_{0}^{x_0,\lambda_n} f(x_{0})  \Big| \\
\leqslant C_{x_0,\lambda_n}^{-1} \Big[ \sum_{k=-\infty}^{-1} (\beta^{x_0,\lambda_n}_{k+1} -\beta_{k}^{x_0,\lambda_n})\|  f\|_\infty + \|f\|_\infty \Big] \leqslant 2C_{x_0,\lambda_n}^{-1}\|f\|_\infty.
\end{multline*}
As $C_{x_0,\lambda_n}\to +\infty$ this proves that $\int_{X\times X}\big(f(y)-f(x)\big)\  \dd \mu(x,y) = 0$.

And then it is established that $\mu$ is minimizing:
\begin{multline*}
 \Big|\int_{X\times X}\big(c(x,y)+c[0]\big) \dd \mu_{x_0}^{\lambda_n}(x,y) \Big|=
 \\ 
 = C_{x_0,\lambda_n}^{-1} \Big| \sum_{k=-\infty}^{-1} \beta_{k+1}^{x_0,\lambda_n}\big(c(x_k^{\lambda_n} ,x_{k+1}^{\lambda_n}  )+c[0]\big)\Big| \\
= C_{x_0,\lambda_n}^{-1}\big| u_{\lambda_n}^\alpha(x_0) \big| \leqslant C_{x_0,\lambda_n}^{-1} \|u_{\lambda_n}^\alpha\|_\infty.
\end{multline*}
Corollary \ref{replambda} was used for the last equality. Recalling that the family $(u_\lambda^\alpha)_{\lambda \in (0,1)}$ is uniformly bounded (Corollary \ref{equilambdabounded}), letting $n\to +\infty$ it follows that 
$$\int_{X\times X}c(x,y)\  \dd \mu(x,y) = -c[0],$$
 thus concluding the proof.

\end{proof}
The next lemma is similar to Lemma \ref{lmcle}:

\begin{lm}\label{lminegdegcle}
Let $x_0\in X$, $\lambda \in(0,1)$ and $w\in \mathcal S$ be a continuous subsolution, then
$$u_\lambda^\alpha(x_0) \geqslant w(x_0) - \lambda  C_{x_0,\lambda} \int_X \alpha(z) w(z) \ \dd \pi_{1*}\mu_{x_0}^\lambda (z).
$$
\end{lm}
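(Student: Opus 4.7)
The plan is to start from the representation formula for $u_\lambda^\alpha$ given in Corollary \ref{replambda}, replace the cost terms by differences of $w$ using the subsolution inequality, and then rearrange by an Abel summation to recognize the measure $\mu_{x_0}^\lambda$. Let $(x_k^\lambda)_{k\leqslant 0}$ be the chain used in Definition \ref{defmeaslambda} and write $\beta_k = \beta_k^{x_0,\lambda}$ to lighten notation. Since $w\in \S$ is a critical subsolution, we have $c(x_k^\lambda, x_{k+1}^\lambda) + \cc \geqslant w(x_{k+1}^\lambda)-w(x_k^\lambda)$ for every $k$, hence
$$u_\lambda^\alpha(x_0) = \sum_{k=-\infty}^{-1} \beta_{k+1}\big(c(x_k^\lambda, x_{k+1}^\lambda)+\cc\big) \geqslant \sum_{k=-\infty}^{-1} \beta_{k+1} \big(w(x_{k+1}^\lambda)-w(x_k^\lambda)\big).$$
Absolute convergence of both series is ensured by Proposition \ref{epslambda} together with the boundedness of $w$ and of $c$.

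Next I would perform an Abel summation on the right-hand side. Shifting indices in the first half and using $\beta_0=1$, together with the identity $\beta_k = (1-\lambda \alpha(x_k^\lambda))\beta_{k+1}$ (so that $\beta_k - \beta_{k+1} = -\lambda \alpha(x_k^\lambda)\beta_{k+1}$), one obtains
$$\sum_{k=-\infty}^{-1} \beta_{k+1}\big(w(x_{k+1}^\lambda)-w(x_k^\lambda)\big) = w(x_0) + \sum_{k=-\infty}^{-1}(\beta_k-\beta_{k+1})w(x_k^\lambda) = w(x_0) - \lambda \sum_{k=-\infty}^{-1}\alpha(x_k^\lambda)\beta_{k+1} w(x_k^\lambda).$$
The convergence of the telescoped boundary term at $-\infty$ is controlled by $\beta_{-n}\to 0$ (which follows from Proposition \ref{epslambda}) combined with $\|w\|_\infty<\infty$.

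Finally, by Definition \ref{defmeaslambda} of the measure $\mu_{x_0}^\lambda$ and of the constant $C_{x_0,\lambda}$, one recognizes
$$\sum_{k=-\infty}^{-1}\alpha(x_k^\lambda)\beta_{k+1} w(x_k^\lambda) = C_{x_0,\lambda}\int_X \alpha(z) w(z) \ \dd \pi_{1*}\mu_{x_0}^\lambda(z),$$
which combined with the previous two displays yields the claimed inequality. The only genuinely delicate point is to verify absolute convergence and to justify the rearrangement of the infinite sums, both of which follow from the uniform bound on $\sum_k \beta_k$ provided by Proposition \ref{epslambda}; everything else is mechanical manipulation.
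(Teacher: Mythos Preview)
Your proof is correct and follows essentially the same route as the paper: start from the representation formula of Corollary \ref{replambda}, apply the subsolution inequality termwise, perform an Abel transform using $\beta_k-\beta_{k+1}=-\lambda\alpha(x_k^\lambda)\beta_{k+1}$, and identify the resulting sum with the integral against $\mu_{x_0}^\lambda$. Your explicit remarks on absolute convergence and the vanishing boundary term at $-\infty$ (via Proposition \ref{epslambda}) are a welcome clarification that the paper leaves implicit.
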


\begin{proof}
We use Corollary \ref{replambda} and the fact that $w$ is a critical subsolution:
\begin{multline*}
u_\lambda^\alpha(x_0)  =  \sum_{k=-\infty}^{-1} \beta_{k+1}^{x_0,\lambda}\big(c(x_{k}^\lambda , x_{k+1}^\lambda)+c[0]\big) \\
\geqslant  \sum_{k=-\infty}^{-1} \beta_{k+1}^{x_0,\lambda}\big(w(x_{k+1}^\lambda) - w(x_{k}^\lambda) \big)  \\
=\sum_{k=-\infty}^{-1} (\beta^{x_0,\lambda}_{k} -\beta_{k+1}^{x_0,\lambda}) w(x_{k}^{x_0,\lambda})  +\beta_{0}^{x_0,\lambda}w(x_{0}).
\end{multline*}
The last equality follows by an Abel transform. We now use the definition of $\beta^{x_0,\lambda_n}_{k}$ to compute
\begin{multline*}
\beta^{x_0,\lambda}_{k} -\beta_{k+1}^{x_0,\lambda} =  \prod\limits_{j=k}^{-1} \big(1-\lambda\alpha(x_{j}^\lambda)\big)- \prod\limits_{j=k+1}^{-1} \big(1-\lambda\alpha(x_{j}^\lambda)\big) \\
= -\lambda  \alpha(x_{k}^\lambda)  \prod\limits_{j=k+1}^{-1} \big(1-\lambda\alpha(x_{j}^\lambda)\big)
= -\lambda  \alpha(x_{k}^\lambda) \beta_{k+1}^{x_0,\lambda}.
\end{multline*}
Going back to the previous computation and remembering that $\beta_{0}^{x_0,\lambda}=1$ yields
\begin{multline*}
u_\lambda^\alpha(x_0) \geqslant w(x_0) - \lambda  \sum_{k=-\infty}^{-1} \beta_{k+1}^{x_0,\lambda}    w(x_{k}^{x_0,\lambda})\alpha(x_{k}^\lambda)
\\
= w(x_0) - \lambda  C_{x_0,\lambda}   \int_X \alpha(z) w(z) \ \dd \pi_{1*}\mu_{x_0}^\lambda (z).
\end{multline*}
\end{proof}

The first part of  Theorem \ref{Thconvdiscdeg} is now ready to be proven:

\begin{proof}[Proof of Theorem \ref{Thconvdiscdeg} first formula]
Let $\lambda_n \to 0$ be a sequence such that $(u_{\lambda_n}^\alpha)_{n\in \N}$ converges to a function $v: X\to \R$.  Henceforth the function $v$ is  a weak KAM solution by continuity of the Lax--Oleinik operator. We have also defined for all $x \in X$,  $u_0^\alpha(x) = \max\limits_{u\in \mathcal F_\alpha} u(x)$. The aim here is to  prove that $v=u_0^\alpha$.

By Proposition \ref{inegdegdisc}, $v\in \mathcal F_\alpha$ and therefore $v\leqslant u_0^\alpha$.

Let us now prove the reverse inequality. Let $x_0\in X$. Up to a further extraction, we assume that the sequence of probability measures $(\mu_{x_0}^{\lambda_n})_{n\in \N}$ weakly converges to a measure $\mu$ that is a minimizing Mather measure thanks to Lemma \ref{discdegmes}. If $w\in \mathcal F_\alpha$, by definition, $\int_X \alpha(x)w(x) \ \dd\pi_{1*}\mu(x) \leqslant 0$. Combining with Proposition \ref{epslambda} entails that 
$\limsup\limits_{n\to+\infty} \lambda_n  C_{x_0,\lambda_n} \int_X \alpha(z) w(z) \ \dd \pi_{1*}\mu_{x_0}^{\lambda_n} (z)\leqslant 0$. Plugging into the inequality of Lemma \ref{lminegdegcle}  and letting $n\to +\infty$ gives
$$v(x_0) \geqslant w(x_0) - \limsup_{n\to+\infty} \lambda_n  C_{x_0,\lambda_n} \int_X \alpha(z) w(z) \ \dd \pi_{1*}\mu_{x_0}^{\lambda_n} (z)\geqslant w(x_0).
$$
As this holds for all $w\in \mathcal F_\alpha$ it comes that $v(x_0) \geqslant u_0^\alpha(x_0)$ and being true for all $x_0\in X$,  the first convergence formula is proven.
\end{proof}

 This section ends by establishing the second representation formula for $u_0$. To this aim, we set $\hat u_0^\alpha(x_0) =\min\limits_{\mu\in \widehat\PP_0} \frac{ \int_X \alpha(x)h(x,x_0)\dd\pi_{1*}\mu (x) }  {\int_X \alpha(x)\dd\pi_{1*}\mu (x)  }$ for all $x_0 \in X$, we will prove that $u_0^\alpha = \hat u_0^\alpha$. The proof follows closely that of Proposition \ref{discountedbis}:
 
 \begin{proof}[Proof of Theorem \ref{Thconvdiscdeg} second formula]
 We first claim that $\hat u_0^\alpha$ is a subsolution. Indeed, each function $h_y = h(y, \cdot)$ is a subsolution by Proposition \ref{peierl-prop}. Hence, if $\mu $ is a probablility measure on $X$, so is $h_\mu^\alpha$ defined by $h_\mu^\alpha(x) = \frac{ \int_X \alpha(y)h(y,x) \dd  \mu(y)}{\int_X \alpha(y) \dd  \mu(y)}$ since $\SS$ is closed and convex (see Proposition  \ref{propS}).
Last, as $\hat u_0^\alpha$ is an infimum of functions of this type, it is itself a subsolution by Lemma \ref{inf}.

Next, we establish that $u_0^\alpha\leqslant \hat u_0^\alpha$. Let $u\in \SS$ be a continuous subsolution, we know that $\alpha(y)\big(u(x)-u(y)\big)\leqslant\alpha(y) h(y,x)$ for all pairs $(x,y)$ (Proposition \ref{peierl-prop}). Let $\mu \in \widehat\PP_0$ be a Mather measure, integrating with respect to $y$ the previous inequality yields
$$\left( \int_X \alpha(y) \  \dd \pi_{1*} \mu(y ) \right) u(x) - \int_X \alpha(y) u(y)\  \dd \pi_{1*} \mu(y) \leqslant \int_X \alpha(y)h(y,x)\  \dd \pi_{1*} \mu(y).$$
If $u\in \mathcal F_\alpha$ it follows that $\Big(\int_X \alpha(y) \  \dd \pi_{1*} \mu(y)\Big)u(x)\leqslant \int_X \alpha(y)h(y,x) \ \dd \pi_{1*} \mu(y)$. This being valid for all $u\in \mathcal F_\alpha$ and for all $\mu \in \widehat\PP_0$,  the desired inequality $u_0^\alpha\leqslant \hat u_0^\alpha$ is obtained.

We conclude by proving the reverse inequality. Let $y\in X$, the function $h^y = -h(\cdot ,y)$ is a subsolution (by Proposition \ref{peierl-prop}). Moreover, by definition of $\hat u_0^\alpha$, the function $h^y + \hat u_0^\alpha(y) \in \mathcal F_\alpha$. In particular, $u_0^\alpha \geqslant h^y - \hat u_0^\alpha(y)$ and evaluating at $y$ yields $u_0^\alpha(y) \geqslant -h(y,y) + \hat u_0^\alpha(y)$. If we specify, moreover, $y\in \AA$ to be in the projected Aubry set, leads to the inequalities  (see Theorem \ref{Aubry0})
$$\forall y\in \AA,\quad u_0^\alpha(y) \geqslant \hat u_0^\alpha(y).$$
This is enough  to conclude that $u_0^\alpha \geqslant \hat u_0^\alpha$ everywhere, indeed, $u_0^\alpha$ is a weak KAM solution and $\hat u_0^\alpha\in \SS$ hence Theorem \ref{uniqueness} applies.

 \end{proof}

\section{Comment on the discounted procedure}

As  already mentioned, the introduction of the functions $u_\lambda$ in the second proof of the Weak KAM Theorem \ref{weak KAM} is very natural. Indeed, let us recall a classical fixed point Theorem:

\begin{Th}
Let $C$ be a compact  convex subset of a Fr\' echet vector--space and $f : C \to C$ be a $1$--Lipschitz map. Then $f$ admits a fixed point.
\end{Th}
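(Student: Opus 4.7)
The plan is to mimic the discounted procedure used in the second proof of Theorem \ref{weak KAM}, adapting it to the general Fréchet setting. Fix a base point $x_0\in C$ and for each $\lambda \in (0,1)$ define the perturbed map
$$f_\lambda : C \longrightarrow C, \qquad f_\lambda(x) = \lambda\, f(x) + (1-\lambda)\, x_0.$$
Since $C$ is convex, $f_\lambda$ indeed sends $C$ to $C$. The key point is that for any continuous seminorm $p$ generating the Fréchet topology one has
$$p\bigl(f_\lambda(x)-f_\lambda(y)\bigr) = \lambda\, p\bigl(f(x)-f(y)\bigr) \leqslant \lambda\, p(x-y),$$
so $f_\lambda$ strictly contracts every seminorm by a factor $\lambda$.

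The next step is to build a fixed point of $f_\lambda$. Starting from any $y\in C$, set $y_n = f_\lambda^n(y)$; by the seminorm contraction estimate $p(y_{n+1}-y_n) \leqslant \lambda^n p(y_1-y_0)$, so for every defining seminorm $p$ the series $\sum_n p(y_{n+1}-y_n)$ converges, hence $(y_n)$ is Cauchy in each seminorm, and therefore Cauchy in the Fréchet metric. By completeness of the Fréchet space and closedness of the compact set $C$, the sequence converges to a limit $x_\lambda \in C$, and continuity of $f_\lambda$ yields $f_\lambda(x_\lambda) = x_\lambda$. (Uniqueness of $x_\lambda$ follows from the contraction estimate.)

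Now one passes to the limit $\lambda \to 1$. By compactness of $C$, there exists a sequence $\lambda_n \to 1$ such that $x_{\lambda_n} \to x^* \in C$. Writing the fixed point equation
$$x_{\lambda_n} = \lambda_n f(x_{\lambda_n}) + (1-\lambda_n)\, x_0$$
and using the continuity of $f$ together with the boundedness of $f(C)\subset C$ allows one to pass to the limit and conclude that $x^* = f(x^*)$.

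The main technical subtlety is Step 2: in a general Fréchet space the canonical translation-invariant metric $d(x,y)=\sum_n 2^{-n} p_n(x-y)/\bigl(1+p_n(x-y)\bigr)$ is \emph{not} homogeneous, so $f_\lambda$ need not be a genuine $\lambda$-contraction in the metric sense and one cannot cite the Banach fixed point theorem verbatim. The correct workaround, as explained, is to observe that contraction holds seminorm-by-seminorm, which suffices to ensure that the iterates form a Cauchy sequence and converge. Once this is set up, the rest of the argument (convexity to keep $f_\lambda(C)\subset C$, compactness to extract a converging subsequence of $x_\lambda$, continuity of $f$ to identify the limit as a fixed point) is a direct transposition of what was done for $u_\lambda$ in the proof of the weak KAM Theorem.
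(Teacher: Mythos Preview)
Your approach is essentially the paper's: perturb $f$ into a contraction $f_\lambda$ with fixed point $x_\lambda$, then use compactness of $C$ to extract a convergent subsequence whose limit is a fixed point of $f$. The only cosmetic difference is the choice of perturbation: the paper translates so that $0\in C$ and takes $f_\lambda(x)=f(\lambda x)$, whereas you take $f_\lambda(x)=\lambda f(x)+(1-\lambda)x_0$. Both keep $C$ invariant by convexity and both collapse to the discounted map of the second proof of Theorem~\ref{weak KAM} in the Banach setting.

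You are right to flag the non-homogeneity of the standard Fr\'echet metric; the paper's own one-line claim that $f_\lambda$ is ``a contraction of a complete metric space'' glosses over exactly this point, and your seminorm-by-seminorm Cauchy argument is a clean way to produce $x_\lambda$. Do note, however, that your inequality $p\bigl(f(x)-f(y)\bigr)\leqslant p(x-y)$ tacitly assumes $f$ is $1$-Lipschitz with respect to \emph{each} generating seminorm, which is strictly stronger than being $1$-Lipschitz for the Fr\'echet metric $d$. Under the weaker metric interpretation of the hypothesis, neither your argument nor the paper's is fully justified as written; under the seminorm interpretation, both are fine and yours is the more carefully argued of the two.
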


This result is of course weaker than the Schauder-Tychonoff Theorem but a simple proof goes as follows. Up to conjugating by a translation,  assume that $0\in C$. Then for $\lambda \in (0,1)$, the function $f_\lambda : C\to C$ defined by $f_\lambda(x) = f(\lambda x)$ is well defined and a contraction of a complete metric space. It admits a unique fixed point $x_\lambda\in C$. Consequently, by compactness of $C$ one can  consider a sequence $\lambda_n \to 1$ such that $(x_{\lambda_n})_{n\in \N}$ converges to a point $x^*\in C$. It is then immediate that $x^*$ is a fixed point of $f$. Note that if $0$ is a fixed point of $f$, then $x_\lambda = 0$ for all $\lambda \in (0,1)$.

A natural question is to figure out if in the previous procedure, the whole family $x_\lambda$ always converges. If this were the case, our discounted Theorem \ref{discounted} would be less interesting. However, this is not the case as we now illustrate.

Our example is constructed in $(\R^2 , \| \cdot \|_1)$. More precisely, let us consider the triangle defined by 
$$\mathfrak T = \left\{(x,y)\in \R^2 ,\ \ -\frac12 \leqslant y \leqslant -| x | +\frac12 \right\}.$$

If $\alpha \in (0,1)$, we look for a map $f$ that takes the following form:
$$f(x,y) = \Big(x+\varepsilon(y) , \alpha \big(y+\frac12\big) -\frac12\Big) ,$$ 
where $\varepsilon : \big[-\frac12,\frac12\big]\to \R$ is a map to be determined such that $\varepsilon(-\frac12) = 0$. In this setting, the bottom edge of $\mathfrak T$ is made of fixed points of $f$.

Simple verifications show that $f : \mathfrak T\to \mathfrak T$ is well defined as soon as $|\varepsilon(y) |\leqslant (1-\alpha)(y+\frac12)$. Moreover, it is $1$-Lipschitz if $\varepsilon$ is $(1-\alpha)$-Lipschitz.

If those conditions are verified, an explicit computation shows that for $\lambda\in (0,1)$, denoting by $X_\lambda = (x_\lambda,y_\lambda)$ the unique fixed point of $f_\lambda$, 
$$(x_\lambda,y_\lambda) = \left( \frac{1}{1-\lambda}\varepsilon\Big(\frac{\lambda(\alpha-1)}{2(1-\alpha \lambda)} \Big) , \frac{\alpha -1}{2(1-\alpha \lambda)} \right).$$

By setting $g(\lambda) = \frac{\lambda(\alpha-1)}{2(1-\alpha \lambda)} $, one computes that 
$$g^{-1}(\mu) =  \frac{2\mu}{\alpha - 1 + 2\alpha \mu} .$$
Hence $g$ is a bi--Lipschitz decreasing homeomorphism from $[0,1]$ to $[-1/2,0]$.

Now, define $h : \R \to \R$ by $h(x)= (1-x)\sin\big(\ln(|1-x|)\big)$ for $x\neq 1$ that  extends by continuity with $h(1)=0$. As
$$\forall x\neq 1, \quad h'(x) =  - \sin\big(\ln(|1-x|)\big) - \cos\big(\ln(|1-x|)\big),$$
$h$ is a Lipschitz function. It follows that for $\varepsilon_0>0$ small enough, the function $\varepsilon = \varepsilon_0 h\circ g^{-1}$ is $(1-\alpha)$--Lipschitz on $[-\frac12,0 ]$ and verifies $\varepsilon(-\frac12)=0$. Extend it by $\varepsilon(y) = \varepsilon(0)$ for $y\in [0,\frac12]$.

\begin{figure}[h!]
	\begin{minipage}[b]{0.45\linewidth}
		\centering \includegraphics[scale=0.45]{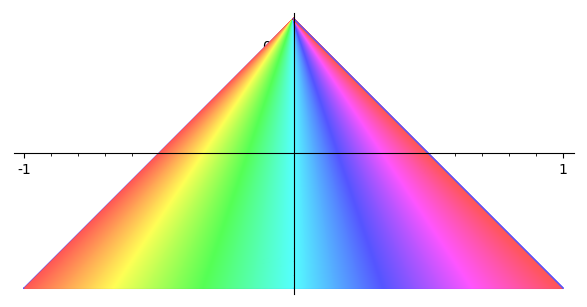}
		\caption{\ The triangle  $\mathfrak T$ filled with rainbow colours. }
	\end{minipage}
	\hfill
	\begin{minipage}[b]{0.48\linewidth}	
		\centering \includegraphics[scale=0.45]{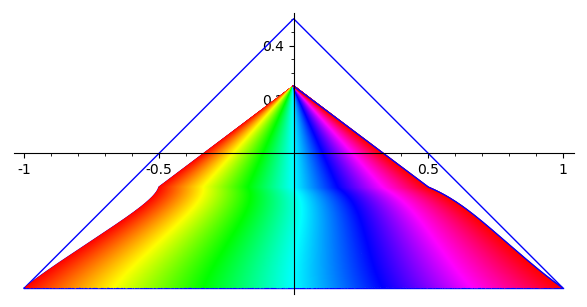}
		\caption{Its image by $f$ for $\alpha = \frac 34$, $\varepsilon_0 = \frac{1}{10}$.}
	\end{minipage}
\end{figure}

For the function $f$ associated to the latter $\varepsilon$, we compute that
$$\forall \lambda\in (0,1),\quad X_\lambda = (x_\lambda,y_\lambda) = \Big( \varepsilon_0\sin\big(\ln(1-\lambda)\big) , \frac{g(\lambda)}{\lambda}\Big).$$

\begin{figure}[h!]
	\begin{minipage}[b]{0.5\linewidth}
		\centering \includegraphics[scale=0.45]{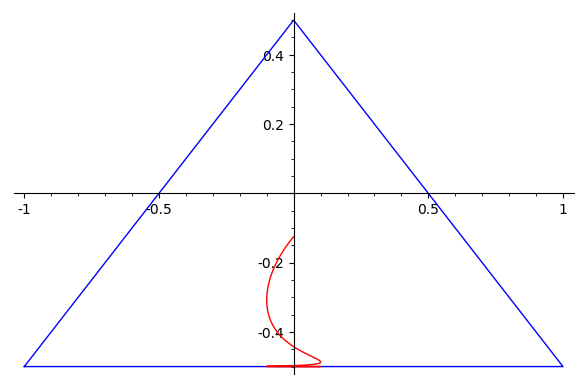}
		\caption{In red, the curve of fixed points $(X_\lambda)_{\lambda \in (0,1)}$. }
	\end{minipage}
	\hfill
	\begin{minipage}[b]{0.48\linewidth}	
		\centering \includegraphics[scale=0.45]{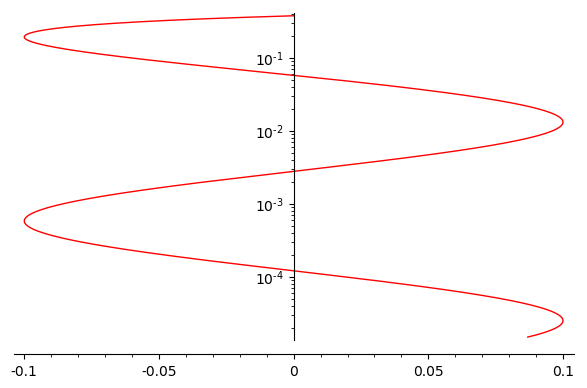}
		\caption{Same curve in vertical logarithmic scale.}
	\end{minipage}
\end{figure}

Clearly $X_\lambda $ diverges as $\lambda \to 1$. Let us also refer to \cite{Ziliotto} for other counterexamples related to the discounted equations with non--convex Hamiltonians.

On the positive side, let us mention another convergence result. We state it in finite dimensions and refer to \cite{Zfixed}\footnote{Since writing \cite{Zfixed}, the author realized that the following result is actually a particular case of previous Theorems of Reich (\cite{Reich}). See also \cite{Kirk,Kirk2} for many further developments.} and references therein for further results. Let us recall that a norm $\| \cdot \|$ on $\R^n$ is called smooth if the function $x \mapsto \|x \|$ is $C^1$ on $\R^n \setminus \{0\}$ or equivalently, if the function  $x \mapsto \|x \|^2$ is $C^1$ on $\R^n $.

\begin{Th}
Let $\| \cdot \|$ be a smooth norm on $\R^n$. Let $C\subset \R^n$ be a compact convex set such that $0\in C$. Finally, let $f : C\to C $ be a $1$--Lipschitz map. For all $\lambda \in (0,1)$ we denote by $X_\lambda\in C$ the unique point such that $X_\lambda = f(\lambda X_\lambda)$. Then the family $(X_\lambda)_{\lambda \in (0,1)}$ converges as $\lambda \to 1$. 
\end{Th}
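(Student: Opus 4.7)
The strategy is to leverage compactness of $C$ (so that $(X_\lambda)_{\lambda\in(0,1)}$ has accumulation points as $\lambda\to 1$) and then prove that there is a unique such accumulation point. The smoothness of the norm will serve to define a single-valued duality map, and it is precisely the oddness of this map that will force uniqueness.

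First, a preliminary step that is essentially free: since $\lambda X_\lambda \to x^*$ whenever $\lambda_n\to 1$ and $X_{\lambda_n}\to x^*$, the continuity of $f$ together with $f(\lambda_n X_{\lambda_n}) = X_{\lambda_n}$ forces $f(x^*) = x^*$, i.e.\ every accumulation point belongs to $\mathrm{Fix}(f)$. I would then introduce the $C^1$ convex map $\phi(x) = \tfrac12\|x\|^2$ (well defined and differentiable on $\R^n$ thanks to the smoothness of the norm) and its gradient $J := \nabla\phi$, the \emph{duality map}; note that $J$ is continuous and, because $\phi$ is even, $J(-u) = -J(u)$.

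The central step is a variational inequality satisfied by $X_\lambda$ against every fixed point $p$ of $f$. Nonexpansiveness of $f$ gives $\|X_\lambda - p\| = \|f(\lambda X_\lambda) - f(p)\| \leq \|\lambda X_\lambda - p\|$, hence $\phi(X_\lambda - p) \leq \phi(\lambda X_\lambda - p)$. Applying the subgradient inequality for $\phi$ at $\lambda X_\lambda - p$ with increment $X_\lambda - p - (\lambda X_\lambda - p) = (1-\lambda)X_\lambda$ gives
$$\phi(X_\lambda - p) \;\geq\; \phi(\lambda X_\lambda - p) + (1-\lambda)\bigl\langle J(\lambda X_\lambda - p),\, X_\lambda\bigr\rangle,$$
and combining with the previous inequality and dividing by $1-\lambda>0$ yields
$$\bigl\langle J(\lambda X_\lambda - p),\, X_\lambda\bigr\rangle \;\leq\; 0 \qquad \forall\, p\in\mathrm{Fix}(f).$$
Passing to the limit along any subsequence $\lambda_n\to 1$ with $X_{\lambda_n}\to x^*$, continuity of $J$ gives the limiting variational inequality $\langle J(x^* - p), x^*\rangle \leq 0$ for every $p\in\mathrm{Fix}(f)$.

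Finally, if $x_1^*$ and $x_2^*$ are two accumulation points, both lie in $\mathrm{Fix}(f)$, so I can substitute $p = x_2^*$ in the inequality for $x_1^*$ and $p = x_1^*$ in the one for $x_2^*$. Adding the two and using the oddness of $J$ collapses the cross terms and leaves $\langle J(x_1^* - x_2^*), x_1^* - x_2^*\rangle \leq 0$, i.e.\ $\|x_1^* - x_2^*\|^2 \leq 0$, whence $x_1^* = x_2^*$. Together with compactness this proves the convergence. The genuinely delicate point is the third paragraph: one must make sure the direction of the subgradient inequality matches the direction of the nonexpansiveness inequality so that both combine (rather than cancel), and smoothness of the norm is what allows the single-valued $J$ and the clean cancellation in the last step — without it one would only get set-valued subgradients and no way to force $x_1^* = x_2^*$.
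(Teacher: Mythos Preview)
Your proof is correct. The paper does not actually prove this theorem: it states it, refers to \cite{Zfixed} and to Reich's earlier work, and only suggests the Euclidean case as an exercise, hinting that smoothness provides a unique tangent hyperplane at each point of the sphere and that the limit is the projection of $0$ onto $\mathrm{Fix}(f)$. Your argument is precisely the classical Reich approach that the paper is alluding to: the duality map $J=\nabla(\tfrac12\|\cdot\|^2)$ is exactly the ``tangent hyperplane'' functional, and your limiting variational inequality $\langle J(x^*-p),x^*\rangle\le 0$ for all $p\in\mathrm{Fix}(f)$ is what characterizes the (generalized) projection of $0$ onto the fixed-point set. The two key uses of smoothness --- that $J$ is single-valued and continuous, and that $J$ is odd so that the two inequalities add up cleanly --- are correctly identified and correctly deployed. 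One very minor remark: in the final line you use $\langle J(u),u\rangle=\|u\|^2$, which follows from the $2$-homogeneity of $\phi$ via Euler's relation; it may be worth saying this explicitly, since for a general smooth norm the pairing $\langle\cdot,\cdot\rangle$ is the duality pairing rather than an inner product.
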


Smoothness here is used as the unit sphere has a unique tangent linear hyperplane at each of its points. A good exercise is to prove the Theorem in the Euclidean case. If the norm comes from a scalar product $\langle \cdot , \cdot \rangle$, and if $\|x\|=1$, this tangent hyperplane is given by the linear form $\langle x, \cdot \rangle$. In this case, it can be established that as $\lambda \to 0$, the points $X_\lambda$ converge to the orthogonal projection of $0$ on the set of fixed points of $f$.

\section{Relations to the classical theory}
Here again, $L$ is a Tonelli Lagrangian on $TM$ the tangent bundle of a smooth compact manifold $M$ endowed with a Riemannian metric.
\subsection{Minimizing Mather measures} 
\underline{The Ma\~ n\'e point of view}: this first approach was actually introduced after Mather's original one by Ma\~ n\'e in \cite{MClosed,Mane}. Mather then noticed that Ma\~ n\'e's point of view could be reformulated in a more synthetic manner and the following results were definitively written in the present form in \cite{FSC1}. 

\begin{df}\label{closedM}\rm
A Borel probability measure $\mu'$ on $TM$ is termed {\it closed} if it has finite first moment, $\int_{TM} \|v\|_x \dd \mu' (x,v) <+\infty$ and if for all $C^1$ functions $f : M\to \R$,
$$\int_{TM} D_x f (v)\dd \mu' (x,v)=0.$$
We denote by $\PP'$ the set of closed probability measures on $TM$.
\end{df}
The finite first moment condition is there so that the integral is absolutely convergent.
Examples of closed measures can be constructed  of the form $\nu_\gamma = \frac1T \int_0^T\delta_{\textrm{$\big($} \gamma(s),\dot\gamma(s)\textrm{$\big)$}} \dd s$ where $\gamma : [0,T]\to M$ is a  $C^1$ curve such that $\gamma(0)=\gamma(T)$. Indeed, if $f : M\to \R$ is $C^1$, then
$$\int_{TM} D_x f (v)\dd \nu_\gamma (x,v) = \int_0^TD_{\gamma(s)}f\big(\dot\gamma(s)\big) \dd s = f\big(\gamma(T)\big) - f\big(\gamma(0)\big) = 0.$$

Ma\~n\' e's version of Mather measures and Mather's critical value is then:
 \begin{Th}[Ma\~n\' e] \label{minimizingMane}
 The following equality holds:
 $$-\alpha(0) = \min_{\mu'\in \PP'} \int_{TM} L(x,v) \dd \mu'(x,v).$$
 Moreover, a closed measure realizes this minimum if and only if it is supported on the Aubry set $ \AA'$. Last, a minimizing measure is automatically invariant by the Lagrangian flow $\varphi_L$.
 \end{Th}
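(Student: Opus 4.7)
The strategy parallels the discrete proof of Theorem \ref{minimizing}, with a $C^{1,1}$ strict critical subsolution $u_0$ (Theorem \ref{strictC}) in place of the continuous strict subsolution used there.

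First I would establish the lower bound. The Fenchel--Young inequality together with the subsolution property give, everywhere on $TM$,
$$D_xu_0(v)\leq L(x,v)+H(x,D_xu_0)\leq L(x,v)+\alpha(0).$$
For any $\mu'\in\PP'$, the finite first-moment assumption makes $(x,v)\mapsto D_xu_0(v)$ integrable since $Du_0$ is bounded on $M$. Integration and the closedness condition applied to the $C^1$ test function $u_0$ then yield $0\leq\int L\,d\mu'+\alpha(0)$. To attain this bound I would construct a minimizer $\mu^*$ by Birkhoff averaging along a calibrated curve $\gamma:(-\infty,0]\to M$ for some weak KAM solution $u$ (Proposition \ref{cchains}). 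A priori bounds for Tonelli minimizers give a uniform bound on $\|\dot\gamma(s)\|_{\gamma(s)}$, so the probability measures $\mu_T:=\frac{1}{T}\int_{-T}^0\delta_{(\gamma(s),\dot\gamma(s))}\,ds$ are supported in a common compact subset of $TM$. A weak-$\ast$ accumulation point $\mu^*$ along some $T_n\to\infty$ is closed, because $\int Df(v)\,d\mu_T=\frac{f(\gamma(0))-f(\gamma(-T))}{T}\to 0$ for every $f\in C^1(M)$, and attains $-\alpha(0)$ by the calibration identity $u(\gamma(0))-u(\gamma(-T))=\int_{-T}^0 L(\gamma,\dot\gamma)\,ds+T\alpha(0)$.

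The support characterization follows by tracking the equality cases above. If $\mu'$ attains the infimum, both pointwise inequalities must be equalities $\mu'$-almost everywhere. The equality $H(x,D_xu_0)=\alpha(0)$ forces $\pi(\mathrm{supp}\,\mu')\subset\AA$ (since $\AA$ is characterized by this equality, Theorem \ref{strictC}), while equality in Fenchel forces $v=\partial_pH(x,D_xu_0)$, hence $(x,v)\in\AA'=\LL^{-1}(\AA^*)$. Conversely, any closed $\mu'$ supported on $\AA'$ satisfies $L(x,v)=D_xu_0(v)-\alpha(0)$ on its support, so $\int L\,d\mu'=-\alpha(0)$ by closedness.

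The most delicate point is automatic $\varphi_L$-invariance. My approach exploits the Lipschitz-graph structure of $\AA'$: since $u_0\in C^{1,1}$, the vector field $X(x):=\partial_pH(x,D_xu_0)$ is Lipschitz on $M$, and by Theorem \ref{invA} the Euler--Lagrange flow on $\AA'$ is conjugated, via $\pi$, to the flow of $X$ restricted to the invariant compact set $\AA$. A minimizing $\mu'$, being supported on the graph of $x\mapsto(x,X(x))$ over $\AA$, is entirely determined by $\sigma:=\pi_*\mu'$, and the closedness condition translates into
$$\int_M D_xf\big(X(x)\big)\,d\sigma(x)=0\qquad\forall f\in C^1(M),$$
i.e. $\sigma$ is an $X$-stationary measure. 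Since $X$ is Lipschitz and generates a well-defined global flow, stationarity of $\sigma$ is equivalent to its invariance under that flow, which pulls back via $\pi$ to $\varphi_L$-invariance of $\mu'$. The main technical obstacle is making the reduction from closedness on $TM$ to $X$-stationarity on $\AA$ completely rigorous; this rests on the fine dynamical description of $\AA'$ given by Theorem \ref{invA} and on the regularity afforded by Bernard's $C^{1,1}$ subsolution theorem.
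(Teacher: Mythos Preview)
The paper does not give its own proof of this theorem: it is stated as a classical result (attributed to Ma\~n\'e, with the present formulation credited to Fathi--Siconolfi \cite{FSC1}) in the ``Relations to the classical theory'' section, with no argument supplied. So there is nothing to compare against directly.

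Your proposed proof is correct and follows the standard route. The lower bound via Fenchel--Young plus the $C^{1,1}$ strict subsolution $u_0$, the construction of a minimizer by averaging along a calibrated curve (with the key a~priori speed bound), and the identification of the equality cases forcing the support into $\AA'$ are all sound and match what one finds in \cite{FSC1} or \cite{Fa}.

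Your treatment of automatic flow invariance is also correct, though it leans on a fact worth making explicit: for a Lipschitz vector field $X$ on a compact manifold, a probability measure $\sigma$ satisfying $\int D_xf(X)\,d\sigma=0$ for all $f\in C^1$ is necessarily invariant under the flow of $X$. This is the uniqueness statement for the continuity equation with Lipschitz drift (trivially covered by classical ODE theory, or by DiPerna--Lions in rougher settings). Once that is granted, your reduction---project $\mu'$ to $\sigma=\pi_*\mu'$ on $\AA$, observe that closedness becomes exactly $X$-stationarity because $\mu'$ lives on the graph of $X$ over $\AA$, then pull invariance back via the Lipschitz graph structure of $\AA'$ (Theorem~\ref{invA} and Proposition~\ref{aubry-graph})---is clean and rigorous. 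An alternative one sometimes sees is to average $\mu'$ along $\varphi_L$, check the average is still closed and minimizing with the same projection, and conclude by the graph property; your argument is arguably more direct.
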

 
 This justifies the definition of the Mather set:
 \begin{df}\rm
Let us  denote by $\PP_0'$ the set of minimizing closed probability measures, that is, closed probability measures $\mu'$ such that  $\int_{TM} L(x,v) \dd \mu' = -\alpha(0)$. The measure $\mu'$ is then said to be a {\it Mather measure}.

On $T^*M$, we define the set $\PP_0^* = \{ \LL_* \mu' , \ \ \mu' \in \PP_0'\}$.

Let us define the {\it Mather set} $ \MM'\subset TM$ by 
$$ \MM' =\overline{ \bigcup_{\mu'\in \PP_0'} \mathrm {supp}(\mu')},$$
 The {\it projected Mather set} is $\MM = \pi(\MM') $.
 
  Finally the {\it Mather set in 
 $T^*M$} is  
 $$\MM^* = \LL(\MM') = \overline{ \bigcup_{\mu^*\in \PP_0^*} \mathrm {supp}(\mu^*)}.
 $$
\end{df}

It is apparent from these results that $\MM' \subset \AA'\subset \LL^{-1}\big(H^{-1}(\{\alpha(0)\})\big)$ (this is Carneiro's Theorem \cite{carneiro}) and, as for the discrete case, and for the same reasons, $\PP_0'$ is convex and compact and there exists one Mather measure whose support is the whole $\MM'$. Finally, Theorem \ref{uniquenessM}, stating that $\MM$ is a uniqueness set for weak KAM solutions, holds. We do not rewrite it here.

\underline{The Mather point of view}: it is more dynamical in nature, hence reminiscent of the ergodic viewpoint of subsection \ref{BernardBuffoni}. It  also reflects Mather's original definitions as stated in \cite{Mather1} following his results on twist maps from \cite{MatherMeasure}.

\begin{df}\label{invmeasure}\rm
We denote $\PP'_L$ be the set of Borel probability measures on $TM$ invariant by the Lagrangian flow $\varphi_L$.
\end{df}

The historical definition of Mather's critical constant is contained in the next result:

\begin{pr}
The critical constant is characterized by
$$-\alpha(0) = \min_{\mu'\in \PP'_L} \int_{TM} L(x,v) \ \dd \mu'(x,v).$$
Moreover, minimizing measures are automatically closed, hence $\mu'$ is minimizing if and only if $\mu'\in \PP_0'$.
\end{pr}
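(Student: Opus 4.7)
The strategy is to compare this statement with the previously established Ma\~n\'e characterization (Theorem \ref{minimizingMane}), which asserts that the minimum of $\int L\, \dd\mu'$ over the set of \emph{closed} probability measures $\PP'$ equals $-\alpha(0)$, that minimizers are exactly those supported on the Aubry set $\AA'$, and that minimizing closed measures are automatically $\varphi_L$-invariant. Granting this, the proof reduces to establishing the following key lemma: \emph{any $\varphi_L$-invariant Borel probability measure $\mu'$ with $\int L\, \dd \mu' < +\infty$ is closed}, i.e.~$\PP_L' \cap \{\int L \dd\mu' <\infty\}\subset \PP'$.

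Once this lemma is at hand, the proof concludes quickly. On the one hand, Ma\~n\'e's theorem provides a measure $\mu_0' \in \PP_0'$ with $\int L\,\dd \mu_0' = -\alpha(0)$, which is automatically $\varphi_L$-invariant, so $\inf_{\PP_L'} \int L\,\dd\mu' \leq -\alpha(0)$. On the other hand, any $\mu' \in \PP_L'$ with $\int L\,\dd\mu' < +\infty$ lies in $\PP'$ by the lemma, so $\int L\,\dd\mu' \geq -\alpha(0)$; measures with infinite action trivially satisfy the same inequality. This gives the equality of the two minima, and moreover shows that every minimizer in $\PP_L'$ is closed and thus lies in $\PP_0'$.

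To prove the lemma, start with the finite first moment. Fix $K>0$; by superlinearity there is $C_K$ such that $L(x,v) \geq K\|v\|_x - C_K$ everywhere. Integrating yields
\[
\int_{TM} \|v\|_x\, \dd \mu'(x,v) \leq \frac{1}{K}\Big(\int L\, \dd \mu' + C_K\Big) < +\infty.
\]
Next, given $f\in C^1(M,\R)$, set $F(x,v) = D_x f(v)$ on $TM$. Writing $\varphi_L^t(x,v) = \big(x_t, \dot x_t\big)$, the chain rule gives
\[
f(x_T) - f(x) = \int_0^T F\circ \varphi_L^t(x,v)\, \dd t, \qquad T>0.
\]
The bound $|F(x,v)| \leq \|Df\|_\infty\|v\|_x$ together with $\varphi_L$-invariance guarantees $\int_0^T\int_{TM} |F\circ\varphi_L^t|\, \dd \mu'\, \dd t = T\int |F|\,\dd\mu' < +\infty$, so Fubini and invariance yield
\[
T \int_{TM} F\, \dd \mu' = \int_{TM} \int_0^T F\circ \varphi_L^t(x,v)\, \dd t\, \dd \mu'(x,v) = \int_{TM}\big[f(x_T)-f(x)\big]\, \dd \mu'(x,v).
\]
The right-hand side is bounded in absolute value by $2\|f\|_\infty$, independently of $T$. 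Dividing by $T$ and letting $T\to +\infty$ gives $\int F\, \dd \mu' = 0$, which is exactly the closedness condition of Definition \ref{closedM}.

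The only subtle point is the Fubini step in the lemma, which is why the finite first moment has to be extracted first from superlinearity. Everything else is a direct consequence of Ma\~n\'e's theorem and the flow-invariance.
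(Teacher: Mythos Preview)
The paper does not supply a proof of this proposition: it sits in the ``Relations to the classical theory'' section, where classical results are stated for context, and the text moves directly from the statement to a discussion of why Ma\~n\'e's viewpoint is more flexible than Mather's. So there is no paper proof to compare against.

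Your argument is correct and is the standard one. Reducing to Theorem~\ref{minimizingMane} is exactly the right move, and the key lemma that flow-invariant measures with finite action are closed is the substantive content. Your proof of that lemma is fine: superlinearity gives the finite first moment, and the Birkhoff-type averaging identity $T\int D_xf(v)\,\dd\mu' = \int\big(f(x_T)-f(x)\big)\,\dd\mu'$ combined with the bound $2\|f\|_\infty$ forces the integral to vanish. The Fubini justification via $|F(x,v)|\leq \|Df\|_\infty \|v\|_x$ and invariance is the correct way to handle it. One cosmetic remark: since $L$ is bounded below (take $K=0$ in superlinearity), the case $\int L\,\dd\mu' = -\infty$ never arises, so the only ``infinite action'' case is $+\infty$, which you already dismiss.
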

From a dynamical point of view, Mather's approach is obviously more natural. However, the big drawback is that the condition of being flow invariant depends on the Lagrangian and its flow, in contrast to  the condition of being closed.  This is actually what motivated Ma\~n\'e's change of paradigm as he wanted to study how Mather measures evolve under perturbations of a Lagrangian. It is also very useful as it applies to less regular Lagrangians and Hamiltonians.

 To end this paragraph, let us pursue our systematic approach of highlighting  relationships between objects coming from the classical setting and their analogues coming from the discrete setting for the time--$1$ action functional $h_1$. The main result states that projected Mather sets coincide in both settings, justifying the same notation:
 
 \begin{pr}\label{projMatherset}
 Denoting by $\MM_L$ the projected Mather set associated to $L$ and $\MM_{h_1}$ the projected Mather set associated to its  time--$1$ action functional, the equality $\MM_L = \MM_{h_1}$ holds.
 \end{pr}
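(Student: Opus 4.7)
The plan is to set up a correspondence between the two classes of Mather measures that is compatible with the projections to $M$, and then deduce the equality of projected Mather sets by taking unions of supports.

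First I would prove $\MM_L \subset \MM_{h_1}$. Given a Lagrangian Mather measure $\mu' \in \PP'_0$, define the continuous map $\Phi : TM \to M \times M$ by $\Phi(x,v) = (x, \pi \circ \varphi_L^1(x,v))$ and set $\mu := \Phi_* \mu'$. Flow-invariance of $\mu'$ (guaranteed by Theorem \ref{minimizingMane}) gives $\pi_{1*}\mu = \pi_* \mu' = \pi_*(\varphi_L^1)_*\mu' = \pi_{2*}\mu$, so $\mu$ is closed in the sense of Definition \ref{closed}. Comparing the orbit $s \mapsto \pi \varphi_L^s(x,v)$ with the infimum defining $h_1$ yields the pointwise bound $h_1 \circ \Phi(x,v) \leqslant \int_0^1 L(\varphi_L^s(x,v))\,ds$; integrating against $\mu'$ and using flow-invariance gives $\int h_1\, d\mu \leqslant -\alpha(0)$, and the reverse inequality comes from Theorem \ref{minimizing}, so $\mu \in \wh\PP_0$. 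Since $\pi_1 \circ \Phi = \pi$, one gets $\pi(\supp \mu') \subset \pi_1(\supp \mu) \subset \MM_{h_1}$, and taking unions over $\mu' \in \PP'_0$ and closures yields the inclusion.

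For the reverse inclusion $\MM_{h_1} \subset \MM_L$, I would start from $\mu \in \wh\PP_0$. By Theorem \ref{minimizing}, $\supp \mu \subset \wh\AA_{h_1}$, which by the equality of Aubry sets established at the end of Chapter 1 coincides with $\wh\AA_H$. On this set, Proposition \ref{Ltwist} together with the analysis leading to Proposition \ref{eqAubry} shows that for each $(x,y)$ there is a unique minimizer $\gamma_{x,y} : [0,1]\to M$ joining $x$ to $y$ in time $1$, and it is a segment of an Euler--Lagrange trajectory. Define $\Psi(x,y) = (x, \dot\gamma_{x,y}(0)) \in TM$ and set
\[
\mu' := \int_0^1 (\varphi_L^s)_* \Psi_* \mu \, ds.
\]
Fubini and the fundamental theorem of calculus give, for $f \in C^1(M,\R)$,
\[
\int_{TM} D_x f(v)\, d\mu'(x,v) = \int_{M \times M}\!\big(f(y) - f(x)\big)\, d\mu(x,y) = 0,
\]
so $\mu'$ is closed. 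Its action is computed directly,
\[
\int_{TM} L \, d\mu' = \int_{M\times M}\!\int_0^1 L(\gamma_{x,y}(s), \dot\gamma_{x,y}(s))\, ds\, d\mu(x,y) = \int h_1\, d\mu = -\alpha(0),
\]
so $\mu' \in \PP'_0$ by Theorem \ref{minimizingMane}. Since $\pi \circ \Psi = \pi_1$, one obtains $\pi_1(\supp \mu) \subset \pi(\supp \mu') \subset \MM_L$, finishing the proof.

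The step I expect to be most delicate is the measurability (and, ideally, continuity) of the lifting map $\Psi$ on $\wh\AA_{h_1}$, because only then is $\Psi_*\mu$ a well-defined Borel measure. This does not follow from Tonelli theory alone (which gives existence of minimizers but not uniqueness off the Aubry set); the key inputs are the identification $\wh\AA_{h_1} = \wh\AA_H$ from the previous chapter, the twist property of $h_1$ (Proposition \ref{Ltwist}), and Mather's graph theorem (Proposition \ref{aubry-graph}), which together force $\Psi$ to be continuous on $\wh\AA_{h_1}$. Everything else in the argument is a bookkeeping manipulation of the defining inequalities for closed measures and the two variational characterizations of $\alpha(0)$.
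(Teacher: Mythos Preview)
Your argument is correct and follows the same two-way measure correspondence as the paper, with only minor technical variations. In the forward direction the paper computes $\int h_1\,d\mu=-\alpha(0)$ directly by restricting to $\AA'$ and invoking Proposition~\ref{eqAubry}, whereas you obtain it via the inequality $h_1\circ\Phi\leqslant\int_0^1 L\circ\varphi_L^s\,ds$ together with Theorem~\ref{minimizing}; both work. In the reverse direction the paper defines the lift on $\wh\AA$ through the Lipschitz vector field $x\mapsto v_x$ coming from a $C^{1,1}$ subsolution (Theorem~\ref{strictC}), which makes continuity of your map $\Psi$ automatic and addresses exactly the measurability concern you raised; the paper then checks $\varphi_L$-invariance of $\mu'$ (Mather's viewpoint), while you check closedness in Ma\~n\'e's sense, and these are equivalent for minimizing measures. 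One small remark: what is established at the end of Chapter~1 is $\AA_H=\AA_{h_1}$ for the \emph{projected} Aubry sets; the identification of the $2$-Aubry sets you invoke follows from this together with Proposition~\ref{aubry-cot} and Proposition~\ref{eqAubry}.
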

 
 \begin{proof}
 Given a point $x\in \AA$, we will denote by $v_x\in T_xM$ the unique vector $v$ such that $(x,v)\in \AA'$ (Theorem \ref{aubry-graph}) and by $y_x = \pi\circ\varphi_L^1(x,v_x)$ the only point such that $(x,y_x)\in \widehat\AA$ (see Proposition \ref{eqAubry}). Then extend the vector--field $x\mapsto v_x$ to a Lipschitz vector--field on $M$ for example by defining $v_x=\LL^{-1}( D_x u)$ where $u $ is a $C^{1,1}$ critical subsolution given by Bernard's Theorem \ref{strictC} or its discrete analogue Theorem \ref{strictC11}.
 
 Let $\mu'\in \PP_0'$ be a classical Mather measure. We associate to it a probability measure on $M\times M$ as follows. If $f : M\times M \to \R$ is a continuous function, then
 \begin{equation}\label{mesure discrete}
 \int_{M\times M} f(x,y)\dd \mu(x,y) = \int_{TM} f\big(x, \pi\circ\varphi_L^1(x,v)\big)\dd \mu'(x,v).
 \end{equation}
 As $\mu'$ has support included in $\AA'$, it follows that $\mu$ has support included in 
 $$\big\{\big(x, \pi\circ\varphi_L^1(x,v)\big), \ \ (x,v)\in \AA' \big\} = \widehat \AA.$$
 Let $g : M\to \R$ be a continuous function, then
  $$\int_{M\times M} \big(g(x)-g(y)\big)\dd \mu(x,y) = \int_{TM} \big(g(x)-g\big( \pi\circ\varphi_L^1(x,v)\big)\big)\dd \mu'(x,v) = 0,$$
  because $\mu'$ is invariant by $\varphi_L^1$   which implies that $ \int_{TM} g\big(\pi(x,v)\big)\dd  \mu' = g\big( \pi\circ\varphi_L^1(x,v)\big)\dd \mu'(x,v)$. Hence $\mu $ is closed.
  
We then    compute the action of $\mu$, remembering Proposition \ref{eqAubry}:
\begin{align*}
\int_{M\times M} h_1(x,y) \dd \mu(x,y) &  =\int_{TM} h_1\big(x, \pi\circ\varphi_L^1(x,v)\big)\dd \mu'(x,v)\\
&= \int_{\AA'} h_1\big(x, \pi\circ\varphi_L^1(x,v)\big)\ \dd \mu'(x,v) \\
&= \int_{\AA'} h_1\big(x, \pi\circ\varphi_L^1(x,v_x)\big)\ \dd \pi_*\mu'(x)\\
&= \int_{\AA'}\int_0^1 L\big(\varphi_L^s(x,v_x)\big)\ \dd s\ \dd \pi_*\mu'(x)\\ 
&= \int_0^1\int_{\AA'} L\big(\varphi_L^s(x,v_x)\big)\ \dd \pi_*\mu'(x)\, \dd s\\ 
&= \int_0^1\int_{\AA'} L(x,v_x)\dd \pi_*\mu'(x)\, \dd s\\ 
&=-\alpha(0).
\end{align*}
The use of the Fubini theorem is justified by the fact that $[0,1]$ and $\AA'$ are compact and $L$ is continuous. It follows that $\mu$ is minimizing hence a discrete Mather measure. Finally, the definition of $\mu$  given by \eqref{mesure discrete} shows that 
$$\textrm{supp}(\mu) = \big\{ \big(x, \pi\circ\varphi_L^1(x,v_x)\big), \ \ (x,v_x)\in \textrm{supp}(\mu') \big\} .$$
It follows that $\pi_1\big(\textrm{supp}(\mu) \big) = \pi\big(\textrm{supp}(\mu')\big)$. That being true for all measures $\mu'\in \PP_0'$ allows to conclude that $\MM_L \subset \MM_{h_1}$. 

Let now    $\mu    \in \widehat\PP_0$ be a minimizing discrete Mather measure on $M\times M$. We define a measure $\mu'_0$ on $TM$ as follows: if $f : TM\to \R$ is bounded and continuous,
$$\int_{TM} f(x,v) \dd \mu_0'(x,v) = \int_{M\times M }  f(x,v_x) \dd \mu(x,y).$$
The measure $\mu_0' $ is not necessarily invariant by the whole Lagrangian flow, but it is $\varphi_L^1$--invariant. Indeed,
\begin{align*}
\int_{TM} f\circ \varphi_L^1(x,v)\ \dd \mu_0'(x,v) =& \int_{M\times M }  f\circ \varphi_L^1(x,v_x)\ \dd \mu(x,y)    \\
=&\int_{\widehat \AA  }  f\circ \varphi_L^1(x,v_x) \ \dd \mu(x,y)    \\
=&       \int_{\widehat \AA }  f(y_x,v_{y_x})\  \dd \mu(x,y)\\
=&       \int_{\widehat \AA }  f(y,v_{y}) \ \dd \mu(x,y)\\
=&       \int_{M\times M }  f(y,v_{y}) \ \dd \mu(x,y)\\
=&       \int_{M\times M }  f(x,v_{x}) \dd \mu(x,y)\\
=&       \int_{TM }  f(x,v) \ \dd \mu'_0(x,v).
\end{align*}
In the previous computation the second equality follows from the fact that $\mu$ is supported in $\widehat \AA$, the third comes from Proposition \ref{eqAubry}, the fourth is a consequence of Proposition \ref{aubry--graph}. Finally the end stems from the fact that $\mu$ is closed, applied to the function $g: x\mapsto f(x,v_x)$. 
It follows that the measure $\mu' = \int_0^1 (\varphi_L^s)_*\mu'_0 \dd s$ is $\varphi_L$--invariant.

 Let us now prove that $\mu'$ is a classical Mather measure.

\begin{align*}
\int_{TM} L(x,v)\ \dd \mu'(x,v) & =   \int_0^1 \int_{TM}L(x,v)\ \dd (\varphi_L^s)_*\mu'_0(x,v)\ \dd s \\
&= \int_0^1 \int_{M\times M}L\circ \varphi_L^s (x,v_x)\ \dd \mu(x,y) \ \dd s\\
&=  \int_{M\times M}\int_0^1L\circ \varphi_L^s (x,v_x) \ \dd s \ \dd \mu(x,y) \\
&=  \int_{\widehat \AA}\int_0^1L\circ \varphi_L^s (x,v_x) \  \dd s\  \dd \mu(x,y) \\
& = \int_{\widehat \AA} h_1 (x, y_x) \dd \mu(x,y) \\
&= \int_{M\times M} h_1 (x, y) \ \dd \mu(x,y) = -\alpha(0).
\end{align*}
Here, it was used that $\mu$ is supported in $\widehat \AA$ and that it is a discrete Mather measure. Hence $\mu'$ is minimizing as desired. Finally, it follows from its definition that 
$$\textrm{supp}(\mu') = \{ \varphi_L^s(x,v_x) , \ \ (x,y_x)\in \textrm{supp}(\mu) , \ \ s\in [0,1] \}.$$
In particular, $\pi_1\big( \textrm{supp}(\mu)\big) \subset   \pi\big(          \textrm{supp}(\mu')\big)$.
As this holds for all discrete Mather measures, it comes  that $\MM_{h_1} \subset \MM_L$. 

This concludes the proof.
 \end{proof}

\begin{rem}\label{perteinfo}\rm
In the previous proof, the construction associating a discrete Mather measure to a classical Mather measure, $\mu' \mapsto \mu$, is injective and the projected supports are the same.

In contrast, the reverse construction $\mu \mapsto \mu'$ may not be injective. Due to the necessity to apply the Lagrangian flow, the support increases and there is a loss of information. In other words, there may be more discrete Mather measures than classical ones; more precisely, the following inclusion holds:
$$\{\pi_* \mu', \ \ \mu'\in \PP_0' \} \subset \{\pi_{1*} \mu, \ \ \mu\in \widehat\PP_0 \},$$
but the inclusion may be strict.  

 For example, as we will see later for twist maps, on a totally periodic circle which does not consist exclusively of fixed points, there is a unique classical Mather measure. In the same time, there are infinitely many discrete Mather measures obtained from averaging Dirac measures on  periodic orbits.
\end{rem}

\subsection{The classical discounted equation}

Recall that for all $\ell>0$ there exists a unique function $U_\ell : M\to \R$, given by Theorem \ref{eqdiscounted}, that solves in the viscosity sense \eqref{discountedHJ} that is, such that $\ell U_\ell(x) + H(x,D_xU_\ell)=0$. The main result on this topic is the convergence of those functions as $\ell\to 0$ proven originally in \cite{DFIZ} (following partial results in \cite{IS}). The result is actually obtained for Hamiltonians that are only assumed to be continuous, coercive and convex:

\begin{Th}\label{discountedC}
There exists a weak KAM solution $U_0$ such that $U_\ell + \frac{\alpha(0)}{\ell}\to U_0$, where the convergence takes place as $\ell\to 0$ and is uniform.
\end{Th}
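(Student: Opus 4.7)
The strategy is to mirror the proof of the discrete discounted convergence Theorem \ref{discounted}, replacing iterated Lax--Oleinik semigroups by the continuous Lax--Oleinik semigroup $S^-(t)$ and discrete calibrating chains by absolutely continuous calibrating curves. All four ingredients of the discrete proof have natural continuous analogues.

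First I would establish that $\big(U_\ell + \tfrac{\alpha(0)}{\ell}\big)_{\ell>0}$ is relatively compact in $C^0(M,\R)$. Equi-Lipschitz bounds are already granted by Theorem \ref{eqdiscounted} (iii), so only equi-boundedness has to be checked. For this, pick any classical weak KAM solution $u$; by adding a large constant one obtains a non-negative $\bar u$ and a non-positive $\underline u$, both still weak KAM. A direct computation (using $H(x,D_x \bar u) = \alpha(0)$ in the viscosity sense) shows that $\bar u - \tfrac{\alpha(0)}{\ell}$ is a supersolution of \eqref{discountedHJ} and $\underline u - \tfrac{\alpha(0)}{\ell}$ a subsolution. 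The standard comparison principle for the discounted equation then gives
\[
\underline u - \tfrac{\alpha(0)}{\ell}\ \leqslant\ U_\ell\ \leqslant\ \bar u - \tfrac{\alpha(0)}{\ell},
\]
which is the desired uniform bound. Combined with the equi-Lipschitz property, Arzelà--Ascoli reduces the convergence to uniqueness of the accumulation point.

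Next I would write the control-theoretic representation formula
\[
U_\ell(x)\ =\ \inf_{\gamma}\int_{-\infty}^{0}e^{\ell s}L\bigl(\gamma(s),\dot\gamma(s)\bigr)\,\dd s,
\]
the infimum being over absolutely continuous curves $\gamma:(-\infty,0]\to M$ with $\gamma(0)=x$ (this is the continuous analogue of Lemma \ref{ulambda}). Given a minimizing curve $\gamma^\ell$ for each $\ell$, define the probability measure $\mu'_\ell$ on $TM$ by
\[
\int_{TM}f\,\dd\mu'_\ell\ =\ \ell\int_{-\infty}^{0}e^{\ell s}f\bigl(\gamma^\ell(s),\dot\gamma^\ell(s)\bigr)\,\dd s,\qquad f\in C_b(TM).
\]
The analogue of Proposition \ref{Laplace}, with an Abel-type integration by parts replacing the summation by parts, shows that any weak-$*$ accumulation point $\mu'$ of the family $(\mu'_\ell)$ is closed in the sense of Definition \ref{closedM} and minimizing, i.e.\ a classical Mather measure (here I use Ma\~n\'e's characterization, Theorem \ref{minimizingMane}). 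Integrating the viscosity subsolution inequality for any $w\in \SSS$ against a Mather measure $\mu'$ yields the constraint $\int_M w\,\dd\pi_*\mu'\leqslant 0$, exactly as in Proposition \ref{inegdiscount}; hence any limit $U_0$ of $U_{\ell_n}+\alpha(0)/\ell_n$ satisfies $U_0\in \FF$ where
\[
\FF\ =\ \Bigl\{u\in \SSS\cap C^0(M,\R)\ :\ \int_M u\,\dd\pi_*\mu'\leqslant 0\ \ \forall \mu'\in \PP_0'\Bigr\}.
\]
Moreover, continuity of $S^-(t)$ on $C^0(M,\R)$ and the identity $U_\ell=S^-_\ell(U_\ell)$ (where $S^-_\ell$ is the discounted Lax--Oleinik semigroup) guarantee that $U_0$ is itself a classical weak KAM solution.

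The last and most delicate step is the matching lower bound. Define
\[
U_0\ :=\ \sup\{u\ :\ u\in \FF\},
\]
which is a weak KAM solution by the continuous analogue of Lemma \ref{inf}. The already proven inequality $U_0\geqslant$ any accumulation point reduces the theorem to showing $U_0\leqslant \liminf U_\ell+\alpha(0)/\ell$. Fix $x_0\in M$ and a minimizing curve $\gamma^\ell$ as above. For any critical subsolution $w$, integration of the inequality $w(\gamma^\ell(0))-w(\gamma^\ell(s))\leqslant \int_s^0[L(\gamma^\ell,\dot\gamma^\ell)+\alpha(0)]\,d\sigma$ against $\ell e^{\ell s}$ and an Abel-type integration by parts (the continuous version of Lemma \ref{lmcle}) gives
\[
U_\ell(x_0)+\tfrac{\alpha(0)}{\ell}\ \geqslant\ w(x_0)\ -\ \int_M w\,\dd\pi_*\mu'_\ell.
\]
Extracting a subsequence so that $\mu'_{\ell_n}\rightharpoonup \mu'\in \PP_0'$ and using $w\in \FF$ yields $\liminf U_{\ell_n}(x_0)+\alpha(0)/\ell_n\geqslant w(x_0)$. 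Taking the supremum over $w\in\FF$ gives the reverse inequality $\geqslant U_0(x_0)$, which completes the proof.

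The main obstacle will be step three, specifically verifying that the Laplace-type averages $\mu'_\ell$ do weakly accumulate on genuine Mather measures. In the discrete setting, closedness followed from a short telescoping computation; here one must carefully justify the integration by parts against $C^1$ test functions on $M$, control the tails at $-\infty$ of the exponential weight, and then invoke Ma\~n\'e's closed-measure characterization to avoid having to check flow invariance by hand.
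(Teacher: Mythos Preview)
Your proposal is correct and follows exactly the approach the paper indicates: the text does not give a self-contained proof of Theorem \ref{discountedC} but states that the argument mirrors the discrete proof of Theorem \ref{discounted} (compactness via comparison, the representation formula of Lemma \ref{Uell}, Laplace-averaged measures accumulating on Mather measures, and the key inequality analogous to Lemma \ref{lmcle}), referring to \cite{DFIZ} for details. Your use of Ma\~n\'e's closed-measure viewpoint (Definition \ref{closedM}, Theorem \ref{minimizingMane}) to handle the limiting measures is precisely the continuous substitute for the discrete closedness computation, and your identified obstacle is a genuine but standard technicality handled in \cite{DFIZ}.
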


The next lemma, called {\it  strong comparison principle}, is an analogue to Lemma \ref{comp-princ} and gives some more informations on Theorem \ref{eqdiscounted}:
\begin{lm}\label{Comp-Princ}
Let $\ell>0$ be a contstant, let $v_1:M\to \R$ (resp. $v_2 : M\to \R$) be a viscosity subsolution (resp. supersolution) to \eqref{discountedHJ}. Then $v_1 \leqslant U_\ell\leqslant v_2$.
\end{lm}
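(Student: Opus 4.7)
The plan is first to reduce the statement to the standard strong comparison principle: if $w_1$ is a viscosity subsolution and $w_2$ a viscosity supersolution of $\ell u + H(x, D_x u) = 0$ on $M$, then $w_1 \leqslant w_2$. Since Theorem \ref{eqdiscounted} states that $U_\ell$ is itself a viscosity solution, hence both a subsolution and a supersolution, applying this principle successively to the pairs $(w_1, w_2) = (v_1, U_\ell)$ and $(w_1, w_2) = (U_\ell, v_2)$ immediately yields the claimed chain of inequalities $v_1 \leqslant U_\ell \leqslant v_2$. This parallels exactly the way Lemma \ref{comp-princ} was used in the discrete setting, only the passage from a discrete monotone iteration to a viscosity argument is the new ingredient.

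To establish the comparison I would use Ishii's classical doubling-of-variables technique. Arguing by contradiction, assume that $m := \max_M (w_1 - w_2) > 0$. For $\varepsilon > 0$, introduce on the compact manifold $M \times M$ the penalized function
$$
\Phi_\varepsilon(x,y) = w_1(x) - w_2(y) - \frac{1}{2\varepsilon}\, d(x,y)^2,
$$
where $d$ is the Riemannian distance. By compactness $\Phi_\varepsilon$ attains its maximum at some $(x_\varepsilon, y_\varepsilon)$, and standard penalization estimates give $d(x_\varepsilon, y_\varepsilon) \to 0$, $d(x_\varepsilon, y_\varepsilon)^2/\varepsilon \to 0$, and, up to a subsequence, $x_\varepsilon, y_\varepsilon \to \bar x$ with $(w_1 - w_2)(\bar x) = m$.

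Since $x \mapsto w_1(x) - \tfrac{1}{2\varepsilon} d(x, y_\varepsilon)^2$ attains its maximum at $x_\varepsilon$, the subsolution property applied to the smooth test function $\tfrac{1}{2\varepsilon} d(\,\cdot\,, y_\varepsilon)^2$ (smooth near $x_\varepsilon$ whenever $x_\varepsilon \neq y_\varepsilon$) gives $\ell w_1(x_\varepsilon) + H(x_\varepsilon, p_\varepsilon) \leqslant 0$, with $p_\varepsilon \in T_{x_\varepsilon}^* M$ the corresponding differential. Symmetrically, the supersolution inequality at $y_\varepsilon$ yields $\ell w_2(y_\varepsilon) + H(y_\varepsilon, q_\varepsilon) \geqslant 0$. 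The geometry of the squared distance provides $\|p_\varepsilon - q_\varepsilon\| = O(d(x_\varepsilon, y_\varepsilon))$ after identification via parallel transport. Subtracting,
$$
\ell \bigl( w_1(x_\varepsilon) - w_2(y_\varepsilon) \bigr) \leqslant H(y_\varepsilon, q_\varepsilon) - H(x_\varepsilon, p_\varepsilon).
$$
The subsolution bound already forces $H(x_\varepsilon, p_\varepsilon) \leqslant \ell \|w_1\|_\infty$, so the superlinearity of $H$ provides $\sup_\varepsilon \|p_\varepsilon\|_{x_\varepsilon} < +\infty$; uniform continuity of $H$ on a compact subset of $T^*M$ then drives the right-hand side to $0$, and passing to the limit produces $\ell m \leqslant 0$, contradicting $m > 0$.

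The main technical obstacle is carrying out the doubling-of-variables argument intrinsically on the manifold rather than in flat space, in particular obtaining the bound $\|p_\varepsilon - q_\varepsilon\| = O(d(x_\varepsilon, y_\varepsilon))$ for the differentials of the squared Riemannian distance. This is standard but requires either working in exponential coordinates centered at $\bar x$, where the penalization reduces to the Euclidean one up to smooth second-order corrections, or invoking the general theory of sub- and superjets on manifolds developed by Crandall--Ishii--Lions. Everything else follows the routine pattern once coercivity of $H$ is used to confine momenta to a compact region of $T^*M$ and thereby access its uniform continuity.
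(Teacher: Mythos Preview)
The paper does not supply its own proof of this lemma: it is stated without proof in the ``Relations to the classical theory'' section as the classical strong comparison principle, explicitly flagged as the continuous analogue of the discrete Lemma~\ref{comp-princ}. Your doubling-of-variables argument is the standard and correct route. One small comment: the squared Riemannian distance $d(\cdot, y_\varepsilon)^2$ is in fact smooth in a full neighborhood of $y_\varepsilon$ (within the injectivity radius), so your parenthetical ``whenever $x_\varepsilon \neq y_\varepsilon$'' is unnecessary and slightly misleading; the test function is automatically smooth where you need it once $d(x_\varepsilon, y_\varepsilon)$ is small.
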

The solutions, as in Lemma \ref{ulambda}, are expressed by an explicit formula:
\begin{lm}\label{Uell}
For any $\ell>0$ and $x\in M$,
$$U_\ell(x) = \min_{\substack{\gamma : (-\infty,0] \to M \\ \gamma(0)=x}} \int_{-\infty}^0 e^{\ell s } L\big(\gamma(s), \dot\gamma(s)\big)\ \dd s,$$
where the minimum is taken amongst absolutely continuous curves and is reached by a $C^2$ curve.
\end{lm}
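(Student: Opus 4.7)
The plan is to prove the formula by showing that the right-hand side defines a viscosity solution of the discounted Hamilton--Jacobi equation \eqref{discountedHJ}, and then invoke the uniqueness stated in Lemma \ref{Comp-Princ} to conclude. Denote by $V_\ell(x)$ the infimum over absolutely continuous curves $\gamma : (-\infty,0]\to M$ with $\gamma(0)=x$ of $\int_{-\infty}^0 e^{\ell s}L(\gamma(s),\dot\gamma(s))\,\dd s$. Comparing with the constant curve $\gamma\equiv x$ gives the upper bound $V_\ell(x)\leqslant \tfrac{1}{\ell}\max_y L(y,0)$, and using $L \geqslant -C$ (by superlinearity on compact sets) gives the lower bound $V_\ell(x)\geqslant -C/\ell$. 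Thus $V_\ell$ is well defined and bounded.

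The central ingredient is the \emph{dynamic programming principle}: for all $x\in M$ and $t>0$,
$$V_\ell(x)=\inf_{\substack{\gamma:[-t,0]\to M\\ \gamma(0)=x}}\Big[e^{-\ell t}V_\ell\big(\gamma(-t)\big)+\int_{-t}^0 e^{\ell s} L\big(\gamma(s),\dot\gamma(s)\big)\,\dd s\Big].$$
This follows by splitting any admissible curve on $(-\infty,0]$ into its restrictions to $(-\infty,-t]$ and $[-t,0]$, and using the change of variable $\sigma=s+t$ to identify the cost on $(-\infty,-t]$ with $e^{-\ell t}$ times the cost of a curve on $(-\infty,0]$ ending at $\gamma(-t)$. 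Continuity of $V_\ell$ then follows from the DPP by taking short curves of controlled speed between two nearby points and comparing, using local boundedness of $L$.

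From the DPP I would derive that $V_\ell$ is a viscosity solution of \eqref{discountedHJ}. For the \emph{subsolution} property, fix a $C^1$ test function $\phi$ with $V_\ell-\phi$ having a local max at $x_0$; use the DPP along constant-speed curves $\gamma_v(s)=\exp_{x_0}(sv)$ for $s\in[-t,0]$ and small $t$, divide the resulting inequality by $t$, let $t\to 0^+$ and expand $e^{-\ell t}=1-\ell t+o(t)$ to get $\ell V_\ell(x_0)+D_{x_0}\phi(v)-L(x_0,v)\leqslant 0$; taking the supremum over $v\in T_{x_0}M$ and recognizing $H$ as the Legendre dual of $L$ gives $\ell V_\ell(x_0)+H(x_0,D_{x_0}\phi)\leqslant 0$. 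The \emph{supersolution} property is obtained symmetrically, this time using an $\varepsilon$-minimizing curve $\gamma_\varepsilon$ in the DPP and the a priori speed bound coming from superlinearity (so $\dot\gamma_\varepsilon(0)$ has a convergent subsequence as $t,\varepsilon\to 0$). By Lemma \ref{Comp-Princ}, $V_\ell=U_\ell$.

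Finally, I would show the infimum is attained by a $C^2$ curve. The main obstacle here is that the time interval is unbounded. The exponential factor $e^{\ell s}$ together with the superlinearity $L(x,v)\geqslant \|v\|_x - C$ forces any minimizing sequence $\gamma_n$ to have $\int_{-\infty}^0 e^{\ell s}\|\dot\gamma_n(s)\|\,\dd s$ bounded and the tails $\int_{-\infty}^{-T}e^{\ell s}L(\gamma_n,\dot\gamma_n)\,\dd s$ to vanish uniformly in $n$ as $T\to\infty$. Applying Tonelli's existence theorem on each $[-T,0]$ with boundary data $\gamma_n(-T)$ (up to passing to subsequences so $\gamma_n(-T)$ converges), I get a minimizer $\gamma^T$ on $[-T,0]$; a diagonal extraction yields a limit $\gamma^\star:(-\infty,0]\to M$ with $\gamma^\star(0)=x$ that realizes $V_\ell(x)$. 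Standard Tonelli regularity (the Euler--Lagrange equation holds and the strict convexity of $L$ in $v$ gives $C^2$ regularity) then shows $\gamma^\star$ is $C^2$. The two delicate points are thus the passage from DPP to the viscosity equation with the exponential weight, and the compactness argument for existence of a minimizer on the half-line.
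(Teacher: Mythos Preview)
The paper does not actually prove this lemma: it sits in the ``Relations to the classical theory'' section of Chapter~3, where results are stated without proof (the text says ``All those results' proofs follow closely the proofs we gave for their discrete analogue and are to be found in \cite{DFIZ}''). The closest thing to a paper-proof is the discrete analogue, Lemma~\ref{ulambda}, which is proved very differently from your proposal: there one simply uses that $T^-_\lambda$ is a contraction, so $u_\lambda = \lim_{k\to\infty} T^{-k}_\lambda \bar 0$, and computes the $k$-th iterate explicitly as a minimum over finite chains; passing $k\to\infty$ gives the formula. The continuous transcription of that argument would be to write $U_\ell$ as the fixed point of the contracting operator $v\mapsto e^{-\ell t}S^-(t)(e^{\ell t}v)$ (appropriately reweighted), iterate from $0$, and pass to the limit.

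Your route via the dynamic programming principle and viscosity uniqueness (Lemma~\ref{Comp-Princ}) is the standard optimal-control approach and is correct; it is a genuinely different argument from the iteration-from-zero method. What each buys: the iteration approach is shorter and needs no viscosity machinery, only the contraction and an explicit computation of the iterates; your approach is more robust and yields the DPP as an intermediate result, which is what one actually uses downstream (e.g.\ in constructing the measures $\mu_\lambda$ of Proposition~\ref{Laplace} in the continuous setting).

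One comment on your existence argument for the minimizer: the paragraph mixes a minimizing sequence $\gamma_n$ for the infinite-horizon problem with Tonelli minimizers $\gamma^T$ on $[-T,0]$ with prescribed boundary $\gamma_n(-T)$, and the diagonal extraction is not quite set up. A cleaner line: once you know $V_\ell=U_\ell$, the DPP on $[-t,0]$ has, by Tonelli's theorem applied to the Lagrangian $(s,x,v)\mapsto e^{\ell s}L(x,v)$, a $C^2$ minimizer $\gamma^t$; by the DPP again these minimizers concatenate (the restriction of $\gamma^{t'}$ to $[-t,0]$ is optimal for the time-$t$ problem), so one may build a single $\gamma^\star$ on $(-\infty,0]$ by successively extending; the uniform speed bound comes from the Lipschitz regularity of $U_\ell$ (Theorem~\ref{eqdiscounted}, point 3) via the usual argument that calibrating curves have bounded speed.
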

The selected weak KAM solution is then identified as follows 

\begin{Th}\label{formulediscount}

Let $\FF'\subset \SS'$ be the set of classical subsolutions $u$ verifying the constraint $\int_M u(x)\dd\pi_*\mu' (x) \leqslant 0$ for all Mather measures $\mu'\in \PP'_0$. 

The selected weak KAM solution is then $U_0 = \sup\limits_{u\in \FF'} u$ where the supremum is a priori taken pointwise.

Moreover, the following alternative formula holds:
$$U_0(x) = \min_{\mu' \in \PP_0'} \int_M h(y,x) \dd \pi_* \mu'(y),$$
where $h$ is again the Peierls barrier.
\end{Th}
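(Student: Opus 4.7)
The plan is to mimic the discrete proofs of Theorem \ref{discounted} and Proposition \ref{discountedbis}, with the Fenchel inequality playing the role of the defining inequality of a discrete subsolution and the integral representation of Lemma \ref{Uell} playing the role of Lemma \ref{ulambda}. Throughout, I exploit that classical subsolutions are Lipschitz (hence a.e.\ differentiable) and that $L(x,v)+\alpha(0)\geqslant D_xu(v)$ almost everywhere for $u\in\SSS$, by Fenchel duality.

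\textbf{Step 1: Accumulation points lie in $\FF'$.} I will show that the weak KAM solution $U_0$ provided by Theorem \ref{discountedC} belongs to $\FF'$. Fix $\mu'\in\PP_0'$ and use that $\mu'$ is closed, combined with Fenchel's inequality applied to $U_\ell$ (which is Lipschitz, hence differentiable $\mu'$-a.e., since $\mu'$ has support contained in $\AA'$ where classical subsolutions are $C^1$ by Theorem \ref{strictC}). On $\AA'$, the equation $\ell U_\ell+H(x,D_xU_\ell)=0$ combined with $L(x,v)\geqslant D_xU_\ell(v)-H(x,D_xU_\ell)=D_xU_\ell(v)+\ell U_\ell(x)$ yields, upon integrating against $\mu'$ and using closedness,
$$-\alpha(0)=\int L\,\mathrm d\mu'\geqslant \ell\int U_\ell\,\mathrm d\pi_*\mu',$$
so that $\int (U_\ell+\alpha(0)/\ell)\,\mathrm d\pi_*\mu'\leqslant 0$. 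Passing to the limit as $\ell\to 0$, $U_0\in\FF'$, and thus $U_0\leqslant\sup_{u\in\FF'}u$.

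\textbf{Step 2: Reverse inequality via measures built from calibrating curves.} For each $\ell>0$ and $x\in M$, let $\gamma_\ell:(-\infty,0]\to M$ be the $C^2$ curve from Lemma \ref{Uell} realizing $U_\ell(x)$, and define the probability measure $\mu'_\ell$ on $TM$ by $\int f\,\mathrm d\mu'_\ell=\ell\int_{-\infty}^0 e^{\ell s}f(\gamma_\ell(s),\dot\gamma_\ell(s))\,\mathrm ds$. A straightforward integration by parts shows that, for any $C^1$ function $f:M\to\R$, $\int D_xf(v)\,\mathrm d\mu'_\ell=\ell f(x)-\ell^2\int_{-\infty}^0 e^{\ell s}f(\gamma_\ell(s))\,\mathrm ds$, which tends to $0$ as $\ell\to 0$. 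Moreover $\int L\,\mathrm d\mu'_\ell=\ell U_\ell(x)\to-\alpha(0)$. Hence any accumulation point of $(\mu'_\ell)$ is a closed minimizing probability measure, i.e.\ a Mather measure $\mu'\in\PP_0'$. For $w\in\FF'$, Fenchel's inequality gives $L(\gamma_\ell,\dot\gamma_\ell)+\alpha(0)\geqslant \frac{\mathrm d}{\mathrm ds}w(\gamma_\ell(s))$ for a.e.\ $s$. Multiplying by $e^{\ell s}$, integrating over $(-\infty,0]$ and applying the same integration by parts yields
$$U_\ell(x)+\frac{\alpha(0)}{\ell}\geqslant w(x)-\int_M w\,\mathrm d\pi_*\mu'_\ell.$$
Extracting a subsequence $\ell_n\to 0$ along which $\mu'_{\ell_n}\to\mu'\in\PP_0'$ and using $w\in\FF'$ (so $\int w\,\mathrm d\pi_*\mu'\leqslant 0$) gives $U_0(x)\geqslant w(x)$. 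Taking the supremum over $\FF'$ and combining with Step 1 proves the first formula.

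\textbf{Step 3: The Peierls barrier formula.} Set $V_0(x)=\min_{\mu'\in\PP_0'}\int h(y,x)\,\mathrm d\pi_*\mu'(y)$. Since each $h_y=h(y,\cdot)$ is a weak KAM solution (Proposition \ref{Cpeierl-prop}), the averaged functions $x\mapsto\int h(y,x)\,\mathrm dm(y)$ are classical subsolutions by convexity of $\SSS$, and an infimum of subsolutions remains a subsolution, so $V_0\in\SSS$. For $u\in\FF'$, the inequality $u(x)-u(y)\leqslant h(y,x)$ integrated against $\pi_*\mu'$ together with $\int u\,\mathrm d\pi_*\mu'\leqslant 0$ gives $u(x)\leqslant\int h(y,x)\,\mathrm d\pi_*\mu'(y)$; taking the supremum in $u$ and the infimum in $\mu'$ yields $U_0\leqslant V_0$. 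For the converse, fix $y\in\AA$ and consider $w=-h(\cdot,y)+V_0(y)$, which is a classical subsolution; by definition of $V_0$, $V_0(y)\leqslant\int h(z,y)\,\mathrm d\pi_*\mu'(z)$ for every $\mu'\in\PP_0'$, so $w\in\FF'$, hence $w\leqslant U_0$. Evaluating at $x=y\in\AA$ and using $h(y,y)=0$ (Theorem \ref{AubryC}) gives $V_0(y)\leqslant U_0(y)$. Since $V_0\leqslant U_0$ on $\AA$, Theorem \ref{Cuniqueness} extends this inequality to all of $M$, completing the proof.

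The main obstacle is Step 2: one needs enough regularity to justify the Fenchel inequality along $\gamma_\ell$ and to compute the total action exactly, as well as the integration-by-parts identity for the closedness of the limit measure. The usual way out is to exploit the $C^2$ regularity of the minimizing curve $\gamma_\ell$ provided by Tonelli theory, which ensures equality $\ell U_\ell(x)=\int L\,\mathrm d\mu'_\ell$ and allows one to integrate by parts cleanly before taking limits.
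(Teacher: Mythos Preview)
Your proposal follows exactly the route the paper intends: the paper does not give a proof of Theorem \ref{formulediscount} but states that it ``follow[s] closely the proofs we gave for their discrete analogue and are to be found in \cite{DFIZ}'', and you have carried out precisely that transposition of Theorem \ref{discounted} and Proposition \ref{discountedbis} to the continuous setting.

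One small correction in Step 1: you justify the differentiability of $U_\ell$ on the support of $\mu'$ by invoking Theorem \ref{strictC}, but that theorem concerns \emph{critical} subsolutions, and $U_\ell$ solves $\ell U_\ell+H(x,D_xU_\ell)=0$, not $H(x,D_xu)=\alpha(0)$. The conclusion $\int_M U_\ell\,\dd\pi_*\mu'\leqslant -\alpha(0)/\ell$ is nonetheless correct; the clean way to obtain it is to mollify $U_\ell$ into smooth $U_\ell^\varepsilon$, use convexity of $H$ in $p$ to get $\ell U_\ell^\varepsilon+H(x,D_xU_\ell^\varepsilon)\leqslant\omega(\varepsilon)$, integrate against the closed measure $\mu'$ (for which the test-function condition of Definition \ref{closedM} applies since $U_\ell^\varepsilon$ is $C^1$), and let $\varepsilon\to 0$. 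Alternatively, one can use the dynamic programming inequality coming from Lemma \ref{Uell} directly, mirroring the purely algebraic discrete argument of Proposition \ref{inegdiscount}. Steps 2 and 3 are fine as written.
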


Let us continue with this Proposition:
\begin{pr}\label{satureC}
There exists a Mather measure $\mu_0'\in \PP_0'$ such that  
$$\int_X U_0(x)\dd \pi_*\mu_0'(x) = 0.$$

Moreover, it can be imposed that $\mu_0'$ is ergodic for the Lagrangian flow.
\end{pr}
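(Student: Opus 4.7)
The plan is to mimic the argument used in the discrete analogue Proposition \ref{sature}. The first ingredient is that $U_0 \in \FF'$, the classical counterpart of Proposition \ref{prlimsat}. This in turn requires the classical analogue of Proposition \ref{inegdiscount}: starting from the discounted equation $\ell U_\ell(x) + H(x, D_xU_\ell) = 0$ (which holds a.e., since $U_\ell$ is Lipschitz by Theorem \ref{eqdiscounted}) and the Fenchel inequality $L(x,v) \geqslant p(v) - H(x,p)$ applied at $p = D_x U_\ell$, one gets $L(x,v) \geqslant D_xU_\ell(v) + \ell U_\ell(x)$ almost everywhere. Integrating against any Mather measure $\mu' \in \PP_0'$ and using the closedness identity $\int D_xU_\ell(v)\,\dd\mu'(x,v) = 0$ (valid for Lipschitz functions either by smoothing $U_\ell$ via the semiconcavity of $S^-(t) S^+(t) U_\ell$ in the spirit of Theorem \ref{regB}, or directly from the $\varphi_L$-invariance of $\mu'$) yields $-\alpha(0) \geqslant \ell \int U_\ell\,\dd\pi_*\mu'$. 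Dividing by $\ell$ and passing to the limit $\ell \to 0$ gives $\int U_0\,\dd\pi_*\mu' \leqslant 0$, hence $U_0 \in \FF'$.

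With this in hand, argue by contradiction. Suppose $\int U_0\,\dd\pi_*\mu' < 0$ for every $\mu' \in \PP_0'$. The map $\mu' \mapsto \int U_0\,\dd\pi_*\mu'$ is continuous for the weak-$*$ topology (since $U_0$ is continuous on the compact manifold $M$), and $\PP_0'$ is compact and convex, so there exists $\varepsilon > 0$ with $\int U_0\,\dd\pi_*\mu' \leqslant -\varepsilon$ for all $\mu' \in \PP_0'$. Then $U_0 + \varepsilon$ remains a critical subsolution (adding a constant preserves the gradient) and satisfies $\int (U_0 + \varepsilon)\,\dd\pi_*\mu' \leqslant 0$ for all $\mu' \in \PP_0'$, so $U_0 + \varepsilon \in \FF'$. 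But $U_0 + \varepsilon > U_0$ contradicts the identity $U_0 = \sup_{u \in \FF'} u$ of Theorem \ref{formulediscount}. Hence there exists $\mu_0' \in \PP_0'$ with $\int U_0\,\dd\pi_*\mu_0' = 0$.

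For the ergodicity refinement, apply Choquet's theorem to $\PP_0'$. First observe that $\PP_0'$ is a face of the compact convex set $\PP_L'$: since $\mu' \mapsto \int L\,\dd\mu'$ is affine continuous on $\PP_L'$ and $\PP_0'$ is the subset on which it attains its minimum $-\alpha(0)$, any convex decomposition in $\PP_L'$ of a point of $\PP_0'$ consists of elements of $\PP_0'$. Consequently the extremal points of $\PP_0'$ are extremal points of $\PP_L'$, and hence ergodic for $\varphi_L$. Choquet's theorem then provides a probability measure $w$ on $\PP_0'$, supported on extremal (ergodic) measures, with $\mu_0' = \int_{\PP_0'} \mu'\,\dd w(\mu')$. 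Integrating yields
\[
0 = \int_M U_0\,\dd\pi_*\mu_0' = \int_{\PP_0'} \Bigl(\int_M U_0\,\dd\pi_*\mu'\Bigr)\,\dd w(\mu'),
\]
and since each inner integral is non-positive by the first step, it vanishes $w$-almost everywhere. Any ergodic $\mu'$ in the support of $w$ then meets the requirement.

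The main obstacle is the first step: justifying the closedness identity $\int D_xU_\ell\,\dd\mu' = 0$ under the mere Lipschitz regularity of $U_\ell$. The cleanest route is to use the $\varphi_L$-invariance of $\mu'$ (which is automatic for Mather measures by Theorem \ref{minimizingMane}) together with Tonelli and the Lipschitz continuity of $U_\ell$ to write the integral as a time-averaged increment along orbits, which vanishes. Once past this technicality, the argument is a compactness-and-Choquet routine mirroring the proof of Proposition \ref{sature} almost verbatim.
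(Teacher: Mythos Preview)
Your proof is correct and follows exactly the route the paper intends: the text explicitly states that the proof of Proposition~\ref{satureC} mirrors that of its discrete analogue Proposition~\ref{sature}, and your contradiction-via-compactness argument followed by Choquet's theorem reproduces that argument faithfully. You even supply details the paper leaves implicit, namely the justification of $U_0\in\FF'$ (the classical analogue of Proposition~\ref{inegdiscount}) and the reason why extremal points of $\PP_0'$ are ergodic (because $\PP_0'$ is a face of $\PP_L'$).

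One small remark on the technicality you flag: rather than wrestling with the closedness identity $\int D_xU_\ell(v)\,\dd\mu'=0$ for a merely Lipschitz $U_\ell$, it is cleaner to bypass differentiation entirely via the variational formula of Lemma~\ref{Uell}. From the dynamic programming inequality $U_\ell(x)\leqslant e^{-\ell t}U_\ell\big(\gamma(-t)\big)+\int_{-t}^0 e^{\ell s}L(\gamma,\dot\gamma)\,\dd s$, applied to trajectories of $\varphi_L$ and integrated against the $\varphi_L$-invariant minimizing measure $\mu'$, one gets directly $(1-e^{-\ell t})\int U_\ell\,\dd\pi_*\mu'\leqslant \frac{1-e^{-\ell t}}{\ell}\,(-\alpha(0))$, hence $\int\big(U_\ell+\tfrac{\alpha(0)}{\ell}\big)\,\dd\pi_*\mu'\leqslant 0$. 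This is the standard route in \cite{DFIZ} and sidesteps the regularity issue you raise.
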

All those results' proofs follow closely the proofs we gave for their discrete analogue and are to be found in \cite{DFIZ}.

To conclude, as there are more measures in $\widehat\PP_0$ than in $\PP_0'$ one finds that
\begin{pr}\label{inegdiscounted}
The following inequality holds: 
$u_1\leqslant U_0$.
\end{pr}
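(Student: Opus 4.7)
The plan is to exploit the explicit representation formulas for both limit functions in terms of the Peierls barrier, and then invoke the inclusion of marginals from Remark \ref{perteinfo}. First I would recall that by Proposition \ref{discountedbis},
\[
u_1(x) = \min_{\mu \in \widehat\PP_0} \int_M h(y,x) \, \dd \pi_{1*}\mu(y),
\]
where $h$ is the Peierls barrier for the cost $c = h_1$, and by Theorem \ref{formulediscount},
\[
U_0(x) = \min_{\mu' \in \PP_0'} \int_M h_L(y,x) \, \dd \pi_*\mu'(y),
\]
where $h_L$ is the classical Peierls barrier. Since we are working with the cost $c = h_1$, the discrete and classical Peierls barriers coincide by the Proposition asserting $h = h_L$ in this setting, so the integrands in both formulas are identical.

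Next I would rewrite both $u_1$ and $U_0$ as minima over the respective sets of first marginals. Setting $\Sigma_d = \{\pi_{1*}\mu : \mu \in \widehat\PP_0\}$ and $\Sigma_c = \{\pi_*\mu' : \mu' \in \PP_0'\}$, both $u_1(x)$ and $U_0(x)$ are minima of the same linear functional $\nu \mapsto \int_M h(y,x) \, \dd\nu(y)$ over $\Sigma_d$ and $\Sigma_c$ respectively. The key input is then Remark \ref{perteinfo}, which states precisely that $\Sigma_c \subset \Sigma_d$: every classical Mather measure $\mu'$ on $TM$ produces (via the map $\mu' \mapsto \mu$ explicitly constructed in the proof of Proposition \ref{projMatherset}) a discrete Mather measure with the same first marginal.

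From this inclusion, minimizing over the smaller set $\Sigma_c$ can only yield a value greater than or equal to minimizing over $\Sigma_d$, and therefore $u_1(x) \leq U_0(x)$ for every $x \in M$. I do not expect any genuine obstacle here: all nontrivial ingredients (the representation formulas, the identification $h = h_L$, and the inclusion of marginals) have been established in earlier sections, and the conclusion follows from the elementary monotonicity of infima with respect to the domain of minimization. The only small subtlety to state explicitly is that the subset inclusion $\Sigma_c \subset \Sigma_d$ need not be an equality in general, which is exactly what allows the inequality $u_1 \leq U_0$ to be strict in examples.
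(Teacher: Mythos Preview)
Your proof is correct and follows exactly the approach the paper has in mind: the text introduces Proposition \ref{inegdiscounted} with the sentence ``as there are more measures in $\widehat\PP_0$ than in $\PP_0'$ one finds that'', which is precisely the inclusion of marginals from Remark \ref{perteinfo} that you use to compare the two minima from Proposition \ref{discountedbis} and Theorem \ref{formulediscount}. The only minor redundancy is that Theorem \ref{formulediscount} already writes $U_0$ with the same Peierls barrier $h$, so you need not invoke $h=h_L$ separately.
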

However, we will provide later an example where  this inequality is strict.

\subsection{Discount for the positive classical L.--O. semigroup}

As is now customary, all results have a ``positive" pendant by reversing time, meaning, by considering the Hamiltonian $\widecheck H$. In this instance,  for $\ell>0$,  define the function $V_\ell$ such that $-V_\ell$ is the only viscosity solution to the equation 
\begin{equation}\label{CdiscountedHJ}
\ell u(x)+\widecheck H(x,D_xu)=0,\quad x\in M \tag{$\ell\widecheck {\rm H}$J}.
\end{equation}

\begin{Th}\label{discounted+C}
There exists  a positive weak KAM solution $V_0$ such that $V_\ell - \frac{\alpha(0)}{\ell} \to V_0$ where the convergence takes place as $\ell \to 0$ and is uniform.
\end{Th}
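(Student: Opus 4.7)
The plan is to reduce the statement to the already-established Theorem \ref{discountedC} applied to the time--reversed Hamiltonian $\widecheck H(x,p)=H(x,-p)$, in complete analogy with the way the positive Lax--Oleinik semigroup $S^+$ was related to $\widecheck S$ in the section on the positive Lax--Oleinik semigroup.

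First, I would observe that $\widecheck H$ is itself a Tonelli Hamiltonian, with associated Lagrangian $\widecheck L(x,v)=L(x,-v)$. The critical constant of $\widecheck H$ coincides with $\alpha(0)$. Indeed, given any closed measure $\mu'$ on $TM$ in the sense of Definition \ref{closedM}, the push-forward by $(x,v)\mapsto (x,-v)$ is again a closed measure (the defining identity $\int D_x f(v)\,\dd\mu'=0$ is preserved under $v\mapsto -v$), and under this involution the action of $\widecheck L$ is mapped to the action of $L$. Hence the Ma\~n\'e characterization of the critical value (Theorem \ref{minimizingMane}) yields the same minimum for $\widecheck H$ as for $H$.

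Next, by definition $-V_\ell$ is the unique viscosity solution of the discounted equation \eqref{CdiscountedHJ}, which is precisely equation \eqref{discountedHJ} for the Hamiltonian $\widecheck H$. Therefore Theorem \ref{discountedC} applied to $\widecheck H$ provides a weak KAM solution $\widecheck U_0:M\to\R$ for $\widecheck H$ such that
\[
-V_\ell+\frac{\alpha(0)}{\ell}\;\longrightarrow\;\widecheck U_0\qquad\text{uniformly as }\ell\to 0.
\]
Setting $V_0:=-\widecheck U_0$ yields uniform convergence of $V_\ell-\alpha(0)/\ell$ to $V_0$.

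It remains to check that $V_0$ is a \emph{positive} weak KAM solution for $H$ in the sense of Theorem \ref{C.weakKAM+}. This follows from the identity $S^+(t)u=-\widecheck S(t)(-u)$ derived before Theorem \ref{C.weakKAM+}: since $\widecheck U_0$ satisfies $\widecheck S(t)\widecheck U_0=\widecheck U_0-t\alpha(0)$ for all $t>0$, we get
\[
S^+(t)V_0=-\widecheck S(t)(-V_0)=-\widecheck S(t)\widecheck U_0=-\widecheck U_0+t\alpha(0)=V_0+t\alpha(0),
\]
which is exactly the positive weak KAM equation. The main (and essentially only) obstacle is the verification that the critical constant is preserved under the involution $v\mapsto -v$ on $TM$; everything else is a direct transcription through the Fenchel duality $\widecheck L(x,v)=L(x,-v)$ and the identity relating $S^+$ with $\widecheck S$.
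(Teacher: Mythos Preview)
Your proposal is correct and follows exactly the approach indicated by the paper, which does not spell out a proof but simply states that ``all results have a `positive' pendant by reversing time, meaning, by considering the Hamiltonian $\widecheck H$'' before asserting Theorem~\ref{discounted+C}. Your argument is precisely the detailed execution of this reduction: apply Theorem~\ref{discountedC} to $\widecheck H$, use that the critical value is unchanged (which you justify via the Ma\~n\'e characterization, though it is also implicit in Theorem~\ref{C.weakKAM+}), and translate the conclusion back via $S^+(t)u=-\widecheck S(t)(-u)$.
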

The functions $V_\ell$ are given by the explicit formula:
 \begin{lm}\label{Vell}
 For any $\ell>0$ and $x\in M$,  
 $$V_\ell(x) =  -\min_{\substack{\gamma : [0,+\infty) \to M \\ \gamma(0)=x}} \int_0^{+\infty} e^{-\ell s } L\big(\gamma(s), \dot\gamma(s)\big)\dd s,$$
where the minimum is taken amongst absolutely continuous curves and is reached by a $C^2$ curve.
 \end{lm}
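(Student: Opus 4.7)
The plan is to derive this statement from the analogous formula for the negative counterpart, namely Lemma \ref{Uell}, applied to the reversed Hamiltonian $\widecheck H$.

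First I would recall that, by construction, $-V_\ell$ is the unique viscosity solution of \eqref{CdiscountedHJ}, that is, of $\ell u(x)+\widecheck H(x,D_x u)=0$. Since $\widecheck H$ is itself a Tonelli Hamiltonian (as already observed in the discussion preceding Theorem \ref{C.weakKAM+}), Lemma \ref{Uell} applies to $\widecheck H$ and yields
$$-V_\ell(x)=\min_{\substack{\eta:(-\infty,0]\to M\\ \eta(0)=x}}\int_{-\infty}^0 e^{\ell s}\,\widecheck L\bigl(\eta(s),\dot\eta(s)\bigr)\,\dd s,$$
the minimum being taken among absolutely continuous curves and attained by a $C^2$ curve solving the Euler--Lagrange equation of $\widecheck L$.

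Next I would perform a simple time reversal. Recall the identity $\widecheck L(x,v)=L(x,-v)$ derived in the text. Given an absolutely continuous $\eta:(-\infty,0]\to M$ with $\eta(0)=x$, set $\gamma(t)=\eta(-t)$ for $t\in[0,+\infty)$. Then $\gamma(0)=x$, $\dot\gamma(t)=-\dot\eta(-t)$ almost everywhere, and the change of variable $s=-t$ gives
$$\int_{-\infty}^0 e^{\ell s}\,\widecheck L\bigl(\eta(s),\dot\eta(s)\bigr)\,\dd s=\int_0^{+\infty}e^{-\ell t}\,L\bigl(\gamma(t),\dot\gamma(t)\bigr)\,\dd t.$$
This correspondence $\eta\leftrightarrow \gamma$ is a bijection between the two sets of admissible curves, so taking the infimum on both sides and multiplying by $-1$ produces exactly the announced formula. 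The fact that the minimum is realized by a $C^2$ curve is inherited from the corresponding statement in Lemma \ref{Uell}: if $\eta$ attains the minimum on the reversed side and is $C^2$, then $\gamma(t)=\eta(-t)$ is $C^2$ as well, and it solves the Euler--Lagrange equation of $L$ because $\eta$ solves that of $\widecheck L$.

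There is no real obstacle in this argument: all the analytic and variational content is already absorbed in Lemma \ref{Uell}, and the only thing to verify is the compatibility of the time reversal with the $e^{\pm\ell s}$ weighting and with the Lagrangian $L\leftrightarrow \widecheck L$. The only mild point of care is to make sure that the reversal is indeed a bijection on absolutely continuous curves with prescribed initial point $x$, which is immediate since $t\mapsto -t$ is a smooth involution of $\R$ that preserves absolute continuity and initial conditions.
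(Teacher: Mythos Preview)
Your proof is correct and follows exactly the approach the paper indicates: the lemma is stated without proof, but the section opens with ``all results have a `positive' pendant by reversing time, meaning, by considering the Hamiltonian $\widecheck H$,'' which is precisely the reduction to Lemma \ref{Uell} via $\widecheck L(x,v)=L(x,-v)$ and the time reversal $\gamma(t)=\eta(-t)$ that you carry out.
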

The limit $V_0$ has the following form:
\begin{pr}\label{discountedbis+C}
Let $\FF^{\prime +}\subset \SSS$ be the set of subsolutions $u$ verifying the constraint $\int_M u(x)\dd\pi_*\mu (x) \geqslant 0$ for all Mather measures $\mu\in \PP'_0$. 

The limit  $V_0$ is expressed as $V_0 = \inf\limits_{u\in \FF^{\prime +}} u$ where the infimum is a priori taken pointwise. And finally for all $x\in M$,
$$V_0(x) = \max_{\mu \in \PP'_0} \int_M- h(x,y) \dd \pi_* \mu(y).$$
\end{pr}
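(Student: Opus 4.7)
The strategy is to reduce the statement to the already-established classical discounted theorem (Theorem \ref{discountedC} and its characterization in Theorem \ref{formulediscount}), applied to the time-reversed Hamiltonian $\widecheck H(x,p) = H(x,-p)$. As noted in the text preceding Theorem \ref{discounted+C}, if $V_\ell$ is defined as in Lemma \ref{Vell}, then $-V_\ell$ is the unique viscosity solution to \eqref{CdiscountedHJ}, i.e., the $\widecheck H$-discounted equation. Moreover $\widecheck H$ is Tonelli with associated Lagrangian $\widecheck L(x,v) = L(x,-v)$, and it is straightforward to verify from the minimax/Fenchel duality that its critical constant $\widecheck \alpha(0)$ coincides with $\alpha(0)$ (any $u \in \SSS_{\alpha(0)}$ for $H$ gives a $\widecheck H$-subsolution $-u$ with the same constant, and vice-versa).

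With this in place, I would apply Theorem \ref{formulediscount} to $\widecheck H$: as $\ell \to 0$, the family $-V_\ell + \frac{\alpha(0)}{\ell}$ converges uniformly to the $\widecheck H$-weak KAM solution $\widecheck U_0 = \sup_{w \in \widecheck \FF'} w$, where $\widecheck \FF' \subset \widecheck\SSS'$ is the set of $\widecheck H$-subsolutions $w$ with $\int_M w \, \dd \pi_* \widecheck \mu' \leqslant 0$ for every $\widecheck L$-Mather measure $\widecheck \mu'$; and $\widecheck U_0(x) = \min_{\widecheck\mu' \in \widecheck\PP_0'} \int_M \widecheck h(y,x) \, \dd \pi_*\widecheck \mu'(y)$, where $\widecheck h$ is the Peierls barrier for $\widecheck L$. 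Hence $V_0 = -\widecheck U_0$.

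The remaining task is to translate each object through the involution. Three symmetry facts drive the translation. First, $u \in \SSS_{\alpha(0)}$ if and only if $-u \in \widecheck \SSS_{\alpha(0)}$, so $\widecheck\FF' = \{-u : u \in \FF^{\prime+}\}$ once we check that Mather measures correspond via $\sigma_*$, where $\sigma : (x,v) \mapsto (x,-v)$. Second, $\sigma_*$ induces a bijection $\PP'_0 \leftrightarrow \widecheck \PP'_0$: indeed $\widecheck L \circ \sigma = L$, and closedness is preserved because $\int D_x f(v) \, \dd \sigma_* \mu' = -\int D_x f(v) \, \dd \mu' = 0$; moreover $\pi_* \sigma_* \mu' = \pi_* \mu'$, so the marginal constraint $\int u \, \dd \pi_* \widecheck\mu' \leqslant 0$ for $-u$ reads exactly as the constraint $\int u \, \dd \pi_* \mu' \geqslant 0$ defining $\FF^{\prime+}$. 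Taking the supremum and negating yields $V_0 = -\sup \widecheck\FF' = \inf_{u \in \FF^{\prime+}} u$. Third, by the change of variable $\tilde\gamma(s) = \gamma(-s)$ in the definition \eqref{action} of the action functional, one has $\widecheck h_t(x,y) = h_t(y,x)$, and passing to the liminf as $t \to +\infty$ gives $\widecheck h(x,y) = h(y,x)$. Plugging these into the formula for $\widecheck U_0$ and negating gives the stated expression $V_0(x) = \max_{\mu' \in \PP'_0} \int_M -h(x,y) \, \dd \pi_* \mu'(y)$.

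The principal obstacle is conceptual rather than technical: one must be careful that the duality $(H,L) \leftrightarrow (\widecheck H, \widecheck L)$ intertwines \emph{all} the relevant objects (critical constants, subsolutions, Lagrangian flow, Mather measures, Peierls barrier) in a consistent way, and that the bijection $\sigma_*$ between Mather measures preserves the base projections so that the marginal constraints transform cleanly. Once this dictionary is laid down, the result is essentially a corollary of Theorem \ref{formulediscount}.
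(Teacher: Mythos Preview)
Your approach is correct and is precisely the one the paper indicates: the text introduces $V_\ell$ via the Hamiltonian $\widecheck H(x,p)=H(x,-p)$ and states that ``all results have a `positive' pendant by reversing time, meaning, by considering the Hamiltonian $\widecheck H$,'' leaving the detailed transcription to the reader. Your dictionary (subsolutions $u\leftrightarrow -u$, Mather measures via $\sigma_*$, Peierls barrier $\widecheck h(x,y)=h(y,x)$) is exactly what is needed, and the paper provides no additional argument beyond this.
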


As for the discrete case,  relations do exist between 
$U_0$ and $V_0$ (the proofs are similar hence omitted):
\begin{pr}\label{U<V}
The functions $U_0$ and $V_0$ verify the inequality
 $U_{0|\AA}\leqslant V_{0|\AA}$.
\end{pr}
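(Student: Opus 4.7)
The plan is to transcribe the argument that established Proposition~\ref{u<v} to the classical setting, with the calibration property of critical subsolutions along Aubry orbits from Theorem~\ref{invA} replacing its discrete counterpart (Remark~\ref{remoublieee}). I would argue by contradiction: assume there exists $x_0 \in \AA$ with $U_0(x_0) > V_0(x_0)$, and set $\varepsilon := U_0(x_0) - V_0(x_0) > 0$. The goal is to produce a minimizing Mather measure $\mu' \in \PP_0'$ whose projection $\pi_*\mu'$ is supported in a set where $U_0 - V_0 \equiv \varepsilon$, so that $\int V_0\,d\pi_*\mu' \leqslant -\varepsilon < 0$, in contradiction with $V_0 \in \FF^{\prime +}$ (Proposition~\ref{discountedbis+C}).

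For the first step, I would lift $x_0$ via the Graph Theorem (Proposition~\ref{aubry-graph}) to its unique preimage $(x_0,v_0) \in \AA'$, and look at the full orbit $(x(t),\dot x(t)) := \varphi_L^t(x_0,v_0)$, which stays in $\AA'$ for every $t \in \R$ by the invariance statement of Theorem~\ref{invA}. Applying the calibration identity of that theorem to each of $U_0$ and $V_0$ (both are critical subsolutions, since positive weak KAM solutions are critical subsolutions) and subtracting yields $(U_0-V_0)(x(t)) = \varepsilon$ for every $t \in \R$; by continuity of $U_0$ and $V_0$ (Theorem~\ref{Creg}), this extends to the closure $K := \overline{\{x(t) : t \leqslant 0\}} \subset \AA$.

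The Mather measure would be built by Krylov--Bogolyubov time averaging: set $\mu_T' := \tfrac{1}{T}\int_{-T}^0 \delta_{\varphi_L^s(x_0,v_0)}\,ds$ on $TM$ and extract a weak-$*$ cluster point $\mu'$ along some $T_n \to +\infty$. Compactness of $\AA'$ (again the Graph Theorem) ensures that $\mu'$ is a Borel probability measure supported in $\AA'$, with $\pi(\mathrm{supp}(\mu')) \subseteq K$. A routine change-of-variables argument shows that $\mu'$ is $\varphi_L$-invariant, hence closed in the sense of Definition~\ref{closedM}, and Theorem~\ref{minimizingMane} then promotes it to $\mu' \in \PP_0'$. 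Using $U_0 \in \FF'$ (the analogue of Proposition~\ref{prlimsat} recorded in Theorem~\ref{formulediscount}) and $V_0 = U_0 - \varepsilon$ on $K$, one concludes
\[
\int_M V_0 \, d\pi_*\mu' \;=\; \int_M U_0 \, d\pi_*\mu' - \varepsilon \;\leqslant\; -\varepsilon \;<\; 0,
\]
which is the desired contradiction. The one point requiring real care, relative to the discrete proof, is the verification that $\mu'$ is flow-invariant and thus closed: this follows from the elementary estimate $\bigl|\int f\circ\varphi_L^t\,d\mu_T' - \int f\,d\mu_T'\bigr| \leqslant 2|t|\,\|f\|_\infty / T$ for every continuous $f$ and every fixed $t \in \R$, obtained by splitting the averaging interval.
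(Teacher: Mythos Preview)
Your argument is correct and follows precisely the route the paper indicates (the paper omits the proof, noting only that it is similar to the discrete Proposition~\ref{u<v}). Each ingredient of the discrete argument is replaced by its classical analogue just as you describe: Aubry sequences by Euler--Lagrange orbits in $\AA'$, the calibration identity of Remark~\ref{remoublieee} by that of Theorem~\ref{invA}, and the Birkhoff-average construction of a Mather measure (as in Theorem~\ref{uniquenessM}) by the Krylov--Bogolyubov time average you carry out.
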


As far as conditions for $U_0$ and $V_0$ to be a conjugate pair are concerned:

\begin{pr}\label{conjdiscount'}
The following assertions are equivalent:
\begin{enumerate}
\item\label{1'} The functions $U_0$ and $V_0$ form a conjugate pair,
\item\label{2'} $U_{0|\AA} =V_{0|\AA}$,
\item\label{3'} $U_0\geqslant V_0$,
\item\label{4'} for all classical Mather measures $ \mu' \in \PP_0'$, the equality $\int_M U_0(x)\dd \pi_*\mu'(x) = 0$  holds,
\item\label{5'} for all classical Mather measures $ \mu'\in \PP_0'$, the equality $\int_M V_0(x)\dd \pi_*\mu'(x) = 0$  holds,
\item\label{6'} there exists a critical subsolution $v\in \SSS$ such that for all Mather measures $ \mu'$, the equality $\int_M v(x)\dd \pi_*\mu'(x) = 0$  holds.
\end{enumerate}

\end{pr}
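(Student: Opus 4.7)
The plan is to mimic the proof of the discrete analogue, Proposition \ref{conjdiscount}, using the classical tools that have been developed in parallel throughout the exposition. The equivalence \eqref{1'} $\Leftrightarrow$ \eqref{2'} is a direct consequence of the classical notion of conjugate pair: a pair $(u^-,u^+)$ of negative and positive weak KAM solutions is conjugate precisely when $u^-_{|\AA} = u^+_{|\AA}$, and then by Theorem \ref{Cuniqueness} the pair is uniquely determined from its values on $\AA$.

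For \eqref{2'} $\Leftrightarrow$ \eqref{3'}, one direction is immediate: any conjugate pair satisfies $u^-\geqslant u^+$ everywhere, so \eqref{2'} implies \eqref{3'}. For the converse, I would combine \eqref{3'} with Proposition \ref{U<V}, which says $U_{0|\AA}\leqslant V_{0|\AA}$. The two inequalities on $\AA$ force equality on $\AA$, i.e. \eqref{2'}. Next, \eqref{3'} implies both \eqref{4'} and \eqref{5'}: indeed $U_0\in\FF'$ and $V_0\in\FF'^+$ (this is the classical counterpart of Proposition \ref{prlimsat}), so for any $\mu'\in \PP_0'$,
\[
0 \;\geqslant\; \int_M U_0\,\dd\pi_*\mu' \;\geqslant\; \int_M V_0\,\dd\pi_*\mu' \;\geqslant\; 0.
\]
The implications \eqref{4'} $\Rightarrow$ \eqref{6'} and \eqref{5'} $\Rightarrow$ \eqref{6'} are trivial since weak KAM solutions (positive or negative) are in particular critical subsolutions.

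The main obstacle is the implication \eqref{6'} $\Rightarrow$ \eqref{2'}. Given a critical subsolution $v$ satisfying the integral equality, set $v^- = \lim_{t\to+\infty} S^-(t)v + t\alpha(0)$ and $v^+ = \lim_{t\to+\infty} S^+(t)v - t\alpha(0)$ (both limits exist by monotonicity since $v\in\SSS$, and the convergence of the classical Lax--Oleinik semigroup from Theorem \ref{cvsg} applies). Both are weak KAM solutions agreeing with $v$ on $\AA$ (by Corollary \ref{ccst}), hence they satisfy $\int_M v^-\,\dd\pi_*\mu' = \int_M v^+\,\dd\pi_*\mu' = 0$ for every $\mu'\in\PP_0'$. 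Since $v^-\leqslant U_0$ (as $v^-\in\FF'$) and $U_0\in\FF'$, integrating against $\mu'\in\PP_0'$ sandwiches both integrals between $0$ and $0$; the continuity of $v^-$ and $U_0$ combined with equality of integrals against every Mather measure forces $v^-_{|\MM} = U_{0|\MM}$. Applying the classical uniqueness result (the Mather set $\MM$ is a uniqueness set for weak KAM solutions, the analogue of Theorem \ref{uniquenessM} which follows from Theorem \ref{Cuniqueness} together with the compactness of $\widetilde\PP_0$ argument used in the discrete case) yields $v^- = U_0$. The symmetric argument with $\FF'^+$ gives $v^+ = V_0$. Since $v^-_{|\AA} = v^+_{|\AA}$ by construction, we obtain $U_{0|\AA} = V_{0|\AA}$, which is \eqref{2'}, closing the loop.
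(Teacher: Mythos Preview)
Your proposal is correct and follows essentially the same approach as the paper: the paper explicitly omits the proof of this proposition, stating that it is similar to the discrete case (Proposition~\ref{conjdiscount}), and your argument faithfully transposes that discrete proof step by step to the classical setting, invoking the appropriate classical analogues (Proposition~\ref{U<V}, Corollary~\ref{ccst}, the fact that $U_0\in\FF'$ and $V_0\in\FF'^{+}$, and the uniqueness of weak KAM solutions on $\MM$).
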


\subsection{Some degenerate discounted Hamilton--Jacobi equations}

The corresponding results are inspired by \cite{Zdisc} using also methods and ideas introduced in \cite{ZQJ} where more general problems are studied. Those results  hold as well for less regular Hamiltonians.

In this time--continuous setting, one still considers a continuous function $\delta : M\to \R$ that takes non--negative values and satisfies the condition
$$\forall \mu' \in \PP_0', \quad \int_M \delta(x) \ \dd\pi_* \mu' >0.$$

The degenerate discounted Hamilton--Jacobi equation that here studied is 
\begin{equation}\label{discountedHJdeg}
\ell \delta(x)u(x)+H(x,D_xu)=\alpha(0),\quad x\in M .\tag{$\ell\delta$HJ}
\end{equation}

To be more precise the condition prescribed in \cite{Zdisc} is that $\delta$ is positive on the projected Aubry set. However, the more general case studied in \cite{ZQJ} handles a wider class of perturbations of the critical equation that can be non--linear in $u(x)$. All  results stated below therefore follow from those two references.

The first existence result hereafter  states that our problem is well posed and is, to our knowledge original in this generality:

\begin{Th}\label{existencedeg}
For all $\ell>0$ there exists a unique viscosity solution to \eqref{discountedHJdeg}  denoted by $U_\ell^\delta$.
\end{Th}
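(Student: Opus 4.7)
The result is the exact continuous analogue of the discrete pair Theorems~\ref{strongCP}--\ref{existencelambda}, and I would follow the same strategy, transposing the two ingredients (monotone iteration for existence, strong comparison via strict subsolutions for uniqueness) into the viscosity framework.

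\emph{Existence.} Pick a $C^{1,1}$ critical subsolution $u_0 \in \SSS$ given by Theorem \ref{strictC} and a negative weak KAM solution $u^-$. For any constant $K>0$ the function $u_0-K$ is smooth with $H(x,D_x(u_0-K)) \leqslant \alpha(0)$ everywhere, hence
$$\ell \delta(x)(u_0(x)-K) + H(x,D_x(u_0-K)) \leqslant \ell \delta(x)(u_0(x)-K)+\alpha(0) \leqslant \alpha(0)$$
as soon as $K$ is large enough to make $u_0-K$ non-positive; so $u_0-K$ is a classical subsolution of \eqref{discountedHJdeg}. In the same way, $u^-+K$ is a viscosity supersolution for $K$ large. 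Since $u_0-K \leqslant u^-+K$ for $K$ large, Perron's method applied between these two barriers produces a viscosity solution $U_\ell^\delta$ of \eqref{discountedHJdeg}.

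\emph{Uniqueness.} This is the actual content of the theorem, and I would model the argument on Theorem \ref{strongCP}. Given a subsolution $w$ and a supersolution $v$, consider $w_\varepsilon = (1-\varepsilon)u_0^*+\varepsilon w$ where $u_0^*$ is a $C^{1,1}$ strict critical subsolution normalized to be negative (Theorem \ref{strictC11}'s classical counterpart, Theorem \ref{strictC}). A direct computation as in \eqref{subsollambda} shows $w_\varepsilon$ is still a viscosity subsolution of \eqref{discountedHJdeg}. Assume by contradiction that $\max(w_\varepsilon - v) > 0$ is attained at some $x_0 \in M$. I would now use the representation property that any viscosity supersolution $v$ of $\ell\delta u+H=\alpha(0)$ admits, at $x_0$, a backward calibrating curve $\gamma:(-\infty,0]\to M$, $\gamma(0)=x_0$, along which
$$v(x_0) = \beta(s)\,v(\gamma(s)) + \int_s^0 \beta(r)\bigl[L(\gamma(r),\dot\gamma(r))+\alpha(0)\bigr]\,\mathrm{d} r, \quad \beta(s)=e^{\ell\int_s^0\delta(\gamma(r))\,\mathrm{d} r},$$
while $w_\varepsilon$ satisfies the same identity with $\leqslant$. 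Subtracting gives the crucial inequality
$$0<(w_\varepsilon-v)(x_0) \leqslant \beta(s)\,(w_\varepsilon-v)(\gamma(s)) \leqslant \beta(s)\,(w_\varepsilon-v)(x_0),$$
because $x_0$ realizes the maximum and $\beta(s)\geqslant 1$. This forces $\beta(s)=1$ for all $s\leqslant 0$, i.e. $\delta\circ\gamma\equiv 0$, and simultaneously forces all intermediate inequalities to be equalities. The latter equalities say that $w_\varepsilon$ is calibrated by $\gamma$, hence so is the strict subsolution $u_0^*$, and therefore $\gamma$ takes values in $\AA$.

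\emph{Getting the contradiction.} Form the empirical measures $\mu'_T = \frac1T\int_{-T}^0 \delta_{(\gamma(s),\dot\gamma(s))}\,\mathrm{d} s$ on $TM$. A standard relatively compact argument (the curve being bounded and minimizing by Tonelli regularity along $\AA'$) produces a weak-$*$ limit $\mu'$ which is a closed probability measure, and the calibration equality along $\gamma$ combined with the fact that $\gamma$ lies in $\AA$ shows $\int L\,\mathrm{d}\mu'=-\alpha(0)$, so $\mu'\in \PP_0'$ is a classical Mather measure (Theorem \ref{minimizingMane}). But $\delta\circ\gamma\equiv 0$ yields $\int_M \delta\,\mathrm{d}\pi_*\mu' = 0$, contradicting the assumption on $\delta$. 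Hence $w_\varepsilon\leqslant v$, and letting $\varepsilon\to 0$ gives $w\leqslant v$. Applying this both ways to two solutions gives uniqueness.

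The main obstacle is the existence of the backward calibrating curve for viscosity supersolutions of the degenerate discounted equation with the exponential weight $\beta$. This is the place where the degeneracy of $\delta$ (absence of a classical strong comparison principle) bites; I would obtain it either by adapting Fathi--Siconolfi type arguments to the weighted action functional $\int e^{\ell\int_s^0\delta}[L+\alpha(0)]$, or by first proving the representation formula for the $v_\eta$ associated to $\delta+\eta$ (where classical comparison applies) and then passing to the limit $\eta\to 0$ using the equicontinuity produced by Tonelli bounds. Once the calibration is in hand, the rest of the argument is the verbatim translation of the discrete proof of Theorem~\ref{strongCP}.
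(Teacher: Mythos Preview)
Your overall strategy is the right one and mirrors the discrete proofs of Theorems~\ref{strongCP}--\ref{existencelambda}; the paper itself only sketches the continuous version, referring back to those discrete arguments and to \cite{Zdisc,ZQJ}. Two points need attention, however.

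First, your weight $\beta(s)$ has the wrong sign. The representation formula in Lemma~\ref{Uelldelta} uses $\exp\big(\ell A_\gamma(s)\big)$ with $A_\gamma(s) = -\int_s^0 \delta\circ\gamma$, so the correct weight is $\beta(s)=e^{-\ell\int_s^0\delta(\gamma(r))\,\mathrm{d} r}\leqslant 1$, not $\geqslant 1$. With your sign, the chain $(w_\varepsilon-v)(x_0)\leqslant \beta(s)(w_\varepsilon-v)(\gamma(s))\leqslant \beta(s)(w_\varepsilon-v)(x_0)$ only recovers $\beta(s)\geqslant 1$ and does \emph{not} force $\beta(s)=1$ as you claim. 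With the correct sign the chain reads
\[
(w_\varepsilon-v)(x_0)\leqslant \beta(s)(w_\varepsilon-v)(\gamma(s))\leqslant (w_\varepsilon-v)(\gamma(s))\leqslant (w_\varepsilon-v)(x_0),
\]
using $0<\beta(s)\leqslant 1$ together with the positivity of $(w_\varepsilon-v)(\gamma(s))$ (forced by the first inequality) and the maximality at $x_0$; equality throughout then does give $\beta(s)=1$, exactly as in the discrete proof of Theorem~\ref{strongCP}.

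Second, you make your life harder than necessary by aiming at sub $\leqslant$ super and therefore needing a calibrating curve for a bare supersolution---the obstacle you rightly flag. The discrete proof, and the paper's intended continuous analogue, only proves sub $\leqslant$ sol, using a calibrating curve for the \emph{solution} $v$; such a curve is directly provided by the representation formula of Lemma~\ref{Uelldelta} once existence is in hand. Existence itself can be obtained either via your Perron argument, or, closer to the discrete Theorem~\ref{existencelambda}, by monotone iteration of the weighted Lax--Oleinik semigroup starting from a negative weak KAM solution (bounded above by a positive one). Uniqueness then follows from sub $\leqslant$ sol applied symmetrically to two solutions, and no approximation by $\delta+\eta$ is needed.
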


The proof of existence uses the next lemma, which is a strong comparison principle. It is also new with such conditions on $\delta$. It  is an analogue to Theorem \ref{strongCP}:
\begin{lm}\label{Comp-Princdeg}
Let $\ell>0$ be a constant, let $v_1:M\to \R$ be a viscosity subsolution to \eqref{discountedHJdeg} and $v_2 : M\to \R$ be a viscosity supersolution to \eqref{discountedHJdeg}. Then $v_1 \leqslant U_\ell\leqslant v_2$.
\end{lm}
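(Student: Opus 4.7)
The plan is to mirror, in the viscosity setting, the strategy used for the discrete Theorem \ref{strongCP}, exploiting the same circle of ideas as in \cite{Zdisc, ZQJ}. The heart of the matter is to show that any viscosity subsolution $v_1$ of \eqref{discountedHJdeg} lies pointwise below any viscosity supersolution $v_2$; existence of $U_\ell^\delta$ from Theorem \ref{existencedeg} then follows by Perron's method and the full statement $v_1 \leqslant U_\ell^\delta \leqslant v_2$ is immediate. Coercivity of $H$ forces $v_1$ and $v_2$ to be Lipschitz continuous. Fix next a $C^{1,1}$ strict critical subsolution $u_0$ given by Bernard's Theorem \ref{strictC}, normalized so that $u_0 \leqslant 0$ on $M$, and for $\varepsilon \in (0,1)$ set $v_1^\varepsilon = (1-\varepsilon) u_0 + \varepsilon v_1$. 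Convexity of $H$ in $p$ and non-negativity of $\delta$ give, in the viscosity sense,
$$\ell \delta(x) v_1^\varepsilon(x) + H(x, D_x v_1^\varepsilon) \leqslant (1-\varepsilon)\bigl[\ell\delta(x)u_0(x)+\alpha(0)\bigr] + \varepsilon \alpha(0) \leqslant \alpha(0),$$
so $v_1^\varepsilon$ remains a subsolution; it suffices to prove $v_1^\varepsilon \leqslant v_2$ for every $\varepsilon \in (0,1)$ and then let $\varepsilon \to 1$.

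Suppose, for a contradiction, that $\max_M(v_1^\varepsilon - v_2) > 0$ is attained at some $x_0 \in M$. The supersolution property of $v_2$, combined with a variable-discount dynamic programming principle (a state-dependent Duhamel analogue of Lemma \ref{Uell}, justified as in \cite{ZQJ}), produces a $C^2$ curve $\gamma : (-\infty, 0] \to M$ with $\gamma(0) = x_0$ along which the representation
$$v_2(x_0) = e^{-\ell \int_{-t}^0 \delta(\gamma(\sigma))d\sigma} v_2(\gamma(-t)) + \int_{-t}^0 e^{-\ell\int_s^0 \delta(\gamma(\sigma))d\sigma}\bigl(\alpha(0) - L(\gamma(s),\dot\gamma(s))\bigr)ds$$
holds for every $t>0$; the integrated subsolution inequality for $v_1^\varepsilon$ holds along the same curve with ``$\leqslant$'' replacing ``$=$''. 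Subtracting yields
$$0 < (v_1^\varepsilon - v_2)(x_0) \leqslant e^{-\ell\int_{-t}^0 \delta(\gamma(\sigma))d\sigma}(v_1^\varepsilon - v_2)(\gamma(-t)) \leqslant (v_1^\varepsilon - v_2)(\gamma(-t)),$$
since $\delta \geqslant 0$. Maximality at $x_0$ forces every inequality above to be an equality for every $t>0$, whence $\delta(\gamma(s))=0$ for all $s\leqslant 0$, and the subsolution inequality for $v_1^\varepsilon$ is an equality along $\gamma$; the strictness of $u_0$ then pins $(\gamma(s),\dot\gamma(s))$ onto the classical Aubry set $\AA'$ for all $s \leqslant 0$.

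It remains to build an illicit Mather measure. For $T>0$ set $\mu'_T = \frac{1}{T}\int_{-T}^0 \delta_{(\gamma(s),\dot\gamma(s))}ds$ on $TM$ and extract a weak-$*$ limit $\mu'$ as $T\to+\infty$. A standard telescoping computation against $C^1$ test functions shows $\int_{TM} D_xf(v)\,d\mu' = 0$, so $\mu'$ is closed in the sense of Definition \ref{closedM}, and $\mathrm{supp}(\mu') \subset \AA'$ combined with the calibration identity yields $\int L\,d\mu' = -\alpha(0)$. By Theorem \ref{minimizingMane}, $\mu' \in \PP_0'$ is a classical Mather measure. But $\delta$ vanishes on $\pi(\mathrm{supp}(\mu'))$, so $\int_M \delta\,d\pi_* \mu' = 0$, contradicting the hypothesis imposed on $\delta$. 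Hence $\max_M(v_1^\varepsilon - v_2) \leqslant 0$, and letting $\varepsilon \to 1$ gives $v_1 \leqslant v_2$.

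The main obstacle in executing this plan is the rigorous derivation of the integrated calibrated representation along $\gamma$ for a supersolution of \eqref{discountedHJdeg} with a state-dependent discount $\delta$: the classical formula of Lemma \ref{Uell} treats only constant $\ell$, and the variable-coefficient version requires a careful Duhamel-type calculation combined with Perron-style existence of optimal curves for the underlying optimal control problem. This technical step is carried out in \cite{ZQJ} in a more general nonlinear setting and transfers to the present equation with only cosmetic modifications.
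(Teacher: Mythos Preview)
The paper does not actually prove Lemma~\ref{Comp-Princdeg}; it states it as the continuous analogue of Theorem~\ref{strongCP} and defers to the references \cite{Zdisc,ZQJ}. Your strategy---approximating $v_1$ by a convex combination with a strict $C^{1,1}$ critical subsolution, looking at a maximum of $v_1^\varepsilon-v_2$, and constructing an illicit Mather measure on which $\delta$ vanishes---is precisely the continuous transposition of the proof of Theorem~\ref{strongCP} and is the intended route.

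There is, however, a genuine slip in your execution. You claim that the supersolution property of $v_2$ produces a curve $\gamma$ along which the representation formula holds with \emph{equality}. This is false for general supersolutions: equality along a backward curve is a feature of \emph{solutions}, not of supersolutions (take for instance $H(x,p)=\tfrac12 p^2$ on~$\T^1$ with $\delta\equiv1$; any positive constant is a supersolution yet satisfies the representation only with strict inequality for the constant curve). What one actually has is $v_2(x_0)\geqslant T_t^{(\ell\delta)}v_2(x_0)$, and picking the optimal curve $\gamma_t$ in this semigroup gives only
\[
v_2(x_0)\;\geqslant\; e^{-\ell\int_{-t}^0\delta(\gamma_t)}\,v_2\bigl(\gamma_t(-t)\bigr)+\int_{-t}^0 e^{-\ell\int_s^0\delta(\gamma_t)}\bigl[L(\gamma_t,\dot\gamma_t)+\alpha(0)\bigr]\,\dd s.
\]
(Note also the sign: it is $L+\alpha(0)$, not $\alpha(0)-L$; compare Lemma~\ref{Uelldelta}.) Fortunately this is all you need. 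The subsolution inequality for $v_1^\varepsilon$ holds along the \emph{same} $\gamma_t$; subtracting gives
$(v_1^\varepsilon-v_2)(x_0)\leqslant e^{-\ell\int_{-t}^0\delta(\gamma_t)}(v_1^\varepsilon-v_2)(\gamma_t(-t))$, and maximality at $x_0$ together with positivity of the maximum now forces \emph{all} slacks to vanish---in particular the one in the supersolution inequality---yielding $\delta\circ\gamma_t\equiv0$ and calibration of $u_0$ along $\gamma_t$, exactly as in the discrete argument after equation~\eqref{subsollambda}. A diagonal extraction over $t\to\infty$ (the curves $\gamma_t$ need not be nested) then produces the closed measure supported in $\AA'$ with $\int\delta\,\dd\pi_*\mu'=0$, contradicting the hypothesis on~$\delta$.
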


The convergence result in this case is:

\begin{Th}\label{discountedCdeg}
There exists a weak KAM solution $U_0^\delta$ such that $U_\ell^\delta\to U_0^\delta$ where the convergence takes place as $\ell\to 0$ and is uniform.
\end{Th}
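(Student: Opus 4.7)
The plan is to adapt the template of the non-degenerate classical convergence (Theorem \ref{discountedC}) and its degenerate discrete analogue (Theorem \ref{Thconvdiscdeg}) to the variable-rate discount $\ell\delta(x)$.

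\emph{Compactness.} First I would show that the family $(U_\ell^\delta)_{\ell\in(0,1)}$ is relatively compact in $C^0(M,\R)$. Uniform $L^\infty$ bounds come by sandwiching with appropriately shifted weak KAM solutions: if $\underline u$ and $\overline u$ are negative and positive weak KAM solutions for $H$, then $\underline u$ is a subsolution and $\overline u$ a supersolution to \eqref{discountedHJdeg}, so the strong comparison principle (Lemma \ref{Comp-Princdeg}) gives $\underline u\leqslant U_\ell^\delta\leqslant\overline u$. Then Lipschitz continuity is forced by the subsolution inequality $H(x,D_xU_\ell^\delta)\leqslant\alpha(0)-\ell\delta(x)U_\ell^\delta(x)$ together with the coercivity of $H$, so the family is equi--Lipschitz. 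By Arzel\`a--Ascoli it suffices to prove that every accumulation point as $\ell\to 0$ is the same function $U_0^\delta$.

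\emph{Identifying the candidate limit and the upper bound.} Define the constraint set
\[
\mathcal F_\delta' = \bigl\{u\in\SSS \,:\, \textstyle\int_M \delta(x)u(x)\,\dd\pi_*\mu'(x)\leqslant 0 \text{ for every } \mu'\in\PP_0'\bigr\}
\]
and set $U_0^\delta(x)=\sup_{u\in\mathcal F_\delta'}u(x)$. For each $\ell>0$ the function $U_\ell^\delta$ lies in $\mathcal F_\delta'$: because $U_\ell^\delta$ is Lipschitz, the viscosity subsolution inequality holds a.e., and combining with the Fenchel inequality $D_xU_\ell^\delta(v)\leqslant L(x,v)+H(x,D_xU_\ell^\delta)$ and integrating against $\mu'\in\PP_0'$ yields
\[
0=\int_{TM}D_xU_\ell^\delta(v)\,\dd\mu'\leqslant -\alpha(0)+\alpha(0)-\ell\int_M\delta\,U_\ell^\delta\,\dd\pi_*\mu',
\]
where the first equality uses Definition \ref{closedM} and $\int L\,\dd\mu'=-\alpha(0)$ by Theorem \ref{minimizingMane}. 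Dividing by $\ell>0$ and passing to the limit along any convergent subsequence $U_{\ell_n}^\delta\to U$ gives $U\in\mathcal F_\delta'$, hence $U\leqslant U_0^\delta$.

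\emph{The reverse inequality via a calibrating measure.} The hard direction mimics Lemmas \ref{lminegdegcle} and \ref{discdegmes}. The optimal control representation of $U_\ell^\delta$ produces, for each $x_0\in M$, a $C^2$ minimizing curve $\gamma_\ell:(-\infty,0]\to M$ with $\gamma_\ell(0)=x_0$ achieving
\[
U_\ell^\delta(x_0)=\int_{-\infty}^0 \beta_\ell(s)\bigl[L(\gamma_\ell(s),\dot\gamma_\ell(s))+\alpha(0)\bigr]\,\dd s,\qquad \beta_\ell(s)=\exp\!\Big(\ell\!\int_s^0\!\delta(\gamma_\ell(\tau))\,\dd\tau\Big).
\]
Normalizing yields the probability measure $\mu_{x_0}^\ell=C_{x_0,\ell}^{-1}\int_{-\infty}^0\beta_\ell(s)\delta_{(\gamma_\ell(s),\dot\gamma_\ell(s))}\,\dd s$ on $TM$ with $C_{x_0,\ell}=\int_{-\infty}^0\beta_\ell(s)\,\dd s$. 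For $w\in\mathcal F_\delta'$ Lipschitz and a.e.\ an $\alpha(0)$-subsolution, the Fenchel inequality gives $\tfrac{\dd}{\dd s}w(\gamma_\ell(s))\leqslant L(\gamma_\ell,\dot\gamma_\ell)+\alpha(0)$ a.e., and an integration by parts against the weight $\beta_\ell$ (whose derivative is $\ell\delta(\gamma_\ell)\beta_\ell$) delivers
\[
U_\ell^\delta(x_0)\geqslant w(x_0)-\ell\,C_{x_0,\ell}\!\int_{TM}\!\delta(x)w(x)\,\dd\pi_*\mu_{x_0}^\ell(x).
\]
Two cornerstones conclude the proof: (i) $\ell\,C_{x_0,\ell}$ stays uniformly bounded as $\ell\to 0$; (ii) any weak limit of $\mu_{x_0}^{\ell_n}$ is a closed minimizing measure, hence an element of $\PP_0'$ (its invariance under $\varphi_L$ then follows from the Ma\~n\'e characterization, Theorem \ref{minimizingMane}). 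Since $w\in\mathcal F_\delta'$ forces $\int\delta\,w\,\dd\pi_*\mu'\leqslant 0$ at the limit, the inequality passes to $\liminf U_{\ell_n}^\delta(x_0)\geqslant w(x_0)$; taking the supremum over $w\in\mathcal F_\delta'$ yields $U\geqslant U_0^\delta$.

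\emph{Main obstacle.} The decisive analytic step is the uniform bound (i) on $\ell\,C_{x_0,\ell}$, obtained by contradiction: if some sequence $\ell_n\,C_{x_0,\ell_n}\to+\infty$, a truncation-and-renormalization argument (the continuous analogue of Proposition \ref{epslambda}, using the lower bound $\beta_\ell(s)\geqslant e^{-\ell\|\delta\|_\infty|s|}$ and comparing the weighted mean of $\delta\circ\gamma_\ell$ to an exponential-decay integral $\int_0^\infty e^{-\ell_n t}\,\dd t$) produces a closed minimizing probability measure $\mu'\in\PP_0'$ with $\int_M\delta\,\dd\pi_*\mu'=0$, violating hypothesis on $\delta$. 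The variable-rate weight $\beta_\ell$ complicates the Abel/integration-by-parts estimates compared to the constant-discount case, but the structure of the argument is the same as in the discrete Proposition \ref{epslambda}.
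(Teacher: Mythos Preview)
Your approach is correct and is precisely the one the paper intends: this theorem sits in the ``Relations to the classical theory'' section, where the paper does not give a self-contained proof but refers to \cite{Zdisc,ZQJ} and to the discrete template of Section~\ref{sectiondeg}. Your outline is the faithful continuous translation of the proof of Theorem~\ref{Thconvdiscdeg}: comparison with signed weak KAM solutions for compactness (mirroring Theorem~\ref{existencelambda} and Corollary~\ref{equilambdabounded}), the integral constraint against Mather measures (Proposition~\ref{inegdegdisc}), the weighted occupation measure along an optimal curve (Definition~\ref{defmeaslambda} and Lemma~\ref{discdegmes}), the key inequality via integration by parts (Lemma~\ref{lminegdegcle}), and the blow-up/contradiction bound on $\ell\,C_{x_0,\ell}$ (Proposition~\ref{epslambda}).

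One concrete correction: your displayed weight has the wrong sign. With $\beta_\ell(s)=\exp\big(\ell\int_s^0\delta(\gamma_\ell)\big)$ and $s<0$, one gets $\beta_\ell(s)\geqslant 1$, the integral defining $U_\ell^\delta$ diverges, and the formula contradicts Lemma~\ref{Uelldelta}. The correct weight is $\beta_\ell(s)=\exp\big(\ell A_{\gamma_\ell}(s)\big)=\exp\big(-\ell\int_s^0\delta(\gamma_\ell)\big)\leqslant 1$. Your subsequent claims (that $\beta_\ell'(s)=\ell\,\delta(\gamma_\ell(s))\beta_\ell(s)$ and that $\beta_\ell(s)\geqslant e^{-\ell\|\delta\|_\infty|s|}$) are consistent with this corrected sign, so the error is a slip in the display rather than in the argument. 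Two minor remarks: Lemma~\ref{Uelldelta} only asserts a Lipschitz minimizer (since $\delta$ is merely continuous the modified Euler--Lagrange equation need not yield $C^2$), and you should mention tightness of $(\mu_{x_0}^\ell)$ on the non-compact $TM$, which follows from the equi-Lipschitz bound on $U_\ell^\delta$ giving a uniform speed bound on the optimal curves.
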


The solutions, as in Corollary \ref{replambda}, are expressed by an explicit formula:
\begin{lm}\label{Uelldelta}
If $t>0$ and  $\gamma : [-t,0] \to M$ is an absolutely continuous curve, we set $A_\gamma(-t) = -\int_{-t}^0 \delta \circ \gamma (s) \ \dd s$.

For any $\ell>0$ and $x\in M$, if $t>0$ then
\begin{multline*}
U_\ell^\delta(x) = \min_{\substack{\gamma : [-t,0] \to M \\ \gamma(0)=x}}\Big\{ \exp\big( \ell A_\gamma(-t)\big) U_\ell^\delta\big(\gamma(-t)\big) 
\\
+ \int_{-t}^0  \exp\big( \ell A_\gamma(s)\big) \big[L\big(\gamma(s), \dot\gamma(s)\big)+\alpha(0)\big]\ \dd s\Big\},
\end{multline*}
where the minimum is taken amongst absolutely continuous curves and is reached by a Lipschitz curve.

Moreover, there exists a Lipschitz curve $\gamma : (-\infty ,0] \to M$ such that $\gamma(0) = x$ and 
$$U_\ell^\delta(x) = \int_{-\infty}^0  \exp\big( \ell A_\gamma(s)\big) \big[L\big(\gamma(s), \dot\gamma(s)\big)+\alpha(0)\big] \ \dd s. $$
\end{lm}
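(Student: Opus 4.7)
The plan is to prove the two formulas separately, with the first (finite horizon) formula following from a dynamic programming argument in the spirit of classical Tonelli theory, and the second (infinite horizon) formula being obtained as a limit, with the main technical challenge lying in controlling the asymptotic decay of the exponential weight.

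For the first formula, I would introduce the nonlinear operator
$$\mathcal{T}_t^\ell u (x) = \inf_{\substack{\gamma:[-t,0]\to M\\ \gamma(0)=x}}\bigl\{ \exp(\ell A_\gamma(-t)) u(\gamma(-t)) + \int_{-t}^0 \exp(\ell A_\gamma(s))[L(\gamma(s),\dot\gamma(s))+\alpha(0)]\,\dd s\bigr\}$$
acting on continuous functions, and show that $U_\ell^\delta$ is the unique fixed point. First, Tonelli's theorem (applied thanks to convexity of $L$ in $v$ and its superlinearity) together with a standard compactness argument ensures that the infimum is attained and that the minimizing curve is Lipschitz (and in fact, by examining the associated necessary conditions, as regular as the data allows). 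Second, the formula $t\mapsto \mathcal{T}_t^\ell u$ enjoys a semigroup property, and one checks by the standard test-function computation that a continuous function $u$ satisfies $u = \mathcal{T}_t^\ell u$ for all $t>0$ if and only if $u$ is a viscosity solution of \eqref{discountedHJdeg}: the integrating factor $\exp(\ell A_\gamma)$ is precisely designed so that if $u$ is smooth, differentiating $\mathcal{T}_t^\ell u$ in $t$ at $t=0$ along an optimal trajectory recovers the equation $\ell\delta(x)u(x) + H(x,D_xu)=\alpha(0)$. Uniqueness of the fixed point then follows from Lemma \ref{Comp-Princdeg}, and existence from Theorem \ref{existencedeg}.

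For the second formula, the idea mirrors Corollary \ref{replambda} in the discrete setting. Fix $x\in M$. For each $t_n \to +\infty$, take a minimizer $\gamma_n:[-t_n,0]\to M$ realizing $\mathcal{T}_{t_n}^\ell U_\ell^\delta (x)$. Standard a priori estimates from Tonelli theory yield equi-Lipschitz bounds on the $\gamma_n$ on any compact subinterval of $(-\infty,0]$, so that a diagonal extraction produces a limit curve $\gamma:(-\infty,0]\to M$ with $\gamma(0)=x$. To pass to the limit in the formula and obtain the claimed infinite-horizon representation, the essential point is
$$\exp(\ell A_{\gamma_n}(-t_n))\,U_\ell^\delta(\gamma_n(-t_n)) \longrightarrow 0,$$
which, since $U_\ell^\delta$ is bounded on the compact manifold $M$, reduces to showing $A_{\gamma_n}(-t_n) = -\int_{-t_n}^0 \delta\circ\gamma_n \to -\infty$.

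This last point is the main obstacle and the step requiring the nondegeneracy condition $\int_M\delta\,\dd\pi_*\mu'>0$ for all $\mu'\in\PP_0'$. I would prove it by contradiction, exactly as in Proposition \ref{epslambda}: assuming that $\int_{-t_n}^0 \delta\circ\gamma_n$ stays bounded along some subsequence, form the normalized probability measures
$$\mu_n = \frac{1}{t_n}\int_{-t_n}^0 \delta_{(\gamma_n(s),\dot\gamma_n(s))}\,\dd s \quad\text{on } TM,$$
and extract a weak-$*$ limit $\mu'$. A computation analogous to the proof of Proposition \ref{projMatherset} shows that $\mu'$ is closed (the boundary terms being $O(1/t_n)$) and minimizing (using the identity $U_\ell^\delta = \mathcal{T}_{t_n}^\ell U_\ell^\delta$ applied to $\gamma_n$, together with the fact that the exponential weights along the $\gamma_n$ are uniformly close to $1$ under the boundedness assumption on the integral of $\delta$). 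Hence $\mu'\in\PP_0'$ is a classical Mather measure, but $\int_M\delta\,\dd\pi_*\mu'= \lim \frac{1}{t_n}\int_{-t_n}^0 \delta\circ\gamma_n = 0$, contradicting the hypothesis on $\delta$. Once the boundary term is killed, passage to the limit in the integral is justified by monotone/dominated convergence (the integrand is non-negative up to the additive constant $L+\alpha(0)\geqslant 0$, by definition of $\alpha(0)$ combined with adding a strict subsolution), yielding the desired representation.
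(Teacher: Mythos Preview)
The paper does not give a proof of this lemma: it is stated in the ``Relations to the classical theory'' subsection on degenerate discounted equations, where all results are announced without proof and attributed to \cite{Zdisc} and \cite{ZQJ}. So there is no paper proof to compare against directly. That said, your outline is the natural continuous transcription of the discrete arguments the paper \emph{does} prove in Section~\ref{sectiondeg}: your contradiction argument for the vanishing of the boundary term is exactly the continuous analogue of Proposition~\ref{epslambda}, and the passage to the infinite-horizon formula mirrors Corollary~\ref{replambda}. In that sense your approach matches the spirit of the paper and of the cited references.

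Two small points to tighten. First, the claim ``$L+\alpha(0)\geqslant 0$'' is not true as stated; what you presumably mean (and what you hint at with ``adding a strict subsolution'') is that after replacing $L$ by $L-D u_0$ for a $C^{1,1}$ critical subsolution $u_0$, the modified Lagrangian satisfies $L-Du_0+\alpha(0)\geqslant 0$ while leaving the discounted equation, its solutions, and the Mather measures unchanged. You need this modification already in the step where you argue that the limit measure $\mu'$ is minimizing, not only for dominated convergence at the end, so it should be performed at the outset. Second, the measures $\mu_n$ live on $TM$, which is noncompact, so weak-$*$ compactness requires the equi-Lipschitz bound on $\dot\gamma_n$ (which you have from Tonelli theory) to ensure tightness; this is implicit in what you wrote but deserves a word.
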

The selected weak KAM solution is then identified as follows: 

\begin{Th}\label{formulediscountdeg}

Let $\FF'_\delta \subset \SS'$ be the set of classical subsolutions $u$ verifying the constraint $\int_M \delta(x)u(x)\dd\pi_*\mu' (x) \leqslant 0$ for all Mather measures $\mu'\in \PP'_0$. 

The selected weak KAM solution is then $U_0^\delta = \sup\limits_{u\in \FF'_\delta} u$ where the supremum is  taken pointwise.

Moreover, the alternative formula holds:
$$U_0(x) = \min_{\mu' \in \PP_0'} \frac{\int_M \delta(y) h(y,x)\  \dd \pi_* \mu'(y)}{ \int_M \delta(y) \  \dd \pi_* \mu'(y) },$$
where $h$ is again the Peierls barrier.
\end{Th}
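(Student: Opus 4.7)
The plan is to transpose the argument used for Theorem \ref{Thconvdiscdeg} to the continuous setting, replacing discrete calibrating chains by calibrating curves provided by Lemma \ref{Uelldelta} and using Ma\~n\'e's description of Mather measures on $TM$ (Definition \ref{closedM} and Theorem \ref{minimizingMane}) in place of closed measures on $X\times X$. Throughout, I will denote by $\gamma^\ell_x : (-\infty,0]\to M$ a Lipschitz curve calibrating $U_\ell^\delta$ at $x$ as given by the last part of Lemma \ref{Uelldelta}, and use the shorthand $A_\gamma(s) = -\int_s^0 \delta\circ\gamma$.

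I would first establish that $U_0^\delta\in \mathcal F'_\delta$: given $\mu'\in\PP'_0$, the viscosity subsolution inequality $H(x,D_xU_\ell^\delta)\leqslant \alpha(0)-\ell\delta(x)U_\ell^\delta(x)$ combined with the Fenchel inequality yields, for every $C^1$ function $f$, a classical action comparison; integrating $\delta\,U_\ell^\delta$ against $\pi_*\mu'$ (using that $\mu'$ is closed and minimizing, so $\int L\,d\mu'=-\alpha(0)$) and dividing by $\ell$ gives $\int \delta\,U_\ell^\delta\,d\pi_*\mu'\leqslant 0$; passing to the limit in $\ell$ produces the constraint. This yields the inequality $U_0^\delta \leqslant \sup_{\mathcal F'_\delta} u$. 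For the reverse inequality, I would build probability measures on $TM$ from the calibrating curves: set
\[ \mu^\delta_{x,\ell} \;=\; C_{x,\ell}^{-1}\int_{-\infty}^{0} \exp\!\big(\ell A_{\gamma^\ell_x}(s)\big)\,\delta\!\big(\tfrac{}{}\!\cdot\!\tfrac{}{}\big)^{-1}\delta(\gamma^\ell_x(s))\,\delta_{(\gamma^\ell_x(s),\dot\gamma^\ell_x(s))}\,ds, \]
normalized by $C_{x,\ell}=\int_{-\infty}^0 \exp(\ell A_{\gamma^\ell_x}(s))\,ds$. By the same Abel–integration trick as in the proof of Proposition \ref{epslambda}, together with condition ($\alpha2$) on $\delta$, one shows $\ell C_{x,\ell}$ stays bounded and every weak-$*$ accumulation point of $(\mu^\delta_{x,\ell})$ is closed, $\varphi_L$-invariant on average and minimizes $\int L$, hence is a Ma\~n\'e Mather measure (this is the continuous counterpart of Lemma \ref{discdegmes}). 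For a continuous classical subsolution $w$, the calibration of $\gamma^\ell_x$ and the subsolution inequality $w(\gamma^\ell_x(0))-w(\gamma^\ell_x(s))\leqslant \int_s^0 L+\alpha(0)$ together with integration by parts in the exponential weight yield the key bound
\[ U_\ell^\delta(x)\;\geqslant\; w(x)\;-\;\ell\,C_{x,\ell}\int_M \delta(y)\,w(y)\,d\pi_*\mu^\delta_{x,\ell}(y), \]
which is the continuous version of Lemma \ref{lminegdegcle}. Letting $\ell\to 0$ along a suitable subsequence and using the defining constraint for $w\in\mathcal F'_\delta$ gives $U_0^\delta(x)\geqslant w(x)$, proving the first formula.

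For the second formula, denote by $\hat U(x)$ the right-hand side infimum. Since each $h_y=h(y,\cdot)$ is a weak KAM solution (Proposition \ref{Cpeierl-prop}) and $\SSS$ is closed and convex (Proposition \ref{PropS}), any probability-measure average $x\mapsto \int \delta(y) h(y,x)\,d\pi_*\mu'(y)/\int\delta\,d\pi_*\mu'$ is a subsolution; consequently $\hat U\in\SSS$ by Lemma \ref{inf}. For any $u\in\mathcal F'_\delta$ and any $\mu'\in\PP'_0$, multiplying $u(x)-u(y)\leqslant h(y,x)$ by $\delta(y)$ and integrating against $\pi_*\mu'$ gives $u(x)\leqslant \int\delta(y)h(y,x)\,d\pi_*\mu'(y)/\int\delta\,d\pi_*\mu'$, hence $U_0^\delta\leqslant \hat U$ by the first formula. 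For the opposite inequality on the Aubry set: for $y\in\AA$ the function $h^y+\hat U(y)=-h(\cdot,y)+\hat U(y)$ is a subsolution, and one checks it belongs to $\mathcal F'_\delta$ (using $h(y,y)=0$ from Theorem \ref{AubryC}), so $U_0^\delta\geqslant h^y+\hat U(y)$; evaluating at $y\in\AA$ and using $h(y,y)=0$ gives $U_0^\delta(y)\geqslant\hat U(y)$ for every $y\in\AA$. Then $U_0^\delta$ is a weak KAM solution and $\hat U$ is a subsolution with $U_0^\delta\geqslant \hat U$ on $\AA$, so Theorem \ref{Cuniqueness} yields $U_0^\delta\geqslant \hat U$ globally.

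The main obstacle I anticipate is the continuous analogue of Proposition \ref{epslambda}, namely obtaining the uniform bound on $\ell C_{x,\ell}$: the discrete proof exploited a contradiction argument producing a forbidden Mather measure, and here one must ensure the candidate limit measure genuinely satisfies the three properties (closed, minimizing, $\int\delta\,d\pi_*\mu'=0$) in Ma\~n\'e's sense. This requires uniform Lipschitz estimates on the calibrating curves $\gamma^\ell_x$ — which come from Tonelli theory applied to the representation formula, using that the $U_\ell^\delta$ are equi-Lipschitz — and then the same exponential–Abel estimates used in Proposition \ref{epslambda}, carried out with integrals in place of sums. Once this uniform bound and the closedness/minimality of the limit measure are established, the rest of the argument is a direct translation of the discrete proof.
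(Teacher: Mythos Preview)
Your approach is correct and matches the paper's intended strategy: the paper does not write out a proof of this continuous statement but explicitly presents it as the transposition of the discrete Theorem \ref{Thconvdiscdeg}, to be carried out with Ma\~n\'e--type closed measures on $TM$ and calibrating curves from Lemma \ref{Uelldelta} in place of calibrating chains. Your outline follows that template step for step, including the identification of the continuous analogue of Proposition \ref{epslambda} as the one nontrivial point. One cosmetic remark: your displayed definition of $\mu^\delta_{x,\ell}$ contains a garbled factor; in the discrete proof the measure (Definition \ref{defmeaslambda}) is built from the weights $\beta_{k+1}$ alone, with no $\alpha$ in the definition---the $\alpha$ (here $\delta$) only appears after the Abel computation in Lemma \ref{lminegdegcle}. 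Your continuous measure should likewise be $C_{x,\ell}^{-1}\int_{-\infty}^0 \exp\big(\ell A_{\gamma^\ell_x}(s)\big)\,\delta_{(\gamma^\ell_x(s),\dot\gamma^\ell_x(s))}\,\dd s$, and the $\delta$-weight emerges from the integration by parts.
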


\newpage
\newpage
\mbox{}
\newpage

\chapter{A family of examples}

We explore here explicit examples to show how the pair $(u_1,v_1)$ may behave.
Recall that on the one hand $u_1$ is the limit of the solutions to the discounted equations $(u_\lambda)_{\lambda\in (0,1)}$ as $\lambda \to 1$ for the negative Lax--Oleinik semigroup $T^-$. On the other hand, $v_1$ is the limit of the solutions to the discounted equations $(v_\lambda)_{\lambda\in (0,1)}$ as $\lambda \to 1$ for the positive Lax--Oleinik semigroup $T^+$.

 As the examples presented below come from Hamiltonian systems, some familiarity with the classical theory could help the reader. The study of those examples is familiar to specialists of weak KAM theory but we have not found it written in the literature. We believe that the informations they entail is interesting and that they provide counter--examples  to natural  questions. At the end of the Chapter, we also address the question as to whether weak KAM solutions selected by the discounted approximation procedure in the discrete and in the continuous setting coincide.  

The  setting will be the one dimensional torus $\T^1=\R\slash \Z$. 
\vspace{2mm}

\fbox{
\begin{minipage}{0.9\textwidth}
We consider a smooth potential $V : \T^1\to \R$ that attains its maximum at exactly two points $0$ and $X$ and such that $V(0) = V(X)=0$. 
\end{minipage}}
\vspace{2mm}

Consider the Hamiltonian function $H_0(x,p)= \frac12p^2 + V(x)$ defined on $\T^1\times \R$. The associated Lagrangian is then $L_0 : (x,v) \mapsto  \frac12v^2 - V(x)$. The cost function used is the time--$1$ action functional $h_1^0$ associated to $H_0$, as defined by \eqref{action}. By Theorem \ref{ewd}, discrete and classical weak KAM solutions coincide and we will use this fact. Again, some knowledge of classical Hamilton--Jacobi equations can be useful though not necessary to read this Chapter. Moreover, as these examples fall in the scope of Conservative Twist Maps of the annulus, the latter also illustrate results of the following and last chapter of this essay.

Let us  denote by $f^\pm  : x\mapsto \pm\sqrt{-2V(x)}$. The level set $H_0^{-1}(\{0\})$ is the union of the graphs of $f^+$ and $f^-$. Those graphs touch at $ (0,0) $ and $(X,0)$. 

A last assumption on $H_0$ is the following:
\begin{equation}\label{eye}
\boxed{\alpha:= \frac{ \int_0^{X}f^+(x)\dd x}{X} < \frac{ \int_X^{1}f^+(x)\dd x}{1-X} := \beta.}
\end{equation}

\section{\texorpdfstring{The study of $H_0$}{The study of  H0}}
 Let $X_0^0\in [0,X]$ and $X_1^0\in [X,1]$ verify that 
$$\int_0^{X^0_0}f^+(x)\dd x = \int_{X^0_0}^{X}f^+(x)\dd x \  ; \quad \int_X^{X^0_1}f^+(x)\dd x = \int_{X^0_1}^{1}f^+(x)\dd x .$$
The function $u_1^0$ defined by 
$$u_1^0(x) = \begin{cases}
\int_0^x f^+(s)\dd s & \mathrm{if} \ \ 0\leqslant x\leqslant X^0_0; \\
\int_0^{X^0_0} f^+(s)\dd s + \int_{X_0^0}^{x} f^-(s)\dd s & \mathrm{if} \ \ X^0_0\leqslant x\leqslant X; \\
\int_X^x f^+(s)\dd s & \mathrm{if} \ \ X\leqslant x\leqslant X^0_1 ;\\
\int_X^{X^0_1} f^+(s)\dd s + \int_{X^0_1}^{x} f^-(s)\dd s & \mathrm{if} \ \ X^0_1\leqslant x\leqslant 1 
\end{cases}
$$
verifies $(u_1^{0})'(s)\in H_0^{-1}(\{0\})$ at every point where the derivative exists, that is for $s\in \T^1\setminus\{X^0_0,X^0_1\}$. Moreover, it is semiconcave (as seen here by the fact that at $X^0_0$ and $X^0_1$, the left derivative is bigger than the right derivative). This is enough in this context to prove that $u_1^0$ is a viscosity solution of the stationary Hamilton--Jacobi equation $H_0\big(x,(u_1^{0})'(x)\big)=0$\footnote{Indeed, for a semiconcave function, the super condition property only has to be checked at differentiability points. For more general results see \cite{BJ,BJ2}.}.

\begin{figure}[h!]
\begin{center}

  \begin{tikzpicture}[scale = 1.2]
    
   \draw  (0,-3)--(0,3) ;
     \draw  (8,-3)--(8,3) ;
     \draw (0,0) -- (8,0);

\draw (0,0) node[left]{(0,0)} ;
\draw (8,0) node[right]{(1,0)} ;
\draw (1.7,0) node[below]{$X_0^0$} ;
\draw (5.7,0) node[below]{$X_1^0$} ;
\draw (4,-0.2) node[below]{$X$} ;
\draw [color=blue](1,2.5) node {$H_0^{-1}(\{0\})$} ;

  \draw[color=red][line width=2pt][domain=0:1][samples=250] plot (2
   *\x,2* abs{sin(pi*\x/2 r)} );
    \draw[color=red][line width=2pt][domain=1:2][samples=250] plot (2
   *\x,-2* abs{sin(pi*\x/2 r)} );

  \draw[color=red][line width=2pt][domain=2:3][samples=250] plot (2
   *\x,3* abs{sin(pi*\x/2 r)} );
    \draw[color=red][line width=2pt][domain=3:4][samples=250] plot (2
   *\x,-3* abs{sin(pi*\x/2 r)} );

\draw [color=red] [line width=2pt](2,2* abs{sin(pi*1/2 r)})--(2,-2* abs{sin(pi*1/2 r)}) ;
\draw [color=red] [line width=2pt](6,3* abs{sin(pi*1/2 r)})--(6,-3* abs{sin(pi*1/2 r)}) ;
   \draw [color=blue][domain=0:2][samples=250] plot (2
   *\x,2* abs{sin(pi*\x/2 r)} );
    \draw[color=blue][domain=0:2][samples=250] plot (2
   *\x,-2* abs{sin(pi*\x/2 r)} );
   
     \draw[color=blue][domain=2:4][samples=250] plot (2
   *\x,3* abs{sin(pi*\x/2 r)} );
    \draw[color=blue][domain=2:4][samples=250] plot (2
   *\x,-3* abs{sin(pi*\x/2 r)} );

\end{tikzpicture}

\caption{The graph of the superdifferential $\partial^+ u_1^0$ is drawn in red.}
\label{double2}
\end{center}
\end{figure}

%
\begin{itemize}
\item  It means that the critical constant  is $\alpha(0) = 0$. 
\item The  Aubry and Mather sets are included in the graph of $(u_1^{0})'$ and all Hamiltonian trajectories either converge to the fixed point $(0,0)$ or to the fixed point $(X,0)$. It can be easily concluded from this that $\AA = \MM = \{0,X\}$, that classical Mather measures are convex combinations of Dirac measures  $\delta_{(0,0)}$ and $\delta_{(X,0)}$ (on $T\T^1$), and that discrete Mather measures  are convex combinations of Dirac measures, $\delta_{(0,0)}$ and $\delta_{(X,X)}$ (on $\T^1\times \T^1$).  Note that,  in the present context, this illustrates a classical Theorem of Carneiro for autonomous Hamiltonian systems, namely  that Mather measures are supported on the critical energy level (\cite{carneiro}).
\item Finally, as $u_1^0(0) = u_1^0(X) = 0$, one deduces that the function $u_1^0$ is indeed the weak KAM solution selected by the discounted approximation (Theorem \ref{discounted}). By Proposition \ref{conjdiscount}, in this case, by setting $v_1^0$ the positive weak KAM solution selected by the discounted procedure, the pair $(u_1^0,v_1^0)$ is a conjugate pair. Here, one easily computes that $v_1^0 = -u_1^0$.
\end{itemize}

\section{\texorpdfstring{Increasing the cohomology class: $\boxed{c\in [0,\alpha]}$}{Increasing the cohomology class: c in [0,alpha]}}

We now initiate a classical procedure in Aubry--Mather theory: changing cohomology class. This is related to the topology of the underlying space $X=\T^1$.  This procedure is more thoroughly detailed in the final Chapter on Conservative Twist maps of the Annulus. In the Hamiltonian setting, this originates in the work of Mather \cite{Mather1} who noticed that suitably correcting the Lagrangian (or the Hamiltonian) by a closed $1$--form does not modify the Lagrangian minimizers. The resulting objects of Aubry--Mather theory then only depend on the cohomology class of the $1$--form. In the context of $\T^1$, the first cohomology group $H^1(\T^1,\R)$ is isomorphic to $\R$ and if $c\in \R$, a representing $1$--form is the constant form $x\in \T^1\mapsto c$ where here $c$ is identified to the linear form $v\in\R \mapsto cv$.

Let $c\in [0, \alpha]$, we consider the Hamiltonian $H_c : (x,p)\mapsto \frac12(p+c)^2 + V(x)$. The associated Lagrangian is $L_c : (x,v) \mapsto \frac12 (v-c)^2 -\frac12 c^2 -V(x)$. The flow associated to $H_c$ is conjugated (by a vertical translation) to that of $H_0$. The cost function is $h_1^c$, associated to the time--$1$ action functional of $H_c$.
 Let $X_0^c\in [0,X]$ and $X_1^c\in [X,1]$ verify that 
\begin{multline*}
\int_0^{X^c_0}f^+(x)\dd x-cX_0^c = \int_{X^c_0}^{X}f^+(x)\dd x + c(X-X_0^c) \ ; \\
 \int_X^{X^c_1}f^+(x)\dd x - c (X^c_1 -X) = \int_{X^c_1}^{1}f^+(x)\dd x + c(1-X^c_1) .
\end{multline*}
The function $u_1^c$ defined by 
$$u_1^c(x) = \begin{cases}
\int_0^x f^+(s)\dd s -cx & \mathrm{if} \ \ 0\leqslant x\leqslant X^c_0; \\
\int_0^{X^c_0} f^+(s)\dd s + \int_{X_0^c}^{x} f^-(s)\dd s -cx& \mathrm{if} \ \ X^c_0\leqslant x\leqslant X; \\
\int_X^x f^+(s)\dd s -cx & \mathrm{if} \ \ X\leqslant x\leqslant X^c_1 ;\\
\int_X^{X^c_1} f^+(s)\dd s + \int_{X^c_1}^{x} f^-(s)\dd s -cx& \mathrm{if} \ \ X^c_1\leqslant x\leqslant 1; 
\end{cases}
$$
verifies $(u_1^{c})'(s)\in H_c^{-1}(\{0\})$  for $s\in \T^1\setminus\{X^c_0,X^c_1\}$ and  is  semiconcave. As previously, this yields that $u_1^c$ is a viscosity solution of the stationary Hamilton--Jacobi equation $H_c\big(x,(u_1^{c})'(x)\big)=0$. 

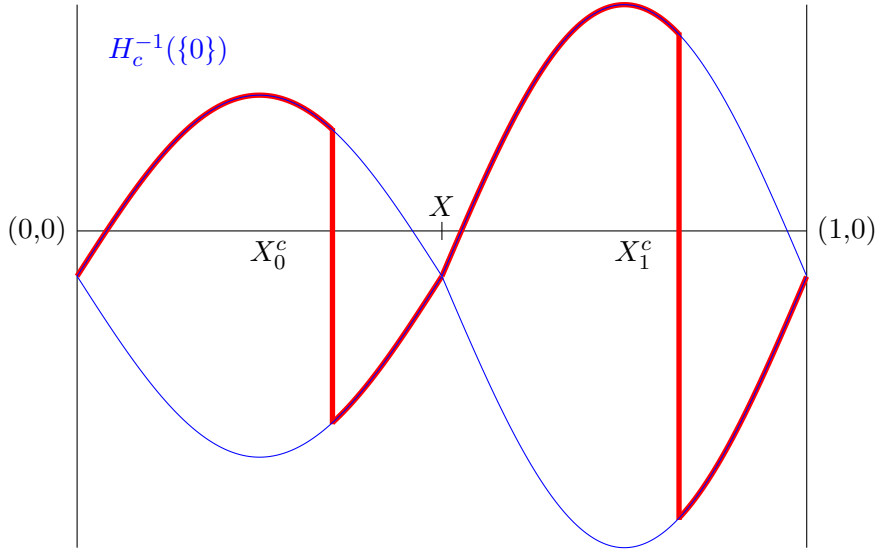
\begin{figure}[h!]
\begin{center}

  \begin{tikzpicture}[scale = 1.2]

   \draw  (0,-3)--(0,3) ;
     \draw  (8,-3)--(8,3) ;
     \draw (0,.5) -- (8,.5);
     \draw (4,.6) -- (4,.4);

\draw (0,0.5) node[left]{(0,0)} ;
\draw (8,0.5) node[right]{(1,0)} ;
\draw (2.1,0.5) node[below]{$X_0^c$} ;
\draw (6.1,0.5) node[below]{$X_1^c$} ;
\draw (4,1) node[below]{$X$} ;
\draw [color=blue](1,2.5) node {$H_c^{-1}(\{0\})$} ;

  \draw[color=red][line width=2pt][domain=0:1.4][samples=250] plot (2
   *\x,2* abs{sin(pi*\x/2 r)} );
    \draw[color=red][line width=2pt][domain=1.4:2][samples=250] plot (2
   *\x,-2* abs{sin(pi*\x/2 r)} );

  \draw[color=red][line width=2pt][domain=2:3.3][samples=250] plot (2
   *\x,3* abs{sin(pi*\x/2 r)} );
    \draw[color=red][line width=2pt][domain=3.3:4][samples=250] plot (2
   *\x,-3* abs{sin(pi*\x/2 r)} );

\draw [color=red] [line width=2pt](2.8,2* abs{sin(pi*.695 r)})--(2.8,-2* abs{sin(pi*.695 r)}) ;
\draw [color=red] [line width=2pt](6.6,3* abs{sin(pi*.645 r)})--(6.6,-3* abs{sin(pi*.645 r)}) ;
  \draw [color=blue][domain=0:2][samples=250] plot (2
   *\x,2* abs{sin(pi*\x/2 r)} );
    \draw[color=blue][domain=0:2][samples=250] plot (2
   *\x,-2* abs{sin(pi*\x/2 r)} );
   
     \draw[color=blue][domain=2:4][samples=250] plot (2
   *\x,3* abs{sin(pi*\x/2 r)} );
    \draw[color=blue][domain=2:4][samples=250] plot (2
   *\x,-3* abs{sin(pi*\x/2 r)} );

\end{tikzpicture}

\caption{The graph of the superdifferential $\partial^+ u_1^c$ is drawn in red.}
\end{center}
\end{figure}

\begin{itemize}
\item It means that the critical constant for the cost function $h_1^c$,  denoted $\alpha(c)$, verifies $\alpha(c) = 0$. 
\item The  Aubry and Mather sets are included in the graph of $(u_1^{c})'$ and all Hamiltonian trajectories either converge to the fixed point $(0,-c)$ or  to the fixed point $(X,-c)$. From there, it can be easily concluded  that $\AA = \MM = \{0,X\}$ (here we drop the subscript $c$ as the sets are independent of it), that classical Mather measures are convex combinations of Dirac measures, $\delta_{(0,0)}$ and $\delta_{(X,0)}$ (on $T\T^1$), and that  discrete Mather measures  are convex combinations of Dirac measures $\delta_{(0,0)}$ and $\delta_{(X,X)}$ (on $\T^1\times \T^1$). 
\item Finally, as $u_1^c(0) = u_1^c(X) = 0$, it is deduced that the function $u_1^c$ is indeed the weak KAM solution selected by the discounted approximation (Theorem \ref{discounted}). 
\item By Proposition \ref{conjdiscount}, in this case again, by setting $v_1^c$ the positive weak KAM solution selected by the discounted procedure, the pair $(u_1^c,v_1^c)$ is a conjugate pair. 
\end{itemize}

Here, one  computes  
$$v_1^c(x) = \begin{cases}
\int_0^x f^-(s)\dd s -cx & \mathrm{if} \ \ 0\leqslant x\leqslant \widecheck X^c_0; \\
\int_0^{\widecheck X^c_0} f^-(s)\dd s + \int_{\widecheck X_0^c}^{x} f^+(s)\dd s -cx& \mathrm{if} \ \ \widecheck X^c_0\leqslant x\leqslant X; \\
\int_X^x f^-(s)\dd s -cx & \mathrm{if} \ \ X\leqslant x\leqslant \widecheck X^c_1 ;\\
\int_X^{\widecheck X^c_1} f^-(s)\dd s + \int_{\widecheck X^c_1}^{x} f^+(s)\dd s -cx& \mathrm{if} \ \ \widecheck X^c_1\leqslant x\leqslant 1 ,
\end{cases}
$$
where $\widecheck X_0^c\in [0,X]$ and $\widecheck X_1^c\in [X,1]$ verify

\begin{multline*}
\int_0^{\widecheck X^c_0}f^-(x)\dd x-c\widecheck X_0^c = \int_{\widecheck X^c_0}^{X}f^-(x)\dd x + c(X-\widecheck X_0^c) \ ;  \\
 \int_X^{\widecheck X^c_1}f^-(x)\dd x - c (\widecheck X^c_1 -X) = \int_{\widecheck X^c_1}^{1}f^-(x)\dd x + c(1-\widecheck X^c_1) .
\end{multline*}

\section{\texorpdfstring{A change of regime: $\boxed{c\in \big( \alpha,  \int_0^{1}f^+(x)\dd x\big)}$}{A change of regime}}

 If $c\in \big(\alpha,  \int_0^{1}f^+(x)\dd x\big)$. It happens that the critical constant is again $0$ but it is not anymore possible to construct a critical subsolution that vanishes both at $0$ and at $X$.
 Let  $X^c\in [X,1]$ verify that 
$$
\int_0^{X^c}f^+(x)\dd x-cX^c = \int_{X^c}^{1}f^+(x)\dd x + c(1-X^c)  
 .
$$
The function $u_1^c$ defined by 
$$u_1^c(x) = \begin{cases}
\int_0^x f^+(s)\dd s -cx & \mathrm{if} \ \ 0\leqslant x\leqslant X^c; \\
\int_0^{X^c} f^+(s)\dd s + \int_{X^c}^{x} f^-(s)\dd s -cx& \mathrm{if} \ \ X^c\leqslant x\leqslant 1.
\end{cases}
$$
verifies $(u_1^{c})'(s)\in H_c^{-1}(\{0\})$  for $s\in \T^1\setminus\{X^c\}$ and  is  semiconcave. As previously, this yields that $u_1^c$ is a viscosity solution of the stationary Hamilton--Jacobi equation $H_c\big(x,(u_1^{c})'(x)\big)=0$. 

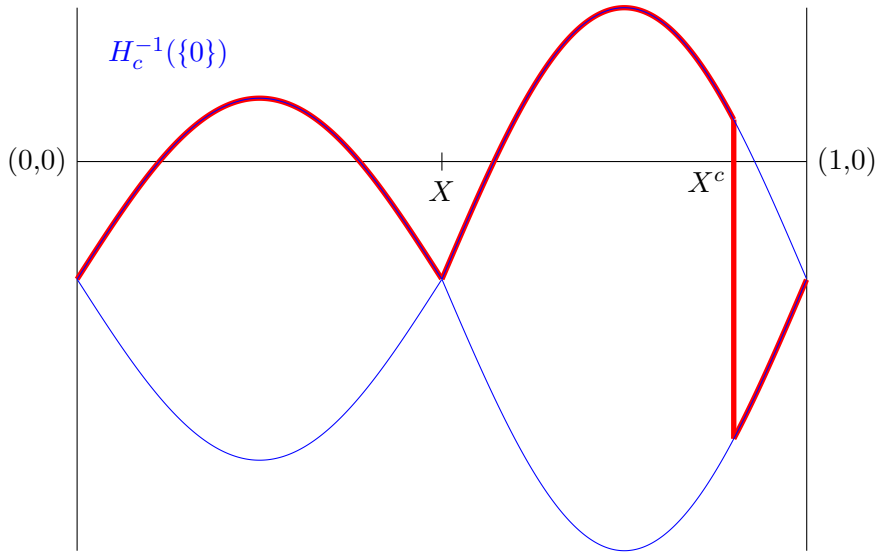
\begin{figure}[h!]
\begin{center}

  \begin{tikzpicture}[scale = 1.2]

   \draw  (0,-3)--(0,3) ;
     \draw  (8,-3)--(8,3) ;
     \draw (0,1.3) -- (8,1.3);
     \draw (4,1.2) -- (4,1.4);

\draw (0,1.3) node[left]{(0,0)} ;
\draw (8,1.3) node[right]{(1,0)} ;
\draw (6.9,1.3) node[below]{$X^c$} ;
\draw (4,1.2) node[below]{$X$} ;
\draw [color=blue](1,2.5) node {$H_c^{-1}(\{0\})$} ;

  \draw[color=red][line width=2pt][domain=0:2][samples=250] plot (2
   *\x,2* abs{sin(pi*\x/2 r)} );

  \draw[color=red][line width=2pt][domain=2:3.6][samples=250] plot (2
   *\x,3* abs{sin(pi*\x/2 r)} );
    \draw[color=red][line width=2pt][domain=3.6:4][samples=250] plot (2
   *\x,-3* abs{sin(pi*\x/2 r)} );

\draw [color=red] [line width=2pt](7.2,3* abs{sin(pi*.8 r)})--(7.2,-3* abs{sin(pi*.8 r)}) ;
  \draw [color=blue][domain=0:2][samples=250] plot (2
   *\x,2* abs{sin(pi*\x/2 r)} );
    \draw[color=blue][domain=0:2][samples=250] plot (2
   *\x,-2* abs{sin(pi*\x/2 r)} );
   
     \draw[color=blue][domain=2:4][samples=250] plot (2
   *\x,3* abs{sin(pi*\x/2 r)} );
    \draw[color=blue][domain=2:4][samples=250] plot (2
   *\x,-3* abs{sin(pi*\x/2 r)} );

\end{tikzpicture}

\caption{The graph of the superdifferential $\partial^+ u_1^c$ is drawn in red.}
\end{center}
\end{figure}

%
\begin{itemize}
\item It means that the critical constant $\alpha(c) = 0$.
\item Here again, it can be established that $\AA = \MM = \{0,X\}$ and  that classical Mather measures are convex combinations of the Dirac measures (on $T\T^1$) that are $\delta_{(0,0)}$ and $\delta_{(X,0)}$, and that discrete Mather measures  are convex combinations of the Dirac measures (on $\T^1\times \T^1$) that are $\delta_{(0,0)}$ and $\delta_{(X,X)}$. Finally,  $u_1^c(0) =0$ and $ u_1^c(X) =\int_0^X f^+(s)\dd s -cX< 0$. 
\item This function $u_1^c$ is indeed the weak KAM solution selected by the discounted approximation (Theorem \ref{discounted}). Roughly speaking, as between $0$ and $X$, $(u_1^c)' = f^+-c$, it is the fastest growing weak KAM solution. 
\item By Proposition \ref{conjdiscount}, in this case, by setting $v_1^c$ the positive weak KAM solution selected by the discounted procedure, the pair $(u_1^c,v_1^c)$ is  NOT a conjugate pair. 
\end{itemize}
Here, one  computes by similar means that 
$$v_1^c(x) = \begin{cases}
\int_X^x f^+(s)\dd s -c(x-X) & \mathrm{if} \ \ 0\leqslant x\leqslant X; \\
\int_X^x f^-(s)\dd s -c(x-X) & \mathrm{if} \ \ X\leqslant x\leqslant \widecheck X^c ;\\
\int_X^{\widecheck X^c} f^-(s)\dd s + \int_{\widecheck X^c}^{x} f^+(s)\dd s -c(x-X)& \mathrm{if} \ \ \widecheck X^c\leqslant x\leqslant 1; 
\end{cases}
$$
where  $\widecheck X^c\in [X,1]$ verifies

$$\int_X^0 f^+(s)\dd s+cX= \int_X^{\widecheck X^c} f^-(s)\dd s + \int_{\widecheck X^c}^{1} f^+(s)\dd s -c(1-X).
$$

Note that in this regime, for $c$ close to  $\int_0^{X}f^+(x)\dd x$ the functions $u_1^c$ and $v_1^c$ are not ordered while, as will become clear next, for $c$ close to $  \int_0^{1}f^+(x)\dd x$ then $u_1^c< v_1^c$.

\section{\texorpdfstring{The limiting case: $\boxed{c_0=  \int_0^{1}f^+(x)\dd x}$}{The limiting case: }}
 In this limit case, $c_0=  \int_0^{1}f^+(x)\dd x$, again the critical constant is  $0$ for example by invoking the continuity of Mather's $\alpha$--function\footnote{This result is a straightforward consequence of Proposition \ref{alphaconv} in next Chapter.} or because we exhibit a weak KAM solution below. 

Indeed the function $u_1^{c_0}$ defined by 
\begin{equation}\label{u_1crit}
\forall x\in [0,1],\quad u_1^{c_0}(x) =
\int_0^x f^+(s)\dd s -{c_0}x,
\end{equation}
 verifies $u_1^{c_0}(0)=u_1^{c_0}(1)=0$ whence to be identified with a function on $\T^1$. It is $C^1$ (even $C^{1,1}$ in agreement with Fathi's result \cite{Fa2}) and a classical solution of  $H_{c_0}\big(x,(u_1^{c_0})'(x)\big)=0$, hence a weak KAM solution. 
\begin{itemize}
\item It means that the critical constant $\alpha({c_0}) = 0$.
In this particular case, all viscosity subsolutions are of the form $u_1^{c_0} + K$, where $K\in \R$ and it can even be proved that all discrete subsolutions are of the same form. 
\item The situation is then different from the previous cases as the projected Aubry set is the whole torus $\AA_{c_0}=\T^1$ and the classical Aubry set $\AA^*_{c_0}$ is the whole graph of $f^+-{c_0}$. 
\item On the contrary, as the Hamiltonian dynamics on the critical level set remains the same as in the previous examples, the invariant measures remain the same and $\MM = \{0,X\}$.

\item
At last,  $u_1^{c_0}(0) =0$ and $ u_1^{c_0}(X) =\int_0^X f^+(s)\dd s -{c_0}X< 0$. The function $u_1^{c_0}$ is indeed the weak KAM solution selected by the discounted approximation (Theorem \ref{discounted}). 
 
\item Setting $v_1^{c_0} = u_1^{c_0} - u_1^{c_0}(X)$ one obtains the weak KAM solution selected by the positive discounted approximation. Here, $u_1^{c_0} < v_1^{c_0}$ and the pair is not conjugated (in this case all negative weak KAM solutions are positive weak KAM solutions hence conjugate pairs are trivial).
 \end{itemize}

\section{\texorpdfstring{Positive rotation numbers: $\boxed{c>  \int_0^{1}f^+(x)\dd x}$}{Positive rotation numbers:}}

Let us now discuss what happens for $c>  \int_0^{1}f^+(x)\dd x$. Again the cost is $h_1^c$ associated to $H_c$. The behavior of weak KAM solutions and minimal trajectories are those of an area preserving twist diffeomorphism. It will be treated more thoroughly in the next Chapter but  some results are briefly used here. 

For $c>  \int_0^{1}f^+(x)\dd x$ the situation is quite similar to the previous one. There exists a unique subsolution up to constants and therefore, up to constants, there exists  a unique weak KAM solution,  be it negative or positive. One such  subsolution is the following: recalling that $V : \T^1 \to \R$ is the potential used in the definition of $H_0$,  if $a>0$, let us denote by $f^+_a : x\mapsto \sqrt{2\big(a-V(x)\big)}$ the function whose graph is the upper part of the level set $H_0^{-1}\{a\}$. There exists a unique $a_c$ such that $\int_0^1  f^+_{a_c}(x)\dd x = c$. A subsolution for $H_c$ (that is also a positive and negative weak KAM solution) is then $u : x\mapsto \int_0^x  f^+_{a_c}(t) \dd t -cx$. Therefore, the critical constant is $a_c = \alpha(c)$.

To each real number $c\in \R$, we associate a rotation number $\rho(c)\in \R$. Its projection to $\T^1$, written $\varrho(c)$, has the property that $\int_{\T^1\times \T^1}(y-x)\dd \mu(x,y) =\varrho(c)$ for all Mather measure $\mu \in \widehat\PP_0$.  The function $\rho : \R\to \R$ is continuous and non--decreasing. It is not uniquely defined, but determined up to an integer. Here we make the choice of setting $\rho(0)=0$.   All the previous cases treated correspond to a vanishing rotation number. In our present case, the Aubry set in $T ^*\T^1 $ is the graph of the function $f_{\alpha(c)}^+$: $\AA^*_c = \big\{ \big(x, f_{\alpha(c)}^+(x)-c\big), \ \ x\in \T^1 \big\}$ and the $2$--Aubry set is (thanks to Proposition \ref{eqAubry})
$$\widehat \AA_c = \left \{ \left(x ,  \pi \circ \varphi_{H_c}^1\big(x,f_{\alpha(c)}^+(x)-c\big) \right) , \quad x \in \T^1 \right\}.$$
Hence, the rotation number property can be rewritten
\begin{multline*}
\int_{\T^1\times \T^1}(y-x)\dd \mu(x,y)=\int_{\T^1\times \T^1}\big( \pi \circ \varphi_{H_c}^1\big(x,f_{\alpha(c)}^+(x)-c\big)-x\big)\dd \mu(x,y)\\
=\int_{\T^1}\big( \pi \circ \varphi_{H_c}^1\big(x,f_{\alpha(c)}^+(x)-c\big)-x\big)\dd\pi_{1*} \mu(x,y)  = \varrho(c),
\end{multline*}

and  $\mu$ being closed yields that for all continuous function $f : \T^1 \to \R$:
\begin{multline*}
\int_{\T^1\times \T^1}\big(f(y)-f(x)\big)\dd \mu(x,y)=\int_{\T^1\times \T^1}\Big(f\big( \pi \circ \varphi_{H_c}^1(x,f_{\alpha(c)}^+(x)-c)\big)-f(x)\Big)\dd \mu(x,y)\\
=\int_{\T^1}\Big( f\big(\pi \circ \varphi_{H_c}^1(x,f_{\alpha(c)}^+(x)-c)\big)-f(x)\Big)\dd\pi_{1*} \mu(x,y)  =0.
\end{multline*}
This means that if we set $\psi_c : x\mapsto \pi \circ \varphi_{H_c}^1\big(x,f_{\alpha(c)}^+(x)-c\big)$ (also called projected dynamics) then $\psi_c$ is a circle diffeomorphism of rotation number $\varrho(c)$ and the measure $\pi_{1*}\mu$ is $\psi_c$--invariant.
 
 We now focus on the case: \textbf{$\rho(c)$ is irrational}. In this case it is known from Poincaré-Denjoy theory that $\psi_c$ is conjugated to a rotation of angle $\rho(c)$ (because $\psi_c$ is $C^2$) and that there exists a unique $\psi_c$--invariant measure. Hence necessarily,
 $$\pi_{1*}\mu =\frac{1}{T_c} \int_0^{T_c} \delta_{ \pi \circ \varphi_{H_c}^s(0,f_{\alpha(c)}^+(0)-c)} \dd s,$$
 where $T_c$ is the smallest positive constant such that $ \varphi_{H_c}^{T_c}(0,f_{\alpha(c)}^+(0)-c)=(0,f_{\alpha(c)}^+(0)-c)$, as the latter is $\psi_c$--invariant\footnote{It can be proven that $\rho(c) = T_c^{-1}$.}. Lifting things up to $\T^1\times \T^1$, we find that
 $$\mu =\frac{1}{T_c} \int_0^{T_c} \delta_{( \pi \circ \varphi_{H_c}^s(0,f_{\alpha(c)}^+(0)-c), \pi \circ \varphi_{H_c}^{s+1}\textrm{$\big($}0,f_{\alpha(c)}^+(0)-c)\textrm{$\big)$}} \dd s.$$
As a conclusion  $\widehat\PP_0 = \{\mu_c\}$ where $\mu_c$ is the previous measure and similarly, $ \PP_0^* = \{\mu_c^*\}$ where 
$$\mu_c^* = \frac{1}{T_c} \int_0^{T_c} \delta_{ \varphi_{H_c}^s(0,f_{\alpha(c)}^+(0)-c)} \dd s.$$
Note that $ \PP_0^*$ is always a singleton for non $0$ rotation numbers in this $1$ dimensional autonomous setting. However, for rational rotation numbers, in the discrete setting, there are many Mather measures supported on the various periodic orbits.

\subsection{\texorpdfstring{Non--continuity of $u_1^c$ with respect to $c$}{Non--continuity of u1c with respect to c}}

We now aim at studying the behavior of $\mu_c$ as $c\to c_0$:

\begin{pr}\label{mesureCV}
Assume that $V''(0)V''(X)\neq 0$. Let $(c_n)_{n> 0}$ be a decreasing sequence converging to $c_0$ such that $\rho(c_n)$ is irrational for all $n>0$, then 
\begin{multline*}
\mu_{c_n} \underset{n\to +\infty}{\longrightarrow} \\
 \frac{\big(\sqrt{-V''(0)}\big)^{-1}}{\big(\sqrt{-V''(0)}\big)^{-1}+\big(\sqrt{-V''(X)}\big)^{-1}}\delta_{(0,0)} 
 +  \frac{\big(\sqrt{-V''(X)}\big)^{-1}}{\big(\sqrt{-V''(0)}\big)^{-1}+\big(\sqrt{-V''(X)}\big)^{-1}}\delta_{(X,X)}.\end{multline*}
\end{pr}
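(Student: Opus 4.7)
The plan is to identify the Mather measure $\mu_{c_n}$ via its projection to the circle, then analyse the concentration of mass near the two saddles $(0,-c_0)$ and $(X,-c_0)$ using the quadratic behavior of $V$ at its maxima.

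First, I would express $\pi_{1*}\mu_c$ explicitly. For $c$ with $\rho(c)$ irrational, the projected dynamics $\psi_c$ is the time--$1$ map of the smooth vector field on $\T^1$ given by $\dot x = \partial_p H_c(x,f^+_{\alpha(c)}(x)-c) = f^+_{\alpha(c)}(x) > 0$. Its unique invariant probability measure is
$$\pi_{1*}\mu_c = \frac{1}{T_c}\cdot\frac{dx}{f^+_{\alpha(c)}(x)},\qquad T_c:=\int_0^1\frac{dx}{f^+_{\alpha(c)}(x)}.$$
Because $c\mapsto\alpha(c)$ is continuous with $\alpha(c_0)=0$ and $c_n\searrow c_0$, one has $\alpha(c_n)\to 0^+$. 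On every compact subset of $\T^1\setminus\{0,X\}$, $f^+_{\alpha(c_n)}$ converges uniformly to $f^+$, which is bounded away from $0$ there; hence the only non-negligible contributions to both $T_c$ and $\pi_{1*}\mu_c$ come from arbitrarily small neighborhoods of $0$ and $X$.

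The core of the proof is the asymptotic estimate of these local contributions. Fix $\epsilon>0$ small. Writing $V(x)=\tfrac12 V''(0)x^2+o(x^2)$ near $0$ and using that $V''(0)<0$, a standard change of variable $y=x\sqrt{-V''(0)/(2\alpha(c))}$ gives
$$\int_{-\epsilon}^{\epsilon}\frac{dx}{\sqrt{2(\alpha(c)-V(x))}}=\frac{1}{\sqrt{-V''(0)}}\int_{-A_c}^{A_c}\frac{dy}{\sqrt{1+y^2}}+O(1)=\frac{|\ln\alpha(c)|}{\sqrt{-V''(0)}}+O(1),$$
with $A_c\sim \epsilon\sqrt{-V''(0)/(2\alpha(c))}\to\infty$, since $\int_{-A}^{A}(1+y^2)^{-1/2}dy=2\operatorname{arcsinh}(A)=\ln A + O(1)$. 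The same argument at $X$ yields the analogue with $V''(X)$. Summing,
$$T_{c_n}=\Big(\tfrac{1}{\sqrt{-V''(0)}}+\tfrac{1}{\sqrt{-V''(X)}}\Big)\,|\ln\alpha(c_n)|+O(1),$$
and for any fixed $\epsilon>0$,
$$\pi_{1*}\mu_{c_n}\bigl(B(0,\epsilon)\bigr)\underset{n\to\infty}{\longrightarrow}\frac{1/\sqrt{-V''(0)}}{1/\sqrt{-V''(0)}+1/\sqrt{-V''(X)}},$$
with the symmetric limit at $X$. Thus $\pi_{1*}\mu_{c_n}$ converges weakly to the stated convex combination of $\delta_0$ and $\delta_X$.

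Finally, one must lift this to $\T^1\times\T^1$. Since $\mu_{c_n}$ is supported on the graph of $\psi_{c_n}$, it suffices to show that $\psi_{c_n}(x)\to 0$ uniformly for $x$ in a small fixed neighborhood of $0$, and likewise $\psi_{c_n}(x)\to X$ near $X$. But $\psi_{c_n}(x)-x$ equals the displacement along the flow in time $1$, which is bounded by $\sup_{|y|\le \epsilon'}f^+_{\alpha(c_n)}(y)=O(\sqrt{\alpha(c_n)})$ for $x$ in a neighborhood of $0$ from which the flow has not exited $B(0,\epsilon')$ in time $1$; and the hyperbolic/logarithmic estimate above says this neighborhood contains a fixed $B(0,\epsilon)$ for $n$ large. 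Consequently the mass of $\pi_{1*}\mu_{c_n}$ concentrated near $0$ gives mass concentrated at $(0,0)$ for $\mu_{c_n}$, and similarly at $(X,X)$, proving the claim.

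The main obstacle is the logarithmic singular integral estimate near each saddle: one must confirm that the quadratic expansion of $V$ controls the integral to leading order and that the subleading terms are genuinely $O(1)$ as $\alpha(c_n)\to 0$; the lift from the marginal to the full measure on $\T^1\times\T^1$ is then a soft consequence of the slowness of the dynamics near the hyperbolic fixed points.
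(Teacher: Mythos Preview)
Your approach is essentially the same as the paper's: both identify $\pi_{1*}\mu_{c_n}$ as proportional to $dx/\sqrt{2(\alpha(c_n)-V(x))}$ and extract the weights from the logarithmic asymptotics of the period integral near the two saddles (the paper carries out the $O(1)$ control you flag via an explicit $\varepsilon$-sandwich, bounding $V$ between $(V''(0)\pm\varepsilon)x^2/2$ and computing both integrals exactly).

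The one place the paper is more economical is the lift from the marginal to the full measure on $\T^1\times\T^1$. Rather than arguing directly that $\psi_{c_n}(x)\to 0$ near $0$ via the slowing of the flow, the paper observes at the outset that, by continuity of $c\mapsto\alpha(c)$, any weak limit of $(\mu_{c_n})$ is automatically a Mather measure at cohomology $c_0$, hence of the form $\beta_0\delta_{(0,0)}+\beta_X\delta_{(X,X)}$; the coefficients are then read off from the marginal alone. Your direct argument is correct but this structural shortcut saves the dynamical estimate entirely.
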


\begin{proof}
As the set of probability measures on $X\times X$ is compact, to prove the result one just needs to prove that any converging subsequence of $(\mu_{c_n})_{n>0}$ has the announced limit. Hence without loss of generality, let us assume that  $(\mu_{c_n})_{n>0}$ is a converging sequence. We now prove it converges to $\frac{\big(\sqrt{-V''(0)}\big)^{-1}}{\big(\sqrt{-V''(0)}\big)^{-1}+\big(\sqrt{-V''(X)}\big)^{-1}}\delta_{(0,0)} +  \frac{\big(\sqrt{-V''(X)}\big)^{-1}}{\big(\sqrt{-V''(0)}\big)^{-1}+\big(\sqrt{-V''(X)}\big)^{-1}}\delta_{(X,X)}$.

  We know from \cite{Mather1} for example, or Proposition \ref{alphaconv} below, that the function $c\mapsto \alpha(c)$ is convex and continuous. Hence the limit of the sequence $(\mu_{c_n})_{n>0}$ is a Mather measure, therefore of the form $\beta_0 \delta_{(0,0)} + \beta_X \delta_{(X,X)}$ with $0\leqslant \beta_0,\beta_X\leqslant 1$ and $\beta_0+\beta_X = 1$.
  
With this information at hand, we will in fact only study the measures $\pi_{1*} \mu_{c_n}$ and prove they converge to $\frac{\big(\sqrt{-V''(0)}\big)^{-1}}{\big(\sqrt{-V''(0)}\big)^{-1}+\big(\sqrt{-V''(X)}\big)^{-1}}\delta_{0} +  \frac{\big(\sqrt{-V''(X)}\big)^{-1}}{\big(\sqrt{-V''(0)}\big)^{-1}+\big(\sqrt{-V''(X)}\big)^{-1}}\delta_{X}$.  

Let $\gamma_n : t\mapsto  \pi \circ \varphi_{H_{c_n}}^t(0,f_{\alpha(c_n)}^+(0)-c_n)$. By looking at the Hamiltonian equations \eqref{Hamilton} and recalling that $H_{c_n}$ is constant on a Hamiltonian trajectory, one finds that
\begin{equation}\label{projdyn}
\forall t\in \R,\quad \dot\gamma_n(t) = \sqrt{2\big(\alpha(c_n) - V\big( \gamma_n(t)\big)\big)}.
\end{equation}
The coefficients $\beta_0$ and $\beta_X$ are proportional to the relative amount of time that the trajectory $\gamma_n $ stays respectively in a neighborhood of $0$ and $X$, as $n\to +\infty$.

Until the end of this proof,  let us  no longer think of points on the circle $\T^1$ but by lifting to $\R$, but keeping the same notations. Hence the function $V$ is now a $1$-periodic function on $\R$. Integrating \eqref{projdyn}, one computes that if $x<y$, the time it takes  $\gamma_n$ to go from $x$ to $y$ is 
\begin{equation}\label{time}
t_{x,y} = \int_x^y \frac{\dd s}{\sqrt{2\big(\alpha(c_n)-V(s)\big)}}.
\end{equation}
In particular, $T_{c_n} =  \int_0^1 \frac{\dd s}{\sqrt{2\textrm{$\big($}\alpha(c_n)-V(s)\textrm{$\big)$}}}$.

Let $0<\varepsilon <\max\{-V''(0),-V''(X)\}$. Let us consider $\eta>0$ such that,
$$\left[|x|<\eta \right]  \Longrightarrow   \left[ \left|V(x) -V''(0)\frac{x^2}{2}\right|<\varepsilon\frac{x^2}{2}\right] ,$$
 and
  $$\left[|x-X|<\eta\right] \Longrightarrow \left[\left |V(x)-V''(X) \frac{(x-X)^2}{2} \right |<\varepsilon\frac{(x-X)^2}{2} \right].$$ 
  We now split the integral defining $T_{c_n}$ into the $4$ following pieces:
\begin{multline}
T_{c_n}=  \int_0^1 \frac{\dd s}{\sqrt{2\left(\alpha(c_n)-V(s)\right)}} \\
=\underbrace{ \int_{-\eta}^{\eta} \frac{\dd s}{\sqrt{2\left(\alpha(c_n)-V(s)\right)}}  }_{\tcircle{1}}  + \underbrace{\int_{X-\eta}^{X+\eta} \frac{\dd s}{\sqrt{2\left(\alpha(c_n)-V(s)\right)}}  }_{\tcircle{2}} \\
+\underbrace{\int_\eta^{X-\eta} \frac{\dd s}{\sqrt{2\left(\alpha(c_n)-V(s)\right)}}   }_{\tcircle{3}}+ \underbrace{  \int_{X+\eta}^{1-\eta} \frac{\dd s}{\sqrt{2\left(\alpha(c_n)-V(s)\right)}}  }_{\tcircle{4}}.
\end{multline}
Let $M>0$ be a constant independent of $n$ such that $|{\tcircle{3}}|+|{\tcircle{4}}|<M$ for all $n>0$. Such an $M$ exists as the denominators appearing in the integrals are uniformly positive. 

Let us now study and estimate ${\tcircle{1}}$. From the definition of $\eta$ the following inequalities are infered:
\begin{multline} 
\underbrace{ \int_{-\eta}^{\eta} \dfrac{\dd s}{\sqrt{2\left(\alpha(c_n)+\big(\varepsilon-V''(0)\big)\dfrac{s^2}{2}\right)}} }_{\tcircle{5}} 
\\
\leqslant  \int_{-\eta}^{\eta} \dfrac{\dd s}{\sqrt{2\left(\alpha(c_n)-V(s)\right)}}
\\
\leqslant \underbrace{  \int_{-\eta}^{\eta} \dfrac{\dd s}{\sqrt{2\left(\alpha(c_n)+\big(-V''(0)-\varepsilon\big)\dfrac{s^2}{2}\right)}} }_{\tcircle{6}} .
\end{multline}
Terms  {\tcircle{5}} and  \tcircle{6} can be integrated explicitly, let us deal with  \tcircle{5}.
\begin{align*}
\int_{-\eta}^{\eta} & \frac{\dd s}{\sqrt{2\left(\alpha(c_n)+\big(\varepsilon-V''(0)\big)\dfrac{s^2}{2}\right)}} 
=
\frac{ \sqrt{\frac{2\alpha(c_n) }{\varepsilon-V''(0)}      }}{ \sqrt{2\alpha(c_n)  }}     \int_{-\eta\sqrt{\frac{\varepsilon-V''(0)}{2\alpha(c_n)}     }}^{\eta\sqrt{\frac{\varepsilon-V''(0)}{2\alpha(c_n)}}}
 \dfrac{    \dd t}{\sqrt{1+t^2}}     \\    
 &= \frac{1}{\sqrt{\varepsilon - V''(0)}}\left[             \ln\left(  \sqrt{1+t^2}+t                   \right)           \right]_{-\eta\sqrt{\frac{\varepsilon-V''(0)}{2\alpha(c_n)}     }}^{\eta\sqrt{\frac{\varepsilon-V''(0)}{2\alpha(c_n)}}}        \\
 &=  \frac{1}{\sqrt{\varepsilon - V''(0)}}    \ln\left[  \dfrac{  \sqrt{  \dfrac{ \big(\varepsilon -V''(0)\big)\eta^2    }{        2\alpha(c_n)    }+1           }              +\eta\sqrt{   \dfrac{  \varepsilon-V''(0)      }{  2\alpha(c_n)     }        }                      }{                \sqrt{  \dfrac{ \big(\varepsilon -V''(0)\big)\eta^2    }{        2\alpha(c_n)    }+1           }              -\eta\sqrt{   \dfrac{  \varepsilon-V''(0)      }{  2\alpha(c_n)     }        }                           }                                                   \right] \\
 &=  \frac{1}{\sqrt{\varepsilon - V''(0)}}    \ln\left[    \left(            \sqrt{  \dfrac{ \big(\varepsilon -V''(0)\big)\eta^2    }{        2\alpha(c_n)    }+1           }              +\eta\sqrt{   \dfrac{  \varepsilon-V''(0)      }{  2\alpha(c_n)     }        }             \right)  ^2    \right]       \\
 &=  \frac{1}{\sqrt{\varepsilon - V''(0)}} \left\{  -\ln(2)-\ln\big(\alpha(c_n)\big) +\ln\left[    \sqrt{    \big(\varepsilon -V''(0)\big)\eta^2 +2\alpha(c_n)    }          +\eta \sqrt{\varepsilon-V''(0)  }               \right]           \right\}      \\
 &\underset{n\to +\infty}{\sim}             \dfrac{-\ln\big(\alpha(c_n)\big)}{\sqrt{\varepsilon-V''(0)}}.
\end{align*}
The last relation uses the continuity of $\alpha$ and the consecutive  limit: $\lim\limits_{n\to +\infty} \alpha(c_n) = 0$.

The same computation for \tcircle{6} yields
$$ \int_{-\eta}^{\eta} \dfrac{\dd s}{\sqrt{2\left(\alpha(c_n)+\big(-V''(0)-\varepsilon\big)\dfrac{s^2}{2}\right)}} \underset{n\to +\infty}{\sim}             \dfrac{-\ln\big(\alpha(c_n)\big)}{\sqrt{-\varepsilon-V''(0)}}.$$
As for  \tcircle{2} the same strategy is adopted:

\begin{multline} 
\underbrace{ \int_{X-\eta}^{X+\eta} \dfrac{\dd s}{\sqrt{2\left(\alpha(c_n)+\big(\varepsilon-V''(X)\big)\dfrac{s^2}{2}\right)}} }_{\tcircle{7}} 
\\
\leqslant  \int_{X-\eta}^{X+\eta} \dfrac{\dd s}{\sqrt{2\left(\alpha(c_n)-V(s)\right)}}
\\
\leqslant \underbrace{  \int_{X-\eta}^{X+\eta} \dfrac{\dd s}{\sqrt{2\left(\alpha(c_n)+\big(-V''(X)-\varepsilon\big)\dfrac{s^2}{2}\right)}} }_{\tcircle{8}} .
\end{multline}

Similar computations yield
$$ \int_{X-\eta}^{X+\eta} \dfrac{\dd s}{\sqrt{2\left(\alpha(c_n)+\big(\varepsilon-V''(X)\big)\dfrac{s^2}{2}\right)}} \underset{n\to +\infty}{\sim}             \dfrac{-\ln\big(\alpha(c_n)\big)}{\sqrt{\varepsilon-V''(X)}},$$
and
$$ \int_{X-\eta}^{X+\eta} \dfrac{\dd s}{\sqrt{2\left(\alpha(c_n)+\big(-V''(X)-\varepsilon\big)\dfrac{s^2}{2}\right)}} \underset{n\to +\infty}{\sim}             \dfrac{-\ln\big(\alpha(c_n)\big)}{\sqrt{-\varepsilon-V''(X)}}.$$

Let now $f: \R \to \R$ be a continuous $1$--periodic function that is constant on $[-\eta,\eta]$ and on $[X-\eta, X+\eta]$, with $0\leqslant \min\big( f(0),f(X)\big)$. We know that
$$\int_{[0,1]} f(s) \dd \pi_{1*} \mu_{c_n}(s) \underset{n\to +\infty}{\longrightarrow}   \beta_0f(0)+\beta_Xf(X). $$
Gathering the previous computations we infer that
$$\frac{f(0)      \tcircle{1} + f(X) \tcircle{2} -\|f\|_\infty M    }{T_{c_n}} \leqslant  \int_{[0,1]} f(s) \dd \pi_{1*} \mu_{c_n}(s)    \leqslant \dfrac{ f(0)      \tcircle{1} + f(X)\tcircle{2} +\|f\|_\infty M}{T_{c_n}}.$$ 
And letting $n\to +\infty$ one discovers that
\begin{multline*}\dfrac{     \dfrac{f(0)}{\sqrt{ \varepsilon-V''(0)    } }    + \dfrac{f(X)}{\sqrt{ \varepsilon-V''(X)    } }       }{   \dfrac{1}{  \sqrt{- \varepsilon-V''(0)} } + \dfrac{1}{  \sqrt{- \varepsilon-V''(X)} }        }   
\\
\leqslant  \beta_0f(0)+\beta_Xf(X)     \leqslant \\       
   \dfrac{     \dfrac{f(0)}{\sqrt{- \varepsilon-V''(0)    } }    + \dfrac{f(X)}{\sqrt{- \varepsilon-V''(X)    } }       }{   \dfrac{1}{  \sqrt{ \varepsilon-V''(0)} } + \dfrac{1}{  \sqrt{-\varepsilon-V''(X)} }        } 
.
\end{multline*}
This being true for all $\varepsilon>0$ and all non--negative $f(0)$ and $f(X)$,  the lemma is proved.
\end{proof}

At last, we can deduce the following:
\begin{pr}
Assume that $H_0 : (x,p)\mapsto \frac 12 p^2 +V(x)$ is a Hamiltonian such that $V : \T^1\to \R$ is smooth, non--positive, and verifies $V^{-1}\{0\} = \{0,X\}$ for some $X\in \T^1\setminus \{0\}$. Assume moreover that $V''(0)\neq V''(X)$ are both negative and again
$$\alpha:= \frac{ \int_0^{X}f^+(x)\dd x}{X} < \frac{ \int_X^{1}f^+(x)\dd x}{1-X} := \beta.$$
 Then, using the previous notations, if   $(c_n)_{n> 0}$ is a decreasing sequence converging to $c_0$ such that $\rho(c_n)$ is irrational for all $n>0$, the family of functions $(u_1^{c_n})_{n>0}$ does not converge to $u_1^{c_0}$.
\end{pr}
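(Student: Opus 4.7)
The strategy is to derive a contradiction between the discounted normalization $\int u_1^{c_n}\,\dd\pi_{1*}\mu_{c_n}=0$ along the unique Mather measure at level $c_n$ and the value $\int u_1^{c_0}\,\dd\pi_{1*}\mu_\infty$, which the conjectured convergence would force to vanish but which the assumption $\alpha<\beta$ renders strictly negative.

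For every $n\geqslant 1$, since $\rho(c_n)$ is irrational, Denjoy's theorem applied to the $C^2$ projected circle diffeomorphism $\psi_{c_n}$ gives uniqueness of the invariant probability measure; lifted to $\T^1\times\T^1$, this yields uniqueness of the Mather measure $\mu_{c_n}\in\widehat{\PP}_0^{c_n}$. Proposition \ref{sature} then forces $\int_{\T^1} u_1^{c_n}\,\dd\pi_{1*}\mu_{c_n}=0$ for every $n$. Meanwhile Proposition \ref{mesureCV} identifies the weak-$*$ limit
$$\mu_\infty=\frac{A}{A+B}\,\delta_{(0,0)}+\frac{B}{A+B}\,\delta_{(X,X)},\qquad A=\bigl(\sqrt{-V''(0)}\bigr)^{-1},\ B=\bigl(\sqrt{-V''(X)}\bigr)^{-1}.$$

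Assume now, for contradiction, that $u_1^{c_n}\to u_1^{c_0}$. Each $u_1^{c_n}$ is, by Theorem \ref{ewd}, a classical weak KAM solution for the Tonelli Hamiltonian $H_{c_n}$, hence a critical subsolution; the subsolution inequality $\tfrac12(D_xu+c_n)^2+V(x)\leqslant \alpha(c_n)$ combined with the continuity $\alpha(c_n)\to \alpha(c_0)=0$ yields a uniform Lipschitz bound on the family $(u_1^{c_n})_{n\geqslant 1}$. Hence the assumed convergence is automatically uniform by Arzel\`a--Ascoli. Combining uniform convergence of $u_1^{c_n}$ with weak-$*$ convergence of $\pi_{1*}\mu_{c_n}$ to pass to the limit in the identity of the previous paragraph gives
$$0=\int_{\T^1} u_1^{c_0}(x)\,\dd\pi_{1*}\mu_\infty(x)=\frac{A}{A+B}\,u_1^{c_0}(0)+\frac{B}{A+B}\,u_1^{c_0}(X).$$
By the explicit formula \eqref{u_1crit} one has $u_1^{c_0}(0)=0$, while using $c_0=\alpha X+\beta(1-X)$,
$$u_1^{c_0}(X)=\int_0^X f^+(s)\,\dd s-c_0X=X(\alpha-c_0)=X(1-X)(\alpha-\beta)<0,$$
the strict inequality coming from the standing hypothesis $\alpha<\beta$. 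Since $B/(A+B)>0$, this contradicts the displayed equality and concludes the argument.

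The main obstacle is the passage to the limit: one must ensure that the weak-$*$ convergence $\pi_{1*}\mu_{c_n}\to\pi_{1*}\mu_\infty$ (to a measure with explicit finite support) combines legally with the hypothetical convergence $u_1^{c_n}\to u_1^{c_0}$. This hinges on the uniform Lipschitz bound for critical subsolutions of $H_{c_n}$ as $c_n$ ranges in a compact set, which is a standard Tonelli fact following from coercivity and continuity of $c\mapsto \alpha(c)$. Once that is secured, the contradiction is purely arithmetic and reflects that the asymmetry $\alpha<\beta$ makes $u_1^{c_0}(X)$ strictly negative, incompatible with the barycentric constraint imposed by $\mu_\infty$, whose first marginal charges $X$ with positive weight.
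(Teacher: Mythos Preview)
Your proof is correct and follows essentially the same approach as the paper: both combine Proposition~\ref{mesureCV} with the saturation identity $\int u_1^{c_n}\,\dd\pi_{1*}\mu_{c_n}=0$ (valid here because the irrationality of $\rho(c_n)$ makes the Mather measure unique), and both contradict the explicit values $u_1^{c_0}(0)=0$, $u_1^{c_0}(X)<0$ coming from~\eqref{u_1crit} and the hypothesis $\alpha<\beta$.

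The only presentational difference is that the paper argues directly rather than by contradiction: it shows that $(u_1^{c_n})_{n>0}$ actually converges to the unique weak KAM solution $u$ at cohomology $c_0$ satisfying the barycentric constraint, and then observes that this $u$ has $u(0)u(X)<0$, whereas $u_1^{c_0}(0)u_1^{c_0}(X)=0$. This gives slightly more information (the sequence does converge, just to a different function), but the underlying mechanism is identical to yours. Your explicit computation $u_1^{c_0}(X)=X(1-X)(\alpha-\beta)$ is a nice touch that makes the contradiction completely transparent. Note also that, like the paper's argument, your proof does not in fact use the hypothesis $V''(0)\neq V''(X)$; only the nondegeneracy $V''(0)V''(X)\neq 0$ (needed for Proposition~\ref{mesureCV}) plays a role.
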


\begin{proof}
Using that a limit of weak KAM solutions is a weak KAM solution and the previous Proposition \ref{mesureCV}, one proves that  $(u_1^{c_n})_{n>0}$ converges to the unique weak KAM solution $u$, for  $H_{c_0}$ such that  
$$\frac{\big(\sqrt{-V''(0)}\big)^{-1}}{\big(\sqrt{-V''(0)}\big)^{-1}+\big(\sqrt{-V''(X)}\big)^{-1}}u(0) +  \frac{\big(\sqrt{-V''(X)}\big)^{-1}}{\big(\sqrt{-V''(0)}\big)^{-1}+\big(\sqrt{-V''(X)}\big)^{-1}}u(X)=0.$$
 Recall that at cohomology $c_0$, there is a unique weak KAM solution, up to constants. Moreover, the condition $\alpha<\beta$ ensures that it is not possible to have $u(0)=u(X)=0$. Hence $u(0)u(X)<0$. On the contrary, as $u_1^{c_0}$  is given by the formula \eqref{u_1crit}:
$$\forall x\in [0,1],\quad u_1^{c_0}(x) =
\int_0^x f^+(s)\dd s -{c_0}x,$$
we have $u_1^{c_0}(0)u_1^{c_0}(X)=0$.

\end{proof}

\subsection{\texorpdfstring{A situation where $u_1^c \neq U_0^c$}{A situation where}}

We come back to Proposition \ref{inegdiscounted}. More precisely, we answer by the negative the natural question: does the discounted procedure select the same weak KAM solution in the discrete setting and in the continuous setting?

We now focus our attention on the unique real number $c_{\frac12} \in \R$ such that $\rho(c_{\frac12})=\frac12$. Note that $c_{\frac12}>0$ and that $\AA^*_{c_{\frac12}} \subset H^{-1}(\{\alpha(c_{\frac12})\}$. It is actually the upper connected component of this level set of $H$. Moreover, it can be characterized  as follows:
\begin{pr}\label{TFI}
Let $\Phi_{H_{c_{\frac12}}} : \R^2 \to \R^2$ denote the  lift of $\varphi_{H_{c_{\frac12}}}$ that fixes the point $(0,-{c_{\frac12}})$ and $P : \R^2 \to \T^1 \times \R$ the canonical projection. Then 
$$\AA^*_{c_{\frac12}} = P\Big( \big \{(x,p) \in \R^2 , \quad \Phi_{H_{c_{\frac12}}}^2(x,p) = (x+1,p) \big \} \Big).
$$
\end{pr}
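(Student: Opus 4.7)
The plan is to prove both inclusions separately, exploiting that for $\rho(c_{\frac12}) = \frac12 > 0$ we must have $c_{\frac12} > \int_0^1 f^+(s)\,\dd s$, so we are in the regime analyzed in the previous section. In particular $\alpha(c_{\frac12}) > 0$, and the Aubry set $\AA^*_{c_{\frac12}}$ is contained in the invariant $C^{1,1}$ graph $\Gamma = \{(x, f^+_{\alpha(c_{\frac12})}(x) - c_{\frac12}) : x \in \T^1\}$. The Hamiltonian flow preserves $\Gamma$ and its time-$1$ map projects to the circle diffeomorphism $\psi_{c_{\frac12}}$ of rotation number $\frac12$, so that $\psi_{c_{\frac12}}^2$ is a circle diffeomorphism of rotation number $0$.

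For the inclusion $\supseteq$: given $(x,p) \in \R^2$ satisfying $\Phi^2_{H_{c_{\frac12}}}(x,p) = (x+1,p)$, the Hamiltonian orbit through $(x,p)$ projects to a $2$-periodic curve $\gamma : \R \to \T^1$ winding once around $\T^1$ per period. The probability measure $\mu' = \frac12 \int_0^2 \delta_{(\gamma(t),\dot\gamma(t))}\,\dd t$ is $\varphi_L$-invariant, hence closed. Its $L_{c_{\frac12}}$-average equals $-\alpha(c_{\frac12})$ because $\gamma$ is a calibrating trajectory for the $C^{1,1}$ critical subsolution $u(x) = \int_0^x (f^+_{\alpha(c_{\frac12})}(s)-c_{\frac12})\,\dd s$ constructed above. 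Hence $\mu' \in \PP'_0$ and its support is in $\AA'_{c_{\frac12}}$; pushing by $\LL$ yields $P(x,p) \in \AA^*_{c_{\frac12}}$.

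For the inclusion $\subseteq$: take $(x_0,p_0) \in \AA^*_{c_{\frac12}}$ lifted to $\R^2$. By Theorem \ref{invA}, the whole orbit $\Phi^n_{H_{c_{\frac12}}}(x_0,p_0)$ lies in $\AA^*_{c_{\frac12}} \subseteq \Gamma$. Its projection under $\psi_{c_{\frac12}}^2$ is therefore an orbit of a circle diffeomorphism of zero rotation number; by the classical Poincar\'e classification, this orbit is either a fixed point of $\psi_{c_{\frac12}}^2$ (which, lifting back, is precisely the condition $\Phi^2(x_0,p_0) = (x_0+1,p_0)$) or is heteroclinic between two distinct fixed points of $\psi_{c_{\frac12}}^2$. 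It remains to rule out the heteroclinic case.

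The main obstacle is precisely this exclusion of heteroclinic orbits. I would argue by contradiction: assume $(x_0,p_0) \in \AA^*_{c_{\frac12}}$ is heteroclinic between two distinct $2$-periodic Birkhoff orbits of $\psi_{c_{\frac12}}$. Setting $x_n = \pi\big(\Phi^n(x_0,p_0)\big)$, Corollary \ref{cst} gives, for any $u \in \SSS_{\alpha(c_{\frac12})}$,
$$u(x_{n+1}) - u(x_n) = h_1^{c_{\frac12}}(x_n,x_{n+1}) + \alpha(c_{\frac12}), \qquad \forall n \in \Z.$$
Summing from $-N$ to $N-1$, the left-hand side stays bounded in $N$, so the partial sums $\sum_{n=-N}^{N-1}\big(h_1^{c_{\frac12}}(x_n,x_{n+1}) + \alpha(c_{\frac12})\big)$ must remain bounded. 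But the standard Aubry--Mather analysis of twist maps at rational rotation numbers, revisited systematically in Chapter~5, shows that the \emph{Peierls barrier} between points lying on distinct $2$-periodic Birkhoff orbits is strictly positive, so the asymptotic excess action accumulated along a heteroclinic connection is strictly positive in both time directions. This contradicts the boundedness above, forcing $(x_0,p_0)$ to be a fixed point of $\Phi^2_{H_{c_{\frac12}}}$ translated by $(1,0)$, which concludes the proof.
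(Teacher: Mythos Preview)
Your argument has genuine gaps in both directions, and it misses the mechanism that makes the proposition work.

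For the inclusion $\supseteq$, you assert without justification that the $2$--periodic orbit through $(x,p)$ is calibrating for the subsolution $u(x)=\int_0^x\big(f^+_{\alpha(c_{\frac12})}(s)-c_{\frac12}\big)\dd s$. But a point satisfying $\Phi^2(x,p)=(x+1,p)$ has a priori no reason to lie on the graph $\Gamma$, and a $2$--periodic Hamiltonian orbit need not be action--minimizing. What is needed is an argument forcing $(x,p)$ onto the correct energy level. The paper does this by an elementary phase--plane analysis: if $H(x,p)<0$ the orbit does not wind around $\T^1$; if $p+c_{\frac12}\leqslant 0$ the first coordinate is non--increasing; on the remaining branch the time to wind once is strictly monotone in $p$, so at most one $p$ per $x$ works. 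This shows the right--hand side is at most a graph over $\T^1$, which the student's approach never establishes.

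For the inclusion $\subseteq$, your exclusion of heteroclinic orbits via ``excess action'' is misconceived. Points of the Aubry set are, \emph{by definition}, calibrated by every critical subsolution, so for any $(x_0,p_0)\in\AA^*_{c_{\frac12}}$ one has $\sum_{n=-N}^{N-1}\big(h_1^{c_{\frac12}}(x_n,x_{n+1})+\alpha(c_{\frac12})\big)=u(x_N)-u(x_{-N})$, which is always bounded. There is no accumulated excess to contradict. The Peierls--barrier results you invoke distinguish static classes but do not prevent calibrated heteroclinics from belonging to the Aubry set; at rational rotation numbers they typically do.

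The point you miss entirely is \emph{autonomy}. Since $H_{c_{\frac12}}$ is time--independent, the flow $\Phi^t$ commutes with $\Phi^2$. The Aubry set here is the full invariant circle $\Gamma$, on which the flow acts transitively. Poincar\'e theory for the lift $g$ of $\psi_{c_{\frac12}}$ (rotation number $\frac12$) gives one $x_0$ with $g^2(x_0)=x_0+1$; writing any other $(x,\cdot)\in\mathfrak A$ as $\Phi^t(x_0,\cdot)$ and using commutativity propagates $\Phi^2(\cdot)=(\cdot+1,\cdot)$ to all of $\mathfrak A$. In particular there are no heteroclinic points on $\Gamma$ at all. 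Combined with the phase--plane uniqueness above, one gets $\mathfrak A\subset B$ with $\mathfrak A$ a full graph and $B$ a partial graph, hence equality.
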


\begin{proof}[Sketch of Proof]
There are many possible ways to tackle this Proposition according to the property of the system used. The proof is essentially given in \cite[Proposition 15]{Arnaud3} and very much related to \cite[Proposition 2]{AABZ} that proves a  version of the result, in arbitrary dimension, by using $C^0$--integrability in a neighborhood of $\AA^*_{c_{\textrm{$\frac12$}}}$. It can also be deduced from the Implicit function Theorem, proving that the right hand side of the above equality is a manifold.

Let us sketch a proof using a stronger integrability, reminiscent of the Arnol'd--Liouville Theorem \cite{Duist}. Let us call $B$ the right hand side of the equality to prove. A first step is  that $B$ is a (potentially partial) graph over $\T^1$. We distinguish three cases remembering that Hamiltonian orbits are included in level sets of $H$. 
\begin{itemize}
\item if $H(x,p)< \max V=0$ then the orbit $\{\varphi_{H_{c_{\frac12}}}^s(x,p), \quad s\in \R\}$ projects on an interval strictly included in $\T^1$ hence $(x,p)\notin B$.
\item  if $H(x,p)\geqslant  \max V$ with $p\leqslant -c_{\frac 12}$. Let $(\tilde x ,p)\in \R^2$ such that $P(\tilde x ,p)=(x,p)$. Then if we define for $s\in \R$, $\Phi_{H_{c_{\frac12}}}^s(\tilde x,p)=(\tilde x_s,p_s)$, one computes that $s\mapsto \tilde x_s$ is non--increasing, hence $(x,p)\notin B$.
\item  if $H(x,p)\geqslant  \max V$ with $p\geqslant -c_{\frac 12}$, meaning that $p\geqslant \sqrt{ -2 V( x )}-c_{\frac 12}$. We fix here $\tilde x\in \R$ and let $p$ vary. Looking at the Hamiltonian equations and more precisely computing the time $t_p$ such that $\Phi_{H_{c_{\frac12}}}^{t_p}(\tilde x,p)=(\tilde x+1,p+1)$ with equation \eqref{time}, it can be seen that $p\mapsto t_p$ is decreasing. Hence there is at most one $p$ such that $t_p = 2$.
\end{itemize}

Now recall that $\AA^*_{c_\frac 12} = \big\{ \big(x, f_{\alpha(c_{\frac 12})}^+(x)-c_{\frac 12}\big), \ \ x\in \T^1 \big\}$. We denote by $\mathfrak A$ the lift of $\AA^*_{c_\frac 12}$ to $\R^2$, that is invariant under $\Phi_{H_{c_{\frac12}}}$. We define the map $g : \R \to \R$ by 
$$\forall x\in \R, \quad \Phi_{H_{c_{\frac12}}}^1 \big(x, f_{\alpha(c_{\frac 12})}^+(x)-c_{\frac 12}\big) = \big(g(x), f_{\alpha(c_{\frac 12})}^+\circ g (x)-c_{\frac 12}\big).$$
The function $g$ is the lift of a circle diffeomorphism and its rotation number here is $\rho(c_{\frac12}) = \frac 12$. It follows from Poincaré's theory of rotation numbers that there exists a real number $x_0\in \R$ such that $g^2(x_0) = x_0+1$. Let now $x\in \R$ be any real number. There is a time $t\in \R$ such that  $ \big(x, f_{\alpha(c_{\frac 12})}^+(x)-c_{\frac 12}\big)=\Phi_{H_{c_{\frac12}}}^t\big(x_0, f_{\alpha(c_{\frac 12})}^+(x_0)-c_{\frac 12}\big)$.
It follows that 
\begin{multline*}
\Phi_{H_{c_{\frac12}}}^2\big(x, f_{\alpha(c_{\frac 12})}^+(x)-c_{\frac 12}\big) = \Phi^2\circ\Phi_{H_{c_{\frac12}}}^t\big(x_0, f_{\alpha(c_{\frac 12})}^+(x_0)-c_{\frac 12}\big) \\
=\Phi^t\circ\Phi_{H_{c_{\frac12}}}^2\big(x_0, f_{\alpha(c_{\frac 12})}^+(x_0)-c_{\frac 12}\big)  \\
=\Phi^t\circ\Phi_{H_{c_{\frac12}}}^2\big(x_0+1, f_{\alpha(c_{\frac 12})}^+(x_0)-c_{\frac 12}\big) =\big(x+1, f_{\alpha(c_{\frac 12})}^+(x)-c_{\frac 12}\big).
\end{multline*}

Thus it has been proven that $\mathfrak A\subset B$ and that the right hand side is a partial graph while the left hand side is a full graph. Hence both terms are equal.
\end{proof}
Let us define the map $h : \T^1 \to \R$ by 
$$\forall x\in \T^1, \quad \varphi_{H_{c_{\frac12}}}^1 \big(x, f_{\alpha(c_{\frac 12})}^+(x)-c_{\frac 12}\big) = \big(h(x), f_{\alpha(c_{\frac 12})}^+\circ h (x)-c_{\frac 12}\big).$$
The function $h$ has $g:\R \to \R$ as a lift.
 It follows from the previous result and Theorem \ref{minimizing} that for all $x\in \T^1$, the measure $\mu_x = \frac12\Big(\delta_{\textrm{$\big(x,h(x)\big)$}} + \delta_{\textrm{$\big(h(x),x\big)$}}\Big)$ is a discrete Mather measure.
 
 \begin{lm}
 Assume that $u_1^{c_{\frac12}} = U_0^{c_\frac12}$, then for all $x\in \T^1$, $u_1^{c_{\frac12}} (x) = -u_1^{c_{\frac12}}\circ g(x)$. 
 \end{lm}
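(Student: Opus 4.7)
The plan is to show that the hypothesis $u_1^{c_{\frac12}}=U_0^{c_{\frac12}}$ forces the sum $F(x):=u_1^{c_{\frac12}}(x)+u_1^{c_{\frac12}}\circ g(x)$ to vanish, by combining the discrete constraint (which makes $F$ non-positive) with a classical constraint (which makes a weighted integral of $F$ equal to zero). Throughout, write $\alpha=\alpha(c_{\frac12})$, $u_1=u_1^{c_{\frac12}}$ and, abusing notation, still denote by $h:\T^1\to\T^1$ the circle homeomorphism induced by $g$. By Proposition~\ref{TFI} and the identification $\AA_{c_{\frac12}}=\T^1$, the involution $h$ is the time-one map of the one-dimensional flow $\dot x=f^+_{\alpha}(x)$ on $\T^1$, which, since $\alpha>\max V=0$, satisfies $f^+_{\alpha}>0$ everywhere.

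First, I would describe the structure of Mather measures. The measure $\mu_x=\tfrac12(\delta_{(x,h(x))}+\delta_{(h(x),x)})$ is a discrete Mather measure (Theorem~\ref{minimizing}), so that $u_1\in\mathcal F$ applied to $\mu_x$ yields
\begin{equation}\label{eq:plan-F}
F(x)=u_1(x)+u_1(h(x))\leqslant 0, \qquad x\in\T^1.
\end{equation}
On the classical side, every ergodic Mather measure $\mu'_x$ is the orbit measure of a periodic Hamiltonian orbit of period $2$, projecting onto $\T^1$ via the change of variables $s\mapsto y=\gamma_x(s)$ with $ds=dy/f^+_\alpha(y)$. Hence
\[
\pi_*\mu'_x \;=\; \frac{dy}{2\,f^+_\alpha(y)},
\]
which is the \emph{same} probability measure on $\T^1$ for every $x$. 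By hypothesis $u_1=U_0\in\mathcal F'$, so $\int u_1\,d\pi_*\mu'_x\leqslant 0$; and by Proposition~\ref{satureC}, applied to an ergodic classical Mather measure (which here is some $\mu'_{x_0}$), the inequality is actually an equality. Since all $\pi_*\mu'_x$ coincide, this gives
\begin{equation}\label{eq:plan-int}
\int_{\T^1}\frac{u_1(y)}{f^+_\alpha(y)}\,dy=0.
\end{equation}

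Next, I would compute $h'$ to perform a clean change of variables. Writing $t=\int_y^{\phi^t(y)}\frac{dx}{f^+_\alpha(x)}$ and differentiating in $y$ with $t=1$ gives the standard identity
\[
h'(y)\;=\;\frac{f^+_\alpha(h(y))}{f^+_\alpha(y)}.
\]
Substituting $y=h(x)$ (so $x=h(y)$, $dx=h'(y)\,dy$) in $\int_{\T^1}\frac{u_1(h(x))}{f^+_\alpha(x)}\,dx$ and using this formula yields
\[
\int_{\T^1}\frac{u_1(h(x))}{f^+_\alpha(x)}\,dx
\;=\;\int_{\T^1}\frac{u_1(y)}{f^+_\alpha(h(y))}\,h'(y)\,dy
\;=\;\int_{\T^1}\frac{u_1(y)}{f^+_\alpha(y)}\,dy\;=\;0,
\]
where the last equality uses \eqref{eq:plan-int}. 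Adding this to \eqref{eq:plan-int} gives
\[
\int_{\T^1}\frac{F(x)}{f^+_\alpha(x)}\,dx=0.
\]
Since $F$ is continuous, $F\leqslant 0$ by \eqref{eq:plan-F}, and the weight $1/f^+_\alpha$ is continuous and strictly positive, the vanishing of this integral forces $F\equiv 0$, which is precisely the desired identity $u_1(x)=-u_1\circ g(x)$.

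The only delicate step is the simultaneous coincidence of all projected classical Mather measures (so that a single saturating orbit measure upgrades the inequality to the equality~\eqref{eq:plan-int}); the rest reduces to the well-known flow-box change of variables on the circle.
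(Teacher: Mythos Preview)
Your proof is correct and follows the same strategy as the paper: obtain $F(x)=u_1(x)+u_1\circ g(x)\leqslant 0$ from the discrete Mather measures $\mu_x$, then use the unique classical Mather measure together with Proposition~\ref{satureC} to force a positive-weight integral of $F$ to vanish, whence $F\equiv 0$. The only difference is cosmetic --- the paper stays in the time parametrization of the periodic orbit, splitting $\int_0^2 u_1(\gamma(s))\,\dd s=\int_0^1 F(\gamma(s))\,\dd s$ via $\gamma(s+1)=g(\gamma(s))$, whereas you pass to the spatial density $\dd y/(2f^+_\alpha)$ and the change of variables $h'=f^+_\alpha\circ h/f^+_\alpha$; your closing worry about ``all $\pi_*\mu'_x$ coinciding'' is moot, since $\AA^*_{c_{1/2}}$ is a single periodic Hamiltonian orbit and hence carries a unique classical Mather measure.
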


\begin{proof}
From the definitions of  $u_1^{c_{\frac12}}$ and $U_0^{c_\frac12}$, and from the previous discussions result that
\begin{equation}\label{contdiscrete}
\forall x\in \T^1, \quad \int_{\T^1} u_1^{c_{\frac12}}(s) \dd \pi_{1*} \mu_x(s) = \frac 12\Big(u_1^{c_{\frac12}}(x) + u_1^{c_{\frac12}}\circ g(x) \Big) \leqslant 0.
\end{equation}
Moreover, using Proposition \ref{satureC} and the fact that there is a single classical Mather measure, we obtain that 
$$\int_{\T^1} u_1^{c_{\frac12}}(x) \dd\pi_* \mu^*_{c_{\frac12}}(x) = 0.
$$
Recalling the definition of $\mu^*_{c_{\frac12}}$ we find:
\begin{multline*}
0=\int_{\T^1} u_1^{c_{\frac12}}(x) \dd\pi_* \mu^*_{c_{\frac12}}(x) =\int_0^2  u_1^{c_{\frac12}}\Big( \pi \circ \varphi_{H_{c_{\frac12}}}^s \big(0, f_{\alpha(c_{\frac 12})}^+(0)-c_{\frac 12}\big)\Big) \dd s \\
=\int_0^1 \Big[  u_1^{c_{\frac12}}\Big( \pi \circ \varphi_{H_{c_{\frac12}}}^s \big(0, f_{\alpha(c_{\frac 12})}^+(0)-c_{\frac 12}\big)\Big) + u_1^{c_{\frac12}}\Big( \pi \circ \varphi_{H_{c_{\frac12}}}^{s+1} \big(0, f_{\alpha(c_{\frac 12})}^+(0)-c_{\frac 12}\big)\Big)     \Big] \dd s \\
=\int_0^1 2 \int_{\T^1}  u_1^{c_{\frac12}} (x)  \dd \pi_{1*} \mu_{    \pi \circ \varphi_{H_{c_{\frac12}}}^s \textrm{$\big(0, f_{\alpha(c_{\frac 12})}^+(0)-c_{\frac 12}\big)$}   }(x) \dd s\leqslant 0.
\end{multline*}
It follows that all inequalities in \eqref{contdiscrete} are equalities, hence the result.

\end{proof}

It is deduced that under the hypotheses of the previous Lemma, $u_1^{c_{\frac12}}(x)$ and $u_1^{c_{\frac12}}\circ g(x)$ must have opposite signs for all $x$. An example in which it is not the case is provided in \cite[Appendix A.2.]{AZ}. We give below a different simple situation where this cannot happen:

\begin{pr}
Let $V : \T^1 \to \R$ be a non--constant $\frac12$--periodic function. Then for the associated Hamiltonian $H_{c_{\frac12}}$, it holds $u_1^{c_{\frac12}} \neq U_0^{c_\frac12}$.
\end{pr}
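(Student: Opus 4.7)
The plan is to argue by contradiction, assuming $u_1^{c_{\frac12}}=U_0^{c_{\frac12}}$ and exploiting the $\tfrac12$-periodicity of $V$ at two levels: on the discounted approximations $u_\lambda$, and on the projected dynamics on the Aubry set.

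First, I would observe that the $\tfrac12$-periodicity of $V$ makes the Lagrangian $L_{c_{\frac12}}$, and therefore the action functional $h_1^{c_{\frac12}}$, invariant under the diagonal translation $(x,y)\mapsto(x+\tfrac12,y+\tfrac12)$. A direct change of variable in the infimum defining $T^-_\lambda$ then shows that $T^-_\lambda$ commutes with pre-composition by $T_{\frac12}:x\mapsto x+\tfrac12$. Since $T^-_\lambda$ has a unique fixed point $u_\lambda$, this forces $u_\lambda=u_\lambda\circ T_{\frac12}$; passing to the uniform limit $\lambda\to 1$ via Theorem~\ref{discounted}, the selected weak KAM solution $u_1^{c_{\frac12}}$ is $\tfrac12$-periodic on $\T^1$.

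Next, I would identify the projected map $h:\T^1\to\T^1$ defined before Proposition~\ref{TFI} as $h(x)=x+\tfrac12$. Since $\rho(c_{\frac12})=\tfrac12>0$, we have $c_{\frac12}>c_0$, so the Aubry set is the full graph $p=f^+_{\alpha(c_{\frac12})}(x)-c_{\frac12}$ and the $x$-component of the Hamiltonian flow on it is $\dot x=f^+_{\alpha(c_{\frac12})}(x)>0$. The function $f^+_{\alpha(c_{\frac12})}$ inherits the $\tfrac12$-periodicity of $V$, and the full period of the flow is $T_{c_{\frac12}}=\rho(c_{\frac12})^{-1}=2$, so the time needed to go from $x$ to $x+\tfrac12$ is
\[
\int_x^{x+\frac12}\frac{\dd s}{f^+_{\alpha(c_{\frac12})}(s)}=\int_0^{\frac12}\frac{\dd s}{f^+_{\alpha(c_{\frac12})}(s)}=\tfrac12\,T_{c_{\frac12}}=1,
\]
which is exactly the time $1$ of the Hamiltonian flow. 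Hence $h(x)=x+\tfrac12$ on $\T^1$.

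Combining the two observations with the preceding lemma yields
\[
0=u_1^{c_{\frac12}}(x)+u_1^{c_{\frac12}}\big(x+\tfrac12\big)=2\,u_1^{c_{\frac12}}(x),
\]
so $u_1^{c_{\frac12}}\equiv 0$. In the super-critical regime $c_{\frac12}>c_0$ the weak KAM solution is $C^{1,1}$ and is given, up to a constant, by $x\mapsto\int_0^x f^+_{\alpha(c_{\frac12})}(t)\,\dd t-c_{\frac12} x$; its vanishing forces $f^+_{\alpha(c_{\frac12})}\equiv c_{\frac12}$, i.e.\ $V\equiv \alpha(c_{\frac12})-\tfrac12 c_{\frac12}^2$, contradicting that $V$ is non-constant. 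The only mildly delicate step is the identification $h=T_{\frac12}$; once that and the $\tfrac12$-periodicity of $u_1^{c_{\frac12}}$ are in hand the contradiction is immediate.
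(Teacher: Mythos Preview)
Your proof is correct and follows the same line as the paper: establish the $\tfrac12$-periodicity of $u_1^{c_{1/2}}$ via uniqueness of the discounted solutions, invoke the preceding lemma, and derive $u_1^{c_{1/2}}\equiv 0$, which is impossible since $V$ is non-constant. The paper's proof is extremely terse and leaves implicit the identification of the projected dynamics with $T_{1/2}$; your explicit computation that $h(x)=x+\tfrac12$ (using the $\tfrac12$-periodicity of $f^+_{\alpha(c_{1/2})}$ and $T_{c_{1/2}}=\rho(c_{1/2})^{-1}=2$) fills exactly the step the paper skips, and your spelling-out of the final contradiction is likewise more detailed than the paper's ``this is clearly not the case.''
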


\begin{proof}
Indeed, in this case, both functions $u_1^{c_{\frac12}}$  and $ U_0^{c_\frac12}$ are also $\frac 12$--periodic (this follows from the uniqueness of the solutions to the discounted equations that hence must be $\frac12$--periodic). Consequently if they coincide, the previous lemma tells us that they must be identically $0$. This is clearly not the case.
 \end{proof}

\section{Concluding example}

Due to the simple structure of the Aubry set and of the set of minimizing measures in all  previous examples, one can check that $u_1^c$ and $v_1^c$ form a conjugate pair ``up to a constant''. More precisely, the modified pair $\big(u_1^c - u_1^c(0) , v_1^c - v_1^c(0)\big)$ is a conjugate pair in all the previous examples. One may wonder if this is always the case.

As a matter of fact, the answer is again negative. We propose here a slightly (but not too much) sophisticated example shedding light on this fact. The computations are not carried on fully and left to the reader. We hope the previous examples give enough insight to make what follows quite straightforward.

Once again we consider  that $H_0 : (x,p)\mapsto \frac 12 p^2 +V(x)$ is a Hamiltonian such that $V : \T^1\to \R$ is smooth, non--positive, and verifies $V^{-1}\{0\} = \{0,X_1,X_2\}$ for some $0<X_1<X_2<1\in \T^1$. Assume again that 
$$\underbrace{ \frac{ \int_0^{X_1}f^+(x)\dd x}{X_1} }_{=\alpha}< \underbrace{\frac{ \int_{X_1}^{X_2}f^+(x)\dd x}{X_2-X_1} }_{= \beta}<\underbrace{\frac{ \int_{X_2}^{1}f^+(x)\dd x}{1-X_2}}_{= \gamma}.$$
If $c\in \R$ we again denote by $H_c : (x,p)\mapsto \frac12(p+c)^2 + V(x)$ and  respectively by $u_1^c$ and $v_1^c$ the negative and positive weak KAM solutions selected by the discounted procedure for the time--$1$ minimal action functional associated to $H_c$. Reasoning as in the previous sections, one checks that for $\alpha<c<\min(\beta,c_0)$, the function $u_1^c$ verifies 
$u_1^c(0)=u_1^c(X_2) = 0$ and $u_1^c(X_1)<0$ while  the function $v_1^c$ verifies 
$v_1^c(X_1)=v_1^c(X_2) = 0$ and $v_1^c(0)>0$. Hence $u_1^c$ and $v_1^c$ are not conjugated ``up to a constant''.

\newpage
\newpage
\mbox{}
\newpage

\chapter{Twist maps}
The results of discrete weak KAM theory will now be applied to the particular and founding case of Exact Conservative Twist Maps of the annulus. Excellent surveys on the subject are \cite{Atwist,Bang,MaFo} and we refer to those references for classical results that we leave without proof. The aim of this section is to present some of the results of \cite{AZ,AZ2,AZ3} obtained in collaboration with Marie--Claude Arnaud, shedding light on the structure of weak KAM solutions and minimizing orbits for exact conservative twist maps. Before giving the precise definition of an exact  conservative twist map, let us emphasize  that the examples of the previous section are closely related to such transformations. Indeed, for a Tonelli Hamiltonian on $T^*\T^1$, the Hamiltonian flow $\varphi^s_H$ is an exact conservative twist map for small times $s>0$. Hence the time $1$ map $\varphi^1_H$ is a composition of a finite number of conservative twist maps.
 
 \section{Definitions and variational structure}
 
 In the rest of this section $\T^1 = \R \slash \Z$ is the circle, the $2$--dimensional annulus is denoted by $\A = T^*\T^1 = \T^1 \times \R$.  The points of that annulus are denoted by   $(\theta,r)\in \A$. Throughout this section we will often consider objects coming from $\A$ lifted to $\R^2$, its universal cover, or from $\T^1$, lifted to $\R$. When done so, a $\sim$ will be added to the original name. For example if $g : \T^1 \to \R$ is any function then $\tilde g  :\R \to \R$ is the lift of $g$. 
 
 When dealing with products, $\T^1\times \T^1$, $\T^1 \times \R$, or $\R \times \R$, the notations $\pi_1$ and $\pi_2$ stand  for the projections on the first and second variable. 
 \subsection{Definition and Birkhoff's theorem}
 
 \begin{df}\label{twistmap}\rm
 An exact conservative twist map of the annulus (abbreviated ECTM) is a $C^1$--diffeomorphism $f : \A \to \A$ such that 
 \begin{enumerate}
 \item $f$ is isotopic to the identity map.
 \item $f$ is exact symplectic: by denoting $f(\theta,r) = \big(\Theta(\theta,r),R(\theta,r)\big)$ the $1$--form $R\dd \Theta - r\dd\theta$ is exact.
 \item $f$ twists verticals to the right:  if $\tilde f =(\widetilde \Theta , R): \R^2 \to \R^2 $ is a lift of $f$ to the universal cover of $\A$ then for all $\tilde \theta \in \R$, the map $r \mapsto  \widetilde \Theta(\tilde \theta ,r)$ is an increasing $C^1$--diffeomorphism of $\R$.
  \end{enumerate}

 \end{df}
\begin{rem}\rm\label{genfunrem}
\hspace{2em}
\begin{enumerate}
\item The first property means that there is a continuous path of diffeomorphisms of the annulus $(f_s)_{s\in [0,1]}$ such that $f_0$ is the identity map and $f_1=f$.  In other topological words, $f$ preserves both ends of the annulus in the sense that uniformly in $\theta$, $\lim\limits_{r\to +\infty} R(\theta,r) = +\infty $ and  $\lim\limits_{r\to -\infty} R(\theta,r) = -\infty $. 
\item The second point means that there is a function $S : \A \to \R$, called {\it generating function}, such that 
\begin{equation}\label{defS}
\dd S = R\dd \Theta -r\dd \theta=R\big( \frac{\partial \Theta }{\partial \theta }\dd \theta +\frac{\partial \Theta }{\partial r } \dd r \big)-r\dd \theta.
\end{equation}

The generating function is defined up to a constant.

Another way of formulating this is to say that if we denote by $\lambda = r\dd \theta$ (the $1$--form  called Liouville form) then $f^*\lambda - \lambda$ is exact:

This has two major implications.
\begin{enumerate}
\item[(i)]   (proved by using Stokes' furmula)  if $\mathcal C$ is a $C^1$ essential circle, meaning an injective $C^1$ closed curve going around the cylinder, then the algebraic area between $\mathcal C$ and $f(\mathcal C)$ is $0$. This means that points are not globally shifted up or down by $f$ in the annulus and that it is worth looking for invariant compact sets.
\item[(ii)] The second is that $f$ preserves the canonical symplectic $2$--form which is here the Lebesgue area form. Indeed, as this symplectic form is  $\dd \lambda = \dd r \wedge \dd \theta$, one finds that 
$$0 = \dd \dd S = \dd R \wedge \dd \Theta - \dd r \wedge \dd \theta = f^*( \dd r \wedge \dd \theta)- \dd r \wedge \dd \theta.$$
\end{enumerate}
\item 
The last point  implies that for $\tilde\theta_0\in \R$ and $\ww\Theta_0\in \R$ there is a unique $r\in \R $ such that $\ww\Theta(\tilde \theta_0,r) = \ww \Theta_0$. It means that $(\tilde \theta,r)\mapsto \big(\tilde \theta, \ww \Theta(\tilde\theta,r)\big)$ is a $C^1$--diffeomorphism that plays the role of Legendre transform and  $(\tilde \theta, \ww \Theta)$ will serve as coordinates of $\R^2$. The latter powerful idea is behind all the variational structure of twist maps.

The twist condition implies that 
 \begin{equation}\label{droite}
 \forall (\theta_0,r_0)\in \A, \quad \frac{\partial \Theta }{\partial r } (\theta_0,r_0)>0.
 \end{equation}
 In fact, it is sometimes replaced by the stronger uniform condition that there exists $\varepsilon>0$ such that 
$ \frac{\partial \Theta }{\partial r } (\theta_0,r_0)>\varepsilon$ for all $ (\theta_0,r_0)\in \A$.
\end{enumerate}
\end{rem}

One can at this stage state an important dynamical property of ECTM on invariant curves (see \cite{Birk}):

\begin{Th}[Birkhoff]\label{birk}
Let $\CC\subset \A$ be a continuous essential circle invariant by $f$, i.e. a continuous embedding of $\T^1$ that is not homotopic to a point and such that $f(\CC)=\CC$. Then there exists a Lipschitz function $g : \T^1\to \R$ such that $\CC=\big\{ \big( \theta , g(\theta)\big),\ \ \theta\in \T^1\big\} $ is the graph of $g$.
\end{Th}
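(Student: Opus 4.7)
The plan is to prove the two conclusions in turn: that $\CC$ is the graph of some function $g:\T^1\to\R$, and then that $g$ is Lipschitz. Since $\CC$ is an essential Jordan curve in $\A$, it separates the annulus into two connected open components $U^+$ and $U^-$ (containing the upper and lower ends respectively), both of which are $f$--invariant because $f$ preserves $\CC$ and, by Remark \ref{genfunrem}(1), preserves each end of $\A$. Define the envelopes
\begin{equation*}
g^+(\theta)=\inf\{r\in\R:(\theta,r)\in U^+\},\qquad g^-(\theta)=\sup\{r\in\R:(\theta,r)\in U^-\},
\end{equation*}
so that $g^-\leqslant g^+$ on $\T^1$, $g^+$ is lower semicontinuous and $g^-$ is upper semicontinuous, and both of their graphs are contained in $\CC$. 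The first objective is to prove the equality $g^+=g^-$, which is exactly the statement that $\CC$ is a graph.

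To this end, I would first observe that both $\mathrm{Graph}(g^+)$ and $\mathrm{Graph}(g^-)$ are themselves invariant under $f$: if $(\theta_0,g^+(\theta_0))$ lies on the boundary of $U^+$ from below, then by the twist condition \eqref{droite}, a small vertical segment just above this point is mapped by $f$ to a curve whose horizontal projection is a neighborhood of $\Theta(\theta_0,g^+(\theta_0))$, and invariance of $U^+$ forces the image of the boundary point to again be on the lower boundary of $U^+$. Now suppose for contradiction that $g^+>g^-$ on some open set, and consider the region $D=\{(\theta,r):g^-(\theta)<r<g^+(\theta)\}$, which lies entirely in $\CC$ (hence has Lebesgue measure zero along vertical fibers interior to $D$? no — $D$ has positive measure whenever $g^+>g^-$ on a set of positive measure). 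The twist condition, applied to the invariant graphs $g^\pm$, allows one to exhibit a sub-region of $D$ which is pushed strictly inside itself (or inside a proper sub-region) by $f$, contradicting the area-preserving nature of $f$ noted in Remark \ref{genfunrem}(2)(ii). This yields $g^+=g^-=:g$ and $\CC=\mathrm{Graph}(g)$.

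For the Lipschitz regularity of $g$, I would exploit both the twist condition for $f$ and for $f^{-1}$ (which twists to the left). Fix two points $P_i=(\theta_i,g(\theta_i))$, $i=1,2$ on $\CC$ with $g(\theta_1)>g(\theta_2)$, and consider their images $f(P_i)$ and $f^{-1}(P_i)$, all of which lie on $\CC$. The condition $\partial_r\widetilde\Theta>0$ (and its uniform version) quantifies how much the $\theta$-coordinate shifts when $r$ increases: a large vertical gap $g(\theta_1)-g(\theta_2)$ forces $\Theta$ and $\Theta\circ f^{-1}$ to spread apart horizontally in opposite directions from $\theta_1,\theta_2$. Compactness of $\CC$ (being a continuous image of $\T^1$) bounds the distortion $\partial_r\widetilde\Theta$ from above and below on a neighborhood of $\CC$, so that combining the two inequalities produced by $f$ and $f^{-1}$ produces $|g(\theta_1)-g(\theta_2)|\leqslant K|\theta_1-\theta_2|$ for a uniform constant $K$, depending only on the twist bounds and the compact strip containing $\CC$.

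The main obstacle is the invariance step for the envelopes $g^\pm$, together with the contradiction argument based on area preservation inside $D$: the envelopes $g^\pm$ are only semicontinuous, and the twist condition has to be combined carefully with the topology of $U^\pm$ to rule out that $f$ exchanges pieces of $\mathrm{Graph}(g^+)$ with pieces of $\mathrm{Graph}(g^-)$ across the "gap" region $D$. Handling this point cleanly is where the original proof of Birkhoff requires the most care, and it is also the step that truly uses that $f$ is a homeomorphism of $\A$ preserving orientation and both ends, rather than only its local twist property.
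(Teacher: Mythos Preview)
The paper does not give a proof of this theorem at all: it is stated as a classical result, attributed to Birkhoff \cite{Birk}, with a remark that it genuinely requires all the ECTM hypotheses and a reference to Arnaud's higher--dimensional generalization \cite{ArBir}. So there is nothing to compare your argument to within the paper itself.

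That said, your outline follows the standard modern route (define the upper and lower envelopes $g^\pm$ of the complementary regions, show $g^+=g^-$ via area preservation and twist, then get Lipschitz from the two--sided twist). Two comments on the content. First, your claim that $\mathrm{Graph}(g^+)$ and $\mathrm{Graph}(g^-)$ are each individually $f$--invariant is the delicate point, and your justification (``a small vertical segment just above \ldots'') is not yet an argument: a priori $f$ could send a point of $\mathrm{Graph}(g^+)$ into $\mathrm{Graph}(g^-)$ over a different base point, since the only thing you know is $f(\CC)=\CC$. The usual proofs instead work with the \emph{accessible} boundaries of $U^\pm$ and use the twist to show that the half--line $\{\theta_0\}\times[g^+(\theta_0),+\infty)$ maps into $U^+\cup\mathrm{Graph}(g^+)$; this is where the monotone twist really bites. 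Second, your area contradiction on $D$ is too vague as stated: one cannot just say ``a sub--region is pushed strictly inside itself'' without producing such a sub--region, and constructing it again requires the accessibility/ordering argument. You correctly flag both of these as the crux, but as written the proposal is an outline rather than a proof.
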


This important Theorem uses all properties of the ECTM. Indeed in \cite[Proposition 5.13]{LeC} are examples of non conservative twist maps leaving invariant essential circles that are not graphs. It was generalized a few decades later to higher dimensional settings by Arnaud \cite{ArBir}.

\subsection{The generating function, properties and consequences}

Until the end of this section, we choose once and for all a generating function $S: \A \to \R$ (see Remark \ref{genfunrem}),   $\s : \R^2 \to \R$ is the lift of $S$ and let us choose  $\f = (\widetilde \Theta, R) : \R^2 \to \R^2$ a lift  of $f$. By the twist condition (Definition \ref{twistmap} point $3$), the map $\L : (\tilde \theta, r) \mapsto \big(\tilde \theta,\widetilde\Theta(\tilde\theta,r)\big)$ is a $C^1$--diffeomorphsim of $\R^2$ (see the links with Definition \ref{twist condition}). In the sequel,  $(\tilde \theta, \widetilde \Theta)$ are systematically used as coordinates, meaning that  the function $\ww S \circ \L^{-1}$ is considered instead of $\ww S$. For readability issues, it is still written $\ww S( \tilde \theta, \ww \Theta)$.

The properties of $f$ translate into the following features of $S$:
\begin{pr}\label{genfun}
The function $S $, through $\ww S$, verifies the following:
\begin{enumerate}
\item The function $\ww S $ is $C^2$ and periodic:
$$\forall (\tilde \theta, \ww \Theta) \in \R^2, \quad \ww S (\tilde \theta+1, \ww \Theta+1) = \ww S  (\tilde \theta, \ww \Theta).$$
\item For all $(\tilde \theta, \ww \Theta, r, R)\in \r^4$, 
\begin{equation}\label{vartwist}
\tilde f ( \tilde \theta,r) =( \ww\Theta,R) \iff
\begin{cases}
r= - \dfrac{\partial \ww S}{\partial \tilde \theta}(\tilde \theta, \ww \Theta),
\vspace{1mm}\\
R=\dfrac{\partial \ww S}{\partial \ww \Theta} (\tilde \theta, \ww \Theta).
\end{cases}
\end{equation}
\item The twist condition translates as follows:
\begin{itemize}
\item for $\tilde \theta\in \R$ fixed, the map $\widetilde \Theta \mapsto \frac{\partial \ww S}{\partial \tilde \theta}(\tilde \theta, \ww \Theta)$ is a decreasing $C^1$--diffeomorphism of $\R$;
\item for $\widetilde \Theta\in \R$ fixed, the map $\tilde \theta \mapsto \frac{\partial \ww S}{\partial \widetilde \Theta}(\tilde \theta, \ww \Theta)$ is a decreasing $C^1$--diffeomorphism of $\R$.

\end{itemize}

\end{enumerate}
\end{pr}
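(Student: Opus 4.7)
The strategy is to exploit the twist condition once more: by its third point, the Legendre-like change of variables $\mathcal L : (\tilde\theta, r) \mapsto \bigl(\tilde\theta, \widetilde\Theta(\tilde\theta, r)\bigr)$ is a $C^1$-diffeomorphism of $\R^2$, so $(\tilde\theta, \widetilde\Theta)$ are legitimate $C^1$-coordinates. I would prove the three assertions by reading the defining identity \eqref{defS} in these coordinates and then invoking the relevant global structure of $\tilde f$ (equivariance for the lift, Schwarz's theorem, and the fact that $\tilde f \circ \mathcal L^{-1}$ is a diffeomorphism of $\R^2$).

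For assertion (2) and the $C^2$ regularity in (1), I would rewrite \eqref{defS} as $d\tilde S = R\, d\widetilde\Theta - r\, d\tilde\theta$ using $(\tilde\theta, \widetilde\Theta)$ as independent coordinates (so $r$ and $R$ are implicitly viewed as functions of $(\tilde\theta, \widetilde\Theta)$ via $\mathcal L^{-1}$ and $\tilde f$). Reading off coefficients gives $\partial_{\tilde\theta}\tilde S = -r(\tilde\theta, \widetilde\Theta)$ and $\partial_{\widetilde\Theta}\tilde S = R(\tilde\theta, \widetilde\Theta)$, which is exactly \eqref{vartwist}. Since $\tilde f$ and $\mathcal L^{-1}$ are $C^1$, these partial derivatives are $C^1$, so $\tilde S$ is $C^2$.

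For the periodicity in (1), I would use that $f$ is a diffeomorphism of $\A = \T^1 \times \R$ isotopic to the identity, which forces the lift to satisfy $\tilde f(\tilde\theta + 1, r) = \tilde f(\tilde\theta, r) + (1, 0)$; equivalently, $\widetilde\Theta(\tilde\theta + 1, r) = \widetilde\Theta(\tilde\theta, r) + 1$ and $R(\tilde\theta + 1, r) = R(\tilde\theta, r)$. In the $(\tilde\theta, \widetilde\Theta)$ picture this translates the deck transformation $T : (\tilde\theta, r) \mapsto (\tilde\theta+1, r)$ into $(\tilde\theta, \widetilde\Theta) \mapsto (\tilde\theta+1, \widetilde\Theta+1)$. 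Since $\tilde S$ arises as the lift of the well-defined function $S$ on $\A$, it is invariant under this $\Z$-action, up to the choice of the integration constant, which one normalizes to $0$. (Alternatively, one differentiates $\tilde S(\tilde\theta+1, \widetilde\Theta+1) - \tilde S(\tilde\theta, \widetilde\Theta)$ using the formulas of (2) and the equivariance of $r$ and $R$, concluding that this difference is constant.)

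For assertion (3), the first statement is direct: by \eqref{vartwist}, $\widetilde\Theta \mapsto \partial_{\tilde\theta}\tilde S(\tilde\theta, \widetilde\Theta) = -r(\tilde\theta, \widetilde\Theta)$ is, up to sign, the inverse of $r \mapsto \widetilde\Theta(\tilde\theta, r)$, which by Definition \ref{twistmap}(3) is an increasing $C^1$-diffeomorphism of $\R$; hence its inverse is, and $\widetilde\Theta \mapsto -r$ is a decreasing $C^1$-diffeomorphism. The second statement is the delicate one and is where I expect the main obstacle. Strict monotonicity comes from Schwarz's theorem applied to the $C^2$ function $\tilde S$:
\[
\partial_{\tilde\theta}\!\left(\partial_{\widetilde\Theta}\tilde S\right) = \partial_{\widetilde\Theta}\!\left(\partial_{\tilde\theta}\tilde S\right) = -\,\partial_{\widetilde\Theta}\, r(\tilde\theta, \widetilde\Theta) < 0,
\]
where negativity comes from the previous step (since $\widetilde\Theta \mapsto r$ is \emph{increasing}). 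The real work is surjectivity onto $\R$: I would view $\Psi := \tilde f \circ \mathcal L^{-1} : \R^2 \to \R^2$, which sends $(\tilde\theta, \widetilde\Theta) \mapsto \bigl(\widetilde\Theta,\, R(\tilde\theta, \widetilde\Theta)\bigr)$, as a composition of two $C^1$-diffeomorphisms of $\R^2$, hence itself a $C^1$-diffeomorphism. For fixed $\widetilde\Theta_0 \in \R$ and arbitrary $R_0 \in \R$, the unique preimage $(\tilde\theta_0, \widetilde\Theta_1) := \Psi^{-1}(\widetilde\Theta_0, R_0)$ must satisfy $\widetilde\Theta_1 = \widetilde\Theta_0$ (reading off the first coordinate), and hence $R(\tilde\theta_0, \widetilde\Theta_0) = R_0$. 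Combined with the strict negativity of the derivative, this yields the desired decreasing $C^1$-diffeomorphism, finishing the proof.
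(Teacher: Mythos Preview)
Your proposal is correct and follows essentially the same route as the paper's treatment, which is not a formal proof but rather the commentary in Remark~\ref{genfunbis}. For assertion~(2) the paper simply says it can be read off equation~\eqref{defS}; for the first bullet of~(3) it says exactly what you say, that $\widetilde\Theta \mapsto -\partial_{\tilde\theta}\ww S$ is the inverse of $r \mapsto \widetilde\Theta(\tilde\theta,r)$.

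The one genuine divergence is in the second bullet of~(3). The paper disposes of it in one sentence: it is ``an emanation of the fact that if $f$ is an ECTM that twists verticals to the right then $f^{-1}$ is an ECTM that twists verticals to the left.'' Concretely, for fixed $\widetilde\Theta$ the map $R \mapsto \pi_1\circ\tilde f^{-1}(\widetilde\Theta,R)=\tilde\theta$ is a decreasing $C^1$-diffeomorphism of $\R$, and $\tilde\theta \mapsto \partial_{\widetilde\Theta}\ww S(\tilde\theta,\widetilde\Theta)=R$ is its inverse. Your argument is instead self-contained: Schwarz gives strict monotonicity from the first bullet, and surjectivity comes from the global diffeomorphism $\Psi=\tilde f\circ\mathcal L^{-1}$. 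Your route avoids asserting (and proving) the twist property of $f^{-1}$ as a separate fact; the paper's route is more symmetric and explains conceptually why the two bullets are really the same statement applied to $f$ and $f^{-1}$. Both are fine.
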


Let us comment on the previous proposition.

\begin{rem}\rm \label{genfunbis}
\hspace{2em}
\begin{enumerate}
\item It can actually be established that given a function $\ww S$ satisfying the three points of the previous proposition the associated function $\tilde f$ is a lift of an ECTM.
\item  The second point of Proposition \ref{genfun} can be directly read on equation \eqref{defS}.
\item The two items of point $3$ are equivalent. The first one results from the fact that the map $ \Theta \mapsto -\frac{\partial \ww S}{\partial \tilde \theta}(\tilde \theta, \ww \Theta)$ is the inverse of the map 
$r\mapsto  \ww \Theta(\tilde \theta,r)$.

The second one is an emanation of the fact that if $f$ is an ECTM that twists verticals to the right then $f^{-1}$ is an ECTM that twists  verticals to the left. A direct consequence of any of these two facts is that
\begin{equation}\label{derneg}
\forall (\tilde \theta_0, \ww \Theta_0) \in \R^2, \quad \frac{\partial^2 \ww S}{\partial \tilde \theta \partial \ww\Theta} (\tilde \theta_0, \ww \Theta_0)<0.
\end{equation}
Another important aftermath is that $\ww S$ is superlinear, meaning that 
\begin{equation}\label{Tsuperlinear}\lim_{|\tilde \theta - \ww \Theta| \to +\infty } \frac{\ww S (\tilde \theta, \ww \Theta)}{|\tilde \theta - \ww \Theta|} = +\infty.
\end{equation}
\end{enumerate}
\item It is proven in \cite{MatherP} (see also \cite{MatherMeasure,Bang}) that finite compositions of ECTM also possess a similar generating function. Hence all variational results relying solely on the generating function and its minimizers also apply to such finite compositions of ECTM.  
\end{rem}

Let us now define the notion of minimizing chains and sequences.
\begin{df}\label{minimi}\rm
\begin{enumerate}
\item Let $a<b$ be two integers such that $b-a>1$ and $(\tilde\theta_i)_{a\leqslant i \leqslant b}\in \R^{b-a+1}$. The chain $(\tilde\theta_i)_{a\leqslant i \leqslant b}$ is termed {\it minimizing} if
$$\sum_{i=a}^{b-1}\ww S( \tilde\theta_i,\tilde\theta_{i+1}) = \min_{\substack{\{x_i\in \R, \ a\leqslant i \leqslant b\} \\ x_a = \tilde\theta_a , x_b=\tilde\theta_b}}   \sum_{i=a}^{b-1} \ww S( x_i,x_{i+1}).$$
\item An infinite sequence  $(\tilde\theta_i)_{-\infty < i \leqslant b}$ or  $(\tilde\theta_i)_{a\leqslant i<+\infty }$ or  $(\tilde\theta_i)_{i\in \Z}$ is minimizing if all its finite subchains are minimizing.
\end{enumerate}

\end{df}

Note that if $(\tilde\theta_i)_{a\leqslant i \leqslant b}\in \R^{b-a+1}$ is minimizing between $\tilde\theta_a$ and $\tilde\theta_b$ and if $a\leqslant a'<b' -1<b'\leqslant b$ then the subchain  $(\tilde\theta_i)_{a'\leqslant i \leqslant b'}\in \R^{b'-a'+1}$ is minimizing between $\tilde\theta_{a'}$ and $\tilde\theta_{b'}$ which justifies the second definition.
 
 The first order necessary condition for a chain to be  minimizing  translates as follows: 
 let  $(\tilde\theta_i)_{i\in I}$ be a minimizing chain or sequence and $i_0\in I$ not be an extremity of $I$, then 
 \begin{equation}\label{dEL}
\frac{\partial \ww S}{\partial \ww\Theta}(\tilde\theta_{i_0-1},\tilde\theta_{i_0}) + \frac{\partial \ww S}{\partial \tilde\theta}(\tilde\theta_{i_0},\tilde\theta_{i_0+1}) = 0.
 \end{equation}
 
 This, together with equation \eqref{vartwist}, yields the following proposition:
 
 \begin{pr}\label{critmin}
 Let  $(\tilde\theta_i)_{i\in I}$ be a minimizing chain or sequence and for all $i\in I$, let 
 $$r_i = -   \frac{\partial \ww S}{\partial \tilde\theta}(\tilde\theta_{i},\tilde\theta_{i+1})=\frac{\partial\ww S}{\partial \ww\Theta}(\tilde\theta_{i-1},\tilde\theta_{i}),$$
 (where only  the well defined term is taken if $i$ is an extremity of $I$). Then $(\tilde\theta_i,r_i)_{i\in I}$ is a piece of orbit of $\tilde f$ in the sense that for all $i\in I$ such that $i+1\in I$, then $\tilde f(\tilde\theta_i,r_i)= (\tilde\theta_{i+1},r_{i+1})$. 
 
 In particular, a minimizing chain is uniquely determined by two consecutive terms $(\tilde\theta_{i},\tilde\theta_{i+1})$.
 \end{pr}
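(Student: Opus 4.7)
The plan is to observe that the proposition is an essentially direct consequence of the two ingredients already proved above the statement, namely the Euler--Lagrange type condition \eqref{dEL} and the variational characterisation \eqref{vartwist} of the lift $\tilde f$ in terms of the generating function $\ww S$.

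First I would check that the definition of $r_i$ is consistent. Fix $i\in I$ with both $i-1$ and $i+1$ in $I$; then by \eqref{dEL} applied at $i_0 = i$ one has
\[
\frac{\partial \ww S}{\partial \ww\Theta}(\tilde\theta_{i-1},\tilde\theta_{i}) \;=\; -\frac{\partial \ww S}{\partial \tilde\theta}(\tilde\theta_{i},\tilde\theta_{i+1}),
\]
so the two expressions for $r_i$ coincide and the common value is unambiguous. At an extremity of $I$ (if any), only one of the two expressions makes sense, and we take that one by convention; there is nothing to check there.

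Next I would establish the orbit property. Fix $i\in I$ such that $i+1\in I$. By the very definition of $r_i$ and $r_{i+1}$,
\[
r_i = -\frac{\partial \ww S}{\partial \tilde\theta}(\tilde\theta_{i},\tilde\theta_{i+1}), \qquad r_{i+1} = \frac{\partial \ww S}{\partial \ww\Theta}(\tilde\theta_{i},\tilde\theta_{i+1}).
\]
Applying the equivalence \eqref{vartwist} in Proposition \ref{genfun} with $(\tilde\theta,\ww\Theta) = (\tilde\theta_i,\tilde\theta_{i+1})$ and $(r,R) = (r_i, r_{i+1})$ immediately gives $\tilde f(\tilde\theta_i,r_i) = (\tilde\theta_{i+1},r_{i+1})$, which is exactly the desired statement.

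Finally, for the uniqueness claim, given two consecutive terms $(\tilde\theta_i, \tilde\theta_{i+1})$ of a minimizing chain, the formula $r_i = -\partial_{\tilde\theta}\ww S(\tilde\theta_i,\tilde\theta_{i+1})$ determines $r_i$, and then the entire chain is recovered by iterating $\tilde f$ forward and $\tilde f^{-1}$ backward, since these are diffeomorphisms and the points $(\tilde\theta_j, r_j)$ form an orbit. I do not anticipate a genuine obstacle here: the result is really just a clean assemblage of \eqref{dEL} and \eqref{vartwist}, and the only mild care needed is bookkeeping at the endpoints of a finite chain.
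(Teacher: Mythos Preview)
Your proof is correct and follows exactly the approach indicated in the paper, which simply states that the proposition is a consequence of \eqref{dEL} together with \eqref{vartwist} without spelling out the details. Your write-up makes explicit precisely the bookkeeping the paper leaves implicit.
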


 Let us then introduce the notion of crossing and its consequences:
 
 \begin{df}\label{crossing}\rm
Two chains  $(\tilde\theta_i)_{i\in I}$ and $(\tilde\theta'_i)_{i\in I}$ are said to cross
 \begin{itemize}
 \item at some index $i_0\in I$ if $\tilde\theta_{i_0} = \tilde\theta'_{i_0}$;
 \item between two indices $i_0\in I$ and $i_0+1\in I$ if $(\tilde\theta_{i_0}  -\tilde\theta'_{i_0})(\tilde\theta_{i_0+1} - \tilde\theta'_{i_0+1})<0$.
 \end{itemize}

 \end{df}

\begin{rem}\label{remcross}\rm
If $(\tilde\theta_i)_{i\in I}$ and $(\tilde\theta'_i)_{i\in I}$ are distinct minimizing chains that cross at some index $i_0\in I$ that is an interior index, then the twist condition implies that
\begin{equation}\label{crossbis}
(\tilde\theta_{i_0-1}  -\tilde\theta'_{i_0-1})(\tilde\theta_{i_0+1} - \tilde\theta'_{i_0+1})<0.
\end{equation}
Indeed, let $r_{i_0} = -   \frac{\partial \ww S}{\partial \tilde\theta}(\tilde\theta_{i_0},\tilde\theta_{i_0+1})$ and $r'_{i_0} = -   \frac{\partial \ww S}{\partial \tilde\theta}(\tilde\theta_{i_0}',\tilde\theta_{i_0+1}')$. Necessarily $r_{i_0}\neq r_{i_0}'$ because otherwise  $(\tilde\theta_i)_{i\in I}$ and $(\tilde\theta'_i)_{i\in I}$ are the projections of the same orbit of $\tilde f$ (Proposition \ref{critmin}). Assume then for example that $r_{i_0} > r'_{i_0}$. We deduce from the twist condition that $ \tilde\theta_{i_0+1}>\tilde\theta'_{i_0+1}$ and that $\tilde\theta_{i_0-1}<\tilde\theta_{i_0-1}'$.

Inequality \eqref{crossbis} is often taken as the definition of crossing at $i_0$.
\end{rem}
 
One can then state Aubry \& Le Daeron's Fundamental Lemma:
 \begin{lm}\label{aubryfl}

  Let  $(\tilde\theta  ,\tilde\theta', \widetilde\Theta , \widetilde\Theta')\in \R^4$ such that  $(\tilde\theta  -\tilde\theta')(\widetilde\Theta - \widetilde\Theta')<0$. Then 
 $$\ww S(\tilde\theta,\widetilde\Theta)+\ww S (\tilde\theta',\widetilde\Theta')> \ww S(\tilde\theta,\widetilde\Theta')+\ww S(\tilde\theta',\widetilde\Theta).$$
 \end{lm}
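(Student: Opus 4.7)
The key ingredient will be the sign of the mixed partial derivative $\frac{\partial^2 \ww S}{\partial \tilde\theta\, \partial \ww\Theta}$, which is strictly negative everywhere by equation~\eqref{derneg} (itself a consequence of the twist condition in Proposition~\ref{genfun}.3). The entire statement will follow by recognizing the difference
\[
\Delta := \ww S(\tilde\theta,\widetilde\Theta)+\ww S(\tilde\theta',\widetilde\Theta') - \ww S(\tilde\theta,\widetilde\Theta') - \ww S(\tilde\theta',\widetilde\Theta)
\]
as a ``discrete mixed partial'' of $\ww S$ on the rectangle with opposite corners $(\tilde\theta,\widetilde\Theta')$ and $(\tilde\theta',\widetilde\Theta)$, and expressing it as a double integral of $\frac{\partial^2 \ww S}{\partial \tilde\theta\, \partial \ww\Theta}$.

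The plan is as follows. First, reduce to the case $\tilde\theta < \tilde\theta'$ (the other case is obtained by swapping the roles of the two pairs, which leaves $\Delta$ unchanged). Under this reduction, the crossing hypothesis $(\tilde\theta-\tilde\theta')(\widetilde\Theta-\widetilde\Theta')<0$ forces $\widetilde\Theta' < \widetilde\Theta$. Next, apply the fundamental theorem of calculus twice: writing
\[
\ww S(\tilde\theta,\widetilde\Theta) - \ww S(\tilde\theta,\widetilde\Theta') = \int_{\widetilde\Theta'}^{\widetilde\Theta} \frac{\partial \ww S}{\partial \ww\Theta}(\tilde\theta,y)\,\dd y,
\]
and similarly for the pair at $\tilde\theta'$, yields
\[
\Delta = \int_{\widetilde\Theta'}^{\widetilde\Theta}\!\!\left[\frac{\partial \ww S}{\partial \ww\Theta}(\tilde\theta,y) - \frac{\partial \ww S}{\partial \ww\Theta}(\tilde\theta',y)\right]\dd y = -\int_{\widetilde\Theta'}^{\widetilde\Theta}\!\!\int_{\tilde\theta}^{\tilde\theta'} \frac{\partial^2 \ww S}{\partial \tilde\theta\,\partial \ww\Theta}(x,y)\,\dd x\,\dd y.
\]
Since the integration region $[\tilde\theta,\tilde\theta']\times[\widetilde\Theta',\widetilde\Theta]$ has strictly positive measure and the integrand is strictly negative by~\eqref{derneg}, the inner double integral is strictly negative. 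After the minus sign, $\Delta>0$, which is exactly the claimed inequality.

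I expect no real obstacle here: the argument is a one-variable monotonicity plus integration trick. The only point requiring any care is the sign bookkeeping (one must keep track that $\tilde\theta<\tilde\theta'$ and $\widetilde\Theta'<\widetilde\Theta$ are compatible only under the crossing hypothesis), and the appeal to $C^2$ regularity of $\ww S$ from Proposition~\ref{genfun}.1 to justify applying Fubini and the fundamental theorem of calculus to the mixed partial. Alternatively one could give a purely one-dimensional proof by setting $h(y):=\ww S(\tilde\theta',y)-\ww S(\tilde\theta,y)$ and observing that $h'(y)=\frac{\partial \ww S}{\partial \ww\Theta}(\tilde\theta',y)-\frac{\partial \ww S}{\partial \ww\Theta}(\tilde\theta,y)<0$ by the strict monotonicity stated in the second item of Proposition~\ref{genfun}.3, so that $h$ is strictly decreasing and $h(\widetilde\Theta')>h(\widetilde\Theta)$---this avoids invoking~\eqref{derneg} directly and relies only on the twist condition in its diffeomorphism form.
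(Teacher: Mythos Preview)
Your proof is correct and follows essentially the same approach as the paper: both express the difference $\Delta$ as a double integral of the mixed partial $\frac{\partial^2 \ww S}{\partial \tilde\theta\,\partial \ww\Theta}$ and invoke its strict negativity from~\eqref{derneg}. The paper parametrizes the rectangle by convex combinations $\tilde\theta_s$, $\ww\Theta_t$ rather than reducing to an ordered case, which absorbs the sign bookkeeping into the factor $(\tilde\theta-\tilde\theta')(\ww\Theta'-\ww\Theta)$, but the argument is the same.
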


\begin{proof}
It follows from the chain of equalities (in which we use the notations $\ww\Theta_t =  t \ww\Theta'+(1-t)\ww\Theta$ and $\tilde \theta_t = t\tilde\theta' + (1-t)\tilde\theta$),
\begin{multline*}
\ww S(\tilde\theta,\widetilde\Theta') - \ww S(\tilde\theta,\widetilde\Theta) +\ww S(\tilde\theta',\widetilde\Theta)-\ww S (\tilde\theta',\widetilde\Theta')   \\
=  \int_0^1 \Big[ \frac{\partial \ww S}{\partial \ww\Theta} (\tilde \theta,  \ww\Theta_t) -  \frac{\partial \ww S}{\partial \ww\Theta} (\tilde \theta',  \ww\Theta_t)\Big]\dd t\cdot (\ww\Theta'-\ww\Theta) \\
= \int_0^1 \int_0^1 \frac{\partial^2 \ww S}{\partial \tilde \theta \partial \ww\Theta} (\tilde\theta_s,\ww\Theta_t) \dd s \dd t \cdot (\tilde \theta-\tilde\theta') (\ww\Theta'-\ww\Theta) <0.
\end{multline*}
The last inequality is a consequence of the crossing hypothesis and of \eqref{derneg}.
\end{proof}

 We may now state Aubry's Non--crossing Lemma:
 \begin{pr}\label{noncrossing}
 Let $(\tilde\theta_i)_{i\in I}$ and $(\tilde\theta'_i)_{i\in I}$ be two distinct minimizing chains. Then one of the following holds
 \begin{itemize}
 \item  $(\tilde\theta_i)_{i\in I}$ and $(\tilde\theta'_i)_{i\in I}$ don't cross,
 \item  $(\tilde\theta_i)_{i\in I}$ and $(\tilde\theta'_i)_{i\in I}$ cross exactly once,
 \item   $I=[a,b]$ is a finite interval and $(\tilde\theta_i)_{i\in I}$ and $(\tilde\theta'_i)_{i\in I}$ cross exactly twice, at $a$ and $b$.
 \end{itemize}
 In the last case, both $(\tilde\theta_i)_{i\in I}$ and $(\tilde\theta'_i)_{i\in I}$ are maximal in the sense that neither one is a strict subchain of a minimizing chain.
 \end{pr}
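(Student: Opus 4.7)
The plan is to prove Proposition~\ref{noncrossing} by contradiction using an exchange operation combined with Aubry and Le~Daeron's Fundamental Lemma~\ref{aubryfl} and the first-order condition \eqref{dEL}. Call a pair of crossings $P_1<P_2$ of the two chains \emph{essentially interior} in $I$ if there exists an index of $I$ strictly outside $[P_1,P_2]$ (either to its left or to its right). The heart of the argument is to show that the existence of two essentially interior crossings is impossible. The three listed alternatives will then follow, since only $I=[a,b]$ with $P_1=a$ and $P_2=b$ prevents a pair of crossings from being essentially interior.

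Given two essentially interior crossings $P_1<P_2$, I form the exchanged chains $\hat\theta,\hat\theta'$ that coincide with $\tilde\theta,\tilde\theta'$ outside $[P_1,P_2]$ and are swapped inside. On any finite sub-interval $[a',b']\subseteq I$ witnessing the essential interiority, $\hat\theta$ has the same endpoints as $\tilde\theta$ and $\hat\theta'$ the same as $\tilde\theta'$, so minimality yields $A(\hat\theta)+A(\hat\theta')\geqslant A(\tilde\theta)+A(\tilde\theta')$, where $A(\cdot)$ denotes the total action of a chain over the relevant indices. When both crossings occur between indices (say between $i_j$ and $i_j+1$ for $j=1,2$), the Fundamental Lemma applied at each exchange boundary gives strictly
\[
\ww S(\tilde\theta_{i_j},\tilde\theta_{i_j+1})+\ww S(\tilde\theta'_{i_j},\tilde\theta'_{i_j+1}) > \ww S(\tilde\theta_{i_j},\tilde\theta'_{i_j+1})+\ww S(\tilde\theta'_{i_j},\tilde\theta_{i_j+1}),
\]
and summing over $j=1,2$ produces the reverse strict inequality $A(\tilde\theta)+A(\tilde\theta')>A(\hat\theta)+A(\hat\theta')$, the desired contradiction. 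When a crossing occurs at an index $i_0$ (so $\tilde\theta_{i_0}=\tilde\theta'_{i_0}$), the exchange at $i_0$ contributes zero to the difference of actions, so the previous inequality chain becomes $A(\hat\theta)+A(\hat\theta')=A(\tilde\theta)+A(\tilde\theta')$. Combined with the individual inequalities, this forces $\hat\theta$ to be itself minimizing; hence $\hat\theta$ satisfies \eqref{dEL} at the interior index $i_0$. Subtracting this from the Euler--Lagrange relation for $\tilde\theta$ at $i_0$ leaves
\[
\frac{\partial \ww S}{\partial \tilde\theta}(\tilde\theta_{i_0},\tilde\theta_{i_0+1}) = \frac{\partial \ww S}{\partial \tilde\theta}(\tilde\theta_{i_0},\tilde\theta'_{i_0+1}),
\]
so the injectivity of $\ww\Theta\mapsto\frac{\partial \ww S}{\partial \tilde\theta}(\tilde\theta_{i_0},\ww\Theta)$ coming from Proposition~\ref{genfun}(3) yields $\tilde\theta_{i_0+1}=\tilde\theta'_{i_0+1}$, and Proposition~\ref{critmin} then gives $\tilde\theta\equiv\tilde\theta'$, contradiction.

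For the maximality statement in the remaining case $I=[a,b]$, $P_1=a$, $P_2=b$, suppose that $(\tilde\theta_i)$ extends to a minimizing chain $(\bar\theta_i)$ on $[a',b']\supsetneq[a,b]$. Patch $\tilde\theta'$ into $\bar\theta$ by setting $\hat\theta_i=\tilde\theta'_i$ for $i\in[a,b]$ and $\hat\theta_i=\bar\theta_i$ otherwise. Since $\tilde\theta'$ and $\bar\theta|_{[a,b]}=(\tilde\theta_i)_{i\in[a,b]}$ share endpoints and both achieve the minimal action between them, $A(\hat\theta)=A(\bar\theta)$ on $[a',b']$, so $\hat\theta$ is itself minimizing on $[a',b']$. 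Now the rigidity step of the previous paragraph applies verbatim at the index $a$ (or $b$), which is an interior index of $[a',b']$, producing $\tilde\theta'_{a+1}=\tilde\theta_{a+1}$ and then $\tilde\theta\equiv\tilde\theta'$ via Proposition~\ref{critmin}, contradicting distinctness. Thus $(\tilde\theta_i)$ is maximal, and the same argument applied symmetrically shows that $(\tilde\theta'_i)$ is maximal.

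Main obstacle: the subcase where at least one crossing sits at an index rather than between two indices. There the exchange preserves the total action, so the Fundamental Lemma does not directly yield a strict inequality; the contradiction must instead be extracted from the rigidity package (Euler--Lagrange for the exchanged minimizing chain, strict monotonicity in Proposition~\ref{genfun}(3), and two-term uniqueness from Proposition~\ref{critmin}). The same rigidity step drives the maximality statement, so executing it cleanly is the linchpin of the proof.
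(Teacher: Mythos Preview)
Your proposal is correct and follows essentially the same strategy as the paper: an exchange argument, with Lemma~\ref{aubryfl} giving a strict action drop when a crossing lies between two indices, and a rigidity argument (equality of actions forces the exchanged chain to be minimizing, then two consecutive coincidences propagate via Proposition~\ref{critmin}) when a crossing sits at an index. The paper phrases the rigidity step as ``$\hat\theta$ and $\tilde\theta$ agree at two consecutive indices'' and invokes Proposition~\ref{critmin} directly, whereas you unpack this into the Euler--Lagrange relation \eqref{dEL} plus the strict monotonicity of Proposition~\ref{genfun}(3); these are the same argument. For the maximality clause, the paper instead shows that the three-term chain $(\tilde\theta'_{a-1},\tilde\theta_a,\tilde\theta_{a+1})$ cannot be minimizing and builds an explicit competitor with strictly smaller action; your patching of $\tilde\theta'$ into the extension $\bar\theta$ and reuse of the rigidity step is a slightly cleaner variant of the same idea. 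One small point: your sentence ``so the previous inequality chain becomes $A(\hat\theta)+A(\hat\theta')=A(\tilde\theta)+A(\tilde\theta')$'' is only literally true when \emph{both} crossings are at indices; in the mixed case (one between indices, one at an index) the Fundamental Lemma still yields a strict inequality and the contradiction is immediate, so this case should be dismissed in one line before the rigidity step.
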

 
 \begin{proof}
 To prove this proposition, let us assume that $(\tilde\theta_i)_{i\in I}$ and $(\tilde\theta'_i)_{i\in I}$ cross (at least) twice and that one of those crossing is not an extremity of $I$. There are several cases to deal with, we only cover two of them and let the other ones as an exercise as the ideas are the same.
 
 \underline{First case}: there are $\alpha<\beta$ such that $(\tilde\theta_i)_{i\in I}$ and $(\tilde\theta'_i)_{i\in I}$ cross between $\alpha$ and $\alpha+1$ and between $\beta$ and $\beta+1$. Then define two chains $(\hat\theta_i)_{i\in [\alpha, \beta+1]}$ and $(\hat\theta'_i)_{i\in [\alpha, \beta+1]}$ as follows:
 $$\hat\theta_i = 
 \begin{cases}
\tilde\theta_i' \ \ \textrm{if} \ \ i\in [\alpha+1,\beta]\\
\tilde\theta_i\ \ \textrm{if} \ \ i\in \{\alpha,\beta+1\};
 \end{cases}
$$
  $$\hat\theta'_i = 
 \begin{cases}
\tilde\theta_i \ \ \textrm{if} \ \ i\in [\alpha+1,\beta]\\
\tilde\theta_i'\ \ \textrm{if} \ \ i\in \{\alpha,\beta+1\}.
 \end{cases}
$$
Note that  $(\tilde\theta_i)_{i\in [\alpha, \beta+1]}$ and  $(\hat\theta_i)_{i\in [\alpha, \beta+1]}$ have same endpoints and so do  $(\tilde\theta'_i)_{i\in [\alpha, \beta+1]}$ and $(\hat\theta'_i)_{i\in [\alpha, \beta+1]}$ hence
$$\sum_{i=\alpha}^{\beta}\ww S( \tilde\theta_i,\tilde\theta_{i+1})\leqslant  \sum_{i=\alpha}^{\beta}\ww S( \hat\theta_i,\hat\theta_{i+1}); \quad \textrm{and} \quad 
\sum_{i=\alpha}^{\beta}\ww S( \tilde\theta'_i,\tilde\theta'_{i+1}) \leqslant   \sum_{i=\alpha}^{\beta}\ww S( \hat\theta'_i,\hat\theta'_{i+1})  .
$$
 Moreover
\begin{multline}
\sum_{i=\alpha}^{\beta}\ww S( \tilde\theta_i,\tilde\theta_{i+1})+\sum_{i=\alpha}^{\beta}\ww S( \tilde\theta'_i,\tilde\theta'_{i+1}) -\sum_{i=\alpha}^{\beta}\ww S( \hat\theta_i,\hat\theta_{i+1})-\sum_{i=\alpha}^{\beta}\ww S( \hat\theta'_i,\hat\theta'_{i+1}) \\
= \ww S( \tilde\theta_\alpha,\tilde\theta_{\alpha+1})+\ww S( \tilde\theta_\beta,\tilde\theta_{\beta+1})+ \ww S( \tilde\theta'_\alpha,\tilde\theta'_{\alpha+1})+\ww S( \tilde\theta'_\beta,\tilde\theta'_{\beta+1}) \\
- \ww S( \tilde\theta_\alpha,\tilde\theta'_{\alpha+1})-\ww S( \tilde\theta_\beta,\tilde\theta'_{\beta+1})- \ww S( \tilde\theta'_\alpha,\tilde\theta_{\alpha+1})-\ww S( \tilde\theta'_\beta,\tilde\theta_{\beta+1})>0 ;
\end{multline}
where the last inequality is obtained by two applications of Aubry's Fundamental Lemma \ref{aubryfl}.
This contradicts either the fact that $(\tilde\theta_i)_{i\in [\alpha, \beta+1]}$ is minimizing or that $(\tilde\theta'_i)_{i\in [\alpha, \beta+1]}$ is minimizing.

\underline{Second case}: there are  $\alpha<\beta$ such that $(\tilde\theta_i)_{i\in I}$ and $(\tilde\theta'_i)_{i\in I}$ cross at $\alpha$ and at $\beta$ and such that $\alpha-1\in I$.

 Then define two chains $(\hat\theta_i)_{i\in [\alpha-1, \beta]}$ and $(\hat\theta'_i)_{i\in [\alpha-1, \beta]}$ as follows:
 $$\hat\theta_i = 
 \begin{cases}
\tilde\theta_i' \ \ \textrm{if} \ \ i\in [\alpha,\beta]\\
\tilde\theta_i\ \ \textrm{if} \ \ i\in \{\alpha-1,\alpha\};
 \end{cases}
$$
  $$\hat\theta'_i = 
 \begin{cases}
\tilde\theta_i \ \ \textrm{if} \ \ i\in [\alpha,\beta]\\
\tilde\theta_i'\ \ \textrm{if} \ \ i\in \{\alpha-1,\alpha\}.
 \end{cases}
$$
We purposely insist on the fact that $\hat\theta_\alpha=\hat\theta'_\alpha=\tilde\theta_\alpha=\tilde\theta'_\alpha$.
Note that  $(\tilde\theta_i)_{i\in [\alpha-1, \beta]}$ and  $(\hat\theta_i)_{i\in [\alpha-1, \beta]}$ have same endpoints and so do  $(\tilde\theta'_i)_{i\in [\alpha-1, \beta]}$ and $(\hat\theta'_i)_{i\in [\alpha-1, \beta]}$. Hence
$$\sum_{i=\alpha-1}^{\beta-1}\ww S( \tilde\theta_i,\tilde\theta_{i+1})\leqslant  \sum_{i=\alpha-1}^{\beta-1}\ww S( \hat\theta_i,\hat\theta_{i+1}); \quad \textrm{and} \quad 
\sum_{i=\alpha-1}^{\beta-1}\ww S( \tilde\theta'_i,\tilde\theta'_{i+1}) \leqslant   \sum_{i=\alpha-1}^{\beta-1}\ww S( \hat\theta'_i,\hat\theta'_{i+1})  .
$$

Moreover the definitions of  $(\hat\theta_i)_{i\in [\alpha-1, \beta]}$ and $(\hat\theta'_i)_{i\in [\alpha-1, \beta]}$ yield
\begin{equation*}
\sum_{i=\alpha}^{\beta}\ww S( \tilde\theta_i,\tilde\theta_{i+1})+\sum_{i=\alpha}^{\beta}\ww S( \tilde\theta'_i,\tilde\theta'_{i+1}) -\sum_{i=\alpha}^{\beta}\ww S( \hat\theta_i,\hat\theta_{i+1})-\sum_{i=\alpha}^{\beta}\ww S( \hat\theta'_i,\hat\theta'_{i+1}) =0.
\end{equation*}
It follows that both inequalities above are equalities and that  $(\tilde\theta_i)_{i\in [\alpha-1, \beta]}$ and  $(\hat\theta_i)_{i\in [\alpha-1, \beta]}$ are both minimizing. As they coincide for both indices $\alpha-1$ and $\alpha$, they are equal (see Proposition \ref{critmin}). The same argument shows that  $(\tilde\theta'_i)_{i\in [\alpha-1, \beta]}$ and  $(\hat\theta'_i)_{i\in [\alpha-1, \beta]}$ are equal, and finally we have proved that  $(\tilde\theta_i)_{i\in [\alpha-1, \beta]}$ and  $(\tilde\theta'_i)_{i\in [\alpha-1, \beta]}$ are equal. This in turn implies that $(\tilde\theta_i)_{i\in I} = (\tilde\theta'_i)_{i\in I}$  which is a contradiction. 

The last assertion of the Proposition remains to be proven. The previous argument could be adapted here. Let us though propose another one. Assume by contradiction that $(\tilde\theta_i)_{i\in [a,b]}$ and $(\tilde\theta'_i)_{i\in [a-1, b]}$ are minimizing and verify $\tilde\theta_a=\tilde\theta'_a$, $\tilde\theta_b=\tilde\theta'_b$ (the other cases are treated the same way). Then by the minimization hypothesis it is  inferred that 
 $$ \sum_{i=a}^{b-1} \ww S (\tilde\theta_i,\tilde\theta_{i+1})=\sum_{i=a}^{b-1} \ww S ( \tilde\theta'_i,\tilde\theta'_{i+1}).$$
Moreover, note that the chain $(\tilde \theta_{a-1}', \tilde \theta_{a} ,\tilde \theta_{a+1})$ is not minimizing, otherwise there would be equality $\tilde \theta_{a+1}=\tilde \theta_{a+1}'$ contradicting the beginning of the Proposition. It follows there exists $\hat \theta \in \R$ such that 
$$\ww S(\tilde \theta_{a-1}', \hat \theta)+\ww S (\hat \theta ,\tilde \theta_{a+1})<\ww S(\tilde \theta_{a-1}', \tilde \theta_{a})+\ww S (\tilde \theta_{a} ,\tilde \theta_{a+1}).$$

The chain $(\hat\theta_i)_{i\in [a-1, b]}$ is then defined as  follows:
 $$\hat\theta_i = 
 \begin{cases}
 \tilde\theta_{a-1}' \ \ \textrm{if} \ \ i=a-1\\
\hat \theta \ \ \textrm{if} \ \ i=a\\
\tilde\theta_i\ \ \textrm{if} \ \ i\in [a+1,b];
 \end{cases}
$$

inducing that 
\begin{multline*}
\sum_{i=a-1}^{b-1} \ww S ( \hat\theta_i,\hat\theta_{i+1}) = \ww S(\tilde \theta_{a-1}', \hat \theta)+\ww S (\hat \theta ,\tilde \theta_{a+1})+\sum_{i=a+1}^{b-1} \ww S (\tilde\theta_i,\tilde\theta_{i+1}) \\
< \ww S(\tilde \theta_{a-1}', \tilde \theta_{a})+\ww S (\tilde \theta_{a} ,\tilde \theta_{a+1})+\sum_{i=a+1}^{b-1} \ww S (\tilde\theta_i,\tilde\theta_{i+1}) 
=\sum_{i=a-1}^{b-1} \ww S ( \tilde\theta'_i,\tilde\theta'_{i+1}).
\end{multline*}
This contradicts the fact that $(\tilde\theta'_i)_{i\in [a-1, b]}$ is minimizing.
  \end{proof}
  The previous non--crossing Proposition can be enforced.
  \begin{df}\rm
  When $I$ is suitably infinite, one says that two chains $(\tilde\theta_i)_{i\in I}$ and $(\tilde\theta'_i)_{i\in I}$ are respectively
  \begin{itemize}
  \item $\alpha$--asymptotic if $\lim\limits_{i\to -\infty} |\tilde\theta_i-\tilde\theta'_i| = 0$; we then say $(\tilde\theta_i)_{i\in I}$ and $(\tilde\theta'_i)_{i\in I}$ cross at $-\infty$;
  \item $\omega$--asymptotic if $\lim\limits_{i\to +\infty} |\tilde\theta_i-\tilde\theta'_i| = 0$; we then say $(\tilde\theta_i)_{i\in I}$ and $(\tilde\theta'_i)_{i\in I}$ cross at $+\infty$.
  \end{itemize}
  \end{df}
  Using this terminology, we state without proof (see for instance \cite[Lemma 3.9]{Bang}) the following strengthening of Proposition \ref{noncrossing}:
  \begin{pr}\label{cross-asymptotic}
Let $(\tilde\theta_i)_{i\in I}$ and $(\tilde\theta'_i)_{i\in I}$ be two distinct minimizing chains. Assume furthermore that the sequence $|\tilde\theta_{i+1} - \tilde\theta_i |$ is bounded. Then $(\tilde\theta_i)_{i\in I}$ and $(\tilde\theta'_i)_{i\in I}$ cross at most once, except possibly at both ends of $I$.

 In the latter case, both chains are maximal minimizing chains.
  \end{pr}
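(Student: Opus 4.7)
The plan is to argue by contradiction, adapting the double-swap technique from the proof of Proposition \ref{noncrossing} to handle crossings at $\pm\infty$. Only the new cases need attention: an asymptotic crossing together with at least one other crossing that is either another asymptotic one at the opposite end (which is the allowed case, so that one must prove maximality) or an interior crossing (which must be ruled out). Suppose therefore that the two distinct minimizing chains $(\tilde\theta_i)$ and $(\tilde\theta'_i)$ are $\alpha$-asymptotic and have, in addition, an interior crossing. Using Remark \ref{remcross} (and, if the interior crossing occurs exactly at an index rather than strictly between two consecutive indices, enlarging the swap block by one index on each side), one reduces to the existence of an interior $i_0 \in I$ for which the strict between-indices crossing condition $(\tilde\theta_{i_0} - \tilde\theta'_{i_0})(\tilde\theta_{i_0+1} - \tilde\theta'_{i_0+1}) < 0$ holds.

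Mimicking Case 1 of the proof of Proposition \ref{noncrossing}, fix $a \ll i_0 < b$ in $I$ and define two competitor chains $(\bar\theta_i)$ and $(\bar\theta'_i)$ on $[a,b]$ by interchanging the middle blocks indexed by $[a+1, i_0]$, while keeping the values at $a$ and $b$. The endpoints at $a$ and $b$ match those of the originals, so the minimality of $(\tilde\theta_i)$ and $(\tilde\theta'_i)$ on $[a,b]$ gives that the sum of the two cost differences is non-negative. A telescoping computation shows that all the inner contributions cancel, leaving the quantity $A(a) + B(i_0)$, where $A(a)$ and $B(i_0)$ are the Aubry-type discrepancies at the two swap locations $(a, a+1)$ and $(i_0, i_0+1)$, exactly of the form appearing in Lemma \ref{aubryfl}. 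Aubry's Fundamental Lemma applied at the strict crossing $(i_0, i_0+1)$ yields $B(i_0) < 0$, a fixed strictly negative quantity independent of $a$, while the $\alpha$-asymptoticity forces $\tilde\theta_a - \tilde\theta'_a \to 0$ and $\tilde\theta_{a+1} - \tilde\theta'_{a+1} \to 0$ as $a \to -\infty$; combined with the bounded-increments hypothesis and the $C^2$ regularity of $\ww S$, this forces $A(a) \to 0$. For $a$ sufficiently negative, $A(a) + B(i_0) < 0$, contradicting the minimality of $(\tilde\theta_i)$ or $(\tilde\theta'_i)$ on $[a,b]$.

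The main technical obstacle is exactly the control of $A(a)$: one must know that the first-order Taylor estimate for $\ww S$ along the tails has uniformly bounded constants as $a \to -\infty$. This is precisely where the bounded-increments assumption is used essentially, as it keeps the four arguments of $\ww S$ appearing in $A(a)$ in a fixed compact region of $\R^2$ on which the partial derivatives of the $C^2$ function $\ww S$ are bounded. The symmetric case of an $\omega$-asymptotic crossing combined with an interior crossing is handled identically (or by running the argument for the ECTM $f^{-1}$). Finally, for the maximality assertion, suppose one of the chains, say $(\tilde\theta_i)_{i \in I}$, were a strict subchain of a longer minimizing chain $(\tilde\tau_i)_{i \in J}$, with $J \supsetneq I$. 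Extending $(\tilde\theta'_i)$ by one step outside $I$ using the orbit formula of Proposition \ref{critmin} produces another minimizing chain on a larger index set that still crosses $(\tilde\tau_i)$ at the remaining end of $I$ and now also at the point that was an endpoint of $I$ (so at an interior point of the larger set), placing us exactly in the interior-plus-end configuration just excluded, a contradiction.
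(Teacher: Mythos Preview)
The paper does not prove this proposition; it is stated with a reference to \cite[Lemma 3.9]{Bang}. Your argument for the main part (an $\alpha$- or $\omega$-asymptotic crossing together with an interior crossing) is the standard one and is correct: the double-swap on $[a,b]$ with $a\to -\infty$ (or $b\to +\infty$), the fixed negative Aubry discrepancy $B(i_0)$ at the strict interior crossing, and the vanishing of the boundary discrepancy $A(a)$ thanks to asymptoticity, bounded increments, and the diagonal periodicity of $\ww S$, together give the contradiction. You should make the role of the periodicity $\ww S(x+1,y+1)=\ww S(x,y)$ explicit: the points $(\tilde\theta_a,\tilde\theta_{a+1})$ themselves escape to infinity, and it is only after reducing modulo this $\Z$-action that bounded increments confine the arguments to a compact strip around the diagonal on which $\ww S$ is uniformly $C^1$.

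There is, however, a genuine gap in your maximality argument. You write that ``extending $(\tilde\theta'_i)$ by one step outside $I$ using the orbit formula of Proposition~\ref{critmin} produces another minimizing chain''. Proposition~\ref{critmin} only gives the forward implication (minimizing $\Rightarrow$ orbit); extending a minimizing chain by one step along the orbit need not preserve minimality, so you cannot invoke the already-established interior-plus-end case on the extended pair. A correct approach is to transplant the last part of the proof of Proposition~\ref{noncrossing} and combine it with your limiting trick: if, say, $I=(-\infty,b]$, the chains are $\alpha$-asymptotic with $\tilde\theta_b=\tilde\theta'_b$, and $(\tilde\theta'_i)$ extends minimizingly to $b+1$, then since the chains are distinct the $3$-chain $(\tilde\theta_{b-1},\tilde\theta_b,\tilde\theta'_{b+1})$ fails the Euler--Lagrange condition at $b$ and is therefore not minimizing, giving a fixed gain $\varepsilon>0$ from an improved middle point $\hat\theta$. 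One then builds, for each $a\ll 0$, \emph{two} competitors on $[a,b+1]$ (one against each original, swapping the block $[a+1,b-1]$ and inserting $\hat\theta$ at $b$ on the appropriate side), sums the two cost inequalities so that the interior blocks cancel, and obtains $A(a)-\varepsilon\geqslant 0$ with $A(a)\to 0$, a contradiction. This is the step you are missing.
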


\begin{rem}\rm
A consequence of our forthcoming analysis  will be that the hypothesis concerning the boundedness of $|\tilde\theta_{i+1} - \tilde\theta_i |$ is in fact automatically verified.
  \end{rem}

 This section ends with a fundamental property of minimizing sequences. It is by no means a direct consequence of the previous stated facts. The proof is quite tricky and we refer the interested reader to  \cite[Theorem 3.15]{Bang} for details. 
 
 \begin{Th}[Aubry--Mather] \label{AM}
 Let $(\tilde\theta_i)_{i\in \Z}$ be a minimizing sequence. Then the following hold:
 \begin{enumerate}
 \item For all $(a,b)\in \Z^2$, $(\tilde\theta_i)_{i\in \Z}$ and $(\tilde\theta_{i-a}+b)_{i\in \Z}$ do not cross.
 \item There exists a homeomorphism  $\tilde g : \R \to \R$  verifying $\tilde g(x+1) =\tilde  g(x)+1$ for all $x\in \R $ and such that\footnote{$g$ is the lift of an orientation preserving circle homeomorphism.}  
 $$\forall i\in \Z, \quad \tilde g(\tilde\theta_i) = \tilde\theta_{i+1}.$$
 \item It follows then from Poincaré theory that there exists a real number $\rho\in \R$ called {\it  rotation number} such that 
 $$\forall i\in \Z, \quad |\tilde\theta_i - \tilde\theta_0 - i\rho|<1.$$
 In particular, $\lim\limits_{|i|\to +\infty} \dfrac{\tilde\theta_i}{i} = \rho$.
 \end{enumerate}

 \end{Th}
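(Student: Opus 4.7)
The key observation, via the periodicity in Proposition \ref{genfun}, is that for any $(a,b)\in\Z^2$ the shifted sequence $\tau_i:=\tilde\theta_{i-a}+b$ is itself a minimizing $\Z$-chain. Assertion (1) thus reduces to showing that two distinct minimizing $\Z$-chains $\sigma$ and $\tau$ cannot cross in the sense of Definition \ref{crossing}. Proposition \ref{cross-asymptotic} already forbids more than one interior crossing, so the task is to rule out the remaining single one.

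I would split this into two sub-cases. If $\sigma$ and $\tau$ cross strictly between indices $i_0$ and $i_0+1$, Aubry's Fundamental Lemma \ref{aubryfl} applied to $(\sigma_{i_0},\tau_{i_0},\sigma_{i_0+1},\tau_{i_0+1})$ gives immediately a strict one-step gain. If instead they touch at an index $i_0$, so $\sigma_{i_0}=\tau_{i_0}=:y$, Proposition \ref{critmin} forces $\sigma_{i_0\pm 1}\neq\tau_{i_0\pm 1}$, say $\sigma_{i_0+1}>\tau_{i_0+1}$; writing the discrete Euler--Lagrange equation \eqref{dEL} for $\sigma$ and $\tau$ at $i_0$ and using the strict mixed-derivative sign \eqref{derneg} shows that the first variation of $x\mapsto\ww S(\sigma_{i_0-1},x)+\ww S(x,\tau_{i_0+1})$ at $x=y$ is non-zero, and symmetrically for $x\mapsto\ww S(\tau_{i_0-1},x)+\ww S(x,\sigma_{i_0+1})$; hence a small interior perturbation yields the strict two-step inequality
$$h_2(\sigma_{i_0-1},\tau_{i_0+1})+h_2(\tau_{i_0-1},\sigma_{i_0+1})<h_2(\sigma_{i_0-1},\sigma_{i_0+1})+h_2(\tau_{i_0-1},\tau_{i_0+1}),$$
where $h_2$ denotes the optimal two-step action. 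In both sub-cases, picking a large window $[a,b]\ni i_0$ and using Proposition \ref{noncrossing} together with the sign switch at $i_0$ forces $\sigma_a<\tau_a$ and $\sigma_b>\tau_b$; plugging the local strict gain into hybrid chains then produces two chains with endpoint pairs $(\sigma_a,\sigma_b)$ and $(\tau_a,\tau_b)$ and total action strictly less than that of $\sigma$ plus that of $\tau$, contradicting minimality.

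Assertion (2) now follows by a standard ordering argument. On the $\Z$-invariant set $E=\{\tilde\theta_i+k:(i,k)\in\Z^2\}$ set $\tilde g(\tilde\theta_i+k):=\tilde\theta_{i+1}+k$; well-definedness and strict monotonicity are both direct consequences of (1) applied with $(a,b)=(i-j,\ell-k)$, because any touching between $\sigma$ and the corresponding shifted sequence is prohibited unless the two sequences coincide, while any strict ordering at one index propagates to every other index. This $\Z$-equivariant strictly increasing map on $E$ extends to a homeomorphism of $\R$ by taking one-sided suprema and filling the remaining gaps (present only in the rational rotation number case) by an order-preserving $\Z$-equivariant interpolation. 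Assertion (3) is then immediate from Poincar\'e's rotation number theory applied to the induced circle homeomorphism $g$: the lift $\tilde g$ admits a rotation number $\rho$, and the classical estimate $|\tilde g^n(x)-x-n\rho|<1$ evaluated at $x=\tilde\theta_0$ gives the stated bound, whence also $\tilde\theta_i/i\to\rho$.

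The main obstacle lies in the touching sub-case of (1): it is there that one must exploit the strict mixed-derivative inequality \eqref{derneg} of the twist condition through the Euler--Lagrange equation to turn a mere touching into a quantifiably strict two-step perturbation gain, and then combine this local strict improvement with the global endpoint configuration to contradict minimality on the full window. Parts (2) and (3) are essentially formal consequences once (1) is in hand.
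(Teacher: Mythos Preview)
The paper does not prove this theorem; it explicitly says ``The proof is quite tricky and we refer the interested reader to \cite[Theorem 3.15]{Bang} for details.'' So your sketch cannot be compared with an argument in the paper, only assessed on its own merits.

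Parts (2) and (3) of your sketch are standard and fine once (1) is in hand. The problem is the core step of (1). You write that ``plugging the local strict gain into hybrid chains then produces two chains with endpoint pairs $(\sigma_a,\sigma_b)$ and $(\tau_a,\tau_b)$ and total action strictly less than that of $\sigma$ plus that of $\tau$.'' But this is impossible: $\sigma$ and $\tau$ are by hypothesis minimizing between their own endpoints, so any pair of chains with endpoints $(\sigma_a,\sigma_b)$ and $(\tau_a,\tau_b)$ has total action at least $A(\sigma)+A(\tau)$. The natural swap at the crossing produces chains with the \emph{exchanged} endpoints $(\sigma_a,\tau_b)$ and $(\tau_a,\sigma_b)$, and the strict gain from Lemma~\ref{aubryfl} gives $A(\alpha)+A(\beta)<A(\sigma)+A(\tau)$ for those --- which contradicts nothing, since $\alpha$ and $\beta$ have different endpoints from $\sigma,\tau$. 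This is not a minor packaging issue: it is precisely why the proof is ``tricky.'' The genuine argument uses the translate structure $\tau=T^{(a,b)}\sigma$ in an essential way (iterating the translation to produce many crossings with a fixed strict gain $\delta$ on a single long window of $\sigma$, or equivalently comparing with periodic configurations), and your sketch does not capture this mechanism. The difficulty is the same in both your sub-cases, not only the touching one.

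A minor side remark: you invoke Proposition~\ref{cross-asymptotic}, which carries the hypothesis that $|\tilde\theta_{i+1}-\tilde\theta_i|$ is bounded; at this point in the paper that bound is not yet available (Remark~\ref{autoB} derives it \emph{from} consequences of the present theorem). For $\Z$-indexed chains Proposition~\ref{noncrossing} already gives ``at most one crossing,'' so cite that instead.
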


 \section{Examples and Moser's Theorem}
 \subsection{Notions of integrability}
 
 Our first family of examples are called {\it integrable ECTMs}. They are of the following form:
 \begin{ex}\label{Tintegrable}\rm 
 Let $\rho : \R \to \R$ be an increasing diffeomorphism. Then the map $f_\rho : (\theta,r) \mapsto (\theta+\rho(r),r)$ is an ECTM.
 \end{ex}
 
 Such maps have a particularly simple dynamics. Indeed, for all $r\in \R$ if we denote by $\CC_r = \{(\theta,r),\ \ \theta \in \T^1\}$ the canonical circle of height $r$, then $\CC_r$ is invariant by $f_\rho$ and the dynamics of $f_\rho$ restricted to $\CC_r$ is a rotation of angle $\rho(r)$ (mod $1$). One checks that $f_\rho $ is the time $1$ map of the Tonelli Hamiltonian flow of $H_\rho : (\theta,r)\mapsto \varrho(r)$ where $\varrho$ is a primitive of $\rho$.
 
 By extension, let us introduce several classes of ECTM exhibiting similar dynamical features.

\begin{df}\rm
Let $f : \A \to \A$ be an ECTM. We say $f$ is integrable if there exists a $C^1$ area preserving diffeomorphism $\Phi : \A \to \A$ and $\rho : \R \to \R$ such that $f = \Phi^{-1} \circ \f_\rho \circ \Phi$. 
\end{df}
\begin{rem}\rm
In the case of an integrable ECTM, the function $\rho $ will automatically be an increasing diffeomorphism. However, note that the previous definition can be  extended trivially to any area preserving transformation of the annulus.

In the previous definition, the transformation $f$ is the time $1$ map of the Hamiltonian flow\footnote{This is a general fact and proves that the group of Hamiltonian diffeomorphisms is a normal subgroup of the group of symplectomorphisms (see \cite[Exercise 7 page 471]{Audin}).} of $H_\rho \circ \Phi$. 
\end{rem}
 
 The following notions are obtained by considering twist maps  for which the annulus $\A$ is foliated by invariant circles. Bare in mind that by Birkhoff's Theorem \ref{birk}, such circles are automatically Lipschitz graphs.  The integrable case corresponds to a classical foliation. The next notions are obtained by weakening the regularity of the invariant foliation. We therefore start by defining what are those non--regular foliations.
 
 \begin{df}\label{C0fol}\rm
 A continuous foliation $\FF = \{\FF_c, \ c\in \R\}$, (otherwise called lamination) of $\A$ by graphs is defined through   a continuous function $\eta : (\theta,c) \mapsto \eta_c(\theta)$ from $\A$ to $\R$ such that
 \begin{itemize}
 \item for all $\theta \in \R$, the map $c \mapsto \eta_c(\theta)$ is a (increasing) homeomorphism of $\R$,
 \item for all $c\in \R$, \  $\int_{\T^1} \eta_c(\theta)\dd \theta = c$.
 \end{itemize}
For $c\in \R$ the circle $\FF_c = \big\{\big(\theta, \eta_c(\theta)\big), \ \ \theta\in \T^1\big\}$ is the leaf of the continuous foliation at cohomology $c$. It then follows that $\A $ is the disjoint union of the leaves of the foliation.

Reciprocally, any such function $\eta : (\theta,c) \mapsto \eta_c(\theta)$ from $\A$ to $\R$ defines a continuous foliation of $\A$ by graphs.
 \end{df}

 \begin{rem}\rm
 The second point  in the definition of $\eta$ is a normalization  condition. It could be dropped to give an equivalent notion. However, it is so convenient we prefer to include it directly in the definition. 
 \end{rem}
 
 For an intermediate regularity, arises naturally  the notion of Lipschitz foliation:

  \begin{df}\label{Lipfol}\rm
 A Lipschitz  foliation $\FF = \{\FF_c, \ c\in \R\}$  of $\A$ by graphs is the data of a continuous  foliation with associated function $\eta : (\theta,c) \mapsto \eta_c(\theta)$ from $\A$ to $\R$ such that
$$\exists K>0, \forall (c_1,c_2)\in \R^2,\forall \theta \in \T^1, \quad \frac1K |c_1-c_2| \leqslant |\eta_{c_1}(\theta) - \eta_{c_2}(\theta) | \leqslant K|c_1-c_2|.
$$
 \end{df}

It is now at grasp to define weaker notions of integrability for twist maps:

\begin{df}\rm\label{integrabletwist}
\hspace{2em}
\begin{itemize}
\item An ECTM $f$ of the annulus is $C^0$--integrable if there exists a continuous foliation,   $\FF = \{\FF_c, \ c\in \R\}$,  of $\A$ by graphs, such that for all $c\in \R$, $f(\FF_c)=\FF_c$.
\item  An ECTM $f$ of the annulus is Lipschitz integrable if there exists a Lipschitz foliation,   $\FF = \{\FF_c, \ c\in \R\}$,  of $\A$ by graphs, such that for all $c\in \R$, $f(\FF_c)=\FF_c$.
\end{itemize}
\end{df}
  
  Those various notions of integrability each have dynamical consequences on the underlying ECTM. Such results are presented without proof in the last Section \ref{lastlast}.
Research on those notions is quite frustrating though, starting from the fact that it is still conjectural whether those three notions of integrability are different.  For instance, there are no known examples of ECTM that are $C^0$--integrable but not integrable (meaning that all the leaves are smooth).  
 
 On the bright side, it is proved in \cite{AZ,AZ2} that  continuous foliations by graphs that are invariant by an ECTM  must satisfy some particular properties. This is used to exhibit foliations that  cannot  be invariant by an ECTM.
 
 \begin{Th}\label{folpasinv}
 Let $\FF = \{\FF_c, \ c\in \R\}$ be the foliation associated to the function $\eta_c(\theta) = c+\varepsilon(c)\cos(2\pi \theta)$, where $\varepsilon : \R \to \R$ is a non $C^1$, Lipschitz, function with Lipschitz constant less than $(2\pi)^{-1}$. Then $\FF$ is not invariant by any ECTM.
 \end{Th}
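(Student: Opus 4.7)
The plan is to argue by contradiction: assume there exists an ECTM $f$ preserving the foliation $\FF$, and show that this forces $\varepsilon$ to be of class $C^1$, contradicting the hypothesis.

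First, I would check that $\FF$ is a legitimate continuous foliation by graphs in the sense of Definition \ref{C0fol}. The Lipschitz bound $\mathrm{Lip}(\varepsilon) < (2\pi)^{-1}$ ensures that $c \mapsto \eta_c(\theta) = c + \varepsilon(c)\cos(2\pi\theta)$ is a strict homeomorphism of $\R$ for each $\theta$ (its Lipschitz bounds are between $1-(2\pi)^{-1}$ and $1+(2\pi)^{-1}$), and the normalization $\int_{\T^1}\eta_c\,\dd\theta = c$ is manifest. Assuming $f$ preserves the foliation, each leaf $\FF_c$ is an essential invariant Lipschitz graph. By the standard correspondence between invariant Lipschitz graphs and weak KAM subsolutions for twist maps (using the generating function $S$ and Proposition \ref{genfun}), each leaf corresponds to a critical subsolution $u_c : \T^1 \to \R$ at cohomology $c$, satisfying $u_c'(\theta) = \eta_c(\theta) - c = \varepsilon(c)\cos(2\pi\theta)$, that is, up to an additive constant,
\begin{equation*}
u_c(\theta) = \frac{\varepsilon(c)}{2\pi}\sin(2\pi\theta).
\end{equation*}
Since every point of $\T^1$ lies on an invariant leaf, and by Aubry's non--crossing (Proposition \ref{noncrossing} and Proposition \ref{cross-asymptotic}) all the orbits on $\FF_c$ are minimizing, one shows that the projected Aubry set $\AA_c$ at cohomology $c$ is the whole circle $\T^1$. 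In particular $u_c$ is the unique critical subsolution at cohomology $c$ up to constants, and it is automatically $C^{1,1}$ (in fact $C^\infty$ in $\theta$ here).

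Secondly, I would invoke the regularity result for $C^0$--integrable ECTMs which is the crux of \cite{AZ,AZ2}: for a continuous invariant foliation by graphs of an ECTM, the family of normalized weak KAM solutions $c \mapsto u_c$ depends in a $C^1$ manner on the cohomology $c$, pointwise in $\theta$. The underlying mechanism, which I would sketch, is that since $\AA_c = \T^1$ for every $c$, Mather's $\beta$ function is strictly convex on the interval of rotation numbers spanned by the foliation, so its Legendre dual $\alpha$ is $C^1$ with $\alpha'(c) = \rho(c)$ (the rotation number on $\FF_c$). Combining this with the exact symplectic condition $f^*\lambda - \lambda = \dd S$ and the calibration identity $u_c\bigl(g_c(\theta)\bigr) - u_c(\theta) = \ww S\bigl(\theta, g_c(\theta)\bigr) - c\bigl(g_c(\theta)-\theta\bigr) + \alpha(c)$ valid on the whole leaf, one obtains, by differentiating with respect to $c$ (using the envelope structure of the family of Lax--Oleinik minimizers), that $c \mapsto u_c(\theta)$ is differentiable with continuous derivative for each fixed $\theta$.

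Thirdly, I would apply this conclusion to the explicit formula for $u_c$ above. Evaluating at $\theta = 1/4$ yields
\begin{equation*}
u_c(1/4) = \frac{\varepsilon(c)}{2\pi}\sin(\pi/2) = \frac{\varepsilon(c)}{2\pi},
\end{equation*}
and the $C^1$ dependence of the left-hand side in $c$ forces $\varepsilon$ to be of class $C^1$, contradicting the standing hypothesis that $\varepsilon$ is not $C^1$.

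The main obstacle, and where the depth of \cite{AZ,AZ2} really enters, is the middle step: establishing the $C^1$ dependence of the weak KAM solutions $u_c$ on the cohomology $c$ under the sole assumption of $C^0$--integrability. The rest of the argument (the correspondence between leaves and subsolutions, the computation of $u_c$, and the final numerical contradiction) is largely routine once this regularity is granted. A self-contained version of the middle step requires carefully relating the rotation number $\rho(c)$, the differentiability of Mather's $\alpha$, the symplectic structure on $\A$, and the family of calibrating orbits on each leaf, which is precisely the technical heart of the Arnaud--Zavidovique program for integrable twist maps.
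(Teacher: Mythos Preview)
Your proposal is correct and follows essentially the same route as the paper: argue by contradiction, identify the weak KAM solutions as $u_c(\theta)=\frac{\varepsilon(c)}{2\pi}\sin(2\pi\theta)$, invoke the regularity result from \cite{AZ,AZ2} that invariance by an ECTM forces $u$ to be $C^1$ in $(\theta,c)$, and deduce that $\varepsilon$ is $C^1$. The only cosmetic difference is that the paper phrases the key regularity step via the Lipschitz--integrability route (Theorem~\ref{AZLip} together with the straightening criterion Theorem~\ref{folfol}), while you invoke the $C^0$--integrability characterization (Theorem~\ref{AZC0}) directly; both are equivalent packagings of the same result.
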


Though the full proof of this Theorem goes beyond the scope of the present text, some explanations will be provided at the very end, in Section \ref{lastlast}.

\subsection{The standard family}

According to Remark \ref{genfunbis} it is very easy to construct twist maps with no particular property.
However, let us mention a historically very important family of examples.

\begin{ex}\rm \label{standardfamily}
For all $\varepsilon \in \R$ let us define the map $F_\varepsilon : \A \to \A$ by

$$\forall (\theta,r)\in \A, \quad F_\varepsilon(\theta,r) = \Big( \theta + r - \frac{\varepsilon}{2\pi} \sin(2\pi \theta) , r- \frac{\varepsilon}{2\pi} \sin(2\pi \theta)\Big).$$
\end{ex}

This family serves as a test for the state--of--the--art  research on twist maps.  On certain aspects, the picture is not glorious. For instance, a conjecture of Sinai is that $F_{\varepsilon}$ has positive metric entropy for all $\varepsilon \neq 0$ (with respect to the Lebesgue measure) even though nobody knows how to prove it even for a single parameter. More on such questions is discussed in works of Berger and Berger, Turaev \cite{Ber1,Ber2}. A picture of the dynamics for some arbitrary value of $\varepsilon$ is presented in the introduction of \cite{Gole} and clearly shows that such a dynamics is very rich. On the one hand, for $\varepsilon = 0$ the map $F_0$ is the most basic example among  integrable ECTMs. For $\varepsilon$ small, KAM theory applies and many invariant circles with diophantine rotation numbers persist (see \cite{Her1,Her2} and references therein). On the other hand, a theorem of Mather \cite{Mather43} states that for $\varepsilon >\frac43$, $F_\varepsilon$ has no invariant essential circle. 

What happens to the circles when they disappear is a challenging question: what is their regularity, the dynamics on them at the last parameter...
After they disappear  Aubry--Mather theory provides an answer  as to what they become.

\subsection{General twist maps and a Theorem of Moser}

Amongst other examples and open questions, the nature of possible invariant circles of twist maps is still not well understood. In the recent \cite{avila}, the authors construct a $C^1$ ECTM having an invariant circle that is not everywhere differentiable with a minimal (irrational) restricted dynamics on it\footnote{Recall that a homeomorphism of the circle having an irrational rotation number is either minimal (all orbits are dense) or not. In the first case, it is conjugated to the irrational rotation. In the second case, it is only semi--conjugated to the irrational rotation as there are wandering intervals. We then  speak of a Denjoy counterexample. A Theorem of Denjoy states that Denjoy counterexamples cannot be of class $C^2$.}.
This answers partially a question of Arnaud \cite{A11} asking whether such $C^2$ maps exist. 

Concerning invariant essential circles on which the dynamics is irrational and not minimal (that of a Denjoy counterexample) examples have been constructed in \cite{Her1} where a $C^1$ invariant circle is constructed, and in \cite{A13} where a non--differentiable invariant curve is constructed (thus answering a question of Mather). By Denjoy's Theorem, such an invariant circle cannot be $C^2$.

Last, concerning the inverse problem, in \cite{Arnaud3} are provided examples of essential circles that are graphs of Lipschitz maps but cannot be invariant by any ECTM.

 Before turning back to weak KAM theory, let us mention an important Theorem of Moser \cite{Moser} stating that any regular ECTM can be represented by a Hamiltonian function. The drawback is that the latter is not autonomous (i.e. it is time--dependent).
 
 \begin{Th}[Moser]\label{moser}
 Let $f: \A \to \A$ be an ECTM. There exists a $C^2$ time--dependent Hamiltonian $H : \R \times \A \to \R$ such that 
 \begin{itemize}
 \item 
 for all $t \in \R$, $ H(t ,\cdot, \cdot ) : \A \to \R$ is a Tonelli Hamiltonian;
 \item for all $(t, \theta, r) \in \R\times \A$, then $ H(t+1,\theta,r) =    H(t,\theta,r)$;
\item the ECTM $ f$ is the time $1$ map $\varphi_0^1$ of the Hamiltonian flow of $ H$ generated by the equations
\begin{equation}\label{tHamilton}\begin{cases}
\dot{\theta}(t) = \partial_r  H\big(t,\theta(t),r(t)\big), \\
\dot{r}(t) = -\partial_{\theta} H\big(t,\theta(t),r(t)\big).
\end{cases}
\end{equation}
 \end{itemize}
 \end{Th}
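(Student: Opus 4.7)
The strategy is to build a smooth isotopy of (lifts of) twist maps from a simple free-flow time-$t$ map to $\tilde f$, then recover a time-dependent Hamiltonian as the infinitesimal generator of the isotopy via the Hamilton--Jacobi relation. Let $\tilde S$ be a generating function of a lift $\tilde f$ of $f$. For $t\in (0,1]$, I propose the interpolation
$$\tilde S_t(\tilde\theta,\widetilde\Theta) := \frac{1-t}{2t}(\widetilde\Theta-\tilde\theta)^2 + t\,\tilde S(\tilde\theta,\widetilde\Theta).$$
A direct computation gives
$$\partial^2_{\tilde\theta\,\widetilde\Theta}\tilde S_t = -\frac{1-t}{t} + t\,\partial^2_{\tilde\theta\,\widetilde\Theta}\tilde S,$$
which is strictly negative throughout $(0,1]$; moreover $\tilde S_t$ is $(1,1)$-periodic and superlinear in $|\widetilde\Theta-\tilde\theta|$, and as $t\to 0^+$ it behaves like the time-$t$ action $(\widetilde\Theta-\tilde\theta)^2/(2t)$ of the free flow of $H_0(r)=r^2/2$. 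By the converse to Proposition~\ref{genfun} indicated in Remark~\ref{genfunbis}, each $\tilde S_t$ is the generating function of a lift $\tilde f_t$ of an ECTM; at $t=1$ we recover $\tilde f_1=\tilde f$.

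Next, I extract the Hamiltonian from the smooth isotopy $t\mapsto \tilde f_t$. Since each $\tilde f_t$ is exact symplectic, this isotopy is generated by a unique time-dependent Hamiltonian $H$ through Hamilton's equations \eqref{tHamilton}. The explicit formula
$$H(t,\widetilde\Theta,R) = -\partial_t \tilde S_t(\tilde\theta,\widetilde\Theta),$$
with $\tilde\theta$ determined implicitly from $R=\partial_{\widetilde\Theta}\tilde S_t(\tilde\theta,\widetilde\Theta)$, comes from the standard Hamilton--Jacobi identification of $\tilde S_t$ with a time-$t$ action; the implicit function theorem applies because $\partial^2_{\tilde\theta\widetilde\Theta}\tilde S_t<0$. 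I then verify that $H$ is $1$-periodic in $\theta$ (from the periodicity of $\tilde S_t$); that $H$ is strictly convex in $r$, since a computation yields $\partial^2_{RR}H = -1/\partial^2_{\tilde\theta\widetilde\Theta}\tilde S_t>0$; and that $H$ is superlinear in $r$ uniformly in $(t,\theta)$, which follows from the quadratic growth of $\tilde S_t$ in $|\widetilde\Theta-\tilde\theta|$ and \eqref{Tsuperlinear}.

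Finally, to enforce $1$-periodicity of $H$ in the time variable together with the required smoothness, I reparametrize time: choose a smooth non-decreasing $\phi:[0,1]\to[0,1]$ with $\phi(0)=0$, $\phi(1)=1$, and all derivatives of $\phi$ vanishing at both endpoints, and set $\tilde S_t^{\mathrm{new}} := \tilde S_{\phi(t)}$. The resulting isotopy (and hence the associated $H$) is constant in $t$ near $t=0$ and $t=1$, so extending $H$ by $1$-periodicity in $t$ preserves $C^2$ regularity, while the time-$1$ map remains $\tilde f$.

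The main technical obstacle lies in the Tonelli verification and the regularity of $H$: making the implicit function $\tilde\theta(t,\widetilde\Theta,R)$ smooth enough to conclude $H\in C^2$ requires the uniform version of the twist condition noted after \eqref{droite}, and the uniform superlinearity of $H$ in $r$ hinges on the quadratic contribution from the interpolation dominating the lower-order behavior of $\tilde S$. A secondary subtlety is the regularity mismatch between the $C^1$ hypothesis on $f$ (giving a $C^2$ generating function) and the $C^2$ conclusion on $H$; this is handled either by a prior smooth approximation of $f$ or by accepting that one naturally produces $H$ with the same regularity as $\tilde S$.
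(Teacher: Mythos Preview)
The paper does not prove this theorem; it is stated with a reference to Moser's original article and accompanied only by the remark that Moser's construction interpolates between the identity and $f$ by a family of twist maps whose generating functions coincide with the Lagrangian action. Your strategy---interpolating generating functions and recovering $H$ via the Hamilton--Jacobi relation $H=-\partial_t\tilde S_t$---is precisely Moser's method, and your verification of the twist condition for $\tilde S_t$ and of $\partial^2_{RR}H>0$ is correct.

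There is, however, a genuine gap in your final reparametrization step, and it is not merely cosmetic. You take $\phi:[0,1]\to[0,1]$ with $\phi(0)=0$, $\phi(1)=1$ and all derivatives vanishing at the endpoints, and assert that the new isotopy is ``constant in $t$ near $t=0$ and $t=1$''. This is false as stated: vanishing of derivatives at a point does not make $\phi$ locally constant, so $\tilde S_{\phi(t)}$ still carries the singular factor $\frac{1-\phi(t)}{2\phi(t)}$ as $t\to 0$. If instead you force $\phi$ to be genuinely constant on neighborhoods of the endpoints, then near $t=0$ one has $\phi\equiv 0$, where your $\tilde S_0$ is undefined; and near $t=1$ the generating vector field (hence $H$) vanishes identically, which destroys both strict convexity and superlinearity---the resulting $H$ is not Tonelli on those time intervals.

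The standard repair is to avoid interpolating from the identity. Start instead from a fixed nondegenerate twist map, e.g.\ the time-$1$ map of $H_0(r)=r^2/2$ with generating function $\tfrac12(\widetilde\Theta-\tilde\theta)^2$, and interpolate linearly in $t\in[0,1]$ between that and $\tilde S$; the associated $H$ is then Tonelli for every $t$ and has a well-defined limit at $t=0$. One then arranges $1$-periodicity by matching $H(0,\cdot)$ and $H(1,\cdot)$ through an additional smoothing that preserves convexity (this is where the real work in Moser's paper lies). Your closing remarks on the uniform twist and on the $C^1$/$C^2$ regularity mismatch are accurate and identify the remaining technical content.
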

 
 Note that in Moser's original article, only the case of a $C^\infty$ ECTM is treated. However, it is stated in the paper that the proof adapts to less regular functions. Moreover, it can be checked that Moser's construction allows to interpolate between Identity and $f$ by twist maps using the family $(\varphi_0^t)_{t\in [0,1]}$. The generating functions $(\widetilde S_t)_{t\in (0,1]}$ of those twist maps are given by the relations
 $$\tilde \varphi_0^t (\tilde \theta,r) = (\widetilde\Theta , R) \ \iff  \widetilde S_t(\tilde\theta, \widetilde\Theta) = \int_0^t \widetilde L\big(s, \gamma_{( \tilde \theta,r)}(s) , \dot \gamma_{( \tilde \theta,r)}(s) \big),$$
 where the Lagrangian function $L : \R\times \A \to \R$ is defined as previously by 
 $$\forall (t,\theta, v)\in \R \times \A , \quad L(t,\theta, v) = \sup_{r\in \R} rv - H(t,\theta,r),$$
 and the curve $\gamma_{( \tilde \theta,r)}$ is defined by
 $$\forall s\in \R, \quad \gamma_{( \tilde \theta,r)}(s) = \pi_1 \circ \tilde \varphi_0^s (\tilde \theta,r).$$
 Here, the generating function is given by the Lagrangian (minimal) action.
 
 \section{Weak KAM for twist maps}
 We now turn to the study of the Lax--Oleinik semigroup and weak KAM solutions for an ECTM, $f: \A \to \R$. As was already  apparent in the previous Chapter dedicated to examples, there is actually a one parameter family of semigroups, indexed by the cohomology. The underlying metric space here is $X = \T^1$ that is obviously compact. We will precisely build a $1$--parameter family of cost functions using that the first cohomology group $H^1(\T^1,\R)$ is isomorphic to $\R$. This appears in the early works of Mather on twist maps that later led him to define the $\alpha$ function in a higher dimensional Lagrangian setting (\cite{Mather1}). The latter idea was also adapted in \cite{ZJMD} on more general metric spaces.  
 
As previously, $S$ is a generating function of $f$,   $\s : \R^2 \to \R$ is the lift of $S$ and we choose  $\f = (\widetilde \Theta, R) : \R^2 \to \R^2$ a lift  of $f$.  The canonical projection is denoted by $\pi : \R\to \T^1$.

\begin{df}\rm \label{coutc}
Let $c\in \R$ that we will refer to as a cohomology class. The cost function $S^c : \T^1 \times \T^1 \to \R$ is defined by
$$\forall (\theta,\Theta)\in  \T^1 \times \T^1, \quad S^c(\theta, \Theta) = \inf_{\substack{ \pi(\tilde \theta) = \theta \\ \pi(\ww\Theta) = \Theta   } } \ww S(\tilde \theta, \ww \Theta) + c(\tilde \theta - \ww\Theta).$$
\end{df}
\begin{rem}\rm\label{remSc}
\hspace{2em}
\begin{itemize}
\item In the previous definition, using the translation invariance of $\ww S$, it is also possible to fix a $\tilde \theta \in [-1,1]$ such that $\pi(\tilde \theta) = \theta$ and take the infimum solely on $\ww \Theta$.
\item As $\ww S$ is superlinear \eqref{Tsuperlinear} it is easily seen that the previous infimum is actually a minimum. Moreover, if as asserted, we restrict to $\tilde \theta \in [-1,1]$ then there exists $K>0$ (depending on $c$) such that the minimum on $\ww\Theta$ can be restricted to $\ww\Theta\in [-K,K]$.
\item Given $\theta \in \T^1$ and $\tilde \theta \in \R$ such that $\pi(\tilde \theta) = \theta$, the derivative $\partial_2 S^c ( \theta , \Theta)$ exists if and only if there exists a unique $\ww \Theta_0$ realizing the minimum  $\ S^c ( \theta , \Theta) =  \ww S(\tilde \theta, \ww \Theta_0) + c(\tilde \theta - \ww\Theta_0)$. In such case, $\partial_2 S^c ( \theta , \Theta) = \partial_2 S (\tilde  \theta , \ww\Theta_0) -c$.
\item  Similarly, given $\Theta \in \T^1$ and $\widetilde \Theta \in \R$ such that $\pi(\widetilde \Theta) = \Theta$, the derivative $\partial_1 S^c ( \theta , \Theta)$ exists if and only if there exists a unique $\tilde \theta_0$ realizing the minimum  $\ S^c ( \theta , \Theta) =  \ww S(\tilde \theta_0, \ww \Theta) + c(\tilde \theta_0 - \ww\Theta)$. In such case, $\partial_1 S^c ( \theta , \Theta) = \partial_1 \ww S (\tilde  \theta_0 , \ww\Theta) +c$.
\item The previous points and the fact that $f$ is a twist map yields that $S^c$ verifies the left and right twist conditions as defined in Definition \ref{twist condition} (see also \cite[Proposition 6.4]{ZJMD}).
\item From the previous points, the same strange regularity property  observed for costs coming from Tonelli Lagrangians (see discussion following Proposition \ref{eqAubry}) is brought to light:  given $(\theta, \Theta)\in \T^1\times \T^1$, the following are equivalent
\begin{itemize}
\item $\partial_1 S^c ( \theta , \Theta)$ exists;
\item $\partial_2 S^c ( \theta , \Theta)$ exists.
\end{itemize}
\end{itemize}
\end{rem}
With those facts and bearing in mind that an infimum of a compact family of $C^2$ functions is semiconcave, it follows that 
\begin{pr}\label{Scsemic}
The function $S^c$ is locally semiconcave. More precisely, any of its lifts to $\R^2$ is semiconcave.
\end{pr}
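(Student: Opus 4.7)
The plan is to work with a lift $\widetilde{S^c}:\R^2\to\R$ of $S^c$ and establish that it is uniformly semiconcave. Since local semiconcavity on $\T^1\times\T^1$ is equivalent to local semiconcavity of any lift, and since $\widetilde{S^c}$ is invariant under the $\Z^2$--action $(\tilde\theta,\widetilde\Theta)\mapsto(\tilde\theta+1,\widetilde\Theta)$ and $(\tilde\theta,\widetilde\Theta)\mapsto(\tilde\theta,\widetilde\Theta+1)$ (as can be checked using the periodicity identity $\widetilde S(\tilde\theta+1,\widetilde\Theta+1)=\widetilde S(\tilde\theta,\widetilde\Theta)$ from Proposition \ref{genfun}), global semiconcavity on $\R^2$ will follow from uniform local semiconcavity on a fundamental domain.

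The natural expression I would use is
$$\widetilde{S^c}(\tilde\theta,\widetilde\Theta)=\inf_{n\in\Z} g_n(\tilde\theta,\widetilde\Theta),\qquad g_n(\tilde\theta,\widetilde\Theta):=\widetilde S(\tilde\theta,\widetilde\Theta+n)+c(\tilde\theta-\widetilde\Theta-n).$$
Each $g_n$ is $C^2$ by the first item of Proposition \ref{genfun}, and adding the affine function $c(\tilde\theta-\widetilde\Theta-n)$ does not affect second derivatives. The first key step is to fix a compact set $K\subset\R^2$ and show that for every $(\tilde\theta,\widetilde\Theta)\in K$ the infimum above is realized for $n$ in a finite set $N_K$ that depends only on $K$. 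This is where I would use the superlinearity \eqref{Tsuperlinear}: on $K$, the value $g_0(\tilde\theta,\widetilde\Theta)$ is uniformly bounded by some constant $M_K$, while for $|n|$ large and $(\tilde\theta,\widetilde\Theta)\in K$, the quantity $|\tilde\theta-\widetilde\Theta-n|\to\infty$ uniformly, hence $g_n(\tilde\theta,\widetilde\Theta)\to+\infty$ uniformly in $K$. Thus only finitely many indices $n\in N_K$ can compete for the infimum on $K$.

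Consequently, on $K$,
$$\widetilde{S^c}(\tilde\theta,\widetilde\Theta)=\min_{n\in N_K}g_n(\tilde\theta,\widetilde\Theta).$$
Since each $g_n$ is $C^2$ on $K$, a classical estimate gives that each $g_n$ restricted to $K$ is $C_n$--semiconcave for some $C_n>0$ (controlled by $\sup_K\|\partial^2 g_n\|$). Setting $C_K=\max_{n\in N_K}C_n$, the finite family $\{g_n\}_{n\in N_K}$ is equi--semiconcave on $K$ and Proposition \ref{infconc} yields that $\widetilde{S^c}$ is $C_K$--semiconcave on $K$. To upgrade this to uniform semiconcavity of $\widetilde{S^c}$ on $\R^2$, I would choose $K$ to be a fundamental domain (e.g.\ $[-1,2]\times[-1,2]$) for the $\Z^2$--action and use the invariance of $\widetilde{S^c}$ under integer translations: the same constant $C_K$ works after every integer translation, so $\widetilde{S^c}$ is globally $C_K$--semiconcave on $\R^2$.

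The only genuine difficulty is the step that restricts the infimum to a finite family on $K$; once this is secured, the rest is a direct application of Proposition \ref{infconc}. Note that superlinearity was already used in Remark \ref{remSc} to argue that the infimum in the definition of $S^c$ is achieved on a compact set, so essentially the same argument, applied uniformly in $(\tilde\theta,\widetilde\Theta)$ varying over a compact set, delivers what is needed.
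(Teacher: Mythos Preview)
Your proposal is correct and follows essentially the same approach as the paper. The paper does not give a formal proof but simply remarks, just before the proposition, that the result follows from the facts in Remark~\ref{remSc} (the infimum reduces to a bounded, hence finite, family thanks to superlinearity) together with the principle that an infimum of a compact family of $C^2$ functions is semiconcave (Proposition~\ref{infconc}); your argument is precisely a fleshed-out version of this sketch.
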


In particular, if $S^c$ admits any partial derivative at points $(\theta, \Theta)$ then  $S^c$ is differentiable at $( \theta , \Theta)$.

From the previous points and the semiconcavity of $S^c$ (see the proof of Proposition \ref{regv} and the following Remark \ref{remsc}), it follows that:
\begin{pr}\label{prSc}
If $(\theta_k)_{k\in I}$ is a minimizing sequence or chain (with at least $3$ points) for $S^c$ then all the derivatives $\partial_i S^c(\theta_k, \theta_{k+1})$, $i\in \{1,2\}$ and $(k,k+1)\in I\times I$, exist. If $k_0\in I$ and $\tilde\theta_{k_0}$ is a lift of $\theta_{k_0}$ then there exists a unique (minimizing) chain  $(\tilde\theta_k)_{k\in I}$ such that
$$\forall k<k', \quad \sum_{\ell = k}^{k'-1} S^c(\theta_{\ell},\theta_{\ell+1} )= \sum_{\ell = k}^{k'-1} \ww S(\tilde\theta_{\ell},\tilde\theta_{\ell+1}) +c(\tilde\theta_k-\tilde\theta_{k'}).$$
As is now customary, setting then 
$$r_\ell = -\partial_1  \ww S(\tilde\theta_{\ell},\tilde\theta_{\ell+1})  = \partial_2  \ww S(\tilde\theta_{\ell-1},\tilde\theta_{\ell})= c  -\partial_1   S^c(\theta_{\ell},\theta_{\ell+1})  =c+ \partial_2   S^c(\theta_{\ell-1},\theta_{\ell}),$$
then  $(\theta_k,r_k)_{k\in I}$ is a piece of orbit of $f$ and $(\tilde\theta_k,r_k)_{k\in I}$ is a piece of orbit of $\tilde f$.
\end{pr}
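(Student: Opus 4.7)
The plan is to construct the lifted chain explicitly, verify its minimizing property for the ``twisted'' generating function $\widetilde S^c(\tilde\theta,\widetilde\Theta):=\widetilde S(\tilde\theta,\widetilde\Theta)+c(\tilde\theta-\widetilde\Theta)$, and then read off the orbit structure from the Euler--Lagrange relation and the differentiability from semiconcavity together with a non-crossing argument.

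First I would construct $(\tilde\theta_\ell)_{\ell\in I}$ inductively from the fixed lift $\tilde\theta_{k_0}$. At each step, given $\tilde\theta_\ell$, define $\tilde\theta_{\ell+1}$ as a lift of $\theta_{\ell+1}$ achieving the minimum in Definition \ref{coutc}, so that $S^c(\theta_\ell,\theta_{\ell+1})=\widetilde S(\tilde\theta_\ell,\tilde\theta_{\ell+1})+c(\tilde\theta_\ell-\tilde\theta_{\ell+1})$; existence is guaranteed by the superlinearity \eqref{Tsuperlinear} of $\widetilde S$. Proceed analogously for indices $\ell<k_0$. Summing these equalities between $k$ and $k'-1$, the $c$-terms telescope into $c(\tilde\theta_k-\tilde\theta_{k'})$, which is exactly the displayed identity.

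Next I would show that $(\tilde\theta_\ell)$ is a minimizing chain for $\widetilde S^c$ (and hence for $\widetilde S$ with fixed lifted endpoints). For any competitor $(\hat\theta_\ell)$ on $\R$ with $\hat\theta_k=\tilde\theta_k$, $\hat\theta_{k'}=\tilde\theta_{k'}$, projecting to $\T^1$ gives a chain from $\theta_k$ to $\theta_{k'}$, and by definition of $S^c$ one has $\sum \widetilde S^c(\hat\theta_\ell,\hat\theta_{\ell+1})\geqslant \sum S^c(\pi\hat\theta_\ell,\pi\hat\theta_{\ell+1})\geqslant \sum S^c(\theta_\ell,\theta_{\ell+1})=\sum \widetilde S^c(\tilde\theta_\ell,\tilde\theta_{\ell+1})$, using the $S^c$-minimality of $(\theta_\ell)$ in the middle. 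Since $\partial_{12}\widetilde S^c=\partial_{12}\widetilde S<0$, all the variational machinery of Section~1 (Proposition \ref{critmin} and Aubry's non-crossing lemma \ref{noncrossing}) applies to chains minimizing $\widetilde S^c$. The Euler--Lagrange relation for $\widetilde S^c$ is, after cancelling the constants $\pm c$, identical to that for $\widetilde S$, hence $\partial_2\widetilde S(\tilde\theta_{\ell-1},\tilde\theta_\ell)+\partial_1\widetilde S(\tilde\theta_\ell,\tilde\theta_{\ell+1})=0$; defining $r_\ell$ as stated and invoking Proposition \ref{critmin} gives the Hamiltonian orbit structure for $\tilde f$ and $f$.

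Then I would prove differentiability of $S^c$ at each $(\theta_\ell,\theta_{\ell+1})$. By Proposition \ref{Scsemic} and Remark \ref{remSc}, it suffices to show uniqueness of the lift realizing the minimum in the definition of $S^c(\theta_\ell,\theta_{\ell+1})$; the identities for $r_\ell$ then follow from the envelope identities of Remark \ref{remSc}. Here the hypothesis of three points is essential. Suppose, with the lift $\tilde\theta_{\ell+1}$ fixed, there were a second minimizing lift $\tilde\theta_\ell+n$ ($n\neq 0$) for the edge $(\theta_\ell,\theta_{\ell+1})$; propagating this alternative lift backwards by the same minimizing procedure yields a second lifted minimizing chain $(\tilde\theta'_j)$ for $\widetilde S^c$ with $\tilde\theta'_{\ell+1}=\tilde\theta_{\ell+1}$ and $\tilde\theta'_\ell-\tilde\theta_\ell=n\neq 0$. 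These two chains agree at index $\ell+1$ but differ at index $\ell$; Aubry's non-crossing lemma for $\widetilde S^c$ (Proposition \ref{noncrossing}), applied to the three-term portions surrounding the coincidence, combined with the maximality statement, forces both chains to coincide, a contradiction. The hardest step is exactly this non-crossing argument, since it is delicate to handle carefully the various possible configurations of crossings at the common index and at the neighbouring indices (where the ``at least three points'' assumption is used). Once uniqueness of the lift for each edge is obtained, the inductive construction of Step~1 gives a unique lifted chain from $\tilde\theta_{k_0}$, completing the proof.
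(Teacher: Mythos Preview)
Your construction of the lifted chain and the verification that it is minimizing for $\widetilde S$ (Steps 1--2) are correct and useful. The orbit structure via the Euler--Lagrange relation is also fine once uniqueness is known.

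The difficulty is Step~3. Your non-crossing argument, as written, does not close. Concretely, take $I=\{\ell-1,\ell,\ell+1\}$ and suppose a second lift $\tilde\theta_\ell+n$ also realizes $S^c(\theta_\ell,\theta_{\ell+1})$ (with $\tilde\theta_{\ell+1}$ fixed). As you propagate backward, one checks that $(\tilde\theta_{\ell-1},\tilde\theta_\ell+n,\tilde\theta_{\ell+1})$ is again minimizing between $\tilde\theta_{\ell-1}$ and $\tilde\theta_{\ell+1}$. Proposition~\ref{noncrossing} then only tells you that both $3$--chains are \emph{maximal}; when $|I|=3$ this is vacuous, and even when $|I|>3$ you still need to argue carefully which chain extends. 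The argument can be saved, but not by the two chains you construct: you must compare the alternative chain $(\tilde\theta_{\ell-1},\tilde\theta_\ell+n,\tilde\theta_{\ell+1})$ with the \emph{integer translate} $(\tilde\theta_{\ell-1}+n,\tilde\theta_\ell+n,\tilde\theta_{\ell+1}+n)$ of the original. These cross at the interior index $\ell$, so Remark~\ref{remcross} forces $(-n)(-n)<0$, a contradiction. This extra step is missing from your proposal.

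The paper's route is shorter and avoids this detour. It works directly on $\T^1$ via Proposition~\ref{Scsemic} and Remark~\ref{remsc}: for any interior index $\ell$, the map $\theta\mapsto S^c(\theta_{\ell-1},\theta)+S^c(\theta,\theta_{\ell+1})$ is a sum of two locally semiconcave functions attaining its minimum at $\theta_\ell$, hence both summands are differentiable there. The last bullet of Remark~\ref{remSc} (existence of $\partial_1 S^c$ iff existence of $\partial_2 S^c$) then propagates differentiability to the remaining partials, and uniqueness of the lifts is immediate from the earlier bullets of Remark~\ref{remSc}. After that, your Steps 1--2 (now with unique choices) and Proposition~\ref{critmin} give the orbit statements. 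Replacing your Step~3 by this semiconcavity argument makes the proof both correct and considerably shorter.
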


We will now focus our attention on the negative Lax--Oleinik semigroups associated to $S^c$ and gather results previously proven.

\begin{df}\rm \label{LOtwist}
Given $c\in \R$, we define the operator $T^c$ which, to a bounded function $u : \T^1 \to \R$, associates the function 
$$T^c u : \Theta \in \T^1 \mapsto \inf_{\theta \in \T^1} u(\theta) + S^c(\theta ,\Theta).$$
Equivalently, recalling that two infimums commute, if $\tilde u : \R \to \R$  is the  lift of $u$ then the lift of $T^c u$ is given by the relation
$$\forall \ww\Theta \in \R, \quad \ww T^c \tilde u (\ww\Theta) = \inf_{\tilde \theta \in \R} \tilde u ( \tilde \theta) + \ww S(\tilde \theta, \ww \Theta) + c(\tilde \theta - \ww\Theta).$$
\end{df}
 The weak KAM Theorem then states:

\begin{Th}\label{KAMft}
For all $c\in \R$ there exists a unique constant $\alpha(c)$ for which the equation 
\begin{equation}\label{wkc}
u = T^c u + \alpha(c)
\end{equation}
admits solutions $u^c : \T^1 \to \R$. Such a solution (or its lift $\tilde u_c : \R \to \R$) will be called a {\it weak KAM solution at cohomology $c$}.
\end{Th}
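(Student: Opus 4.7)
The plan is to reduce this statement directly to the abstract discrete weak KAM Theorem \ref{weak KAM}, after checking that the cost $S^c$ fits into the framework developed in the first Chapter. The underlying space is $X = \T^1$, which is a compact metric space, so the only nontrivial verification is that $S^c : \T^1 \times \T^1 \to \R$ is well defined and continuous. Well-definedness (i.e.\ the infimum in Definition \ref{coutc} is finite, and actually a minimum) was already observed in Remark \ref{remSc}: by translation invariance of $\widetilde S$ one may restrict $\tilde\theta$ to a fundamental domain, and then by the superlinearity \eqref{Tsuperlinear} the admissible $\widetilde\Theta$ lie in a bounded interval depending only on $c$. Continuity then follows from Proposition \ref{Scsemic} since local semiconcavity implies continuity (or, equivalently, from the fact that $S^c$ is an infimum over a compact parameter set of a jointly continuous family of $C^2$ functions).

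Once continuity of $S^c$ is established, I would observe that the operator $T^c$ of Definition \ref{LOtwist} is exactly the negative Lax--Oleinik semigroup $T^-$ of Chapter 1 applied to the cost function $S^c$ on the compact metric space $\T^1$. In particular, all properties collected in Proposition \ref{propT} hold for $T^c$: it is order preserving, commutes with additive constants, is $1$-Lipschitz for the sup-norm, and maps $\BB(\T^1,\R)$ into an equicontinuous family with uniformly bounded amplitude.

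With these verifications in hand, Theorem \ref{weak KAM} applies verbatim and yields the existence of a unique real constant, which we now baptise $\alpha(c)$, such that the fixed point equation $u = T^c u + \alpha(c)$ has a solution $u^c : \T^1 \to \R$; moreover any such solution is automatically continuous by Remark following Theorem \ref{weak KAM}. For existence either of the two proofs given in Chapter 1 applies (the Schauder--Tychonoff argument or the discounted construction), and uniqueness of $\alpha(c)$ follows as in Proposition \ref{pr asympt} from the asymptotic formula $\tfrac{1}{n}(T^c)^n v \to -\alpha(c)$ uniformly, for any $v \in \BB(\T^1,\R)$. There is no real obstacle here: the only substantive ingredient specific to the twist setting is the continuity of $S^c$, and the rest is a mechanical application of the abstract theory. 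It is worth noting, however, that the lifted reformulation in Definition \ref{LOtwist} is consistent with the statement: if $u$ solves \eqref{wkc} then its lift $\tilde u_c : \R \to \R$ satisfies $\tilde u_c = \widetilde{T^c}\tilde u_c + \alpha(c)$, and conversely any $\Z$-periodic solution of the lifted equation descends to a weak KAM solution on $\T^1$.
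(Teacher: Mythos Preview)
Your proposal is correct and matches the paper's approach: the paper does not give a separate proof of Theorem~\ref{KAMft} but treats it as an immediate instance of the abstract weak KAM Theorem~\ref{weak KAM}, the only point to check being that $S^c$ is a continuous cost on the compact metric space $\T^1$, which follows from Remark~\ref{remSc} and Proposition~\ref{Scsemic} exactly as you argue.
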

The function $\alpha : \R \to \R$ is called {\it Mather's $\alpha$ function}.
\begin{pr}\label{alphaconv}
Mather's $\alpha$ function is convex and superlinear.
\end{pr}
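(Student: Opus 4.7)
The plan is to establish convexity by forming convex combinations of subsolutions at different cohomology classes, and superlinearity by testing the weak KAM inequality against long integer shifts on the universal cover, exploiting the $1$-periodicity of the lift $\tilde u_c$ together with the diagonal periodicity of $\widetilde S$.

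\emph{Convexity.} Fix $c_1, c_2 \in \R$, $t \in [0,1]$, and set $c = tc_1 + (1-t)c_2$. By Theorem~\ref{KAMft}, for $i=1,2$ there exists a weak KAM solution $u_i$ at cohomology $c_i$. Its $1$-periodic lift $\tilde u_i : \R \to \R$ satisfies, in view of Definition~\ref{coutc} and the subsolution inequality $u_i \leqslant T^{c_i} u_i + \alpha(c_i)$,
\[
\tilde u_i(\widetilde\Theta) - \tilde u_i(\tilde\theta) \leqslant \widetilde S(\tilde\theta,\widetilde\Theta) + c_i(\tilde\theta-\widetilde\Theta) + \alpha(c_i) \qquad \forall (\tilde\theta,\widetilde\Theta) \in \R^2.
\]
Then $\tilde u := t\tilde u_1 + (1-t)\tilde u_2$ is again $1$-periodic, hence descends to a continuous function $u : \T^1 \to \R$, and the convex combination of the two inequalities yields
\[
\tilde u(\widetilde\Theta) - \tilde u(\tilde\theta) \leqslant \widetilde S(\tilde\theta,\widetilde\Theta) + c(\tilde\theta-\widetilde\Theta) + t\alpha(c_1) + (1-t)\alpha(c_2),
\]
so that $u$ is a $(t\alpha(c_1)+(1-t)\alpha(c_2))$-subsolution for the cost $S^c$. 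Proposition~\ref{criticalSS} applied to $S^c$ gives $\alpha(c) \leqslant t\alpha(c_1)+(1-t)\alpha(c_2)$.

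\emph{Superlinearity.} Let $\tilde u_c$ be the $1$-periodic lift of a weak KAM solution at cohomology $c$. As above, it satisfies
\[
\tilde u_c(\widetilde\Theta) - \tilde u_c(\tilde\theta) \leqslant \widetilde S(\tilde\theta,\widetilde\Theta) + c(\tilde\theta-\widetilde\Theta) + \alpha(c) \qquad \forall (\tilde\theta,\widetilde\Theta) \in \R^2.
\]
Fix $K \in \mathbb{N}^*$ and, given $c \neq 0$, set $\widetilde\Theta = \tilde\theta + K\,\mathrm{sgn}(c)$. The left-hand side vanishes by $1$-periodicity of $\tilde u_c$, so
\[
K|c| \leqslant \widetilde S(\tilde\theta, \tilde\theta + K\,\mathrm{sgn}(c)) + \alpha(c).
\]
By the diagonal $1$-periodicity of $\widetilde S$ (point~1 of Proposition~\ref{genfun}), the map $\tilde\theta \mapsto \widetilde S(\tilde\theta, \tilde\theta \pm K)$ is $1$-periodic and continuous, hence bounded by a constant $M_K$. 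Thus $\alpha(c) \geqslant K|c| - M_K$, which gives $\liminf_{|c|\to +\infty} \alpha(c)/|c| \geqslant K$; since $K$ was arbitrary, $\alpha(c)/|c| \to +\infty$.

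\emph{Expected obstacle.} Neither step is delicate once the translation between the cost $S^c$ on $\T^1 \times \T^1$ and the expression $\widetilde S(\tilde\theta,\widetilde\Theta) + c(\tilde\theta-\widetilde\Theta)$ on the cover (Definition~\ref{coutc}) has been absorbed. The only point to handle with care is the passage to the universal cover, where the cohomology class $c$ plays the role of a slope imposed on the lift: convex combinations preserve slopes (giving convexity) and integer shifts interact with the slope to produce the term $K|c|$ (giving superlinearity).
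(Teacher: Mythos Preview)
Your proof is correct and follows essentially the same approach as the paper: convexity via convex combinations of lifted subsolutions together with Proposition~\ref{criticalSS}, and superlinearity by evaluating the subsolution inequality at integer shifts of the lift. The only cosmetic difference is that the paper fixes $\tilde\theta=0$ in the superlinearity step (using $C_k=\max\big(\widetilde S(0,k),\widetilde S(0,-k)\big)$), whereas you keep $\tilde\theta$ arbitrary and invoke the diagonal periodicity of $\widetilde S$ to bound $\widetilde S(\tilde\theta,\tilde\theta\pm K)$; these are the same thing.
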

\begin{proof}
One easily reconstructs from the definition of $ S^c$ and Proposition \ref{criticalSS} that if $c\in \R$, then $\alpha(c)$ is the least constant $a\in \R$ such that there exists a $1$--periodic function $\tilde v : \R \to \R$ verifying 
$$\forall (\tilde \theta, \tilde \theta') \in \R \times \R, \quad \tilde v(\tilde \theta') - \tilde v(\tilde \theta) \leqslant \ww S( \tilde \theta, \tilde \theta') + c(\tilde\theta - \tilde \theta') +a.$$
If now $c_1$ and $c_2$ are real numbers and $\tilde u_1$ and $\tilde u_2$ are lifts of weak KAM solutions at the corresponding cohomology classes, if $t\in (0,1)$ one infers that, setting $\tilde u_t = t\tilde u_1 + (1-t) \tilde u_2$,
$$\forall (\tilde \theta, \tilde \theta') \in \R \times \R, \quad \tilde u_t(\tilde \theta') - \tilde u_t(\tilde \theta) \leqslant \ww S( \tilde \theta, \tilde \theta') + (tc_1+(1-t)c_2)(\tilde\theta - \tilde \theta') +t\alpha(c_1)+(1-t)\alpha(c_2).$$
It follows that $\alpha(tc_1+(1-t)c_2)\leqslant t\alpha(c_1)+(1-t)\alpha(c_2)$ and the convexity is proved.

For superlinearity, let $k>0$ be an integer. If $c\in \R$, let $u_c : \T^1 \to \R$ be a weak KAM solution at cohomology $c$. If $\tilde u_c$ is its lift, by periodicity, we infer that
$$0=\tilde u_c(k)- \tilde u_c (0) \leqslant \ww S(0,k) -ck+ \alpha(c),$$
$$0=\tilde u_c(-k)- \tilde u_c (0) \leqslant \ww S(0,-k) +ck+ \alpha(c).$$
From those inequalities, we infer that, setting $C_k = \max \big(\ww S(0,k), \ww S(0,-k)\big)$,
$$\forall c\in \R, \quad \alpha(c) \geqslant k|c| - C_k,$$
from which it follows that $\lim\limits_{|c|\to +\infty} \dfrac{\alpha(c)}{|c|} = +\infty$.

\end{proof}

 Informations obtained from Proposition \ref{regv}, Proposition \ref{critmin} and Remark \ref{remSc} are gathered in the next Theorem (keeping in mind that a weak KAM solution is locally semiconcave):
 
 \begin{Th}\label{twistKAM}
 Let $c\in \R$, $u_c : \T^1 \to \R$ be a weak KAM solution at cohomology $c$ and $\tilde u_c : \R \to \R$ its lift.
 \begin{enumerate}
 \item For all $\theta_0 \in \T^1$,  there exists $( \theta_k^c)_{k\leqslant 0}$ such that  $ \theta_0= \theta_0^c$ and
$$\forall k<0 , \ \  u_c ( \theta_0) =   u_c ( \theta_k^c) + \sum_{i=k}^{-1} S^c( \theta_i^c, \theta_{i+1}^c)  +|k|\alpha(c).$$

 \item The previous sequence may not be unique but it is uniquely determined by $(\theta^c_{-1}, \theta_0)$. Moreover, setting for $k\leqslant 0$, $r_k =c+ \partial_2 S^c (\theta_{k-1}^c, \theta_k^c)$ (that exists) then $( \theta_k^c,r_k)_{k\leqslant 0}$ is a piece of orbit of $f$.
 
 \item For all $k<0$, $u_c$ is derivable at $\theta_k^c$ and $c+u_c'(\theta_k^c) = r_k$. Moreover, $r_0 \in \partial^+ u_c(\theta_0)$ and $u_c$ is derivable at $\theta_0$ if and only if the sequence $( \theta_k^c)_{k\leqslant 0}$ is unique.
 
 \item It follows that $f^{-1} \left(  \overline{\G(c +u'_c)}  \right) \subset \G(c +u'_c)$ where $\G(c +u'_c) $ is the set of $\big(\theta, c+u'_c(\theta)\big)$ for $ \theta \in \T^1$ such that $u'_c(\theta)$ exists.\footnote{In the previous inclusion, the fact that we can take a closure on the left hand side is because a limit of calibrating sequences for $u_c$ is still calibrating.}
  Moreover, if  $(\theta_0,r_0) \in \overline{\G(c +u'_c)} $ and  for $k\leqslant 0$, $(\theta_k,r_k) = f^k(\theta_0,r_0)$, then 
 $$\forall k<0 , \ \  u_c ( \theta_0) =   u_c ( \theta_k) + \sum_{i=k}^{-1} S^c( \theta_i, \theta_{i+1})  +|k|\alpha(c).$$

 \item Given a sequence $( \theta_k^c)_{k\leqslant 0}$ as above and $\tilde \theta_0 \in \R $ a lift of $\theta_0$, there   exists a unique $(\tilde \theta_k^c)_{k\leqslant 0}$ that projects on $ ( \theta_k^c)_{k\leqslant 0}$ and such that 

 $$\forall k<0 , \ \ \tilde u_c (\tilde \theta_0) =  \tilde u_c (\tilde \theta_k^c) + \sum_{i=k}^{-1} \ww S(\tilde \theta_i^c,\tilde \theta_{i+1}^c) + c(\tilde \theta_k^c - \tilde \theta_0^c) +|k|\alpha(c).$$
 
 \item With the previous notations for $k\leqslant 0$, $r_k = \partial_2 \ww S (\tilde \theta_{k-1}^c, \tilde\theta_k^c)$ and $(\tilde \theta_k^c,r_k)_{k\leqslant 0}$ is a piece of orbit of $\tilde f$.

 \item For all $k<0$, $\tilde u_c$ is derivable at $\tilde \theta_k^c$ and $c+\tilde u_c'(\tilde\theta_k^c) = r_k$. Moreover, $r_0 \in \partial^+ \tilde u_c(\tilde \theta_0)$ and $\tilde u_c$ is derivable at $\tilde\theta_0$ if and only if the sequence $(\tilde \theta_k^c)_{k\leqslant 0}$ is unique.
 
 \item It follows that $\tilde f^{-1} \left(  \overline{\G(c +\tilde u'_c)}  \right) \subset \G(c +\tilde u'_c)$ where $\G(c +\tilde u'_c) $ is the set of $\big(\theta, c+\tilde u'_c(\theta)\big)$ for $ \tilde\theta \in \R$ such that $\tilde u'_c(\tilde\theta)$ exists.
 Finally, if  $(\tilde\theta_0,r_0) \in \overline{\G(c +\tilde u'_c)} $ and  for $k\leqslant 0$, $(\tilde\theta_k,r_k) = \tilde f^k(\theta_0,r_0)$, then 
 
  $$\forall k<0 , \ \ \tilde u_c (\tilde \theta_0) =  \tilde u_c (\tilde \theta_k) + \sum_{i=k}^{-1} \ww S(\tilde \theta_i,\tilde \theta_{i+1}) + c(\tilde \theta_k - \tilde \theta_0) +|k|\alpha(c).$$
 \end{enumerate}

 \end{Th}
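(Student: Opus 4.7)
The overall strategy is to regard the theorem as the concrete translation, in the twist-map setting, of the abstract discrete weak KAM results of Chapters 1--3 applied to the cost $S^c$ on $X=\T^1$ (with critical constant $\alpha(c)$), combined with the variational structure of $\ww S$ recorded in Proposition \ref{critmin} and Proposition \ref{prSc}, and the semiconcavity of $S^c$ (Proposition \ref{Scsemic}). Items (1) and (2) are then essentially bookkeeping: the existence of a calibrating sequence for $u_c$ is Lemma \ref{chains}; uniqueness given $(\theta^c_{-1},\theta_0)$ follows from Proposition \ref{critmin} applied to any three successive terms (which constitute a minimizing chain of $S^c$); and the identification $f(\theta^c_{k-1},r_{k-1})=(\theta^c_k,r_k)$ with $r_k=c+\partial_2S^c(\theta^c_{k-1},\theta^c_k)$ is exactly Proposition \ref{prSc}.

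For (3), the plan is to exploit that $u_c$ is locally semiconcave, being (up to $\alpha(c)$) in the image of $T^c$ by Proposition \ref{propTbis}, while the calibration identity at interior points $\theta^c_k$ ($k<0$) furnishes a semiconvex tangent lower bound via
$$u_c(\theta)\geqslant u_c(\theta^c_{k+1})-S^c(\theta,\theta^c_{k+1})-\alpha(c),$$
tight at $\theta=\theta^c_k$ because $-S^c(\cdot,\theta^c_{k+1})$ is locally semiconvex. Proposition \ref{semcc} then forces differentiability at $\theta^c_k$ and Proposition \ref{regv} gives $c+u_c'(\theta^c_k)=r_k$. At the endpoint $\theta_0$ only the semiconcave tangent upper bound $u_c(\theta)\leqslant u_c(\theta^c_{-1})+S^c(\theta^c_{-1},\theta)+\alpha(c)$ is available, yielding $r_0\in\partial^+u_c(\theta_0)$ by Proposition \ref{regv}. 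For the equivalence, Remark \ref{remSc} implies that every reachable gradient of $u_c$ at $\theta_0$ arises from a backward chain, and the left twist property of $S^c$ makes the map $\theta^c_{-1}\mapsto c+\partial_2S^c(\theta^c_{-1},\theta_0)$ injective; hence uniqueness of $\theta^c_{-1}$ (equivalently, by (2), of the whole sequence) is equivalent to $\partial^+u_c(\theta_0)$ being a singleton, i.e.\ to differentiability.

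For (4), the inclusion $f^{-1}(\G(c+u'_c))\subset\G(c+u'_c)$ is immediate from (3) applied to the shifted calibrating sequence starting at $\theta^c_{-1}$. For a point $(\theta_0,r_0)\in\overline{\G(c+u'_c)}$, I would approximate by calibrated points $(\theta^n_0,r^n_0)\in\G(c+u'_c)$; continuity of $f^{-1}$, $u_c$ and $S^c$ together with compactness of $\T^1$ allow the calibration identities written at the approximating points to pass to the limit, so the limit backward orbit calibrates $u_c$ and (3) applies at the new $\theta^c_{-1}$. Items (5)--(8) are the lifted counterparts: given a calibrating $(\theta^c_k)$ in $\T^1$ and a choice of lift $\tilde\theta_0$, one selects at each step the lift $\tilde\theta^c_{k-1}$ realizing the infimum in Definition \ref{coutc} (using superlinearity \eqref{Tsuperlinear}); telescoping the corresponding equalities produces the cohomology correction $c(\tilde\theta^c_k-\tilde\theta_0)$ automatically, the momentum identity $r_k=\partial_2\ww S(\tilde\theta^c_{k-1},\tilde\theta^c_k)$ follows from \eqref{vartwist}, and differentiability statements transfer to $\tilde u_c$ because it is the lift of $u_c$ and these properties are local.

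The step I expect to be most delicate is verifying that the lifted selection $\tilde\theta^c_{k-1}$ in Definition \ref{coutc} is indeed unique and yields a well-defined chain, so that the limit argument in (4) and (8) produces a bona fide calibrating sequence rather than a merely approximate one. This will rest on the local semiconcavity of $\ww S$ in each variable together with the uniform control on $|\tilde\theta^c_{k-1}-\tilde\theta^c_k|$ provided a posteriori by the Aubry--Mather Theorem \ref{AM} and the non-crossing Proposition \ref{cross-asymptotic}, ensuring that the backward chain remains in a compact region of $\R^2$ after lifting.
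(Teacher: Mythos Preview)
Your proposal is correct and matches the paper's approach: the paper does not give a standalone proof of this theorem, but merely states that it ``gathers informations obtained from Proposition \ref{regv}, Proposition \ref{critmin} and Remark \ref{remSc}'' together with the semiconcavity of weak KAM solutions, which is exactly the toolkit you invoke.

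One small remark: the step you flag as most delicate---uniqueness of the lifted $\tilde\theta^c_{k-1}$---is in fact already settled by Proposition \ref{prSc} and Remark \ref{remSc}: at any interior point of a minimizing chain for $S^c$, the partial derivatives $\partial_i S^c$ exist, and by Remark \ref{remSc} this is equivalent to the minimizer in Definition \ref{coutc} being unique. Thus you do not need to appeal to Theorem \ref{AM} or Proposition \ref{cross-asymptotic} for this. Likewise, the limit argument in (4) takes place on the compact $\T^1$, so no a priori bound on $|\tilde\theta^c_{k-1}-\tilde\theta^c_k|$ is required there either.
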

 
 \begin{rem}\label{derivsemcon}\rm
Being in dimension $1$, concave functions have very strong derivability properties that semiconcave functions inherit from. It follows that if $v : \T^1\to \R$ is a semiconcave function, then it admits at all $\theta \in \T^1$ a left derivative $v'_-(\theta)$ and a right derivative $v'_+(\theta)$ that verify $v'_-(\theta)\geqslant v'_+(\theta)$. Its superdifferential at $\theta$ is then the segment $\partial^+ v(\theta)=[v'_+(\theta),v'_-(\theta)]$. Finally, using the previous notation, if $c\in \R$ then 
 $$ \overline{\G(c +v')} = \bigcup_{\theta \in \R} \{\theta\} \times \{c+v'_+(\theta),c+v'_-(\theta)\}.$$
 \end{rem}
 We introduce the notion of full pseudograph that is the graph of the superdifferential of a semiconcave function and appears in various weak KAM related works (\cite{BerPseudo,APseudo}):
 
 \begin{df}\label{pg}\rm
 Let $v : \T^1 \to \R$ be a semiconcave function. Then its full pseudograph is
 $$\PG(v') = \bigcup_{\theta\in \T^1} \{\theta\} \times \partial^+ v(\theta)=\bigcup_{\theta\in \T^1} \{\theta\} \times[v'_+(\theta),v'_-(\theta)].$$ 
 A related notation: for $c\in \R$,
 $$\PG(c+v') = \bigcup_{\theta\in \T^1} \{\theta\} \times \big(c+\partial^+ v(\theta)\big)=\bigcup_{\theta\in \T^1} \{\theta\} \times[c+v'_+(\theta),c+v'_-(\theta)].$$ 
We will use similar notations for semiconcave functions on $\R$.
 \end{df}
 A beautiful theorem due to Marie-Claude Arnaud (using weak KAM methods) yields: 
 
 \begin{pr}\label{pgGraph}
 Let $v : \T^1 \to \R$ be a semiconcave function. Then its full pseudograph $\PG(v')$ is a Lipschitz manifold. In the present case it is a Lipschitz essential circle (meaning it separates the annulus in two unbounded connected components).
 \end{pr}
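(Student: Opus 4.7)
The plan is to reduce to the concave case and exploit the monotonicity of the derivative. Since $v$ is semiconcave, fix $K>0$ with $v$ being $K$-semiconcave, so that $\tilde v(\theta) = v(\theta) - \frac{K}{2}\theta^2$ (working on a lift) is concave. For a concave function, both one-sided derivatives $\tilde v'_\pm$ are non-increasing and satisfy $\tilde v'_-(\theta) \geq \tilde v'_+(\theta)$ with strict inequality on an at most countable set. Moreover, by the same reasoning as in Remark \ref{derivsemcon}, the superdifferential of $\tilde v$ at $\theta$ equals $[\tilde v'_+(\theta), \tilde v'_-(\theta)]$, and hence $\PG(v') = \Psi\bigl(\PG(\tilde v')\bigr)$ where $\Psi(\theta,y) = (\theta, y+K\theta)$ is a bi-Lipschitz homeomorphism of the annulus. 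It therefore suffices to prove the statement for $\tilde v$ concave.

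Next I would parameterize $\PG(\tilde v')$ by arc length in the $\ell^1$ metric. The key observation is that $\PG(\tilde v')$ is a totally ordered subset of $\R^2$ for the order $(\theta_1,y_1) \preceq (\theta_2,y_2)$ iff $\theta_1 \leq \theta_2$ and $y_1 \geq y_2$: indeed, if $\theta_1 < \theta_2$ then monotonicity gives $\tilde v'_+(\theta_1) \geq \tilde v'_-(\theta_2)$, and at a single point $\theta$ the superdifferential is a vertical segment. Define $s : \PG(\tilde v') \to \R$ by $s(\theta,y) = \theta - y$. Then $s$ is continuous and strictly increasing along $\PG(\tilde v')$, and if $s(\theta_2,y_2) - s(\theta_1,y_1) = \delta > 0$, then $\theta_2-\theta_1 \geq 0$, $y_1-y_2\geq 0$, and their sum equals $\delta$; hence both $\theta(s)$ and $y(s)$, viewed as functions of $s$, are $1$-Lipschitz. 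This gives a Lipschitz parameterization $\gamma : \R \to \R^2$ of $\PG(\tilde v')$, which is also injective by the total-order property. Composing with $\Psi$ yields a Lipschitz, injective parameterization of $\PG(v')$; this already shows that $\PG(v')$ is a $1$-dimensional Lipschitz manifold.

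To conclude that this manifold projects to a Lipschitz essential circle in $\A$, I use the periodicity of $v$: since $v$ is defined on $\T^1$, one has $v'_\pm(\theta+1) = v'_\pm(\theta)$, so $\PG(v')$ (viewed in $\R^2$) is invariant under the translation $(\theta,r)\mapsto(\theta+1,r)$. Along the parameterization $\gamma$, this means there exists $s_0>0$ such that $\gamma(s+s_0) = \gamma(s) + (1,0)$; the restriction $\gamma|_{[0,s_0]}$ then descends to an injective Lipschitz loop in $\T^1 \times \R$ that winds once around the annulus, hence is an essential Lipschitz embedding of $\T^1$. Finally, separation follows because $\PG(v')$ lies between the graphs of $v'_+$ and $v'_-$, which are bounded (since $v$ is Lipschitz), so the two complementary regions $\{r > v'_-(\theta)\} \cup \{\text{above}\}$ and $\{r < v'_+(\theta)\} \cup \{\text{below}\}$ are both unbounded and connected.

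The only genuinely delicate point is the arc-length construction: one must verify that the map $s$ is actually a homeomorphism onto $\R$ (surjective, because $\tilde v'_-$ is non-increasing and defined on all of $\R$, so $\theta - y$ ranges over $\R$ as $\theta \to \pm\infty$; continuous because the filled-in monotone graph is closed in $\R^2$; injective by the total-order property), and that the inverse inherits $1$-Lipschitz regularity from the $\ell^1$-additivity argument above. Once this is in place, the rest is essentially bookkeeping with the bi-Lipschitz change of variables $\Psi$.
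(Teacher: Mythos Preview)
The paper does not give its own proof of this proposition: it is stated as ``a beautiful theorem due to Marie-Claude Arnaud (using weak KAM methods)'' and left without argument. Your elementary approach via convex analysis is therefore not competing with anything in the text, and it is essentially correct. The reduction by the shear $\Psi$ to a concave $\tilde v$, the total order on $\PG(\tilde v')$, and the $\ell^1$ arc-length parameter $s=\theta-y$ together amount to the one-dimensional case of Minty's theorem (a maximal monotone graph becomes, after a $45^\circ$ rotation, the graph of a $1$-Lipschitz function). That is a clean and self-contained route, different in spirit from Arnaud's weak KAM argument.

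Two small points to tighten. First, your surjectivity justification for $s$ only shows the image is unbounded in both directions; to exclude gaps you must use that $\partial^+\tilde v(\theta)$ is nonempty and convex for every $\theta$: if $s_0$ were missed, the total order would force either an interval of $\theta$ with empty superdifferential or a non-convex superdifferential at a single point, both impossible. Second, there is a notational slip in the periodicity step: it is $\Psi\circ\gamma$ (the parameterization of the lift of $\PG(v')$), not $\gamma$ itself, that satisfies $\Psi\circ\gamma(s+s_0)=\Psi\circ\gamma(s)+(1,0)$; the pseudograph $\PG(\tilde v')$ is invariant under $(\theta,y)\mapsto(\theta+1,\,y-K)$, and the $(1,0)$-periodicity is recovered only after applying $\Psi$. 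With these two adjustments your argument is complete.
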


 We are now ready to state a first result on the interplay between Lax--Oleinik and the non--crossing lemma:
 
 \begin{lm}\label{orderWKAM}
 Let $v : \T^1\to \R$ be a continuous function, $c\in \R$ and $\tilde\theta_1<\tilde\theta_2$ two real numbers.
 \begin{enumerate}
 \item Assume that $\tilde\theta'_1$ and $\tilde\theta'_2$ verify for $i\in \{1,2\}$,
 $$  \widetilde T^c \tilde v(\tilde\theta_i)=\min_{\tilde\theta'\in\R} \big(\tilde v(\tilde\theta')+\ww S(\tilde\theta',\tilde \theta_i)+c(\tilde\theta'-\tilde\theta_i)\big)=\tilde v(\tilde\theta'_i)+\ww S(\tilde\theta'_i,\tilde \theta_i)+c(\tilde\theta'_i-\tilde\theta_i).$$
 
Then $\tilde \theta'_1\leqslant \tilde \theta'_2$.

\item If moreover $v$ is semiconcave, then $\tilde\theta'_1 <\tilde \theta'_2$.
\end{enumerate}
 \end{lm}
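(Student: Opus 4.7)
\medskip
\noindent\textbf{Proof plan.} The plan for (i) is the classical non--crossing argument built on Aubry's Fundamental Lemma \ref{aubryfl}. Setting $F_i(\tilde\theta) := \tilde v(\tilde\theta) + \ww S(\tilde\theta, \tilde\theta_i) + c(\tilde\theta - \tilde\theta_i)$, the minimality of $\tilde\theta'_i$ gives the two inequalities $F_1(\tilde\theta'_1) \leqslant F_1(\tilde\theta'_2)$ and $F_2(\tilde\theta'_2) \leqslant F_2(\tilde\theta'_1)$. Adding them, the $\tilde v$--terms cancel, and the terms linear in $c$ cancel as well (they depend only on the sum $\tilde\theta'_1 + \tilde\theta'_2 - \tilde\theta_1 - \tilde\theta_2$), leaving
$$\ww S(\tilde\theta'_1, \tilde\theta_1) + \ww S(\tilde\theta'_2, \tilde\theta_2) \leqslant \ww S(\tilde\theta'_1, \tilde\theta_2) + \ww S(\tilde\theta'_2, \tilde\theta_1).$$
If by contradiction $\tilde\theta'_1 > \tilde\theta'_2$, then since $\tilde\theta_1 < \tilde\theta_2$ we have $(\tilde\theta'_1 - \tilde\theta'_2)(\tilde\theta_1 - \tilde\theta_2) < 0$, so Lemma \ref{aubryfl} (applied with first pair $\tilde\theta'_1, \tilde\theta'_2$ and second pair $\tilde\theta_1, \tilde\theta_2$) yields the reverse \emph{strict} inequality, a contradiction. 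Hence $\tilde\theta'_1 \leqslant \tilde\theta'_2$.

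For (ii), I would upgrade this to strict inequality by assuming $\tilde\theta'_1 = \tilde\theta'_2 =: \tilde\theta'$ and deriving a contradiction from the twist condition. The key observation is that $F_i$ is the sum of the semiconcave function $\tilde v$ and a $C^2$ function, hence semiconcave. A semiconcave function on $\R$ admits at every point a left derivative that dominates the right derivative (Remark \ref{derivsemcon}), whereas a local minimum forces the opposite inequality $(F_i)'_-(\tilde\theta') \leqslant 0 \leqslant (F_i)'_+(\tilde\theta')$. Combining these, $F_i$ is differentiable at $\tilde\theta'$ with $(F_i)'(\tilde\theta') = 0$. In particular $\tilde v$ is differentiable at $\tilde\theta'$ and
$$\tilde v'(\tilde\theta') + \frac{\partial \ww S}{\partial \tilde\theta}(\tilde\theta', \tilde\theta_i) + c = 0, \qquad i \in \{1,2\}.$$
Subtracting the two identities gives $\frac{\partial \ww S}{\partial \tilde\theta}(\tilde\theta', \tilde\theta_1) = \frac{\partial \ww S}{\partial \tilde\theta}(\tilde\theta', \tilde\theta_2)$, which contradicts the strict monotonicity provided by the twist condition (Proposition \ref{genfun}(3)) since $\tilde\theta_1 < \tilde\theta_2$.

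The only mild obstacle is the first--order argument in (ii): one must be careful that the minimum of $F_i$ on $\R$ is indeed an interior local minimum so that both one--sided derivatives make sense (the minimum is attained by continuity of $\tilde v$, periodicity, and superlinearity \eqref{Tsuperlinear} of $\ww S$, which also confines minimizers to a bounded range). Everything else is a direct appeal to Lemma \ref{aubryfl} and the definition of the twist property; no additional compactness or approximation is needed.
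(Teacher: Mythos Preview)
Your proof is correct and follows essentially the same approach as the paper. For part (1) you sum the two minimality inequalities and then apply Aubry's Fundamental Lemma, while the paper applies the lemma first and then observes that one of the two minimality conditions must fail; these are the same argument arranged in reverse order. For part (2) the paper invokes Proposition \ref{regv} to get differentiability of $\tilde v$ at the minimizer and then uses that $\tilde f$ is a bijection, whereas you carry out the underlying first--order computation by hand and appeal directly to the twist condition of Proposition \ref{genfun}(3); again this is the same mechanism, just unpacked.
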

 
 \begin{proof}
 Let us argue by contradiction. Then by Proposition \ref{aubryfl} the following holds:
 \begin{multline*}
 \tilde v(\tilde\theta'_1)+\ww S(\tilde\theta'_1,\tilde \theta_1)+c(\tilde\theta'_1-\tilde\theta_1)+\tilde v(\tilde\theta'_2)+\ww S(\tilde\theta'_2,\tilde \theta_2)+c(\tilde\theta'_2-\tilde\theta_2) > \\
  > \tilde v(\tilde\theta'_2)+\ww S(\tilde\theta'_2,\tilde \theta_1)+c(\tilde\theta'_2-\tilde\theta_1)+\tilde v(\tilde\theta'_1)+\ww S(\tilde\theta'_1,\tilde \theta_2)+c(\tilde\theta'_1-\tilde\theta_2).
 \end{multline*}
We infer that at least one of the two inequalities 
$$ \tilde v(\tilde\theta'_1)+\ww S(\tilde\theta'_1,\tilde \theta_1)+c(\tilde\theta'_1-\tilde\theta_1)> \tilde v(\tilde\theta'_2)+\ww S(\tilde\theta'_2,\tilde \theta_1)+c(\tilde\theta'_2-\tilde\theta_1),
$$
$$\tilde v(\tilde\theta'_2)+\ww S(\tilde\theta'_2,\tilde \theta_2)+c(\tilde\theta'_2-\tilde\theta_2)>\tilde v(\tilde\theta'_1)+\ww S(\tilde\theta'_1,\tilde \theta_2)+c(\tilde\theta'_1-\tilde\theta_2),
$$
is valid that is a contradiction.

To prove the second item, we recall that thanks to Proposition \ref{regv}, if $\tilde v$ is semiconcave, then it is derivable both at $\tilde \theta'_1$ and $\tilde \theta'_2$ and $\pi_1 \circ \tilde f\big( \tilde \theta'_i , c+\tilde v' (\tilde \theta_i')\big)  = \tilde \theta_i$. As $\tilde \theta_1 \neq \tilde \theta_2$, necessarily $\tilde \theta'_1 \neq \tilde \theta'_2$.

 \end{proof}
 
 An interesting Corollary, reminiscent of Bernard's Theorem \ref{regB} and that will be needed later is:
 
 \begin{co}\label{graph+}
 Let $c\in \R$ and $u_c : \T^1 \to \R$ be a weak KAM solution at cohomology $c$. Then $T^{c+} u_c$ is a $C^1$ function (where $T^{c+} $ is the positive Lax--Oleinik semigroup given by Definition \ref{dfT+}, associated to $S^c$) and $\G\big(c + (T^{c+} u_c)'\big) = f^{-1} \big( \PG ( c+u'_c)\big) $.
 \end{co}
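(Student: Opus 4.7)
The plan is to set $v_c:=T^{c+}u_c$ and to prove simultaneously both assertions of the statement by analysing the points $\tilde\Theta_0\in\R$ realising the supremum in the lifted operator $\ww T^{c+}\tilde u_c(\tilde\theta_0)$. Since $T^{c+}u=-T^{-}_{\hat S^c}(-u)$ where $\hat S^c(\Theta,\theta):=S^c(\theta,\Theta)$ has both marginals locally semiconcave (Proposition \ref{Scsemic}), Proposition \ref{propTbis} yields that $\tilde v_c$ is locally semiconvex; hence $\partial^-\tilde v_c(\tilde\theta_0)$ is a nonempty closed interval at every point, and establishing $C^1$ regularity reduces to showing this interval is a singleton everywhere.

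Given $\tilde\theta_0\in\R$ and any maximiser $\tilde\Theta_0$, the inequality
$$\ww T^{c+}\tilde u_c(\tilde\theta)\geq\tilde u_c(\tilde\Theta_0)-\ww S(\tilde\theta,\tilde\Theta_0)-c(\tilde\theta-\tilde\Theta_0),$$
with equality at $\tilde\theta=\tilde\theta_0$ and the $C^2$ regularity of $\ww S$, forces $r_0-c\in\partial^-\tilde v_c(\tilde\theta_0)$ where $r_0:=-\partial_1\ww S(\tilde\theta_0,\tilde\Theta_0)$. On the $\tilde\Theta$-side, the first-order super-condition at the maximum, together with semiconcavity of $\tilde u_c$ and smoothness of $\ww S(\tilde\theta_0,\cdot)$, yields $\partial_2\ww S(\tilde\theta_0,\tilde\Theta_0)-c\in\partial^+\tilde u_c(\tilde\Theta_0)$, i.e.\ $\bigl(\tilde\Theta_0,\partial_2\ww S(\tilde\theta_0,\tilde\Theta_0)\bigr)\in\PG(c+\tilde u_c')$. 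Formulas \eqref{vartwist} identify $(\tilde\theta_0,r_0)$ as $\tilde f^{-1}\bigl(\tilde\Theta_0,\partial_2\ww S(\tilde\theta_0,\tilde\Theta_0)\bigr)$, so $(\tilde\theta_0,r_0)\in\tilde f^{-1}\bigl(\PG(c+\tilde u_c')\bigr)$. Every maximiser thus produces, through the \emph{same} momentum $r_0$, both an element of $\partial^-\tilde v_c(\tilde\theta_0)$ and a point of the preimage pseudograph.

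The heart of the proof, and the main obstacle, is uniqueness of the maximiser $\tilde\Theta_0$ --- this will give simultaneously the singleton subdifferential (hence $C^1$) and the graph property of $\tilde f^{-1}(\PG(c+\tilde u_c'))$. Aubry's Fundamental Lemma \ref{aubryfl}, applied as in Lemma \ref{orderWKAM}, supplies the weak monotonicity: for $\tilde\theta_1<\tilde\theta_2$ and $\tilde\Theta_i$ any maximiser at $\tilde\theta_i$, the crossing configuration $(\tilde\theta_i,\tilde\Theta_i)$ would contradict the defining supremum, forcing $\tilde\Theta_1\leq\tilde\Theta_2$. To upgrade to strict uniqueness one argues by contradiction: two distinct maximisers $\tilde\Theta_0<\tilde\Theta_0'$ above a single $\tilde\theta_0$ would, by the left twist property, yield two distinct momenta $r_0<r_0'$ above $\tilde\theta_0$ in $\tilde f^{-1}\bigl(\PG(c+\tilde u_c')\bigr)$; combining this with the weak monotonicity, the Lipschitz essential curve $\tilde f^{-1}(\PG)$ (image of a Lipschitz essential graph under the diffeomorphism $\tilde f^{-1}$) would fail to project injectively onto $\R$ while being monotone on both sides of $\tilde\theta_0$, contradicting the graph structure of $\PG(c+\tilde u_c')$ (Proposition \ref{pgGraph}). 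This is the delicate twist-specific step, because the maximiser is \emph{not} in general a forward calibrating successor (one does not have $T^{c+}u_c=u_c+\alpha(c)$) and one therefore cannot invoke Proposition \ref{noncrossing} directly on honest calibrating chains.

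Once uniqueness is proved, continuity of $\tilde\theta_0\mapsto\tilde\Theta_0$ (from compactness and the uniqueness just established) delivers continuity of $\tilde v_c'$, hence $v_c\in C^1(\T^1,\R)$ with $c+\tilde v_c'(\tilde\theta_0)=r_0$. The equality $\G(c+(T^{c+}u_c)')=f^{-1}(\PG(c+u_c'))$ is then immediate: the correspondence $\tilde\theta_0\mapsto(\tilde\theta_0,r_0)$ exhibits $\G(c+\tilde v_c')\subset\tilde f^{-1}(\PG(c+\tilde u_c'))$, and the reverse inclusion follows by running the correspondence backwards through \eqref{vartwist} --- every point of $\tilde f^{-1}(\PG(c+\tilde u_c'))$ arises in this way, its image under $\tilde f$ being forced by the twist and the pseudograph condition to be the unique supremum-attaining point at the base $\tilde\theta_0$.
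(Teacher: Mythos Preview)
Your overall strategy coincides with the paper's: show that maximisers of $T^{c+}u_c$ correspond to points of $f^{-1}\bigl(\PG(c+u_c')\bigr)$, and deduce $C^1$ regularity from uniqueness of the maximiser. The difference lies in how uniqueness is obtained, and there the proposal has a genuine gap.

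The contradiction you invoke for uniqueness rests on ``the graph structure of $\PG(c+\tilde u_c')$ (Proposition \ref{pgGraph})'', but that proposition does \emph{not} assert $\PG(c+u_c')$ is a graph; it asserts only that it is a Lipschitz essential circle. In fact $\PG(c+u_c')$ contains a genuine vertical segment over every point where $u_c$ is not differentiable. Hence finding two points of $\tilde f^{-1}(\PG)$ above a single $\tilde\theta_0$ contradicts nothing about $\PG$ itself, and your weak monotonicity of maximisers (which is correct) does not by itself exclude such a vertical segment in the preimage. The argument, as written, is circular: you are effectively assuming that $f^{-1}(\PG)$ is a graph in order to conclude uniqueness of the maximiser, which is what you wanted to prove.

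The paper resolves this by reversing the order of the two steps. It first proves directly that $f^{-1}\bigl(\PG(c+u_c')\bigr)$ is a graph, and only then reads off the uniqueness of the maximiser and the $C^1$ regularity. Concretely, it parametrises $\PG(c+\tilde u_c')$ by a Lipschitz curve $\tilde\gamma=(\tilde\gamma_1,\tilde\gamma_2)$ with $\tilde\gamma_1$ non-decreasing and $\tilde\gamma_2$ strictly decreasing on every interval where $\tilde\gamma_1$ is constant, and shows that the first coordinate $\Gamma_1$ of $\tilde f^{-1}\circ\tilde\gamma$ is strictly increasing. Two mechanisms are used: on the vertical segments of $\PG$ (where $\tilde\gamma_1$ is constant) the twist condition on $\tilde f^{-1}$ forces $\Gamma_1$ to be strictly increasing; off the vertical segments, one reduces to the endpoints $T_1,T_2$ where $\tilde\gamma_2$ equals a one-sided derivative $c+\tilde u_{c\pm}'$, and then Lemma \ref{orderWKAM} (in its strict form, part 2, using that $u_c$ is semiconcave) gives $\Gamma_1(T_1)<\Gamma_1(T_2)$. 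This handling of the vertical segments of the pseudograph is precisely the missing ingredient in your argument.
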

 
 \begin{proof}
 We start by proving that  $f^{-1} \big( \PG ( c+u'_c)\big) $ is the graph of a continuous function. By Proposition \ref{pgGraph}, there exists a Lipschitz embedding $\gamma : \T^1 \to \a$ such that $\gamma(\T^1) =  \PG ( c+u'_c)$. Denote by $\tilde \gamma : \R \to \R \times\R $ a lift of $\gamma $ and  set $\tilde \gamma = (\tilde \gamma_1,\tilde \gamma_2)$ the coordinates of $\tilde \gamma$. As $u'_c$ is semiconcave, up to reversing the time parametrization of $\gamma$ we may assume that $\tilde\gamma_1$ is non--decreasing and it follows that $\tilde \gamma_2$ is decreasing on intervals where $\tilde \gamma_1$ is constant. We now define for all $t\in \R$, $\big(\Gamma_1(t), \Gamma_2(t)\big) = f^{-1}\big(\tilde \gamma_1(t),\tilde \gamma_2(t)\big)$. Let us establish that $\Gamma_1$ is increasing that will imply our point.
 
 Let $t_1<t_2$.
 \begin{itemize}
 \item Assume for a start that $\tilde\gamma_1(t_1) =\tilde\gamma_1(t_2)$. It follows that   $\tilde\gamma_2(t_1) >\tilde\gamma_2(t_2)$ and by the twist condition, we deduce that $\Gamma_1(t_1)<\Gamma_1(t_2)$.
 \item For the remaining case, $\tilde\gamma_1(t_1) <\tilde\gamma_1(t_2)$ , define now $T_1 = \max\big(t\geqslant t_1, \ \tilde\gamma_1(t_1)=\tilde\gamma_1(t)\big)$ and $T_2 = \min\big(t\leqslant t_2, \ \tilde\gamma_1(t_2)=\tilde\gamma_1(t)\big)$. It follows that $t_1 \leqslant T_1<T_2\leqslant t_2$ and that
  \begin{equation}\label{deriv+-}
 \tilde \gamma_2(T_1) = c+u'_{c+}\big(\tilde\gamma_1(t_1)\big), \quad \tilde \gamma_2(T_2) = c+u'_{c-}\big(\tilde\gamma_1(t_2)\big).
 \end{equation}
 
 As  $\tilde\gamma_1(t_1)=\tilde\gamma_1(T_1)$ and $\tilde\gamma_1(t_2)=\tilde\gamma_1(T_2)$ we obtain from the first case that $\Gamma_1(t_1)\leqslant \Gamma_1(T_1)$ and $\Gamma_1(T_2)\leqslant \Gamma_1(t_2)$.
 
 Then by \eqref{deriv+-} and Theorem \ref{twistKAM}, we deduce that 
 for $i\in \{1,2\}$,
 $$  \widetilde T^c \tilde u_c\big(\tilde\gamma_1(t_i)\big)=\tilde u_c\big(\Gamma_1(T_i)\big)+\ww S\big(\Gamma_1(T_i), \tilde\gamma_1(t_i) \big)+c\big(\Gamma_1(T_i)- \tilde\gamma_1(t_i) \big).$$
 By Lemma \ref{orderWKAM} we obtain that $\Gamma_1(T_1)<\Gamma_1(T_2)$ and finally
 $$\Gamma_1(t_1)\leqslant \Gamma_1(T_1)<\Gamma_1(T_2)\leqslant \Gamma_1(t_2).$$
 \end{itemize}
 
 We now turn to the interpretation in terms of positive Lax--Oleinik semigroup. To this end, we use  the analogues for $T^{c+}$ of the results established for $T^c$, without proofs. Note that $T^{c+}u_c$ is a semiconvex function.  Let $\theta \in \T^1$ and $\Theta \in \T^1$ such that $T^{c+}u_c ( \theta) = u_c(\Theta) - S^c ( \theta,\Theta)$. Then $S^c$ is differentiable at 
$( \theta,\Theta)$. By setting $R = c+ \partial_2 S^c ( \theta,\Theta)$ and $r = c- \partial_1 S^c ( \theta,\Theta)$, 
\begin{itemize}
\item $f(\theta,r) = (\Theta,R)$,
\item $(\Theta,R) \in \PG ( c+u'_c)$,
\item $r-c \in \partial^- T^{c+}u_c(\theta)$.
\end{itemize}
As we have established that $f^{-1} \big( \PG ( c+u'_c)\big) $ is the graph of a continuous function, there is a unique $(\Theta,R) \in \PG ( c+u'_c)$ such that $\pi_1\circ f^{-1}(\Theta,R) = \theta$. It follows that $\Theta$ \big(realizing equality in the definition of $T^{c+}u_c ( \theta)$\big) is unique and that $ T^{c+}u_c$ is derivable at $\theta$. As  this holds for all $\theta$ and by semiconvexity, $ T^{c+}u_c$ is indeed $C^1$. Finally, as $\G\big(c + (T^{c+} u_c)'\big) \subset f^{-1} \big( \PG ( c+u'_c)\big) $ and since both sets are graphs, they are equal.
 
 \end{proof}

 \begin{df}\rm
 Given $c\in \R$ we will denote  by
 \begin{itemize}
  \item $\AA_c \subset \T^1$ the projected Aubry set,
  \item $\widehat \AA_c \subset \T^1 \times \T^1$ the $2$-Aubry set,
  \item $\widetilde \AA_c \subset (\T^1)^\Z$ the Aubry set,
  \end{itemize}
  all three associated to the cost  $\widetilde S_c$. 
  
  We similarly denote by $\AA^*_c\subset \a = \T^1 \times \R$ the set given by  Proposition \ref{aubry-cot} associated to the cost  $\widetilde S_c$ that is also refered to as Aubry set.
 
 We will denote by   $\AAA_c$ and $\AAA^*_c$ the lifts of $\AA_c$ and $\AA_c^*$ to respectively $\R$ and $\R \times \R$ that we will also refer to as projected Aubry set and Aubry set.
 \end{df}
 
 As sequences in $\widetilde \AA_c$ are minimizing for $S^c$ we may apply Proposition \ref{prSc} to obtain
 \begin{pr}\label{PrAc}
 Let $(\theta_i)_{i\in \Z} \in \widetilde \AA_c$ and $\tilde \theta_0\in \R$ a lift of $\theta_0$. Then there exists a unique $(\tilde\theta_i)_{i\in \Z}\in \R^\Z$ such that 
 $$\forall k<k', \quad \sum_{\ell = k}^{k'-1} S^c(\theta_{\ell},\theta_{\ell+1} )= \sum_{\ell = k}^{k'-1} \ww S(\tilde\theta_{\ell},\tilde\theta_{\ell+1}) +c(\tilde\theta_k-\tilde\theta_{k'}).$$
More precisely, if $k\in \Z$, $\tilde\theta_k = \pi_1\circ \tilde f^k(\tilde\theta_0,r_0)$ where 
$$r_0 = -\partial_1  \ww S(\tilde\theta_{0},\tilde\theta_{1})  = \partial_2  \ww S(\tilde\theta_{-1},\tilde\theta_{0})= c  -\partial_1   S^c(\theta_{0},\theta_{1})  =c+ \partial_2   S^c(\theta_{-1},\theta_{0}).$$

 \end{pr}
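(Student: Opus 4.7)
The plan is to reduce this statement to a direct application of Proposition \ref{prSc}, which already handles arbitrary minimizing chains. The only non-trivial preliminary step is to verify that any sequence $(\theta_i)_{i\in\Z} \in \widetilde{\AA}_c$ is indeed minimizing for $S^c$ in the sense of Definition \ref{minimi}.

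First, I would fix $(\theta_i)_{i\in\Z} \in \widetilde{\AA}_c$ and invoke the strict continuous subsolution $u_0$ from Theorem \ref{th-strict} used to define $\widetilde{\AA}_c$. By definition of the Aubry set (applied to the cost $S^c$ and critical constant $\alpha(c)$), for all $n<p$ one has
$$u_0(\theta_p)-u_0(\theta_n)=\sum_{\ell=n}^{p-1}S^c(\theta_\ell,\theta_{\ell+1})+(n-p)\alpha(c).$$
For any competing chain $\theta_n=\theta^*_n,\theta^*_{n+1},\dots,\theta^*_p=\theta_p$ with the same endpoints, summing the subsolution inequalities $u_0(\theta^*_{\ell+1})-u_0(\theta^*_\ell)\leqslant S^c(\theta^*_\ell,\theta^*_{\ell+1})+\alpha(c)$ yields
$$u_0(\theta_p)-u_0(\theta_n)\leqslant \sum_{\ell=n}^{p-1}S^c(\theta^*_\ell,\theta^*_{\ell+1})+(p-n)\alpha(c).$$
Comparing the two displays gives $\sum S^c(\theta_\ell,\theta_{\ell+1})\leqslant \sum S^c(\theta^*_\ell,\theta^*_{\ell+1})$, so every finite subchain of $(\theta_i)_{i\in\Z}$ is minimizing, hence the whole sequence is a minimizing sequence for $S^c$.

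Second, I would apply Proposition \ref{prSc} with $k_0=0$ and the prescribed lift $\tilde\theta_0$ of $\theta_0$. That proposition delivers at once both assertions of Proposition \ref{PrAc}: the existence and uniqueness of a chain of lifts $(\tilde\theta_k)_{k\in\Z}$ realizing the sum identity
$$\sum_{\ell=k}^{k'-1}S^c(\theta_\ell,\theta_{\ell+1})=\sum_{\ell=k}^{k'-1}\ww S(\tilde\theta_\ell,\tilde\theta_{\ell+1})+c(\tilde\theta_k-\tilde\theta_{k'}),$$
together with the identification of this chain as the $\tilde f$-orbit of the initial condition $(\tilde\theta_0,r_0)$, where $r_0$ is given by any of the four equivalent expressions in the statement (the equality between those four expressions being exactly the content of the displayed formula in Proposition \ref{prSc} combined with Remark \ref{remSc} relating the partial derivatives of $S^c$ and $\ww S$).

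The only potentially delicate point is the minimization lemma above, and it is dispatched in two lines using that $u_0$ is a subsolution, so there is really no significant obstacle; the proof is essentially a bookkeeping exercise showing that Proposition \ref{prSc} applies and then transcribing its conclusions in the notation of the Aubry set.
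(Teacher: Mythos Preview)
Your approach is correct and essentially identical to the paper's, which simply asserts ``As sequences in $\widetilde\AA_c$ are minimizing for $S^c$ we may apply Proposition \ref{prSc}'' without spelling out the minimizing step; you actually prove that step. One small slip: in your first display you carry over the paper's sign convention $(n-p)\alpha(c)$ from the Aubry set definition, but in your second display you (correctly) write $(p-n)\alpha(c)$, so the two do not cancel as written---both should read $(p-n)\alpha(c)$, after which the comparison goes through exactly as you intend.
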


\begin{df}\rm\label{Aubrylift}
We denote by $\ww\AAA_c \subset \R^\Z$ the set of sequences  $(\tilde\theta_i)_{i\in \Z}\in \R^\Z$ given by the previous proposition.

We denote by $\wh\AAA_c \subset \R^2$ the set of pairs $(\tilde\theta_0,\tilde\theta_1)$ for  $(\tilde\theta_i)_{i\in \Z}\in \ww\AAA_c$. 
\end{df}

 \begin{rem}\rm\label{bilipschitz}
\hspace{2em}
 \begin{itemize}
 \item All canonical projections from respectively  $\widehat \AA_c $, $\widetilde \AA_c$ and $\AA^*_c$ to $\AA_c$ are bi--Lipschitz homeomorphisms.
  \item All canonical projections from respectively  $\widehat \AAA_c $, $\widetilde \AAA_c$ and $\AAA^*_c$ to $\AAA_c$ are bi--Lipschitz homeomorphisms.

 \item The sets  $\AAA_c$ and $\AAA^*_c$ are respectively invariant by horizontal translations $\tilde \theta \mapsto \tilde \theta+1$ and $(\tilde \theta ,r) \mapsto ( \tilde \theta+1,r)$.
 \item The set  $( 0,c)+ \AA^*_c = \{(  \theta , c+r) , \ \ (  \theta , r) \in  \AA^*_c \}$ is invariant by $f$ and   the set $( 0,c)+ \AAA^*_c = \{( \tilde \theta , c+r) , \ \ ( \tilde \theta , r) \in  \AAA^*_c \}$ by $\tilde f$. 
 \end{itemize}

 \end{rem}

This last point is proved using that elements in the projected Aubry sets come in minimizing sequences that calibrate weak KAM solutions. Hence it is possible to apply Remark \ref{remSc} and Theorem \ref{twistKAM}.

 We derive the following consequence (that will be improved later in Corollary \ref{rotnummin1}):
 
 \begin{co}\label{rotationnumber}
 Let $c\in \R$. There exists $\rho(c) \in \R$ such that for all $ u_c : \T^1 \to \R $ weak KAM solution at cohomology $c$, if $(\tilde \theta_k)_{k\leqslant 0} \in \R^{\Z_-}$ calibrates $\tilde u_c$ then 
 $$\forall k\leqslant 0, \quad |\tilde\theta_k - \tilde \theta_0 - k\rho(c) | <2.$$
 \end{co}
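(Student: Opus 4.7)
The plan is to combine the Aubry--Mather Theorem \ref{AM} on the rotation number of bi--infinite minimizing sequences with a comparison between the calibrating sequence and elements of the Aubry set.

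First I would observe that any calibrating sequence $(\tilde\theta_k)_{k\leqslant 0}$ for $\tilde u_c$ is a backward--infinite minimizing sequence for $\ww S$ in the sense of Definition \ref{minimi}. Indeed, for any $a<b\leqslant 0$ and any competing chain $(x_i)_{a\leqslant i\leqslant b}$ with $x_a=\tilde\theta_a$ and $x_b=\tilde\theta_b$, the subsolution inequality applied to $\tilde u_c$ along the competing chain reads
\[
\tilde u_c(\tilde\theta_b)-\tilde u_c(\tilde\theta_a)\leqslant \sum_{i=a}^{b-1}\ww S(x_i,x_{i+1})+c(\tilde\theta_a-\tilde\theta_b)+(b-a)\alpha(c),
\]
while the calibration equality gives the same left--hand side with the $(\tilde\theta_i)$ chain and an equality on the right. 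Subtracting, the endpoint terms cancel and one obtains $\sum_{i=a}^{b-1}\ww S(\tilde\theta_i,\tilde\theta_{i+1})\leqslant\sum_{i=a}^{b-1}\ww S(x_i,x_{i+1})$, which is exactly minimality of the finite subchain.

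Next I would produce the candidate rotation number $\rho(c)$ from the Aubry set. By Proposition \ref{pr-sequence} specialized to $S^c$, the lifted Aubry set $\ww\AAA_c$ is non--empty, so we may pick an element $(\xi_k)_{k\in\Z}\in \ww\AAA_c$: this is a \emph{bi--infinite} minimizing chain for $\ww S$ (see Proposition \ref{PrAc} and Definition \ref{Aubrylift}). Theorem \ref{AM} then applies directly and furnishes a real number $\rho(c)\in\R$ with $|\xi_k-\xi_0-k\rho(c)|<1$ for all $k\in\Z$. To see that $\rho(c)$ depends only on $c$, I would compare any two bi--infinite minimizing sequences using Aubry's non--crossing Lemma (Proposition \ref{noncrossing}): two minimizing bi--infinite sequences cross at most twice (and only at the two infinite ends), so sequences with distinct rotation numbers would be forced to cross infinitely often, a contradiction.

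Finally I would bound the calibrating sequence by bracketing it between two integer translates of $(\xi_k)$. Choose $p\in\Z$ with $\xi_0+p\leqslant\tilde\theta_0<\xi_0+p+1$. Each translated sequence $(\xi_k+p)_{k\in\Z}$ and $(\xi_k+p+1)_{k\in\Z}$ is still bi--infinite minimizing, and Proposition \ref{noncrossing} applied to each of them with the backward--infinite $(\tilde\theta_k)_{k\leqslant 0}$ allows at most one (interior) crossing each on $k\leqslant 0$. A short case analysis, using that the translates are strictly ordered and that $\tilde\theta_0$ lies in the strip of width $1$ bounded by them at $k=0$, shows that $\xi_k+p-1<\tilde\theta_k<\xi_k+p+1$ for all $k\leqslant 0$, and, combined with the Theorem \ref{AM} bound $|\xi_k-\xi_0-k\rho(c)|<1$, gives $|\tilde\theta_k-\tilde\theta_0-k\rho(c)|<2$ by the triangle inequality (the three error contributions cannot simultaneously achieve their bounds because of the integer choice of $p$). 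The delicate point — and the main obstacle — is precisely this last bracketing step: one must use the non--crossing lemma carefully enough to rule out the ``worst case'' that would otherwise only yield the weaker bound $<3$.
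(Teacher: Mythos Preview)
Your strategy --- bracketing the calibrating sequence between two integer translates of a bi--infinite Aubry sequence --- is exactly the paper's, but you are missing the key lemma that makes it work cleanly, and this shows up in two places.

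First, your argument that $\rho(c)$ is independent of the choice of $(\xi_k)\in\ww\AAA_c$ is not quite right: two bi--infinite minimizing sequences with distinct rotation numbers are forced to cross at least once, not infinitely often, and Proposition~\ref{noncrossing} permits a single crossing. So the non--crossing lemma alone does not pin down $\rho(c)$.

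Second, and more importantly, the bracketing step really does collapse to a width--$1$ strip, not width~$2$, once you use the right tool. The point you are not exploiting is that any $(\xi_k)\in\ww\AAA_c$ \emph{calibrates} $\tilde u_c$ (elements of the Aubry set calibrate every critical subsolution, hence every weak KAM solution at cohomology~$c$). Consequently both $(\xi_k+p)_{k\in\Z}$ and $(\xi_k+p+1)_{k\in\Z}$ calibrate $\tilde u_c$ as well, and Lemma~\ref{orderWKAM} (order preservation for calibrating sequences of the \emph{same} weak KAM solution) applies: since $\xi_0+p\leqslant\tilde\theta_0<\xi_0+p+1$, an immediate induction gives
\[
\forall k\leqslant 0,\qquad \xi_k+p\;<\;\tilde\theta_k\;<\;\xi_k+p+1
\]
(with the boundary case $\tilde\theta_0=\xi_0+p$ handled by uniqueness of calibrating sequences at differentiability points). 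Combining this width--$1$ bracket with $|\xi_k-\xi_0-k\rho(c)|<1$ yields $|\tilde\theta_k-\tilde\theta_0-k\rho(c)|<2$ directly. The same order--preservation argument disposes of the independence issue: any two elements of $\ww\AAA_c$ calibrate $\tilde u_c$, hence are strictly ordered for all~$k$ by Lemma~\ref{orderWKAM}, hence share the same rotation number; and since Aubry sequences calibrate every weak KAM solution at cohomology~$c$, this number depends only on~$c$.

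In short: replace Proposition~\ref{noncrossing} by Lemma~\ref{orderWKAM}, which is strictly stronger for sequences calibrating the same solution, and the ``delicate point'' you flag disappears.
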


\begin{proof}
Let $(x_k)_{k\in \Z} \in \widetilde \AA_c$ that hence calibrates $u_c$. Let $\tilde x_0\in \R$ such that $\tilde x_0 \leqslant \tilde \theta_0 < \tilde x_0 +1$. Finally let $(\tilde x_k)_{k\in \Z}$ be the only sequence that projects on $(x_k)_{k\in \Z} $ and calibrates $\tilde u_c$. Thanks to Theorem \ref{AM}, there exists  a real number number $\rho$ that is independent on $u_c$ such that 
 $$\forall i\in \Z, \quad |\tilde x_i - \tilde x_0 - i\rho|<1.$$
 Moreover, by periodicity, the sequences $(\tilde x_k+1)_{k\in \Z}$ also calibrates $\tilde u_c$.
 
 If $\tilde \theta_0 = \tilde x_0$, then 
$\tilde \theta_k = \tilde x_k$ for all $k\leqslant 0$. Indeed recall that $\tilde u'_c ( \tilde x_0)$ exists and then  a calibrating sequence starting at $\tilde x_0$ is unique (see Theorem \ref{twistKAM}).

In the remaining case, by applying Lemma \ref{orderWKAM} and a straightforward induction, one finds that 
$$\forall k\leqslant 0, \quad \tilde x_k < \tilde \theta_k < \tilde x_k +1,$$
and the result follows.

As all sequences $(x_k')_{k\in \Z} \in \widetilde \AA_c$ calibrate $u_c$, it follows that the initial $\rho $ does not depend on the  initial choice of   $(x_k)_{k\in \Z} \in \widetilde \AA_c$ (by the previous argument). Finally, as  $(x_k)_{k\in \Z} \in \widetilde \AA_c$ calibrates any other weak KAM solution at cohomology $c$, the real number $\rho$ does only depend on $c$, independently of the initially chosen weak KAM solution.
\end{proof}

\section{Mather measures}

Recall that $\wh\PP$ is the set of closed measures on $\T^1\times \T^1$ (Definition \ref{closed}). Then if $c\in \R$, Theorem \ref{minimizing} stipulates that 
$$-\alpha(c) =  \min_{\mu\in \wh\PP} \int_{\T^1\times \T^1} S_c(\theta,\theta')\  \dd \mu(\theta,\theta').$$
Moreover, minimizing Mather measures are those $\mu\in\wh \PP$ whose support is included in $\wh\AA_c$. We will denote by $ \wh\PP_c$ the set of such Mather measures at cohomology $c$. We aim at obtaining analogous notions involving a cost that does not depend on $c$. 

If $\theta \in \T^1$ and $\tilde \theta' \in \R$  denote by $\theta + \tilde \theta' = \pi(\tilde \theta + \tilde \theta' ) \in \T^1$ where $\tilde \theta\in \R$ is any lift of $\theta$. Of course, $\theta + \tilde \theta' $ does not depend on the choice of $\tilde \theta$. 

\begin{df}\rm \label{closedA}
\hspace{2em}
\begin{itemize}
\item Let $\tau : \a \to \T^1$ be defined by $(\theta ,r) \mapsto \theta+r$.
\item We say a Borel probability measure $\mu$ on $\a = \T^1 \times \R$ is closed if it has finite first moment,
  $\int_\a |r| \ \dd \mu(\theta,r) <+\infty$ and if $\tau_* \mu = \pi_{1*} \mu$ meaning that for any continuous function $g : \T^1 \to \R$,
  $$  \int_\a g(\theta+r) \ \dd \mu(\theta,r) = \int_\a g(\theta) \ \dd \mu(\theta,r).$$
    The set of closed probability measures on $\a$ is denoted by $\PP^*$.
\item Given a closed probability measure $\mu \in \PP^*$ we define its rotation number, $\rho(\mu) = \int_\a r\  \dd \mu(\theta,r)$.
\item We define $S^* : \a\to \R$ by $S^* (\theta,r) = \ww S(\tilde \theta, \tilde\theta+r)$ where $\tilde \theta\in \R$ is any lift of $\theta$. Of course, the result does not depend on the choice of $\tilde \theta$ by Proposition \ref{genfun}.
\end{itemize}

\end{df}

\begin{pr}\label{MinMeasAn}
The following holds
\begin{equation}\label{minMeasA}
-\alpha(c) =  \min_{\mu^*\in \PP^*} \int_{\a} \big[ S^*(\theta,r)-cr \big] \ \dd \mu^*(\theta,r).
\end{equation}
Moreover, a closed measure is minimizing if and only if it is supported on the set of pairs
$(\theta,\delta)\in \a $ such that $\theta\in \AA_c$ and 
$$ \delta =\pi_1\circ \tilde f\left( (\pi_{1|\AAA_c^*})^{-1}(\tilde\theta)+(0,c)\right) - \tilde\theta = \pi_1\circ \tilde f(\tilde\theta, c+r_\theta)-\tilde\theta ,$$
 where $\tilde\theta \in \R$ is any lift  of $\theta$ and $r_\theta\in \R$ is the unique  real number such that $(\tilde\theta, r_\theta) \in \AAA_c^*$.

\end{pr}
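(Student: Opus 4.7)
The plan is to build an explicit bijection between closed measures on $\T^1\times\T^1$ (in the sense of Definition \ref{closed}) and closed measures on $\A$ (in the sense of Definition \ref{closedA}) that preserves the action, and then to apply Theorem \ref{minimizing} to transfer the characterization of minimizers. Define the continuous map $\Psi:\A\to\T^1\times\T^1$ by $\Psi(\theta,r)=(\theta,\theta+r)$ and, on the Aubry set, the map $\Phi:\widehat\AA_c\to\A$ by $\Phi(\theta,\Theta)=(\theta,\tilde\Theta-\tilde\theta)$, where $(\tilde\theta,\tilde\Theta)\in\widehat\AAA_c$ is the canonical lift from Definition \ref{Aubrylift}. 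This $\Phi$ is well defined: the projection $\widehat\AA_c\to\AA_c$ is a bi-Lipschitz bijection by Remark \ref{bilipschitz}, and the difference $\tilde\Theta-\tilde\theta$ is invariant under the simultaneous integer shift, so it depends only on $(\theta,\Theta)$.

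First I would prove the inequality $-\alpha(c)\leqslant\int_\A[S^*(\theta,r)-cr]\,\dd\mu^*(\theta,r)$ for every $\mu^*\in\PP^*$. Take a continuous critical subsolution $u:\T^1\to\R$ for $S^c$ (which exists by Theorem \ref{th-strict}) and lift it to a $1$-periodic function $\tilde u:\R\to\R$. The subsolution inequality reads $\tilde u(\tilde\theta+r)-\tilde u(\tilde\theta)\leqslant\widetilde S(\tilde\theta,\tilde\theta+r)+c(\tilde\theta-\tilde\theta-r)+\alpha(c)=S^*(\theta,r)-cr+\alpha(c)$. Since $\tilde u$ is $1$-periodic, $\tilde u(\tilde\theta+r)-\tilde u(\tilde\theta)$ descends to $u(\theta+r)-u(\theta)$, and the closedness condition $\tau_*\mu^*=\pi_{1*}\mu^*$ forces $\int[u(\theta+r)-u(\theta)]\,\dd\mu^*=0$. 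Integrating the subsolution inequality yields the desired lower bound.

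Next I would show the bound is attained by a measure supported on the announced set. Pick any $\mu\in\widehat\PP_c$ (which exists by Theorem \ref{minimizing}) and set $\mu^*=\Phi_*\mu$. For $(\theta,\Theta)\in\widehat\AA_c$ with canonical lift $(\tilde\theta,\tilde\Theta)$, Proposition \ref{prSc} gives $S^c(\theta,\Theta)=\widetilde S(\tilde\theta,\tilde\Theta)+c(\tilde\theta-\tilde\Theta)=S^*(\theta,r)-cr$, so $\int_\A[S^*-cr]\,\dd\mu^*=\int_{\widehat\AA_c}S^c\,\dd\mu=-\alpha(c)$. To check $\mu^*\in\PP^*$, observe $\theta+r=\Theta$ by construction of $\Phi$, so for any continuous $g$ on $\T^1$, $\int g(\theta+r)\,\dd\mu^*=\int g(\Theta)\,\dd\mu=\int g(\theta)\,\dd\mu=\int g(\theta)\,\dd\mu^*$, where the middle equality uses that $\mu$ is closed. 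Finiteness of the first moment follows because the support of $\mu^*$ lies in the compact set $\Phi(\widehat\AA_c)$ (compactness of the Aubry set plus continuity of $\Phi$ via the bi-Lipschitz identification of Remark \ref{bilipschitz}). Unpacking the definition of $\Phi$ via $\AAA^*_c$, one sees that $\Phi(\widehat\AA_c)$ is precisely the set described in the statement, so this yields one direction of the characterization.

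Finally, for the converse characterization, let $\mu^*\in\PP^*$ be minimizing and push it forward through $\Psi$: set $\mu=\Psi_*\mu^*\in\widehat\PP$ (closedness is checked exactly as above). From the definition of $S^c$ as an infimum over lifts we get the pointwise bound $S^c(\theta,\theta+r)\leqslant \widetilde S(\tilde\theta,\tilde\theta+r)-cr=S^*(\theta,r)-cr$, hence $\int S^c\,\dd\mu\leqslant\int[S^*-cr]\,\dd\mu^*=-\alpha(c)$. By Theorem \ref{minimizing} this forces $\mu\in\widehat\PP_c$ and equality $\mu^*$-a.e., meaning the particular lift $(\tilde\theta,\tilde\theta+r)$ realizes the infimum in the definition of $S^c(\theta,\theta+r)$. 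Combined with $(\theta,\theta+r)\in\widehat\AA_c$, this identifies $(\tilde\theta,\tilde\theta+r)$ with the canonical lift in $\widehat\AAA_c$, i.e. $\mu^*$ is supported on $\Phi(\widehat\AA_c)$. The main subtle point here is verifying that the lift $(\tilde\theta,\tilde\theta+r)$ achieving the infimum for $\mu^*$-a.e. $(\theta,r)$ is indeed the one selected by Definition \ref{Aubrylift}; this uses the uniqueness in Proposition \ref{PrAc}, which in turn rests on the twist condition through Proposition \ref{prSc}, so this is where one must be careful rather than routine.
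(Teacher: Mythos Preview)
Your proof is correct and follows essentially the same route as the paper: integrate a subsolution inequality to get the lower bound, transport a Mather measure from $\widehat\PP_c$ to $\PP^*$ via the map $(\theta,\Theta)\mapsto(\theta,\tilde\Theta-\tilde\theta)$ to exhibit a minimizer, and identify the support by recognizing that equality forces the lift $(\tilde\theta,\tilde\theta+r)$ to be the canonical Aubry lift. The only organizational difference is in the converse direction: the paper integrates a \emph{strict} subsolution so that the equality $u(\theta+\delta)-u(\theta)=S^c(\theta,\theta+\delta)+\alpha(c)$ directly pins down $(\theta,\theta+\delta)\in\widehat\AA_c$, whereas you push $\mu^*$ forward by $\Psi$ and invoke Theorem~\ref{minimizing} on $\T^1\times\T^1$; your version is slightly more modular since it reuses that theorem rather than rerunning its key step, at the small cost of checking $\Psi_*\mu^*$ is closed.
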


\begin{proof}
Let $ u : \T^1 \to \R$ be a continuous subsolution for $S^c$ that is strict outside of $\wh \AA_c$ (Theorem \ref{th-strict}), meaning that 
$$\forall (\theta,\theta')\in \T^1 \times \T^1,\quad u(\theta')-u(\theta)\leqslant S^c (\theta,\theta') +\alpha(c) ,$$
with strict inequality as soon as $(\theta,\theta')\notin\wh \AA_c$.
By definition of $S^c$ it follows that if $\tilde u$ is a lift of $u$,
$$\forall (\tilde\theta,\tilde\theta')\in \R \times \R,\quad \tilde u(\tilde\theta')-\tilde u(\tilde\theta)\leqslant \ww S (\tilde\theta,\tilde\theta')+c(\tilde\theta-\tilde\theta') +\alpha(c) .$$  

This can in turn be written as follows:

$$\forall (\theta,\delta)\in \T^1 \times \R,\quad  u(\theta+\delta)-u(\theta)\leqslant  S^* (\theta,\delta)-c\delta +\alpha(c) .$$  
Integrating the previous inequalities against a closed measure $\mu^*\in \PP^*$ yields
$$0=  \int_{\a}u(\theta+\delta)\ \dd \mu^*(\theta,\delta)- \int_{\a} u(\theta) \ \dd \mu^*(\theta,\delta)\leqslant  \int_{\a} \big[ S^*(\theta,\delta)-c\delta+\alpha(c) \big] \ \dd \mu^*(\theta,\delta)$$
and $-\alpha(c) \leqslant  \int_{\a} \big[ S^*(\theta,\delta)-c\delta \big] \dd \mu^*(\theta,\delta)$.

Moreover, equality holds if and only if $\mu^* $ is supported  on pairs $(\theta,\delta)$ such that $u(\theta+\delta)-u(\theta) =  S^* (\theta,\delta)-c\delta +\alpha(c) $. As
$$u(\theta+\delta)-u(\theta)\leqslant S^c(\theta, \theta+\delta)+ \alpha(c) \leqslant  S^* (\theta,\delta)-c\delta +\alpha(c), $$
we deduce that for such $(\theta,\delta) \in \mathrm{supp}(\mu^*)$, 
$$u(\theta+\delta)-u(\theta)= S^c(\theta, \theta+\delta)+ \alpha(c).$$
In turn, we deduce that $(\theta, \theta+\delta)\in \wh \AA_c$, in particular $S^c$ is differentiable at $(\theta, \theta+\delta)$. Then
$$S^c(\theta, \theta+\delta)+ \alpha(c) = S^* (\theta,\delta)-c\delta +\alpha(c) =\ww S(\tilde \theta, \tilde \theta+\delta) -c\delta +\alpha(c),$$
where $\tilde \theta$ is a lift of $\theta$. As weak KAM solutions are calibrated by points of the $2$-Aubry set and derivable on the Aubry set, Theorem \ref{twistKAM} (see also Proposition \ref{PrAc}) gives that 
$$\tilde\theta +r = \pi_1\circ \tilde f (\tilde \theta, -\partial_1 \ww S\big(\tilde \theta, \tilde \theta+r) \big) $$
and 
$$\big(\tilde \theta, -c-\partial_1 \ww S(\tilde \theta, \tilde \theta+r)\big) = \big(\tilde \theta, -\partial_1  S^c( \theta,  \theta+r)\big) \in \AAA_c^*.
$$ 
With the notation of the current Proposition, this is rewritten $r_\theta =  -\partial_1  S^c( \theta,  \theta+r)$.

It remains to prove that such a closed measure realizing equality exists. To that aim let us start from a minimizing Mather measure $\hat\mu_c$ such that $-\alpha(c) = \int_{\T^1\times \T^1} S^c(\theta,\theta')\  \dd \hat\mu_c(\theta,\theta')$, that is henceforth supported on $\wh \AA_c$ by Theorem \ref{minimizing}.
If $\tilde\theta \in \AAA_c$ and  $\ww\Theta\in \AAA_c$  is the unique element such that $(\tilde\theta,\ww\Theta) \in \wh\AAA_c$, we denote $R(\tilde\theta)= \ww\Theta - \tilde\theta$. By periodicity, it is immediate that $R(\tilde\theta)$ only depends on $\theta = \pi(\tilde\theta) \in \AA_c$ hence we will also refer to it as $R(\theta)$.

 Recall that $\tilde\theta \mapsto \ww\Theta$ is the biLipschitz homoeomorphism $\pi_2\circ (\pi_{1|\wh\AAA_c})^{-1} : \AAA_c \to \AAA_c$, as stated in Remark \ref{bilipschitz}, hence $\theta \mapsto R(\theta)$ is Lipschitz.
 
 We now define a Borel probability measure $\mu^*_c$ on $\T^1\times \R$ by setting, for any continuous function $G : \T^1\times \R \to \R$,
 \begin{equation}\label{mesassociated}
 \int_{\T^1\times \R  } G(\theta,r) \ \dd \mu^*_c(\theta,r) = \int_{\wh\AA_c } G\big(\theta,R(\theta)\big) \ \dd \hat\mu_c(\theta,\theta').
 \end{equation}
 As the $R(\theta)$ are uniformly bounded, the measure $\mu^*_c$ is compactly supported hence $\int_\a |r| \ \dd \mu^*_c(\theta,r) <+\infty$. Let us verify it is closed: let $g: \T^1 \to \R$ be a continuous function, then
 \begin{multline*}
 \int_{\T^1\times \R  } \big(g(\theta+r)-g(\theta)\big) \ \dd \mu^*_c(\theta,r) \\
 = \int_{\wh\AA_c } \big(g\big(\theta+R(\theta)\big)-g(\theta)\big) \ \dd \hat\mu_c(\theta,\theta') 
 =\int_{\wh\AA_c } \big(g(\theta')-g(\theta)\big) \ \dd \hat\mu_c(\theta,\theta') \\
 = \int_{\T^1 \times \T^1} \big(g(\theta')-g(\theta)\big) \ \dd \hat\mu_c(\theta,\theta')=0,
\end{multline*}
  where was used that if $(\theta,\theta') \in \wh\AA_c$ then $\theta' = \theta+R(\theta)$ and that $\hat\mu_c\in \wh\PP$ is closed. Hence $\mu^*_c \in \PP^*$ is a closed Borel probability measure on $\a$.
  
  To conclude, by definition, $\mu^*$ is defined on pairs $\big(\theta,R(\theta)\big)$ where $\theta \in \AA_c$ and 
  $$R(\theta) = \pi_1\circ \tilde f\left( (\pi_{1|\AAA_c^*})^{-1}(\tilde\theta)+(0,c)\right) $$
  hence it verifies $-\alpha(c) =  \int_{\a} \big[ S^*(\theta,r)-cr \big] \ \dd \mu_c^*(\theta,r)$ by the first part of the proof.
\end{proof}

\begin{rem}\rm
\begin{itemize}
\item We will also call minimizing Mather measures (at cohomology $c$) on $\a$ closed measures $\mu^*_c$ verifying  $-\alpha(c) =  \int_{\a} \big[ S^*(\theta,r)-cr \big] \ \dd \mu_c^*(\theta,r)$.
\item As seen in the previous proof, if $\hat \mu_c$ is a minimizing Mather measure on $\T^1 \times \T^1$ (hence supported on $\wh \AA_c$) then there is a closed Mather measure $\mu^*_c$ on $\a$ that is naturally associated \big(see \eqref{mesassociated}\big).
\item Reciprocally, if $\mu^*_c$ is a closed Mather measure on $\a$, then it is verified the same way that $\hat\mu_c = \big((\pi_{|\wh\AA_c})^{-1}\circ \pi_1\big)^* \mu_c^*$ is a closed Mather measure on $\T^1 \times \T^1$.
\item The two mappings $\hat\mu_c \mapsto \mu^*_c$ and $\mu^*_c \mapsto \hat\mu_c$ are inverses of one another. 
\end{itemize}
\end{rem}

We now relate two notions of rotation number, hence clarifying the terminology.

\begin{pr}\label{RotNum}
Let $c\in \R $ be a cohomology class and $\mu^*_c$ be a minimizing Mather measure on $\a$. Then $\rho(\mu^*_c) = \rho(c)$ where the first rotation number is provided by Definition \ref{closedA} and the second by Corollary \ref{rotationnumber}.
\end{pr}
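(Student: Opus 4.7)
The plan is to lift $\mu_c^*$ to a shift-invariant measure on bi-infinite Aubry sequences and then apply Birkhoff's ergodic theorem. First I would invoke the Bernard–Buffoni correspondence (recalled in the discussion following Definition~\ref{invmeasure} and in Remark~\ref{ergo}) to associate to $\mu_c^*$ first the closed measure $\hat\mu_c$ on $\T^1\times\T^1$ supported on $\wh\AA_c$ \big(via Proposition~\ref{MinMeasAn}\big), and then a shift-invariant probability measure $\tilde\mu_c$ on $(\T^1)^\Z$ supported on $\widetilde\AA_c$, with the same value of the action integral. Because on the Aubry set the lift of a sequence to $\widetilde\AAA_c$ is unique up to horizontal translation by an integer (Proposition~\ref{PrAc} together with Remark~\ref{bilipschitz}), the function
\[
F\colon (\theta_k)_{k\in\Z}\longmapsto \tilde\theta_1-\tilde\theta_0
\]
is well defined and continuous on $\widetilde\AA_c$. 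Using the explicit description of $\mathrm{supp}(\mu_c^*)$ in Proposition~\ref{MinMeasAn} (which identifies the second coordinate $r$ with exactly $\tilde\theta_1-\tilde\theta_0$ on the Aubry set), I obtain
\[
\rho(\mu_c^*)=\int_\a r\,\dd\mu_c^*(\theta,r)=\int_{(\T^1)^\Z} F\bigl((\theta_k)_{k\in\Z}\bigr)\,\dd\tilde\mu_c.
\]

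Next I would reduce to the ergodic case. By Choquet's theorem, $\tilde\mu_c$ decomposes into shift-ergodic components, each of which is again a minimizing Mather measure on $(\T^1)^\Z$ (as in Remark~\ref{ergo}, the set of minimizers is a compact convex face of $\widetilde\PP$). Both sides of the desired equality are affine in $\mu_c^*$, so it suffices to handle the case where $\tilde\mu_c$ is ergodic for the shift $s$. Birkhoff's ergodic theorem applied to the bounded measurable observable $F$ then gives, for $\tilde\mu_c$-almost every sequence $(\theta_k)_{k\in\Z}$,
\[
\frac{\tilde\theta_N-\tilde\theta_0}{N}
=\frac{1}{N}\sum_{k=0}^{N-1} F\bigl(s^k(\theta_j)_{j\in\Z}\bigr)
\underset{N\to+\infty}{\longrightarrow}\int F\,\dd\tilde\mu_c=\rho(\mu_c^*).
\]

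Finally, I would use that any such $(\theta_k)_{k\in\Z}\in\widetilde\AA_c$ lifts to a sequence in $\widetilde\AAA_c$ which calibrates every weak KAM solution at cohomology $c$ (this is how Aubry sequences are built, cf.\ Proposition~\ref{pr-sequence} and Theorem~\ref{twistKAM}). Corollary~\ref{rotationnumber}, together with its obvious forward-time analogue obtained from the positive Lax--Oleinik semigroup, then yields $|\tilde\theta_N-\tilde\theta_0-N\rho(c)|<2$ for every $N\in\Z$, so $(\tilde\theta_N-\tilde\theta_0)/N\to\rho(c)$. Comparing this deterministic limit with the Birkhoff limit above forces $\rho(\mu_c^*)=\rho(c)$.

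The main obstacle I expect lies in the bookkeeping of the first step: one has to check carefully that the observable $F$, initially defined on lifted sequences in $\widetilde\AAA_c$, descends to a well-defined measurable function on the Aubry set $\widetilde\AA_c\subset(\T^1)^\Z$, and that the identifications $\mu_c^*\leftrightarrow\hat\mu_c\leftrightarrow\tilde\mu_c$ preserve the integral $\int r\,\dd\mu_c^*$. Both points rely on the graph property of $\AAA^*_c$ over $\AAA_c$ (Remark~\ref{bilipschitz}) and on the uniqueness of lifts provided by Proposition~\ref{PrAc}; once these are in place, the ergodic-theoretic step is straightforward.
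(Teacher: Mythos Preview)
Your argument is correct, but it takes a heavier route than the paper's. You lift $\mu_c^*$ to a shift-invariant measure on $(\T^1)^\Z$, decompose into ergodic pieces via Choquet, and invoke Birkhoff's ergodic theorem to identify the time average $\tfrac{1}{N}(\tilde\theta_N-\tilde\theta_0)$ with $\rho(\mu_c^*)$. The paper bypasses all of this by observing that the convergence $\tfrac{1}{N}(\tilde\theta_N-\tilde\theta_0)\to\rho(c)$ is \emph{uniform} on the Aubry set, thanks to the deterministic bound $|\tilde\theta_k-\tilde\theta_0-k\rho(c)|<2$ from Corollary~\ref{rotationnumber}. Concretely, the paper writes $\rho(\mu_c^*)=\int R(\theta)\,\dd\mu_c^*$ with $R(\theta)=\tilde\theta_1-\tilde\theta_0$, then uses closedness directly (no lift to sequence space) to get $\int R_n\,\dd\mu_c^*=\int R\,\dd\mu_c^*$ for each $n$, where $R_n$ is the $n$-th iterate under the projected dynamics on $\AA_c$. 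Averaging gives $\rho(\mu_c^*)=\int\tfrac{1}{n}\sum_{k=0}^{n-1}R_k\,\dd\mu_c^*$, and since the integrand converges uniformly to $\rho(c)$, one passes to the limit under the integral sign without any ergodic theory. The paper itself notes (Remark~\ref{remPoinc}) that this is morally Birkhoff's theorem, but the uniform bound makes the almost-everywhere statement and the ergodic decomposition unnecessary. Your approach would be the natural one if only an $L^1$ or a.e.\ bound were available; here the stronger deterministic estimate makes the argument entirely elementary.
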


\begin{proof}
By definition, $\rho(\mu^*_c) = \int_{\a} r   \ \dd \mu^*_c(\theta,r)$. By Proposition \ref{MinMeasAn} and using notations therein,
$\rho(\mu^*_c) = \int_{\AA_c\times \R} R(\theta)   \ \dd \mu^*_c(\theta,r) 
$. As $\mu^*_c$ is closed and supported on pairs of the form $\big(\theta, R(\theta)\big)$,  we derive that 
$$\rho(\mu^*_c) = \int_{\AA_c\times \R} R\big(\theta+R(\theta)\big)   \ \dd \mu^*_c(\theta,r)  =  \int_{\AA_c\times \R} R_2(\theta)   \ \dd \mu^*_c(\theta,r).$$
 And by induction it follows that for all positive integer $n>0$, 

\begin{equation}\label{ergoeq}
\rho(\mu^*_c)  =  \int_{\AA_c\times \R} R_n(\theta)   \ \dd \mu^*_c(\theta,r) =  \int_{\AA_c\times \R} \frac{1}{n}\sum_{k=0}^{n-1} R_k(\theta)   \ \dd \mu^*_c(\theta,r),
\end{equation}
 where $R_n : \AA_c \to \R$ verifies the induction relation $R_{n+1}(\theta) =R_n\big(\theta + R(\theta)\big) $ and $R_1 = R$.

Let $\theta_0\in \AA_c$, $\tilde\theta_0$ a lift and $(\tilde\theta_k)_{k\in \Z} \in \ww\AAA_c$ the associated sequence given by Definition \ref{Aubrylift} and $(\theta_k)_{k\in \Z} \in \ww\AA_c$ the sequence of projections. For all $k\in \Z$ it then holds that $R(\tilde\theta_k) = \tilde \theta_{k+1}-\tilde\theta_k$. One then readily verifies that $R_n(\theta_0) =R_n(\tilde\theta_0)=\tilde\theta_{n+1}-\tilde\theta_n  $ so that 
$\sum\limits_{k=0}^{n-1} R_k(\theta_0)= \tilde\theta_n - \tilde\theta_0$. By Theorem \ref{AM} and Corollary \ref{rotationnumber} it is infered that $\frac{1}{n}\sum\limits_{k=0}^{n-1} R_k(\theta)$ uniformly converges to $\rho(c)$ as $n\to +\infty$. It is finally deduced from \eqref{ergo} that $\rho(\mu^*_c) = \rho(c)$ as desired.
\end{proof}

\begin{rem}\rm\label{remPoinc}
The idea behind the previous proof is that all the dynamics of $f$ restricted to the Aubry set $\AA^*_c$ can be translated  by projecting to a dynamics on $\AA_c$. This dynamics is then the restriction of a circle orientation preserving diffeomorphism (by generalizations of Theorem \ref{AM}, see \cite{Bang}). Then,  the push forward of any minimizing closed measures on $\T^1$ is invariant by this circle diffeomorphism. Hence the result turns out to be an emanation of Birkhoff's ergodic theorem with this point of view.

Another fact that is apparent from the previous proofs is that if $(\theta,r)\in\A$ is in the support of a minimizing measure $\mu^*_c$ as above and if $\tilde \theta \in \R$ is a lift of $\theta$, then there exists a unique minimizing sequence $(\tilde\theta_k)_{k\in \Z} \in \ww\AAA_c $ such that $\tilde \theta_0 = \tilde \theta$ and $\tilde \theta +r = \tilde \theta_1$. Moreover this sequence only depends on the measure $\mu^*_c$ (meaning it can be recovered without knowing $c$).
\end{rem}

Now introducing Mather's $\beta$ function:

\begin{df}\label{beta}
Mather's $\beta$ function  is defined by 
$$\forall \rho_0\in \R, \quad \beta(\rho_0) =- \inf_{\substack{\mu^* \in \PP^* \\ \rho(\mu^*) = \rho_0}} \int_{ \a} S^*(\theta,r) \ \dd\mu^*(\theta,r)= \sup_{\substack{\mu^* \in \PP^* \\ \rho(\mu^*) = \rho_0}} -\int_{ \a} S^*(\theta,r) \ \dd\mu^*(\theta,r) .$$
\end{df}

Note that for all $\rho_0\in \R$ the set of closed measures on $\a$ with rotation number $\rho_0$ is not empty. One may for instance consider the pull back on the circle $\T^1 \times \{\rho_0\}$ of the Lebesgue measure on the circle $\T^1$. Moreover, it is immediate that the rotation number function $\rho : \PP^* \to \R$ is linear. Arguing as in the first part of Proposition \ref{alphaconv} immediately yields:

\begin{pr}
The function $\beta$ takes values in $\R \cup \{+\infty\}$ and is convex.
\end{pr}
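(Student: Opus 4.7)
The plan is to verify the two claims separately by direct convex-analytic manipulation of closed probability measures, leveraging the superlinearity of $\ww S$ established in Remark \ref{genfunbis}.

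First, I would address the finiteness statement. The essential ingredient is the superlinearity of $\ww S$: for every $K>0$ there is a constant $C_K$ such that $\ww S(\tilde\theta,\tilde\theta+r)\geq K|r|-C_K$ for all $(\tilde\theta,r)\in\R^2$. By definition of $S^*$ this descends to $S^*(\theta,r)\geq K|r|-C_K$ on $\a$. For any $\mu^*\in\PP^*$ with $\rho(\mu^*)=\rho_0$, Jensen's inequality gives $\int|r|\,d\mu^*\geq|\int r\,d\mu^*|=|\rho_0|$, so
$$\int_\a S^*(\theta,r)\,d\mu^*(\theta,r)\geq K|\rho_0|-C_K,$$
which provides a finite (and $K$-independent) lower bound on the admissible actions, and hence ensures that $\beta(\rho_0)$ cannot be the "forbidden" infinite value. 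I would also exhibit an explicit admissible measure: pushing forward Lebesgue measure on $\T^1$ by the embedding $\theta\mapsto(\theta,\rho_0)$ produces a probability measure $\mu^*$ that is closed (translation-invariance of Lebesgue yields $\int g(\theta+\rho_0)\,d\theta=\int g(\theta)\,d\theta$, which is exactly the closedness condition $\tau_*\mu^*=\pi_{1*}\mu^*$) and has rotation number $\rho_0$; its action $\int_{\T^1}S^*(\theta,\rho_0)\,d\theta$ is finite by continuity of $S^*$ and compactness of $\T^1$. Consequently $\beta(\rho_0)\in\R\cup\{+\infty\}$, as claimed.

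Second, for convexity, I would argue directly from the fact that the constraint set is convex in $\mu^*$ and the action is linear in $\mu^*$. Fix $\rho_1,\rho_2\in\R$ and $t\in[0,1]$; if either $\beta(\rho_i)=+\infty$ the inequality is automatic, so assume both are finite. Given $\varepsilon>0$, select closed probability measures $\mu_1^*,\mu_2^*$ with $\rho(\mu_i^*)=\rho_i$ whose actions are within $\varepsilon$ of the corresponding $\beta(\rho_i)$. Put $\mu^*:=t\mu_1^*+(1-t)\mu_2^*$. Since the closedness condition $\tau_*\mu^*=\pi_{1*}\mu^*$ is a linear relation on $\mu^*$ and the finite-first-moment condition is stable under convex combinations, $\mu^*\in\PP^*$; moreover $\rho(\mu^*)=t\rho_1+(1-t)\rho_2$ by linearity of $\rho$, and the action satisfies
$$\int_\a S^*\,d\mu^*=t\int_\a S^*\,d\mu_1^*+(1-t)\int_\a S^*\,d\mu_2^*\leq t\beta(\rho_1)+(1-t)\beta(\rho_2)+\varepsilon.$$
Taking the infimum (respectively supremum, according to the sign convention) over admissible measures and letting $\varepsilon\to 0$ delivers the desired convexity inequality for $\beta$.

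There is no substantive obstacle in this proof: both claims are formal consequences of (i) the superlinearity of $\ww S$, which prevents $\beta$ from being $-\infty$, and (ii) the fact that the closedness condition and the rotation number constraint are preserved under convex combinations of measures. The only mild point meriting care is verifying that $\mu^*=t\mu_1^*+(1-t)\mu_2^*$ truly lies in $\PP^*$, but this reduces to checking that $\tau_*$ and $\pi_{1*}$ are linear operations on measures, which is immediate.
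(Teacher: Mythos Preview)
Your proof is correct and takes essentially the same approach as the paper: the paper's argument (contained in the sentence preceding the proposition) consists of exhibiting the Lebesgue measure on $\T^1\times\{\rho_0\}$ to show the admissible set is nonempty, noting that $\rho$ is linear on $\PP^*$, and then pointing to the convex-combination argument of Proposition~\ref{alphaconv} for convexity --- exactly your steps. Your superlinearity estimate goes slightly beyond what this proposition requires, since it already produces a uniform lower bound on the action and hence the finite-valuedness of $\beta$ that the paper records only later in Theorem~\ref{beta+}.
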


Actually this can be improved by the following Theorem of Mather (\cite{Mather1}):

\begin{Th}\label{beta+}
The function $\beta$ is finite--valued, convex and superlinear. Moreover, $\alpha$ and $\beta$ are convex dual one another meaning that 
$$\forall \rho_0 \in \R, \quad \beta(\rho_0)  = \max_{c\in \R} \rho_0 c - \alpha(c),$$
$$\forall c_0 \in \R, \quad \alpha(c_0)  = \max_{\rho\in \R} \rho c_0 - \beta(\rho).$$

\end{Th}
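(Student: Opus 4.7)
The four claims will be established in order of increasing difficulty. Convexity of $\beta$ is immediate from the definition: the set $\{\mu \in \PP^* : \rho(\mu) = \rho_0\}$ is affine because $\rho$ is a linear functional on $\PP^*$, and the functional $\mu \mapsto \int S^*\, \dd\mu$ is itself affine; a standard convex combination of near-optimizers for rotation numbers $\rho_1$ and $\rho_2$ therefore witnesses the correct inequality at $t\rho_1 + (1-t)\rho_2$.

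Finiteness and superlinearity I would prove together, leveraging the superlinearity \eqref{Tsuperlinear} of $\ww S$. For any $\rho_0 \in \R$, the product measure $\nu_{\rho_0} := \mathrm{Leb}_{\T^1} \otimes \delta_{\rho_0}$ belongs to $\PP^*$: indeed $\pi_{1*}\nu_{\rho_0} = \mathrm{Leb}_{\T^1}$, and $\tau_*\nu_{\rho_0}$ is the Lebesgue measure translated by $\rho_0$, which equals Lebesgue on the circle, so $\nu_{\rho_0}$ is closed; its rotation number is $\rho_0$ and its action $\int_{\T^1} S^*(\theta,\rho_0)\, \dd\theta$ is finite, pinning down one side of $\beta$. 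Conversely, \eqref{Tsuperlinear} supplies, for every $C>0$, a constant $D_C$ with $S^*(\theta,r) \geq C|r| - D_C$ on $\a$; for any closed measure $\mu$ with $\rho(\mu)=\rho_0$ one has $\int |r|\, \dd\mu \geq \bigl|\int r\, \dd\mu\bigr| = |\rho_0|$, hence $\int S^*\, \dd\mu \geq C|\rho_0| - D_C$. This simultaneously yields finiteness of the infimum in the definition of $\beta$ and, by letting $C \to \infty$, the superlinear growth.

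For the duality, I would begin with Proposition \ref{MinMeasAn}, which reads $-\alpha(c) = \min_{\mu \in \PP^*} \int (S^*(\theta,r) - cr)\, \dd\mu$. Stratifying this minimization by the value $\rho_0 = \rho(\mu)$ and using the definition of $\beta$ rearranges it into the identity $\alpha(c) = \sup_{\rho_0 \in \R}\bigl(c\rho_0 - \beta(\rho_0)\bigr)$, giving one half of the duality. The reverse formula $\beta(\rho_0) = \sup_{c \in \R}\bigl(c\rho_0 - \alpha(c)\bigr)$ then follows from classical Legendre--Fenchel biduality, provided $\beta$ is known to be a proper, convex, lower semicontinuous function on $\R$ (convexity and properness are already in hand).

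The main obstacle is thus the lower semicontinuity of $\beta$, equivalently, attainment in the infimum defining it. A minimizing sequence $\mu_n$ with $\rho(\mu_n) = \rho_0$ could a priori let mass escape to infinity in the $r$-coordinate, since $\PP^*$ is not weak-$*$ compact. The coercive estimate $S^*(\theta,r) \geq C|r| - D_C$ together with $\int S^*\, \dd\mu_n \leq \beta(\rho_0)+1$ forces $\int |r|\, \dd\mu_n$ to be uniformly bounded, which, combined with compactness in the $\theta$ direction, yields tightness and allows extraction of a weak-$*$ limit $\mu$. Closedness and the rotation-number constraint pass to the limit because both are tested against bounded continuous functions of $\theta$ alone, and $\mu \mapsto \int S^*\, \dd\mu$ is lower semicontinuous under the uniform moment bound thanks to the same coercivity. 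This furnishes the lsc required for the biduality step and completes the proof.
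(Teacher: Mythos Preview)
Your argument is correct but more labor-intensive than the paper's. Both proofs obtain $\alpha=\beta^*$ by stratifying Proposition~\ref{MinMeasAn} over rotation numbers. From there the paper simply invokes two facts from Fenchel duality: since $\beta^*=\alpha$ is everywhere finite (Proposition~\ref{alphaconv}), $\beta$ must be superlinear; and since $\beta$ is convex, $\beta=\beta^{**}=\alpha^*$. You instead prove finiteness and superlinearity of $\beta$ directly from the pointwise coercivity $S^*(\theta,r)\geqslant C|r|-D_C$, which is more transparent but duplicates what the duality gives for free. Your concluding tightness argument for lower semicontinuity is actually unnecessary: having shown $\beta$ is finite-valued and convex on all of $\R$, you already know it is continuous there, hence lsc, and Fenchel--Moreau applies immediately. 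Two minor slips live in that redundant step: attainment of the infimum at each fixed $\rho_0$ is not the same statement as lsc of $\rho_0\mapsto\beta(\rho_0)$ (though the same compactness argument, run with $\rho_n\to\rho_0$, does yield the latter); and the rotation-number constraint is tested against the unbounded integrand $r$, not a bounded function of $\theta$, so passing it to the weak-$*$ limit needs uniform integrability of $r$---which your coercivity bound does supply. Neither point affects the overall validity.
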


\begin{proof}
We start from the measure characterization of $\alpha$ (given by \eqref{minMeasA} in Proposition \ref{MinMeasAn})
 that we rewrite in separating measures according to their rotation number:
\begin{multline*}
\alpha(c) =  \max_{\mu^*\in \wh\PP^*} -  \int_{\a} \big[ S^*(\theta,r)-cr \big] \ \dd \mu^*(\theta,r) \\
= \max_{\varrho\in \R} \sup_{\substack{ \mu^*\in \wh\PP^* \\ \rho(\mu^*) = \varrho}} c\varrho -  \int_{ \a} S^*(\theta,r) \ \dd\mu^*(\theta,r) =  \max_{\varrho\in \R}  c\varrho - \beta(\varrho).
\end{multline*}
We recognize here the Fenchel dual of $\beta$: $\alpha = \beta^*$  (see \cite{evans,Rock}). By basic properties of the Fenchel dual, as $\beta^*$ is everywhere finite, it follows that $\beta$ is superlinear. Finally, as $\beta $ is  also convex, then $\beta = \beta^{**} = \alpha^*$ as was to be proved. 
\end{proof}

We deduce from properties of the Fenchel transform (\cite{evans, Rock}), together with Corollary \ref{rotationnumber} and Proposition \ref{RotNum}:

\begin{Th}\label{alphaC1}
The following relations, given $c_0$ and $\rho_0$ real numbers, are equivalent:
\begin{itemize}
\item $c_0 \in \partial^- \beta ( \rho_0)$; 
\item $\beta(\rho_0) + \alpha(c_0)=  c_0\rho_0 $;  
\item $\rho_0 \in \partial^- \alpha(c_0)$;
\item there exists a minimizing measure $\mu^*_{c_0}\in \PP^*$ at cohomology $c_0$ with rotation number $\rho_0$,
\item all minimizing measures $\mu^*_{c_0}\in \PP^*$ at cohomology $c_0$ have rotation number $\rho_0$,
\item there exists a minimizing infinite chain, calibrating a weak KAM solution at cohomology $c_0$ that has rotation number 
$\rho_0$,
\item all minimizing infinite chains, calibrating a weak KAM solution at cohomology $c_0$ have rotation number 
$\rho_0$.
\end{itemize}

As for all $c_0\in \R$, the subdifferential $\partial^-(c_0)$ is a singleton, we deduce that the function $\alpha$ is $C^1$ and the function $\beta$ is strictly convex. The function $c\mapsto \rho(c) = \alpha'(c)$ is continuous surjective and non--decreasing.

\end{Th}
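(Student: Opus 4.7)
My approach is to organize the seven equivalences in three blocks---convex duality, measure characterization, and the ``exists vs.\ all'' dichotomy together with the chain formulations---then read off the final assertions from the resulting description of $\partial^-\alpha$.

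First, Theorem \ref{beta+} gives $\alpha=\beta^\ast$ and $\beta=\alpha^\ast$, and since both functions are finite and convex on $\R$ with $\beta$ superlinear, the classical Fenchel--Young inequality $\alpha(c)+\beta(\rho)\geqslant c\rho$ together with its cases of equality immediately yields
\[
c_0\in\partial^-\beta(\rho_0)\iff \alpha(c_0)+\beta(\rho_0)=c_0\rho_0\iff \rho_0\in\partial^-\alpha(c_0),
\]
settling the first three bullets.

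Next I would bridge the Fenchel equality with the measure-theoretic statements. For the direct implication, if $\mu^\ast\in\PP^\ast$ is minimizing at cohomology $c_0$ with $\rho(\mu^\ast)=\rho_0$, then Proposition \ref{MinMeasAn} yields $-\int S^\ast\,d\mu^\ast+c_0\rho_0=\alpha(c_0)$, while $-\int S^\ast\,d\mu^\ast\leqslant \beta(\rho_0)$ by Definition \ref{beta}; combined with Fenchel--Young this forces $\alpha(c_0)+\beta(\rho_0)=c_0\rho_0$. The reverse implication rests on the attainment of $\beta(\rho_0)$, which I view as the main technical step. The superlinearity of $\ww S$ recorded in Remark \ref{genfunbis} translates into coercivity of $S^\ast$ in the fibre direction $r$, so any minimizing sequence in Definition \ref{beta} has uniformly bounded first moment and is tight; a weak-$\ast$ extraction together with lower semicontinuity of $\mu^\ast\mapsto\int S^\ast\,d\mu^\ast$ and closedness of the rotation-number constraint produces a minimizer $\mu^\ast_0\in\PP^\ast$ with $\rho(\mu^\ast_0)=\rho_0$ and $-\int S^\ast\,d\mu^\ast_0=\beta(\rho_0)$. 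The Fenchel equality then makes $\mu^\ast_0$ minimizing at cohomology $c_0$, providing the desired measure with rotation number $\rho_0$.

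The remaining equivalences---``exists vs.\ all'' at the measure and at the chain level---collapse to the uniqueness of the rotation number. Proposition \ref{RotNum} states that every minimizing measure at cohomology $c_0$ has rotation number equal to the number $\rho(c_0)$ of Corollary \ref{rotationnumber}; so existence of one with rotation number $\rho_0$ forces $\rho_0=\rho(c_0)$, whence all such measures have that rotation number. For the chain version, any calibrating chain for a weak KAM solution at cohomology $c_0$ lies in $\widetilde\AA_c$ by Theorem \ref{twistKAM}, hence has rotation number $\rho(c_0)$ by Corollary \ref{rotationnumber}; conversely, a Birkhoff averaging argument along such a chain (as in the proof of Theorem \ref{minimizing}) produces a closed minimizing measure on $\T^1\times\T^1$ supported in $\widehat\AA_c$, which is transported to a closed minimizing measure on $\A$ by the construction in the proof of Proposition \ref{MinMeasAn}, with the same rotation number.

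Combining the three blocks shows that $\partial^-\alpha(c_0)=\{\rho(c_0)\}$ for every $c_0\in\R$: the inclusion $\rho(c_0)\in\partial^-\alpha(c_0)$ comes from the existence of minimizing measures at $c_0$ (Proposition \ref{MinMeasAn}) via the direct implication above, and any other $\rho_0\in\partial^-\alpha(c_0)$ produces via the reverse implication a minimizing measure at $c_0$ with rotation number $\rho_0$, forcing $\rho_0=\rho(c_0)$ by Proposition \ref{RotNum}. A finite convex function on $\R$ with everywhere single-valued subdifferential is $C^1$, so $\alpha\in C^1(\R)$. Strict convexity of $\beta$ follows by contradiction from Fenchel duality: affineness of $\beta$ on a nontrivial interval $[\rho_1,\rho_2]$ would place a common slope $c^\ast$ in $\partial^-\beta(\rho)$ for every $\rho\in(\rho_1,\rho_2)$, hence $(\rho_1,\rho_2)\subset\partial^-\alpha(c^\ast)$, contradicting the singleton property. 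Continuity of $\rho=\alpha'$ is standard for $C^1$ convex functions on $\R$; monotonicity is convexity of $\alpha$; and surjectivity onto $\R$ comes from the superlinearity of $\alpha$ (Proposition \ref{alphaconv}), which forces $\alpha'(c)\to\pm\infty$ as $c\to\pm\infty$.
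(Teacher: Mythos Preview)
Your approach is essentially the one the paper intends: the paper's proof is the single sentence ``We deduce from properties of the Fenchel transform, together with Corollary~\ref{rotationnumber} and Proposition~\ref{RotNum}'', and you unpack exactly these three ingredients (Fenchel--Young and its equality case, the uniqueness of the rotation number for minimizing measures, and the uniqueness of the rotation number for calibrating chains). Your additional tightness/attainment argument for $\beta$ fills in a step the paper leaves entirely implicit, and it is genuinely needed for the implication ``$\rho_0\in\partial^-\alpha(c_0)\Rightarrow$ there is a minimizing measure at $c_0$ with rotation number $\rho_0$''.

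One slip to correct: you write that ``any calibrating chain for a weak KAM solution at cohomology $c_0$ lies in $\widetilde\AA_c$ by Theorem~\ref{twistKAM}''. This is false---half-infinite calibrating chains need not belong to the Aubry set (which consists of bi-infinite sequences calibrating \emph{all} subsolutions). Fortunately you do not need it: Corollary~\ref{rotationnumber} applies directly to any calibrating chain and already yields the rotation number $\rho(c_0)$, so simply drop the reference to $\widetilde\AA_c$ and cite Corollary~\ref{rotationnumber} alone.
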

 These last results were first published by Mather in \cite{MatherMeasure} where he attributes them to Aubry.

\section{Order properties of weak KAM solutions}

Results in this section appeared in Arnaud--Zavidovique's works \cite{AZ,AZ3}. Some similar statements also can be found in Zhang's work \cite{Zhang} and related results in the setting of the torus $\T^2$ in \cite{CX}. Interestingly, non variational versions of some among these results date back to Katznelson and Ornstein \cite{KO}.

Our first result is that weak KAM solutions and their full pseudographs (Definition \ref{pg}) are vertically ordered with respect to the rotation number.

 \begin{pr}\label{order}
 Let $c<c'$ be two cohomology classes such that $\rho(c)<\rho(c')$. Let $u_c : \T^1 \to \R$ be a weak KAM solution at cohomology $c$ and  $u_{c'} : \T^1 \to \R$ be a weak KAM solution at cohomology $c'$. If $\theta \in \T^1$ and $r,r'$ are such that $(\theta,r) \in \PG(c+u'_c)$ and $(\theta,r') \in \PG(c'+u'_{c'})$, then $r<r'$.
 
 In particular, if $u_c$ and $u_{c'}$ are derivable at $\theta$ then $c+u'_c(\theta) < c'+ u'_{c'}(\theta)$ and the function $\tilde \theta \mapsto (\tilde u_{c'} - \tilde u_c)(\tilde \theta) + (c'-c)\tilde \theta$ is increasing.
 \end{pr}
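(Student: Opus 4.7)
The plan is to argue by contradiction: assume $r \geqslant r'$ and derive a contradiction via the non-crossing principle combined with the rotation number bound of Corollary \ref{rotationnumber}. Fix a lift $\tilde\theta\in\R$ of $\theta$. Since $(\theta,r)\in\PG(c+u_c')$, Theorem \ref{twistKAM} (together with Corollary \ref{graph+} to handle interior points of the superdifferential segments) shows that the backward $\tilde f$-orbit of $(\tilde\theta,r)$ projects onto a minimizing chain $(\tilde\theta_k)_{k\leqslant 0}$ for $\ww S$ that calibrates $\tilde u_c$, with $\tilde\theta_0=\tilde\theta$ and $r=\partial_2\ww S(\tilde\theta_{-1},\tilde\theta)$; analogously, starting from $(\tilde\theta,r')$ one produces a chain $(\tilde\theta'_k)_{k\leqslant 0}$ calibrating $\tilde u_{c'}$. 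If $r=r'$, both backward orbits coincide, hence define a single sequence whose rotation number would have to equal both $\rho(c)$ and $\rho(c')$ by Corollary \ref{rotationnumber}, contradicting $\rho(c)<\rho(c')$. We may therefore assume $r>r'$.

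The twist condition in the form of item $(3)$ of Proposition \ref{genfun} says that $\tilde\theta_{-1}\mapsto\partial_2\ww S(\tilde\theta_{-1},\tilde\theta)$ is decreasing, so $r>r'$ yields $\tilde\theta_{-1}<\tilde\theta'_{-1}$. Both chains are minimizing for $\ww S$ (Proposition \ref{critmin}) and agree at $k=0$; the non-crossing lemma (Proposition \ref{noncrossing}, or Proposition \ref{cross-asymptotic} given that Corollary \ref{rotationnumber} makes the increments $|\tilde\theta_{k+1}-\tilde\theta_k|$ uniformly bounded) then forbids any further crossing, so the ordering $\tilde\theta_{-1}<\tilde\theta'_{-1}$ propagates to $\tilde\theta_k\leqslant\tilde\theta'_k$ for every $k\leqslant 0$. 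Applying Corollary \ref{rotationnumber} to both sequences, for each $n>0$,
$$\tilde\theta - n\rho(c)-2 \;\leqslant\; \tilde\theta_{-n} \;\leqslant\; \tilde\theta'_{-n} \;\leqslant\; \tilde\theta - n\rho(c')+2,$$
which gives $n\bigl(\rho(c')-\rho(c)\bigr)\leqslant 4$. Letting $n\to+\infty$ contradicts $\rho(c')>\rho(c)$.

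For the ``in particular'' clause, differentiability of $u_c$ and $u_{c'}$ at $\theta$ reduces the fibers $\PG(c+u_c')\cap(\{\theta\}\times\R)$ and $\PG(c'+u_{c'}')\cap(\{\theta\}\times\R)$ to singletons, and the inequality just established becomes $c+u_c'(\theta)<c'+u_{c'}'(\theta)$. To see that $g(\tilde\theta):=(\tilde u_{c'}-\tilde u_c)(\tilde\theta)+(c'-c)\tilde\theta$ is strictly increasing, recall that weak KAM solutions are locally semiconcave hence Lipschitz, so $g$ is absolutely continuous and differentiable almost everywhere; at every point of simultaneous differentiability, the main assertion gives $g'(\tilde\theta)=\bigl(c'+\tilde u_{c'}'(\tilde\theta)\bigr)-\bigl(c+\tilde u_c'(\tilde\theta)\bigr)>0$. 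Absolute continuity then yields $g(\tilde\theta_2)-g(\tilde\theta_1)=\int_{\tilde\theta_1}^{\tilde\theta_2}g'(s)\,\dd s>0$ for any $\tilde\theta_1<\tilde\theta_2$. The main delicate point in this plan is ensuring that every point of $\PG(c+u_c')$, and not only points of $\G(c+u_c')$, is the initial condition of a genuine backward calibrating orbit; this is precisely the content of Corollary \ref{graph+}, which presents $f^{-1}\bigl(\PG(c+u_c')\bigr)$ as the graph of a $C^1$ function.
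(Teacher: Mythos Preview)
Your overall strategy coincides with the paper's: pass to backward $\tilde f$-orbits, invoke the twist condition to order $\tilde\theta_{-1}$ and $\tilde\theta'_{-1}$, then use non-crossing together with the rotation-number estimate of Corollary~\ref{rotationnumber} to reach a contradiction. The ``in particular'' clause is handled exactly as intended.

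There is, however, a genuine gap in your treatment of interior points of the superdifferential. You claim that Corollary~\ref{graph+} ensures that \emph{every} point $(\tilde\theta,r)\in\PG(c+\tilde u'_c)$ launches a backward orbit whose projection calibrates $\tilde u_c$. Corollary~\ref{graph+} only says that $f^{-1}\bigl(\PG(c+u'_c)\bigr)=\G\bigl(c+(T^{c+}u_c)'\bigr)$; it does not say this image lies in $\PG(c+u'_c)$, nor that the resulting backward chain calibrates $\tilde u_c$ or is even minimizing. Theorem~\ref{twistKAM}(4)/(8) gives calibration only for points of $\overline{\G(c+u'_c)}$, i.e.\ for the \emph{extremal} values $r=c+u'_{c\pm}(\theta)$ (cf.\ Remark~\ref{derivsemcon}); interior points of the vertical segments need not correspond to any calibrating sequence at all.

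The fix is exactly what the paper does: since $\partial^+u_c(\theta)$ and $\partial^+u_{c'}(\theta)$ are intervals, the full statement $r<r'$ for all such $r,r'$ reduces to the single inequality $c+u'_{c-}(\theta)<c'+u'_{c'+}(\theta)$. These two extremal values lie in $\overline{\G(c+\tilde u'_c)}$ and $\overline{\G(c'+\tilde u'_{c'})}$ respectively, so Theorem~\ref{twistKAM} applies directly and your contradiction argument goes through without needing Corollary~\ref{graph+} at all.
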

 
 \begin{proof}
 As $u_c$ and $u_{c'}$ are semiconcave, it is enough to prove the following about right and left derivatives:  $c'+u'_{c' +}(\theta) > c+u'_{c -}(\theta)$. Set $r_0 = c+u'_{c -}(\theta)$ and $r'_0 =  c'+u'_{c' +}(\theta) $. Let $\tilde \theta_0 = \tilde \theta_0' \in \R$ be a lift of $\theta$. For integers $n<0$, we define $(\tilde \theta_n , r_n) = \tilde f^n (\tilde\theta_0, r_0)$ and  $(\tilde \theta'_n , r'_n) = \tilde f^n (\tilde\theta'_0, r'_0)$. By Theorem \ref{twistKAM} the sequence $(\tilde\theta_n)_{n\leqslant 0}$ (resp. $(\tilde\theta'_n)_{n\leqslant 0}$) calibrate $\tilde u_c$ with cohomology $c$ (resp. $\tilde u_{c'}$ with cohomology $c'$). Hence both sequences are minimizing and by Corollary \ref{rotationnumber} verify $\lim\limits_{n\to -\infty} \frac{\tilde \theta_n}{n} = \rho(c)$ and $\lim\limits_{n\to -\infty} \frac{\tilde \theta'_n}{n} = \rho(c')$. It follows, as $\rho(c) \neq \rho(c')$, that $r_0 \neq r_0'$. 
 
 We now argue by contradiction and assume that $r_0'<r_0$. As  $\tilde f$ twists verticals to the right, $\tilde f^{-1} $ twists verticals to the left implying that $\tilde \theta_{-1} <\tilde \theta'_{-1}$. As $\rho(c')>\rho(c)$ it follows that for large $n<0$,  $\frac{\tilde \theta'_n}{n} > \frac{\tilde \theta_n}{n} $ and then, for large $n<0$, $\theta'_n <\theta_n$. We deduce that the sequences $(\tilde\theta_n)_{n\leqslant 0}$ and $(\tilde\theta'_n)_{n\leqslant 0}$
cross at least twice, once at $0$ and then once at some negative integer or between two consecutive ones. This contradicts Proposition \ref{noncrossing}.

 \end{proof}
 
 With a similar flavor, here is a result on the actions of Lax-Oleinik semigroups (Definition \ref{LOtwist}):
 
 \begin{lm}\label{Tincr}
Let  $c_1<c_2$  be two real numbers. Let  $v_1,v_2 : \T^1\to \R$ be continuous   functions.

If the function $\theta \mapsto (\tilde v_2-\tilde v_1)(\tilde\theta) + (c_2-c_1)\tilde\theta$ is non--decreasing, then so is the function $\tilde\theta \mapsto (\widetilde{ T}^{c_2}\tilde v_2-\widetilde T^{c_1}\tilde v_1)(\tilde\theta) + (c_2-c_1)\tilde\theta$.
\end{lm}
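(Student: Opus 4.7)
The plan is to absorb the cohomology classes into the functions and reduce to a monotonicity statement for a single cost operator. Set $\widetilde W_i(\tilde\theta) := \tilde v_i(\tilde\theta) + c_i\tilde\theta$ and define
$$\mathcal T W(\ww\Theta) := \inf_{\tilde\theta\in\R} \bigl\{W(\tilde\theta) + \ww S(\tilde\theta,\ww\Theta)\bigr\}.$$
A direct computation from Definition \ref{LOtwist} gives $\widetilde T^{c_i}\tilde v_i(\ww\Theta) + c_i\ww\Theta = \mathcal T \widetilde W_i(\ww\Theta)$, so the hypothesis reads ``$\widetilde W_2 - \widetilde W_1$ is non--decreasing'' and the conclusion becomes ``$\mathcal T\widetilde W_2 - \mathcal T\widetilde W_1$ is non--decreasing''. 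The superlinearity \eqref{Tsuperlinear} of $\ww S$ (together with continuity of $\tilde v_i$) guarantees that the infimum defining each $\mathcal T\widetilde W_i(\ww\Theta)$ is attained. Fix $\ww\Theta_1<\ww\Theta_2$ and pick minimizers $\tilde\theta_1$ and $\tilde\theta_2$ for $\mathcal T\widetilde W_1(\ww\Theta_1)$ and $\mathcal T\widetilde W_2(\ww\Theta_2)$ respectively. The argument then splits on their relative order.

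Case A, $\tilde\theta_1\leqslant \tilde\theta_2$. Use $\tilde\theta_1$ as a test point in the infimum defining $\mathcal T\widetilde W_2(\ww\Theta_1)$ and $\tilde\theta_2$ in that defining $\mathcal T\widetilde W_1(\ww\Theta_2)$. Subtracting each such test inequality from the defining equality at the matching index yields
$$\mathcal T\widetilde W_2(\ww\Theta_1) - \mathcal T\widetilde W_1(\ww\Theta_1) \leqslant (\widetilde W_2-\widetilde W_1)(\tilde\theta_1), \qquad \mathcal T\widetilde W_2(\ww\Theta_2) - \mathcal T\widetilde W_1(\ww\Theta_2) \geqslant (\widetilde W_2-\widetilde W_1)(\tilde\theta_2).$$
The monotonicity hypothesis on $\widetilde W_2-\widetilde W_1$ then chains these into the desired inequality.

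Case B, $\tilde\theta_1>\tilde\theta_2$. Since $(\tilde\theta_1-\tilde\theta_2)(\ww\Theta_1-\ww\Theta_2)<0$, Aubry's Fundamental Lemma \ref{aubryfl} yields
$$\ww S(\tilde\theta_1,\ww\Theta_1) + \ww S(\tilde\theta_2,\ww\Theta_2) > \ww S(\tilde\theta_1,\ww\Theta_2) + \ww S(\tilde\theta_2,\ww\Theta_1).$$
Now test $\tilde\theta_2$ against $\mathcal T\widetilde W_2(\ww\Theta_1)$ and $\tilde\theta_1$ against $\mathcal T\widetilde W_1(\ww\Theta_2)$, add the two test inequalities, and invoke the above Aubry inequality: the $\widetilde W_i$ contributions recombine into $\mathcal T\widetilde W_1(\ww\Theta_1)+\mathcal T\widetilde W_2(\ww\Theta_2)$, producing
$$\mathcal T\widetilde W_2(\ww\Theta_1) + \mathcal T\widetilde W_1(\ww\Theta_2) < \mathcal T\widetilde W_1(\ww\Theta_1) + \mathcal T\widetilde W_2(\ww\Theta_2),$$
which is the required monotonicity (in fact in its strict form).

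The main subtlety is Case~B: because the two minimization problems involve different potentials $\widetilde W_1$ and $\widetilde W_2$, the monotonicity of their difference alone does not control the positions of their respective minimizers, and a crossing between $\tilde\theta_1$ and $\tilde\theta_2$ is a priori possible. What saves the day is precisely the twist condition \eqref{derneg} through Aubry's Fundamental Lemma, mirroring the strategy already used to prove Lemma \ref{orderWKAM}; this is also why the statement is specific to Lax--Oleinik semigroups associated with generating functions of an ECTM.
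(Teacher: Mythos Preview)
Your proof is correct and follows essentially the same approach as the paper's: both pick a minimizer for the $c_1$-problem at the smaller argument and for the $c_2$-problem at the larger argument, then split into two cases according to the relative order of these minimizers, invoking the monotonicity hypothesis in one case and Aubry's Fundamental Lemma~\ref{aubryfl} in the other. Your absorption of the cohomology terms via $\widetilde W_i=\tilde v_i+c_i\tilde\theta$ is a clean repackaging that slightly streamlines the bookkeeping, but the substance of the argument is identical.
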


\begin{proof}
Let $\tilde\theta < \tilde\theta'$ be two real numbers. By definition of the operators $T^{c_i}$ and  $\ww T^{c_i}$ there exist $\tilde\theta_2'$ and $\tilde\theta_1$ such that
$$\widetilde T^{c_2}\tilde v_2(\tilde\theta') = \tilde v_2(\tilde\theta'_2) + \ww S(\tilde\theta'_2,\tilde\theta') + c_2(\tilde\theta'_2-\tilde\theta'),$$
$$\widetilde T^{c_1}\tilde v_1(\tilde\theta) = \tilde v_1(\tilde\theta_1) +\ww S(\tilde\theta_1,\tilde\theta) + c_1(\tilde\theta_1-\tilde\theta).$$
There are two cases to consider:
\begin{itemize}
\item if $\tilde\theta_2' <\tilde \theta_1$ we use Aubry \& Le Daeron's fundamental Lemma \ref{aubryfl} to obtain
\begin{align*}
\widetilde T^{c_2}\tilde v_2(\tilde\theta') +\widetilde T^{c_1}\tilde v_1(\tilde\theta) &= \tilde v_2(\tilde\theta'_2) + \ww S(\tilde\theta'_2,\tilde\theta') + c_2(\tilde\theta'_2-\tilde\theta')\\
&\quad\quad+ \tilde v_1(\tilde\theta_1) + \ww S(\tilde\theta_1,\tilde\theta) + c_1(\tilde\theta_1-\tilde\theta) \\
&> \tilde v_2(\tilde\theta'_2) +\ww  S(\tilde\theta'_2,\tilde\theta) + c_2(\tilde\theta'_2-\tilde\theta')
\\
&\quad \quad + \tilde v_1(\tilde\theta_1) + \ww S(\tilde\theta_1,\tilde\theta') + c_1(\tilde\theta_1-\tilde\theta) \\
& \geqslant  \widetilde T^{c_2}\tilde v_2(\tilde\theta) +\widetilde T^{c_1}\tilde v_1(\tilde\theta')+ (c_2-c_1) (\tilde\theta-\tilde\theta').
\end{align*}
After rearranging the terms, this reads
$$ \widetilde T^{c_2}\tilde v_2(\tilde\theta')-\widetilde T^{c_1}\tilde v_1(\tilde\theta') +(c_2-c_1)\tilde\theta' >  \widetilde T^{c_2}\tilde v_2(\tilde\theta)-\widetilde T^{c_1}\tilde v_1(\tilde\theta) +(c_2-c_1)\tilde\theta.$$

\item if $\tilde\theta_2' \geqslant\tilde \theta_1$ we use the hypothesis on $\tilde\theta \mapsto (\tilde v_2-\tilde v_1)(\tilde\theta) + (c_2-c_1)\tilde\theta$ to show that 
$ \tilde v_2(\tilde\theta'_2) + \tilde v_1(\tilde\theta_1) \geqslant \tilde v_2(\tilde\theta_1) + \tilde v_1(\tilde\theta'_2)+ (c_2-c_1) (\tilde\theta_1-\tilde\theta'_2) $ and then
\begin{align*}
\widetilde T^{c_2}\tilde v_2(\tilde\theta') +\widetilde T^{c_1}\tilde v_1(\tilde\theta) &= \tilde v_2(\tilde\theta'_2) +\ww S(\tilde\theta'_2,\tilde\theta') + c_2(\tilde\theta'_2-\tilde\theta') \\
&\quad\quad + \tilde v_1(\tilde\theta_1) + \ww S(\tilde\theta_1,\tilde\theta) + c_1(\tilde\theta_1-\tilde\theta) \\
&\geqslant  \tilde v_2(\tilde\theta_1) + \ww S(\tilde\theta'_2,\tilde\theta') + c_2(\tilde\theta_1-\tilde\theta')\\
&\quad \quad + \tilde v_1(\tilde\theta'_2) + \ww S(\tilde\theta_1,\tilde\theta) + c_1(\tilde\theta'_2-\tilde\theta) \\
&\geqslant     \widetilde T^{c_2}\tilde v_2(\tilde\theta) +\widetilde T^{c_1}\tilde v_1(\tilde\theta')+ (c_2-c_1) (\tilde\theta-\tilde\theta').
\end{align*}
As before, this gives the result after rearranging terms.
\end{itemize}

\end{proof}

We now state our main result of the section. The rest will be devoted to providing elements of its proof. We chose to present the  parts which best illustrate discrete weak KAM theory and the first Chapters of this text.

\begin{Th}\label{mainAZ}
There exists a function $u : \T^1\times \R \to \R$ that is locally Lipschitz and that verifies the following properties:
\begin{enumerate}
\item For all $c\in \R$, $u(0,c) = 0$.
\item For all $c\in \R$, the function $u_c = u(\cdot, c) : \T^1 \to \R$ is a weak KAM solution at cohomology $c$.
\item If $c<c'$, the function $\tilde \theta \mapsto (\tilde u_{c'} - \tilde u_c)(\tilde \theta) + (c'-c)\tilde \theta$ is non--decreasing.
\end{enumerate}
If $u:\T^1 \to \R$ is any continuous function verifying  properties 1. and 2. above, then
\begin{enumerate}[a.]
\item The map $c\mapsto \PG (c+u'_c)$ is continuous for the Hausdorff topology.
\item The entire annulus is filled by pseudographs: $\a = \bigcup\limits_{c\in \R} \PG (c+u'_c)$. 
\end{enumerate}
 
\end{Th}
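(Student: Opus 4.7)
The plan is to construct $u$ through the discounted selection procedure of Theorem \ref{discounted}, applied separately at each cohomology, and to propagate the monotonicity property (3) by combining Lemma \ref{Tincr} with suitable limits. Concretely, for each $c\in\R$ and $\lambda\in(0,1)$, let $u_\lambda^c$ be the unique fixed point of $v\mapsto T^c(\lambda v)$. By Theorem \ref{discounted} applied to the cost $S^c$, the shifted functions $u_\lambda^c+\alpha(c)/(1-\lambda)$ converge uniformly, as $\lambda\to 1$, to a distinguished weak KAM solution $u_1^c$. I would then set $u(\theta,c):=u_1^c(\theta)-u_1^c(0)$, yielding (1) and (2) by construction.

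For property (3), I would argue by induction on the discounted iterates. For $c_1<c_2$ and $v^0\equiv 0$, the function $\tilde\theta\mapsto(\tilde v^0_{c_2}-\tilde v^0_{c_1})(\tilde\theta)+(c_2-c_1)\tilde\theta=(c_2-c_1)\tilde\theta$ is non-decreasing. Define $v^{n+1}_{c_i}:=T^{c_i}(\lambda v^n_{c_i})$. If $\tilde v^n_{c_2}-\tilde v^n_{c_1}+(c_2-c_1)\tilde\theta$ is non-decreasing, then scaling by $\lambda$ and adding the non-decreasing term $(1-\lambda)(c_2-c_1)\tilde\theta$ gives that $\lambda(\tilde v^n_{c_2}-\tilde v^n_{c_1})+(c_2-c_1)\tilde\theta$ is also non-decreasing; Lemma \ref{Tincr} applied to $\lambda v^n_{c_1},\lambda v^n_{c_2}$ then yields the monotonicity at step $n+1$. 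Taking $n\to\infty$ gives it for $u_\lambda^{c_i}$, and then $\lambda\to 1$ together with subtracting the constant $u_1^{c_i}(0)$ preserves non-strict monotonicity in $\tilde\theta$.

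For local Lipschitzness, in $\theta$, Proposition \ref{propT} applied to $S^c$ provides a Lipschitz bound locally uniform in $c$. In $c$, if $c_1<c_2$ then $w(\tilde\theta):=(\tilde u_{c_2}-\tilde u_{c_1})(\tilde\theta)+(c_2-c_1)\tilde\theta$ is non-decreasing by (3), vanishes at $0$, and satisfies $w(\tilde\theta+1)=w(\tilde\theta)+(c_2-c_1)$ by periodicity; hence $0\leq w(\tilde\theta)\leq c_2-c_1$ on $[0,1]$, which rearranges to $|u_{c_2}(\theta)-u_{c_1}(\theta)|\leq|c_2-c_1|$ uniformly in $\theta$. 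For the unconditional statements (a) and (b), continuity of $u$ on $\T^1\times\R$ combined with equicontinuity of $\{u_{c'}\}$ on compact $c'$-intervals yields uniform convergence $u_{c_n}\to u_c$ when $c_n\to c$, and Hausdorff convergence of pseudographs follows from the fact that superdifferentials of uniformly convergent semiconcave functions converge in the Hausdorff topology. For (b), each $\PG(c+u'_c)$ is a Lipschitz essential circle with mean slope $c$; continuity in $c$ from (a), combined with the fact that the vertical fiber $\PG(c+u'_c)\cap(\{\theta_0\}\times\R)$ escapes to $\pm\infty$ as $c\to\pm\infty$, yields by an intermediate-value argument that every point of $\A$ is covered.

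The main obstacle is the proof of (b) for arbitrary $u$ satisfying only (1)--(2). Without property (3) the pseudographs need not be vertically ordered across different cohomologies, so the ``sweeping'' argument becomes delicate: one must use Corollary \ref{graph+} to identify $f^{-1}\bigl(\PG(c+u'_c)\bigr)$ with the genuine $C^1$ graph of $c+(T^{c+}u_c)'$, and combine Proposition \ref{order} (which gives strict vertical ordering when rotation numbers differ) with a separate analysis of the plateaux of the non-decreasing map $c\mapsto\rho(c)$, using the variational structure from Theorem \ref{AM} to cover the remaining regions of $\A$ sitting between pseudographs of solutions with equal rotation number.
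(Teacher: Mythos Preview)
Your construction of the family $u_c$ via the discounted selection is correct and genuinely different from the paper's. The paper proceeds structurally: it first shows (Lemma \ref{extr}) that at the endpoints $a=\beta'_-(\rho_0)$, $b=\beta'_+(\rho_0)$ of each flat part of $\alpha$, the normalized weak KAM solution is unique, so any normalized choice is automatically continuous on the closed set $\mathcal I$ of such endpoints. On the open intervals $(a,b)$ of $\R\setminus\mathcal I$ it invokes Massart's result that the Aubry set $\AA_c$ is independent of $c\in(a,b)$, linearly interpolates $v_t=tu_a+(1-t)u_b$ on $\AA$, and uses Theorem \ref{uniqueness} and Proposition \ref{distlike} to define $u_{c_t}$ uniquely; property (3) is then obtained by applying Lemma \ref{Tincr} to the iterates $(T^{c_t})^n v_t$.

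Your route bypasses all of this: you take $u_1^c$ from Theorem \ref{discounted} and propagate monotonicity through the discounted iteration $v^{n+1}_{c_i}=T^{c_i}(\lambda v^n_{c_i})$ starting from $0$. The key observation---that if $(\tilde v^n_{c_2}-\tilde v^n_{c_1})+(c_2-c_1)\tilde\theta$ is non-decreasing then so is $\lambda(\tilde v^n_{c_2}-\tilde v^n_{c_1})+(c_2-c_1)\tilde\theta$, because one adds the non-decreasing term $(1-\lambda)(c_2-c_1)\tilde\theta$ to $\lambda$ times the inductive hypothesis---is exactly what makes Lemma \ref{Tincr} applicable at each step. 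This is cleaner: it avoids Massart's theorem on the constancy of Aubry sets along flats, and it gives a canonical choice of $u$ from the outset. The paper's approach, on the other hand, makes the structure of the family more transparent (one sees explicitly how $u_c$ interpolates between the extremal solutions $u_a,u_b$ on the Aubry set), which is useful for the finer analysis in later sections.

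Your treatment of local Lipschitzness and of (a) via Proposition \ref{hausdorff-dif} matches the paper's. For (b) you correctly identify the difficulty when only (1)--(2) are assumed: the pseudographs need not be vertically ordered within a flat of $\rho$, so the naive sweep fails. The paper does not prove (b) either, citing two-dimensional topology and Jordan's theorem and referring to \cite{AZ,AZ3}; your sketch of passing to the $C^1$ graphs $f^{-1}\bigl(\PG(c+u'_c)\bigr)$ via Corollary \ref{graph+} and using Proposition \ref{order} off the flats is along the right lines but, as you acknowledge, incomplete.
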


The whole proof of this Theorem is quite long and can be found in \cite{AZ,AZ3}\footnote{In those references a result of Mather is used:  the $\beta$ function is derivable on $\R \setminus \Q$. We will give here a strategy of proof that avoids using this result that we will recover later.}. We  give the main steps and ideas bellow. 

The first point in the Theorem, $u(0,c) =0$, is a normalization condition and is easily enforced as the set of  weak KAM solutions (at any cohomology class) is invariant by addition of constants.

Let us continue by noticing that a function verifying the first three points of the Theorem is automatically locally Lipschitz. Indeed,  weak KAM solutions are locally uniformly in $c$ equiLipschitz in $\theta$ (as the costs $S^c$ are). Moreover, if $c\in \R$, then $\tilde u_c ( 1) = \tilde u_c ( 0)=0$. Then if $\theta \in \T^1$, $0\leqslant \tilde\theta \leqslant  1$ is a lift, and $c<c'$,
\begin{multline*}
0 = (\tilde u_{c'} - \tilde u_c)(0) + (c'-c)\times 0 \\
  \leqslant (\tilde u_{c'} - \tilde u_c)(\tilde \theta) + (c'-c)\tilde \theta \\
\leqslant (\tilde u_{c'} - \tilde u_c)(1) + (c'-c) \times 1 = c'-c.
\end{multline*}
The result follows from 
$$c-c' \leqslant -\tilde \theta (c'-c) \leqslant  ( u_{c'} -  u_c)( \theta) \leqslant (1-\tilde\theta) (c'-c) \leqslant c'-c.$$

Point $a.$ follows from a more general fact, stated in the next Proposition. As we did not find its proof in the literature we provide it. Note that it is valid in any dimension $N$. In this setting, superdifferentials are linear forms on $\R^N$:

\begin{pr}\label{hausdorff-dif}
Let $N>0$ be an integer, $(v_n)_{n\in \N}$ a sequence of (real valued) equi-semiconcave functions defined on $\T^N$. Assume that the sequence $(v_n)_{n\in \N}$ uniformly converges to a function  $v : \T^N \to \R$ (that is semiconcave). Then $\big(\PG(v_n)\big)_{n\in\N} $ converges to $\PG(v)$ for the Hausdorff distance.
\end{pr}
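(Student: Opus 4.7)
The plan is to split the Hausdorff convergence into its two natural semicontinuity halves.

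\textbf{Preliminary reduction.} Equi-semiconcavity on the compact manifold $\T^N$ implies equi-Lipschitz bounds on the $v_n$, hence a uniform bound on all the superdifferentials $\partial^+ v_n(\theta)$ and $\partial^+ v(\theta)$. Consequently all pseudographs $\PG(v_n)$ and $\PG(v)$ are contained in a common compact subset of $T^*\T^N$. Working in a finite atlas of coordinate charts, the whole argument reduces to the local situation in $\R^N$, where Proposition \ref{superdiff} applies with a common semiconcavity constant $K$.

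\textbf{Upper semicontinuity.} First I would show: if $(\theta_n,p_n)\in\PG(v_n)$ and $(\theta_n,p_n)\to(\theta,p)$, then $(\theta,p)\in\PG(v)$. This is a direct passage to the limit in the semiconcave inequality. For $y$ in a fixed neighborhood,
\[
v_n(y)\leqslant v_n(\theta_n)+p_n\cdot(y-\theta_n)+K\|y-\theta_n\|^2,
\]
and uniform convergence $v_n\to v$ yields $v(y)\leqslant v(\theta)+p\cdot(y-\theta)+K\|y-\theta\|^2$, so $p\in\partial^+v(\theta)$. Combined with the uniform compactness of the pseudographs, this already gives $\PG(v_n)\subset B_\varepsilon(\PG(v))$ for $n$ large enough.

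\textbf{Lower semicontinuity.} Next I would show: every $(\theta_0,p_0)\in\PG(v)$ is approximated by points of $\PG(v_n)$. This is the main technical step. I would introduce the smooth test function
\[
\varphi(y)=v(\theta_0)+p_0\cdot(y-\theta_0)+(K+1)\|y-\theta_0\|^2,
\]
so that the inequality defining $p_0\in\partial^+v(\theta_0)$ forces $v(y)-\varphi(y)\leqslant-\|y-\theta_0\|^2$ in a neighborhood of $\theta_0$. Thus $\theta_0$ is a \emph{strict} local maximum of $v-\varphi$. By uniform convergence of $v_n$ to $v$, for every small closed ball $\overline{B(\theta_0,r)}$ and every sufficiently large $n$, the maximum of $v_n-\varphi$ on $\overline{B(\theta_0,r)}$ is attained at an interior point $\theta_n$, and a standard diagonal argument lets one arrange $\theta_n\to\theta_0$. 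Since $\varphi$ is $C^1$ and $\theta_n$ is an interior local maximum of $v_n-\varphi$, we have $p_n:=D\varphi(\theta_n)=p_0+2(K+1)(\theta_n-\theta_0)\in\partial^+v_n(\theta_n)$. Hence $(\theta_n,p_n)\in\PG(v_n)$ and $(\theta_n,p_n)\to(\theta_0,p_0)$, giving $\PG(v)\subset B_\varepsilon(\PG(v_n))$ for $n$ large.

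\textbf{Main obstacle.} In higher dimension the superdifferential of a semiconcave function need not be a singleton, so one cannot simply extract from differentiability points a sequence $Dv_n(\theta_n)\to p_0$. The quadratic correction $(K+1)\|y-\theta_0\|^2$ (rather than the bare $K\|y-\theta_0\|^2$) is the key device: it strictifies the local maximum of $v-\varphi$ at $\theta_0$, and strict local maxima are stable under uniform perturbation, which is what produces the approximating $p_n\in\partial^+v_n(\theta_n)$ for an arbitrary element of $\partial^+v(\theta_0)$.
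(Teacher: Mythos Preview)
Your argument is correct and follows essentially the same route as the paper's proof: both split Hausdorff convergence into its two semicontinuity halves, handle the upper half by passing to the limit in the defining inequality of the superdifferential, and handle the lower half by strictifying the maximum of $v-\varphi$ at $\theta_0$ via a slightly inflated quadratic term (you use $K+1$, the paper enlarges $K$ so that $\tilde v - K\|\cdot\|^2$ is strictly concave) and then invoking stability of strict local maxima under uniform convergence. The only cosmetic difference is that for the upper half the paper phrases things through the concave auxiliary function $w_{k_n}(x)=\tilde v_{k_n}(x)-K\|x-x_{k_n}\|^2-p_{k_n}(x)$ attaining its maximum at $x_{k_n}$, whereas you pass to the limit directly in the semiconcave inequality; the two formulations are equivalent.
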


\begin{proof}
By hypothesis there exists a constant $K>0$ such that all $\tilde v_n - K\| \cdot \|^2 : \R^N \to \R$ are strictly concave (where $\| \cdot \|$ denotes the Euclidean norm and $\tilde v_n$ the lift of $v_n$ to $\R^N$). It follows that $\tilde v - K\| \cdot \|^2 : \R^N \to \R$ is also  concave and let us assume it is strictly concave up to taking a slightly larger $K$. Let $O\subset \R^N$ be a relatively compact, convex, open set. We will show that, restricted to $O$, $\big(\PG(\tilde v_{n|O}) \big)_{n\in \N}$ converges to $\PG(u_{|O})$ for the Hausdorff distance, which implies the result.

\begin{itemize}
\item Let $X\in O$ and let $k_n$ be an increasing sequence of integers and  $(x_{k_n},p_{k_n}) \in O\times \R^N$ such that $p_{k_n} \in \partial^+ \tilde v_{k_n} (x_{k_n})$ for all $n\in \N$ and $x_{k_n}\to X$. Assume moreover that $p_{k_n} \to p$ as $n\to +\infty$. We will prove that $p\in \partial^+ \tilde v(X)$. By hypothesis, for all $n\in \N$, $0\in \partial^+ w_{k_n}$ where $w_{k_n} : x\mapsto \tilde v_{k_n}(x) - K\|x-x_{k_n}\|^2 - p_{k_n}(x)$ is defined on $O$. It follows that $x_{k_n}$ is a (strict) maximum point of the (strictly) concave function  $ w_{k_n}$:
$$\forall y\in O, \quad w_{k_n}(y) \leqslant w_{k_n}(x_{k_n}).$$
As $(w_{k_n})_{n\in \N}$ uniformly converges to the function $w : x\mapsto \tilde v (x) -K\|x-X\|^2 -p(x)$ we can fix $y\in O$ and pass to the limit in the previous inequality to find that 
$ w(y) \leqslant w(X)$. As this is true for all $y\in O$, $X$ is a maximum point of $w$ which proves, going back to $\tilde v$ that $p\in \partial^+ \tilde v(X)$.

\item Let now $X \in O$ and $p \in \partial^+ \tilde v (X)$. We now define $\varpi : x\mapsto \tilde v (x) -K\|x-X\|^2 -p(x)$ that has a strict maximum at $X$. Let  $\varpi_n : x\mapsto \tilde v_n (x) -K\|x-X\|^2 -p(x)$  for all integer $n$, then $\varpi_n$ converges to $\varpi$ uniformly on $O$. Let $\varepsilon >0$ small enough such that $B(X,2\varepsilon) \subset O$. Let $\eta>0$ such that  if $\|x-X\| = \varepsilon$ then $\varpi(x)< \varpi(X)-\eta$. Let $n_0$ such that for all $n>n_0$, $\| \varpi_n - \varpi \|_{\infty,O} < \eta /3$.
If $n>n_0$ and $x\in O$ such that $\|x-X\| = \varepsilon$ it follows that 
$$\varpi_n(x) <\varpi(x)+\frac{\eta}{3} < \varpi(X) - \frac{2\eta}{3} < \varpi_n(X) - \frac{\eta}{3}.$$
It follows that $\varpi_n$ admits a local (hence global) maximum in the ball $B(X,\varepsilon)$ denoted $x_n$ and at which we infer that $0\in \partial^+\varpi_n(x_n)$.

Using this argument applied to a decreasing sequence $\varepsilon_k\to 0$ it is easy to construct a sequence $x'_n$ that converges to $X$ defined for $n>n_1$ large enough, and such that $0\in \partial^+\varpi_n(x'_n)$ for all integer $n>n_1$. Going back to the initial functions, we conclude that  $$\forall n>n_1,\quad p+2K(x_n'-X) =p_n\in \partial^+ \tilde v_n(x_n').$$
 Clearly, $p_n \to p$.

\end{itemize}
The result follows from the two previous points.
\end{proof}

We now give elements of the construction of a continuous choice of weak KAM solutions with respect to the cohomology class:

\begin{proof}[Elements of Proof]
The first fundamental step is:
\begin{lm}\label{extr}
Let $\rho_0 \in \R$, we denote by $[a,b] = [\beta'_-(\rho_0) , \beta'_+(\rho_0)] = \rho^{-1}(\{\rho_0\})$ (by Theorem \ref{alphaC1}). 
Then there exists a unique weak KAM solution at cohomology $a$ (resp. $b$), denoted $u_a$ (resp. $u_b$) such that $u_a(0) = 0$ (resp. $u_b(0)=0$).

Let $(c_n)_{n\in \mathbb N}$ be a sequence converging to $a$ (resp. converging to $b$).  Let $(v_n)_{n\in \N}$ be a sequence of functions on $\T^1$ such that $v_n$ is a weak KAM solution at cohomology $c_n$ for all $n$ verifying $v_n(0)=0$. 
Then the sequence $(v_n)_{n\in \N}$ uniformly converges towards  $u_a$ (resp. $u_b$).
\end{lm}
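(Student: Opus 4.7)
The plan is to combine the vertical monotonicity of weak KAM solutions with respect to the rotation number, given by Proposition \ref{order}, with Arzel\`a--Ascoli compactness, exploiting the fact that the endpoints $a$ and $b$ of the interval $\rho^{-1}(\{\rho_0\})$ admit \emph{strict} comparison with nearby cohomology classes. I will treat $c=b$ in detail; the case $c=a$ is entirely symmetric.

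First I would establish compactness. Any sequence $(v_n)$ of weak KAM solutions at cohomologies $c_n \to b$ normalized by $v_n(0)=0$ is equi--Lipschitz, since the marginals of $\ww S + c(\tilde\theta-\ww\Theta)$ have a common modulus of continuity for $c$ in a compact neighborhood of $b$, so Proposition \ref{propT} applies uniformly; together with $v_n(0)=0$ on the compact circle this yields uniform boundedness. Arzel\`a--Ascoli then gives precompactness, and passing to the limit in the fixed point equation $v_n = T^{c_n}v_n+\alpha(c_n)$, using continuity of $\alpha$ (Theorem \ref{alphaC1}) and joint continuity of $(u,c)\mapsto T^c u$, shows that any accumulation point $v$ is a weak KAM solution at $b$ with $v(0)=0$.

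Next I would prove uniqueness of the weak KAM solution at $b$ normalized at $0$. Suppose $u^1$ and $u^2$ are two such solutions. Pick $d_n \searrow b$ with $d_n>b$ for every $n$; because $b$ is the right endpoint of $\rho^{-1}(\{\rho_0\})$ and $\rho$ is non--decreasing, $\rho(d_n)>\rho_0=\rho(b)$. Let $u_{d_n}$ be any weak KAM solution at cohomology $d_n$ normalized by $u_{d_n}(0)=0$. By Proposition \ref{order}, for each $n$ and each $i\in\{1,2\}$ the function
\begin{equation*}
\tilde\theta \mapsto (\tilde u_{d_n} - \tilde u^i)(\tilde\theta) + (d_n-b)\tilde\theta
\end{equation*}
is non--decreasing on $\R$. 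By the compactness step, along a subsequence $u_{d_n}\to v$ uniformly, and $v$ is a weak KAM solution at $b$ with $v(0)=0$. Passing to the uniform limit preserves the non--decreasing property, so $\tilde\theta\mapsto(\tilde v-\tilde u^i)(\tilde\theta)$ is non--decreasing on $\R$. But $v-u^i$ descends to $\T^1$, hence its lift is $1$--periodic, and a $1$--periodic non--decreasing function on $\R$ must be constant, which by the common normalization at $0$ forces $v=u^i$. Thus $u^1=v=u^2$; denote the common value $u_b$.

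Finally, for an arbitrary sequence $c_n\to b$ (from either side) and any choice of weak KAM solutions $v_n$ at $c_n$ normalized by $v_n(0)=0$, the compactness step shows every subsequential limit is a weak KAM solution at $b$ normalized at $0$, hence equals $u_b$ by the uniqueness just proved. A standard Bolzano--Weierstrass argument then yields $v_n\to u_b$ uniformly. The case $c=a$ is handled identically with a sequence $d_n \nearrow a$, $d_n<a$, giving $\rho(d_n)<\rho_0$ and the analogous monotonicity from Proposition \ref{order} with roles reversed. The conceptual crux is the collapse of a one--sided vertical monotonicity inequality to equality via periodicity; the only care needed is to choose the comparison sequence $(d_n)$ on the strict side of the extremal endpoint so that Proposition \ref{order} produces genuinely strict information.
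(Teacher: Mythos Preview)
Your proof is correct and follows essentially the same route as the paper: approach the extremal cohomology from the strict side, apply the monotonicity of Proposition \ref{order}, pass to the limit, and use $1$--periodicity to collapse the resulting non--decreasing function to a constant. The only difference is cosmetic: the paper compares derivatives and invokes Proposition \ref{hausdorff-dif} to pass them to the limit before integrating, whereas you work directly with the function--level monotonicity statement already contained in Proposition \ref{order} and pass the non--decreasing property through a uniform limit, which is a slight streamlining.
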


\begin{proof}
We will prove the result for $a$, the rest being similar.  Let us first consider an increasing sequence $(c_n)_{n\in \N}$ that converges to $a$. It follows from the hypothesis that $\rho(c_n ) <\rho_0$ for all $n\in \N$.

Let $(v_n)_{n\in \N}$ be a sequence of functions on $\T^1$ such that $v_n$ is a weak KAM solution at cohomology $c_n$ for all $n$ verifying $v_n(0)=0$. The functions $v_n$ are equiLipschitz (by Proposition \ref{propT} and its proof, since the $S^{c_n}$ are) hence by Ascoli's Theorem, the sequence $(v_n)_{n\in \N}$ is relatively compact.  As all limit points of $(v_n)_{n\in \N}$ are weak KAM solutions at cohomology $a$ \footnote{It can be verified with the use of Remark \ref{remSc} that $c\mapsto S^c$ is continuous.}  that vanish at $0$, it is enough to prove that there is only one such function. To this end, let $u_a : \T^1\to \R$ be a weak KAM solution at cohomology $a$ such that $u_a(0)=0$. Let $(k_n)_{n\in \N}$ be an increasing sequence such that $(v_{k_n})_{n\in \N}$ uniformly converges to a function $w : \T^1 \to \R$ that is hence a weak KAM solution at cohomology $a$ such that $w(0)=0$. Finally, let $\DD\subset \T^1$ be a full measure set such that all $(v_n)_{n\in \N}$, $u_a$, $w$ are derivable on $\DD$ and $\ww \DD \subset \R$ its lift. By Proposition \ref{order}, for all $n \in \N$, $s\in \DD$, $c_n+  v'_n(s) < a+u'_a(s)$. By Proposition \ref{hausdorff-dif}, for all $s\in \DD$, $w'(s) = \lim\limits_{n\to+\infty} v'_{k_n}(s)$ hence $a+w'(s) \leqslant a+u_a'(s)$. Integrating, it follows that
$$\forall x\in [0,1], \quad \tilde u_a(x) = \int_{\ww\DD \cap [0,x] } \tilde u'_a(s)\ \dd s \geqslant \int_{\ww\DD \cap [0,x] } \tilde w'(s) \ \dd s = \tilde w(x).$$
For $x=1$, $\tilde u_a(1) = \tilde w(1) = 0$. It implies that equality $\tilde u_a'(s) = \tilde w'(s)$ holds for almost every $s\in [0,1]$ and then,  integrating as above, that $\tilde u_a(x) = \tilde w(x)$ for all $x\in [0,1]$. We have thus proved that $u_a = w$ and the uniqueness of $u_a$ follows.

If now $(c_n)_{n\in \N}$ is any sequence converging to $a$ and $(v_n)_{n\in \N}$ is a sequence of  weak KAM solutions at cohomology $c_n$ for all $n$ verifying $v_n(0)=0$ then again, the sequence $(v_n)_{n\in \N}$ is relatively compact and as all its limit points are weak KAM solutions at cohomology $a$ vanishing at $0$, by what has been proved, the sequence $(v_n)_{n\in \N}$ converges to $u_a$.

\end{proof}

Keeping the notations of the Lemma, a straightforward corollary of the preceding Lemma and Proposition \ref{order} is, 

\begin{co}\label{monotonyExt}
Let $c\in \R$ and $v_c : \T^1\to \R$ be a weak KAM solution at cohomology $c$. Then
\begin{itemize}
\item $t\mapsto (\tilde u_a - \tilde v_c)(t) + (a-c)t $ is
\begin{itemize}
\item non--increasing if $c>a$,
\item increasing if $c<a$.
\end{itemize}
\item $t\mapsto (\tilde u_b -\tilde  v_c)(t) + (b-c)t $ is
\begin{itemize}
\item non--decreasing if $c<b$,
\item decreasing if $c>b$.
\end{itemize}

\end{itemize}

\end{co}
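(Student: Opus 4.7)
The strategy is to split each bullet into cases according to where $c$ lies relative to $[a,b]$, and to use Proposition \ref{order} directly when the rotation numbers differ, and Lemma \ref{extr} as an approximation tool when they coincide. Since the monotonicity statements are invariant under adding constants to $\tilde u_a$, $\tilde u_b$ or $\tilde v_c$, I may normalize all functions to vanish at $0$ without loss of generality.

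I will prove the first bullet in detail; the second is entirely symmetric (approximating $b$ from above by $c_n \searrow b$ with $c_n > b$). Assume $c < a$. Then $\rho(c) \leqslant \beta'_-(\rho_0)^{-1}$ is strictly less than $\rho_0 = \rho(a)$ (since $c$ lies outside $\rho^{-1}(\{\rho_0\}) = [a,b]$). Proposition \ref{order} applied to $c < a$ with $\rho(c) < \rho(a)$ yields directly that $t \mapsto (\tilde u_a - \tilde v_c)(t) + (a-c)t$ is increasing. Now assume $c > a$ and split further:
\begin{itemize}
\item If $c > b$, then $\rho(a) = \rho_0 = \rho(b) < \rho(c)$, so Proposition \ref{order} applied to $a < c$ gives that $t \mapsto (\tilde v_c - \tilde u_a)(t) + (c-a)t$ is (strictly) increasing, hence the opposite function is strictly decreasing, which is stronger than non-increasing.
\item If $c \in (a,b]$, then $\rho(c) = \rho(a) = \rho_0$ and Proposition \ref{order} does not apply directly. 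Here is where approximation enters: choose an increasing sequence $(c_n)_{n \in \N}$ with $c_n < a$ and $c_n \to a$, and let $v_n$ be the weak KAM solution at cohomology $c_n$ with $v_n(0) = 0$. By Lemma \ref{extr}, $v_n \to u_a$ uniformly. For each $n$, since $c_n < c$ and $\rho(c_n) < \rho_0 = \rho(c)$, Proposition \ref{order} gives that $t \mapsto (\tilde v_c - \tilde v_n)(t) + (c-c_n)t$ is increasing on $\R$. Passing to the pointwise limit $n \to \infty$ (using uniform convergence $\tilde v_n \to \tilde u_a$ and $c_n \to a$), the limiting function $t \mapsto (\tilde v_c - \tilde u_a)(t) + (c-a)t$ is non-decreasing, which is precisely the non-increasing property of its negative plus $(a-c)t$.
\end{itemize}

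The main (and only real) obstacle is the case $c \in (a,b]$, where strictness in Proposition \ref{order} degenerates because rotation numbers coincide; this is exactly why Lemma \ref{extr} is needed: it guarantees that any approximating sequence of weak KAM solutions at cohomologies $c_n \nearrow a$ converges to the specific solution $u_a$ (and not to any other weak KAM solution at cohomology $a$), so that the limiting monotonicity statement is about $u_a$ itself. The second bullet follows by the same scheme with the roles of $(a, c < a, c_n \nearrow a)$ replaced by $(b, c > b, c_n \searrow b)$, using the uniqueness and convergence statement of Lemma \ref{extr} for $b$.
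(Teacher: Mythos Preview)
Your proof is correct and follows exactly the approach the paper intends: apply Proposition~\ref{order} directly when $\rho(c)\neq\rho_0$, and when $c\in(a,b]$ (resp.\ $c\in[a,b)$) approximate $a$ from below (resp.\ $b$ from above) and invoke Lemma~\ref{extr} to pass to the limit, losing strictness in the process. Aside from the garbled expression ``$\rho(c)\leqslant\beta'_-(\rho_0)^{-1}$'' (you simply mean $\rho(c)<\rho_0$ since $c\notin[a,b]=\rho^{-1}(\{\rho_0\})$ and $\rho$ is non-decreasing), the argument is clean and complete.
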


We denote by $\mathcal I = \{\beta'_-(\rho_0) , \beta'_+(\rho_0), \ \ \rho_0 \in \R\} \subset \R$ then the previous Lemma tells  that any choice of weak KAM solutions $u_c$ at cohomology $c$ for $c\in \R$ such that $u_c(0)=0$ for all $c\in \R$ is automatically continuous on $\mathcal I$. Note that $\mathcal I$ is closed as it can alternatively be defined by 
$\mathcal I = \overline{\{ \beta'(r), \ \ r\in \R\ \mathrm{ such\  that}\  \beta'(r) \ \mathrm{exists} \} }$. 

The rest of the construction consists in focusing on its complement, $\R \setminus \mathcal I$, that is on intervals of the form $\big(\beta'_-(\rho_0) , \beta'_+(\rho_0)\big)=(a,b)$ that are not empty (that is where $\beta$ is not derivable at $\rho_0$\footnote{We will prove later that such $\rho_0$ are necessarily rational.}). If this is the case, we use in a crucial way that the Aubry set $\AA_c$ does not depend on $c\in \big(\beta'_-(\rho_0) , \beta'_+(\rho_0)\big)$ by a result of Massart (\cite[Proposition 6]{Massart}). We hence denote it $\AA$ until the end of the construction. Let us then interpolate linearly between $u_a$ and $u_b$ restricted to $\AA$ and use Proposition \ref{distlike}
 	and Theorem \ref{uniqueness}. If $t\in (0,1)$ let $c_t = at +(1-t) b $ and $v_t = tu_a + (1-t)u_b$. As $\alpha(c_t) = t\alpha(a) +(1-t)\alpha(b)$ by duality between $\alpha$ and $\beta$, it follows that for all $t\in (0,1)$, $v_t$ is a critical subsolution for $S^{c_t}$. Hence by   Proposition \ref{distlike}
 	and Theorem \ref{uniqueness} there exists a unique weak KAM solution at cohomology $c_t$, that we note $u_{c_t}  = u_t$ for short, such that $u_{t|\AA} = v_{t|\AA}$. The continuity of $t\mapsto u_t$ is a consequence of uniqueness and Ascoli's Theorem. 
	
	Finally, item $3$ of the Theorem \ref{mainAZ} follows from successive applications of Lemma \ref{Tincr} and the alternative characterization: $u_t = \lim\limits_{n\to +\infty} (T^{c_t})^n v_t$.
	
	Again, point $ a.$ follows from Proposition \ref{hausdorff-dif}. The last thing to prove in the Theorem  is $b.$ It uses in a crucial way 2 dimensional topology and Jordan's curve Theorem. It is quite technical and we refer the interested reader to \cite{AZ,AZ3}.
\end{proof}

\section{Structure of infinite minimizing chains}
 
 We continue exploring the results of \cite{AZ,AZ3} by studying infinite minimizing chains (indexed by non--positive numbers) of the twist map $\tilde f : \R^2 \to \R^2$. As we will see, all such minimizing chains calibrate a weak KAM solution, hence fall within the scope of the previous sections.  The present results generalize classical Aubry-Mather theory as the latter studies full orbits that are minimizing. After obtaining the results, the authors discovered that many had already appeared in Bangert's \cite{BangR}. In the latter, Bangert studies Busemann functions that are weak KAM solutions on the universal cover $\R$. We adopt the alternative approach to study and use weak KAM solutions on $\T^1$, as previously defined. Consequently most proofs differ slightly from \cite{BangR}.  
 
 We start by a consequence of Theorem \ref{mainAZ} that is a generalization of  Corollary \ref{rotationnumber}.
 
 \begin{pr}\label{rotnummin}
 Let $(\tilde \theta_k)_{k\leqslant 0}\in \R^{\Z_-}$ be a minimizing chain. Then there exists $\rho \in \R$ such that 
  $$\forall k\leqslant 0, \quad |\tilde\theta_k - \tilde \theta_0 - k\rho | <2.$$

 \end{pr}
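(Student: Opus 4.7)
The plan is to reduce Proposition \ref{rotnummin} to Corollary \ref{rotationnumber} by showing that any infinite minimizing chain calibrates some weak KAM solution $\tilde u_c$, with the cohomology $c$ selected via the filling property of the pseudographs (Theorem \ref{mainAZ}.b).

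First I would associate to $(\tilde\theta_k)_{k\leqslant 0}$ a backward orbit of $\tilde f$. By Proposition \ref{critmin}, setting $r_k = \partial_2 \widetilde S(\tilde\theta_{k-1}, \tilde\theta_k)$ for $k \leqslant 0$ yields a sequence $(\tilde\theta_k, r_k)_{k\leqslant 0}$ such that $\tilde f(\tilde\theta_k, r_k) = (\tilde\theta_{k+1}, r_{k+1})$. Writing $\theta_0 = \pi(\tilde\theta_0)$, Theorem \ref{mainAZ}.b supplies some $c \in \R$ with $(\theta_0, r_0) \in \PG(c + u'_c)$, equivalently $r_0 - c \in \partial^+ u_c(\theta_0)$; lifted, this reads $(\tilde\theta_0, r_0) \in \PG(c + \tilde u'_c)$.

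Next I would show that $(\tilde\theta_k)_{k\leqslant 0}$ coincides with a calibrating sequence of $\tilde u_c$ at cohomology $c$. Using that the set of initial momenta of calibrating sequences for $u_c$ starting at $\theta_0$ is $c + \partial^+ u_c(\theta_0)$ (combine Theorem \ref{twistKAM} items 1 and 3 with Proposition \ref{critmin} applied to the cost $S^c$), there exists a calibrating chain $(\tilde\theta_k^c)_{k\leqslant 0}$ with $\tilde\theta_0^c = \tilde\theta_0$ and $\partial_2 \widetilde S(\tilde\theta_{-1}^c, \tilde\theta_0^c) = r_0$. The twist condition then forces $\tilde\theta_{-1}^c = \tilde\theta_{-1}$. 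Hence $(\tilde\theta_k)_{k\leqslant 0}$ and $(\tilde\theta_k^c)_{k\leqslant 0}$ are two infinite minimizing chains sharing two consecutive terms, and Proposition \ref{critmin} guarantees they coincide.

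Finally, Corollary \ref{rotationnumber} applied to $u_c$ and its calibrating chain $(\tilde\theta_k)_{k\leqslant 0}$ delivers $|\tilde\theta_k - \tilde\theta_0 - k\rho(c)| < 2$ for all $k \leqslant 0$, so $\rho := \rho(c)$ does the job. The main obstacle is justifying that a calibrating sequence of $u_c$ can be prescribed to have any given initial momentum in $c + \partial^+u_c(\theta_0)$; this is standard but requires unpacking the Lax--Oleinik semigroup together with the local semiconcavity of $S^c$ (Proposition \ref{Scsemic}) to match minimizers in $T^c u_c(\theta_0)$ with elements of the superdifferential $\partial^+u_c(\theta_0)$.
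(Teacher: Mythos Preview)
Your argument contains a genuine gap at the step where you assert that ``the set of initial momenta of calibrating sequences for $u_c$ starting at $\theta_0$ is $c + \partial^+ u_c(\theta_0)$.'' The inclusion goes only one way. If $(\tilde\theta_k^c)_{k\leqslant 0}$ calibrates $\tilde u_c$ then indeed $r_0 - c \in \partial^+ u_c(\theta_0)$ (Theorem~\ref{twistKAM}, item~3), but the converse fails. Any minimizer $\theta_{-1}^c$ of $\theta \mapsto u_c(\theta) + S^c(\theta,\theta_0)$ is a point of differentiability of $u_c$ (Proposition~\ref{regv}, third item), so the associated momentum $r_0$ equals $c + \partial_2 S^c(\theta_{-1}^c,\theta_0)$ with $(\theta_{-1}^c,\, c+u'_c(\theta_{-1}^c)) \in \G(c+u'_c)$; by Theorem~\ref{twistKAM}, item~4 and Remark~\ref{derivsemcon} this forces $(\theta_0,r_0)\in \overline{\G(c+u'_c)}$, i.e.\ $r_0 \in \{c+u'_{c-}(\theta_0),\, c+u'_{c+}(\theta_0)\}$. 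In dimension one the attainable initial momenta are therefore only the \emph{extreme points} of the superdifferential, never its interior. When $c+u'_{c+}(\tilde\theta_0) < r_0 < c+u'_{c-}(\tilde\theta_0)$ there is simply no calibrating chain of $\tilde u_c$ with initial momentum $r_0$, so your identification $(\tilde\theta_k)=(\tilde\theta_k^c)$ collapses, and this case does occur since Theorem~\ref{mainAZ}.b places $(\tilde\theta_0,r_0)$ only in the \emph{full} pseudograph $\PG(c+\tilde u'_c)$.

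The paper's proof is designed precisely around this obstruction. It takes the two calibrating chains $(\tilde\theta_k^\pm)_{k\leqslant 0}$ issued from the one-sided derivatives $r_0^\pm = c+\tilde u'_{c\pm}(\tilde\theta_0)$, notes that when $r_0^+<r_0<r_0^-$ all three chains cross at $k=0$, and uses the twist condition plus Aubry's non-crossing (Proposition~\ref{noncrossing}) to get $\tilde\theta_k^- < \tilde\theta_k < \tilde\theta_k^+$ for every $k<0$; Corollary~\ref{rotationnumber} applied to the two bounding chains then yields the bound with $\rho=\rho(c)$. The stronger fact you are implicitly reaching for---that every minimizing half-chain calibrates \emph{some} weak KAM solution---is eventually true (Theorem~\ref{calibratemin}), but its proof uses Proposition~\ref{rotnummin}, so appealing to it here would be circular.
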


\begin{proof}
We denote as usual for all $k\leqslant 0$, $r_k =   \partial_2 \ww S ( \tilde \theta_{k-1},\tilde \theta_k)$ in such a way that $(\tilde \theta_k,r_k)_{k\leqslant 0}$ is a piece of orbit of $\tilde f$. By Theorem \ref{mainAZ} there exists $c\in \R$ and a weak KAM solution at cohomology $c$, $u_c : \T^1 \to \R $ such that $(\tilde \theta_0, r_0) \in \PG(c+\tilde u'_c)$. The result is proved with $\rho = \rho(c)$. Let $r_0^\pm = c+\tilde u_{c\pm}'(\tilde \theta_0)$ (so that $r_0^+ \leqslant r_0^-$). Finally, for $k\geqslant 0$, we set $(\tilde\theta_k^\pm, r_k^\pm) = \tilde f^k(\tilde\theta_0,r_0^\pm)$. It follows from Theorem \ref{twistKAM}, and the following Remark \ref{derivsemcon}, that both chains $(\tilde \theta_k^+)_{k\leqslant 0}$ and $(\tilde \theta_k^-)_{k\leqslant 0}$ calibrate $\tilde u_c$ hence are minimizing. 

If $r_0$ coincides with either $r_0^+$ or $r_0^-$ then the result is a particular case of Corollary \ref{rotationnumber}. We now assume otherwise which translates to $r_0^+ < r_0<r_0^-$. As the chains $(\tilde \theta_k)_{k\leqslant 0}$ and $(\tilde \theta_k^+)_{k\leqslant 0}$ (resp. $(\tilde \theta_k)_{k\leqslant 0}$ and $(\tilde \theta_k^-)_{k\leqslant 0}$) cross at $k=0$, they cannot cross anywhere else. We deduce from the twist hypothesis that $\tilde \theta_{-1}^- < \tilde \theta_{-1} < \tilde \theta_{-1}^+$. Hence we conclude from the non--crossing that 
$ \tilde \theta_{k}^- < \tilde \theta_{k} < \tilde \theta_{k}^+$ for all $k<0$. Finally, from Corollary \ref{rotationnumber} we conclude
$$\forall k<0, \quad k\rho(c)-2 <  \tilde \theta_{k}^--\tilde\theta_0 <\tilde \theta_{k}-\tilde\theta_0 <\tilde \theta_{k}^+-\tilde\theta_0 <k\rho(c)+2.$$
\end{proof}
\begin{rem}\rm\label{autoB}
Note that the previous proposition clearly implies that the hypothesis in Proposition \ref{cross-asymptotic}, that $|\tilde \theta_{i+1} - \tilde \theta_i|$ is bounded, is  automatically verified by any minimizing chain, hence can be dropped.
\end{rem}

We anticipate on the two next sections by stating a result that will be proved later in the text:

\begin{Th}
\label{calibratemin}
 Let $(\tilde \theta_k)_{k\leqslant 0}\in \R^{\Z_-}$ be a minimizing chain. Then there exists a cohomology class $c\in \R$ and a weak KAM solution $u_c:\T^1\to \R$ at cohomology $c$ such that
 $(\tilde \theta_k)_{k\leqslant 0}$ calibrates $\tilde u_c$. 
\end{Th}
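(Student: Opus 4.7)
The strategy is to lift the minimizing chain to an orbit of $\tilde f$ in $\R^2$, then use Theorem \ref{mainAZ} to locate a cohomology class for which the initial point of this orbit lies in the full pseudograph of a weak KAM solution, and finally apply Theorem \ref{twistKAM} to conclude.

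First, since $(\tilde\theta_k)_{k\leqslant 0}$ is a minimizing chain, Proposition \ref{critmin} applied to the first-order condition \eqref{dEL} provides real numbers $r_k = \partial_2 \ww S(\tilde\theta_{k-1},\tilde\theta_k) = -\partial_1 \ww S(\tilde\theta_k,\tilde\theta_{k+1})$ for $k\leqslant 0$ such that $(\tilde\theta_k, r_k)_{k\leqslant 0}$ is a piece of orbit of $\tilde f$. Let $\theta_0=\pi(\tilde\theta_0)\in \T^1$. By part b.\ of Theorem \ref{mainAZ}, the annulus $\A$ is the union $\bigcup_{c\in\R}\PG(c+u'_c)$, so there exists a cohomology class $c\in\R$ (and a weak KAM solution $u_c$ from the continuous family) such that $(\theta_0,r_0)\in\PG(c+u'_c)$. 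Lifting back, $(\tilde\theta_0,r_0)\in\PG(c+\tilde u'_c)$, that is, $r_0 - c$ lies in the superdifferential $\partial^+ \tilde u_c(\tilde\theta_0)$.

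Second, I invoke part 8 of Theorem \ref{twistKAM}: for any point $(\tilde\theta_0,r_0)\in\overline{\G(c+\tilde u'_c)}$, which by Remark \ref{derivsemcon} coincides with $\PG(c+\tilde u'_c)$, the backward $\tilde f$-orbit $(\hat\theta_k,\hat r_k)_{k\leqslant 0}:=\tilde f^k(\tilde\theta_0,r_0)$ calibrates $\tilde u_c$ in the sense that
\[
\tilde u_c(\tilde\theta_0) = \tilde u_c(\hat\theta_k) + \sum_{i=k}^{-1}\ww S(\hat\theta_i,\hat\theta_{i+1}) + c(\hat\theta_k-\tilde\theta_0) + |k|\alpha(c)
\]
for every $k<0$. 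It remains to identify this backward orbit with the original chain.

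Third, I compare the two orbits of $\tilde f$. Both $(\tilde\theta_k,r_k)_{k\leqslant 0}$ and $(\hat\theta_k,\hat r_k)_{k\leqslant 0}$ are orbits of $\tilde f$ sharing the same point at index $0$, namely $(\tilde\theta_0,r_0)$, so by invertibility of $\tilde f$ they coincide for all $k\leqslant 0$. Hence $\hat\theta_k=\tilde\theta_k$, and substituting into the displayed identity yields the required calibration of $\tilde u_c$ by $(\tilde\theta_k)_{k\leqslant 0}$.

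The only nontrivial ingredient is the second bullet: knowing that $(\tilde\theta_0,r_0)$ actually belongs to some pseudograph $\PG(c+\tilde u'_c)$. This is precisely the content of part b.\ of Theorem \ref{mainAZ}, whose proof relies on $2$-dimensional topology and Jordan's theorem; once this covering property is granted, the rest of the argument is a straightforward combination of the Euler--Lagrange characterization of minimizing chains with the calibration property of pseudographs given by Theorem \ref{twistKAM}.
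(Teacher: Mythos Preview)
Your argument has a genuine gap in the second step. You claim that $\overline{\G(c+\tilde u'_c)}$ coincides with $\PG(c+\tilde u'_c)$, citing Remark~\ref{derivsemcon}. But that remark says precisely the opposite: the closure of the graph is
\[
\overline{\G(c+\tilde u'_c)} = \bigcup_{\tilde\theta}\{\tilde\theta\}\times\{c+\tilde u'_{c+}(\tilde\theta),\,c+\tilde u'_{c-}(\tilde\theta)\},
\]
containing only the \emph{endpoints} of each superdifferential, whereas the full pseudograph contains the whole segments $[c+\tilde u'_{c+}(\tilde\theta),\,c+\tilde u'_{c-}(\tilde\theta)]$. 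So when $\tilde u_c$ is not differentiable at $\tilde\theta_0$ and $r_0$ lies strictly between $c+\tilde u'_{c+}(\tilde\theta_0)$ and $c+\tilde u'_{c-}(\tilde\theta_0)$, part~8 of Theorem~\ref{twistKAM} does not apply, and the backward orbit through $(\tilde\theta_0,r_0)$ need not calibrate $\tilde u_c$. This interior case is not vacuous: it is exactly the situation isolated in the proof of Proposition~\ref{rotnummin}, where the given chain is strictly sandwiched between the two calibrating chains $(\tilde\theta_k^{\pm})_{k\leqslant 0}$.

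The paper closes this gap by an entirely different route, splitting according to the rotation number $\rho_0$. In the irrational case (Theorem~\ref{mainirr}) one sandwiches the chain between nearby Mather orbits $(y_k^{\pm})$, which are $\alpha$-asymptotic by Theorem~\ref{poinc}, and then invokes Proposition~\ref{mincal}: a minimizing chain $\alpha$-asymptotic to a calibrating chain is itself calibrating. In the rational case $\rho_0=p/q$ (Proposition~\ref{non-crossPQ} and Theorem~\ref{mainIrr}) one shows that a chain not of type $(p,q)$ is $\alpha$-asymptotic to one of the neighbouring periodic Mather orbits, and hence calibrates $\tilde u_a$ or $\tilde u_b$ (the extremal solutions of Lemma~\ref{extr}); note that the cohomology that works may well differ from the $c$ you selected. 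Your outline is salvageable only in the easy case $r_0\in\{c+\tilde u'_{c+}(\tilde\theta_0),\,c+\tilde u'_{c-}(\tilde\theta_0)\}$.
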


As a consequence, we  already state an improvement of Proposition \ref{rotnummin}. The proof of the next result uses ideas from Aubry--Mather
 theory (\cite{Bang}) that play a central role in the study of minimizing chains  with rational rotation numbers.
 
 \begin{co}\label{rotnummin1}
 Let $(\tilde \theta_k)_{k\leqslant 0}\in \R^{\Z_-}$ be a minimizing chain. Then there exists $\rho \in \R$ such that 
 \begin{itemize}
 \item for all pairs of integers $(p,q)$ with $q<0$ such that  $p/q<\rho$, then  $\tilde\theta_q - \tilde \theta_0 < p$,
 \item for all pairs of integers $(p,q)$ with $q<0$ such that  $p/q>\rho$, then  $\tilde\theta_q - \tilde \theta_0 > p.$
 
 \end{itemize}
In particular,
  $$\forall k\leqslant 0, \quad |\tilde\theta_k - \tilde \theta_0 - k\rho | <1.$$
 \end{co}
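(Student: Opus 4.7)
The approach combines Theorem \ref{calibratemin} with Aubry's non-crossing lemma applied to integer translates of the chain. First I would invoke Theorem \ref{calibratemin} to obtain a cohomology class $c\in\R$ and a weak KAM solution $u_c$ at cohomology $c$ such that $(\tilde\theta_k)_{k\leqslant 0}$ calibrates $\tilde u_c$, and set $\rho:=\rho(c)$. The ``In particular'' bound is a direct consequence of the two bulleted statements: for each $k<0$, choosing $p$ to be the smallest integer strictly greater than $k\rho$ gives $p/q<\rho$ (where $q=k$) and $p\leqslant k\rho+1$, yielding $\tilde\theta_k-\tilde\theta_0<k\rho+1$; the symmetric choice gives the lower bound $\tilde\theta_k-\tilde\theta_0>k\rho-1$.

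For the first bullet (the second is symmetric), I would argue by contradiction: suppose $\tilde\theta_q-\tilde\theta_0\geqslant p$ for some integer pair $(p,q)$ with $q<0$ and $p/q<\rho$ (equivalently $p>q\rho$). Introduce the translated chain $\tilde\theta'_k:=\tilde\theta_{k-q}+p$ for $k\leqslant q$; the $\mathbb Z^2$-periodicity of $\ww S$ from Proposition \ref{genfun} makes this again a minimizing chain, and since $\tilde u_c(\tilde x+1)=\tilde u_c(\tilde x)$, it also calibrates $\tilde u_c$. The contradiction hypothesis reads $\tilde\theta_q\geqslant\tilde\theta'_q=\tilde\theta_0+p$.

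Next I would apply Aubry's non-crossing lemma (Proposition \ref{cross-asymptotic}, whose hypothesis of bounded $|\tilde\theta_{i+1}-\tilde\theta_i|$ holds by Proposition \ref{rotnummin} and Remark \ref{autoB}) to the distinct minimizing chains $(\tilde\theta_k)_{k\leqslant q}$ and $(\tilde\theta'_k)_{k\leqslant q}$ on their common domain: they cross at most once, and the ``crossing at both ends'' exception is excluded since neither chain is maximal (both extend as minimizing chains beyond $k=q$). If the chains coincide at any index $k_0\leqslant q$, then $k_0$ is an interior point of each (as extended calibrating chain), so Theorem \ref{twistKAM} guarantees $\tilde u_c$ is differentiable at that point with a unique calibrating past orbit; hence both chains agree on $(-\infty,k_0]$, and applying $\tilde f$ forward propagates coincidence to all of $(-\infty,q]$. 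This yields $\tilde\theta_{nq}=\tilde\theta_0+np$ for every $n\geqslant 0$ by iterating the relation $\tilde\theta_k=\tilde\theta_{k-q}+p$, so $\rho=\lim_{n\to\infty}\tilde\theta_{nq}/(nq)=p/q$, contradicting $p/q<\rho$.

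If no such coincidence occurs, the strict inequality $\tilde\theta_q>\tilde\theta'_q$ together with non-crossing forces $\tilde\theta_k>\tilde\theta'_k$ for all $k\leqslant q$, i.e.\ $\tilde\theta_k-\tilde\theta_{k-q}>p$. Telescoping this inequality $n$ times gives $\tilde\theta_{nq}-\tilde\theta_0>np$ for every $n\geqslant 1$; dividing by $nq<0$ flips the inequality, and passing to the limit via $\tilde\theta_{nq}/(nq)\to\rho$ produces $\rho\leqslant p/q$, again a contradiction. The main obstacle is carefully ruling out the ``between-indices crossing'' configuration, where the chains switch sign between two consecutive indices $k_0,k_0+1$ without an index coincidence: one has to combine the twist condition from Remark \ref{remcross} (which controls the relation between the momenta $r_{k_0}$ and $r'_{k_0}$) with the asymptotic behavior $\tilde\theta_k-\tilde\theta_{k-q}\in q\rho+(-4,4)$ from Proposition \ref{rotnummin} to exclude such a configuration, or else to iterate the argument with the further translates $(\tilde\theta_{k-nq}+np)$ to derive the same final contradiction on the rotation number.
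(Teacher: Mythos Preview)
Your approach is essentially the same as the paper's: invoke Theorem~\ref{calibratemin} to obtain $c$ and a weak KAM solution $u_c$ calibrated by the chain, set $\rho=\rho(c)$, argue by contradiction that $\tilde\theta_q-\tilde\theta_0\geqslant p$ for some $(p,q)$ with $p>q\rho$, split into the equality and strict cases, and in each case derive $\rho\leqslant p/q$. The equality case and the final telescoping estimate are handled just as in the paper.

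The gap you flag yourself is real, and the paper closes it not with Proposition~\ref{cross-asymptotic} but with Lemma~\ref{orderWKAM}. Since both $(\tilde\theta_k)_{k\leqslant 0}$ and $(\tilde\theta_{q+k}-p)_{k\leqslant 0}$ calibrate $\tilde u_c$ (by periodicity), and $\tilde u_c$ is semiconcave, part~2 of Lemma~\ref{orderWKAM} applies: from the strict inequality $\tilde\theta_0<\tilde\theta_q-p$ at index $0$ one obtains, step by step going backward, the strict inequality $\tilde\theta_k<\tilde\theta_{q+k}-p$ for every $k\leqslant 0$. This directly rules out both index-coincidences and between-index sign changes in one stroke, without appealing to the generic non-crossing lemma and without the residual case analysis you describe. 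Replacing your use of Proposition~\ref{cross-asymptotic} by Lemma~\ref{orderWKAM} makes the argument complete and essentially identical to the paper's.
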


\begin{proof}
Of course, $\rho$ is the same as the one given by Proposition \ref{rotnummin} and is also $\rho(c)$ where $c$ is given by the previous Theorem. 

Let us prove the first point, the second is similar. By contradiction, assume the existence of an integer $p$ and $q<0$ such that $ \tilde\theta_q - \tilde \theta_0 \geqslant p > q\rho$. By the previous Theorem \ref{calibratemin}, there exists $c\in \R$ and $u_c : \T^1 \to \R$ such that  $(\tilde \theta_k)_{k\leqslant 0}$ calibrates $\tilde u _c$.
\begin{itemize}
\item
We first exclude the equality $\tilde\theta_q = \tilde \theta_0+p$. Indeed,  $\tilde u_c$ is derivable at $\tilde\theta_q$ by Theorem \ref{twistKAM}. Hence assuming by contradiction that $\tilde\theta_q = \tilde \theta_0+p$, by periodicity, $\tilde u_c$ is also derivable at $\tilde \theta_0$ with $\tilde u_c'(\tilde\theta_0) =\tilde u_c'(\tilde\theta_q)$.   With the usual notations, $r_k = c+ \tilde u'_c(\tilde\theta_k)$, recalling that $(\tilde \theta_k,r_k)_{k\leqslant 0}$ is a piece of orbit of $\tilde f$, it follows that 
$$\forall k\leqslant 0,\quad \tilde\theta_{k+q} = \tilde\theta_k +p.$$
By induction, we deduce that $\tilde\theta_{nq} = \tilde\theta_0+np$ for all $n>0$ and finally, dividing by $nq$ and letting $n\to +\infty$ we conclude that $\rho = \frac{p}{q}$. This contradicts the right inequality  $p > q\rho$.
\item We are left with the hypothesis that $\tilde\theta_q - \tilde \theta_0 > p > q\rho$. By periodicity, the chain  $(\tilde \theta_{q+k}-p)_{k\leqslant 0}$ also calibrates $\tilde u_c$. By Lemma \ref{orderWKAM} we deduce that $\tilde\theta_{k+q}-p >\tilde\theta_k$ for all $k\leqslant 0$. By induction we readily obtain that for all $k\leqslant 0$, the sequence $(\tilde\theta_{k+nq}-np)_{n\geqslant 0}$ is increasing. Applying for $k=0$ and dividing by $nq<0$ yields $\dfrac{\tilde\theta_{nq}}{nq} - \dfrac{p}{q} <\dfrac{\tilde\theta_0}{nq}$. Finally, letting $n\to+\infty$ entails $\rho \leqslant \dfrac{p}{q}$ that is again a contradiction.
\end{itemize}

To prove the final statement, let us argue by contradiction and assume that there exists $q<0$ such that $ |\tilde\theta_q - \tilde \theta_0 - q\rho | \geqslant 1$. Then one of the following holds:
$$\exists p\in \Z , \quad \tilde\theta_q - \tilde \theta_0 \leqslant p < q\rho,$$
$$\exists p\in \Z , \quad \tilde\theta_q - \tilde \theta_0 \geqslant p > q\rho,$$
both impossible by what was proved above.
\end{proof}

 We now turn to making Proposition \ref{cross-asymptotic} more precise.
 
 \begin{pr}\label{mincal}
 Let $c\in \R$ be a cohomology class. Let $(\tilde \theta_i)_{i\leqslant 0}$ and  $(\tilde \theta_i^c)_{i\leqslant 0}$ be two minimizing chains such that 
 \begin{itemize}
 \item $\tilde \theta_0 = \tilde \theta_0^c$,
 \item $(\tilde \theta_i)_{i\leqslant 0}$ and  $(\tilde \theta_i^c)_{i\leqslant 0}$ are $\alpha$-asymptotic,
 \item there exists a weak KAM solution $u_c : \T^1 \to \R$ at cohomology $c$, such that $(\tilde \theta_i^c)_{i\leqslant 0}$ calibrates $\tilde u_c$.
 \end{itemize}
Then $(\tilde \theta_i)_{i\leqslant 0}$ also calibrates $\tilde u_c$.
 \end{pr}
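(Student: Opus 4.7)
The aim is to establish, for every $n\leqslant 0$, the calibration equality
\[
\tilde u_c(\tilde\theta_0)-\tilde u_c(\tilde\theta_n)=\sum_{i=n}^{-1}\ww S(\tilde\theta_i,\tilde\theta_{i+1})+c(\tilde\theta_n-\tilde\theta_0)+|n|\alpha(c).
\]
Define, for each $m\leqslant 0$, the non-negative (by the subsolution inequality applied to $\tilde u_c$) defect
\[
\Delta_m:=\sum_{i=m}^{-1}\ww S(\tilde\theta_i,\tilde\theta_{i+1})+c(\tilde\theta_m-\tilde\theta_0)+|m|\alpha(c)-\bigl(\tilde u_c(\tilde\theta_0)-\tilde u_c(\tilde\theta_m)\bigr),
\]
and the analogous quantity $\Delta_m^c$ for the calibrating chain $(\tilde\theta_i^c)_{i\leqslant 0}$, which is identically zero by hypothesis. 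The goal becomes $\Delta_m=0$ for all $m\leqslant 0$.

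The first step is to observe that $m\mapsto\Delta_m$ is non-decreasing as $m\to-\infty$: indeed, for $m'<m\leqslant 0$, splitting the sum at $m$ and using the subsolution inequality between $\tilde\theta_{m'}$ and $\tilde\theta_m$ along the chain $(\tilde\theta_i)_{m'\leqslant i\leqslant m}$ yields $\Delta_{m'}\geqslant\Delta_m$. Hence it suffices to prove that $\liminf_{m\to-\infty}\Delta_m=0$, which will then force $\Delta_m\equiv 0$ since $\Delta_m\geqslant 0$.

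The second (and main) step is the asymptotic comparison. Consider the hybrid chain obtained from $(\tilde\theta_i^c)_{i\leqslant 0}$ by replacing its first entry $\tilde\theta_m^c$ with $\tilde\theta_m$. By minimality of $(\tilde\theta_i)_{m\leqslant i\leqslant 0}$ between its endpoints, we have
\[
\sum_{i=m}^{-1}\ww S(\tilde\theta_i,\tilde\theta_{i+1})\leqslant \ww S(\tilde\theta_m,\tilde\theta_{m+1}^c)+\sum_{i=m+1}^{-1}\ww S(\tilde\theta_i^c,\tilde\theta_{i+1}^c).
\]
Subtracting the calibration equality $\Delta_m^c=0$ (which expresses $\sum_{i=m}^{-1}\ww S(\tilde\theta_i^c,\tilde\theta_{i+1}^c)$ in terms of $\tilde u_c$) from $\Delta_m$, one obtains
\[
\Delta_m\leqslant\bigl[\ww S(\tilde\theta_m,\tilde\theta_{m+1}^c)-\ww S(\tilde\theta_m^c,\tilde\theta_{m+1}^c)\bigr]+c(\tilde\theta_m-\tilde\theta_m^c)+\bigl[\tilde u_c(\tilde\theta_m)-\tilde u_c(\tilde\theta_m^c)\bigr].
\]
The main obstacle is to show that each of the three bracketed terms tends to $0$ as $m\to-\infty$. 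The third term vanishes by continuity of $\tilde u_c$ and the $\alpha$-asymptoticity hypothesis, and the second term is trivially controlled by $|c|\cdot|\tilde\theta_m-\tilde\theta_m^c|$. For the first term, the key point is that by Proposition \ref{rotnummin} both chains $(\tilde\theta_i)$ and $(\tilde\theta_i^c)$ have uniformly bounded consecutive increments (asymptotically close to their common rotation number, since they are $\alpha$-asymptotic), so the pairs $(\tilde\theta_m,\tilde\theta_{m+1}^c)$ and $(\tilde\theta_m^c,\tilde\theta_{m+1}^c)$ live in a strip of $\R^2$ on which $\ww S$ is uniformly continuous; combined with $|\tilde\theta_m-\tilde\theta_m^c|\to 0$, this yields the conclusion.

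Putting steps one and two together, the monotone non-negative sequence $(\Delta_m)_{m\leqslant 0}$ has $\liminf$ equal to zero, hence is identically zero, which is exactly the required calibration of $\tilde u_c$ by $(\tilde\theta_i)_{i\leqslant 0}$.
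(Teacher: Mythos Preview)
Your proof is correct and follows essentially the same route as the paper's: both compare the original chain to the hybrid chain $(\tilde\theta_m,\tilde\theta_{m+1}^c,\ldots,\tilde\theta_0^c)$, use minimality of $(\tilde\theta_i)_{m\leqslant i\leqslant 0}$, and control the error terms via $\alpha$-asymptoticity together with uniform continuity of $\ww S$ on strips of bounded increment (the paper invokes Remark~\ref{autoB}, you invoke Proposition~\ref{rotnummin} directly). The only difference is cosmetic: the paper argues by contradiction (a positive defect propagates and eventually contradicts minimality), whereas you argue directly by showing the defect $\Delta_m$ is non-negative, non-decreasing as $m\to-\infty$, and has limit zero, hence vanishes identically.
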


\begin{proof}
We argue by contradiction assuming that there exist $n_0<0$ and $\varepsilon >0$ such that 
  $$  \tilde u_c (\tilde \theta_0) <  \tilde u_c (\tilde \theta_{n_0}) + \sum_{i=n_0}^{-1} \ww S(\tilde \theta_i,\tilde \theta_{i+1}) + c(\tilde \theta_{n_0} - \tilde \theta_0) +|n_0|\alpha(c)-\varepsilon.$$
  As for $n<n_0$,
    $$  \tilde u_c (\tilde \theta_{n_0}) \leqslant  \tilde u_c (\tilde \theta_{n}) + \sum_{i=n}^{n_0-1} \ww S(\tilde \theta_i,\tilde \theta_{i+1}) + c(\tilde \theta_n - \tilde \theta_{n_0}) +|n_0-n|\alpha(c),$$ 
     by summing the two previous inequalities, we deduce that 
$$  \forall n\leqslant n_0, \quad \tilde u_c (\tilde \theta_0) <  \tilde u_c (\tilde \theta_{n}) + \sum_{i=n}^{-1} \ww S(\tilde \theta_i,\tilde \theta_{i+1}) + c(\tilde \theta_n - \tilde \theta_0) +|n|\alpha(c)-\varepsilon.$$
By uniform continuity of $\ww S$ on compact sets and Remark \ref{autoB} there exists $n_1<n_0$ such that 
$$\forall i<n_1, \quad | \ww S(\tilde \theta_i,\tilde \theta_{i+1})- \ww S(\tilde \theta^c_i,\tilde \theta_{i+1})| < \frac{\varepsilon}{3},$$
and $|\tilde u_c (\tilde \theta_{n})-\tilde u_c (\tilde \theta^c_{n}) + c(\tilde \theta_{n}-\tilde \theta^c_{n})|<\varepsilon /3$.
It follows that for $n<n_1$,
\begin{multline*}
 \ww S(\tilde \theta_n,\tilde \theta^c_{n+1})+ \sum_{i=n_0}^{-1} \ww S(\tilde \theta^c_i,\tilde \theta^c_{i+1}) \leqslant
\sum_{i=n}^{-1} \ww S(\tilde \theta^c_i,\tilde \theta^c_{i+1})+\frac{\varepsilon}{3} \\
=\tilde u_c (\tilde \theta^c_0) -  \tilde u_c (\tilde \theta^c_{n})  + c(\tilde \theta^c_0 - \tilde \theta^c_n) -|n|\alpha(c)+\frac{\varepsilon}{3} \\ 
 \leqslant \tilde u_c (\tilde \theta_0) -  \tilde u_c (\tilde \theta_{n})  + c(\tilde \theta_0 - \tilde \theta_n) -|n|\alpha(c) +\frac{2\varepsilon}{3} \\
 \leqslant \sum_{i=n}^{-1} \ww S(\tilde \theta_i,\tilde \theta_{i+1})-\frac{\varepsilon}{3}.\end{multline*}
This contradicts the fact that $(\tilde\theta_i)_{i\in [n,0]}$ is minimizing.

\end{proof}
We are now ready to show strong results on infinite minimizing chains according to their rotation number. As we will see, the nature of those results depends strongly on its rationality. The next two sections, are devoted to recalling facts about homeomorphisms of the circle and about Aubry sets of twist maps. Many of those results are enunciated and proved in \cite{Bang} as well as in many surveys about Poincar\' e and Denjoy theories (\cite{wal}).

\subsection{Irrational rotation number}

We assume in this paragraph that $\rho_0\in \R\setminus \mathbb Q$.  Let $c_0 \in \R $ such that $\rho(c_0)=\rho_0$, which exists thanks to Theorems \ref{beta+} and  \ref{alphaC1}. We recall without proof (\cite{wal}):

\begin{Th}\label{poinc}
Let $\tilde g : \R \to \R$ be the lift of an orientation preserving circle homeomorphism $g : \T^1 \to \T^1$ of rotation number $\rho_0 \in \R\setminus \Q$. Then there exists a (unique up to an integer) non--decreasing $\tilde \varphi \to \R \to \R$ such that
$$\forall x\in \R, \quad \tilde \varphi(x+1) = \tilde \varphi(x)+1,$$
that semi--conjugates $\tilde g$ to the translation by $\rho_0$:

$$\forall x\in \R , \quad \tilde\varphi\circ \tilde g (x) = \varphi(x) +\rho_0.$$
There is then an alternative:
\begin{enumerate}
\item Either $\tilde \varphi$ is increasing in which case $\tilde g$ is conjugated to an irrational rotation and all its orbits are recurrent.
\item Either $\tilde \varphi$ is not increasing, in which case the set of recurrent orbits $g$ is a Cantor set denoted by $\mathcal K\subset \T^1$ that lifts to $\ww{ \mathcal K}\subset \R$. If $(\tilde a,\tilde b)$ is a connected component of $\R\setminus     \ww {\mathcal K}$ and $(a,b)\in \T^1$ its projection, the images $g^k(a,b)$ are mutually disjoint as $k\in \Z$. In particular,
$$\sum_{k\in \Z}\big( \tilde g^k(\tilde b) - \tilde g^k(\tilde a)\big) \leqslant 1.$$
\end{enumerate}
In all cases there is a unique $g$--invariant probability measure that has full support in the first case and support $\mathcal K$ in the second case. Finally, if $x_0\in \T^1$ then the $\alpha$--limit set of the orbit of $x_0$ is $\T^1$ in the first case and $\mathcal K$ in the second one.
\end{Th}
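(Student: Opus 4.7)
The plan is to prove this classical Poincaré--Denjoy theorem in three stages: construct the semi-conjugacy $\tilde \varphi$, analyze the dichotomy between conjugacy and Denjoy-type examples, and finally handle unique ergodicity together with the description of $\alpha$-limit sets.

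First, I would construct $\tilde \varphi$ via Poincaré's combinatorial observation. The core lemma is that for any $x \in \R$ and any two distinct pairs $(n_1, m_1), (n_2, m_2) \in \Z^2$,
\[
\tilde g^{n_1}(x) + m_1 < \tilde g^{n_2}(x) + m_2 \iff n_1 \rho_0 + m_1 < n_2 \rho_0 + m_2.
\]
This is proved by iterating: if $\tilde g^n(x) - x \in [p, p+1]$ for some integer $p$, then iterating $k$ times gives $\tilde g^{kn}(x) - x \in [kp, k(p+1)]$, so dividing by $kn$ and letting $k \to \infty$ places $\rho_0$ in $[p/n, (p+1)/n]$; irrationality then forces strict inequality in the direction dictated by the above. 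I would then pick a base point $x_0 \in \R$, define $\tilde \varphi$ on the orbit $\{\tilde g^n(x_0) + m : (n,m) \in \Z^2\}$ by $\tilde g^n(x_0) + m \mapsto n\rho_0 + m$, and extend by monotonicity and continuity (for gaps between orbit points, take sup of the defined values on the left). The relations $\tilde \varphi(x+1) = \tilde \varphi(x) + 1$ and $\tilde \varphi \circ \tilde g = \tilde \varphi + \rho_0$ follow directly from the definition on the orbit.

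Second, I would analyze the dichotomy. If $\tilde \varphi$ is strictly increasing, the periodicity and continuity force it to be a homeomorphism, giving a topological conjugacy to the rotation $R_{\rho_0}$, all of whose orbits are dense. Otherwise, pick a maximal interval $(\tilde a, \tilde b)$ on which $\tilde \varphi$ is constant, equal to some $\alpha$. Then $\tilde \varphi$ is constant equal to $\alpha + k\rho_0$ on $(\tilde g^k(\tilde a), \tilde g^k(\tilde b))$ for every $k \in \Z$, and since $\rho_0$ is irrational the values $\alpha + k\rho_0 \pmod 1$ are pairwise distinct. The projections $g^k(a,b)$ are therefore pairwise disjoint in $\T^1$, which yields both the required inequality $\sum_{k\in \Z}(\tilde g^k(\tilde b) - \tilde g^k(\tilde a)) \leq 1$ and the fact that $\mathcal K$, the complement in $\T^1$ of the union of these projected gaps, is closed, nonempty, $g$-invariant, has empty interior, and has no isolated points (the latter because $g$ has no periodic points, since $\rho_0$ is irrational) --- hence $\mathcal K$ is a Cantor set.

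Third, I would establish unique ergodicity and identify the $\alpha$-limit sets. Define $\mu := \tilde \varphi_* \mathrm{Leb}$, where $\mathrm{Leb}$ is Lebesgue measure on $\T^1$; rotation-invariance of $\mathrm{Leb}$ together with the semi-conjugacy shows $\mu$ is $g$-invariant and has the required support. For uniqueness, observe that any $g$-invariant probability $\nu$ pushes forward under $\tilde \varphi$ to a rotation-invariant probability measure on $\T^1$, which is forced to be Lebesgue by the classical unique ergodicity of irrational rotations; since the gaps form a countable family of $g$-orbit-disjoint intervals, each individual gap has $\nu$-measure zero by finiteness and invariance, and one then concludes $\nu = \mu$ by pulling back integrals. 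For $\alpha$-limits, the semi-conjugacy transports accumulation: a point $y$ lies in $\alpha(x_0)$ precisely when $\tilde \varphi(y)$ lies in the $\alpha$-limit of $\tilde \varphi(x_0)$ for the rotation (which is all of $\T^1$), giving $\alpha(x_0) = \T^1$ in the first case and $\alpha(x_0) = \mathcal K$ in the second, after removing the collapsed gaps where no dynamics accumulates. The main obstacle will be the bookkeeping in the uniqueness-of-measure step: one must carefully argue that no invariant probability charges the wandering intervals, using both pairwise disjointness of their orbits and finite total mass.
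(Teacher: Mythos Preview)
The paper does not prove this theorem: it is introduced with ``We recall without proof (\cite{wal})'' and simply stated. So there is no argument in the paper to compare against; your proposal stands on its own as a sketch of the classical Poincar\'e--Denjoy proof.

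Your outline is the standard one and is mostly sound, but there is a concrete error in the third stage. You set $\mu := \tilde\varphi_* \mathrm{Leb}$ and claim this is $g$-invariant. The semi-conjugacy reads $\varphi \circ g = R_{\rho_0} \circ \varphi$, so $\varphi$ goes \emph{from} the $g$-side \emph{to} the rotation side; pushing Lebesgue forward by $\varphi$ lands on the rotation side, and rotation-invariance of the result would require $g_* \mathrm{Leb} = \mathrm{Leb}$, which is not assumed. The correct construction is the reverse: define $\mu$ on the $g$-side by $\mu\big([a,b]\big) = \tilde\varphi(b) - \tilde\varphi(a)$ (the Stieltjes measure $\dd\tilde\varphi$, i.e.\ the pullback of Lebesgue by the monotone surjection $\tilde\varphi$). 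Then the semi-conjugacy gives $g_*\mu = \mu$ directly, and the support claim is immediate since $\mu$ gives zero mass exactly to the plateaux of $\tilde\varphi$. With this fix your uniqueness argument goes through: any $g$-invariant probability $\nu$ gives zero mass to each wandering gap (disjoint orbit of intervals, finite total mass), so $\varphi_*\nu$ is atomless, rotation-invariant, hence Lebesgue; since $\varphi$ is injective off the gaps, $\nu$ is determined and equals $\mu$.

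Two smaller omissions: you do not address the uniqueness of $\tilde\varphi$ up to an integer (it follows a posteriori from unique ergodicity, since any such semi-conjugacy is the distribution function of the unique invariant measure up to an additive constant); and in the extension step you should note that continuity of $\tilde\varphi$ (no jumps) comes from the density of $\{n\rho_0 + m : (n,m)\in\Z^2\}$ in $\R$.
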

We then recall related results on twist maps, the proof of which are omitted, but can be found in \cite{Bang, MaFo, MatherMeasure} and a glimpse of which appears in Remark \ref{remPoinc}:  
\begin{Th}\label{AMT}
Let $\mu^*_{\rho_0}$ be a Mather minimizing measure verifying \eqref{minMeasA} for $c_0$. Then by Theorem \ref{alphaC1}, $\mu^*_{\rho_0}$ is minimizing for all $c\in \rho^{-1}(\{\rho_0\})$. In particular its support verifies
$$\forall c\in \rho^{-1}(\{\rho_0\}), \quad  \pi_1\big({\rm supp}( \mu^*_{\rho_0}) \big)\subset \AA_c.$$
Let $\theta_0^{\rho_0}\in {\rm supp}( \mu^*_{\rho_0})$ and let $\tilde \theta_0^{\rho_0} \in \R$ a lift and $(\tilde \theta_k^{\rho_0})_{k\in \Z} \in \ww\AAA_{c_0}$ the associated minimizing sequence. As seen in the proof of Proposition \ref{RotNum} this sequence only depends on  $\mu^*_{\rho_0}$ hence $(\tilde \theta_k^{\rho_0})_{k\in \Z} \in \ww\AAA_{c}$ for all $c\in \rho^{-1}(\{\rho_0\})$. 

Let $\tilde g : \R \to \R$ the associated map given by Theorem \ref{AM} and $g : \T^1 \to \T^1$ its projection\footnote{The results of Mather proven in \cite{Bang} actually show that the maps $\tilde g$ and $g$ can be chosen only depending on $\rho_0$ and that the measure $\mu^*_{\rho_0}$ is unique.}. Then $\pi_{1*} \mu^*_{\rho_0}$ is $g$--invariant. In particular $\pi_1\big({\rm supp}( \mu^*_{\rho_0}) \big)$ is either $\T^1$ or the $g$--invariant Cantor set $\mathcal K$. Moreover, $\tilde g$ only depends on $\mu^*_{\rho_0}$.
\end{Th}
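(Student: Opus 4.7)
The plan is to proceed in four steps, following the structure of the statement and leveraging previously established results: the measure-theoretic characterization of Aubry sets (Proposition \ref{MinMeasAn}), the Fenchel duality between $\alpha$ and $\beta$ (Theorem \ref{alphaC1}), the structure of minimizing chains (Proposition \ref{critmin} and Theorem \ref{AM}), and Poincar\'e--Denjoy theory (Theorem \ref{poinc}).

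First, I would establish that $\mu^*_{\rho_0}$ is minimizing for every $c\in\rho^{-1}(\{\rho_0\})$. By Proposition \ref{RotNum}, $\rho(\mu^*_{\rho_0})=\rho_0$, so since $\mu^*_{\rho_0}$ is minimizing at $c_0$, combining \eqref{minMeasA} with the duality $\alpha(c_0)+\beta(\rho_0)=c_0\rho_0$ yields $\int_\a S^*\,d\mu^*_{\rho_0}=\beta(\rho_0)$. For any other $c\in\rho^{-1}(\{\rho_0\})$, again by duality, $-\int_\a[S^*(\theta,r)-cr]\,d\mu^*_{\rho_0}=-\beta(\rho_0)+c\rho_0=\alpha(c)$, so $\mu^*_{\rho_0}$ is minimizing at $c$ as well. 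The inclusion $\pi_1({\rm supp}(\mu^*_{\rho_0}))\subset\AA_c$ is then an immediate consequence of the second part of Proposition \ref{MinMeasAn}.

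Second, I would justify that the minimizing sequence $(\tilde\theta_k^{\rho_0})_{k\in\Z}$ is intrinsically attached to the measure. For $(\theta_0,r_0)\in{\rm supp}(\mu^*_{\rho_0})$ with lift $\tilde\theta_0$, set $(\tilde\theta_k,r_k)=\tilde f^k(\tilde\theta_0,r_0)$ and $\tilde\theta_k^{\rho_0}=\pi_1\circ\tilde f^k(\tilde\theta_0,r_0)$. This orbit is defined purely via $\tilde f$ and the base point $(\tilde\theta_0,r_0)$, without reference to any $c$. By the previous step, for each $c\in\rho^{-1}(\{\rho_0\})$, $\mu^*_{\rho_0}$ is minimizing, its support lies in the graph described in Proposition \ref{MinMeasAn}, and thus $(\tilde\theta_0,r_0-c)\in\AAA_c^*$; the orbit therefore projects onto an element of $\widetilde{\mathfrak A}_c$ (Definition \ref{Aubrylift}), establishing $(\tilde\theta_k^{\rho_0})_{k\in\Z}\in\widetilde{\mathfrak A}_c$ for every such $c$. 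This argument also shows that given the measure, the sequence depends on $\tilde\theta_0^{\rho_0}$ only through the prescribed lift.

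Third, the map $\tilde g$ is furnished by Theorem \ref{AM} applied to the minimizing sequence, and descends to $g:\T^1\to\T^1$. The $g$-invariance of $\pi_{1*}\mu^*_{\rho_0}$ comes from closedness: for $(\theta,r)\in{\rm supp}(\mu^*_{\rho_0})$, $\pi(\tilde\theta+r)=g(\theta)$ (using a lift $\tilde\theta$), so for any continuous $f:\T^1\to\R$,
$$\int_{\T^1} f\circ g\,d\pi_{1*}\mu^*_{\rho_0}=\int_\a f(\theta+r)\,d\mu^*_{\rho_0}(\theta,r)=\int_\a f(\theta)\,d\mu^*_{\rho_0}(\theta,r)=\int_{\T^1} f\,d\pi_{1*}\mu^*_{\rho_0},$$
where the middle equality is Definition \ref{closedA}. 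Since $\rho_0$ is irrational and $g$ has rotation number $\rho_0$, Theorem \ref{poinc} ensures $g$ has a unique invariant probability measure, supported either on $\T^1$ or on the Cantor set $\mathcal K$, which forces $\pi_1({\rm supp}(\mu^*_{\rho_0}))$ into the stated alternative.

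The main obstacle will be the final assertion that $\tilde g$ depends only on $\mu^*_{\rho_0}$: Theorem \ref{AM} pins down $\tilde g$ only on the orbit $\{\tilde\theta_k^{\rho_0}+n:k,n\in\Z\}$, so the content is that the canonical extension to all of $\R$ is measure-determined. In the first case of Theorem \ref{poinc}, the orbit is dense, so continuity of $\tilde g$ gives uniqueness for free. In the Denjoy case, one extends $\tilde g$ to the Cantor set $\ww{\mathcal K}$ by continuity, and on each gap $(\tilde a,\tilde b)$ of $\R\setminus\ww{\mathcal K}$ one uses the canonical affine semi-conjugation to the rotation $x\mapsto x+\rho_0$ provided by Theorem \ref{poinc}; since $\ww{\mathcal K}$ and the semi-conjugacy $\tilde\varphi$ are intrinsic to the orbit closure (hence to $\pi_1({\rm supp}(\mu^*_{\rho_0}))$, hence to the measure), this yields a well-defined $\tilde g$ independent of any auxiliary choice. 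The non-crossing property of Proposition \ref{noncrossing} guarantees this extension is monotone and consistent.
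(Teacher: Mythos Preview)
The paper does not prove Theorem \ref{AMT}: the text introducing it reads ``We then recall related results on twist maps, the proof of which are omitted, but can be found in \cite{Bang, MaFo, MatherMeasure} and a glimpse of which appears in Remark \ref{remPoinc}.'' So there is no proof in the paper to compare your proposal against; I can only assess your argument on its own.

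Your sketch is essentially correct and follows the natural route. Step~1 (the duality argument showing the measure is minimizing for every $c\in\rho^{-1}(\{\rho_0\})$) and Step~2 (the sequence is determined by the $\tilde f$--orbit of $(\tilde\theta_0,r_0)$, independent of $c$) are clean. Be aware that the paper's sign conventions for $\beta$ are slightly inconsistent between Definition \ref{beta} and Theorem \ref{alphaC1}, so your claimed equality $\int S^*\,d\mu^*_{\rho_0}=\beta(\rho_0)$ may read as $-\beta(\rho_0)$ depending on which you trust; this is cosmetic and does not affect the argument, since all you need is that the integral depends only on $\rho_0$.

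The one logical wrinkle is the ordering of Steps~3 and~4. In Step~3 you invoke $g(\theta)=\pi(\tilde\theta+r)$ for \emph{every} $(\theta,r)\in{\rm supp}(\mu^*_{\rho_0})$, but at that point $\tilde g$ has only been defined via Theorem \ref{AM} on the single orbit $\{\tilde\theta_k^{\rho_0}+n\}$. What you really need first is that the projected dynamics $(\theta,r)\mapsto \theta+r$ on the support is the restriction of an orientation-preserving circle homeomorphism; this follows from the non-crossing of minimizing orbits (Proposition \ref{noncrossing}) applied to all orbits through the support, and is precisely the content of Step~4. So the argument works, but Steps~3 and~4 should be interleaved: first establish that the non-crossing property makes the projected dynamics on $\pi_1({\rm supp}(\mu^*_{\rho_0}))$ order-preserving and extendable to a circle homeomorphism $g$, then run the closedness computation to get $g$-invariance of $\pi_{1*}\mu^*_{\rho_0}$, and finally apply Theorem \ref{poinc}.
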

The main result of this section is then
\begin{Th}\label{mainirr}
Let $(\tilde \theta_k)_{k\leqslant 0}  $ be a minimizing chain of rotation number $\rho_0 \in \R\setminus \Q$. Let $u_{c_0} : \T^1 \to \R$ be a weak KAM solution at cohomology $c_0 \in \rho^{-1}(\{\rho_0\})$. Then $(\tilde \theta_k)_{k\leqslant 0}  $ calibrates $\tilde u_{c_0}$.
\end{Th}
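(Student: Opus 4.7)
The strategy reduces the problem to an $\alpha$-asymptoticity statement via Proposition \ref{mincal}. By Theorem \ref{twistKAM}, fix a calibrating chain $(\tilde\xi_k)_{k\leqslant 0}$ for $\tilde u_{c_0}$ with $\tilde\xi_0 = \tilde\theta_0$; it is minimizing and, by Corollary \ref{rotationnumber}, has rotation number $\rho(c_0) = \rho_0$. Proposition \ref{mincal} then reduces the theorem to proving that $(\tilde\theta_k)_{k\leqslant 0}$ and $(\tilde\xi_k)_{k\leqslant 0}$ are $\alpha$-asymptotic. Since the two chains coincide at $k=0$ and have bounded consecutive differences (Proposition \ref{rotnummin} and Remark \ref{autoB}), Proposition \ref{cross-asymptotic} leaves three alternatives: they are equal, they are $\alpha$-asymptotic, or they are distinct and strictly ordered on $\Z_{<0}$. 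Aiming for a contradiction, assume that $\tilde\theta_k > \tilde\xi_k$ for all $k<0$ (the opposite ordering is symmetric).

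To exploit the irrationality of $\rho_0$, I introduce a reference family via Theorem \ref{AMT}: fix a bi-infinite Aubry chain $(\tilde\eta_k)_{k\in\Z} \in \ww\AAA_{c_0}$ of rotation number $\rho_0$, and consider all its integer shifts $\tilde\eta^{(p,q)}_k := \tilde\eta_{k-q} + p$ for $(p,q)\in\Z^2$. Each $\tilde\eta^{(p,q)}$ lies in $\ww\AAA_{c_0}$ and so calibrates $\tilde u_{c_0}$, and by Theorem \ref{AM}(1) the family is mutually non-crossing. The Poincar\'e semi-conjugacy $\tilde\varphi$ of Theorem \ref{poinc} satisfies $\tilde\varphi(\tilde\eta^{(p,q)}_k) = \tilde\varphi(\tilde\eta_0) + p + (k-q)\rho_0$, so each translate is the vertical shift of $\tilde\eta$ by $p-q\rho_0$ in $\tilde\varphi$-coordinates; irrationality of $\rho_0$ then makes $\{p - q\rho_0 : (p,q) \in \Z^2\}$ dense in $\R$. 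Given $\varepsilon>0$, pick translates with $\tilde\eta^{(p_-,q_-)}_0 < \tilde\theta_0 < \tilde\eta^{(p_+,q_+)}_0$ whose $\tilde\varphi$-gap is at most $\varepsilon$. Proposition \ref{cross-asymptotic}, applied to $(\tilde\theta_k)$ and to $(\tilde\xi_k)$ against each of these two translates, propagates the sandwich $\tilde\eta^{(p_-,q_-)}_k \leqslant \tilde\theta_k, \tilde\xi_k \leqslant \tilde\eta^{(p_+,q_+)}_k$ to all $k \leqslant 0$. Since the $\tilde\varphi$-gap stays constant in $k$, both $\tilde\varphi(\tilde\theta_k)$ and $\tilde\varphi(\tilde\xi_k)$ lie in a common interval of length $\varepsilon$ for every $k$. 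In the minimal case of Theorem \ref{poinc}, $\tilde\varphi$ is a homeomorphism so $|\tilde\theta_k - \tilde\xi_k|$ is uniformly small, while in the Denjoy case the summability of the lengths of wandering intervals forces $\tilde\theta_k - \tilde\xi_k \to 0$ as $k \to -\infty$. Letting $\varepsilon \to 0$ yields $\tilde\theta_k - \tilde\xi_k \to 0$, contradicting the strict ordering.

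The hard part is the sandwich-propagation step: Proposition \ref{cross-asymptotic} only forbids a second crossing on $\Z_{\leqslant 0}$, and a priori $(\tilde\theta_k)$ could cross one of the extremal translates $\tilde\eta^{(p_\pm,q_\pm)}$ at a finite $k_0 < 0$ and then drift. The resolution, which is the technical heart of the argument, uses density of the Aubry family: one inserts intermediate Aubry translates between $\tilde\theta_{k_0}$ and the crossed one just after the crossing, and a second crossing of $(\tilde\theta_k)$ with any such intermediate translate contradicts the total order of the Aubry family given by Theorem \ref{AM}(1).
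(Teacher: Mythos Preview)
Your overall plan---reduce to $\alpha$-asymptoticity of $(\tilde\theta_k)$ and a $\tilde u_{c_0}$-calibrating chain via Proposition~\ref{mincal}---matches the paper's. The gap is exactly where you flag it: the sandwich by Aubry translates does not propagate, and your proposed fix does not close it. When $\tilde\theta_0 \neq \tilde\eta^{(p_+,q_+)}_0$, the chain $(\tilde\theta_k)_{k\leqslant 0}$ and the translate $(\tilde\eta^{(p_+,q_+)}_k)_{k\leqslant 0}$ have \emph{not} used up a crossing at $k=0$, so Proposition~\ref{noncrossing} still permits one crossing in $\Z_{<0}$. Inserting another translate $\tilde\eta^{(p_1,q_1)}$ does not help: each new translate again allows one crossing with $(\tilde\theta_k)$, and Theorem~\ref{AM}(1) only orders the translates among themselves---it says nothing about how $(\tilde\theta_k)$ threads through them. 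Nothing prevents $(\tilde\theta_k)$ from crossing successive translates one by one while staying within the $O(1)$ band dictated by the rotation-number estimate. (There is also a minor logical slip: proving $\tilde\theta_k-\tilde\xi_k\to 0$ does not contradict $\tilde\theta_k>\tilde\xi_k$; it \emph{is} the desired $\alpha$-asymptoticity. But this is harmless once the real gap is fixed.)

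The paper closes the gap with an ingredient you do not invoke: Theorem~\ref{mainAZ}. One picks $c$ and a weak KAM solution $u_c$ so that $(\tilde\theta_0,r_0)\in\PG(c+\tilde u'_c)$ (whence $\rho(c)=\rho_0$). The extreme $\tilde u_c$-calibrating chains $(\tilde\theta_k^\pm)$ with $\tilde\theta_0^\pm=\tilde\theta_0$ then cross $(\tilde\theta_k)$ \emph{at the endpoint $k=0$}, so Proposition~\ref{noncrossing} forbids any further crossing and the inner sandwich $\tilde\theta_k^-\leqslant\tilde\theta_k\leqslant\tilde\theta_k^+$ holds for all $k\leqslant 0$. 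Next, the nearest Mather orbits $(y_k^\pm)$ calibrate the \emph{same} $\tilde u_c$, so Lemma~\ref{orderWKAM} (order preservation for calibrators of one solution) gives the outer sandwich $y_k^-\leqslant\tilde\theta_k^\pm\leqslant y_k^+$. Since $y_k^+-y_k^-\to 0$ (Theorem~\ref{poinc}), $(\tilde\theta_k)$ is $\alpha$-asymptotic to $(y_k^\pm)$. Because $(y_k^\pm)$ also lie in $\ww\AAA_{c_0}$ (Theorem~\ref{AMT}) and hence calibrate $\tilde u_{c_0}$, the same outer sandwich applies to your chain $(\tilde\xi_k)$; thus $(\tilde\theta_k)$ and $(\tilde\xi_k)$ are $\alpha$-asymptotic and Proposition~\ref{mincal} finishes. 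The missing idea, in short, is to route through the auxiliary $u_c$ provided by Theorem~\ref{mainAZ} so that every comparison is either a crossing at the endpoint $k=0$ or a comparison between calibrators of the \emph{same} solution.
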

\begin{proof}
As always, $r_k =   \partial_2 \ww S ( \tilde \theta_{k-1},\tilde \theta_k)$. 
Arguing as in the proof of Proposition \ref{rotnummin}, there exists $c\in \R$ and $u_c : \T^1 \to \R$ a weak KAM solution at cohomology $c$ such that $(\tilde \theta_0, r_0) \in \PG(c+\tilde u'_c)$ and $\rho_0 = \rho(c)$. We adopt the same notations as in Proposition \ref{rotnummin} setting $r_0^\pm = c+\tilde u_{c\pm}'(\tilde \theta_0)$ (so that $r_0^+ \leqslant r_0^-$) and for $k\leqslant 0$,  $(\tilde\theta_k^\pm, r_k^\pm) = \tilde f^k(\tilde\theta_0,r_0^\pm)$.

Finally, let $y^-_0\in \pi^{-1} \circ \pi_1\big({\rm supp}( \mu^*_{\rho_0}) \big)\subset \R$ (resp. $y^+_0$) be the biggest element of $\pi^{-1} \circ \pi_1\big({\rm supp}( \mu^*_{\rho_0}) \big) $ such that $y^-_0 \leqslant \tilde\theta_0$ \big(resp. smallest element of $\pi^{-1} \circ \pi_1\big({\rm supp}( \mu^*_{\rho_0}) \big) $ such that $y^+_0 \geqslant \tilde\theta_0$\big). Let then $(y^-_k)_{k\in \Z} \in \ww\AAA_c$ and $(y^+_k)_{k\in \Z}\in \ww\AAA_c$ the associated minimizing sequences that calibrate $\tilde u_c$ (by Theorem \ref{AMT}). As in the proof of Proposition \ref{rotnummin}, for all $k\leqslant 0$,
$$y_k^- \leqslant \tilde \theta_k^- \leqslant \tilde\theta_k \leqslant \tilde\theta_k^+\leqslant y_k^+.$$
By Theorem \ref{AMT}, $y_k^- - y_k^+ \to 0$ as $k\to -\infty$ hence all the present sequences are $\alpha$--asymptotic.

Let now $(\tilde\theta^{c_0}_k)_{k\leqslant 0}$ be a calibrating chain for $\tilde u_{c_0}$ with $\tilde\theta^{c_0}_0 = \tilde\theta_0$ (that exists thanks to  Theorem \ref{twistKAM}). As $(y^\pm_k)_{k\in \Z} $ both calibrate $\tilde u_{c_0}$ the same reasoning as above yields that $(\tilde\theta^{c_0}_k)_{k\leqslant 0}$ and $(y^\pm_k)_{k\in \Z} $ are all $\alpha$--asymptotic. Finally, Proposition \ref{mincal} applies and $(\tilde \theta_k)_{k\leqslant 0}  $  does indeed calibrate $\tilde u_{c_0}$.
\end{proof}
As a Corollary, we obtain uniqueness of weak KAM solutions up to constants with irrational rotation number, and a Theorem due to Mather (\cite{MaDiff}) and Bangert (\cite{BangR}). A different proof also can be found in \cite{BerAIF}.
\begin{co}\label{TheoMather}
If $\rho_0 \in \R\setminus \mathbb Q$ then $\rho^{-1}(\{\rho_0\}) =\{c_0\}$ is a singleton, meaning that $\beta$ is derivable at $\rho_0$. Moreover, if $u : \T^1\to \R$ and $v : \T^1 \to \R$ are weak KAM solutions at cohomology $c_0$ then $u-v$ is constant. 
\end{co}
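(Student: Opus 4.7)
The plan is to first establish that $\rho^{-1}(\{\rho_0\})$ reduces to a single point, then deduce the two halves of the statement.

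Suppose for contradiction that $c_1 < c_2$ both belong to $\rho^{-1}(\{\rho_0\})$, and fix weak KAM solutions $u_{c_1}$ and $u_{c_2}$. Consider the Lipschitz map
\[
\varphi(\tilde\theta) := \tilde u_{c_2}(\tilde\theta) - \tilde u_{c_1}(\tilde\theta) + (c_2-c_1)\tilde\theta.
\]
By Theorem \ref{mainAZ}, item 3, $\varphi$ is non-decreasing. Moreover, since the $\tilde u_{c_i}$ are $1$-periodic, $\varphi(1)-\varphi(0) = c_2-c_1 > 0$. The crux is to prove that $\varphi' = 0$ almost everywhere, which will contradict the fundamental theorem of calculus applied to the Lipschitz (hence absolutely continuous) function $\varphi$: one would have both $\int_0^1 \varphi'(\tilde\theta)\, \mathrm d\tilde\theta = 0$ and $\int_0^1 \varphi' = \varphi(1) - \varphi(0) = c_2 - c_1 > 0$.

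To establish $\varphi' = 0$ a.e., I would pick a point $\tilde\theta_0$ where both $\tilde u_{c_1}$ and $\tilde u_{c_2}$ are differentiable (a set of full measure by Lipschitz regularity). By Theorem \ref{twistKAM}, items 1 and 7, the calibrating chain $(\tilde\theta_k)_{k\leq 0}$ of $\tilde u_{c_1}$ starting at $\tilde\theta_0$ is unique and has initial momentum $r_0 = c_1 + \tilde u'_{c_1}(\tilde\theta_0)$. This chain is minimizing of rotation number $\rho_0$, so by Theorem \ref{mainirr} it also calibrates $\tilde u_{c_2}$. Applying Theorem \ref{twistKAM}, item 7, to $\tilde u_{c_2}$, differentiability at $\tilde\theta_0$ forces the chain to be the unique calibrating chain of $\tilde u_{c_2}$, with initial momentum $c_2 + \tilde u'_{c_2}(\tilde\theta_0)$. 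Equating the two expressions for $r_0$ yields $c_1 + \tilde u'_{c_1}(\tilde\theta_0) = c_2 + \tilde u'_{c_2}(\tilde\theta_0)$, whence $\varphi'(\tilde\theta_0) = \tilde u'_{c_2}(\tilde\theta_0) - \tilde u'_{c_1}(\tilde\theta_0) + (c_2-c_1) = 0$.

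Having shown $\rho^{-1}(\{\rho_0\}) = \{c_0\}$, Theorem \ref{alphaC1} gives $\partial^-\beta(\rho_0) = \rho^{-1}(\{\rho_0\}) = \{c_0\}$, so $\beta$ is derivable at $\rho_0$ with derivative $c_0$. For the uniqueness of weak KAM solutions, I would invoke Lemma \ref{extr}: in its notation $[a,b] = \rho^{-1}(\{\rho_0\})$ reduces to $\{c_0\}$, so $a=b=c_0$, and the lemma directly asserts that the weak KAM solution at cohomology $c_0$ satisfying the normalization $u(0)=0$ is unique. Any two weak KAM solutions at $c_0$ therefore differ by an additive constant. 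The main obstacle is the proof that $\varphi' = 0$ a.e., which rests on the careful transfer of calibrating chains between cohomology classes provided by Theorem \ref{mainirr}; everything else is essentially bookkeeping with the tools already assembled.
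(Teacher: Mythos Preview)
Your argument is correct and follows essentially the same route as the paper: the key step, showing that $c_1+\tilde u'_{c_1}=c_2+\tilde u'_{c_2}$ at common differentiability points by transferring calibrating chains via Theorem~\ref{mainirr}, is identical. Two cosmetic remarks: your appeal to Theorem~\ref{mainAZ}, item~3, is both unnecessary (the contradiction comes solely from $\varphi'=0$ a.e.\ plus absolute continuity) and, as stated, only applies to the specific family constructed there rather than to arbitrary weak KAM solutions; and where you split off the uniqueness statement and invoke Lemma~\ref{extr}, the paper instead handles both conclusions simultaneously by taking $c_0,c_1\in\rho^{-1}(\{\rho_0\})$ (possibly equal) and arbitrary solutions from the outset.
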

\begin{proof}
We will prove both statements at once. Let $\{c_0,c_1\} \in \rho^{-1}(\{\rho_0\})$ and let $u_0 : \T^1 \to \R$ be a weak KAM solution at cohomology $c_0$ and $u_1: \T^1 \to \R$ be a weak KAM solution at cohomology $c_1$. Let $D\subset \T^1$ be the set of points where both $u_0$ and $u_1$ are derivable. It is of full Lebesgue measure as its complement is countable. Let $\theta_0 \in D$ and $\tilde \theta_0$ a lift. By Theorem \ref{twistKAM}, there exist unique chains $(\tilde\theta_k^i)_{k\leqslant 0}$ for $i\in \{0,1\}$ calibrating respectively $\tilde u_0$ and $\tilde u_1$ and such that $\tilde\theta_0^i = \tilde \theta_0$ for $i\in \{0,1\}$.

Moreover, $c_i+u'_i(\theta_0) = \partial_2 \ww S ( \tilde\theta^i_{-1} , \tilde \theta^i_0)$. Applying twice Theorem \ref{mainirr} we discover that $(\tilde\theta_k^0)_{k\leqslant 0}$ calibrates $\tilde u_1$ and $(\tilde\theta_k^1)_{k\leqslant 0}$ calibrates $\tilde u_0$. Hence by uniqueness, $(\tilde\theta_k^0)_{k\leqslant 0}=(\tilde\theta_k^1)_{k\leqslant 0}$ and $c_0+u'_0(\theta_0) = c_1+u'_1(\theta_0)$.

Integrating on $\T^1$ it follows that 
$$c_0 = \int_{\T^1} \big(c_0 +u'_0(s) \big) \dd s =   \int_{\T^1} \big(c_1 +u'_1(s) \big) \dd s = c_1.$$
As a conclusion, since  $u_0$ and $u_1$ are Lipschitz with almost everywhere equal derivatives, $u_1 - u_0$ is a constant function.
\end{proof}
As there exists a weak KAM solution $u_{c_0}$ associated to the irrational rotation number $\rho_0$, we know that for every $\tilde\theta_0 \in \R$ there exists a minimizing chain $(\tilde\theta_k)_{k\leqslant 0}$ starting at $\tilde\theta_0 $ with rotation number $\rho_0$ (one calibrating $u_{c_0}$). This next and last corollary on the contrary states that there are not too many such minimizing chains:

\begin{co}\label{fewirr}
Let $\rho_0 \in \R\setminus \mathbb Q$. For all $\tilde\theta \in \R$ there exists at most one minimizing chain $(\tilde\theta_k)_{k\leqslant 0}$ with rotation number $\rho_0$ such that $(\tilde\theta_0 , r_0) \in \tilde f(V_{\tilde\theta})$ where $(\tilde\theta_k,r_k)_{k\leqslant 0}$ is the backward $\tilde f$--orbit associated to $(\tilde\theta_k)_{k\leqslant 0}$ and $V_{\tilde\theta} = \{(\tilde\theta , r), \ r\in \R\}\subset \R\times \R$ is the vertical above $\tilde\theta$.
\end{co}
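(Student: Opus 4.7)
The plan is to reduce the uniqueness to Theorem \ref{mainirr} and Corollary \ref{TheoMather}, and then to pin down the whole backward orbit via the differentiability of the (essentially unique) weak KAM solution at irrational cohomology, as provided by item 7 of Theorem \ref{twistKAM}.

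First I would unpack the hypothesis. By the twist condition (Definition \ref{twistmap}), $\tilde f(V_{\tilde\theta})$ is a graph over the base variable $\tilde\Theta$, so $(\tilde\theta_0,r_0)\in\tilde f(V_{\tilde\theta})$ is equivalent to $\tilde f^{-1}(\tilde\theta_0,r_0)\in V_{\tilde\theta}$, i.e. $\tilde\theta_{-1}=\tilde\theta$. Hence the claim reduces to the following statement: any two minimizing chains $(\tilde\theta_k)_{k\leqslant 0}$ and $(\tilde\theta_k')_{k\leqslant 0}$ with rotation number $\rho_0$ and $\tilde\theta_{-1}=\tilde\theta_{-1}'=\tilde\theta$ must coincide.

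Next, because $\rho_0$ is irrational, Corollary \ref{TheoMather} says $\rho^{-1}(\{\rho_0\})=\{c_0\}$ is a singleton and the weak KAM solution at cohomology $c_0$ is unique up to an additive constant; fix one such $u_{c_0}$. By Theorem \ref{mainirr}, both $\tilde u_{c_0}$-non-candidate chains calibrate $\tilde u_{c_0}$. Now I invoke item 7 of Theorem \ref{twistKAM} at the interior index $k=-1<0$: $\tilde u_{c_0}$ is differentiable at $\tilde\theta_{-1}=\tilde\theta$, and the associated momentum satisfies
$$r_{-1}=c_0+\tilde u_{c_0}'(\tilde\theta).$$
The right-hand side depends only on $\tilde\theta$ and $u_{c_0}$, not on which chain we started with; the same identity for the second chain therefore forces $r_{-1}'=r_{-1}$.

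Thus both backward $\tilde f$-orbits $(\tilde\theta_k,r_k)_{k\leqslant 0}$ and $(\tilde\theta_k',r_k')_{k\leqslant 0}$ pass through the same phase point $(\tilde\theta,r_{-1})$ at time $-1$. Since $\tilde f$ is a diffeomorphism, this determines the whole orbit for $k\leqslant -1$ by applying $\tilde f^{-1}$, and gives $(\tilde\theta_0,r_0)=(\tilde\theta_0',r_0')$ upon applying $\tilde f$ once more; the chains coincide. I do not expect a real obstacle: once Theorem \ref{mainirr} and Corollary \ref{TheoMather} are in hand the argument is a direct application of item 7 of Theorem \ref{twistKAM}, the only subtlety being to notice that one must use the condition at the index $k=-1$ (where interior differentiability is automatic) rather than at $k=0$ (where $\tilde u_{c_0}$ may fail to be differentiable and the calibrating chain need not be unique).
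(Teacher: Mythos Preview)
Your proof is correct. It follows the same skeleton as the paper's argument: reduce to the unique cohomology $c_0$ via Corollary \ref{TheoMather}, force calibration of every such chain by Theorem \ref{mainirr}, and then observe that the phase point at index $-1$ is uniquely determined. The only difference is in how that last step is justified. You invoke item 7 of Theorem \ref{twistKAM} directly: calibration forces $\tilde u_{c_0}$ to be differentiable at the interior index $\tilde\theta_{-1}=\tilde\theta$, hence $r_{-1}=c_0+\tilde u'_{c_0}(\tilde\theta)$ is fixed. The paper instead appeals to Corollary \ref{graph+}, which says $f^{-1}\big(\PG(c_0+u'_{c_0})\big)$ is a \emph{full} graph, so its intersection with $V_{\tilde\theta}$ is a singleton. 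Your route is slightly more elementary in that it avoids the positive Lax--Oleinik machinery behind Corollary \ref{graph+}; the paper's route has the virtue of packaging the conclusion geometrically and reusing a lemma already proved for other purposes. Both are short and essentially equivalent.
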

\begin{proof}
Using the notations of the Corollary, it was just established that, for such a minimizing chain, $(\tilde\theta_0,r_0) \in \PG(c_0 +\tilde u'_{c_0})$ where $\rho^{-1}(\{\rho_0\}) = \{c_0\} $ and $ \PG(c_0 +\tilde u'_{c_0})$ is the unique pseudograph associated to $c_0$.

By Corollary \ref{graph+}, $f^{-1}\big( \PG(c_0 +\tilde u'_{c_0})\big)$ is a graph, meaning that $\tilde f^{-1}\big( \PG(c_0 + \tilde u'_{c_0})\big)\cap V_{\tilde\theta}$ is a singleton and consequently, so is $\PG(c_0 +\tilde  u'_{c_0})\cap \tilde f(V_{\tilde\theta})$. The result follows.
\end{proof}

\subsection{Rational rotation number}
We assume in this paragraph that $\rho_0 = \frac{p}{q}$ is rational, written in irreducible form with $q<0$. We denote by $[a,b] = \rho^{-1}(\{\rho_0\})$. Let us  first recall some classical results from Aubry--Mather theory (\cite{Bang}):
\begin{Th}\label{AMTR}
Let $\mu^*_{\rho_0}$ be a Mather minimizing measure verifying \eqref{minMeasA} for some $c_0\in [a,b]$. Then by Theorem \ref{alphaC1}, $\mu^*_{\rho_0}$ is minimizing for all $c\in \rho^{-1}(\{\rho_0\})$. In particular its support verifies
$$\forall c\in \rho^{-1}(\{\rho_0\}), \quad  \pi_1\big({\rm supp}( \mu^*_{\rho_0}) \big)\subset \AA_c.$$

Let $\theta_0^{\rho_0}\in {\rm supp}( \mu^*_{\rho_0})$ and let $\tilde \theta_0^{\rho_0} \in \R$ a lift and $(\tilde \theta_k^{\rho_0})_{k\in \Z} \in \ww\AAA_{c_0}$ the associated minimizing sequence. As seen in the proof of Proposition \ref{RotNum} this sequence only depends on  $\mu^*_{\rho_0}$ hence $(\tilde \theta_k^{\rho_0})_{k\in \Z} \in \ww\AAA_{c}$ for all $c\in \rho^{-1}(\{\rho_0\})$.

 It verifies 
\begin{equation}\label{typepqeq}
\forall k\in \Z, \quad \tilde \theta_{k+q}^{\rho_0} = \tilde \theta_k^{\rho_0}+p,
\end{equation}
and hence projects to a $q$--periodic sequence on $\T^1$. 

Reciprocally, if a minimizing sequence $ (\tilde \theta_k)_{k\in \Z} \in \R^\Z$ satisfies \eqref{typepqeq}, then it is in any Aubry set $\ww\AAA_c$ for $c\in [a,b]$ and the measure 
$$\mu^* = \frac{1}{q} \sum_{k=0}^{q-1} \delta_{( \theta_k, \tilde \theta_{k+1}-\tilde \theta_k)},$$
 verifies that $\mu^*$ is minimizing (satisfies \eqref{minMeasA}) for all $c\in [a,b]$.
\end{Th}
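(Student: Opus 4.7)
My plan is to prove the direct and reciprocal statements separately. For the direct statement, the opening assertion that $\mu^*_{\rho_0}$ is minimizing at every $c \in [a,b]$ is a direct application of Theorem \ref{alphaC1}: the interval $[a,b] = \rho^{-1}(\{\rho_0\})$ is precisely the set of cohomology classes for which the variational problem \eqref{minMeasA} has the same set of minimizers, namely closed measures of rotation number $\rho_0$. The inclusion $\pi_1(\mathrm{supp}(\mu^*_{\rho_0})) \subset \AA_c$ then follows from the description in Proposition \ref{MinMeasAn}. For a chosen $\theta_0^{\rho_0}$ in the support with lift $\tilde\theta_0^{\rho_0}$, the associated minimizing sequence $(\tilde\theta_k^{\rho_0})$ is produced by Remark \ref{remPoinc}; the essential point recalled there is that this sequence depends only on the measure and not on the cohomology class, so it lies in $\ww\AAA_c$ for every $c \in [a,b]$ simultaneously.

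The delicate step of the direct statement is the periodicity \eqref{typepqeq}. I would rely on the circle homeomorphism $g$ from Theorem \ref{AM} that encodes the dynamics on the Aubry set; by Proposition \ref{RotNum} (and its proof), $g$ has rotation number $\rho_0 = p/q$, and by closedness of $\mu^*_{\rho_0}$ the push-forward $\pi_{1*}\mu^*_{\rho_0}$ is $g$-invariant. Poincar\'e recurrence ensures that every point in the support of this finite invariant measure is $g$-recurrent, and a classical result in Poincar\'e-Denjoy theory says that a recurrent point of a circle homeomorphism with rational rotation number written in lowest terms is periodic with exact period equal to the denominator. Therefore $\theta_q^{\rho_0} = \theta_0^{\rho_0}$ in $\T^1$, so $\tilde\theta_q^{\rho_0} - \tilde\theta_0^{\rho_0} \in \Z$, and combining with the tight bound $|\tilde\theta_q^{\rho_0} - \tilde\theta_0^{\rho_0} - q\rho_0| < 1$ from Corollary \ref{rotnummin1} forces this integer to equal $p$. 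Shifting the reference index (each $\theta_k^{\rho_0}$ remains in the support, being on the $g$-orbit of $\theta_0^{\rho_0}$) propagates the identity to every $k \in \Z$.

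For the reciprocal, let $(\tilde\theta_k)_{k\in\Z}$ be a minimizing sequence satisfying \eqref{typepqeq}. One checks that $\mu^*$ is a well-defined atomic probability measure; it is closed because the cyclic structure makes $\pi_{1*}\mu^*$ and $\tau_*\mu^*$ coincide (the endpoint of the jump from $\theta_k$ is exactly $\theta_{k+1}$); and its rotation number is $p/q$ by direct computation. By the Fenchel duality in Theorem \ref{beta+}, $\alpha(c) = cp/q - \beta(p/q)$ for every $c \in [a,b]$, so showing that $\mu^*$ realizes \eqref{minMeasA} at such $c$ reduces to proving
\begin{equation*}
\frac{1}{q}\sum_{k=0}^{q-1}\ww S(\tilde\theta_k,\tilde\theta_{k+1}) = -\beta(p/q).
\end{equation*}

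The main obstacle is this final identity, which I would establish by a two-sided bound. The inequality $\geq -\beta(p/q)$ is automatic because $\mu^*$ is one closed probability measure of rotation number $p/q$, so its action is bounded below by the infimum defining $-\beta(p/q)$. For the reverse inequality, I would appeal to the direct statement applied to an extremal Mather measure $\mu^*_{\mathrm{ex}}$ of rotation $p/q$ (which exists because the variational problem defining $\beta(p/q)$ is attained, by Theorem \ref{beta+} combined with Proposition \ref{MinMeasAn}): the direct statement produces a $(p,q)$-periodic minimizing sequence $(\tilde\eta_k)$ with per-period action exactly $-q\beta(p/q)$. The global minimality of $(\tilde\theta_k)$ on the interval $[0, nq]$ for arbitrary $n$, compared against the translate of $(\tilde\eta_k)$ sharing the endpoints $\tilde\theta_0$ and $\tilde\theta_0 + np$, yields the opposite inequality by the usual telescoping thanks to the $(p,q)$-periodicity of both sequences, completing the argument.
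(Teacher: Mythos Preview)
The paper does not prove Theorem~\ref{AMTR}: it is introduced with ``Let us first recall some classical results from Aubry--Mather theory (\cite{Bang})'' and stated without proof. So there is no proof in the paper to compare against; your outline is a reconstruction of the classical argument using the machinery assembled in the text.

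Your overall strategy is sound and matches the standard approach, but there are two points to tighten.

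\textbf{Circularity in the direct part.} You invoke Corollary~\ref{rotnummin1} to force $\tilde\theta_q^{\rho_0}-\tilde\theta_0^{\rho_0}=p$. But in the paper's logical order, Corollary~\ref{rotnummin1} relies on Theorem~\ref{calibratemin}, whose proof (completed only after Proposition~\ref{non-crossPQ}) itself uses Theorem~\ref{AMTR}. You should instead cite Theorem~\ref{AM}, item~3, which already gives $|\tilde\theta_i-\tilde\theta_0-i\rho|<1$ for \emph{bi-infinite} minimizing sequences and is proved independently in \cite{Bang}. Since Aubry sequences are bi-infinite, this suffices and avoids the loop.

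\textbf{Imprecision in the reciprocal.} Your phrase ``the translate of $(\tilde\eta_k)$ sharing the endpoints $\tilde\theta_0$ and $\tilde\theta_0+np$'' is not quite right: no integer translate of $(\tilde\eta_k)$ will share both endpoints unless $\tilde\theta_0-\tilde\eta_0\in\Z$. The standard fix is to use the competitor chain
\[
(x_k)_{0\leqslant k\leqslant n|q|}=\bigl(\tilde\theta_0,\ \tilde\eta_1,\ \tilde\eta_2,\ \ldots,\ \tilde\eta_{n|q|-1},\ \tilde\theta_0+np\bigr),
\]
which agrees with $(\tilde\eta_k)$ except at the two ends. The two boundary terms contribute $O(1)$ while the bulk contributes $n\sum_{k}\ww S(\tilde\eta_k,\tilde\eta_{k+1})=-n|q|\,\beta(p/q)$; minimality of $(\tilde\theta_k)$ on $[0,n|q|]$ then gives
\[
n\sum_{k}\ww S(\tilde\theta_k,\tilde\theta_{k+1})\leqslant -n|q|\,\beta(p/q)+O(1),
\]
and dividing by $n$ and letting $n\to\infty$ yields the desired inequality. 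Once $\mu^*$ is known to be minimizing at some $c\in[a,b]$, the claim that $(\tilde\theta_k)\in\ww\AAA_c$ follows from Proposition~\ref{MinMeasAn} and Remark~\ref{remPoinc}, which you should make explicit.
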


A notion that stems from the previous Theorem is:
\begin{df}\rm\label{typepq}
A sequence $(\tilde \theta_k)_{k\in \Z}\in \R^\Z $ that verifies
 $$\forall k\in \Z, \quad \tilde \theta_{k+q} = \tilde \theta_k+p,$$
 is said to be of type $(p,q)$.
\end{df}

It follows from the previous Theorem that it makes sense to denote the projected Mather set $\MM_{\rho_0}$ as $\MM_c$ does not depend on the choice of $c\in \rho^{-1}(\{\rho_0\})$. And  $\mathfrak M_{\rho_0}$ denotes its lift to $\R$ as well.

We recall that by Lemma \ref{extr}, there exists a unique weak KAM solution at cohomology $a$ (resp. $b$) that vanishes at $0\in \T^1$, and that we denote $u_a : \T^1 \to \R$ (resp. $u_b : \T^1 \to \R$). The main result of this section can be stated as follows:

\begin{Th}\label{mainIrr}
Let $(\tilde\theta_k)_{k\leqslant 0}$ be a minimizing chain with rotation number $\rho_0 = \frac pq$. Assume that $(\tilde\theta_k)_{k\leqslant 0}$ is not of type $(p,q)$. Then $\tilde \theta_0 \neq \tilde\theta_{q}-p$ and one of the following assertions holds:
\begin{enumerate}
\item $\tilde \theta_0 < \tilde\theta_{q}-p$ and $(\tilde\theta_k)_{k\leqslant 0}$ calibrates $\tilde u_a$,
\item $\tilde \theta_0 > \tilde\theta_{q}-p$ and $(\tilde\theta_k)_{k\leqslant 0}$ calibrates $\tilde u_b$.
\end{enumerate}

\end{Th}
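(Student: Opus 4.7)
The argument proceeds in three steps: (1) attach a weak KAM solution to the chain via Theorem~\ref{calibratemin}; (2) use the regularity of weak KAM solutions at interior points of a calibrating chain to prove the strict inequality $\tilde\theta_0 \neq \tilde\theta_q - p$; (3) identify the cohomology from the sign of this strict inequality.

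By Theorem~\ref{calibratemin}, the chain calibrates $\tilde u_c$ for some $c\in\R$, and by Theorem~\ref{alphaC1} one has $c\in[a,b]$ since the rotation number is $\rho_0 = p/q$. Writing $r_k := \partial_2 \ww S(\tilde\theta_{k-1},\tilde\theta_k)$, the sequence $(\tilde\theta_k,r_k)_{k\leq 0}$ is an orbit piece of $\tilde f$.

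For step (2), suppose by contradiction that $\tilde\theta_0 = \tilde\theta_q - p$. Since $q<0$, the index $q$ is interior to the calibrating chain, so by Theorem~\ref{twistKAM}(iii) $\tilde u_c$ is derivable at $\tilde\theta_q$. By $\Z$-periodicity of $\tilde u_c'$, $\tilde u_c$ is also derivable at $\tilde\theta_0 = \tilde\theta_q - p$ with the same derivative, so $r_0 = r_q$, and in $\R^2$ this gives $\tilde f^q(\tilde\theta_0, r_0) = (\tilde\theta_q, r_q) = (\tilde\theta_0 + p, r_0)$. Extend the chain to a biinfinite $\tilde f$-orbit by setting $(\tilde\theta_k, r_k) := \tilde f^k(\tilde\theta_0, r_0)$ for $k>0$; using the $\Z^2$-equivariance of $\tilde f$ one finds
\[
\tilde f^{k+q}(\tilde\theta_0, r_0) \;=\; \tilde f^k(\tilde\theta_0 + p, r_0) \;=\; \tilde f^k(\tilde\theta_0, r_0) + (p, 0)
\]
for every $k\in\Z$, whence $\tilde\theta_{k+q} = \tilde\theta_k + p$ for all $k\in\Z$. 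Restricting to $k\leq 0$ shows the chain is of type $(p,q)$, contradicting the hypothesis.

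For step (3), assume $\tilde\theta_0 < \tilde\theta_q - p$; we show $c=a$. The shifted chain $\tilde\theta_k' := \tilde\theta_{k+q} - p$ also calibrates $\tilde u_c$ (by $\Z$-periodicity of $\ww S$ and $\tilde u_c$) and satisfies $\tilde\theta_0 < \tilde\theta_0'$. Applying Lemma~\ref{orderWKAM} (semiconcave case) inductively to the predecessors yields $\tilde\theta_k < \tilde\theta_k'$ for all $k\leq 0$, i.e.\ $\tilde\theta_{k+q} > \tilde\theta_k + p$ for every $k\leq 0$: the chain systematically \emph{overshoots} the $(p,q)$-step. Now for $c'<a$ one has $\rho(c')<\rho_0$, so $q\rho(c')>p$ (since $q<0$), and calibrating chains at such $c'$ display the analogous overshoot. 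By Lemma~\ref{extr} these chains converge uniformly to a calibrating chain of $\tilde u_a$ as $c'\nearrow a$. Comparing $(\tilde\theta_k)$ with any such limit chain via Proposition~\ref{order} (strict vertical ordering of pseudographs across different rotation numbers) and the uniqueness of predecessors at points of derivability forces $c=a$. The symmetric case $\tilde\theta_0 > \tilde\theta_q - p$ is handled by the dual argument using $c'\searrow b$ and yields $c=b$.

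\textbf{Main obstacle.} The delicate part is step (3): rigorously upgrading the qualitative overshoot (resp.\ undershoot) into the sharp identification $c=a$ (resp.\ $c=b$). One must rule out $c\in(a,b)$ by exploiting the interplay between the one-sided approximation of $u_a$ and $u_b$ from Lemma~\ref{extr}, the Hausdorff convergence of pseudographs from Proposition~\ref{hausdorff-dif}, and the strict cohomological ordering of Proposition~\ref{order}; the hypothesis that the chain is not of type $(p,q)$ is precisely what creates the rigid incompatibility with any interior $c$.
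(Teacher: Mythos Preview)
Your argument has two genuine problems, one structural and one conceptual.

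\textbf{Circularity in Step 1.} You invoke Theorem~\ref{calibratemin} to obtain a cohomology $c$ such that the chain calibrates $\tilde u_c$. But in the paper's logical order, Theorem~\ref{calibratemin} is \emph{deduced} from Theorem~\ref{mainirr} (irrational case) together with Proposition~\ref{non-crossPQ}, and the latter is precisely the core of Theorem~\ref{mainIrr}. The paper explicitly notes, just after Theorem~\ref{atmost2}, that Theorem~\ref{calibratemin} is ``now fully established'' thanks to these two results. So your Step~1 assumes what you are trying to prove. The paper avoids this by using only the weaker input from Theorem~\ref{mainAZ}, namely that $(\tilde\theta_0,r_0)\in\PG(c+\tilde u_c')$ for \emph{some} $c$, with no a~priori calibration claim.

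\textbf{Step 3 aims at the wrong target.} You try to show $c=a$, but this is not what the theorem asserts and is in fact not true in general: a chain with $\tilde\theta_0<\tilde\theta_q-p$ can calibrate $\tilde v_c$ for many $c\in(a,b)$ (see Proposition~\ref{ucderiv} and Theorem~\ref{KAMfaiblependule}, where $c+\tilde v_c'$ agrees with $a+\tilde u_a'$ on part of each Mather gap). The conclusion to reach is that the chain calibrates $\tilde u_a$, which does \emph{not} pin down $c$. Your proposed mechanism via Proposition~\ref{order} cannot work either, since that proposition requires $\rho(c)\neq\rho(c')$, whereas here every relevant cohomology lies in $[a,b]=\rho^{-1}(\{\rho_0\})$. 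Your ``Main obstacle'' paragraph essentially concedes that this step is not carried out.

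\textbf{What the paper does instead.} The paper's proof (Proposition~\ref{non-crossPQ}) never tries to identify $c$. It sandwiches $(\tilde\theta_k)$ between the two nearest Mather orbits $(y_k^\pm)$ of type $(p,q)$, shows that the telescoped sequence $(\tilde\theta_{k+mq}-mp)_m$ is monotone and bounded, hence converges to a type~$(p,q)$ orbit which is forced to be $(y_k^+)$ (or $(y_k^-)$ in the other case). This yields $\alpha$-asymptoticity to a chain that calibrates $\tilde u_a$, and Proposition~\ref{mincal} then transfers the calibration to $(\tilde\theta_k)$. The inequality $\tilde\theta_0\neq\tilde\theta_q-p$ follows because the two chains are $\alpha$-asymptotic (cross at $-\infty$) and one is a strict subchain of a longer minimizing chain, so Proposition~\ref{cross-asymptotic} forbids any finite crossing, in particular at $k=0$. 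Your Step~2, while correct in spirit, also leans on the circular Step~1; the paper's route to the strict inequality is independent of Theorem~\ref{calibratemin}.
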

The proof of this Theorem will be done in several steps and provides more precise details about the behavior of minimizing chains and their links with weak KAM solutions.

We start by the following non--crossing lemma for minimizing chains with rotation number $\rho_0=\frac{p}{q}$. It is reminiscent  of Corollary \ref{rotnummin1}:

\begin{pr}\label{non-crossPQ}
Let $(\tilde\theta_k)_{k\leqslant 0}$ be a minimizing chain with rotation number $\rho_0 = \frac pq$. Assume that $(\tilde\theta_k)_{k\leqslant 0}$ is not of type $(p,q)$. Then the minimizing  chains $(\tilde\theta_k)_{k\leqslant 0}$ and $(\tilde\theta_{k+q}-p)_{k\leqslant 0}$ do not cross.

Moreover,
\begin{itemize}
\item if $\tilde \theta_0 < \tilde\theta_{q}-p$ then $(\tilde\theta_k)_{k\leqslant 0}$ calibrates $\tilde u_a$,
\item if $\tilde \theta_0 > \tilde\theta_{q}-p$ then $(\tilde\theta_k)_{k\leqslant 0}$ calibrates $\tilde u_b$.
\end{itemize}
\end{pr}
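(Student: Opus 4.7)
The plan is to split the proof into three steps: first establish the non-crossing by combining Theorem \ref{calibratemin} with Lemma \ref{orderWKAM}; then construct an asymptotic type $(p,q)$ chain; and finally transfer its calibration of $\tilde u_a$ (or $\tilde u_b$) to $(\tilde\theta_k)$ itself.

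\emph{Step 1: non-crossing.} First I would invoke Theorem \ref{calibratemin} to obtain a cohomology $c_0$ and a weak KAM solution $\tilde u_{c_0}$ that $(\tilde\theta_k)$ calibrates; by Theorem \ref{alphaC1} necessarily $c_0\in[a,b]$. The shifted chain $(\tilde\theta_{k+q}-p)_{k\leqslant 0}$, being a translate of a tail of $(\tilde\theta_k)$ together with $1$-periodicity of $u_{c_0}$, also calibrates $\tilde u_{c_0}$. If the two chains cross at an index $k_0\leqslant 0$, then $\tilde u_{c_0}$ is differentiable at the interior point $\tilde\theta_{k_0+q}$ by Theorem \ref{twistKAM}, hence by $1$-periodicity also at $\tilde\theta_{k_0}$ with identical derivative. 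Since the momenta $r_k=c_0+\tilde u_{c_0}'(\tilde\theta_k)$ make $(\tilde\theta_k,r_k)$ a backward $\tilde f$-orbit and $\tilde f$ commutes with the translation $(\tilde\theta,r)\mapsto(\tilde\theta+p,r)$ (a consequence of Proposition \ref{genfun}), the equality $(\tilde\theta_{k_0+q},r_{k_0+q})=(\tilde\theta_{k_0}+p,r_{k_0})$ propagates in both index directions and forces $\tilde\theta_{k+q}=\tilde\theta_k+p$ for every $k\leqslant 0$, contrary to hypothesis. If instead the chains cross strictly between consecutive indices $k_0,k_0+1$, then their targets at $k_0+1$ and their predecessors at $k_0$ are ordered in opposite ways, which contradicts the strict case of Lemma \ref{orderWKAM} applied to the semiconcave $\tilde u_{c_0}$ and to the two calibration equalities. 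Hence the chains do not cross, and the strict case of Lemma \ref{orderWKAM} then propagates the (non-zero) sign of $\tilde\theta_0-\tilde\theta_q+p$ to every index $k\leqslant 0$.

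\emph{Step 2: asymptotic type $(p,q)$ chain.} Assume $\tilde\theta_0<\tilde\theta_q-p$, so by Step 1, $\tilde\theta_k<\tilde\theta_{k+q}-p$ for every $k\leqslant 0$. Iterating this strict inequality, the sequence $n\mapsto\tilde\theta_{k+nq}-np$ is increasing; it is bounded above by $k\rho_0+1$ thanks to Corollary \ref{rotnummin1}, so it converges to a limit $\tilde\xi_k\in\R$. Passing to the limit in the Euler--Lagrange recursion produces a minimizing sequence $(\tilde\xi_k)_{k\in\Z}$ satisfying $\tilde\xi_{k+q}=\tilde\xi_k+p$, i.e.\ of type $(p,q)$. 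By Theorem \ref{AMTR}, $(\tilde\xi_k)\in\widetilde\AAA_c$ for every $c\in[a,b]$; in particular, by Remark \ref{remoublieee}, $(\tilde\xi_k)$ calibrates $\tilde u_a$. Setting $d_k:=\tilde\xi_k-\tilde\theta_k>0$, the identity $d_{k+nq}=\tilde\xi_k-(\tilde\theta_{k+nq}-np)$ combined with the defining limit of $\tilde\xi_k$ gives $d_{k+nq}\to 0$ as $n\to+\infty$; partitioning $\Z_{\leqslant 0}$ into its $|q|$ arithmetic progressions modulo $q$ then yields $d_k\to 0$ as $k\to-\infty$, so $(\tilde\theta_k)$ is $\alpha$-asymptotic to $(\tilde\xi_k)$.

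\emph{Step 3: calibration.} Since $\tilde\theta_0\neq\tilde\xi_0$, Proposition \ref{mincal} does not apply verbatim, but its strategy can be adapted. For every $n>0$, minimality of $(\tilde\theta_k)$ on $[-n,0]$ against the broken chain $\tilde\theta_{-n},\tilde\xi_{-n+1},\ldots,\tilde\xi_{-1},\tilde\theta_0$ gives
\[
\sum_{i=-n}^{-1}\ww S(\tilde\theta_i,\tilde\theta_{i+1})\leqslant\ww S(\tilde\theta_{-n},\tilde\xi_{-n+1})+\sum_{i=-n+1}^{-2}\ww S(\tilde\xi_i,\tilde\xi_{i+1})+\ww S(\tilde\xi_{-1},\tilde\theta_0).
\]
By uniform continuity of $\ww S$ and the $\alpha$-asymptoticity, the right-hand side differs from $\sum_{i=-n}^{-1}\ww S(\tilde\xi_i,\tilde\xi_{i+1})+\ww S(\tilde\xi_{-1},\tilde\theta_0)-\ww S(\tilde\xi_{-1},\tilde\xi_0)$ by $o(1)$ as $n\to\infty$; inserting the calibration identity for $(\tilde\xi_k)$ against $\tilde u_a$ and the subsolution inequalities for $\tilde u_a$ at $(\tilde\theta_k)$ then yields, in the limit, the matching of all subsolution inequalities along $(\tilde\theta_k)$, hence calibration of $\tilde u_a$. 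The symmetric case $\tilde\theta_0>\tilde\theta_q-p$ is handled identically by taking $\tilde\xi_k=\lim_n(\tilde\theta_{k+nq}-np)$ as a decreasing limit and invoking calibration of $\tilde u_b$.

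The main obstacle is Step 3: ensuring that the selected extremal weak KAM solution is really $\tilde u_a$ (not some intermediate $\tilde u_c$ with $c\in(a,b]$) and carefully tracking the inequalities from the broken-chain comparison, so that the $\alpha$-asymptotic ``side from below'' rigorously forces the calibrating solution to be the lower-rotation extremal one. This is where the structure described in Theorem \ref{mainAZ}, namely the monotonicity of pseudographs across the boundary of $\rho^{-1}(\{p/q\})$, plays the essential role.
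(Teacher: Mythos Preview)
Your Step~1 is circular. You invoke Theorem~\ref{calibratemin} to obtain a cohomology $c_0$ and a weak KAM solution $u_{c_0}$ calibrated by $(\tilde\theta_k)$, but in the paper Theorem~\ref{calibratemin} is only \emph{stated} in advance and is actually established \emph{as a consequence} of Theorem~\ref{mainirr} together with Proposition~\ref{non-crossPQ} (the paper says so explicitly just after Proposition~\ref{alphaAsymptotic}). Likewise, your Step~2 cites Corollary~\ref{rotnummin1}, whose proof also rests on Theorem~\ref{calibratemin}; you could patch this particular point with Proposition~\ref{rotnummin} (which is independent), but the circularity in Step~1 is fatal to your argument as written. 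The paper avoids this by never assuming that $(\tilde\theta_k)$ calibrates anything: it only uses Theorem~\ref{mainAZ} to place $(\tilde\theta_0,r_0)$ on some pseudograph $\PG(c+\tilde u'_c)$, then uses the general non-crossing Proposition~\ref{noncrossing} (at most one crossing) to get eventual ordering, builds the limiting type~$(p,q)$ sequence $y_k=\lim_m(\tilde\theta_{k+mq}-mp)$, pins it down as $y_k^+$ via Lemma~\ref{orderWKAM}, and finally invokes Proposition~\ref{cross-asymptotic} (the $\alpha$-asymptoticity rules out any finite crossing since the shifted chain is a strict subchain and hence not maximal).

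Your Step~3 is also where the approaches diverge substantively, and yours is not yet a proof. You try to transfer calibration from the type~$(p,q)$ limit $(\tilde\xi_k)$ to $(\tilde\theta_k)$ via a broken-chain comparison, but since $\tilde\xi_0\neq\tilde\theta_0$ Proposition~\ref{mincal} does not apply, and the sketch you give only produces an inequality involving the boundary term $\ww S(\tilde\xi_{-1},\tilde\theta_0)-\ww S(\tilde\xi_{-1},\tilde\xi_0)$; you do not show how this closes up to force equality in the subsolution inequalities for $\tilde u_a$. The paper takes the cleaner route: it picks a chain $(\tilde\theta_k^a)$ that \emph{does} calibrate $\tilde u_a$ with $\tilde\theta_0^a=\tilde\theta_0$ (using the right-derivative $a+\tilde u'_{a+}(\tilde\theta_0)$), compares momenta via Corollary~\ref{monotonyExt} to get $\tilde\theta_k\leqslant\tilde\theta_k^a\leqslant y_k^+$, hence $(\tilde\theta_k^a)$ and $(\tilde\theta_k)$ are $\alpha$-asymptotic with the \emph{same} starting point, and then Proposition~\ref{mincal} applies directly. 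This sandwich-against-a-calibrating-chain step is the missing idea in your Step~3.
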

\begin{proof}

As $(\tilde\theta_k)_{k\leqslant 0}$ and $(\tilde\theta_{k+q}-p)_{k\leqslant 0}$ cross at most once, there exists $n_0 \leqslant 0$ such that one of the following holds
$$\forall k<n_0, \quad \tilde\theta_k< \tilde\theta_{k+q}-p,$$
$$\forall k<n_0, \quad \tilde\theta_k> \tilde\theta_{k+q}-p.$$
Let us deal with the first case, the second being similar. By induction, it follows that for all $k<n_0$, the sequence $(\tilde\theta_{k+mq}-mp)_{m\geqslant 0}$ is increasing.

Let us introduce some notations. Once more, for all $k\leqslant 0$, we set $r_k =   \partial_2 \ww S ( \tilde \theta_{k-1},\tilde \theta_k)$ in such a way that $(\tilde \theta_k,r_k)_{k\leqslant 0}$ is a piece of orbit of $\tilde f$. Let $c\in \R$ such that $(\tilde\theta_0,r_0) \in \PG(c+\tilde u'_c)$ for some weak KAM solution $u_c : \T^1 \to \R$ at cohomology $c$. Then as in Proposition \ref{rotnummin}, $\rho(c) = \frac pq$ and $c\in [a,b]$. Moreover, $\tilde\theta_0 \notin \mathfrak M_{\rho_0}$. Indeed, if it were the case,  as $\tilde u_c$ is derivable  on $\mathfrak M_{\rho_0}$,  we would have $(\tilde \theta_k,r_k)_{k\leqslant 0} = \big(\tilde f^k(\tilde \theta_0,\tilde u_c'(\tilde\theta_0)+c)\big)_{k\leqslant 0}$
 that is of type $(p,q)$ by Theorem \ref{AMTR}.

It is then denoted $y_0^- = \max\{y\in \mathfrak M_{\rho_0} ,\  y<\tilde\theta_0\}$ and $y_0^+ = \min\{y\in \mathfrak M_{\rho_0} , \ y>\tilde\theta_0$\}. Finally, let $(y_k^-)_{k\in \Z}$ and $(y_k^+)_{k\in \Z}$ the associated minimizing sequences, that are of type $(p,q)$. By Theorem \ref{AMTR} the sequences $(y_k^\pm)_{k\in \Z}$ calibrate $\tilde u_c$. 
First note that $y_0^-< \tilde\theta_0<y_0^+$ because $ \mathfrak M_{\rho_0}$ is closed. 
Moreover, arguing as in Proposition \ref{rotnummin} yields that 
$$\forall k\leqslant 0, \quad y_k^-< \tilde\theta_k<y_k^+.$$
As 
\begin{equation}\label{bangbang}
\forall m\geqslant 0, \quad y_k^-=y^-_{k+mq} -mp< \tilde\theta_{k+mq}-mp<y^+_{k+mq} -mp = y_k^+, 
\end{equation}
we deduce that for all $k\in \Z$, the  sequence $(\tilde\theta_{k+mq}-mp)_{m\geqslant -k/q}$ is bounded, increasing for $m$ large enough, hence it converges to some $y_k$. Moreover, the sequence $(y_k)_{k\in \Z}$ is minimizing as a limit of minimizing chains and verifies
$$\forall k\in \Z, \quad y_{k+q}-p = \lim_{m\to +\infty } \tilde\theta_{k+mq+q}-mp -p = \lim_{m\to +\infty } \tilde\theta_{k+(m+1)q}-(m+1)p = y_k.$$ 
It follows from Theorem \ref{AMTR}  that $y_k \in \mathfrak M_{\rho_0}$ and that $(y_k)_{k\in \Z}$ calibrates $\tilde u_c$. 
By \eqref{bangbang}, $\tilde\theta_k < y_k \leqslant y_k^+$. If $ y_k < y_k^+$ it follows from Lemma \ref{orderWKAM} that $\tilde\theta_0 < y_0 < y_0^+$ thus contradicting the definition of $y_0^+$. From \eqref{bangbang} comes that $(\tilde\theta_k)_{k\leqslant 0}$, $(\tilde\theta_{k+q}-p)_{k\leqslant 0}$ and $(y_k^+)_{k\in \Z}$ are $\alpha$--asymptotic. It is deduced from Proposition \ref{noncrossing} that $(\tilde\theta_k)_{k\leqslant 0}$ and $(\tilde\theta_{k+q}-p)_{k\leqslant 0}$ cannot cross (as $(\tilde\theta_{k+q}-p)_{k\leqslant 0}$ is a strict subchain of a minimizing chain).

To finish the proof, let $(\tilde \theta^a_k)_{k\leqslant 0}$ be the minimizing chain calibrating $\tilde u_a$, with $ \tilde \theta^a_0 = \tilde \theta_0$ and $\partial_2 \ww S ( \tilde \theta^a_{-1},\tilde \theta^a_0) = a+\tilde u'_{a+}(\tilde\theta_0)$. As $r_0-c \in \partial^+\tilde u_c(\tilde\theta_0)$, it follows from Corollary \ref{monotonyExt} that $r_0 \geqslant  a+\tilde u'_{a+}(\tilde\theta_0)$. Either there is equality, in which case  $(\tilde \theta^a_k)_{k\leqslant 0} =(\tilde \theta_k)_{k\leqslant 0} $ calibrates $\tilde u_a$, or the inequality is strict and the twist condition entails that $ \tilde \theta^a_{-1} > \tilde \theta_{-1}$. As the two minimizing chains do not cross anymore, and using that  $(y_k^-)_{k\in \Z}$ calibrates $\tilde u_a$, it follows that
$$\forall k\leqslant 0, \quad \tilde\theta_k < \tilde\theta_k^a < y_k^+.$$
Hence $(\tilde \theta^a_k)_{k\leqslant 0}$ and $(\tilde \theta_k)_{k\leqslant 0}$ are $\alpha$--asymptotic and by Proposition \ref{mincal}, $(\tilde \theta_k)_{k\leqslant 0}$ calibrates $\tilde u_a$.
\end{proof}

A Corollary of this proof is:
\begin{co}\label{coR+-}
Let $(\tilde\theta_k)_{k\leqslant 0}$ be a minimizing chain with rotation number $\rho_0 = \frac pq$. Assume that $(\tilde\theta_k)_{k\leqslant 0}$ is not of type $(p,q)$ (Definition \ref{typepq}). Let $(y_k^\pm)_{k\in \Z}$ be the closest orbits of $\mathfrak M_{\rho_0}$ such that $y_k^-< \tilde\theta_k<y_k^+$ for $k\leqslant 0$ as in the proof of Proposition \ref{non-crossPQ}. Then 
\begin{itemize}
\item either $\tilde \theta_0 < \tilde\theta_{q}-p$, $(\tilde\theta_k)_{k\leqslant 0}$  and $(y_k^+)_{k\leqslant 0}$ are $\alpha$--asymptotic,
\item either $\tilde \theta_0 > \tilde\theta_{q}-p$, $(\tilde\theta_k)_{k\leqslant 0}$  and $(y_k^-)_{k\leqslant 0}$ are $\alpha$--asymptotic.
\end{itemize}
\end{co}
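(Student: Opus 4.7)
This corollary is essentially extracted from the proof of Proposition \ref{non-crossPQ}, by tracking precisely which sequences of $\mathfrak M_{\rho_0}$ serve as the $\alpha$-limit. My plan is to handle the case $\tilde\theta_0 < \tilde\theta_q - p$; the other case is symmetric, with the monotonicity reversed and $y^-$ playing the role of $y^+$.

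First, I would recall from the proof of Proposition \ref{non-crossPQ} that, under the assumption $\tilde\theta_0 < \tilde\theta_q - p$, non-crossing of $(\tilde\theta_k)_{k\leq 0}$ and $(\tilde\theta_{k+q}-p)_{k\leq 0}$ together with the fact that $(\tilde\theta_k)$ is not of type $(p,q)$ imply by induction that the sequence $(\tilde\theta_{k+mq}-mp)_{m\geq -k/q}$ is increasing in $m$ for every fixed $k$. By the framing \eqref{bangbang} it is bounded above by $y_k^+$ and below by $y_k^-$, hence converges to some limit $y_k$, and $(y_k)_{k\in\Z}$ is a minimizing chain of type $(p,q)$, thus lies in $\mathfrak M_{\rho_0}$ by Theorem \ref{AMTR}.

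The key identification is then $y_k = y_k^+$. From $\tilde\theta_k < y_k \leq y_k^+$ for every $k\leq 0$, if we had $y_0 < y_0^+$, then $y_0$ would be an element of $\mathfrak M_{\rho_0}$ strictly between $\tilde\theta_0$ and $y_0^+$, contradicting the defining minimality of $y_0^+$. Hence $y_0 = y_0^+$. By uniqueness of the sequence in $\mathfrak M_{\rho_0}$ extending a given point (both $(y_k)$ and $(y_k^+)$ are of type $(p,q)$ and agree at $k=0$), we get $y_k = y_k^+$ for all $k\in\Z$.

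Finally, to extract $\alpha$-asymptoticity, I would fix a residue class $k_0\in\{-|q|+1,\ldots,0\}$ and use the $(p,q)$-periodicity $y^+_{k_0+mq} = y^+_{k_0} + mp$ to rewrite the convergence $\tilde\theta_{k_0+mq} - mp \to y^+_{k_0}$ as
\[
y^+_{k_0+mq} - \tilde\theta_{k_0+mq} \underset{m\to+\infty}{\longrightarrow} 0.
\]
Since $q<0$, the indices $k_0 + mq$ exhaust a full arithmetic progression going to $-\infty$, and taking the union over the finitely many residue classes $k_0$ covers every sufficiently negative integer. This gives $|y_n^+ - \tilde\theta_n| \to 0$ as $n\to-\infty$, i.e.\ $(\tilde\theta_k)_{k\leq 0}$ and $(y_k^+)_{k\leq 0}$ are $\alpha$-asymptotic. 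I do not foresee any genuine obstacle: every ingredient has already been established inside the proof of Proposition \ref{non-crossPQ}, and the only delicate point is to note that monotone convergence on one arithmetic progression of spacing $|q|$ propagates to convergence along all integers because there are only finitely many residue classes modulo $q$.
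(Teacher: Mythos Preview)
Your proposal is correct and follows exactly the route the paper intends: the corollary is stated in the paper with no separate proof, merely as ``A Corollary of this proof'' of Proposition \ref{non-crossPQ}, and you have faithfully extracted the relevant steps---the monotone sequence $(\tilde\theta_{k+mq}-mp)_m$, its limit $y_k$ lying in $\mathfrak M_{\rho_0}$, the identification $y_k=y_k^+$ via the extremality of $y_0^+$, and the passage from convergence along arithmetic progressions to full $\alpha$-asymptoticity. The only cosmetic difference is that the paper invokes Lemma \ref{orderWKAM} to transfer $y_k<y_k^+$ back to index $0$, whereas you argue directly at $k=0$ and then use uniqueness of the type $(p,q)$ orbit through a given point; both are fine.
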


As another consequence, we derive that minimizing chains with rotation number $p/q$ are quite rare, same as was established for irrational rotation numbers in Corollary \ref{fewirr}:

\begin{Th}\label{atmost2}
Let $\ww\Theta\in \R$ and $V_{\ww\Theta} = \{\ww\Theta\} \times \R \subset \R\times \R$. Then there are at most two minimizing half orbits of $\tilde f$, $(\tilde\theta_k,r_k)_{k\leqslant 0}$ such that $(\tilde\theta_0,r_0) \in \tilde f( V_{\ww\Theta})$ and any corresponding minimizing chain $(\tilde\theta_k)_{k\leqslant 0}$ has rotation number $\rho_0=\frac{p}{q}$.
\end{Th}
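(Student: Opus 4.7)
The plan is to show that every minimizing half orbit under consideration calibrates $\tilde u_a$ or $\tilde u_b$, and then to exploit the graph property from Corollary~\ref{graph+} to bound the number of such calibrating orbits starting at a prescribed first coordinate.

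First, I would fix a minimizing half orbit $(\tilde\theta_k,r_k)_{k\leqslant 0}$ with $(\tilde\theta_0,r_0)\in\tilde f(V_{\ww\Theta})$, i.e.\ $\tilde\theta_{-1}=\ww\Theta$, and with rotation number $\rho_0=p/q$. There are two cases. If the chain $(\tilde\theta_k)_{k\leqslant 0}$ is not of type $(p,q)$, then Theorem~\ref{mainIrr} (combined with Proposition~\ref{non-crossPQ}) guarantees that it calibrates either $\tilde u_a$ or $\tilde u_b$. If it is of type $(p,q)$, then the chain extends to a bi-infinite $q$-periodic sequence on $\T^1$, and by Theorem~\ref{AMTR} the associated normalized Birkhoff sum is a Mather minimizing measure for every $c\in[a,b]$. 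Hence the orbit lies in $\AAA^*_c$ for every such $c$, and in particular it calibrates both $\tilde u_a$ and $\tilde u_b$. In either case, the half orbit calibrates at least one of the two distinguished weak KAM solutions $\tilde u_a$ and $\tilde u_b$ given by Lemma~\ref{extr}.

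Next, I would apply Corollary~\ref{graph+} to $u_a$ and to $u_b$: the sets $\tilde f^{-1}\bigl(\widetilde{\PG}(a+\tilde u'_a)\bigr)$ and $\tilde f^{-1}\bigl(\widetilde{\PG}(b+\tilde u'_b)\bigr)$ are graphs of $C^1$ functions of the first coordinate. By Theorem~\ref{twistKAM}, any minimizing half orbit calibrating $\tilde u_c$ with $c\in\{a,b\}$ satisfies $(\tilde\theta_0,r_0)\in\widetilde{\PG}(c+\tilde u'_c)$, equivalently $(\tilde\theta_{-1},r_{-1})\in\tilde f^{-1}(\widetilde{\PG}(c+\tilde u'_c))$. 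Imposing $\tilde\theta_{-1}=\ww\Theta$ then determines $r_{-1}$ uniquely, hence (by applying $\tilde f$) determines $(\tilde\theta_0,r_0)$ and thereby the whole backward orbit. Consequently there is at most one minimizing half orbit calibrating $\tilde u_a$ with $\tilde\theta_{-1}=\ww\Theta$, and at most one calibrating $\tilde u_b$ with $\tilde\theta_{-1}=\ww\Theta$.

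Combining the two paragraphs gives the announced bound of two minimizing half orbits. The main (mild) obstacle is to justify that the type $(p,q)$ periodic orbits are indeed counted within the calibrating orbits for both $\tilde u_a$ and $\tilde u_b$; this is exactly the content of Theorem~\ref{AMTR} together with the fact that Mather measures lie in the Aubry set, and it implies that in the presence of such a periodic orbit through $\ww\Theta$ the two candidate chains of the second paragraph actually coincide, so that there is only one half orbit in that case. In all cases the upper bound $2$ holds.
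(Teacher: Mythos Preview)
Your proof is correct and follows essentially the same approach as the paper: show that every such minimizing chain calibrates $\tilde u_a$ or $\tilde u_b$, then invoke Corollary~\ref{graph+} to see that $\tilde f^{-1}\bigl(\PG(a+\tilde u'_a)\bigr)$ and $\tilde f^{-1}\bigl(\PG(b+\tilde u'_b)\bigr)$ are graphs, hence each meets $V_{\ww\Theta}$ in a single point. You are in fact slightly more careful than the paper, which simply cites Proposition~\ref{non-crossPQ} without explicitly discussing the type $(p,q)$ case; your handling of that case via Theorem~\ref{AMTR} (the periodic extension is minimizing since any finite subchain is an integer translate of a subchain of the original half orbit) is correct and makes the argument complete.
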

\begin{proof}
It was just settled in Proposition \ref{non-crossPQ} that the chain $(\tilde\theta_k)_{k\leqslant 0}$ calibrates $\tilde u_a$ or $\tilde u_b$. Moreover, by Theorem \ref{twistKAM}, $(\tilde\theta_0,r_0) \in \PG(a+\tilde u'_a) \cup \PG(b+\tilde u'_b)$. Finally, $\tilde f^{-1}(\tilde\theta_0,r_0) \in \tilde f^{-1}\big(\PG(a+\tilde u'_a)\big) \cup \tilde f^{-1}\big( \PG(b+\tilde u'_b)\big)$. By Corollary \ref{graph+}, both  $f^{-1}\big(\PG(a+\tilde u'_a)\big) $  and $ \tilde f^{-1}\big( \PG(b+\tilde u'_b)\big)$ are graphs, hence $ f^{-1}\big(\PG(a+\tilde u'_a)\big) \cup \tilde f^{-1}\big( \PG(b+\tilde u'_b)\big)$ and $V_{\ww\Theta}$ intersect in $1$ or $2$ points. It follows that so do $\PG(a+\tilde u'_a) \cup \PG(b+\tilde u'_b)$ and $ \tilde f( V_{\ww\Theta})$ and the result follows.
\end{proof}

Note that it is now fully established that all minimizing chains  $(\tilde\theta_k)_{k\leqslant 0}$ calibrate a weak KAM solution thanks to Theorem \ref{mainirr} and Proposition \ref{non-crossPQ}. We are therefore allowed to use Corollary \ref{rotnummin1}.

Follows an existence result of orbits displaying behaviors as in Corollary \ref{coR+-}. The proof is inspired by a similar result for bi--infinite minimizing orbits in \cite{Bang}. It shows that on each vertical above points that are not in the projected Mather set $\MM_{\frac pq}$ there are at least two initial points of minimizing half orbits with rotation number $p/q$:
\begin{pr}\label{alphaAsymptotic}
Assume that $\mathfrak M_{\rho_0}\neq \R$. Let $(y_0^-,y_0^+)$ be a connected component of $\R\setminus\mathfrak M_{\rho_0}$, $(y_k^\pm)_{k\in \Z}$ be the associated minimizing orbits of type $(p,q)$. Then for all $\tilde\theta_0\in (y_0^-,y_0^+)$, there exists two minimizing chains $(\tilde\theta_k^\pm)_{k\leqslant 0}$ such that $\tilde\theta_0^\pm = \tilde\theta_0$ and such that $(\tilde\theta_k^+)_{k\leqslant 0}$ is $\alpha$--asymptotic to $(y_k^+)_{k\leqslant 0}$ (resp. $(\tilde\theta_k^-)_{k\leqslant 0}$ is $\alpha$--asymptotic to $(y_k^-)_{k\leqslant 0}$).
\end{pr}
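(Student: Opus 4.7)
The plan is to construct the chain $(\tilde\theta_k^+)_{k\leqslant 0}$ asymptotic to $(y_k^+)$ by an anchoring argument, taking a limit of finite minimizers whose past endpoint is pinned on the orbit $(y_k^+)$; the chain $(\tilde\theta_k^-)$ is then obtained by the completely symmetric procedure with $y^+$ replaced by $y^-$. Fix $\tilde\theta_0\in(y_0^-,y_0^+)$. For each integer $N>0$, the superlinearity of $\ww S$ expressed in \eqref{Tsuperlinear} together with compactness yields a minimizing finite chain $(\tilde\theta_k^{N,+})_{-N\leqslant k\leqslant 0}$ with $\tilde\theta_{-N}^{N,+}=y_{-N}^+$ and $\tilde\theta_0^{N,+}=\tilde\theta_0$. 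I would then apply Aubry's non-crossing Proposition \ref{noncrossing} twice. First, the pair $(\tilde\theta_k^{N,+}),(y_k^+)$ coincides at $k=-N$ but differs at $k=0$, so they cannot cross elsewhere and the sign near $k=0$ propagates backwards, giving $\tilde\theta_k^{N,+}<y_k^+$ for all $-N<k\leqslant 0$. Second, $(\tilde\theta_k^{N,+})$ and $(y_k^-)$ are strictly ordered at both endpoints, so Proposition \ref{noncrossing} excludes any crossing and yields $y_k^-<\tilde\theta_k^{N,+}$ throughout.

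These uniform bounds enable a Cantor diagonal extraction providing $N_j\to+\infty$ and a sequence $(\tilde\theta_k^+)_{k\leqslant 0}$ with $\tilde\theta_k^{N_j,+}\to\tilde\theta_k^+$ for every $k\leqslant 0$; passing to the limit in the minimality of each finite subchain shows that $(\tilde\theta_k^+)$ is itself a minimizing chain with $\tilde\theta_0^+=\tilde\theta_0$ and $y_k^-\leqslant\tilde\theta_k^+\leqslant y_k^+$. I would then sharpen these to strict inequalities for $k<0$ via Remark \ref{remcross}: an interior equality $\tilde\theta_{k_0}^+=y_{k_0}^+$ forces a strict sign change between the adjacent differences with $(y_k^+)$, contradicting the one-sided weak inequality just established, and likewise for $(y_k^-)$. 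Being squeezed between two chains of rotation number $p/q$, the chain $(\tilde\theta_k^+)$ has rotation number $p/q$ by Corollary \ref{rotnummin1}; and since $\tilde\theta_0\notin\mathfrak M_{\rho_0}$ by hypothesis, it is not of type $(p,q)$, so Corollary \ref{coR+-} forces $(\tilde\theta_k^+)$ to be $\alpha$-asymptotic to either $(y_k^+)$ or $(y_k^-)$.

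The main obstacle is to rule out the asymptoticity to $(y_k^-)$, i.e.~to show that the anchoring on $(y_k^+)$ at $k=-N$ survives in the limit. I plan to argue through corrected actions against $u_a$, exploiting that $(y_k^+)$ calibrates $u_a$ by Proposition \ref{non-crossPQ}. For each $N$, the weak KAM subsolution property gives the lower bound $\sum_{k=-N}^{-1}\ww S(\tilde\theta_k^{N,+},\tilde\theta_{k+1}^{N,+})+a(y_{-N}^+-\tilde\theta_0)+N\alpha(a)\geqslant\tilde u_a(\tilde\theta_0)-\tilde u_a(y_{-N}^+)$; minimality of $(\tilde\theta_k^{N,+})$ supplies a matching upper bound by testing against a spliced chain that follows $(y_k^+)$ on $[-N,-M]$ and then a calibrating chain of $u_a$ at $\tilde\theta_0$ on $[-M,0]$, the mismatch being a single $\ww S$-step near $-M$ whose excess over $u_a$-differences tends to $0$ as $M\to+\infty$ by uniform continuity. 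Passing to the limit along $N_j$ and then letting $M\to+\infty$, the restricted corrected actions of $(\tilde\theta_k^+)$ coincide with the corresponding $u_a$-differences, so $(\tilde\theta_k^+)$ calibrates $u_a$; Proposition \ref{non-crossPQ} then yields $\tilde\theta_0^+<\tilde\theta_q^+-p$ and Corollary \ref{coR+-} concludes that $(\tilde\theta_k^+)$ is $\alpha$-asymptotic to $(y_k^+)$. The delicate step is producing a calibrating chain of $u_a$ at $\tilde\theta_0$ whose splice with $(y_k^+)$ has asymptotically negligible one-step cost; this will be arranged by selecting the one-sided calibrating chain of $u_a$ associated with $u'_{a+}$, whose proximity to the pseudograph $\PG(a+u'_a)$ near $y_0^+\in\mathfrak M_{\rho_0}$ follows from Theorem \ref{twistKAM}, Corollary \ref{graph+} and the continuity of full pseudographs (Proposition \ref{pgGraph}).
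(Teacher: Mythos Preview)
Your anchoring construction (pinning finite minimizers at $y_{-N}^+$ and extracting a diagonal limit) is a natural Bangert-style idea, and the non-crossing arguments bounding the finite chains between $(y_k^-)$ and $(y_k^+)$ are correct. The difficulty, as you recognize, is step~7: showing that the limit chain is $\alpha$-asymptotic to $(y_k^+)$ rather than $(y_k^-)$. Here your argument becomes circular. You splice $(y_k^+)$ on $[-N,-M]$ with a calibrating chain of $\tilde u_a$ at $\tilde\theta_0$ on $[-M,0]$, and for the mismatch to vanish you need that calibrating chain to be close to $(y_k^+)$ for large $|M|$. But this is precisely Proposition~\ref{sens-parcour}, which is proved \emph{after} Proposition~\ref{alphaAsymptotic} and uses it. Moreover, even granting that $(\tilde\theta_k^+)$ calibrates $\tilde u_a$, you then invoke Proposition~\ref{non-crossPQ} to conclude $\tilde\theta_0<\tilde\theta_q^+-p$; but that proposition gives the implication in the opposite direction (the sign determines which of $u_a,u_b$ is calibrated), not its converse. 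At this point in the paper it is not yet known that $a\neq b$ (this is Theorem~\ref{diffR}, which relies on Proposition~\ref{alphaAsymptotic}), so one cannot argue that calibrating $u_a$ excludes the other alternative.

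The paper's proof avoids this trap entirely by a different mechanism: instead of anchoring on $(y_k^+)$, it takes a sequence of rotation numbers $\rho_n\searrow p/q$ and, for each $n$, a minimizing chain of rotation number $\rho_n$ starting at $\tilde\theta_0$ (e.g.\ one calibrating a weak KAM solution at some $c_n\in\rho^{-1}(\rho_n)$). Corollary~\ref{rotnummin1} (already available at this stage) gives directly $\tilde\theta_q^n<\tilde\theta_0+p$ for every $n$. Passing to a diagonal limit yields a minimizing chain of rotation number $p/q$ with $\tilde\theta_q^-\leqslant\tilde\theta_0+p$; equality is excluded by Proposition~\ref{non-crossPQ}, and then Corollary~\ref{coR+-} immediately gives $\alpha$-asymptoticity to $(y_k^-)$. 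The key point is that the strict inequality coming from $\rho_n>p/q$ survives as a weak inequality in the limit, furnishing the sign information that your anchoring loses.
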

\begin{proof}
Let us prove the existence of  $(\tilde\theta_k^-)_{k\leqslant 0}$, the other being obtained by reversing all inequalities.

Let $(\rho_n)_{n>0}$ be a decreasing sequence converging to $p/q$. For all $n>0$, let $(\tilde\theta^n_k)_{k\leqslant 0}$ be a minimizing chain of rotation number $\rho_n$ such that $\tilde\theta^n_k = \tilde\theta_0$ (for example a chain calibrating any weak KAM solution at a cohomology $c_n$ verifying $\rho(c_n) = \rho_n$). By Corollary \ref{rotnummin1}, 
$$\forall n>0, \quad \tilde\theta_q^n < \tilde \theta_0+p.$$ 
By the same Corollary \ref{rotnummin1} or Proposition \ref{rotnummin}, for all $k\leqslant 0$, fixed, the sequence $(\tilde\theta^n_k)_{n>0}$ is bounded. Up to a diagonal extraction, we may therefore assume that for all $k\leqslant 0$, the sequence $(\tilde\theta^n_k)_{n>0}$ converges to some $\tilde\theta_k^-$. Clearly, the chain $(\tilde\theta_k^-)_{k\leqslant 0}$ is minimizing, as a limit of minimizing chains. Moreover, $\tilde\theta_0^-=\tilde\theta_0$ and  $(\tilde\theta_k^-)_{k\leqslant 0}$ has rotation number $p/q$ by passing to the limit in the inequalities provided by Proposition \ref{rotnummin}. Finally, $\tilde\theta_q^- \leqslant \tilde \theta_0+p$. By Proposition \ref{non-crossPQ}, this cannot be an equality, hence $\tilde\theta_q^- < \tilde \theta_0+p$ and by Corollary \ref{coR+-}, $(\tilde\theta_k^-)_{k\leqslant 0}$ is $\alpha$--asymptotic to $(y_k^-)_{k\leqslant 0}$.
\end{proof}
As a consequence, we recover a result of Mather \cite{MaDiff} and Bangert \cite{BangR}:
\begin{Th}\label{diffR}
The following alternative holds:
\begin{enumerate}
\item the Mather set $\mathfrak M_{\frac pq}= \R$ in which case there is an invariant graph $\mathcal C_{\frac pq}\subset \A$  on which the dynamics of $f$ is $q$--periodic, there exists a unique weak KAM solution (up to constants) associated to the rotation number $\frac pq$ and the $\beta$ function is derivable at $\frac pq$;
\item the Mather set $\mathfrak M_{\frac pq}\neq \R$ and  the $\beta$ function is not derivable at $\frac pq$.

\end{enumerate}

\end{Th}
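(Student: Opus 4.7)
The plan is to treat the two alternatives of the dichotomy separately, the second being handled by proving its contrapositive. For the case $\mathfrak M_{p/q} = \R$, I would first show that through every $\tilde\theta_0 \in \R$ passes a unique bi-infinite minimizing orbit of type $(p,q)$: existence follows from Theorem \ref{AMTR} (together with a standard limit argument to extend to the closure $\mathfrak M_{p/q}$), and uniqueness from the fact that two distinct such orbits would cross at both indices $0$ and $q$, contradicting Proposition \ref{cross-asymptotic}. Assigning to $\tilde\theta_0$ the vertical component $\bar r(\tilde\theta_0)$ of the unique orbit through it yields a $1$-periodic function whose graph $\mathcal C_{p/q} \subset \A$ is $f$-invariant with $f^q|_{\mathcal C_{p/q}} = \mathrm{id}$, by the defining relation $\tilde\theta_{k+q} = \tilde\theta_k + p$. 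Since $\mathfrak M_{p/q} \subseteq \AA_c$ for every $c \in [a,b]$, one has $\AA_c = \T^1$, so Remark \ref{bilipschitz} makes $\bar r$ Lipschitz. As each type-$(p,q)$ minimizing sequence lies in $\widetilde\AAA_c$ and therefore calibrates every weak KAM solution at cohomology $c$, Theorem \ref{twistKAM} forces $c + u_c'(\tilde\theta) = \bar r(\tilde\theta)$ for every $\tilde\theta$, independently of $c \in [a,b]$. Integrating on $[0,1]$ and using the $1$-periodicity of $\tilde u_c$ then gives $c = \int_{\T^1} \bar r$, whence $a = b$; Theorem \ref{alphaC1} yields derivability of $\beta$ at $p/q$, and uniqueness of $u_c$ up to constants is immediate from the prescription of its derivative.

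For the case $\mathfrak M_{p/q} \neq \R$, I would argue by contrapositive: assume $a = b =: c_0$ and derive $\mathfrak M_{p/q} = \R$. Suppose toward a contradiction the existence of a component $(y_0^-, y_0^+)$ of $\R \setminus \mathfrak M_{p/q}$, and let $(y_k^\pm)_{k\in\Z}$ be the corresponding type-$(p,q)$ Mather orbits. By Lemma \ref{extr} there is, up to constants, a unique weak KAM solution $u = u_a = u_b$ at cohomology $c_0$. For any $\tilde\theta_0 \in (y_0^-, y_0^+)$, Proposition \ref{alphaAsymptotic} yields two minimizing chains $(\tilde\theta_k^\pm)_{k\leqslant 0}$ starting at $\tilde\theta_0$ and $\alpha$-asymptotic to $(y_k^+)$ and $(y_k^-)$ respectively; moreover its proof ensures $\tilde\theta_q^+ > \tilde\theta_0 + p > \tilde\theta_q^-$, so neither chain is of type $(p,q)$. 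Corollary \ref{coR+-} together with Proposition \ref{non-crossPQ} then identify $(\tilde\theta_k^+)$ as calibrating $\tilde u_a$ and $(\tilde\theta_k^-)$ as calibrating $\tilde u_b$; since $a = b$, both calibrate $\tilde u$. The initial momenta $r_0^\pm := \partial_2 \ww S(\tilde\theta_{-1}^\pm, \tilde\theta_0)$ therefore both belong to $c_0 + \partial^+ \tilde u(\tilde\theta_0)$, and Proposition \ref{critmin} forces $\tilde\theta_{-1}^+ \neq \tilde\theta_{-1}^-$ (otherwise the two chains would coincide), so the twist property of Proposition \ref{genfun} yields $r_0^+ \neq r_0^-$. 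Hence $\tilde u$ is non-differentiable at every point of the open interval $(y_0^-, y_0^+)$, contradicting the fact that a semiconcave function on $\R$ is differentiable outside a countable set.

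The principal obstacle is the second case: it requires orchestrating Lemma \ref{extr}, Propositions \ref{non-crossPQ} and \ref{alphaAsymptotic}, Corollary \ref{coR+-}, and the twist condition so as to produce, under the assumption $a = b$, a weak KAM solution failing differentiability on an entire interval --- a state of affairs incompatible with semiconcavity. The first case is essentially a consolidation of previously established facts, once one observes that type-$(p,q)$ orbits must fill $\R$ and are unique through each point.
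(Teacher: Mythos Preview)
Your proof is correct. For case 1 you give a more constructive argument than the paper, which simply invokes Proposition~\ref{aubry-cot} to conclude that when $\AA_c=\T^1$ every weak KAM solution $u_c$ is everywhere differentiable with $c+u_c'$ independent of $c$ and of the solution; integrating then gives $a=b$ directly. Your explicit construction of the invariant graph from uniqueness of type--$(p,q)$ orbits through each point reaches the same conclusion and is fine, though slightly longer.

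For case 2 the approaches genuinely differ. The paper proceeds directly: assuming $\mathfrak M_{p/q}\neq\R$, it picks a single $\tilde\theta_0$ in a gap where both $\tilde u_a$ and $\tilde u_b$ are differentiable, uses Propositions~\ref{alphaAsymptotic} and~\ref{non-crossPQ} (as you do) to obtain $a+\tilde u_a'(\tilde\theta_0)\neq b+\tilde u_b'(\tilde\theta_0)$, and then invokes the monotonicity of $\tilde\theta\mapsto(\tilde u_b-\tilde u_a)(\tilde\theta)+(b-a)\tilde\theta$ from Corollary~\ref{monotonyExt} together with an integration over $[0,1]$ to conclude $b>a$. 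Your contrapositive instead collapses $u_a$ and $u_b$ into a single solution $u$ and derives non--differentiability of $u$ at \emph{every} point of the gap, contradicting semiconcavity. This trades Corollary~\ref{monotonyExt} and the integration step for the regularity of semiconcave functions; it is arguably cleaner, though the paper's direct route has the advantage of yielding the strict inequality $a+u_a'<b+u_b'$ on the gap as an explicit byproduct (which is then used in Proposition~\ref{strictaudessus}).
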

\begin{proof}
\begin{enumerate}
\item If  $\mathfrak M_{\frac pq}= \R$, we infer from Proposition \ref{aubry-cot}  that if $c\in \R$ verifies $\rho(c) = \frac pq$, and if $u_c$ is a weak KAM solution at cohomology $c$, then $u_c$ is derivable on $\T^1$ (hence $C^1$ and even $C^{1,1}$ by Birkhoff's Theorem \ref{birk}) and $c+u'_c$ does not depend on $c$, nor $u_c$, but only on the  orbits of type $(p,q)$, hence $\beta$ is derivable at $\frac pq$ with $c=\beta'(p/q)$ and $u_c$ is unique up to constants.

\item  Assume now that $\mathfrak M_{\frac pq}\neq  \R$. Let $[a,b] = \rho^{-1}(\{p/q\})$. Let $u_a$ be the unique weak KAM solution at cohomology $a$ that vanishes at $0$ and $u_b$ be the unique weak KAM solution at cohomology $b$ that vanishes at $0$. Let $\tilde\theta_0 \in \R \setminus \mathfrak M_{\frac pq}$ such that  both $\tilde u_a$ and $\tilde u_b$ are derivable at $\tilde\theta_0$ (that exists as $ \R\setminus \mathfrak M_{\frac pq}$ is a non--empty open set and $\tilde u_a$ and $\tilde u_b$ are derivable except on a countable set). Let finally 
$(\tilde\theta_k^\pm)_{k\leqslant 0}$ be the sequences given by the previous Proposition \ref{alphaAsymptotic}. Then clearly, those two sequences are different, hence $\tilde\theta^+_{-1} \neq \tilde\theta^-_{-1}$ (more precisely, $\tilde\theta^-_{-1} < \tilde\theta^+_{-1}$). It follows from Proposition \ref{non-crossPQ} and Theorem \ref{twistKAM} that 
$$a+\tilde u'_a(\tilde\theta_0) = \partial_2\ww S(\tilde\theta_{-1}^+, \tilde\theta_0) \neq   \partial_2\ww S(\tilde\theta_{-1}^-, \tilde\theta_0) = b+\tilde u'_b(\tilde\theta_0).$$
We deduce from Corollary \ref{monotonyExt} that $\tilde\theta \mapsto (\tilde u_b-\tilde u_a)(\tilde\theta) + (b-a)\tilde\theta$ is non--decreasing and non--constant. Finally, integrating  inequality $b+\tilde u_b'(\tilde\theta)  \geqslant a+\tilde u_a'(\tilde\theta) $, that holds  almost--everywhere, between $0$ and $1$ and remembering that it is not an equality almost--everywhere, yields $b>a$. As $[a,b] = \partial^- \beta (p/q)$ we have proven the result.
\end{enumerate}
\end{proof}
The previous proof actually implies the more precise result:

\begin{pr}\label{strictaudessus}
Assume $\mathcal M_{\frac pq}\neq \T^1$, let $[a,b] = \rho^{-1}(\{p/q\})$.  Let $u_a$ be the unique weak KAM solution at cohomology $a$ that vanishes at $0$ and $u_b$ be the unique weak KAM solution at cohomology $b$ that vanishes at $0$. Then for all 
$ \theta \in \T^1\setminus \mathcal M_{\frac pq}$ where both $u_a$ and $u_b$ are derivable, 
$$a+u'_a(\theta) < b+u'_b(\theta).$$
\end{pr}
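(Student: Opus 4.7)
The plan is to exhibit at $\tilde\theta_0$ two distinct minimizing half-chains starting at $\tilde\theta_0$, one calibrating $\tilde u_a$ and the other $\tilde u_b$, and then to compare their previous vertices via the twist monotonicity of $\partial_2\widetilde S$.

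First I would fix $\theta\in\T^1\setminus\MM_{\frac pq}$ at a point where both $u_a$ and $u_b$ are derivable and choose a lift $\tilde\theta_0\in \R\setminus \mathfrak M_{\frac pq}$. This lift belongs to some connected component $(y_0^-,y_0^+)$ of $\R\setminus\mathfrak M_{\frac pq}$; denote by $(y_k^\pm)_{k\in\Z}$ the associated type $(p,q)$ minimizing orbits, which calibrate both $\tilde u_a$ and $\tilde u_b$ thanks to Theorem \ref{AMTR}. Proposition \ref{alphaAsymptotic} then yields two minimizing chains $(\tilde\theta_k^\pm)_{k\leqslant 0}$ with $\tilde\theta_0^\pm=\tilde\theta_0$, the chain $(\tilde\theta_k^+)$ being $\alpha$-asymptotic to $(y_k^+)$ and $(\tilde\theta_k^-)$ being $\alpha$-asymptotic to $(y_k^-)$.

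Next, I would identify which weak KAM solution each of them calibrates. Combining Corollary \ref{coR+-} with Proposition \ref{non-crossPQ}, the chain $(\tilde\theta_k^+)$, being $\alpha$-asymptotic to the upper neighbouring orbit, satisfies $\tilde\theta_0<\tilde\theta_q^+-p$ and hence calibrates $\tilde u_a$; symmetrically $(\tilde\theta_k^-)$ calibrates $\tilde u_b$. Because $u_a$ and $u_b$ are derivable at $\theta$, Theorem \ref{twistKAM} furnishes a unique calibrating chain starting at $\tilde\theta_0$ for each of these solutions, together with the identifications
$$a+u_a'(\theta)=\partial_2\widetilde S(\tilde\theta_{-1}^+,\tilde\theta_0),\qquad b+u_b'(\theta)=\partial_2\widetilde S(\tilde\theta_{-1}^-,\tilde\theta_0).$$

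It remains to establish $\tilde\theta_{-1}^-<\tilde\theta_{-1}^+$. As $k\to-\infty$, $\tilde\theta_k^\pm$ becomes arbitrarily close to $y_k^\pm$, and the type $(p,q)$ orbits satisfy $y_k^-<y_k^+$ for every $k$, so $\tilde\theta_k^-<\tilde\theta_k^+$ for $k$ sufficiently negative. The two chains are distinct minimizing chains which coincide at the endpoint $k=0$; they are not $\alpha$-asymptotic to one another, because their limit orbits $(y_k^\pm)$ differ. Hence Proposition \ref{cross-asymptotic} (whose boundedness hypothesis is automatic by Proposition \ref{rotnummin}) forbids any additional crossing, and in particular rules out an interior crossing at some $k<0$. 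Therefore $\tilde\theta_k^-<\tilde\theta_k^+$ for every $k<0$, and so $\tilde\theta_{-1}^-<\tilde\theta_{-1}^+$. The twist monotonicity from Proposition \ref{genfun}, namely that $\tilde\theta\mapsto \partial_2\widetilde S(\tilde\theta,\tilde\theta_0)$ is strictly decreasing, then concludes
$$a+u_a'(\theta)=\partial_2\widetilde S(\tilde\theta_{-1}^+,\tilde\theta_0)<\partial_2\widetilde S(\tilde\theta_{-1}^-,\tilde\theta_0)=b+u_b'(\theta).$$
The main delicate point will be justifying $\tilde\theta_{-1}^-<\tilde\theta_{-1}^+$ by correctly invoking the at-most-once crossing property when one end of the index interval is infinite; everything else is a direct assembly of already established tools.
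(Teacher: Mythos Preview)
Your argument is correct and follows the same route as the paper: Proposition \ref{strictaudessus} is stated there as an immediate consequence of the proof of Theorem \ref{diffR}, part 2, which uses exactly the chains $(\tilde\theta_k^\pm)$ from Proposition \ref{alphaAsymptotic}, identifies them via Proposition \ref{non-crossPQ} as calibrating $\tilde u_a$ and $\tilde u_b$ respectively, and compares $\partial_2\widetilde S(\tilde\theta_{-1}^\pm,\tilde\theta_0)$ by the twist. Your justification of $\tilde\theta_{-1}^-<\tilde\theta_{-1}^+$ via Proposition \ref{cross-asymptotic} is actually more explicit than the paper's parenthetical ``more precisely''; the only small point to make fully airtight is that the gap $y_k^+-y_k^-$ is bounded below by a positive constant (it is $|q|$-periodic and positive), which is what guarantees the two chains are not $\alpha$-asymptotic to one another.
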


As a consequence we deduce:
\begin{pr}\label{sens-parcour}
Assume that $\mathfrak M_{\rho_0}\neq \R$. Let $(y_0^-,y_0^+)$ be a connected component of $\R\setminus\mathfrak M_{\rho_0}$, $(y_k^\pm)_{k\in \Z}$ be the associated minimizing orbits of type $(p,q)$. Then 
\begin{enumerate}
\item all  minimizing chains $(\tilde\theta_k^a)_{k\leqslant 0}$ calibrating $\tilde u_a$, with $\tilde \theta_0^a \in (y_0^-,y_0^+)$ are $\alpha$--asymptotic to $(y_k^+)_{k\leqslant 0}$,
\item all  minimizing chains $(\tilde\theta_k^b)_{k\leqslant 0}$ calibrating $\tilde u_b$, with $\tilde \theta_0^b \in (y_0^-,y_0^+)$ are $\alpha$--asymptotic to $(y_k^-)_{k\leqslant 0}$.
\end{enumerate}
\end{pr}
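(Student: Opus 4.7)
The plan is to deduce both items from Corollary \ref{coR+-} by ruling out the ``wrong'' asymptotic regime. I will spell out item (1); item (2) then follows by exchanging the roles of $a$ and $b$.

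Fix $(\tilde\theta_k^a)_{k\leqslant 0}$ calibrating $\tilde u_a$ with $\tilde\theta_0^a\in(y_0^-,y_0^+)$. Since the chain calibrates a weak KAM solution at cohomology $a$, its rotation number equals $\rho(a)=p/q$ by Corollary \ref{rotationnumber}, and it cannot be of type $(p,q)$: Theorem \ref{AMTR} would otherwise force $\tilde\theta_0^a\in\mathfrak M_{p/q}$, contradicting $\tilde\theta_0^a\in(y_0^-,y_0^+)$. Corollary \ref{coR+-} then produces a dichotomy: either $\tilde\theta_0^a<\tilde\theta_q^a-p$ and $(\tilde\theta_k^a)$ is $\alpha$-asymptotic to $(y_k^+)$, or $\tilde\theta_0^a>\tilde\theta_q^a-p$ and it is $\alpha$-asymptotic to $(y_k^-)$. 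Everything reduces to excluding the second alternative.

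Assume for contradiction that $\tilde\theta_0^a>\tilde\theta_q^a-p$. Proposition \ref{non-crossPQ} then asserts that the chain also calibrates $\tilde u_b$. By Theorem \ref{twistKAM} both $\tilde u_a$ and $\tilde u_b$ are differentiable at every $\tilde\theta_k^a$ for $k\leqslant -1$, and the identification $r_k^a=\partial_2\ww S(\tilde\theta_{k-1}^a,\tilde\theta_k^a)$ forces
\begin{equation*}
a+\tilde u_a'(\tilde\theta_k^a)=r_k^a=b+\tilde u_b'(\tilde\theta_k^a),\qquad k\leqslant-1.
\end{equation*}
To contradict Proposition \ref{strictaudessus} I must exhibit some such $k$ with $\tilde\theta_k^a\notin\mathfrak M_{p/q}$. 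Suppose on the contrary that $\tilde\theta_{-1}^a\in\mathfrak M_{p/q}$, and let $(y_k)_{k\in\Z}$ be the type $(p,q)$ orbit through $y_{-1}=\tilde\theta_{-1}^a$; by Theorem \ref{AMTR} it calibrates $\tilde u_a$, and since $\tilde u_a$ is differentiable at $y_{-1}$, Theorem \ref{twistKAM} gives uniqueness of the backward calibrating chain from $y_{-1}$, so $\tilde\theta_k^a=y_k$ for every $k\leqslant-1$. Applying $\tilde f$ once then yields $\tilde\theta_0^a=y_0\in\mathfrak M_{p/q}$, which is absurd. Hence $\tilde\theta_{-1}^a\notin\mathfrak M_{p/q}$ and Proposition \ref{strictaudessus} applied at $\theta_{-1}^a=\pi(\tilde\theta_{-1}^a)$ produces $a+u_a'(\theta_{-1}^a)<b+u_b'(\theta_{-1}^a)$, contradicting the equality displayed above.

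The principal subtlety is the nested contradiction in the last paragraph: one must preclude the scenario in which the calibrating chain is entirely supported in the Mather set, where the strict inequality of Proposition \ref{strictaudessus} is unavailable; this is settled by forward invariance of $\mathfrak M_{p/q}$ under $\tilde f$ together with the backward uniqueness statement of Theorem \ref{twistKAM}. Item (2) is proved word-for-word the same way: assume $\tilde\theta_0^b<\tilde\theta_q^b-p$ for contradiction, apply Proposition \ref{non-crossPQ} to force the chain to also calibrate $\tilde u_a$, and derive a forbidden equality at a point of $\R\setminus\mathfrak M_{p/q}$ from Proposition \ref{strictaudessus}.
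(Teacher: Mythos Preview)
Your argument is correct and takes a genuinely different route from the paper's. The paper proceeds constructively: at a point where $\tilde u_a$ is differentiable, it invokes the existence result (Proposition~\ref{alphaAsymptotic}) to produce a chain $\alpha$--asymptotic to $(y_k^+)$ starting at $\tilde\theta_0^a$, observes that this chain calibrates $\tilde u_a$ by Proposition~\ref{non-crossPQ}, and concludes by uniqueness of the calibrating chain at differentiability points. For non--differentiability points it approximates from the left, passes to the limit to handle the chain associated with the left derivative $\tilde u'_{a-}(\tilde\theta_0^a)$, and then uses the ordering of calibrating chains (Lemma~\ref{orderWKAM}) together with semiconcavity to treat any other calibrating chain from $\tilde\theta_0^a$.

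Your approach is more economical: you assume the wrong alternative in Corollary~\ref{coR+-}, use Proposition~\ref{non-crossPQ} to force simultaneous calibration of $\tilde u_a$ and $\tilde u_b$, and then derive an equality $a+\tilde u'_a=b+\tilde u'_b$ at $\tilde\theta_{-1}^a$ that directly contradicts Proposition~\ref{strictaudessus} once you have checked $\tilde\theta_{-1}^a\notin\mathfrak M_{p/q}$. This avoids both the differentiability case split and the approximation argument entirely. The trade--off is that your proof relies on Proposition~\ref{strictaudessus}, which the paper does not use here (though it is available, being stated immediately before and derived from the proof of Theorem~\ref{diffR}); the paper's argument is in that sense more self--contained but longer. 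One small point you leave implicit: when you say a type $(p,q)$ chain would force $\tilde\theta_0^a\in\mathfrak M_{p/q}$, you are tacitly extending the half--infinite chain to a bi--infinite one by periodicity and using that $\ww S$ is periodic to preserve minimality before invoking Theorem~\ref{AMTR}; this is routine but worth a word.
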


\begin{proof}
Let us prove the first point. 
Let $(\tilde\theta_k^a)_{k\leqslant 0}$  be calibrating  $\tilde u_a$. If $\tilde\theta_0^a$ is a point of derivability of $\tilde u_a$, let $(\tilde\theta_k^+)_{k\leqslant 0}$ be given by Proposition \ref{alphaAsymptotic} that is  $\alpha$--asymptotic to $(y_k^+)_{k\leqslant 0}$ and  such that $\tilde\theta_0^+ = \tilde\theta^a_0$. By Proposition \ref{non-crossPQ} and Corollary \ref{coR+-} $(\tilde\theta_k^+)_{k\leqslant 0}$ calibrates $\tilde u_a$ and as the latter is derivable at $ \tilde\theta^a_0$, such a minimizing chain is unique. Hence  $(\tilde\theta_k^+)_{k\leqslant 0}=(\tilde\theta_k^a)_{k\leqslant 0} $ is indeed  $\alpha$--asymptotic to $(y_k^+)_{k\leqslant 0}$.

Let us now assume  $\tilde\theta_0^a$ is not a point of derivability of $\tilde u_a$. Let $(\ww\Theta^n_0)_{n\geqslant 0}$ be an increasing sequence converging to  $\tilde\theta_0^a$ and made of derivability points of $\tilde u_a$ (that is Lipschitz hence derivable almost everywhere). For all $n$, we denote by  $(\ww\Theta^n_k)_{k\leqslant 0}$ the unique corresponding minimizing chain calibrating $\tilde u _a$. By the beginning of this proof, for $n$ fixed, $(\ww\Theta^n_k)_{k\leqslant 0}$ is  $\alpha$--asymptotic to $(y_k^+)_{k\leqslant 0}$ and verifies $\ww\Theta^n_q>\ww\Theta_0^n +p$ by Corollary \ref{coR+-}. Moreover, setting $r_k^n = a+\tilde u_a'(\ww\Theta^n_k)$, $(\ww\Theta^n_k,r_k^n)_{k\leqslant 0}$ is a piece of orbit of $\tilde f$ by Theorem \ref{twistKAM}. It follows that as $n\to +\infty$, $(\ww\Theta^n_k,r_k^n)_{n\leqslant 0}$ converges to $(\tilde\theta_k^{a-}, r_k^-)$ such that $(\tilde\theta_k^{a-}, r_k^-)_{k\leqslant 0}$ is a piece of orbit of $\tilde f$, $\tilde\theta_0^{a-}=\tilde\theta_0^{a}$ and $r_0^- = a+u'_{a-}(\tilde\theta_0^{a})$. Moreover, $(\tilde\theta_k^{a-})_{k\leqslant 0}$ calibrates $\tilde u_a$. By passing to the limit in the corresponding inequalities for $\ww\Theta^n$ we gather
$\tilde\theta_q^{a-} \geqslant \tilde\theta_0^{a-}+p$ and equality is excluded by Proposition \ref{non-crossPQ}, hence $\tilde\theta_q^{a-} > \tilde\theta_0^{a-}+p$ and $(\tilde\theta_k^{a-})_{k\leqslant 0}$ is $\alpha$-asymptotic to  $(y_k^+)_{k\leqslant 0}$. 

Coming back to the chain  $(\tilde\theta_k^a)_{k\leqslant 0}$, setting $r_0^a = \partial_2 \ww S(\tilde\theta_{-1}^a,\tilde\theta_0^a)$, as $r_0^a-a \in \partial^+ \tilde u_a ( \tilde\theta_0^a)$ and by semiconcavity of  $\tilde u_a$, it follows that $r_0^a \leqslant r_0^-$. Then, by the twist condition, $\tilde\theta_{-1}^a \geqslant \tilde\theta_{-1}^{a-}$. Last, applying again that calibrating chains do not cross away from the origin (Lemma \ref{orderWKAM}) gives $\tilde\theta_{k}^a \geqslant \tilde\theta_{k}^{a-}$ for all $k\geqslant 0$. We then apply to $k=q$ in order to conclude that 
$$\tilde\theta_q^{a}\geqslant \tilde\theta_q^{a-} > \tilde\theta_0^{a-}+p=\tilde\theta_0^{a}+p.$$
The result now follows from Proposition \ref{non-crossPQ}.
\end{proof}
The proof of Theorem \ref{mainIrr} is now fully completed.

To clarify the picture  the following result makes  Proposition \ref{strictaudessus} more precise. As a matter of fact, it implies that the full pseudographs $\PG(a+u'_a)$ and $\PG(b+u'_b)$ only intersect on the Mather set.

\begin{pr}\label{pendulelike}
Assume that $\mathfrak M_{\rho_0}\neq \R$. Let $(y_0^-,y_0^+)$ be a connected component of $\R\setminus\mathfrak M_{\rho_0}$. Then restricted to $(y_0^-,y_0^+)$,   $\PG(a+\tilde u'_a)$ is strictly under $\PG(b+\tilde u'_b)$, in the sense that if $\tilde\theta_0 \in (y_0^-,y_0^+)$ and $r_a$ is such that $(\tilde\theta_0,r_a) \in \PG(a+\tilde u'_a)$ and $r_b$ is such that $(\tilde\theta_0,r_b) \in \PG(b+\tilde u'_b)$, then $r_a<r_b$.
\end{pr}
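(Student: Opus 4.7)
The plan is the following. Given $\tilde\theta_0 \in (y_0^-, y_0^+)$ and $r_a$, $r_b$ as in the statement, Theorem \ref{twistKAM} together with Remark \ref{derivsemcon} produces calibrating chains $(\tilde\theta_k^a)_{k\leqslant 0}$ for $\tilde u_a$ and $(\tilde\theta_k^b)_{k\leqslant 0}$ for $\tilde u_b$, both equal to $\tilde\theta_0$ at $k=0$, such that $r_a = \partial_2 \ww S(\tilde\theta_{-1}^a,\tilde\theta_0)$ and $r_b = \partial_2 \ww S(\tilde\theta_{-1}^b,\tilde\theta_0)$. Proposition \ref{sens-parcour} then asserts that $(\tilde\theta_k^a)_{k\leqslant 0}$ is $\alpha$-asymptotic to $(y_k^+)_{k\leqslant 0}$ while $(\tilde\theta_k^b)_{k\leqslant 0}$ is $\alpha$-asymptotic to $(y_k^-)_{k\leqslant 0}$.

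Next I would quantify the separation between these two chains as $k\to -\infty$. Since $(y_k^+)_{k\in\Z}$ and $(y_k^-)_{k\in\Z}$ are two distinct minimizing sequences of type $(p,q)$ with $y_0^- < y_0^+$, Aubry's non-crossing Proposition \ref{noncrossing} (applied on a window of length $q$, whose endpoints are already strictly ordered by $(p,q)$-periodicity) forces $y_k^- < y_k^+$ for every $k\in \Z$. Setting $\delta := \min_{0\leqslant j < |q|}(y_j^+ - y_j^-) > 0$, periodicity yields $y_k^+ - y_k^- \geqslant \delta$ for all $k$. Combining with the $\alpha$-asymptotic statements, for all $k$ sufficiently negative one has
\[ \tilde\theta_k^b < y_k^- + \tfrac{\delta}{3} < y_k^+ - \tfrac{\delta}{3} < \tilde\theta_k^a. \]
In particular the two minimizing chains are distinct and cannot be $\alpha$-asymptotic to each other.

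The heart of the argument is now to upgrade this ordering at $-\infty$ to the single index $k=-1$. Both chains already intersect at $k=0$, and by Proposition \ref{cross-asymptotic} (whose boundedness hypothesis is automatic by Remark \ref{autoB}, or equivalently by Proposition \ref{rotnummin}) two distinct minimizing chains that are not $\alpha$-asymptotic cross at most once. Hence the crossing at $k=0$ is the only one, and the ordering $\tilde\theta_k^b < \tilde\theta_k^a$ established at large $|k|$ in fact persists for every $k\leqslant -1$. Specializing at $k=-1$ and invoking the twist condition from Proposition \ref{genfun}(3) — the map $\tilde\theta \mapsto \partial_2 \ww S(\tilde\theta,\tilde\theta_0)$ being a strictly decreasing diffeomorphism of $\R$ — one obtains $r_a < r_b$, which is the desired conclusion. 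The main obstacle I anticipate is precisely step two: turning the qualitative $\alpha$-asymptotic statement into a quantitative strict separation so that the non-crossing lemma can be applied; this is handled by the uniform gap $\delta$ coming from $(p,q)$-periodicity, and I do not expect serious additional difficulty beyond this.
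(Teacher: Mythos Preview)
Your argument is correct and is essentially the same as the paper's: both use Proposition~\ref{sens-parcour} to pin down the $\alpha$-asymptotics of the two calibrating chains, exploit the $(p,q)$-periodicity of $(y_k^+-y_k^-)$ to get a uniform gap, and then invoke a non-crossing lemma together with the twist condition. The only cosmetic differences are that the paper argues by contradiction (assuming $r_a\geqslant r_b$ and producing two crossings) and cites Proposition~\ref{noncrossing} directly, whereas you argue directly and go through Proposition~\ref{cross-asymptotic}; and that, strictly speaking, an arbitrary $r_a\in\PG(a+\tilde u'_a)$ interior to the interval $[a+\tilde u'_{a+}(\tilde\theta_0),\,a+\tilde u'_{a-}(\tilde\theta_0)]$ need not be the initial momentum of a calibrating chain, so one should reduce (as the paper does explicitly) to the extremal values $r_a=a+\tilde u'_{a-}(\tilde\theta_0)$ and $r_b=b+\tilde u'_{b+}(\tilde\theta_0)$, for which Theorem~\ref{twistKAM} and Remark~\ref{derivsemcon} do furnish calibrating chains.
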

 \begin{proof}
 Let $(y_k^{\pm})_{k\in \Z}$ be the minimizing orbits of rotation number $\rho_0$ that are of type $(p,q)$. Then as the sequence $(y_k^+-y_k^-)_{k\in \Z}$ is positive valued and $|q|$--periodic, there exists $\varepsilon>0 $ such that $y_k^--y_k^+>\varepsilon$ for all integer $k\in\Z$. 
 
 Let $\tilde \theta_0 \in (y_0^-, y_0^+)$ and let us assume by contradiction that there exist two minimizing chains $(\tilde\theta_k^a)_{k\leqslant 0}$ and $(\tilde\theta_k^b)_{k\leqslant 0}$ calibrating respectively $\tilde u_a$ and $\tilde u_b$, such that $\tilde \theta_0^a = \tilde \theta^b_0 = \tilde\theta_0 \in (y_0^-,y_0^+)$ and verifying $r_0^a \geqslant r_0^b$, where $r_0^a =  \partial_2 \ww S(\tilde\theta_{-1}^a,\tilde\theta_0^a)$ and $r_0^b= \partial_2 \ww S(\tilde\theta_{-1}^b,\tilde\theta_0^b)$. First note that $r_0^a \neq r_0^b$ as otherwise both minimizing chains would be equal and $(\tilde\theta_k^a)_{k\leqslant 0}$ is $\alpha$--asymptotic to $(y_k^+)_{k\leqslant 0}$ while $(\tilde\theta_k^b)_{k\leqslant 0}$ is $\alpha$--asymptotic to $(y_k^-)_{k\leqslant 0}$. As now  $r_0^a >r_0^b$, it follows from the twist condition that $\tilde\theta^a_{-1} < \tilde \theta^b_{-1}$. But for $n<0$ large enough in absolute value, $|\tilde\theta_n^a - y^+_n|<\frac \varepsilon 2$ and  $|\tilde\theta_n^b - y^-_n|<\frac \varepsilon 2$ thus implying that $ \tilde\theta_n^a>\tilde\theta_n^b$. Hence the two minimizing chains cross at $0$ and somewhere between $-1$ and $n$, that is absurd.
 
 The previous result applies in particular to the minimizing and calibrating chains verifying $r_0^a = a+\tilde u'_{a-}(\tilde \theta_0)$ and $r_0^b = b+\tilde u'_{b+}(\tilde\theta_0)$ therefore proving the Proposition.
 \end{proof}

Now that we have a pretty good idea of how are organized the extreme pseudographs $\PG(a+u'_a)$ and $\PG(b+u'_b)$ and their respective calibrating chains, the next results help understanding the looks of more general weak KAM solutions at any cohomology class $c\in (a,b)$.

\begin{pr}\label{ucderiv}
Assume that $\mathfrak M_{\rho_0}\neq \R$. Let $(y_0^-,y_0^+)$ be a connected component of $\R\setminus\mathfrak M_{\rho_0}$. Let $c\in (a,b)$, $v_c : \T^1 \to \R$  a weak KAM solution at cohomology $c$ and $\tilde \theta_0 \in (y_0^-,y_0^+)$ a point of derivability of $\tilde v_c $ the lift of $v_c$. Then one of the following holds:
\begin{enumerate}
\item $\tilde u_a$ is derivable at $\tilde\theta_0$ and $a+\tilde u'_a (\tilde\theta_0 ) = c+\tilde v'_c (\tilde\theta_0 )$;
\item $\tilde u_b$ is derivable at $\tilde\theta_0$ and $b+\tilde u'_b (\tilde\theta_0 ) = c+\tilde v'_c (\tilde\theta_0 )$.
\end{enumerate}

\end{pr}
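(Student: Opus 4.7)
The strategy is to combine Theorem~\ref{mainIrr} with an approximation argument using derivability points of $\tilde u_a$ (resp.~$\tilde u_b$) in a neighborhood of $\tilde\theta_0$. By Theorem~\ref{twistKAM}, the derivability of $\tilde v_c$ at $\tilde\theta_0$ yields a unique calibrating chain $(\tilde\theta_k^c)_{k\leqslant 0}$ with $\tilde\theta_0^c = \tilde\theta_0$, given by the backward $\tilde f$-orbit of $(\tilde\theta_0, r_0)$ where $r_0 := c + \tilde v_c'(\tilde\theta_0) = \partial_2\ww S(\tilde\theta_{-1}^c,\tilde\theta_0)$. This chain is minimizing with rotation number $\rho(c) = \rho_0 = p/q$ by Corollary~\ref{rotnummin1}. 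Since $\tilde\theta_0 \in (y_0^-,y_0^+) \subset \R \setminus \mathfrak M_{\rho_0}$, the chain cannot be of type $(p,q)$, and by Proposition~\ref{non-crossPQ} either $\tilde\theta_q^c - p > \tilde\theta_0$ and the chain calibrates $\tilde u_a$ (Case 1), or $\tilde\theta_q^c - p < \tilde\theta_0$ and it calibrates $\tilde u_b$ (Case 2). I treat Case 1; Case 2 is symmetric and yields the second alternative.

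I approximate $\tilde\theta_0$ by a sequence $(\tilde\theta_0^{(n)})_{n\in\N} \subset (y_0^-,y_0^+)$ converging to $\tilde\theta_0$ from the right, chosen so that both $\tilde v_c$ and $\tilde u_a$ are derivable at each $\tilde\theta_0^{(n)}$ (the non-derivability set of each semiconcave function is countable, hence its complement is dense in $(y_0^-,y_0^+)$). Since $\tilde v_c$ is semiconcave and derivable at $\tilde\theta_0$, its derivative is continuous there, so $\tilde v_c'(\tilde\theta_0^{(n)}) \to \tilde v_c'(\tilde\theta_0)$; hence $(\tilde\theta_0^{(n)}, c+\tilde v_c'(\tilde\theta_0^{(n)})) \to (\tilde\theta_0, r_0)$, and the continuity of $\tilde f^{-1}$ gives pointwise convergence of the backward orbits: $\tilde\theta_k^{(n)} \to \tilde\theta_k^c$ for every fixed $k \leqslant 0$. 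Applying this at $k = q$, the strict inequality $\tilde\theta_q^c > \tilde\theta_0 + p$ of Case 1 persists, so $\tilde\theta_q^{(n)} > \tilde\theta_0^{(n)} + p$ for $n$ large, and by Proposition~\ref{non-crossPQ} the chain $(\tilde\theta_k^{(n)})_{k\leqslant 0}$ calibrates $\tilde u_a$ as well.

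Since $\tilde u_a$ is derivable at $\tilde\theta_0^{(n)}$, its calibrating chain at $\tilde\theta_0^{(n)}$ is unique by Theorem~\ref{twistKAM} and therefore coincides with $(\tilde\theta_k^{(n)})_{k\leqslant 0}$. This yields the identity
$$a + \tilde u_a'(\tilde\theta_0^{(n)}) = \partial_2 \ww S(\tilde\theta_{-1}^{(n)}, \tilde\theta_0^{(n)}) = c + \tilde v_c'(\tilde\theta_0^{(n)}).$$
Letting $n \to \infty$ and using that, by semiconcavity, $\tilde u_a'$ converges from the right to $\tilde u_{a+}'(\tilde\theta_0)$ while $\tilde v_c'(\tilde\theta_0^{(n)}) \to \tilde v_c'(\tilde\theta_0)$, we obtain $a + \tilde u_{a+}'(\tilde\theta_0) = c + \tilde v_c'(\tilde\theta_0) = r_0$. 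Repeating the argument with a sequence approaching $\tilde\theta_0$ from the left yields $a + \tilde u_{a-}'(\tilde\theta_0) = r_0$, so $\tilde u_a$ is derivable at $\tilde\theta_0$ with $a + \tilde u_a'(\tilde\theta_0) = c + \tilde v_c'(\tilde\theta_0)$, proving assertion~1. The main obstacle is ensuring that the Case~1 alternative survives perturbation of $\tilde\theta_0$: this works because the inequality $\tilde\theta_q^c - p > \tilde\theta_0$ is \emph{strict} and because the approximating calibrating chains for $\tilde v_c$ converge to the chain at $\tilde\theta_0$, a convergence made possible only by the uniqueness of the latter, which is itself a direct consequence of the derivability hypothesis.
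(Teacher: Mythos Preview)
Your proof is correct and follows essentially the same approach as the paper's: invoke Theorem~\ref{mainIrr} to place the unique calibrating chain for $\tilde v_c$ in one of the two cases, use continuity of $\tilde v_c'$ at the derivability point $\tilde\theta_0$ to ensure the strict inequality $\tilde\theta_q^c>\tilde\theta_0+p$ persists at nearby derivability points of both $\tilde v_c$ and $\tilde u_a$, identify the calibrating chains there, and pass to one-sided limits to obtain both $\tilde u'_{a+}(\tilde\theta_0)$ and $\tilde u'_{a-}(\tilde\theta_0)$. The only cosmetic difference is that the paper first secures an entire $\varepsilon$-neighborhood on which the inequality holds and then picks sequences inside it, whereas you argue directly via convergence of the backward orbits; the content is identical.
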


\begin{proof}
Let $(\tilde\theta^c_k)_{k\leqslant 0}$ be the unique minimizing chain calibrating $\tilde v_c$ such that $\tilde\theta^c_0 = \tilde\theta_0$. By Theorem \ref{mainIrr}, either  $\tilde \theta_0^c < \tilde\theta_{q}^c-p$ and $(\tilde\theta_k^c)_{k\leqslant 0}$ calibrates $\tilde u_a$,
either $\tilde \theta_0^c > \tilde\theta_{q}^c-p$ and $(\tilde\theta_k^c)_{k\leqslant 0}$ calibrates $\tilde u_b$.

Let us consider the first case and prove that $\tilde u_a$ is derivable at $\tilde\theta_0$. Recall that as $\tilde v_c$ is a semiconcave function that is derivable  at $\tilde \theta_0$, then $\tilde v'_c$ is continuous at $\tilde\theta_0$. Let $\mathcal D\subset \R$ be a set, the complement of which is countable, such that all functions $\tilde u_a$, $\tilde u_b$ and $\tilde v_c$ are derivable on $\mathcal D$.  By continuity of $\tilde v_c'$ at $\tilde \theta_0$, there exists $\varepsilon>0$ such that for all $\ww \Theta_0 \in \mathcal N \cap ( \tilde\theta_0 - \varepsilon, \tilde\theta_0+\varepsilon) \subset (y_0^-,y_0^+)$,
$$ \pi_1\circ \tilde f^q \big( \ww\Theta_0, c+\tilde v'_c(\ww\Theta_0)\big) -p >\ww\Theta_0,$$
where we use that $\tilde\theta_q^c = \pi_1\circ \tilde f^q \big( \tilde\theta_0, c+\tilde v'_c(\tilde\theta_0)\big)$. It follows that, setting $\ww\Theta_k =  \pi_1\circ \tilde f^k \big( \ww\Theta_0, c+\tilde v'_c(\ww\Theta_0)\big)$ for all $k\leqslant 0$, the chain $(\ww\Theta_k)_{k\leqslant 0}$ is the unique calibrating chain for $\tilde v_c$ starting at $\ww\Theta_0$ and that it also calibrates $\tilde u_a$, again using Theorem \ref{mainIrr}. As, by definition of $\mathcal D$, such a calibrating chain is also unique for $\tilde u_a$ we uncover that 
$$a+\tilde u'_a ( \ww\Theta_0) = c+\tilde v'_c(\ww\Theta_0) = \partial_2 \ww S ( \ww\Theta_{-1} , \ww\Theta_0).$$
Applying the preceding equality to an increasing sequence $(\ww\Theta_0^n)_{n\geqslant 0}$ of points in $\mathcal N \cap ( \tilde\theta_0 - \varepsilon, \tilde\theta_0+\varepsilon)$ converging to $\tilde \theta_0$ gives $a+\tilde u'_{a-}(\tilde\theta_0) = c+\tilde u'_c(\tilde\theta_0)$. Similarly, taking a decreasing sequence   $(\ww\Theta_0^n)_{n\geqslant 0}$ of points in $\mathcal N \cap ( \tilde\theta_0 - \varepsilon, \tilde\theta_0+\varepsilon)$ converging to $\tilde \theta_0$ gives $a+\tilde u'_{a+}(\tilde\theta_0) = c+\tilde u'_c(\tilde\theta_0)$. Those two equalities provide the desired result.
\end{proof}

We deduce from Proposition \ref{pendulelike}, Proposition \ref{ucderiv}  and from semiconcavity, that weak KAM solutions' pseudographs can jump downward only once from   $\PG(b+\tilde u'_b)$ to $\PG(a+\tilde u'_a)$ on each connected component of $\R\setminus\mathfrak M_{\rho_0}$.

\begin{Th}\label{KAMfaiblependule}
Assume that $\mathfrak M_{\rho_0}\neq \R$. Let $(y_0^-,y_0^+)$ be a connected component of $\R\setminus\mathfrak M_{\rho_0}$. Let $c\in (a,b)$, $v_c : \T^1 \to \R$  a weak KAM solution at cohomology $c$. Then there exists $\ww\Theta \in [y_0^-,y_0^+]$ such that 
\begin{enumerate}
\item $c+\tilde v'_c(\tilde\theta) = b+\tilde u'_b(\tilde\theta)$ for almost every $\tilde\theta\in (y_0^-,\ww\Theta)$,
\item $c+\tilde v'_c(\tilde\theta) = a+\tilde u'_a(\tilde\theta)$ for almost every $\tilde\theta\in (\ww\Theta,y_0^+)$,
\item$c+\tilde v'_{c-}(\ww\Theta) = b+\tilde u'_{b-}(\ww\Theta) $,
\item$c+\tilde v'_{c+}(\ww\Theta) = a+\tilde u'_{a+}(\ww\Theta) $.
\end{enumerate}
 In particular, 
 $$\forall \tilde \theta \in (y_0^-,\ww\Theta),\quad \tilde v_c(\tilde\theta) = \tilde v_c(y_0^-) +\int_{y_0^-}^{\tilde\theta} \tilde u'_b(s) \ \dd s+(b-c)(\tilde\theta - y_0^-);$$
  $$\forall \tilde \theta \in (\ww\Theta,y_0^+),\quad \tilde v_c(\tilde\theta) = \tilde v_c(y_0^+) +\int_{y_0^+}^{\tilde\theta} \tilde u'_a(s) \ \dd s+(a-c)(\tilde\theta - y_0^+) .$$
\end{Th}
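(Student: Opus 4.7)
The plan is to localize Proposition \ref{ucderiv} into a \emph{separation} statement: among the derivability points of $\tilde v_c$ in $(y_0^-, y_0^+)$, let $A$ be the set where $c + \tilde v_c' = a + \tilde u_a'$ and $B$ the set where $c + \tilde v_c' = b + \tilde u_b'$. Proposition \ref{ucderiv} guarantees $A\cup B$ has full measure in $(y_0^-, y_0^+)$, and Proposition \ref{pendulelike} forces $A\cap B = \emptyset$. The core of the argument is to show that $B$ lies entirely to the left of $A$, i.e.\ $\sup B \leqslant \inf A$; once this is done, defining $\widetilde\Theta := \sup B$ (with the natural convention $\widetilde\Theta = y_0^-$ if $B = \varnothing$ and $\widetilde\Theta = y_0^+$ if $A = \varnothing$) yields the candidate transition point.

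The separation claim is precisely where I expect the real work to lie, and I would tackle it by contradiction using calibrating chains. Suppose $\tilde\theta_A \in A$, $\tilde\theta_B \in B$ with $\tilde\theta_A < \tilde\theta_B$. Since $\tilde v_c$ is derivable at both points, Theorem \ref{twistKAM} gives unique calibrating chains $(\tilde\theta_k^A)_{k\leqslant 0}$ and $(\tilde\theta_k^B)_{k\leqslant 0}$ for $\tilde v_c$ with initial points $\tilde\theta_A$ and $\tilde\theta_B$. By Theorem \ref{mainIrr}, $(\tilde\theta_k^A)_{k\leqslant 0}$ also calibrates $\tilde u_a$ and $(\tilde\theta_k^B)_{k\leqslant 0}$ also calibrates $\tilde u_b$. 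Now apply Lemma \ref{orderWKAM} (second part) to the semiconcave function $\tilde v_c$: iteratively, $\tilde\theta_k^A < \tilde\theta_k^B$ for every $k \leqslant 0$. On the other hand, Proposition \ref{sens-parcour} asserts that $(\tilde\theta_k^A)_{k\leqslant 0}$ is $\alpha$-asymptotic to $(y_k^+)_{k\leqslant 0}$ while $(\tilde\theta_k^B)_{k\leqslant 0}$ is $\alpha$-asymptotic to $(y_k^-)_{k\leqslant 0}$, and since $y_k^- < y_k^+$ uniformly, $\tilde\theta_k^A > \tilde\theta_k^B$ for $k \ll 0$, contradicting the previous inequality.

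With $\widetilde\Theta$ thus defined, items (1) and (2) of the conclusion are immediate from the definitions of $A$ and $B$ and the density of $A\cup B$. For items (3) and (4), I would invoke semiconcavity: in dimension one, the left derivative $\tilde f'_{-}$ of a semiconcave function $\tilde f$ is left-continuous and agrees with $\tilde f'$ on the (full measure) set of derivability. Approaching $\widetilde\Theta$ from the left through points of $B$ gives $c + \tilde v'_{c-}(\widetilde\Theta) = \lim (c + \tilde v'_c(\tilde\theta)) = \lim(b + \tilde u'_b(\tilde\theta)) = b + \tilde u'_{b-}(\widetilde\Theta)$, and symmetrically from the right through $A$ one obtains $c + \tilde v'_{c+}(\widetilde\Theta) = a + \tilde u'_{a+}(\widetilde\Theta)$.

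Finally, the two integral formulas are a direct consequence of items (1) and (2) combined with the fact that $\tilde v_c$, being Lipschitz, is the integral of its almost everywhere derivative. On $(y_0^-,\widetilde\Theta)$ one writes $\tilde v_c(\tilde\theta) - \tilde v_c(y_0^-) = \int_{y_0^-}^{\tilde\theta} \tilde v'_c(s)\,\mathrm ds = \int_{y_0^-}^{\tilde\theta}\bigl(b + \tilde u'_b(s) - c\bigr)\,\mathrm ds$, and rearranging yields the stated identity; the integral on $(\widetilde\Theta, y_0^+)$ is handled identically by integrating from $y_0^+$ backwards. The only genuine obstacle in this whole scheme is the separation claim in paragraph two, and as explained the combination of Lemma \ref{orderWKAM} with the asymptotic descriptions of calibrating chains from Proposition \ref{sens-parcour} resolves it cleanly.
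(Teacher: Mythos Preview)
Your proof is correct and fills in exactly the details the paper omits. The paper gives no explicit proof of this theorem, merely stating just before it that the result is ``deduced from Proposition \ref{pendulelike}, Proposition \ref{ucderiv} and from semiconcavity''; your argument makes this deduction precise, and the separation step via calibrating chains (Lemma \ref{orderWKAM} plus Proposition \ref{sens-parcour}) is entirely in the spirit of the paper's techniques---indeed it mirrors the proof of Proposition \ref{pendulelike} itself. One very minor point: when you say ``By Theorem \ref{mainIrr}, $(\tilde\theta_k^A)_{k\leqslant 0}$ also calibrates $\tilde u_a$'', the more direct justification is that $\tilde\theta_A\in A$ means $c+\tilde v_c'(\tilde\theta_A)=a+\tilde u_a'(\tilde\theta_A)$, so the backward $\tilde f$-orbit from $(\tilde\theta_A,c+\tilde v_c'(\tilde\theta_A))$ is literally the calibrating chain for $\tilde u_a$ at $\tilde\theta_A$ by Theorem \ref{twistKAM}; Theorem \ref{mainIrr} is not needed there.
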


\section{A glimpse into the world of weakly integrable twist maps}\label{lastlast}

We wish to give an account on some results originally published in \cite{AZ,AZ2} by Arnaud--Zavidovique.  We will only state them as the proofs go far beyond the scope of this memoir. The understanding of weakly integrable twist maps (Definition \ref{integrabletwist}) is a frustrating task. Indeed, as was already pointed out, there is no known example of $C^0$--integrable twist map with a non $C^1$ invariant circle. The notion appears in various historic works (even if not explicitly defined). In the study of the Hopf conjecture about Riemannian tori without conjugate points, before its definitive answer by Burago and Ivanov in \cite{BI}, it was proved by Heber (\cite{Heber}) that such geodesic flows are $C^0$--integrable. This result was then generalized to exact symplectic twist maps (\cite{CS}) and at last to more general flows of Tonelli Hamiltonians (\cite{AABZ} and also \cite{ArMin}) and twist maps in higher dimension (\cite{A}). Finally, on more general surfaces, let us mention the work \cite{MS}.

The philosophy of our results is to show that weak forms of integrability have strong dynamical implications and that further properties of the underlying foliations can be obtained. The first result completely characterizes $C^0$--integrable twist maps in terms of the function $u : \T^1 \times \R \to \R$ provided by Theorem \ref{mainAZ}.
\begin{Th}\label{AZC0}
There is equivalence between
\begin{enumerate}
\item the map $f$ is $C^0$--integrable,
\item the function $u$ is $C^1$.
\end{enumerate}
Moreover in either case the function $u$ is unique and 
\begin{itemize}
\item for each $c\in \R$, the graph $\mathcal G(c+u'_c)$ is a leaf of the invariant foliation $\mathcal F$,
\item the map $h_c : \theta\mapsto \theta + \frac{\partial u}{\partial c} (\theta,c)$ is a semi--conjugation between the projected dynamics $g_c : \theta \mapsto \pi_1\circ f \big (\theta , c+\frac{\partial u}{\partial \theta} (\theta,c)\big)$ and the rotation $R_{\rho(c)} : \theta \mapsto \theta + \rho(c)$ meaning that $h_c\circ g_c = R_{\rho(c)}\circ h_c$.
\end{itemize}
\end{Th}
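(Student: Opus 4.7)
The plan is to establish the two implications in tandem with the additional structural statements, using the machinery of weak KAM solutions built so far together with Birkhoff's graph theorem.

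For the implication $(2)\Rightarrow(1)$, assume $u$ is $C^1$. Then for every $c\in\R$ the function $u_c$ is $C^1$, so the pseudograph $\PG(c+u'_c)$ of Theorem~\ref{mainAZ} collapses to an honest graph $\G(c+u'_c)$. By Theorem~\ref{twistKAM}(4), $f^{-1}\big(\overline{\G(c+u'_c)}\big)\subset \G(c+u'_c)$, but the closure is superfluous here and the graph is $f$-invariant by essentialness and area preservation. Monotonicity of $c\mapsto (\tilde u_{c'}-\tilde u_c)(\tilde\theta)+(c'-c)\tilde\theta$ implies that $c\mapsto c+u'_c(\theta)$ is non-decreasing, and the continuity of the derivatives in both variables (from $u\in C^1$) upgrades non-decreasing to a continuous family of graphs. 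The filling property (Theorem~\ref{mainAZ}.b) provides surjectivity onto $\A$, and normalizing by the average yields an $\eta$ as in Definition~\ref{C0fol}, so $f$ is $C^0$-integrable.

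For the converse $(1)\Rightarrow(2)$, suppose $\FF=\{\FF_c\}$ is an $f$-invariant continuous foliation by graphs; by Birkhoff's Theorem~\ref{birk} each leaf $\FF_c$ is the graph of a Lipschitz map $\eta_c$, and $\int_{\T^1}\eta_c=c$. Define $\tilde v_c$ on $\R$ by $\tilde v_c(\tilde\theta)=\int_0^{\tilde\theta}\big(\tilde\eta_c(s)-c\big)\dd s$, which descends to a Lipschitz function $v_c$ on $\T^1$. Since $\FF_c$ is an essential $f$-invariant Lipschitz graph of slope $\eta_c-c$ (after vertical shift by $c$), the standard calibration argument shows that any backward $f$-orbit inside the graph calibrates $v_c$, so $v_c$ is a critical subsolution for $S^c$ at some critical value which must equal $\alpha(c)$; the invariance and the fact that every $\theta$ lies under the graph then give the equality $v_c=T^c v_c+\alpha(c)$, hence $v_c$ is a weak KAM solution. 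Setting $u_c = v_c - v_c(0)$ we obtain a family satisfying points 1--3 of Theorem~\ref{mainAZ}, and the joint continuity of $(\theta,c)\mapsto \eta_c(\theta)$ together with the integral formula shows that $u$ is $C^1$ in both variables.

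Uniqueness follows because each leaf of a $C^0$-integrable foliation is determined by its average height $c$, so $u'_c=\eta_c-c$ is forced; combined with the normalization $u_c(0)=0$ this pins down $u$. It also shows that $\G(c+u'_c)=\FF_c$ is a leaf. The remaining point, the semi-conjugacy property of $h_c$, is the most delicate: it is obtained by differentiating in $c$ the calibration identity along orbits on the leaf. More precisely, for $(\theta,c+u_c'(\theta))\in \FF_c$ and $\Theta=g_c(\theta)$ one has, at any lift,
\begin{equation*}
\tilde u_c(\ww\Theta)=\tilde u_c(\tilde\theta)+\ww S(\tilde\theta,\ww\Theta)+c(\tilde\theta-\ww\Theta)+\alpha(c).
\end{equation*}
Since $u$ is $C^1$ and $\alpha$ is $C^1$ with $\alpha'(c)=\rho(c)$ by Theorem~\ref{alphaC1}, differentiating in $c$ yields
\begin{equation*}
\tfrac{\partial u}{\partial c}(\Theta,c)-\tfrac{\partial u}{\partial c}(\theta,c)=\tilde\theta-\ww\Theta+\rho(c),
\end{equation*}
i.e.\ $h_c\circ g_c=R_{\rho(c)}\circ h_c$. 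The main obstacle I expect is making the differentiation in $c$ rigorous and verifying that the calibration identity holds with joint smoothness (not only almost everywhere), but the hypothesis $u\in C^1$ combined with the fact that the calibrating $\tilde\theta,\ww\Theta$ lie in the graph (so $\tilde\theta$ depends continuously on $(\ww\Theta,c)$ via $\tilde f^{-1}$ applied to the leaf) should allow one to justify the exchange of derivative and argmin via an envelope-type argument.
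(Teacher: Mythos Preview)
The paper does not actually prove this theorem: it is stated in the final section with the explicit remark that ``the proofs go far beyond the scope of this memoir'', and the reader is referred to \cite{AZ,AZ2}. So there is no in-paper proof to compare against; I can only assess your argument on its own.

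Your implication $(2)\Rightarrow(1)$ is essentially fine, as is the formal computation of the semi-conjugacy (the terms in $\partial_c\ww\Theta$ cancel via the graph relations $\partial_2\ww S(\tilde\theta,\ww\Theta)=c+u_c'(\ww\Theta)$, which is the envelope argument you allude to). The real problem is in $(1)\Rightarrow(2)$. From the invariant foliation you define
\[
u(\theta,c)=\int_0^{\tilde\theta}\big(\tilde\eta_c(s)-c\big)\,\dd s,
\]
and correctly observe that $\partial_\theta u(\theta,c)=\eta_c(\theta)-c$ is jointly continuous because the foliation is $C^0$. But this says nothing about $\partial_c u$: the definition of a $C^0$-foliation gives no differentiability of $c\mapsto\eta_c(\theta)$, so you cannot differentiate under the integral sign, and you have not shown that $u$ is $C^1$ in the $c$-variable. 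This is precisely the content the paper flags as ``the striking fact'' immediately after the statement, and it is where the actual work lies. Without it, the semi-conjugacy computation is circular, since it presupposes that $\partial_c u$ exists.

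A second, smaller issue: you are constructing a family of weak KAM solutions from the foliation and then appealing to uniqueness to identify it with the $u$ of Theorem~\ref{mainAZ}. But the uniqueness you invoke is part of the conclusion of the theorem you are proving; Theorem~\ref{mainAZ} itself only asserts \emph{existence} of such a $u$, not uniqueness. You would first need to show (for instance via Lemma~\ref{extr} and the structure results for rational rotation numbers) that when every leaf is a full invariant circle the weak KAM solution at each cohomology is unique up to constants, so that any $u$ satisfying points 1--3 of Theorem~\ref{mainAZ} is forced to be the one you wrote down.
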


The striking fact in the previous Theorem is the regularity with respect to $c$. At irrational rotation numbers, Poincaré--Denjoy theory gives that a semi--conjugation to the corresponding rotation is unique and regularity at such cohomology classes is not surprising. It is not the case at cohomology classes with a rational rotation number and the proof actually gives that the function $h_c$ is $C^{k-1}$ if $f$ is $C^k$. Moreover, previous works of Arnaud \cite{Arnaud3} yield that at such a cohomology class $c$, the invariant circle is also $C^k$ (that also follows from the Implicit Function Theorem) and the restricted dynamics is completely periodic and conjugated to a rotation (cf. Theorem \ref{diffR}). 

The main (hypothetical) feature of a $C^0$--integrable twist map that would not be integrable is the presence of invariant circles with irrational rotation number and a restricted dynamics that is one of a Denjoy counterexample. This means that if $c$ is the corresponding cohomology class, the map $h_c$ is not a homeomorphism.  This is excluded in the case of Lipschitz--integrable twist maps (see Definition \ref{Lipfol}) by the next Theorem.

\begin{Th}\label{AZLip}
Assume that the Exact Conservative Twist Map is Lipschitz integrable. Then there exists an exact area preserving homeomorphism $\Phi $ of $ \T^1\times \R$, which is $C^1$ in the variable $\theta$, such that 
$$\forall (x,c) \in \T^1 \times \R, \quad \Phi \circ f \circ \Phi^{-1} (x,c) = (x + \rho(c) ,c).$$
Moreover, in this case, $\rho : \R \to \R$ is a bi--Lipschitz homeomorphism, all the leaves of the invariant foliation are $C^1$ and the restricted dynamics on each leaf is $C^1$--conjugated to a rotation.

The function $\Phi$ is implicitly defined by the relation
$$\Phi\Big( \theta, c+\frac{\partial u}{\partial \theta}(\theta,c)\Big) = \Big( \theta + \frac{\partial u}{\partial c}(\theta,c),c\Big).$$
\end{Th}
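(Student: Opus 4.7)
Since Lipschitz integrability entails $C^0$-integrability, the starting point is Theorem \ref{AZC0}, which yields a $C^1$ function $u : \T^1 \times \R \to \R$ such that each leaf is $\mathcal F_c = \mathcal G(c + u'_c)$ and provides the continuous semi-conjugation $h_c(\theta) = \theta + \partial_c u(\theta, c)$ between the projected dynamics $g_c$ and the rotation $R_{\rho(c)}$. Setting $\eta_c(\theta) = c + \partial_\theta u(\theta, c)$, the Lipschitz hypothesis from Definition \ref{Lipfol} becomes the existence of $K > 0$ such that, for every $\theta \in \T^1$, the map $c \mapsto \eta_c(\theta)$ is bi-Lipschitz with constants in $[1/K, K]$.

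Next I would define $\Phi$ by inverting the relation $r = \eta_c(\theta)$ to recover $c$ from $(\theta, r)$, and setting $\Phi(\theta, r) = (h_c(\theta), c)$. The bi-Lipschitz estimate makes the vertical projection $(\theta, r) \mapsto c$ a homeomorphism that is $C^1$ in $\theta$, and once $\Phi$ is shown to be a global homeomorphism the conjugacy $\Phi \circ f \circ \Phi^{-1}(x, c) = (x + \rho(c), c)$ is automatic: $\Phi$ sends each leaf $\mathcal F_c$ onto $\T^1 \times \{c\}$, $f$ preserves $\mathcal F_c$, and the induced circle map is precisely $h_c \circ g_c \circ h_c^{-1} = R_{\rho(c)}$.

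The main obstacle is to prove that $h_c$ is a homeomorphism of $\T^1$ (and not merely a surjective monotone semi-conjugation) for every $c$, so that $\Phi$ is truly bijective in the horizontal direction. For irrational $\rho(c)$ this amounts to excluding Denjoy-type wandering intervals on $\mathcal F_c$; for rational $\rho(c) = p/q$ it amounts to excluding a non-degenerate interval $[a, b] = \rho^{-1}(\{p/q\})$ with nontrivial Aubry gaps, since by Proposition \ref{strictaudessus} and Proposition \ref{pendulelike} such gaps produce two distinct extremal pseudographs $\PG(a + u'_a)$ and $\PG(b + u'_b)$ lying strictly below/above one another outside $\mathcal M_{p/q}$. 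In both cases, the transversal bi-Lipschitz lower bound rules the pathology out: a collapse of $h_c$ on an interval $I$, or a non-trivial gap region between extremal pseudographs, would produce an open region of $\A$ trapped between two cohomology-close candidate leaves yet of positive vertical width independent of how close the cohomology classes are, contradicting the uniform two-sided Lipschitz spacing of $\mathcal F$.

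Once $h_c$ is shown to be a homeomorphism for every $c$, the bi-Lipschitz character of $\rho$ follows by passing the bi-Lipschitz transversal estimate through the mean vertical-displacement formula for the rotation number on each straightened circle, combined with strict monotonicity of $c \mapsto \rho(c)$ inherited from the injectivity of $c \mapsto \mathcal F_c$. Exact area preservation of $\Phi$ I would verify by a direct computation in the $(\theta, c)$ chart: writing $r\,d\theta = (c + \partial_\theta u)\,d\theta$ on one side and $\Phi^*(r\,d\theta) = c\,d(\theta + \partial_c u)$ on the other, the difference rearranges, via $d(c\,\partial_c u) = \partial_c u\,dc + c\,d(\partial_c u)$, into the differential of an explicit primitive built from $u$ and $c\,\partial_c u$. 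Finally, the $C^1$ regularity of each leaf is just the $C^1$-regularity of $\theta \mapsto c + \partial_\theta u(\theta, c)$ from Theorem \ref{AZC0}, and the $C^1$ character of the conjugation to a rotation on each leaf is inherited from the $C^1$ smoothness of $h_c$ in $\theta$ once $h_c$ is known to be a homeomorphism with nonvanishing derivative (itself a consequence, after a short argument, of the transversal Lipschitz lower bound combined with the twist condition).
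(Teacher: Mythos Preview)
The paper does not contain a proof of this theorem. In the opening of Section~\ref{lastlast} the author writes explicitly: ``We will only state them as the proofs go far beyond the scope of this memoir,'' and refers to the original articles \cite{AZ,AZ2} for the arguments. Theorem~\ref{AZLip} is therefore stated without proof, so there is no paper proof to compare your proposal against.

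That said, your outline follows the natural architecture: reduce to Theorem~\ref{AZC0}, define $\Phi$ via the implicit relation, and then argue that the semi-conjugations $h_c$ are genuine homeomorphisms by exploiting the transversal bi-Lipschitz bound. The part of your sketch that is genuinely delicate --- and which the author flags as going ``far beyond the scope'' --- is the step where you exclude Denjoy-type behaviour on leaves with irrational rotation number and exclude nontrivial flats $[a,b]=\rho^{-1}(\{p/q\})$ at rational rotation numbers. Your one-sentence heuristic (``a collapse of $h_c$ on an interval, or a gap region, would produce an open region trapped between two cohomology-close leaves yet of positive vertical width independent of how close the cohomology classes are'') is the right intuition but is not yet a proof: you have not explained why the vertical width of the trapped region is bounded below \emph{uniformly} as the two cohomology classes approach each other, nor how exactly the area-preserving property of $f$ enters to force the contradiction. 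Likewise, the claim that $h_c$ has nonvanishing derivative (needed for the $C^1$-conjugation statement) is asserted rather than argued. These are precisely the technical points that require the detailed analysis in \cite{AZ,AZ2}.
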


In the previous Theorem, the area preserving homeomorphism $\Phi$ maps the foliation $\mathcal F$ invariant by $f$ to the the standard foliation $\mathcal F^0$ consisting of the obvious circles $ \mathcal F^0_c = \{(\theta,c) , \ \theta\in \T^1\}$. A natural problem is therefore  to find which foliations by graphs are homeomorphic  to the standard foliation by an exact area preserving homeomorphism. On this matter, we provide the following characterization.

\begin{Th}\label{folfol}
Let $\mathcal F$ be a foliation of $\T^1 \times \R$ by graphs of functions $\theta \mapsto \eta_c ( \theta)$ such that for $c\in \R$, $\int_{\T^1}\eta_c(\theta)\ \dd \theta = c$. Then $\mathcal F$ is homeomorphic to the standard foliation $\mathcal F^0$ by an exact area preserving homeomorphism if and only if there exists a $C^1$ function $u : \T^1 \times \R \to \R$ such that
\begin{itemize}
\item $u(0,c) = 0$ for all $c\in \R$,
\item $\eta_c(\theta) = c + \frac{\partial u}{\partial \theta}(\theta,c)$ for all $(\theta,c) \in \T^1 \times \R$,
\item for all $c\in \R$, the map $\theta \mapsto \theta + \frac{\partial u}{\partial c}(\theta,c)$ is a homeomorphism of $\T^1$.

\end{itemize}
Again, an area preserving homeomorphism $\Phi$ sending $\mathcal F$ to $\mathcal F^0$ is implicitly defined by 
 the relation
$$\Phi\Big( \theta, c+\frac{\partial u}{\partial \theta}(\theta,c)\Big) = \Big( \theta + \frac{\partial u}{\partial c}(\theta,c),c\Big).$$

\end{Th}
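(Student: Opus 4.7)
The plan is to establish the two implications separately, relying on the explicit generating-function identity in the statement.

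\textbf{Sufficiency.} Given a $C^1$ function $u$ satisfying the three conditions, I would first check that the implicit relation defines a map $\Phi$ on all of $\T^1\times\R$. For any $(\theta,r)$, the foliation property of $\mathcal F$ singles out a unique $c$ with $r = \eta_c(\theta) = c+\partial_\theta u(\theta,c)$; then setting $\Phi(\theta,r)=(\theta+\partial_c u(\theta,c),c)$ and using condition (iii), $\Phi$ is a well-defined continuous map $\T^1\times\R\to\T^1\times\R$. Continuity of the inverse follows from the same ingredients, so $\Phi$ is a homeomorphism sending $\mathcal F_c$ to $\mathcal F^0_c$ by construction. To see that $\Phi$ is exact area preserving, I would compute the pullback of the target Liouville form $c\,dx$, where $x=\theta+\partial_c u$:
\begin{equation*}
\Phi^*(c\,dx)-r\,d\theta = c\,d(\partial_c u)-\partial_\theta u\,d\theta = d(c\,\partial_c u - u),
\end{equation*}
which is exact (this is the calculation underlying the discussion in Theorem \ref{AZLip}). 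Taking exterior derivatives gives area preservation as well.

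\textbf{Necessity.} Assume $\Phi$ is an exact area preserving homeomorphism with $\Phi(\mathcal F)=\mathcal F^0$. The matching of area labels $\int_{\T^1}\eta_c\,d\theta = c$ for both foliations, together with the vanishing of algebraic area between any essential circle and its $\Phi$-image, forces the bijection of leaves to respect the label, so we can write $\Phi(\theta,\eta_c(\theta))=(X(\theta,c),c)$ for a continuous $X$ with $X(\cdot,c)$ a homeomorphism of $\T^1$. The candidate $u$ is then constructed on the universal cover as a primitive of the continuous $1$-form
\begin{equation*}
\omega:=\bigl(\eta_c(\theta)-c\bigr)\,d\theta + \bigl(X(\theta,c)-\theta\bigr)\,dc,
\end{equation*}
via a path integral starting at $(0,c)$, which enforces $u(0,c)=0$. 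Once $u$ is defined, $\partial_\theta u = \eta_c-c$ and $\partial_c u = X-\theta$ are continuous, so $u\in C^1$; the three conditions of the statement then all hold, the last one being exactly the fact that $X(\cdot,c)$ is a homeomorphism of $\T^1$.

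\textbf{Main obstacle.} The delicate point is producing the $C^1$ primitive $u$ from a mere homeomorphism $\Phi$. Two things must be verified: first, that $\omega$ is closed in the appropriate (distributional) sense, and second, that its primitive descends to $\T^1\times\R$. For the former, one reformulates area preservation of $\Phi$ in the coordinates $(\theta,c)$ on the source (via the continuous change of variables $r=\eta_c(\theta)$, whose Jacobian $\partial_c\eta_c$ exists because the leaves foliate $\T^1\times\R$), where it becomes precisely the equality $\partial_c\eta_c = \partial_\theta X$ in distributional sense — this is the closedness of $\omega$. For the descent to $\T^1\times\R$, one checks $\Z$-periodicity in $\theta$: $u(\theta+1,c)-u(\theta,c)=\int_0^1(\eta_c(s)-c)\,ds = 0$ by the area normalization, and $X(\theta+1,c)-(\theta+1)=X(\theta,c)-\theta$ because $X(\cdot,c)$ lifts a circle homeomorphism. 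These two normalizations, together with the algebraic area vanishing condition built into the notion of exact area preservation, are what make the construction go through.
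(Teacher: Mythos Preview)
The paper does not prove this theorem: the opening of Section \ref{lastlast} states explicitly that the results there are only stated, ``as the proofs go far beyond the scope of this memoir,'' with reference to \cite{AZ,AZ2}. So there is no in-paper proof to compare against.

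Your outline is essentially correct and is the natural strategy. Two points deserve tightening.

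In the sufficiency direction, the pullback identity $\Phi^*(c\,dx)-r\,d\theta=d(c\,\partial_c u-u)$ is only formal at the $C^1$ level: $d(\partial_c u)$ is not a classical form and $c\,\partial_c u-u$ need not be $C^1$. The clean fix is to avoid differential forms and check exactness directly: for an area-preserving homeomorphism isotopic to the identity the flux is a single number, so it suffices to compute the algebraic area between one essential circle and its image. Taking the leaf $\mathcal F_c$, its image is $\mathcal F^0_c$ and the area between them is $\int_{\T^1}(c-\eta_c)=0$ by the normalization.

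In the necessity direction, your assertion that ``$\partial_c\eta_c$ exists because the leaves foliate'' is not quite right: the foliation only gives that $c\mapsto\eta_c(\theta)$ is an increasing homeomorphism, hence differentiable a.e., not continuously. The distributional closedness of $\omega$ is better obtained by a direct area computation. For any rectangle $[\theta_0,\theta_1]\times[c_0,c_1]$, applying area preservation of $\Phi$ to the set $\{(\theta,r):\theta_0\leqslant\theta\leqslant\theta_1,\ \eta_{c_0}(\theta)\leqslant r\leqslant\eta_{c_1}(\theta)\}$ yields
\[
\int_{\theta_0}^{\theta_1}\bigl(\eta_{c_1}(\theta)-\eta_{c_0}(\theta)\bigr)\,\dd\theta
=\int_{c_0}^{c_1}\bigl(X(\theta_1,c)-X(\theta_0,c)\bigr)\,\dd c.
\]
Setting $\theta_0=c_0=0$ and calling the common value $V(\theta_1,c_1)$, the two representations give $\partial_\theta V=\eta_c-\eta_0$ and $\partial_c V=X(\theta,\cdot)-X(0,\cdot)$, both continuous, so $V\in C^1$. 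Adding the explicit $C^1$ function $W(\theta,c)=\int_0^\theta\eta_0+\int_0^c X(0,\cdot)-c\theta$ and then subtracting the resulting value at $\theta=0$ produces the desired $u$; the map $\theta\mapsto\theta+\partial_c u(\theta,c)=X(\theta,c)-X(0,c)$ is a homeomorphism of $\T^1$ as a translate of $X(\cdot,c)$. Your identification of where exactness is used (pinning the leaf-label bijection to the identity) is correct.
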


As a conclusion, the previous Theorem allows to explain Theorem \ref{folpasinv}. In the case of the foliation given by the functions $\eta_c(\theta) = c+\varepsilon(c)\cos(2\pi \theta)$, for a function $\varepsilon : \R \to \R$ which is Lipschitz, non $C^1$, with a small enough Lipschitz constant, the foliation is Lipschitz in the sense of Definition \ref{Lipfol}. Moreover, were this foliation straightened by an exact area preserving homeomorphism the associated function $u$ would be given by 
$$\forall (\theta,c)\in \T^1\times \R, \quad u(\theta,c) = \frac{\varepsilon(c)}{2\pi}\sin(2\pi\theta). $$
This last function is clearly not $C^1$, thus violating the conclusion of Theorem \ref{folfol}.

We conclude that the foliation given by $\eta$ cannot be invariant by an ECTM.

\newpage
\newpage
\mbox{}
\newpage

\bibliography{synthese}
\bibliographystyle{siam}
\addcontentsline{toc}{chapter}{\protect\numberline{}Bibliography}

\end{document}